\newcommand{\ld}{\lambda}
\newcommand{\ga}{\gamma}
\newcommand{\Ga}{\Gamma}
\newcommand{\al}{\alpha}
\newcommand{\be}{\beta}
\newcommand{\de}{\delta}
\newcommand{\De}{\Delta}
\newcommand{\si}{\sigma}
\newcommand{\ep}{\epsilon}
\newcommand{\vep}{\varepsilon}
\newcommand{\Om}{\Omega}
\newcommand{\Ld}{\Lambda}
\newcommand{\tht}{\theta}
\newcommand{\cc}{\mathbb{C}}
\newcommand{\cd}{\mathbb{D}}
\newcommand{\zz}{\mathbb{Z}}
\newcommand{\rr}{\mathbb{R}}
\newcommand{\vv}{\mathbb{V}}
\newcommand{\nn}{\mathbb{N}}
\newcommand{\qq}{\mathbb{Q}}
\newcommand{\ol}{\overline}
\newcommand{\wt}{\widetilde}
\newcommand{\wh}{\widehat}
\newcommand{\beqq}{\begin{equation*}}
\newcommand{\eeqq}{\end{equation*}}
\newcommand{\beq}{\begin{equation}}
\newcommand{\eeq}{\end{equation}}
\newcommand{\beqan}{\begin{align*}}
\newcommand{\eeqan}{\end{align*}}
\newcommand{\beqa}{\begin{align}}
\newcommand{\eeqa}{\end{align}}
\newtheorem{theorem}{Theorem}[section]
\newtheorem{lemma}{Lemma}[section]
\newtheorem{sublemma}{Sublemma}[lemma]
\newtheorem{definition}{Definition}[section]
\newtheorem{proposition}{Proposition}[section]
\newtheorem{corollary}{Corollary}[theorem]
\newtheorem{question}{Question}[section]
\newtheorem{remark}{Remark}[section]
\numberwithin{equation}{section}
\begin{document}
\title[Parabolic and near-parabolic renormalization]{The parabolic and near-parabolic renormalization for a class of polynomial maps and its applications}


\author[X. Zhang]{Xu Zhang}
\address[X. Zhang]{Department of Mathematics, Shandong University, Weihai, Shandong, 264209,  China}
\email{xu$\_$zhang$\_$sdu@mail.sdu.edu.cn}

\subjclass[2010]{37F10, 30D05.}

\keywords{Fatou coordinate, Julia set, parabolic, polynomial, positive area, renormalization}

\begin{abstract}
For a class of polynomial maps of one variable with a parabolic fixed points and degrees bigger than $21$, the parabolic renormalization is introduced based on Fatou coordinates and horn maps, and a type of maps which are invariant under the parabolic renormalization is also given. For the small perturbation of these kinds of maps, the near-parabolic renormalization is also introduced based on the first return maps defined on the fundamental regions. As an application, we show the existence of non-renormalizable polynomial maps with degrees bigger than  $21$ such that the Julia sets have positive Lebesgue measure and Cremer fixed points, this provides a positive answer for the classical Fatou conjecture (the existence of Julia set with positive area) with degrees bigger than $21$.
\end{abstract}

\maketitle

\section{Introduction}
Given a holomorphic function $f:\ \cc\to\cc$ defined on the complex plane $\cc$ and an initial point $z_0\in\cc$, a dynamical system is defined by the iteration  $z_{n+1}= f(z_n)$, $n\geq0$. The sequence $(z_n)_{n\in\nn}\subset\cc$ is called the orbit of $z_0$, denoted by $\text{Orb}(z_0,f)$. The filled-in Julia set $K_f$ is the set of points for which the orbit $\text{Orb}(z,f)$
is bounded in $\cc$. The boundary of $K_f$ is called  the Julia set $J_f$, which have empty interior and might have fractal geometric structure. The complementary of the Julia set is called the Fatou set. If there is a point $z_0\in\cc$ with $f(z_0)=z_0$, then $z_0$ is a fixed point, the multiplier is $f^{\prime}(z_0)$, the fixed point $z_0$ is called parabolic if $f^{\prime}(z_0)$ is a root of unity, $z_0$ is called an irrationally indifferent fixed point if $f^{\prime}(z_0)$ is not a root of unity and $|f^{\prime}(z_0)|=1$ (including the Siegel disk, which is conjugate with rotation,  this is called the linearizability problem, and the Cremer fixed point, which is the non-linearizable case), and the fixed point is called non-degenerate if $f^{\prime\prime}(z_0)\neq0$. The simple case $f^{\prime}(z_0)=1$ and $f^{\prime\prime}(z_0)\neq0$ is called the non-degenerate $1$-parabolic fixed point, and have simple dynamics, where the orbits are attracted towards $z_0$ on one side and repelled away on the other side. However, if there is perturbation of the parameter $f^{\prime}(z_0)$, then the complicated dynamics might appear, this is called the parabolic bifurcation.

A useful tool in the study of the parabolic bifurcation is the method of renormalization. Renormalization means that a particular subset of the dynamical plane is chosen, the return map restricted to this subset is considered such that some iterate of the original map sends this set to itself, and the return map (renormalization) bears a resemblance to the original map on the whole dynamical plane. Renormalization theories are very useful in the complex dynamics, for example, the double-period bifurcation in the unimodal interval maps in the work of Feigenbaum, Coullet-Tresser, and so on \cite{CoulletTresser1978, Feigenbaum1978, Feigenbaum1980, Lanford1982}; Yoccoz's renormalization in the proof of Siegel-Brjuno Theorem on the linearization of irrationally indifferent fixed points \cite{Yoccoz1995}, Inou and Shishikura's work on parabolic and near-parabolic renormalization of a cubic polynomial map \cite {Shishikura2000, InouShishikura2016}, and
some generalization of parabolic renormalization on a class of analytic maps by Lanford and Yampolsky \cite{LanfordYampolsky2012}. The renormalization method has been extended to the study of complex H\'{e}non map \cite{CarvalhoLyubichMartens2006, GaidashevYampolsky}.

An important problem is the study of the area of the Julia set, the analysis of which requires a series of tools from combinatorics,
complex analysis, dynamical systems, geometry, topology, and so on. The study of this problem has a fairly long history, since the work of Koenigs, Schr\"{o}der,
B\"{o}ttcher, and Fatou and Julia around the late 19th century and the beginning of 20th century \cite{Fatou1919,Fatou1920}. There are many mathematicians contributed to this problem, Cremer, Siegel, Brolin, Guckenheimer, Jakobson, Douady, Hubbard, Sullivan, Thurston, Lyubich, McMullen, Milnor, Shishikura, Yoccoz, Buff, Ch\'{e}ritat, Avila, Lyubich, and so on, where the names of these mathematicians are arranged by the years of their work.

Inspired by Ahlfors's conjecture on the area of limit sets of Kleinian groups, it is guessed that there might not exist Julia set of a polynomial with positive area. Many results show the non-existence of Julia sets with positive area. Petersen proved that the
Julia set of a quadratic polynomial with Siegel disc for the rotation number of bounded type is locally connected and
of Lebesgue measure zero \cite{Peterse1996}. Petersen and Zakeri proved that the Julia set of
$P(z)=e^{2\pi i\al}z + z^2$ has zero Lebesgue measure by trans-quasiconformal surgery and
David's theorem, where $\al\in[0,1]$ has a continued fraction expansion $\al=[a_1,a_2,a_3,...]$ with $\log a_n=O(a_n)$(see Subsection \ref{equ2021-2-3-3} for more details on this expansion) \cite{PetersenZakeri2004}. A major advancement on this problem is the construction of quadratic polynomials of Buff and Ch\'{e}ritat's examples of Siegel and
Cremer parameters, and of infinitely renormalizable parameters, with positive measure Julia
sets \cite{BuffCheritat2012}, which is based on the combination of Ch\'{e}ritat's work
on parabolic implosion \cite{Cheritat2002, Cheritata, Cheritatb}, Inou and Shishikura's work on parabolic and near-parabolic renormalization of a cubic polynomial map \cite{Shishikura2000, InouShishikura2016},  and some other work by Buff and Ch\'{e}ritat \cite{BuffCheritat2006, BuffCheritat2007}.
Later, Avila and Lyubich produced a positive
Hausdorff measure set of so-called Feigenbaum infinitely renormalizable quadratic polynomials
with positive measure Julia sets, commenting that it is still unknown if this phenomenon can
occur for real quadratic polynomials \cite{AvilaLyubich}. Another significant refinement of parabolic point theory
was developed by Shishikura to show that the Julia sets of some quadratic polynomials,
obtained by taking fast-increasing sequences of parabolic parameters (and therefore including
some Cremer parameters), have Hausdorff dimension two, and, correspondingly, the boundary
of the Mandelbrot set has Hausdorff dimension two \cite{Shishikura1998}.  McMullen subsequently proved that the Julia set has Hausdorff
dimension less than two and proved some results on ``space filling" of the Siegel disc under
``renormalization" (where this refers to the type of renormalizations used for circle maps, rather
than for polynomials), this is called McMullen's renormalization of Siegel disks of quadratic polynomials, where the rotation number is of bounded type \cite{McMullen1998}. In \cite{DudkoLyubich2020}, Dudko et al. provided the pacman renormalization theory, which combines the features of quadratic-like and Siegel renormalizations. In \cite{Cheritat2021}, Ch\'{e}ritat provided a construction of near-parabolic renormalization, extending the range of applications to unicritical polynomials of all degrees.

The concepts of parabolic and near-parabolic renormalization were introduced in \cite{Shishikura1998} to study the Hausdorff dimension of some quadratic Julia sets and the Mandelbrot set. Inou and Shishikura introduced a class of maps by using $P(z)=z(1+z)^2$, such that the parabolic and the near-parabolic renormalization can be defined \cite{InouShishikura2016}. The Inou-Shishikura class of maps plays an important role in the proof of the well-known result of the existence of Julia set with positive area for quadratic polynomials \cite{BuffCheritat2012}. The elegant part in \cite{InouShishikura2016} is the choice of an ellipse which is important in the function spaces.

Inspiring by the idea of the work of Inou and Shishikura, the parabolic and near-parabolic renormalization for the following class of polynomial maps $P(z)=z(1+z)^m$ is investigated, where $m$ is an integer and $m\geq22$. Following the idea of Inou and Shishikura, the choice of the ellipse should be dependent on the value of $m$, and a case by case study of the ellipse for each $m$ seems to work very well. The difficult part is the choices of the ellipse for each $m$. The key point of our construction is that for large enough $m$, $m\geq22$, we introduce a combinatorial construction of the ellipse, one part is a proper ellipse independent on $m$, and the other part is dependent on $m$. Here, the construction of the function spaces is useful for the definition of the parabolic and near-parabolic renormalization. Furthermore, based on this construction, for the cases $3\leq m\leq 21$, the problem of the choices of the ellipses is a case by case study following the work of Inou and Shishikura and our construction.

Based on the parabolic and near-parabolic bifurcation, the existence of high degree (degree bigger than or equal to $22$) polynomial maps with Julia sets having positive Lebesgue measure and Cremer fixed points is verified. This provides a positive answer to the classical Fatou conjecture (the existence of Julia set with positive area) for polynomials with degrees bigger than $21$.

The strategy of the proof follows the idea in the work of Buff and Ch\'{e}ritat for quadratic polynomials \cite{BuffCheritat2012}. The idea of the proof is divided into three parts:
\begin{itemize}
\item The study of the Hausdorff convergence of perturbed Siegel
disks, or the control of the post-critical sets of perturbations near the Siegel disk, this part is based on the near parabolic renormalization obtained in the first part. This generalizes Inou and Shishikura's results on the parabolic and near parabolic renormalization to high degree polynomials, this part is obtained in Section \ref{renormalization-2022-2-12-1}.
\item Ch\'{e}ritat's techniques of parabolic explosion \cite{Cheritat2000} and Yoccoz's renormalization technique \cite{Yoccoz1995} to control the shape of Siegel disks, the parabolic explosion (the control of the cycle) for the high degree polynomials is obtained in Subsection \ref{equ-21-12-7-1} (see Propositions \ref{explosionfun-1} and \ref{explosionfun-2}, and Lemma \ref{equ-2021-12-8-1} in Subsection \ref{equ-21-12-7-1}).
    \item McMullen's results on Siegel disks of bounded type \cite{McMullen1998}, which studied the measurable density of the filled-in Julia set near the boundary of a Siegel disk, this result was generalized to a class of high degree polynomials in \cite{Shen2006}, this part is summarized in Subsection \ref{lebesgue-density-12-12-1}.
\end{itemize}

\begin{theorem}\label{positivearea-12-11-1}
There exist non-renormalizable polynomial maps of any degree ($\geq22$) that have a Cremer fixed point and a Julia set of positive area.
\end{theorem}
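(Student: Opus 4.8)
The plan is to adapt the strategy of Buff and Ch\'eritat \cite{BuffCheritat2012} for the quadratic family to the present setting, using the high-degree near-parabolic renormalization built in Section \ref{renormalization-2022-2-12-1}. Fix a degree $d\ge 22$, set $m=d-1\ge 21$, and work in the one-parameter family $P_\alpha(z)=e^{2\pi i\alpha}z(1+z)^m$, so that $0$ is a fixed point of $P_\alpha$ with multiplier $e^{2\pi i\alpha}$ and $\deg P_\alpha=m+1=d$. Every map occurring in the construction will be (a restriction of) a map in the renormalization-invariant class attached to $P(z)=z(1+z)^m$, so the whole renormalization tower of each $P_\alpha$ is defined and uniformly controlled. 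It suffices to produce a single irrational $\alpha$ for which $P_\alpha$ is non-renormalizable, has a Cremer fixed point at $0$, and $\mathrm{area}(K_{P_\alpha})>0$; since $J_{P_\alpha}=K_{P_\alpha}$ in that case (no attracting or parabolic cycle, no Siegel disk, no wandering domain, hence no bounded Fatou component), this gives $\mathrm{area}(J_{P_\alpha})>0$. The starting observation is that for $\alpha_0=p/q$ rational, $P_{\alpha_0}$ has a parabolic cycle, so $K_{P_{\alpha_0}}$ has nonempty interior and positive area; the problem is to perturb $\alpha_0$ to an irrational Cremer value while keeping a definite fraction of that area.

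I would first assemble the three ingredients already available. (1) From the generalized Inou--Shishikura theory of Section \ref{renormalization-2022-2-12-1}: for $\alpha$ whose continued fraction $[a_1,a_2,\dots]$ has all sufficiently far entries large, the Siegel disk $\Delta_\alpha$ of $P_\alpha$ exists with controlled post-critical set, the assignment $\alpha\mapsto\Delta_\alpha$ is continuous (Hausdorff topology) along such sequences, and a prescribed lengthening of the tail of the continued fraction changes the Hausdorff limit of $\Delta_\alpha$ in a prescribed way. (2) From Ch\'eritat's parabolic-explosion estimates, Propositions \ref{explosionfun-1} and \ref{explosionfun-2} and Lemma \ref{equ-2021-12-8-1} of Subsection \ref{equ-21-12-7-1}: the cycle bifurcating from $0$ under a small change of $\alpha$ is controlled, which pins down the conformal radius and shape of $\Delta_\alpha$ and makes the perturbations in (1) effective. (3) From the generalized McMullen density theorem of Subsection \ref{lebesgue-density-12-12-1}: along the bounded-type stretches of $\alpha$ visible at a given stage, $K_{P_\alpha}$ has Lebesgue density bounded away from zero at a.e.\ point of $\partial\Delta_\alpha$, so that a small Hausdorff move of the Siegel disk destroys only a controlled amount of the area of $K$.

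Then comes the inductive construction. Choose $\alpha_0=p/q$ and a fixed neighbourhood $U$ of its parabolic basin with $\mathrm{area}(K_{P_{\alpha_0}}\cap U)=:A_0>0$, and fix $\varepsilon_n>0$ with $\sum_n\varepsilon_n<A_0/2$. Build $\alpha_0,\alpha_1,\alpha_2,\dots$ where $\alpha_{n+1}$ arises from $\alpha_n$ by appending one very large partial quotient, chosen so large that simultaneously: (i) $|\alpha_{n+1}-\alpha_n|<\varepsilon_n$, so $\alpha_n$ converges to some $\alpha$; (ii) the partial quotients grow fast enough that $\alpha$ violates the Brjuno condition, hence (by the analogue of Yoccoz's non-linearizability criterion valid for maps in the invariant class) $0$ is a Cremer fixed point of $P_\alpha$; (iii) using (1)--(3), $\mathrm{area}(K_{P_{\alpha_{n+1}}}\cap U)\ge \mathrm{area}(K_{P_{\alpha_n}}\cap U)-\varepsilon_n$, with the carried-over part of the filled Julia set identified concretely; (iv) $P_{\alpha_{n+1}}$ is non-renormalizable. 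Letting $n\to\infty$ and invoking the measure-persistence argument of \cite{BuffCheritat2012} (the part of $K_{P_{\alpha_n}}$ surviving into $K_{P_{\alpha_{n+1}}}$ having area at least $\mathrm{area}(K_{P_{\alpha_n}}\cap U)-\varepsilon_n$ by (iii)), one obtains $\mathrm{area}(K_{P_\alpha})\ge A_0-\sum_n\varepsilon_n>A_0/2>0$. Together with (ii), (iv) and the remark on Fatou components above, $P_\alpha$ is a non-renormalizable degree-$d$ polynomial with a Cremer fixed point and $\mathrm{area}(J_{P_\alpha})>0$. Since $d\ge 22$ was arbitrary, this proves the theorem.

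The main obstacle is the quantitative compatibility of the inductive step: each new partial quotient must be large enough to force non-linearizability and small area loss in the limit, yet the near-parabolic renormalization and the parabolic-explosion estimates of (1)--(2) must remain uniformly valid along the ever-growing renormalization tower of $P_{\alpha_n}$ --- this is exactly where the invariance of the class built from $z(1+z)^m$ and the combinatorial ellipse construction of Section \ref{renormalization-2022-2-12-1} are indispensable, and where the present high-degree setting genuinely differs from the cubic case of Inou--Shishikura. A subsidiary difficulty is that McMullen's density estimate is proved near a genuine Siegel boundary, whereas in the limit the Siegel disk degenerates to the Cremer point; this is bypassed, as in \cite{BuffCheritat2012}, by applying (3) only to the finitely many bounded-type stretches visible at stage $n$ and then passing to the limit.
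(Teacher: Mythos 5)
Your overall architecture --- iterating a perturbation in the rotation number, controlling area loss via McMullen-type density, and passing to a limit using upper semi-continuity of area --- matches the paper's strategy (which follows Buff--Ch\'eritat). However, two of your design choices diverge from what the paper actually does, and each one opens a genuine gap.

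First, you initialize the induction at a \emph{rational} $\alpha_0=p/q$ and step through rationals: if $\alpha_n$ has a finite continued fraction $[a_0,\dots,a_k]$, then ``appending one very large partial quotient'' produces another rational $[a_0,\dots,a_k,A]$, so every $P_{\alpha_n}$ along your chain has a parabolic cycle, not a Siegel disk. But the area-persistence machinery you invoke --- the controlled Hausdorff behavior of perturbed Siegel disks (items (1) and (3) of your list, i.e.\ Propositions \ref{equ-21-12-11-5}--\ref{equ-21-12-12-1} and the McMullen density results of Subsection \ref{lebesgue-density-12-12-1}) --- is stated and proved \emph{for bounded-type Siegel parameters}. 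In the paper (and in Buff--Ch\'eritat) the anchors of the iteration are $\alpha\in\mathcal{S}_N$ and $\alpha_n=[a_0,\dots,a_n,A_n,N,N,\dots]$, which are bounded-type irrationals at every stage: each $P_{\alpha_n}$ has an honest Siegel disk, and it is that Siegel disk (not a parabolic basin) whose area is tracked. Replacing the Siegel anchors by parabolic ones would require a quantitative parabolic-implosion/area-persistence theorem (rational $\to$ rational) that the paper does not develop, so as written your step (iii) has no support.

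Second, you conclude that $0$ is a Cremer point of the limit $P_\alpha$ from ``the analogue of Yoccoz's non-linearizability criterion valid for maps in the invariant class.'' No such analogue is available for the family $P_\alpha(z)=e^{2\pi i\alpha}z(1+z)^m$; in fact the paper explicitly lists the extension of Yoccoz's non-Brjuno $\Rightarrow$ no-Siegel-disk theorem to this family as an \emph{open problem} (Section \ref{julia-set-equ-2022-2-12-4}, last Question). What the paper uses instead is the second bullet of Proposition \ref{equ-2021-12-11-3}: each $\alpha_{n+1}$ is chosen so that $P_{\alpha_{n+1}}$ has a nontrivial cycle inside $D(0,\varepsilon_n)\setminus\{0\}$. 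Taking $\varepsilon_n\to 0$ and using the holomorphic dependence of that cycle on the parameter (Propositions \ref{explosionfun-1}--\ref{explosionfun-2}), the cycles persist and accumulate at $0$ for the limiting parameter $\alpha$, which is incompatible with a Siegel disk around $0$; hence $0$ is Cremer, with no appeal to a Brjuno/Yoccoz dichotomy. You should replace your step (ii) by this accumulating-cycles argument, which is what the parabolic-explosion control of Lemma \ref{equ-2021-12-8-1} is for. A similar remark applies to your (iv): ensuring each $P_{\alpha_{n+1}}$ is non-renormalizable does not by itself force the limit $P_\alpha$ to be non-renormalizable; the paper obtains non-renormalizability of the limit from the combinatorics of $\alpha$ itself.
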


The rest is organized as follows. In Section \ref{renormalization-2022-2-12-1}, the main results of the parabolic and near-parabolic bifurcation are provided.
In Section \ref{julia-set-equ-2022-2-12-2}, the proof of Theorem \ref{positivearea-12-11-1} is given.
In Section \ref{julia-set-equ-2022-2-12-3}, some basic concepts and useful tools are introduced.
In Section \ref{julia-set-equ-2022-2-12-4}, several open problems are provided for further work.

\section{Parabolic bifurcation and near-parabolic bifurcation}\label{renormalization-2022-2-12-1}

\subsection{Parabolic fixed point, Fatou coordiante and horn maps}

The theory of Fatou coordinates and horn maps were developed by Douady, Hubbard, Lavaurs and so on.

Consider a holomorphic function as follows:
\beq\label{equ202091}
f(z)=a_1z+a_2z^2+O(z^3),
\eeq
where $a_1$ and $a_2$ are complex parameters. If $a_1=1$ and $a_2\neq0$, then this map has a non-degenerate 1-parabolic fixed point at $z=0$. Consider a change of coordinate $w=-\frac{1}{a_2z}$ with $a_2\neq0$, where the fixed point $0$ is sent to $\infty$, the dynamics in this coordinate near $\infty$ is given by
\beqq
F(w)=-\frac{1}{a_2f(-\frac{1}{a_2w})}=w\bigg(\frac{1}{1-\frac{1}{w}+O(-\frac{1}{a_2^2w^2})}\bigg)=
w+1+\frac{b_1}{w}+O\bigg(\frac{1}{w^2}\bigg),
\eeqq
where
\beqq
b_1=1+\frac{\text{the coefficients in}\ O(z^3)}{a^2_2}.
\eeqq

\begin{theorem}\label{paraequ-85}\cite{Shishikura2000}
For the holomorphic function in \eqref{equ202091} with $a_1=1$ and $a_2\neq0$, there exists a sufficiently large positive number $L$, and two injective holomorphic functions $\Phi_{att}=\Phi_{att,F}:\{w:\ \mbox{Re}\,w>L\}\to\mathbb{C}$ and  $\Phi_{rep}=\Phi_{rep,F}:\{w:\ \mbox{Re}\,w<-L\}\to\mathbb{C}$ such that
\begin{itemize}
\item[(a)]
\beq
\Phi_{s}(F(w))=\Phi_s(w)+1\quad (s=\text{att},\ \text{rep}),
\eeq
where both sides of the equation are defined on the meaningful regions.
\item[(b)] $\Phi_{att}$ and $\Phi_{rep}$ are unique up to addition of constant.
\item[(c)]There is a large positive constant $L'$ such that $\Phi_{att}$ and $\Phi_{rep}$ can be extended to $\{w:\ |\mbox{Im}\,w|+\mbox{Re}\,w>L'\}$ and $\{w:\ |\mbox{Im}\,w|-\mbox{Re}\,w>L'\}$, respectively.
\item[(d)]$\Phi_{att}$ and $\Phi_{rep}$ have asymptotic expansion $w-b_1\log w+const+o(1)$ as $w\to\infty$.
\end{itemize}
\end{theorem}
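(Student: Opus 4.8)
The plan is to work in the repelling/attracting coordinate $w=-1/(a_2z)$, where $F(w)=w+1+b_1/w+O(1/w^2)$, and construct the Fatou coordinate as a limit of iterates. First I would make a preliminary change of variable $\va(w)=w-b_1\log w$, which conjugates $F$ to a map $\wt F$ of the form $\wt F(\zeta)=\zeta+1+O(1/\zeta\cdot\log\zeta/\zeta)=\zeta+1+o(1/\zeta)$ on a region $\{|\mathrm{Im}\,\zeta|+\mathrm{Re}\,\zeta>L'\}$; the point of this normalization is precisely to kill the $b_1/w$ term so that the remaining perturbation is summable along orbits, which is what forces the asymptotic expansion in part (d). On such a right half-plane-like domain (an ``upper-right'' sector union a horizontal strip), one checks that $\wt F$ maps the domain into itself and that $\mathrm{Re}\,\wt F^{\,n}(\zeta)\to+\infty$ like $n$, with good control $\wt F^{\,n}(\zeta)=\zeta+n+o(\log n)$.

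Next I would define $\Phi_{att}(\zeta)=\lim_{n\to\infty}\big(\wt F^{\,n}(\zeta)-n-c_n\big)$ for a suitable sequence of constants $c_n$ chosen so that the limit exists; concretely one shows the telescoping series $\sum_n\big(\wt F^{\,n+1}(\zeta)-\wt F^{\,n}(\zeta)-1\big)$ converges locally uniformly, because each term is $o(1/\wt F^{\,n}(\zeta))=o(1/n)$ — here the delicate point is to get summability, which needs the $\log$-normalization and a uniform estimate of the error term $O(1/\zeta^2)$ after the change of variable, combined with the linear growth $\mathrm{Re}\,\wt F^{\,n}\sim n$. Hartogs/Weierstrass then gives holomorphy of the limit, and by construction $\Phi_{att}\circ\wt F=\Phi_{att}+1$, hence $\Phi_{att}\circ F=\Phi_{att}+1$ after undoing $\va$; injectivity follows because $\Phi_{att}$ is a limit of injective maps that are uniformly close to the identity (a Hurwitz/normal-families argument on the nearly-translation maps $\zeta\mapsto\wt F^{\,n}(\zeta)-n$). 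The repelling coordinate $\Phi_{rep}$ is obtained by the same construction applied to $F^{-1}$ (which exists and is univalent near $\infty$ on the left region, with expansion $w-1-b_1/w+\cdots$) on the ``lower-left'' domain, and taking $\Phi_{rep}=\lim(\wt F^{-n}(\zeta)+n-c'_n)$.

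For part (b), uniqueness up to an additive constant: if $\Psi$ is another solution of $\Psi\circ F=\Psi+1$ on a right-region, then $\Psi\circ\Phi_{att}^{-1}$ commutes with the unit translation $T_1$ on a neighborhood of $+\infty$, hence descends to a holomorphic self-map of a punctured-disk quotient fixing the puncture; analyzing its Fourier/Laurent expansion and using that it is tangent to the identity at infinity forces it to be $\zeta\mapsto\zeta+\mathrm{const}$. Part (c), the extension to the larger sectorial domains $\{|\mathrm{Im}\,w|\pm\mathrm{Re}\,w>L'\}$, follows by using the functional equation to propagate: any point of the big domain is mapped by finitely many iterates of $F$ (resp. $F^{-1}$) into the small half-plane where $\Phi_{att}$ (resp. $\Phi_{rep}$) is already defined, and one sets $\Phi_{att}(w):=\Phi_{att}(F^{N}(w))-N$; one must check this is well-defined (independent of $N$, immediate from the functional equation) and that iterates of $F$ do stay in the domain and escape to $\infty$, which is the standard ``attracting petal'' geometry for a parabolic fixed point of multiplier $1$. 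Finally part (d) is read off from the construction: unwinding $\Phi_{att}(w)=\va(w)+\lim_n\big(\wt F^{\,n}(\va(w))-\va(w)-n\big)$ and $\va(w)=w-b_1\log w$, together with the estimate that the limiting sum is a convergent series of terms $o(1/w)$ uniformly (so the sum is $\mathrm{const}+o(1)$), yields $\Phi_{att}(w)=w-b_1\log w+\mathrm{const}+o(1)$.

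The main obstacle, and where the real work lies, is the convergence step: obtaining a uniform-on-compacts bound on the error term of $\wt F$ and the linear-growth estimate $\mathrm{Re}\,\wt F^{\,n}(\zeta)\geq n/2$ (say) simultaneously on the whole sectorial domain, so that the defining series telescopes absolutely; everything else (holomorphy, injectivity, uniqueness, extension, asymptotics) is then a fairly mechanical consequence. Since this is precisely Shishikura's construction, I would cite \cite{Shishikura2000} for the detailed estimates rather than reproduce them.
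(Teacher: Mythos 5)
Your proposal is essentially the classical Fatou coordinate construction that the paper cites from \cite{Shishikura2000} without reproving: conjugate by $\va(w)=w-b_1\log w$ to reduce the perturbation to $O(1/\zeta^2)$ (summable along orbits, which is the whole point), define $\Phi_{att}$ as the locally uniform limit $\lim_n\big(\wt F^{\,n}(\zeta)-n\big)$, and then verify holomorphy and injectivity via Weierstrass/Hurwitz, uniqueness from the functional equation plus the asymptotics, extension by pulling back along iterates of $F$, and item (d) by undoing $\va$. The outline is correct and identifies the right key step (the normalization that makes the telescoping series converge); the only slips are cosmetic (after the log-normalization no auxiliary constants $c_n$ are needed, and the bound $o(1/\zeta)$ you first write is too weak for summability — you need and indeed later invoke the stronger $O(1/\zeta^2)$).
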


\begin{definition}
For the functions $\Phi_{att}$ and $\Phi_{rep}$ introduced in Theorem \ref{paraequ-85}, $\Phi_{att}$ is called attracting Fatou coordinate and $\Phi_{rep}$ is said to be repelling Fatou coordinate. They are considered as the conjugacy between the map $F(w)$ and the translation $T:z\to z+1$ defined on the half-neighborhoods (attracting or repelling petals).

Both the attracting and repelling Fatou coordinates are defined on the common regions $V_{\pm}=\{w:\ \pm\text{Im}\,w>|w|+L'\}$,  the horn map $E_F$ defined on $\Phi_{rep,F}(V_{\pm})$ is given by
\beqq
E_F=\Phi_{att}\circ\Phi_{rep}^{-1}.
\eeqq
\end{definition}

\begin{theorem}
\begin{itemize}
\item[(a)] There exists $L^{\prime\prime}>0$ such that $E_F$ is defined on the region $\{z\in\cc:\ -1\leq\text{Re}\,z\leq1,\ |\text{Im}\,z|\geq L^{\prime\prime}\}$, which is contained in $\Phi_{rep}(V_{\pm})$.
\item[(b)] For $z\in\{z\in\cc:\ -1\leq\text{Re}\,z\leq1,\ |\text{Im}\,z|\geq L^{\prime\prime}\}$,
\beqq
E_F(z+1)=E_F(z)+1,
\eeqq
yielding that $E_F(z)-z$ is a periodic function with period $1$. So, $E_F$ can be extended holomorphically to $\{z:\ |\text{Im}\,z|\geq L^{\prime\prime}\}$.
\item[(c)] There are constants $c_{upper}$ and $c_{lower}$ such that
\beqq
E_F(z)-z\to c_{upper}\ \text{as}\ \text{Im}\,z\to+\infty\ \text{and}\ E_F(z)-z\to c_{lower}\ \text{as}\ \text{Im}\,z\to-\infty,
\eeqq
and
$c_{upper}-c_{lower}=2\pi i b_1$.
\end{itemize}
\end{theorem}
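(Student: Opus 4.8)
The plan is to extract everything from the asymptotic expansion of the Fatou coordinates near $\infty$ recorded in item (d) of Theorem \ref{paraequ-85}, namely $\Phi_{s}(w)=w-b_1\log w+C_{s}+o(1)$ as $w\to\infty$ for $s=\text{att},\text{rep}$ (with $\log$ the branch fixed on the corresponding simply connected petal), together with the two functional equations $\Phi_{s}(F(w))=\Phi_s(w)+1$. For part (a), given a target $z$ with $-1\le\text{Re}\,z\le1$ and $\text{Im}\,z$ large, I would solve $\Phi_{rep}(w)=z$ by a fixed-point argument: writing $\Phi_{rep}(w)=w-b_1\log w+C_{rep}+\psi(w)$ with $\psi(w)\to0$, the map $w\mapsto z+b_1\log w-C_{rep}-\psi(w)$ has derivative of size $O(1/|z|)$ on a disc of radius of order $\log|z|$ about $z$, hence is a contraction of that disc into itself once $\text{Im}\,z$ is large; its unique fixed point $w$ satisfies $\Phi_{rep}(w)=z$ and $|w-z|=O(\log|z|)=o(\text{Im}\,z)$, so $w\in V_+$. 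Running the same argument with $V_-$ when $\text{Im}\,z\le-L''$ shows that for $L''$ large the set $\{-1\le\text{Re}\,z\le1,\ |\text{Im}\,z|\ge L''\}$ lies in $\Phi_{rep}(V_+\cup V_-)$; since $\Phi_{att}$ is defined on all of $V_{\pm}$, the composition $E_F=\Phi_{att}\circ\Phi_{rep}^{-1}$ is defined there.

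For part (b), from $\Phi_{rep}(F(w))=\Phi_{rep}(w)+1$ and the injectivity of $\Phi_{rep}$ one gets $\Phi_{rep}^{-1}(z+1)=F\bigl(\Phi_{rep}^{-1}(z)\bigr)$ as long as $z$ and $z+1$ both lie in $\Phi_{rep}(V_{\pm})$; the estimate $|w-z|=o(\text{Im}\,z)$ together with $F(w)=w+1+O(1/w)$ shows that $F$ carries the sub-cone of $V_+$ on which this inverse was constructed back into $V_+$ (and similarly for $V_-$), so no point leaves the domain. Then, using $\Phi_{att}(F(w))=\Phi_{att}(w)+1$,
\beqq
E_F(z+1)=\Phi_{att}\bigl(\Phi_{rep}^{-1}(z+1)\bigr)=\Phi_{att}\bigl(F(\Phi_{rep}^{-1}(z))\bigr)=\Phi_{att}\bigl(\Phi_{rep}^{-1}(z)\bigr)+1=E_F(z)+1.
\eeqq
Hence $g(z):=E_F(z)-z$ is $1$-periodic on the strip, and $E_F(z)=z+g(z)$ extends holomorphically to $\{|\text{Im}\,z|\ge L''\}$ by the rule $E_F(z+n)=E_F(z)+n$, the consistency on overlaps being the identity theorem applied to the functional equation just derived.

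For part (c), since $g$ is holomorphic and $1$-periodic on $\{\text{Im}\,z\ge L''\}$ it factors through $q=e^{2\pi iz}$; as $\text{Im}\,z\to+\infty$ we have $q\to0$ and $g$ stays bounded (again by (d)), so Riemann's removable-singularity theorem yields a limit $c_{upper}$, and symmetrically a limit $c_{lower}$ at $-i\infty$. To evaluate $c_{upper}-c_{lower}$ I would substitute the expansions of both coordinates: for $\text{Im}\,z\to+\infty$ the point $w=\Phi_{rep}^{-1}(z)$ tends to $\infty$ through $V_+$, where the determinations of $\log w$ inherited from the attracting petal $\{\text{Re}\,w>L\}$ and from the repelling petal $\{\text{Re}\,w<-L\}$ agree, so $E_F(z)-z=\Phi_{att}(w)-\Phi_{rep}(w)\to C_{att}-C_{rep}$; for $\text{Im}\,z\to-\infty$ the point $w$ tends to $\infty$ through $V_-$, where continuing those two branches around $w=0$ (one through $\{\text{Re}\,w>L\}$, the other through $\{\text{Re}\,w<-L\}$) makes them differ by $2\pi i$, producing an extra term $2\pi ib_1$. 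Comparing the two limits gives the asserted identity $c_{upper}-c_{lower}=2\pi ib_1$ (the sign being fixed once the orientation conventions of Theorem \ref{paraequ-85} are pinned down).

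The one genuinely delicate step is this last computation: the constant $2\pi ib_1$ is purely a monodromy effect, so the proof must carefully record at which of the two ``gates'' $\pm i\infty$ the branches of $\log w$ used by $\Phi_{att}$ and $\Phi_{rep}$ disagree; everything else — the inverse-function estimate in (a), the invariance of the relevant cone under $F$ in (b), and the periodic extension — is routine bookkeeping built on Theorem \ref{paraequ-85}.
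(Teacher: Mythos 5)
Your proposal is correct, and it reproduces the classical Douady--Hubbard--Lavaurs/Shishikura argument: the paper itself states this theorem as background from the Fatou-coordinate theory (Theorem \ref{paraequ-85}, cited to \cite{Shishikura2000}) and gives no proof of its own, so there is nothing to diverge from. Your route --- solving $\Phi_{rep}(w)=z$ by a contraction built on the expansion $w-b_1\log w+const+o(1)$ (with Cauchy estimates supplying the smallness of the derivative of the $o(1)$ term), the functional equations giving $E_F(z+1)=E_F(z)+1$ and the periodic extension, and the removable-singularity argument in $q=e^{2\pi iz}$ together with the $2\pi i b_1$ branch-of-logarithm discrepancy between the upper and lower gates --- is exactly the standard proof, and the one delicate point (tracking which determinations of $\log w$ the two coordinates use at $\pm i\infty$) is correctly identified and handled.
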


\begin{definition} (Fundamental regions and quotient cylinders)
Let $\xi$ be a sufficiently large positive number, and $l$ be a vertical line with $l=\{w\in\cc:\ \text{Re}\,w=\xi\}$. Then $F(l)$ is on the right hand side of $l$, $F(l)$ and $l$ bound an open region, denoted by $S_{att}$, and $F$ is injective in a neighborhood of $\ol{S_{att}}$. The closed strip $\ol{S_{att}}$ is called a (an attracting) fundamental region for $F$. The quotient space $\ol{S_{att}}/\sim$, where $l\ni w\sim F(w)\in F(l)$, is a topological cylinder and is called attracting Ecalle-Voronin cylinder, denoted by $\mathcal{C}_{att}$. Since $F$ is analytic near $l$, the cylinder has a natural structure as a Riemann surface. Similarly, if $\xi$ is a sufficiently small negative number, $l=\{w\in\cc:\ \text{Re}\,w=\xi\}$,  $S_{rep}$,  $\ol{S_{rep}}$, and $\ol{S_{rep}}/\sim$ are defined similarly, the closed strip $\ol{S_{rep}}$ is called a (repelling) fundamental region for $F$, and the quotient space $\ol{S_{rep}}/\sim$ is said to be repelling Ecalle-Voronin cylinder, denoted by $\mathcal{C}_{rep}$. Furthermore, the Fatou coordinates induce isomorphisms from attracting/repelling cylinders onto $\cc/\zz$ via the natural projection
$\cc\to\cc/\zz$, denoted by $\text{mod}\ \zz$.

The horn map $E_F$ via $\text{mod}\ \zz$ induces  a map on $\cc/\zz$ defined only in the neighborhoods of both ends $\pm i\infty$. If $w\in\ol{S_{rep}}$ with $|\text{Im}\,z|$ sufficiently large, then its orbit will eventually land on $\ol{S_{att}}$. This induces a map from a neighborhood of an upper or lower end of $\mathcal{C}_{rep}$ to $\mathcal{C}_{att}$. This map is well-defined, where the map might be discontinuous when $w\in\partial S_{rep}$ or its orbit arrives in $\partial S_{att}$, since the quotient space is defined by the identification $w\sim F(w)$ on the boundary. And, this map is exactly the one induced by $E_F$ via the Fatou coordinates.

Since the Fatou coordinates are only determined up to additive constant, a normalization is needed for convenience. Take a critical point $cp$, a normalization is chosen such that $\Phi_{att}(cp)=0$. For $\Phi_{rep}$, a normalization is chosen so that $c_{upper}=0$, i.e.,
\beqq
E_F(z)=z+o(1)\ \text{as}\ \text{Im}\,z\to+\infty.
\eeqq

\end{definition}

\begin{definition}\label{renorm2020912-3}
For the original map $f$ with the parabolic fixed point at $z=0$, let $\tau(z)=-\frac{1}{a_2z}$, the attracting Fatou coordinate is $\Phi_{att,f}=\Phi_{att}\circ \tau$, the repelling Fatou coordinate is $\Phi_{rep,f}=\Phi_{rep}\circ \tau$, and the horn map $E_f=\Phi_{att,f}\circ \Phi_{rep,f}^{-1}=\Phi_{att}\circ \tau\circ (\Phi_{rep}\circ \tau)^{-1}=\Phi_{att}\circ \Phi_{rep}^{-1}$. And, the fundamental regions are $S_{att,f}=\tau^{-1}(S_{att})$ and $S_{rep,f}=\tau^{-1}(S_{rep})$, whose shapes are like ``croissant-shaped" regions whose horns point at the fixed point $0$. The horn map $E_f$ is induced by the orbits going from the horns of $S_{rep,f}$ to $S_{att,f}$.
\end{definition}

\begin{definition}
Let $\widehat{\cc}=\cc\cup\{\infty\}$ and $\text{Dom}(f)$ be the domain of a function $f$. A neighborhood of $f$ is defined by
\begin{align*}
&\mathcal{N}=\mathcal{N}(f;K,\vep)\\
=&\bigg\{g:\ \text{Dom}(g)\to\widehat{\cc}:\
K\subset\text{Dom}(g)\cap\text{Dom}(f)\ \text{is compact and}\ \sup_{z\in K}d_{\widehat{\cc}}(g(z),f(z))<\vep\bigg\},
\end{align*}
where $\vep$ is a positive constant and $d_{\widehat{\cc}}(\cdot,\cdot)$ is the hyperbolic metric on $\widehat{\cc}$. A sequence $\{f_n\}$ is called convergent to $f$ uniformly on compact sets if for any neighborhood $\mathcal{N}$ of $f$, there is an positive integer $n_0$ such that $f_n\in\mathcal{N}$ for $n>n_0$.
\end{definition}

The construction $f\rightsquigarrow E_f$ is continuous and holomorphic in the following sense.
\begin{theorem} (Continuity and holomorphic dependence)
\begin{itemize}
  \item[(a)] Let $f$ be a holomorphic map with a non-degenerate $1$-parabolic fixed point at $z=0$. Given a neighborhood $\mathcal{N}$ of its horn map $E_f$, there exists a neighborhood $\mathcal{N}'$ of $f$ such that if $g\in\mathcal{N}'$ and $g$ has a $1$-parabolic fixed point at $0$, then its horn map $E_g$ can be defined so that $E_g\in\mathcal{N}$.
\item[(b)] Suppose $f_{\ld}(z)$ is holomorphic in $(\ld,z)\in\Ld\times\mathcal{U}$, where $\Ld$ is a complex manifold and $\mathcal{U}=\text{Dom}(f_{\ld})\subset\widehat{\cc}$. Assume that $f_{\ld}$ always have a non-degenerate $1$-parabolic fixed point at $z=0$. Then for $\ld_{*}\in\Ld$ and an open set $\mathcal{V}\subset\cc$, whose closure is compact and contained in $\text{Dom}(E_{f_{\ld_{*}}})$, there exists a neighborhood $\Ld_1$ of $\ld_{*}$ in $\Ld$ such that $E_{f_{\ld}}(z)$ is defined and holomorphic in $\Ld_1\times\mathcal{V}$.
\end{itemize}

\end{theorem}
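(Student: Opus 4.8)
The plan is to reduce both assertions to the coordinate $w=\tau(z)=-1/(a_2z)$ near $\infty$, where the map becomes $F(w)=w+1+b_1/w+O(1/w^2)$, and to realise the attracting and repelling Fatou coordinates as locally uniform limits of holomorphic functions built from the iterates $F^{\circ n}$. Since $E_f=E_F$ by Definition~\ref{renorm2020912-3} and the passage $f\mapsto F$ is continuous (and holomorphic in the parameter of a holomorphic family, because $a_2=f_\ld''(0)/2$ is a nonvanishing holomorphic function of $\ld$), it suffices to prove continuity and holomorphic dependence for $F\mapsto E_F$. Once the Fatou coordinates are exhibited as such limits, parts (a) and (b) follow from the corresponding properties of the iterates together with uniform convergence estimates, via Weierstrass's theorem on locally uniform limits of holomorphic maps and Hurwitz's theorem for their inverses.

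Concretely, I would fix a base point $w_0$ with $\mathrm{Re}\,w_0>L$ and establish that
\[
\Phi_{att,F}(w)-\Phi_{att,F}(w_0)=\lim_{n\to\infty}\bigl(F^{\circ n}(w)-F^{\circ n}(w_0)\bigr)
\]
locally uniformly on the attracting petal; in this difference the divergent parts $n$ and $b_1\log n$ cancel, and its convergence is precisely the orbit estimate by which $\Phi_{att,F}$ is constructed (cf. Theorem~\ref{paraequ-85}(d)). The crucial point is that this estimate is \emph{uniform} over maps $G$ in a $C^0$-neighbourhood of $F$ that still have a non-degenerate $1$-parabolic fixed point at $0$: the attracting petal of $G$ contains any prescribed compact subset of the petal of $F$ once $G$ is close enough, and the orbit bookkeeping is unchanged, so $\Phi_{att,G}\to\Phi_{att,F}$ uniformly on compact subsets of the petal once the additive constant is pinned by the normalisation $\Phi_{att}(cp)=0$ at a marked critical point $cp$ — which moves continuously since $G'\to F'$ locally uniformly. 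The repelling coordinate is treated identically using the inverse branch of $F$ near $\{\mathrm{Re}\,w<-L\}$, its constant fixed so that $c_{upper}=0$; here $c_{upper}(G)$ depends continuously on $G$ because it is read off the (uniformly convergent) asymptotic expansion. For part (b) the same construction is run with a parameter: $f_\ld^{\circ n}$, and hence $F_\ld^{\circ n}$, is holomorphic in $(\ld,w)$, so the locally uniform limits defining $\Phi_{att,F_\ld}$ and $\Phi_{rep,F_\ld}$ are holomorphic in $(\ld,w)$ by Weierstrass.

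It remains to pass from the Fatou coordinates to $E_F=\Phi_{att,F}\circ\Phi_{rep,F}^{-1}$. Inverses of locally uniformly convergent sequences of injective holomorphic maps converge locally uniformly on compact subsets of the limiting image (Hurwitz preventing the images from collapsing), and composition preserves locally uniform convergence, so $E_G\to E_F$ uniformly on compact subsets of $\mathrm{Dom}(E_F)$; moreover, for $G$ close to $F$, the compact set $K$ of a given neighbourhood $\mathcal{N}=\mathcal{N}(E_f;K,\vep)$ still lies in $\mathrm{Dom}(E_G)$, by the uniform control of the repelling cylinder and of the transit map carrying the horns of $\ol{S_{rep}}$ into $\ol{S_{att}}$, whence $E_G\in\mathcal{N}$; this is part (a). For part (b), $\Phi_{rep,F_\ld}^{-1}$ is holomorphic in $(\ld,\cdot)$ by the holomorphic implicit function theorem with parameters, so $E_{F_\ld}$ is holomorphic wherever defined, and since $\ol{\mathcal{V}}$ is compact and contained in $\mathrm{Dom}(E_{f_{\ld_*}})=\mathrm{Dom}(E_{F_{\ld_*}})$, the same uniform control of petals, cylinders and transit map furnishes a neighbourhood $\Ld_1$ of $\ld_*$ on which $\mathcal{V}\subset\mathrm{Dom}(E_{F_\ld})$; thus $E_{f_\ld}(z)$ is defined and holomorphic on $\Ld_1\times\mathcal{V}$.

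The main obstacle is exactly this uniform convergence estimate: one must show that the incomplete Fatou coordinates $F^{\circ n}(w)-n-b_1\log n$ converge at a rate uniform over a whole $C^0$-neighbourhood of $F$ (respectively, uniformly for $\ld$ near $\ld_*$), and at the same time verify that the attracting and repelling petals, the quotient cylinders, and the orbit-transit map defining the horn map do not degenerate under perturbation. Everything else — Weierstrass, Hurwitz, and holomorphic dependence of inverses and of critical points — is soft once these uniform geometric estimates are in hand.
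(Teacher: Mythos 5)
The paper itself gives no proof of this theorem: it is quoted as background from the parabolic-bifurcation theory of Douady--Hubbard--Lavaurs--Shishikura (the surrounding statements are cited to \cite{Shishikura2000} and \cite{InouShishikura2016}), so there is no ``paper proof'' to compare against line by line. Your sketch follows exactly the standard route of that theory: pass to the coordinate $w=-1/(a_2z)$ (noting $a_2=f''(0)/2\neq0$ varies continuously, resp.\ holomorphically in $\ld$), realise the Fatou coordinates as locally uniform limits built from the iterates --- your difference formula $\Phi_{att,F}(w)-\Phi_{att,F}(w_0)=\lim_n\bigl(F^{\circ n}(w)-F^{\circ n}(w_0)\bigr)$ is correct, since $\Phi_{att}(w)-\Phi_{att}(w_0)=\Phi_{att}(F^{\circ n}(w))-\Phi_{att}(F^{\circ n}(w_0))$ and the logarithmic terms in the asymptotics cancel in the limit --- then get continuity/holomorphy of $E_F=\Phi_{att}\circ\Phi_{rep}^{-1}$ from Weierstrass and Hurwitz plus uniform non-degeneration of petals, cylinders and the transit map. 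This is sound, and part (b) via holomorphy of $(\ld,w)\mapsto F_\ld^{\circ n}(w)$ is the right mechanism. The caveat is the one you flag yourself: the entire analytic content of the theorem is the uniform estimate (uniform over a neighbourhood of $f$ among maps keeping a non-degenerate $1$-parabolic point at $0$, resp.\ uniform in $\ld$ near $\ld_*$) for the incomplete Fatou coordinates, for the domains of definition, and for the convergence of the normalising constants ($\Phi_{att}(cp)=0$ and $c_{upper}=0$, the latter requiring uniform control of $E_G(z)-z$ near the upper end, not just pointwise convergence on compacta); your proposal names these but does not establish them, so as written it is a correct reduction to, rather than a replacement for, the estimates carried out in \cite{Shishikura2000}. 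If you intend a self-contained proof, the piece to supply is precisely the uniform version of the orbit estimates behind Theorem \ref{paraequ-85} (e.g.\ a uniform bound $|G(w)-w-1|<1/4$ and $|G'(w)-1|<1/4$ on a fixed $\{|w|>R\}$ for all $G$ in the neighbourhood, from which uniform convergence of the incomplete Fatou coordinates and uniform lower bounds on the size of the petals and of the transit strips follow).
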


\subsection{Bifurcation of parabolic fixed points}

Consider a holomorphic function $f_0$ with a non-degenerate $1$-parabolic fixed point at $z=0$, and consider a perturbation of $f_0$ in a neighborhood of $0$, this perturbation function is denoted by $f$. Since $f_0$ is non-degenerate, $z=0$ has multiplicity $2$ as a solution to $f_0(z)=z$. So, $f$ has two fixed points near $0$. Without loss of generality, we can suppose that $z=0$ is a fixed point of $f$. The multiplier of $f$ at $z=0$ is close to $1$, it can be written as $e^{2\pi i\al}$ with $\al\in\cc$.

In the following discussions, assume $f$ can be written as follows:
\beq\label{equ2020821-1}
f(z)=e^{2\pi i\al}z+a_2z^2+O(z^3), \text{where}\ a_2\neq0,\ \al=\al(f)\ \text{is small and}\ |\arg\,\al|<\frac{\pi}{4}.
\eeq
Let $\si=\si(f)$ be another fixed point of $f$ near $0$. It is evident that if $\al(f)=0$, then $\si(f)=0$. By direct computation, $\si(f)$ has asymptotic expansion $\si(f)=-2\pi i\al/a_2+o(\al)$, when $f$ is convergent to $f_0$ in a fixed neighborhood of $0$, where $a_2=f^{\prime\prime}_0(0)/2$. For more details on the choice of $\al$, please refer to \cite{Shishikura2000}.

\begin{theorem} \cite{Shishikura2000} \cite[Theorem 2.1]{InouShishikura2016}\label{equ2020821-2}
Suppose $f_0$ has a non-degenerate $1$-parabolic fixed point at $z=0$. Then there exists a neighborhood $\mathcal{N}=\mathcal{N}(f_0;K,\vep)$ ($\text{int}\,K$ contains $0$) such that if $f\in\mathcal{N}$ and $f$ satisfies \eqref{equ2020821-1}, then the fundamental regions $S_{att,f}$, $S_{rep,f}$ are defined near those of $f_0$, except that the horns of
$S_{att,f}$ and $S_{rep,f}$ point to distinct fixed points $0$ and $\si(f)$ (if $\al(f)\neq0$). Moreover, the Fatou coordinates $\Phi_{att,f}$ and $\Phi_{rep,f}$ are also defined in a neighborhood of $\ol{S_{att,f}}\setminus\{0,\si(f)\}$ and $\ol{S_{rep,f}}\setminus\{0,\si(f)\}$ so that they induce isomorphisms from the quotient cylinders $\mathcal{C}_{att,f}$ and $\mathcal{C}_{rep,f}$ onto $\cc/\zz$. The horn map $E_f$ is defined similarly. After a suitable normalization, $\Phi_{att,f}$, $\Phi_{rep,f}$, and $E_f$ depend continuously and holomorphically on $f$.
\end{theorem}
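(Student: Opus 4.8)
\emph{Proof sketch.} The plan is the now-classical one of Douady--Hubbard--Lavaurs and Shishikura. First I would pass to a coordinate $w=\tau_f(z)$ that sends the fixed point $0$ to $\infty$, that reduces to $\tau(z)=-\frac{1}{a_2z}$ of Definition~\ref{renorm2020912-3} when $\al(f)=0$, and in which $f$ becomes a map $F=F_f$ that, along a long ``corridor'' joining the images of the two fixed points of $f$, differs from the translation $w\mapsto w+1$ by a uniformly small error. Here one uses Rouch\'e together with the implicit function theorem applied to $f(z)-z$ to see that $f$ has exactly the two fixed points $0$ and $\si(f)=-2\pi i\al/a_2+o(\al)$ near $0$, and one uses the hypothesis $|\arg\al|<\frac{\pi}{4}$ to guarantee that, for $\al\neq0$, this corridor has a controlled shape and length of order $1/|\al|$, so that orbits traverse it moving by roughly $+1$ at each step; as $\al\to0$ its two ends merge and one recovers the attracting and repelling petals of $f_0$. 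Since $K$ contains $0$ in its interior and $f\in\mathcal N$, Cauchy estimates on $K$ bound all Taylor coefficients of $f$ --- and with them $a_2$ and all the error terms above --- uniformly over $\mathcal N$, and this uniformity is what makes the rest of the construction go through locally uniformly.

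\emph{Building the coordinates.} On the attracting end of the corridor --- a croissant-shaped fundamental region $S_{att,f}$ converging as $\al\to0$ to an attracting fundamental region of $f_0$, with its horns now pointing to one of the two fixed points $0$ and $\si(f)$, as asserted --- I would solve the Abel equation $\Phi(F(w))=\Phi(w)+1$ exactly as in the parabolic case: after the auxiliary conjugacy $\zeta=w-b_1\log w$, which turns $F$ into $\widetilde{F}(\zeta)=\zeta+1+O(1/|\zeta|^2)$, put $\Phi_{att}=\lim_{n\to\infty}(\widetilde{F}^{\,n}-n)$, where $\widetilde{F}^{\,n}$ is the $n$-th iterate; the $n$-th telescoped increment is $O(1/n^2)$, so the limit exists, is holomorphic, and carries bounds uniform over $\mathcal N$, with asymptotics of the form $w-b_1\log w+\mathrm{const}+o(1)$ as in part~(d) of Theorem~\ref{paraequ-85}. (Equivalently one solves a contraction fixed-point equation for the bounded holomorphic correction of $w-b_1\log w$.) Passing back to the $w$- and then the $z$-coordinate and extending along orbits by the functional equation gives $\Phi_{att,f}$ on a neighbourhood of $\ol{S_{att,f}}\setminus\{0,\si(f)\}$, inducing an isomorphism of $\mathcal C_{att,f}$ onto $\cc/\zz$. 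The repelling coordinate $\Phi_{rep,f}$ on $S_{rep,f}$ is obtained the same way from the other end of the corridor using an inverse branch of $F$, its horns pointing to the other fixed point. Finally $E_f=\Phi_{att,f}\circ\Phi_{rep,f}^{-1}$, defined on the half-neighbourhoods of the ends of $\mathcal C_{rep,f}$, inherits the properties of the two coordinates.

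\emph{Continuity, holomorphic dependence, and the main obstacle.} Holomorphic dependence on $f$ is the soft half: the limit (or fixed point) defining $\Phi_{att}$ is a locally uniform limit of functions holomorphic jointly in $(f,w)$, so $\Phi_{att,f}(z)$, $\Phi_{rep,f}(z)$ and $E_f(z)$ are holomorphic in $(f,z)$; uniqueness up to an additive constant then lets one impose the normalizations $\Phi_{att,f}(cp)=0$ and $c_{upper}=0$ holomorphically. For continuity as $f\to f_0$ (hence $\al\to0$) I would combine the uniform estimates with Montel's theorem to get locally uniform convergence of $\Phi_{att,f}$ and $\Phi_{rep,f}$ on compact subsets of the petals of $f_0$: any subsequential limit solves the unperturbed Abel equation with the same normalization, hence equals $\Phi_{att,f_0}$, resp.\ $\Phi_{rep,f_0}$; composing gives $E_f\to E_{f_0}$. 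I expect this passage to the limit to be the main obstacle: although the corridor stretches to length $\sim1/|\al|\to\infty$, one must still show that $S_{att,f}$ and $S_{rep,f}$ converge to fundamental regions of $f_0$ and that the coordinates converge \emph{after} the chosen normalization --- i.e., one must set up the parabolic-implosion picture with care. This requires no idea beyond the geometric control of the corridor from the first step (and is carried out in detail in \cite{Shishikura2000}), but it is where all the delicate analysis lives.
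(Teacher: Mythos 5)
The statement you are proving is quoted in the paper from \cite{Shishikura2000} and \cite[Theorem 2.1]{InouShishikura2016}; the paper itself supplies no proof, so your sketch must be measured against the standard construction in those references. Your overall strategy (two fixed points by Rouch\'e, a corridor of length $\sim 1/|\al|$ controlled by $|\arg\al|<\pi/4$, uniform Cauchy estimates over $\mathcal N$, and continuity via normal families plus uniqueness of normalized solutions of the Abel equation) is the right frame, but the central step has a genuine gap: you define the perturbed attracting coordinate by the telescoping limit $\Phi_{att}=\lim_{n\to\infty}(\widetilde F^{\,n}-n)$ ``exactly as in the parabolic case.'' That limit construction is only available when $\al(f)=0$. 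For $\al(f)\neq 0$ --- and in particular for real $\al$, which the hypothesis $|\arg\al|<\pi/4$ allows --- neither fixed point attracts, and an orbit starting in $S_{att,f}$ does not converge to a fixed point within the region where your estimates hold: it crosses the gate between $0$ and $\si(f)$ in roughly $1/|\al|$ steps and then leaves. Concretely, in the coordinate $w=-1/(a_2z)$ the perturbed map is approximately $w\mapsto e^{-2\pi i\al}w+1$, i.e.\ a near-rotation about a point at distance $\sim 1/(2\pi|\al|)$, not a near-translation all the way out to infinity; so $\widetilde F^{\,n}-n$ does not converge, and the increments are not summable along the whole orbit. The same objection applies to your repelling coordinate via inverse branches.

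This is precisely why the perturbed Fatou coordinates are constructed differently in \cite{Shishikura2000} and \cite{InouShishikura2016}: either one uniformizes the quotient cylinder $\ol{S_{att,f}}/\!\sim$ (which is conformally $\cc/\zz$, so the isomorphism lifts to a coordinate satisfying $\Phi(F(w))=\Phi(w)+1$ on the fundamental region, exactly the structure the paper's definitions of $S_{att,f}$, $\mathcal C_{att,f}$ set up), or one writes down an explicit approximate Fatou coordinate adapted to \emph{both} fixed points (of the type $\frac{1}{2\pi i\al}\log\frac{z}{z-\si(f)}$, which degenerates to $-1/(a_2z)$ as $\al\to0$) and corrects it on the bounded strip by a fixed-point or quasiconformal argument that never uses infinite forward orbits. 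Your coordinate change ``that reduces to $\tau(z)=-1/(a_2z)$ when $\al(f)=0$'' gestures at this but is never specified, and without it the asymptotics $w-b_1\log w+\mathrm{const}+o(1)$ you invoke have no analogue. Your continuity and holomorphic-dependence paragraph is sound in spirit, but it presupposes a construction of $\Phi_{att,f}$, $\Phi_{rep,f}$ valid for $\al\neq0$; with the limit definition replaced by the fundamental-region construction (and the normalizations imposed there), the rest of your argument goes through as in the cited sources.
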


\begin{theorem} \cite{Shishikura2000} \cite[Theorem 2.2]{InouShishikura2016} \label{equ2020821-3}
Let $f$ be a holomorphic function defined in the previous theorem with $f^{\prime}(0)\neq1$. Then for any orbit starting from $\ol{S_{att,f}}\setminus\{0,\si(f)\}$ eventually lands on $\ol{S_{rep,f}}\setminus\{0,\si(f)\}$. Such a correspondence induces an isomorphisms $\chi_f$ from $\mathcal{C}_{att,f}$ onto $\mathcal{C}_{rep,f}$. By identifying these cylinders with $\cc/\zz$ by the Fatou coordinates, $\chi_f$ is written as
\beq
\chi_f(z)=z-\frac{1}{\al(f)}\ \text{on}\ \cc/\zz,
\eeq
where the horn map $E_f$ is normalized so that $E_f(z)=z+o(1)$ as $\text{Im}\,z\to+\infty$.

The composition $h=\chi_f\circ E_f$ corresponds to the first return map of $f$ to the region $\ol{S_{rep,f}}\setminus\{0,\si(f)\}$ near the horns, i.e., if $z\in\ol{S_{rep,f}}\setminus(\{0,\si(f)\}\cup\text{``inner boundary"})$ and $w=\Phi_{rep,f}(z)\in\cc/\zz$ has sufficiently large $|\text{Im}\,w|$, then there is a smallest $n\geq1$ such that $f^n(z)\in \ol{S_{rep,f}}\setminus\{0,\si(f)\}$ such that $\Phi_{rep,f}(f^n(z))=h(w)=\chi_f\circ E_f(w)$ in $\cc/\zz$.
\end{theorem}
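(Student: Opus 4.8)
My plan is to argue in three stages: first, that the orbit correspondence through the ``gate'' between the two fixed points is well defined and yields a biholomorphism $\chi_f\colon\mathcal C_{att,f}\to\mathcal C_{rep,f}$; second, that in Fatou coordinates this biholomorphism is the translation $z\mapsto z-1/\al(f)$; and third, that $h=\chi_f\circ E_f$ is the stated first--return map.

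\emph{Stage 1 (the map $\chi_f$).} By Theorem \ref{equ2020821-2} the perturbed fundamental regions $\ol{S_{att,f}}$ and $\ol{S_{rep,f}}$ lie near those of $f_0$, with their horns pointing at the distinct fixed points $0$ and $\si(f)$, so the attracting and repelling croissants overlap along the ``gate'' between $0$ and $\si(f)$. In the coordinate $\Phi_{att,f}$ every step of the forward orbit of a point of $\ol{S_{att,f}}\setminus\{0,\si(f)\}$ adds $1$, so the orbit leaves the attracting petal through its far side and, by the geometry just recalled, meets $\ol{S_{rep,f}}\setminus\{0,\si(f)\}$; this is the perturbed substitute for convergence to the parabolic point. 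Sending each orbit to its first such landing gives a map on orbits that is locally constant modulo the identifications $w\sim f(w)$ and holomorphic where $f$ is univalent, hence descends to a holomorphic map $\chi_f\colon\mathcal C_{att,f}\to\mathcal C_{rep,f}$; since the number of transit steps is uniformly controlled near the two ends, $\chi_f$ is proper of degree one, thus a biholomorphism. Transporting both cylinders onto $\cc/\zz$ by the Fatou coordinates turns $\chi_f$ into a biholomorphism of $\cc/\zz$, so $\chi_f(z)=z+c(f)$ or $\chi_f(z)=-z+c(f)$; a local analysis at $0$ and $\si(f)$ shows the two ends of $\mathcal C_{att,f}$ go to the corresponding ends of $\mathcal C_{rep,f}$ rather than being interchanged, so $\chi_f(z)=z+c(f)$, with $c(f)\in\cc/\zz$ depending continuously and holomorphically on $f$ by Theorem \ref{equ2020821-2}.

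\emph{Stage 2 (the value $c(f)=-1/\al(f)$).} I would use the linearizable case as a model. For $f$ linearizable at $0$ — which includes the dense set $\{|f'(0)|\neq1\}$ by Koenigs, and density is all that is needed — pick a Koenigs coordinate $\phi$ at $0$, so $\phi\circ f\circ\phi^{-1}(\zeta)=e^{2\pi i\al}\zeta$; then $\Psi(\zeta)=\frac{1}{2\pi i\al}\log\zeta$ (modulo a $1$-periodic function) is a Fatou coordinate in the $\zeta$-plane, multivalued on a punctured neighborhood of $0$, and its analytic continuation once around $0$ changes by exactly $\frac{2\pi i}{2\pi i\al}=\frac1\al$ since $\log f'(0)=2\pi i\al$ by the definition of $\al$. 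A transiting orbit runs from $\ol{S_{att,f}}$, winds once through the gate around the fixed point, and lands in $\ol{S_{rep,f}}$; reading this through the Fatou coordinates, the functional equations $\Phi_{att,f}\circ f=\Phi_{att,f}+1$ and $\Phi_{rep,f}\circ f=\Phi_{rep,f}+1$ contribute only integer shifts, the base branches of $\Phi_{att,f}$ and $\Phi_{rep,f}$ contribute the constant that the normalization $E_f(z)=z+o(1)$ as $\mathrm{Im}\,z\to+\infty$ pins down, and the single winding contributes $-\frac1\al$ — the sign being forced by $|\arg\al|<\pi/4$, which puts $\si(f)=-2\pi i\al/a_2+o(\al)$ in a definite direction from $0$ and so fixes the winding sense. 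Carrying out this bookkeeping in $\cc/\zz$ leaves $\chi_f(z)=z-1/\al$ for these $f$, and since $c(f)$ and the class of $-1/\al(f)$ both vary continuously with $f$ and agree on a dense set, the formula holds for every admissible $f$. (The same constant is visible in the rougher model where $f$ is replaced near $\{0,\si(f)\}$ by the time--one map of $\dot z=a_2\,z\,(z-\si(f))$, whose Fatou coordinate $\frac{1}{a_2\si(f)}\log\frac{z-\si(f)}{z}$ has monodromy $\frac{2\pi i}{a_2\si(f)}=-\frac1\al+o(1/\al)$ — confirming it is $\al(f)$, not the large number of iterates, that governs the constant.)

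\emph{Stage 3 (first--return description).} Unwinding the definitions: if $z\in\ol{S_{rep,f}}$ avoids $\{0,\si(f)\}$ and the inner boundary and $w=\Phi_{rep,f}(z)$ has large $|\mathrm{Im}\,w|$, then by the construction of $E_f$ the forward orbit of $z$ first meets $\ol{S_{att,f}}$ at $f^{k_1}(z)$ with $\Phi_{att,f}(f^{k_1}(z))=E_f(w)$, and by the construction of $\chi_f$ it then first meets $\ol{S_{rep,f}}$ again at $f^{k_1+k_2}(z)$ with $\Phi_{rep,f}(f^{k_1+k_2}(z))=\chi_f(E_f(w))=h(w)$ in $\cc/\zz$; since the Fatou coordinate increases by $1$ at each step of both passages, $n=k_1+k_2$ is the least $n\geq1$ with $f^n(z)\in\ol{S_{rep,f}}\setminus\{0,\si(f)\}$, which is the asserted description. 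I expect the main obstacle to be Stage 2: upgrading the ``monodromy $=1/\al$'' picture to the exact identity — in particular, showing that the only non--integer contribution surviving in $\cc/\zz$ is $-1/\al$, making all the petal and transit estimates uniform over the neighborhood $\mathcal N$ of $f_0$, and justifying the reduction to a dense set of linearizable parameters.
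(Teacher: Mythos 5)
First, a point of reference: the paper contains no proof of Theorem \ref{equ2020821-3}; it is quoted verbatim from \cite{Shishikura2000} and \cite[Theorem 2.2]{InouShishikura2016}. So your proposal can only be measured against the standard argument, whose architecture you do reproduce (transit through the gate, the fact that an isomorphism of $\cc/\zz$ preserving the ends is a translation, the monodromy $1/\al$ of the coordinate near the fixed point, normalization by the horn map). The issue is that the central analytic step is asserted rather than proved.

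The gap is Stage 1. The claim that an orbit leaving $\ol{S_{att,f}}$ ``meets $\ol{S_{rep,f}}$ by the geometry just recalled'' is precisely the content to be established, and it does not follow from Theorem \ref{equ2020821-2}, which only supplies fundamental regions and Fatou coordinates in neighborhoods of $\ol{S_{att,f}}$ and $\ol{S_{rep,f}}$, with no control of the dynamics in the gate between $0$ and $\si(f)$. The standard proof constructs a perturbed Fatou coordinate on a region containing the whole gate, e.g.\ by comparing $f$ with the time-one map of $\dot z=2\pi i\al\,z(1-z/\si)$, whose coordinate $\frac{1}{2\pi i\al}\log\frac{z}{z-\si}$ sends the gate to a strip of width comparable to $|1/\al|$ on which $f$ is conjugate to $w\mapsto w+1$ up to small errors. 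This is exactly where the hypothesis $|\arg\al|<\tfrac{\pi}{4}$ does its work (it makes $\mathrm{Re}(1/\al)$ dominate, so forward orbits actually cross the strip), and it simultaneously delivers the transit, the uniform bound on transit times that your ``proper of degree one'' claim needs, the continuity of $f\mapsto\chi_f$ that your density argument in Stage 2 presupposes (Theorem \ref{equ2020821-2} asserts continuity of $\Phi_{att,f}$, $\Phi_{rep,f}$, $E_f$, not of $\chi_f$), and the bookkeeping that pins the constant to $-1/\al$. Your Stage 1 never uses $|\arg\al|<\tfrac{\pi}{4}$: as written, the same words would apply to $\al$ with $\arg\al$ near $\tfrac{\pi}{2}$, where $0$ becomes attracting, orbits starting in $\ol{S_{att,f}}$ converge to $0$ and never reach $\ol{S_{rep,f}}$, and the conclusion is false. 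Stages 2 and 3 are reasonable outlines (the reduction to linearizable multipliers plus continuity is a legitimate device, and the first-return unwinding is routine), but both rest on the same missing gate estimates, so the proposal as it stands is not a proof.
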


\subsection{Parabolic and near-parabolic renormalization}

\begin{definition} \cite{InouShishikura2016}
Denote by $\text{Exp}^{\sharp}(z)=e^{2\pi iz}$ and $\text{Exp}^{\flat}(z)=e^{-2\pi iz}$. Both functions induce isomorphisms from $\cc/\zz$ onto $\cc^*=\cc\setminus\{0\}$;
$\text{Exp}^{\sharp}$ sends upper end $+i\infty$ to $0$ and lower end $-i\infty$ to $\infty$, and for $\text{Exp}^{\flat}$, the role of the ends is interchanged.

Suppose $f$ has a non-degenerate parabolic fixed point at $0$, the parabolic renormalization is defined bo be
\beqq
\mathcal{R}_0f=\mathcal{R}^{\sharp}_0f
=\text{Exp}^{\sharp}\circ E_f\circ (\text{Exp}^{\sharp})^{-1},
\eeqq
where $E_f$ is the horn map of $f$, and normalized as $E_f(z)=z+o(1)$ as $\text{Im}\,z\to+\infty$. Then $\mathcal{R}_0f$ extends holomorphically to $0$ and $\mathcal{R}_0f(0)=0$, $(\mathcal{R}_0f)^{\prime}(0)=1$. So, $\mathcal{R}_0f$ has a $1$-parabolic fixed point at $0$. Similarly, the parabolic renormalization for lower end is defined as
\beqq
\mathcal{R}^{\flat}_0f
=c\cdot\text{Exp}^{\flat}\circ E_f\circ (\text{Exp}^{\flat})^{-1},
\eeqq
where $c\in\cc^*$ is chosen so that $(\mathcal{R}^{\flat}_0f)^{\prime}(0)=1$.
\end{definition}

\begin{definition} \cite{InouShishikura2016}
Suppose $f(z)=e^{2\pi i\al}z+O(z^2)$  has fundamental domains, and a return map $h=\chi_f\circ E_f$ is defined as in Theorems \ref{equ2020821-2} and \ref{equ2020821-3}, where $\al\neq0$ and $f^{\prime\prime}(0)\neq0$. The near-parabolic renormalization (or also called cylinder renormalization) is defined by
\beqq
\mathcal{R}f=\mathcal{R}^{\sharp}f
=\text{Exp}^{\sharp}\circ \chi_f\circ E_f\circ (\text{Exp}^{\sharp})^{-1}.
\eeqq
Then $\mathcal{R}f$ extends to $0$ and $\mathcal{R}f(0)=0$, $(\mathcal{R}f)^{\prime}(0)=e^{-2\pi i\tfrac{1}{\al}}$. For the lower end, set $\mathcal{R}^{\flat}f
=\text{Exp}^{\flat}\circ \chi_f\circ E_f\circ (\text{Exp}^{\flat})^{-1}$.
\end{definition}

\begin{definition}\cite{InouShishikura2016}
For $f(z)=e^{2\pi i\al}z+O(z^2)$ with $\al\neq0$, suppose $f(z)=e^{2\pi i\al}h(z)$ with $h(0)=0$ and $h'(0)=1$, $f$ is related to the pair $(\al,h)$. The near-parabolic renormalization can be expressed as a skew product:
\beqq
\mathcal{R}:\ (\al,h)\to\left(-\frac{1}{\al}\text{mod}\,\zz,\ \mathcal{R}_{\al}h\right),
\eeqq
where $\mathcal{R}_{\al}h=E_{(e^{2\pi i\al})h}$ is the renormaliztion in fiber direction.
\end{definition}

\begin{question}
The existence of invariant function spaces under the parabolic or near-parabolic renormalization is a basic problem.
\end{question}

\subsection{A class of functions} \label{r0function2021-9-2-1}

The following class of functions is invariant under parabolic renormalization $\mathcal{R}_0$, but not invariant for the fiber renormalization $\mathcal{R}_{\al}$ for $\al\neq0$, since the simple covering structure of horn map is destroyed, implying the possible existence of infinitely many critical values, or the assumption of the branched covering does not hold \cite{Shishikura1998}.

\begin{definition} (Class $\mathcal{F}_0$)\cite{Shishikura1998}
\begin{equation*}
\mathcal{F}_{0}=\left\{f: \text{Dom}(f) \rightarrow \mathbb{C}\ \bigg | \begin{array}{l}
0\in\text{Dom}(f) \text { open}\subset\cc,\  f\ \text{is holomorphic in Dom}(f), \\
f(0)=0,\ f'(0)=1,\ f:\ U_f\subset\text{Dom}(f)\setminus\{0\}\to\cc^{*}=\cc\setminus\{0\}\\
 \text{is a branched covering map with a unique critical value}\ cv_{f},\\
\text{all critical points in}\ U_f\ \text{are of local degree}\ 2
\end{array}\right\}.
\end{equation*}
\end{definition}

\begin{lemma} \cite[Chap. VIII, Sec. 4 and 5]{Gamelin2001} (Inverse Function Theorem)
Suppose $\rho>0$ is a positve number, $f(z)$ is holomorphic for $z\in\ol{\cd}(z_0,\rho)$ satisfying $f(z_0)=w_0$, $f^{\prime}(z_0)\neq0$, and $f(z)\neq w_0$ for $z\in\ol{\cd}(z_0,\rho)\setminus\{z_0\}$. Let $\de>0$ be chosen such that $|f(z)-w_0|\geq\de$ for $|z-z_0|=\rho$. Then for each $w$ such that $|w-w_0|<\de$, there is a unique $z_w$ satisfying $|z_w-z_0|<\rho$ and $f(z_w)=w$. Such a $ z_{w}$ is given by the explicit expression
\[
z_{w}=f^{-1}(w)=\frac{1}{2 \pi i} \int_{\left|z-z_{0}\right|=\delta} \frac{z f^{\prime}(z)}{f(z)-w} d z,
\]
in particular, $f^{-1}$ is analytic on its domain of definition.
\end{lemma}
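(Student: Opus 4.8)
The plan is to deduce the statement from the argument principle, Rouch\'e's theorem, and the residue theorem, in the classical manner. First I would set $g(z):=f(z)-w_0$, which is holomorphic on $\ol{\cd}(z_0,\rho)$, vanishes at $z_0$, and, since $f'(z_0)\neq0$, has a \emph{simple} zero there; by hypothesis it has no other zero on $\ol{\cd}(z_0,\rho)$, and on the circle $|z-z_0|=\rho$ one has $|g(z)|\geq\de>0$, so $g$ has no zero on that circle. Hence, by the argument principle applied to $g$ on $|z-z_0|=\rho$, the number of zeros of $g$ in the open disk $\{|z-z_0|<\rho\}$, counted with multiplicity, equals $1$.

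Next, fix $w$ with $|w-w_0|<\de$. On $|z-z_0|=\rho$,
\[
\bigl| (f(z)-w) - (f(z)-w_0) \bigr| = |w_0-w| < \de \leq |f(z)-w_0|,
\]
so Rouch\'e's theorem shows that $f(z)-w$ has exactly one zero, counted with multiplicity, in $\{|z-z_0|<\rho\}$; call it $z_w$. Since the total multiplicity is $1$, $z_w$ is a simple zero, so $f(z_w)=w$, $f'(z_w)\neq0$, and $z_w$ is the unique preimage of $w$ under $f$ in the open disk (there is none on the boundary either, since there $|f(z)-w|\geq\de-|w-w_0|>0$). This gives existence and uniqueness. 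For the integral formula, I would apply the residue theorem to $z\mapsto \dfrac{zf'(z)}{f(z)-w}$ on $|z-z_0|=\rho$: its only singularity inside is the simple zero $z_w$ of $f(z)-w$, where the residue equals $z_w f'(z_w)/f'(z_w)=z_w$, whence
\[
\frac{1}{2\pi i}\int_{|z-z_0|=\rho}\frac{zf'(z)}{f(z)-w}\,dz = z_w = f^{-1}(w).
\]
Finally, analyticity of $f^{-1}$ on $\{|w-w_0|<\de\}$ follows because the integrand is jointly continuous in $(z,w)$ and holomorphic in $w$ there (the denominator never vanishes for $z$ on the contour), so one may differentiate under the integral sign (or invoke Morera's theorem).

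I do not expect a genuine obstacle here: the statement is a textbook fact, and every step is routine complex analysis. The only points requiring care are cosmetic rather than mathematical. The contour of integration in the displayed formula should be read as the circle $|z-z_0|=\rho$ — the circle on which the standing estimate $|f(z)-w_0|\geq\de$ is imposed — rather than $|z-z_0|=\de$ as literally printed; and ``$f$ holomorphic on $\ol{\cd}(z_0,\rho)$'' should be understood as holomorphic on an open neighbourhood of the closed disk, so that $f'$ on $|z-z_0|=\rho$ and the contour integrals are legitimate. With these readings the proof above goes through verbatim.
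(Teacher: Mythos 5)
Your proof is correct and is the standard textbook argument — argument principle to count the simple zero of $f-w_0$, Rouch\'e to get exactly one preimage $z_w$ of $w$, residue theorem for the integral representation, and Morera or differentiation under the integral for analyticity of $f^{-1}$. The paper does not give its own proof; this lemma is cited verbatim from Gamelin, and what you wrote is essentially that reference's proof. You also correctly flagged the misprint: the contour in the displayed formula must be $|z-z_0|=\rho$ (where the lower bound $|f(z)-w_0|\geq\de$ holds and hence $|f(z)-w|\geq\de-|w-w_0|>0$), not $|z-z_0|=\de$ as printed.
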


\begin{definition} \cite[Chap. VIII, Sec. 4 and 5]{Gamelin2001}
 (i) We call $f$ attains $m$ times the value $w_{0}$ at $z_{0}$ if the function $f(z)-w_{0}$ has a zero of order $m$ at $z=z_{0}$.
(ii) We call $f$ attains $m$ times the value $w_{0}$ on an open subset of $\cc$ if the function $f(z)-w_{0}$ has $m$ zeros on this open subset counting multiplicity.
\end{definition}

\begin{lemma} \cite[Chap. VIII, Sec. 4 and 5]{Gamelin2001}
 If $f$ achieves $m$ times the value $w_{0}$ at $z_{0},$ then there exists $\rho>0$ and $r>0$ such that for every $w \in \cd(w_{0},\rho)$, $f$ achieves $m$ times the value $w$ on $\cd(z_{0},r)$.
\end{lemma}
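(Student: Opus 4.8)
The plan is to derive the statement from the argument principle, equivalently from Rouch\'e's theorem, applied to the holomorphic function $g(z)=f(z)-w_0$. Since $f$ achieves $m$ times the value $w_0$ at $z_0$, by definition $g$ has a zero of order $m$ at $z_0$; in particular $g\not\equiv 0$ on a neighborhood of $z_0$, so the zeros of $g$ are isolated. First I would fix $r>0$ small enough that $\ol{\cd}(z_0,r)$ is contained in the open set where $f$ is holomorphic and that $g$ has no zero on $\ol{\cd}(z_0,r)\setminus\{z_0\}$; this is possible precisely because the zeros are isolated. Then $|g|=|f-w_0|$ is a strictly positive continuous function on the compact circle $\{|z-z_0|=r\}$, hence attains a positive minimum $\rho:=\min_{|z-z_0|=r}|f(z)-w_0|>0$.

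Next, for every $w\in\cd(w_0,\rho)$ and every $z$ with $|z-z_0|=r$ one has
\[
\big|(f(z)-w)-(f(z)-w_0)\big|=|w_0-w|<\rho\leq|f(z)-w_0|,
\]
so Rouch\'e's theorem applies on the disk $\cd(z_0,r)$ and shows that $f(\cdot)-w$ and $f(\cdot)-w_0$ have the same number of zeros in $\cd(z_0,r)$, counted with multiplicity. The function $f(\cdot)-w_0$ has exactly $m$ such zeros there (the zero of order $m$ at $z_0$, and no other by the choice of $r$), hence $f(\cdot)-w$ also has exactly $m$ zeros in $\cd(z_0,r)$; that is, $f$ achieves $m$ times the value $w$ on $\cd(z_0,r)$, as claimed.

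An equivalent route, more in the spirit of the preceding Inverse Function Theorem statement, is to work directly with the winding number: the zero-counting integral
\[
N(w)=\frac{1}{2\pi i}\int_{|z-z_0|=r}\frac{f'(z)}{f(z)-w}\,dz
\]
is well defined for $w\in\cd(w_0,\rho)$ because the denominator does not vanish on the contour, it is integer valued, and it depends continuously on $w$ since the integrand varies continuously in $w$ uniformly on the compact contour. A continuous integer-valued function on the connected set $\cd(w_0,\rho)$ is constant, so $N(w)\equiv N(w_0)=m$, giving the conclusion.

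The argument is routine and I do not expect a genuine obstacle; the only points needing a word of care are that $f$ must be non-constant near $z_0$ so that $f-w_0$ has isolated zeros and a suitable radius $r$ exists (this is automatic from the hypothesis that the zero has finite order $m$), and that $r$ be taken small enough for $\ol{\cd}(z_0,r)$ to lie inside $\text{Dom}(f)$.
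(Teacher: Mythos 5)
Your proof is correct. The paper does not supply its own proof of this lemma; it simply cites Gamelin, whose Chapter VIII treats exactly this via the argument principle and Rouch\'e's theorem, so your argument reproduces the standard route of the cited reference. Both of your variants (Rouch\'e on the circle $|z-z_0|=r$, and the continuity of the integer-valued zero-counting integral $N(w)$) are sound, and the care you take in choosing $r$ so that $z_0$ is the only zero of $f-w_0$ in $\ol{\cd}(z_0,r)$ and so that $\ol{\cd}(z_0,r)\subset\mathrm{Dom}(f)$ is exactly what is needed.
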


Next, a Riemann surface, consisting of ``sheets", which are copies of the plane $\cc$, and cutting along several slits and glued together along pairs of slits, is constructed. The action of the map $f\in\mathcal{F}_0$ on each sheet is a projection onto $\cc$.

Consider the function defined on the complex plane:
\beq
P(z)=z(1+z)^m, (m\geq2,\ m\in\mathbb{N}),\ z\in\cc.
\eeq
Now, the map $P(z)=z(z+1)^m$ will be considered as an example to illustrate the construction of the Riemann surface. For convenience, the domain of $P$ is $\cc$.

\begin{proposition}\label{paraequ-2}
The derivative of $P(z)$ is $(1+z)^{m-1}(1+(m+1)z)$. The critical points are $-1$ with multiplicity $m-1$, and $-\tfrac{1}{m+1}$ with multiplicity $1$. The critical value corresponding to $-1$ is $0$, and the critical value corresponding to $-\tfrac{1}{m+1}$ is $-\tfrac{m^m}{(m+1)^{m+1}}$.
\end{proposition}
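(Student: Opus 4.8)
The plan is to establish all four assertions by direct computation with the explicit polynomial $P(z)=z(1+z)^m$. First I would differentiate by the product rule, $P'(z)=(1+z)^m+mz(1+z)^{m-1}$, and then pull out the common factor $(1+z)^{m-1}$ to obtain $P'(z)=(1+z)^{m-1}\bigl((1+z)+mz\bigr)=(1+z)^{m-1}(1+(m+1)z)$, which is the claimed form of the derivative.

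Second, since the critical points of a polynomial are exactly the zeros of its derivative, the factored expression immediately shows they are $z=-1$ and $z=-\tfrac{1}{m+1}$. Reading off the orders of vanishing of $P'$, the factor $(1+z)^{m-1}$ makes $-1$ a zero of order $m-1$, while the linear factor $1+(m+1)z$ makes $-\tfrac{1}{m+1}$ a simple zero; because $m\geq 2$ the two points are distinct, so the multiplicities are $m-1$ and $1$, as stated.

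Third, I would substitute the two critical points back into $P$ to read off the critical values. Evaluating at $z=-1$ gives $P(-1)=(-1)(1-1)^m=0$. Evaluating at $z=-\tfrac{1}{m+1}$, where $1+z=\tfrac{m}{m+1}$, gives $P\bigl(-\tfrac{1}{m+1}\bigr)=-\tfrac{1}{m+1}\bigl(\tfrac{m}{m+1}\bigr)^m=-\tfrac{m^m}{(m+1)^{m+1}}$, which is the last assertion.

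I do not expect any genuine obstacle here: every step is elementary algebra. The only minor points deserving attention are verifying that the two critical points are genuinely distinct (which uses $m\geq 2$, valid in our setting) and correctly tracking the exponents when simplifying the second critical value. These computations are precisely what supply the combinatorial data — one distinguished simple critical point $-\tfrac{1}{m+1}$ together with the degenerate critical point $-1$ lying over the value $0$ — that will be used in building the Riemann surface attached to $P$ and in placing $P$ in the class $\mathcal{F}_0$.
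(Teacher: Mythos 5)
Your computation is correct and is exactly the routine calculation the proposition rests on; the paper states the result without explicit proof precisely because it is this one-line product-rule-and-factor computation. Nothing differs from the implicit argument in the paper.
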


Set
\beq\label{critip2011-8-30-1}
cp_{\tiny{P}}:=-\frac{1}{m+1}\ \text{and}\  cv_{\tiny{P}}:=-\frac{m^m}{(m+1)^{m+1}}=-\frac{1}{m+1}\bigg(\frac{m}{m+1}\bigg)^m.
\eeq
 It is evident that $0>cv_{\tiny{P}}>cp_{\tiny{P}}$.

\begin{remark}
For $m=2$, the function is $P(z)=z(1+z)^2$ (Inou-Shishikura class), it has a multiple fixed point
$0$, a critical point $-1/3$ with the corresponding critical value $-4/27$, and a second critical point $-1$ with corresponding critical value $0$. This class of functions plays an important role in the proof of the existence of quadratic polynomial maps with positive area Julia sets.
\end{remark}

Suppose the critical value is $cv=cv_P\in\rr_{-}$.
Denote by
\beqq
 \Ga_b=(-\infty,cv],\ \Ga_a=(cv,0),\ \Ga_c=(0,+\infty),
\eeqq
and
\beqq
\cc_{slit}=\cc\setminus(\{0\}\cup\Ga_b\cup\Ga_c),\ \cc_{slit\pm}=\cc\cap\{z:\ \pm\mbox{Im}z>0\}.
\eeqq
An illustration diagram for  $\Ga_b$, $\Ga_a$, and $\Ga_c$ is given in Figure \ref{illustration-three-1}.
\begin{figure}[htbp]
\begin{center}
\scalebox{0.5}{ \includegraphics{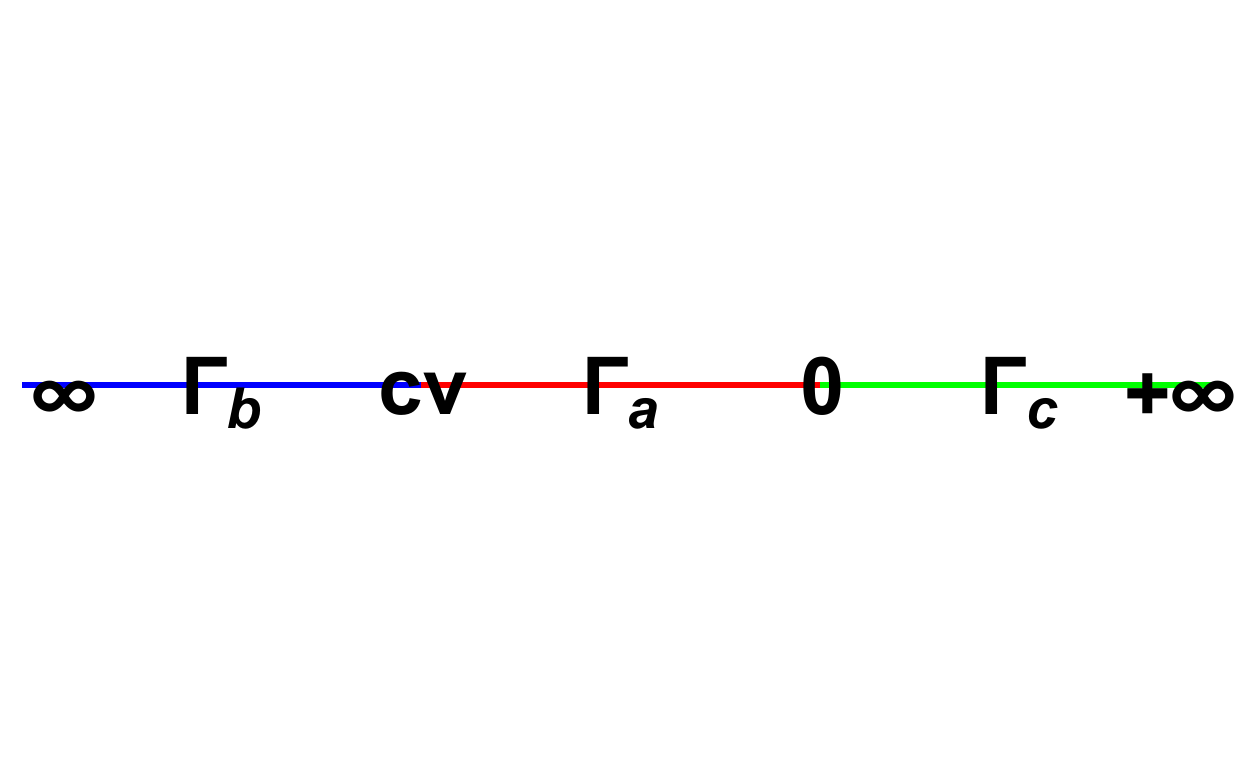}}
\renewcommand{\figure}{Fig.}
\caption{The illustration diagram of  $\Ga_b$, $\Ga_a$, and $\Ga_c$, where blue represents $\Ga_b$, red represents $\Ga_a$, and green represents $\Ga_c$.
}\label{illustration-three-1}
\end{center}
\end{figure}
By the definition of $\cc_{slit}$, it is simply connected and does not contain $0$ and the critical value, $P^{-1}(\cc_{slit})$ consists of connected components, denoted by $(\mathcal{U}_j)_{j\in I}$, where $I=\{1,2,...,m+1\}$ is the index set, each $\mathcal{U}_j$ is mapped by $P$ isomorphically onto $\cc_{slit}$. Denote by
\beq\label{equ2021-9-12-3}
\mathcal{U}_{j\pm}=P^{-1}(\cc_{slit\pm})\cap\mathcal{U}_{j},
\eeq
\beq\label{equ2021-9-12-5}
\ga_{aj}=P^{-1}(\Ga_a)\cap\mathcal{U}_{j},\ \ga_{bj\pm}=P^{-1}(\Ga_b)\cap\overline{\mathcal{U}}_{j\pm},\ \mbox{and}\ \ga_{cj\pm}=P^{-1}(\Ga_c)\cap\overline{\mathcal{U}}_{j\pm}.
\eeq

Since $P$ is homeomorphic near $0$, there is a component containing $0$, denoted by $\mathcal{U}_1$, and $\ga_{c1+}=\ga_{c1-}$. Further, since $P$ has a unique critical value $cv_P$ and the critical point $cp_P$ is of local degree $2$, where $cv_P$ and $cp_P$ are introduced in \eqref{critip2011-8-30-1}, there is another component, denoted by $\mathcal{U}_2$, where $\ga_{b1+}=\ga_{b2-}$ and $\ga_{b2+}=\ga_{b1-}$. And, $P^{-1}(cv)\cap\ga_{b1+}\cap\ga_{b2+}$ is a critical point, denoted by $cp=cp_P$.

Set
\beqq
\mathcal{U}_{12}:=\mathcal{U}_{1}\cup\mathcal{U}_{2}\cup\ga_{b1+}\cup\ga_{b2+}.
\eeqq
So, $f:\ \mathcal{U}_{12}\to\cc_{slit}\cup\Ga_b=\cc\setminus(\{0\}\cup\Ga_c)$ is a branched covering of degree $2$ over the critical value $cv$.

The two components $\mathcal{U}_1$ and $\mathcal{U}_2$ are attached to the critical point $-\tfrac{1}{m+1}$.
Note that the multiplicity of $-1$ is $m-1$, there are $m-1$ connected components of $P^{-1}(\cc_{slit})$ attached to the point $-1$, denoted by $\mathcal{U}_3,...,\mathcal{U}_{m+1}$, where $\mathcal{U}_3,...,\mathcal{U}_{m+1}$ are around the center $-1$ along the counterclock-wise direction with the increasing order.

By the properties of the polynomial $P$, $\ga_{a1}=(cp,0)$ and $\ga_{a2}=(-1,cp)$. Further, the connected components of  $P^{-1}(\Ga_a)\setminus(\{-1\}\cup\ga_{a1}\cup\ga_{a2})$ can be denoted by  $\ga_{a3}$, $\ga_{a4}$,..., $\ga_{a,m+1}$.

By the properties of the inverse of $\Ga_b=(-\infty,cv]$, one has $\ga_{b1+}=\ga_{b2-}$,  $\ga_{b2+}=\ga_{b1-}$,  $\ga_{b1+}$ and $\ga_{b2+}$ are joined together by the critical point $-\tfrac{1}{m+1}$, $\ga_{b1+}$ is above the real axis, $\ga_{b2+}$ is below the real axis. And, $\ga_{c1+}=\ga_{c1-}=(0,+\infty)$.

Consider the two different situations: (i) $m$ is an odd number; (ii) $m$ is an even number.

For Case (i), $m$ is an odd number.

 Since $m$ is odd, and for any $z\in(-\infty,-1)$, $P(z)=z(1+z)^m>0$, one has that $P^{-1}(\Ga_a)\cap(-\infty,-1)=\emptyset$.

The inverse of $\Ga_c=(0,+\infty)$ consists of the following connected components: $\ga_{c1+}=\ga_{c1-}=(0,+\infty)$,  and $(-\infty,-1)$.

Figure \ref{illustration-m-3} is the illustration diagram for $m=3$, and
Figure \ref{illustration-m-5} is the illustration diagram for $m=5$.

\begin{figure}[htbp]
\begin{center}
\scalebox{0.5}{ \includegraphics{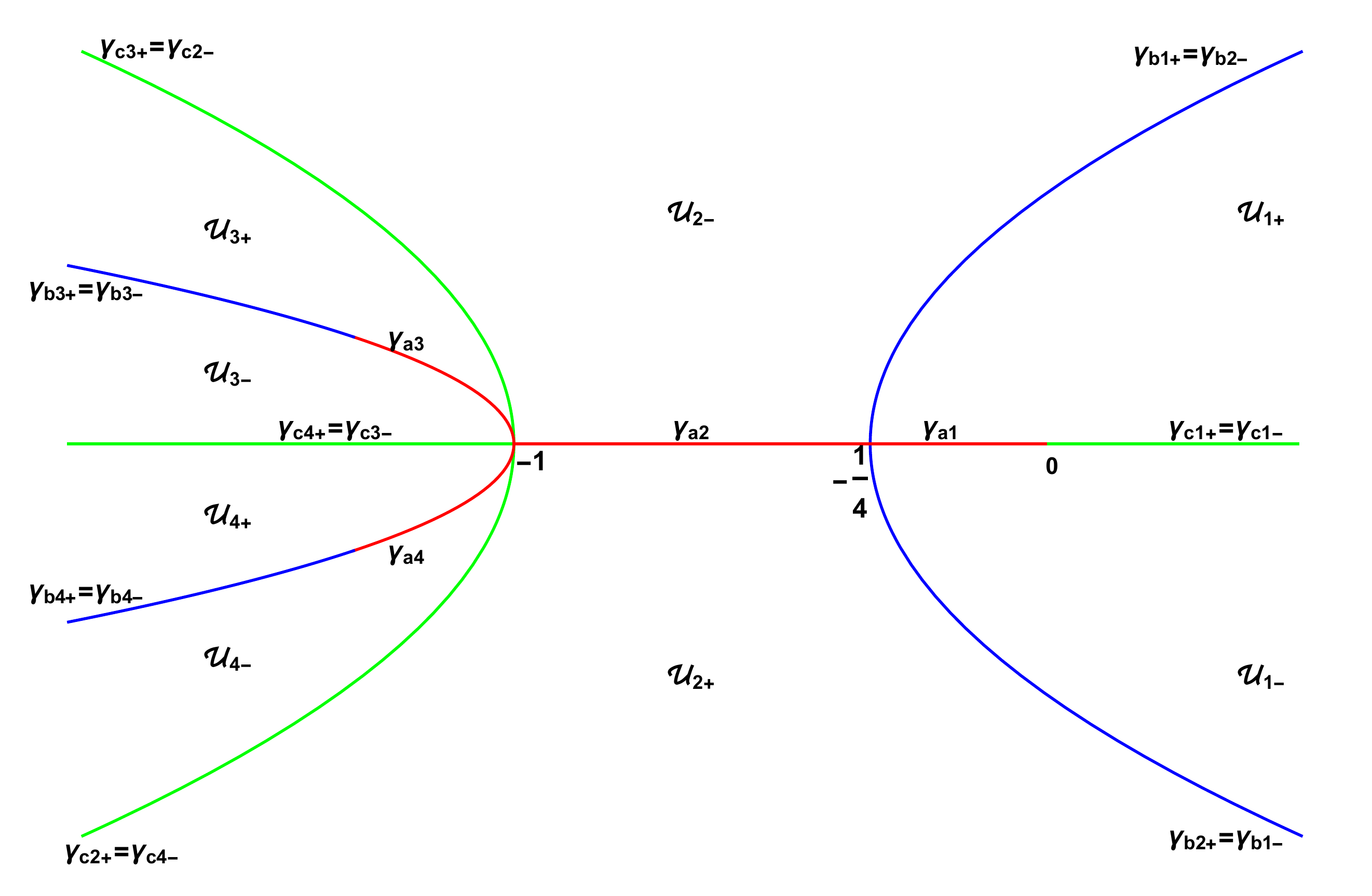}}
\renewcommand{\figure}{Fig.}
\caption{The illustration diagram of  $m=3$, where blue represents $P^{-1}(\Ga_b)$, red represents $P^{-1}(\Ga_a)$, and green represents $P^{-1}(\Ga_c)$.
}\label{illustration-m-3}
\end{center}
\end{figure}

\begin{figure}[htbp]
\begin{center}
\scalebox{0.5}{ \includegraphics{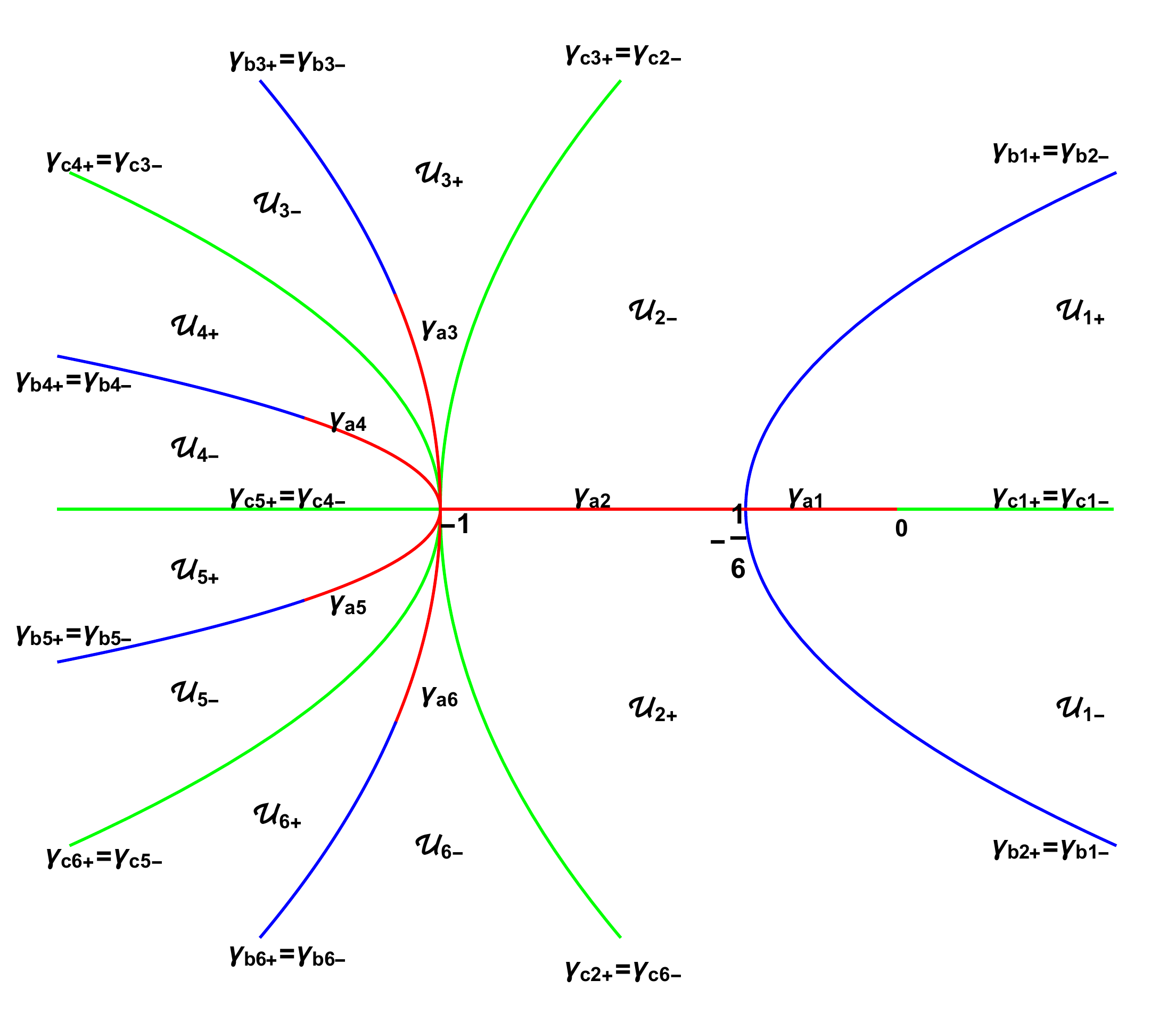}}
\renewcommand{\figure}{Fig.}
\caption{The illustration diagram of  $m=5$, where blue represents $P^{-1}(\Ga_b)$, red represents $P^{-1}(\Ga_a)$, and green represents $P^{-1}(\Ga_c)$.
}\label{illustration-m-5}
\end{center}
\end{figure}

For Case (ii), $m$ is an even number.

Since $m$ is even, and for any $z\in(-\infty,-1)$, $P(z)=z(1+z)^m<0$, one has that there is a unique solution to $P(z)=cv$, denoted by $cv_{-1}$.

The inverse of $\Ga_a=(cv,0)$ consists of the following connected components:
$\ga_{a1}=(cp,0)$, $\ga_{a2}=(-1,cp)$, and $(cv_{-1},-1)$.

The illustration diagram for $m=4$ is in Figure \ref{illustration-m-4}. The illustration diagram for $m=6$ is in Figure \ref{illustration-m-6}.

\begin{figure}[htbp]
\begin{center}
\scalebox{0.5}{ \includegraphics{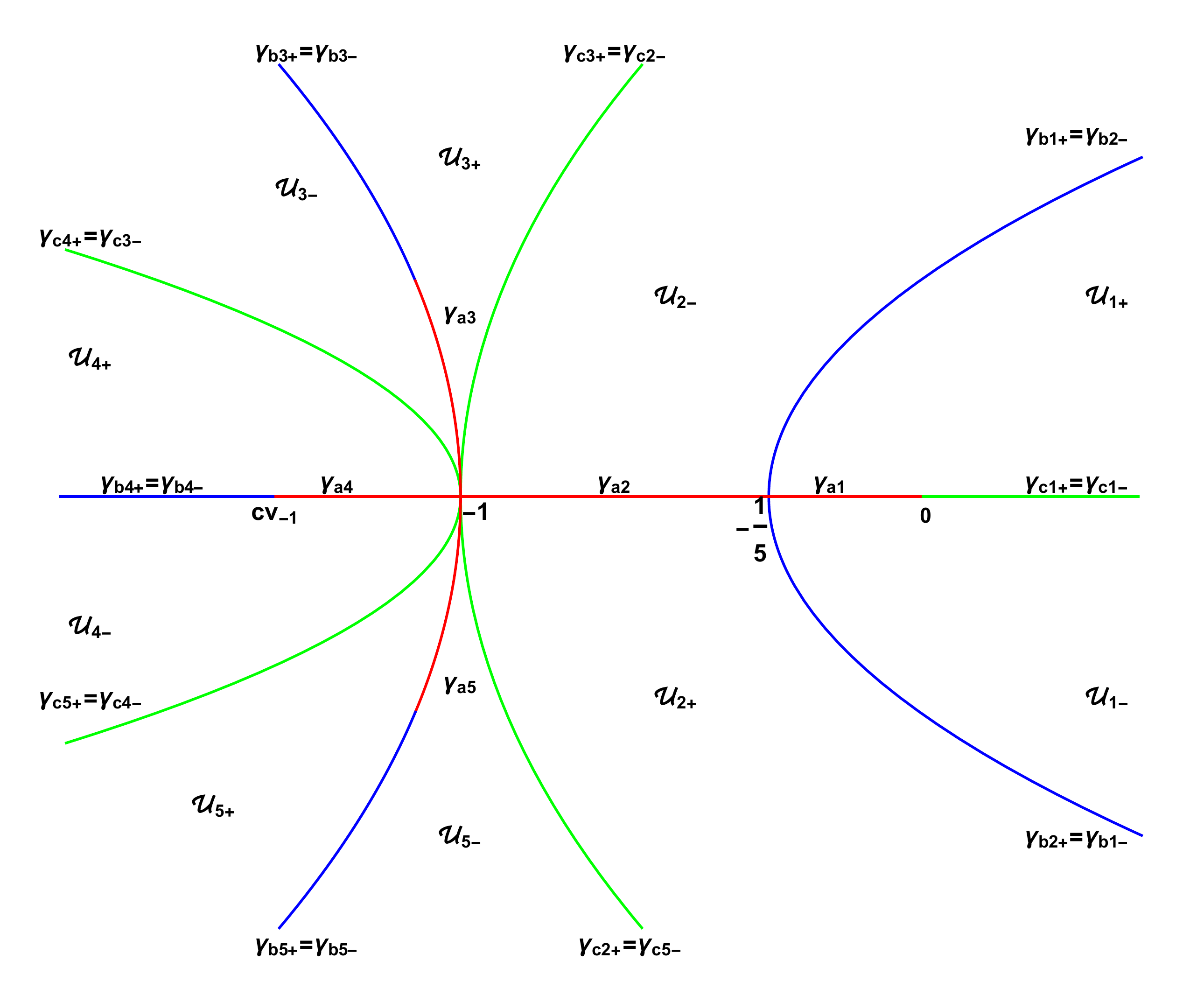}}
\renewcommand{\figure}{Fig.}
\caption{The illustration diagram of  $m=4$, where blue represents $P^{-1}(\Ga_b)$, red represents $P^{-1}(\Ga_a)$, and green represents $P^{-1}(\Ga_c)$.
}\label{illustration-m-4}
\end{center}
\end{figure}

\begin{figure}[htbp]
\begin{center}
\scalebox{0.5}{ \includegraphics{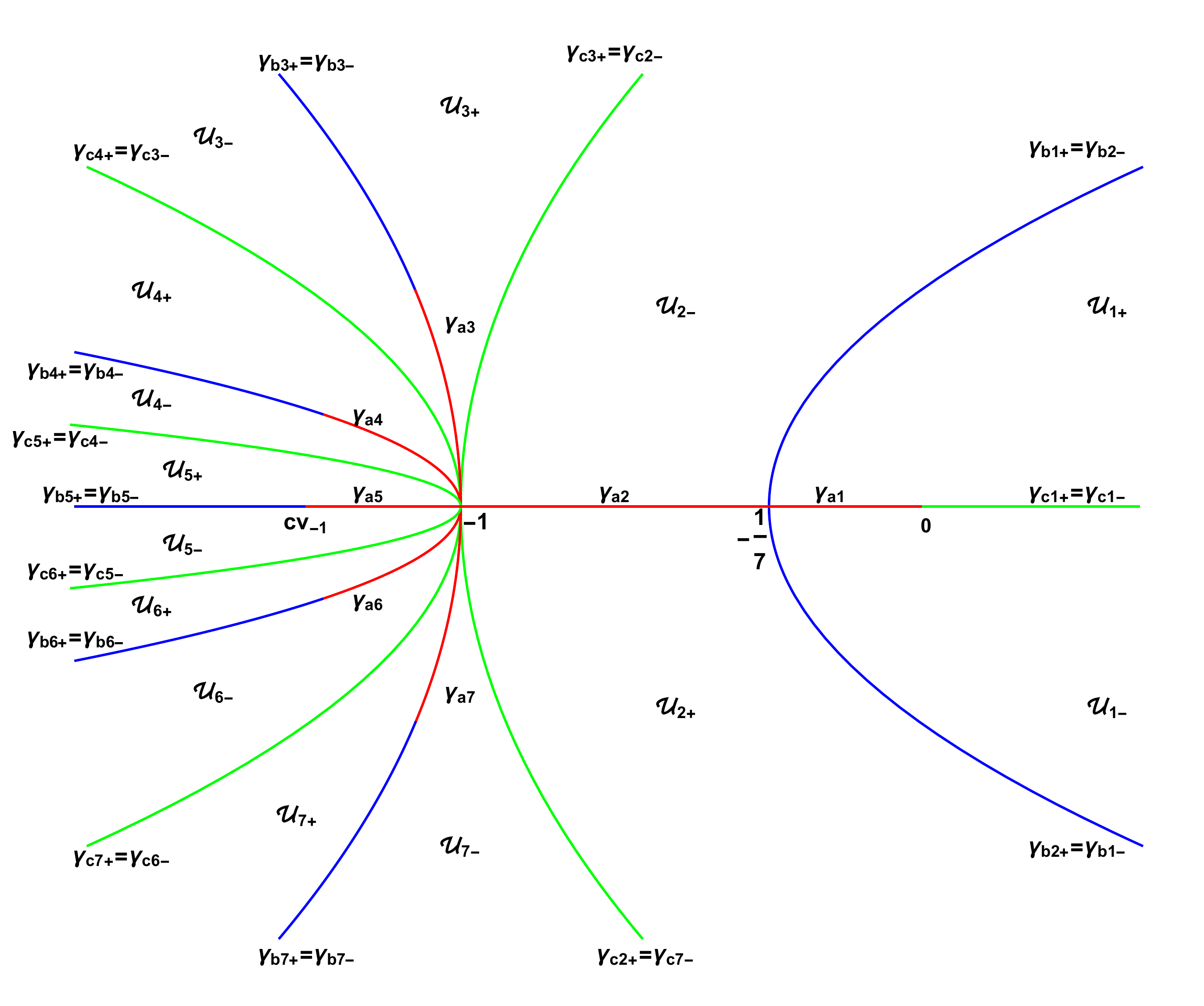}}
\renewcommand{\figure}{Fig.}
\caption{The illustration diagram of  $m=6$, where blue represents $P^{-1}(\Ga_b)$, red represents $P^{-1}(\Ga_a)$, and green represents $P^{-1}(\Ga_c)$.
}\label{illustration-m-6}
\end{center}
\end{figure}

\begin{remark}
The polynomial $P$ can be thought of as an element in $\mathcal{F}_0$, where the point $-1$ should be deleted from the domain of $P$, since $-1$ is a critical point of $P$ with multiplicity $m-1$ and $P(-1)=0$. The domain of $P$ might be different such that $P$ is a branched covering of different degrees, for example, the domain of $P$
might be
\beq\label{defPdiff2021-8-30-2}
\mathcal{U}^{P}_{12}:=\ol{\mathcal{U}}_{1}\cup \mathcal{U}_{2},
\eeq
or
\beq\label{defPdiff2021-8-30-3}
\mathcal{U}^P_{*}:=\ol{\mathcal{U}}_{1}\cup \mathcal{U}_{2}\cup
\mathcal{U}_{3+}\cup\ga_{c2+}\cup
\mathcal{U}_{m+1,-}\cup \ga_{c,2,-},
\eeq
where $\mathcal{U}^{P}_{12}$ is used for the degree $2$, and $\mathcal{U}^P_{*}$ is used for the degree bigger than or equal to $3$.
 \end{remark}

\begin{lemma}\label{paraequ-2021-9-20-2}
Set
\beqq
\be_k:=\frac{k}{m+1}\pi,\ 0\leq k\leq m,
\eeqq
\beqq
S_{k,\pm}:=\{z\in\cc:\ z\neq-1,\ \be_k<\pm\text{arg}\,(z+1)<\be_{k+1}\},\ 0\leq k\leq m,
\eeqq
 and
\beqq
k_0:=\left\{
      \begin{array}{ll}
        \frac{m+1}{2}+1, & \hbox{if}\ m\ \mbox{is odd} \\
        \frac{m}{2}+2, &\hbox{if}\ m\ \mbox{is even}.
      \end{array}
    \right.
\eeqq
Then, one has
\begin{itemize}
\item[(a)]
\beqq
S_{0,+}\subset  \ol{\mathcal{U}}_{1,+}^P\cup\ol{\mathcal{U}}_{2,-}^P\
\mbox{and}\
S_{0,-}\subset  \ol{\mathcal{U}}_{1,-}^P\cup\ol{\mathcal{U}}_{2,+}^P,
\eeqq
\beqq
\ga_{c2-}\subset S_{1,+}\ \mbox{and}\ \ga_{c2+}\subset S_{1,-},
\eeqq
and
\beqq
\ga_{b3+}\cup \ga_{a3}\subset S_{2,+}\ \mbox{and}\ \ga_{b,m+1,+}\cup \ga_{a,m+1}\subset S_{2,-};
\eeqq
\item[(b)] If $m$ is odd, then,
\beqq
\ol{S}_{m,+}\subset\ol{\mathcal{U}}_{k_0,-}^P\ \mbox{and}\ \ol{S}_{m,-}\subset\ol{\mathcal{U}}_{k_0+1,+}^P;
\eeqq
if $m$ is even, then
\beqq
\ol{S}_{m,+}\subset\ol{\mathcal{U}}_{k_0,+}^P\ \mbox{and}\
\ol{S}_{m,-}\subset\ol{\mathcal{U}}_{k_0,-}^P.
\eeqq
\end{itemize}
\end{lemma}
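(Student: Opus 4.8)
The plan is to analyze the polynomial $P(z) = z(1+z)^m$ purely via the geometry of the map $\zeta \mapsto \zeta^{m+1}$ after the coordinate change $\zeta = z+1$, and then transport the slit decomposition back. First I would write $P(z) = (\zeta - 1)\zeta^m$ where $\zeta = z+1$; for $z$ near $-1$, i.e.\ $\zeta$ near $0$, the dominant behaviour is $P(z) \approx -\zeta^m$, so the $m-1$ components $\mathcal{U}_3, \dots, \mathcal{U}_{m+1}$ of $P^{-1}(\cc_{slit})$ clustered at $-1$ are, to leading order, the preimages of $\cc_{slit}$ under $\zeta \mapsto -\zeta^m$. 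Each such component occupies an angular sector at $-1$ of opening $2\pi/m$, and the sectors $S_{k,\pm}$ defined in the statement (which have opening $\pi/(m+1)$) are half-sectors of this picture once one accounts for the shift in the branch cut of $\arg$. The content of part (a) is then to identify, for the three lowest values of $k$ (namely $k=0,1,2$), exactly which sheet $\mathcal{U}_j^P$ contains the corresponding sector, and part (b) does the same for the top value $k=m$, where $m$ being odd or even changes which real half-line $(-\infty,-1)$ maps into (this is precisely the dichotomy already worked out in the excerpt: for $m$ odd, $(-\infty,-1)$ maps into $\Ga_c$, while for $m$ even it contains the point $cv_{-1}$ and maps partly into $\Ga_a$).

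The concrete steps I would carry out are as follows. (1) For part (a), first line: near $-1$ the set $S_{0,+}$ is the sector $0 < \arg(z+1) < \pi/(m+1)$; I would check directly that $P$ maps this into $\cc \setminus(\{0\}\cup\Ga_b\cup\Ga_c)$ by tracking the argument of $P(z) = (\zeta-1)\zeta^m$ and noting that the relevant boundary rays $\ga_{a1} = (cp,0)$ and $\ga_{b1+}$ (resp.\ the image of these under the local degree-$2$ structure at $cp_P = -1/(m+1)$) bound this sector — this is where one uses that $S_{0,+}$ straddles the boundary between $\mathcal{U}_1$ and $\mathcal{U}_2$, giving the stated containment $S_{0,+}\subset \ol{\mathcal{U}}_{1,+}^P \cup \ol{\mathcal{U}}_{2,-}^P$. (2) For the $\ga_{c2\pm}$ line: $\ga_{c2\pm}$ is a preimage of $\Ga_c = (0,+\infty)$, hence a curve along which $P$ is real positive; I would compute its argument of $(z+1)$ at $0$ and at $\infty$ along this arc and show it stays in $(\be_1,\be_2)$. (3) The $\ga_{b3\pm}\cup\ga_{a3}$ line is analogous, using that $\mathcal{U}_3$ is the first sheet past $\mathcal{U}_2$ in the counterclockwise labelling around $-1$, so its defining arcs $\ga_{b3+}$ and $\ga_{a3}$ sit in the angular range $(\be_2,\be_3)$. (4) For part (b): the sector $S_{m,\pm}$ is $\be_m < \pm\arg(z+1) < \pi$, i.e.\ the sector adjacent to the negative real axis $(-\infty,-1)$. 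I would use the local model $P(z)\approx -\zeta^m$ at $-1$ together with the global fact (already recorded) about whether $(-\infty,-1)$ lies in $P^{-1}(\Ga_c)$ (m odd) or meets $P^{-1}(\Ga_a)$ at $cv_{-1}$ (m even); this determines which component $\mathcal{U}_{k_0}^P$ (with the sign depending on parity) the sector $S_{m,\pm}$ falls into, matching the two displayed formulas, and the definition of $k_0$ is exactly the bookkeeping index that tracks "how far around from $\mathcal{U}_1$" one has travelled after $\lceil m/2\rceil$ sheets.

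The main obstacle I expect is the careful bookkeeping of the branch-cut conventions: the sheets $\mathcal{U}_j$ are defined as components of $P^{-1}(\cc\setminus(\{0\}\cup\Ga_b\cup\Ga_c))$ with $\pm$-subscripts coming from the sign of $\mathrm{Im}\,P(z)$, while the sectors $S_{k,\pm}$ are defined by the sign of $\arg(z+1)$; reconciling these — in particular getting the $\mathcal{U}_{1,+}^P$ versus $\mathcal{U}_{2,-}^P$ alternation in part (a) right, and the parity-dependent sign flip ($\mathcal{U}_{k_0,-}^P$ for odd $m$ vs.\ $\mathcal{U}_{k_0,+}^P$ for even $m$) in part (b) — is where the proof must be done with care rather than by a clean formula. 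The analytic estimates themselves are routine: each reduces to showing that along an explicitly described arc or on an explicitly described sector, $\arg((\zeta-1)\zeta^m)$ does not cross the forbidden values $0$ and $\pi$ (for $\Ga_c$ and $\Ga_b$), which for $|\zeta|$ small follows from the $-\zeta^m$ approximation and for $|\zeta|$ large from $P(z)\sim \zeta^{m+1}$, with a continuity/connectedness argument (the sheets are connected and $P$ is a homeomorphism on each) closing the gap in between. I would organise the write-up by proving the local-model lemma at $-1$ first, then reading off all six containments as corollaries, handling the odd and even cases in parallel columns.
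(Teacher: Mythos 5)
Your overall plan (pass to $\zeta=z+1$, track the argument of $P(z)=(\zeta-1)\zeta^m$, and sort the sectors $S_{k,\pm}$ onto the sheets $\mathcal{U}_{j,\pm}^P$) points in the right direction, but the way you propose to control the argument has a genuine gap. You control $\arg P$ only in the two asymptotic regimes, via $P\approx-\zeta^m$ near $-1$ and $P\sim\zeta^{m+1}$ near $\infty$, and then invoke ``continuity/connectedness'' to bridge the intermediate range of $|\zeta|$. That bridge does not exist for free: connectedness of the sheets does not prevent the unbounded sector $S_{k,\pm}$ from crossing a boundary curve $\ga_{aj}$, $\ga_{bj\pm}$ or $\ga_{cj\pm}$ at moderate $|\zeta|$ and re-entering, and ruling out exactly such crossings \emph{is} the content of the lemma. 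The paper's proof closes this with a single uniform estimate valid on the whole sector, which your proposal never states: if $\be_k<\pm\arg(z+1)<\be_{k+1}$ then also $\be_k<\pm\arg z<\pi$ (since $z=(z+1)+(-1)$), hence for $z\in S_{k,+}$
\[
k\pi \;<\; \arg P(z)\;=\;\arg z+m\,\arg(z+1)\;<\;k\pi+\pi+\frac{m-k}{m+1}\pi ,
\]
and every containment in (a) and (b) is then read off by comparing these windows with the lifted-argument values on the anchor curves (e.g.\ $\arg P\equiv 2\pi$ on $\ga_{c2-}$, $\arg P\equiv 3\pi$ on $\ga_{b3+}\cup\ga_{a3}$) and with the parity discussion of $(-\infty,-1)$ for $k=m$. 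Without this global two-sided bound your interpolation step is unsupported.

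Two further points where the proposal as written would mislead you. First, your step (1) claims to check that $P$ maps $S_{0,+}$ into $\cc\setminus(\{0\}\cup\Ga_b\cup\Ga_c)$; this is false, since $S_{0,+}$ contains part of $\ga_{b1+}$ (the common boundary of $\mathcal{U}_{1,+}^P$ and $\mathcal{U}_{2,-}^P$), so $P(S_{0,+})$ meets $\Ga_b$ --- the correct statement is only that the lifted argument stays strictly in $(0,2\pi)$, i.e.\ the sector avoids the preimages of $\Ga_c$ bounding those two half-sheets, which is what the displayed inequality with $k=0$ gives. Second, your local picture at $-1$ identifies $S_{k,\pm}$ (opening $\pi/(m+1)$) as ``half-sectors'' of the sheet sectors of opening $2\pi/m$; but $\pi/(m+1)\neq\pi/m$, and this $m$ versus $m+1$ mismatch is precisely why the sector can drift relative to the sheet boundaries as $|\zeta|$ grows and why a purely local model at $-1$ cannot decide the global containments. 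Repairing the proposal amounts to replacing the asymptotic-plus-connectedness step by the exact argument bookkeeping above, which is essentially the paper's proof.
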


\begin{proof}
For $z\in S_{k,\pm}$, denote by $w=z+1$, one has $z=w-1=w+(-1)$, implying that $\tfrac{k}{m+1}\pi<\pm\text{arg}\,z<\pi$. So, for $z\in S_{k,+}$,
\beqq
\frac{k}{m+1}\pi+m\times\frac{k}{m+1}\pi
<\text{arg}\,P(z)=\text{arg}\,z+m\times\text{arg}\,(z+1)<\pi+m\times\frac{k+1}{m+1}\pi,
\eeqq
that is,
\beq\label{est-2021-9-20-1}
k\pi<\text{arg}\,P(z)<k\pi+\pi+\frac{m-k}{m+1}\pi.
\eeq
Note that $0\leq\frac{m-k}{m+1}<1$ by $0\leq k\leq m$, and $\tfrac{1}{2}<\tfrac{m}{m+1}<1$ by $m>1$.

By \eqref{est-2021-9-20-1}, for $k=0$ and $z\in S_{0,+}$, one has
\beqq
0<\text{arg}\,P(z)<\pi+\frac{m}{m+1}\pi<2\pi.
\eeqq
By \eqref{equ2021-9-12-3},
\beqq
\mathcal{U}^P_{1+}=P^{-1}(\cc_{slit+})\cap\mathcal{U}^P_{1}\
\mbox{and}\ \mathcal{U}^P_{2-}=P^{-1}(\cc_{slit-})\cap\mathcal{U}^P_{2},
\eeqq
one has
\beqq
0<\text{arg}\,P(z)<\pi\ \mbox{for}\ z\in \mathcal{U}^P_{1+}, \mbox{and}\
\pi<\text{arg}\,P(z)<2\pi\ \mbox{for}\ z\in \mathcal{U}^P_{2-}.
\eeqq
By \eqref{est-2021-9-20-1}, for $k=1$ and $z\in S_{1,+}$, one has
\beq\label{equ-2021-9-20-5}
\pi<\text{arg}\,P(z)<2\pi+\frac{m-1}{m+1}\pi<3\pi.
\eeq
By \eqref{est-2021-9-20-1}, for $k=2$ and $z\in S_{2,+}$, one has
\beq\label{equ-2021-9-20-6}
2\pi<\text{arg}\,P(z)<2\pi+\pi+\frac{m-2}{m+1}\pi<4\pi.
\eeq
Note that $\text{arg}\,P(\ga_{c2-})=2\pi$. So,  $\ga_{c2-}\subset S_{1,+}$. By these discussions, one has $S_{0,+}\subset  \ol{\mathcal{U}}_{1,+}^P\cup\ol{\mathcal{U}}_{2,-}^P$.

By \eqref{est-2021-9-20-1}, for $k=3$ and $z\in S_{3,+}$, one has
\beqq
3\pi<\text{arg}\,P(z)<3\pi+\pi+\frac{m-3}{m+1}\pi.
\eeqq
This, together with $\text{arg}\,P(\ga_{b3+}\cup \ga_{a3})=3\pi$, \eqref{equ-2021-9-20-5}, and \eqref{equ-2021-9-20-6}, implies that
\beqq
\ga_{b3+}\cup \ga_{a3}\subset S_{2,+}.
\eeqq

By \eqref{est-2021-9-20-1}, for $k=m$ and $z\in S_{m,+}$, one has
\beqq
m\pi<\text{arg}\,P(z)<(m+1)\pi,
\eeqq
and for $k=m-1$ and $z\in S_{m-1,+}$, one has
\beqq
(m-1)\pi<\text{arg}\,P(z)<(m-1)\pi+\frac{m+2}{m+1}\pi=m\pi+\frac{1}{m+1}\pi.
\eeqq

\end{proof}

For any $f\in\mathcal{F}_0$,  consider the linear conjugacy map $\tilde{f}(\cdot)=\tfrac{cv_{P}}{cv_f}f(\tfrac{cv_f}{cv_P}\cdot)$ of $f$, it is evident that this conjugacy map of $f$ is also in $\mathcal{F}_0$ and has the same critical value $cv_P$, which is contained in $\rr_{-}$. So, we assume the critical value of $f$ is in $\rr_{-}$.

\begin{definition}\label{paraequ-89}
For $f\in\mathcal{F}_0$, suppose the critical value is $cv=cv_f\in\rr_{-}$.
Denote by
\beqq
 \Ga_b=(-\infty,cv],\ \Ga_a=(cv,0),\ \Ga_c=(0,+\infty),
\eeqq
and
\beqq
\cc_{slit}=\cc\setminus(\{0\}\cup\Ga_b\cup\Ga_c),\ \cc_{slit\pm}=\cc\cap\{z:\ \pm\mbox{Im}z>0\}.
\eeqq
\end{definition}

By the definition of $\cc_{slit}$, it is simply connected and does not contain $0$ and the critical value, $f^{-1}(\cc_{slit})$ consists of connected components, denoted by $(\mathcal{U}_j^{f})_{j\in I}$, where $I$ is the index set, each $\mathcal{U}_j^{f}$ is mapped by $f$ isomorphically onto $\cc_{slit}$. Denote by $\mathcal{U}_{j\pm}^{f}=f^{-1}(\cc_{slit\pm})\cap\mathcal{U}_{j}^{f}$, $\ga_{aj}^{f}=f^{-1}(\Ga_a)\cap\mathcal{U}_{j}^{f}$, $\ga_{bj\pm}^{f}=f^{-1}(\Ga_b)\cap\overline{\mathcal{U}}_{j\pm}^{f}$, and $\ga_{cj\pm^{f}}=f^{-1}(\Ga_c)\cap\overline{\mathcal{U}}_{j\pm}^{f}$, where the
closures are taken within the domain of $f$.

Since $f$ is homeomorphic near $0$, there is a component containing $0$, denoted by $\mathcal{U}_1^{f}$, and $\ga_{c1+}^{f}=\ga_{c1-}^{f}$. Further, since $f$ has a unique critical value and all critical points are of local degree $2$, there is another component, denoted by $\mathcal{U}_2^{f}$, where $\ga_{b1+}^{f}=\ga_{b2-}^{f}$ and $\ga_{b2+}^{f}=\ga_{b1-}^{f}$. And, $f^{-1}(cv)\cap\ga_{b1+}^{f}\cap\ga_{b2+}^{f}$ is a critical point, denoted by $cp=cp_f$.

Set
\beqq
\mathcal{U}_{12}^{f}:=\ol{\mathcal{U}_{1}}^{f}\cup\mathcal{U}_{2}^{f}.
\eeqq
It is evident that $f:\ \mathcal{U}_{12}^{f}\to\cc_{slit}\cup\Ga_b=\cc\setminus(\{0\}\cup\Ga_c)$ is a branched covering of degree $2$ over the critical value $cv$.

\subsection{A general class $\mathcal{F}_1^{P}$ and the main results}\label{parabolicbif-main-results}

In this subsection, a class of functions, $\mathcal{F}_1^{P}$, is introduced. Based on this type of functions, the main results are introduced.

\begin{definition}\label{def-2021-9-26-1}
Let $m\in\nn$ be a given constant. For the polynomial $P(z)=z(1+z)^m$, let $V$ be an open subset of $\cc$ containing $0$, where $V$ is dependent on $m$.  Define
\begin{equation*}
\mathcal{F}_{1}^{P}=\left\{f=P\circ \varphi^{-1}: \varphi(V) \rightarrow \mathbb{C}\ \bigg | \begin{array}{l}
\varphi:V\to\cc\ \text { is univalent, } \varphi(0)=0, \varphi^{\prime}(0)=1, \\
\text { and } \varphi \text { has a quasi-conformal extension to } \mathbb{C}
\end{array}\right\},
\end{equation*}
where univalent refers to holomorphic and injective. If $f\in\mathcal{F}_{1}^{P}$, then $0$ is a $1$-parabolic fixed point of $f$; if $cp_P\in V$, then $cp_f=\varphi(cp_P)$ is a critical point and $cv_P$ is a critical value of $f$, where $cp_P$ and $cv_P$ are introduced in \eqref{critip2011-8-30-1}.
\end{definition}

\begin{theorem}\label{paraequ-84} (Invariance of $\mathcal{F}_1^{P}$)
Let $m\in\nn$ be a given constant with $m\geq22$. There exist a Jordan domain $V$ (dependent on $m$) containing $0$ and $cp_P=-\tfrac{1}{m+1}$ with a smooth boundary and an open set $V'$ containing $\ol{V}$ such that
\begin{itemize}
\item [(a)] $f^{\prime\prime}(0)\neq0$, $ |f^{\prime\prime}(0)-(2m+1)|\leq1$, $cp_f\in \text{Basin}(0)$.
\item [(b)] There is a natural injection $((\mathcal{F}_0\setminus\{quadratic\ polynomials\})/\underset{linear}\sim)\hookrightarrow\mathcal{F}_1^{P}$.
\item [(c)] For $f\in\mathcal{F}_1^{P}$, the parabolic renormalization $\mathcal{R}_0f$ is well-defined, $\mathcal{R}_0f=P\circ\psi^{-1}\in\mathcal{F}_1$, where $\psi$ can be extended to a univalent function from $V'$ to $\cc$.
\item [(d)] The parabolic renormalization operator $\mathcal{R}_0$ is holomorphic in the following sense: given a family $f_{\ld}=P\circ\varphi^{-1}_{\ld}$ with a holomorphic function $\varphi_{\ld}(z)$ in two variables $(\ld,z)\in\Ld\times V$, where $\Ld$ is a complex manifold, the renormalization can be written as $\mathcal{R}_0(f_{\ld})=P\circ \psi^{-1}_{\ld}$ with $\psi_{\ld}$ holomorphic in $(\ld,z)\in \Ld\times V'$.
\end{itemize}
\end{theorem}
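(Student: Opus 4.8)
The plan is to follow the architecture of Inou and Shishikura \cite{InouShishikura2016}, the one new ingredient being the combinatorially built domain $V$ adapted to the degree $m+1$. The argument splits into an explicit analysis of $P$ itself and a quasiconformal-surgery step transporting the conclusion to an arbitrary $f=P\circ\varphi^{-1}\in\mathcal{F}_1^P$. First I would set up the Riemann surface of $P$: using the slit plane $\cc_{slit}$, the components $\mathcal{U}_1,\dots,\mathcal{U}_{m+1}$ of $P^{-1}(\cc_{slit})$, and the sector inclusions of Lemma~\ref{paraequ-2021-9-20-2}, glue copies of $\cc$ along the slits $\Ga_a,\Ga_b,\Ga_c$ into a surface $\mathcal{W}_P$ on which $P$ acts sheetwise by projection; restricting to $\mathcal{U}_{12}$ (and to the larger domains $\mathcal{U}^P_{12}$, $\mathcal{U}^P_{*}$) exhibits $P$ as a branched covering over $cv_P$ with the single critical point $cp_P$ of local degree $2$. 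Applying Theorems~\ref{paraequ-85}--\ref{equ2020821-3} to $P$ produces $\Phi_{att,P}$, $\Phi_{rep,P}$ and the horn map $E_P$; the qualitative fact to record here is that the critical orbit $cp_P\mapsto cv_P\mapsto\cdots$ lands in the attracting petal and tends to $0$, so that $E_P$ has a single critical value and fits the branched-covering requirements of $\mathcal{F}_0$.

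Next, conjugating $E_P$ by $\text{Exp}^{\sharp}$ gives $\mathcal{R}_0P$, which has a $1$-parabolic fixed point at $0$; uniformizing the sheet(s) of the renormalized surface near its critical point yields a univalent $\psi_0$ with $\mathcal{R}_0P=P\circ\psi_0^{-1}$, and I would \emph{define} $V$ and the slightly larger $V'$ from the range of $\psi_0^{-1}$, checking directly that $0,cp_P\in V$. Part~(a) is then essentially a computation: $f''(0)=2m-\varphi''(0)$ by the chain rule, while the bound $|f''(0)-(2m+1)|\le1$ (hence $f''(0)\ne0$) and $cp_f\in\text{Basin}(0)$ follow from distortion estimates for univalent maps on $V$ together with the petal-capture property above. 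For part~(b), given $g\in\mathcal{F}_0$ that is not a quadratic polynomial, normalized so that $cv_g=cv_P$, I would analytically continue the branch of $g^{-1}$ fixing $0$ along $P(V)$: the covering property of $g$ over $\cc^{*}$ and the matching of local degree $2$ at $cp_P$ with a critical point of $g$ make $\varphi:=g^{-1}\circ P$ well defined and univalent on $V$ with $\varphi(0)=0$, $\varphi'(0)=1$, so that $g=P\circ\varphi^{-1}\in\mathcal{F}_1^P$; a quadratic polynomial must be excluded because its lift would necessarily carry a critical point.

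For a general $f=P\circ\varphi^{-1}$ I would rerun the construction with $f$ in place of $P$: the coordinates $\Phi_{att,f},\Phi_{rep,f}$ and the horn map $E_f$ are defined and depend continuously and holomorphically on $f$ by the continuity theorems quoted above, and $\mathcal{R}_0f=\text{Exp}^{\sharp}\circ E_f\circ(\text{Exp}^{\sharp})^{-1}$ factors as $P\circ\psi_f^{-1}$, where $\psi_f$ uniformizes a surface glued with the Beltrami coefficient pushed forward from the quasiconformal extension of $\varphi$; the measurable Riemann mapping theorem supplies the uniformization and equips $\psi_f$ with a quasiconformal extension to $\cc$, so $\mathcal{R}_0f\in\mathcal{F}_1^P$. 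Part~(d) then follows because each map in the composition defining $\psi_f$ --- the Fatou coordinates, $\text{Exp}^{\sharp}$, and $\varphi_\ld$ --- is holomorphic in $(\ld,z)$, and analytic continuation to $V'$ preserves this dependence.

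The one genuinely hard point is that $\psi_f$ must extend \emph{univalently} to the larger set $V'$, uniformly in $f$ and in $m\ge22$. Tracing the composition that defines $\psi_f$, this reduces to a concrete inclusion of plane domains: the pullback of the domain $\varphi(V)$ of $f$ through $\Phi_{rep,P}^{-1}$, $\text{Exp}^{\sharp}$ and the sheet identifications must contain $\ol{V'}$; and since $f$ agrees with $P$ after precomposition with the controlled univalent map $\varphi$, it suffices to prove this for $\psi_P$ with room to spare, i.e.\ that the pullback of $V$ strictly contains $\ol{V'}\supset\ol{V}$. This is exactly where the shape of $V$ is decisive and where the combinatorial construction enters: $V$ is assembled from a fixed ellipse-shaped piece, independent of $m$, governing the geometry near $0$ and near $cp_P$ and verified once and for all, glued to an $m$-dependent piece accounting for the petals on the sheets $\mathcal{U}_3,\dots,\mathcal{U}_{m+1}$ that cluster around $-1$. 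The sector estimates of Lemma~\ref{paraequ-2021-9-20-2} together with the asymptotic expansions of $\Phi_{att,P}$ and $\Phi_{rep,P}$ yield the required inclusion for every $m\ge22$, which is where the threshold $22$ originates, the range $3\le m\le21$ needing an $m$-by-$m$ choice of ellipse within the same scheme. I expect this $m$-uniform quantitative inclusion, combining delicate control of the Fatou coordinates of $P$ with the $m$-dependent sheet geometry near $-1$, to be the main obstacle --- just as the choice of the ellipse is the crux of the Inou--Shishikura cubic case.
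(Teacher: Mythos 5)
There is a genuine gap, and it sits exactly where you locate the difficulty. First, your domains $V,V'$ are not actually constructed: you propose to \emph{define} them from the range of the uniformizing map $\psi_0^{-1}$ obtained by renormalizing $P$ itself. But the class $\mathcal{F}_1^P$ is defined by a fixed, explicit $V$, and every uniform statement over the class (including the crucial fact that $\psi_f$ is univalent on $V'$ for \emph{all} $f=P\circ\varphi^{-1}$) is obtained in the paper from the explicit shape of $V$: $V=\psi_1\bigl(\widehat{\cc}\setminus(E\cup\overline{\cd}(\pm\cot(\tfrac{\pi}{m+1})i,\tfrac{1}{\sin(\pi/(m+1))}))\bigr)$ with the fixed ellipse $E$ (constants $e_1=0.84$, $e_0=-0.18$, $e_{-1}=0.6$), and $V'$ defined by $P$-preimages of the disks of radius $|cv_P|e^{\pm2\pi\eta}$, with $\ol V\subset V'$ proved in Proposition \ref{paraequ-86}. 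It is precisely because $\varphi$ is univalent off the ellipse that the Koebe-type estimates of Lemma \ref{paraequ-11} give uniform control of $F=Q\circ\varphi^{-1}$ in the coordinate at infinity; an a posteriori $V$ read off from $\mathcal{R}_0P$ gives you no such handle, and the inclusion you say "suffices to prove for $\psi_P$ with room to spare" is not a consequence of the case $f=P$ plus "controlled $\varphi$" — it is exactly the uniform statement that needs the explicit estimates (Lemmas \ref{paraequ-55}, \ref{near-equat-5}, \ref{connect-2021-12-29-3}, Propositions \ref{attracting2020912-1}--\ref{attracting202091203}), and it is these numerical inequalities on $\arg\Phi'_{att}$, $|\Phi'_{att}|$ and $\operatorname{Re}F(cv_Q)$ that produce the threshold $m\ge22$, not a pullback-inclusion of $V$ through the Fatou coordinates of $P$ alone.

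Second, your transport mechanism from $P$ to a general $f\in\mathcal{F}_1^P$ is wrong in kind: you invoke quasiconformal surgery and the measurable Riemann mapping theorem to "uniformize a surface glued with the Beltrami coefficient pushed forward from the quasiconformal extension of $\varphi$". A quasiconformal straightening would only give a qc-conjugate model of $\mathcal{R}_0f$; it cannot by itself produce the exact holomorphic factorization $\mathcal{R}_0f=P\circ\psi^{-1}$ with $\psi$ univalent on $V'$, which is what membership in $\mathcal{F}_1^P$ (and statement (c)) requires. The paper never performs surgery at this stage: $\psi$ is built holomorphically as $\Psi_0\circ\widetilde\Phi\circ\Psi_1^{-1}$, where $\Psi_1$ is defined by gluing the inverse branches of $P$ over the explicitly located domains $D_n,D_n',D_n'',D_n^\sharp$ on the Riemann surface $X$ (Propositions \ref{paraequ-14}, \ref{near-equat-2}, \ref{attracting2020912-2}, \ref{attracting202091203}); holomorphy is automatic up to removable singularities, and the quasiconformal extension of $\psi$ is a cheap afterthought coming from univalence on the strictly larger $V'$, not an input from the MRMT. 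Your sketch omits this entire machinery — the lift $g=\widetilde\varphi\circ\widetilde Q^{-1}$, the repelling Fatou coordinate on $X$, the attracting-side domain estimates, and the boundary-matching argument for $\Psi_1$ — which is the actual content of parts (c) and (d). (Your treatments of (a) and (b) are essentially in line with the paper: $f''(0)=2m-\varphi''(0)$ plus Bieberbach-type distortion, and the piecewise definition $\varphi=(f|_{\mathcal{U}_j^f})^{-1}\circ P$ glued along the slits as in Proposition \ref{paraequ-88}.)
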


\begin{theorem} (Contraction in the Teichm\"{u}ller metric)\label{ren2020912-2}
Let $m\in\nn$ be a given constant with $m\geq22$. There is a one-to-one correspondence between each element in $\mathcal{F}_1^P$ and any element in the Teichm\"{u}ller space defined on $\cc\setminus\ol{V}$, on which the parabolic renormalization operator is contraction
\beqq
d_{Teich}(\mathcal{R}_0(f_1),\mathcal{R}_0(f_2))\leq \ld\cdot d_{Teich}(f_1,f_2),
\eeqq
where $d_{Teich}(\cdot,\cdot)$ is the complete distance on $\mathcal{F}_1^P$ induced from the Teichm\"{u}ller distance, and $\ld=e^{-2\pi\cdot\text{mod}(V'\setminus \ol{V})}<1$.
\end{theorem}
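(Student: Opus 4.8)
The plan is to follow the Inou--Shishikura strategy for establishing contraction of parabolic renormalization in the Teichm\"{u}ller metric, adapting it to the degree-$m$ polynomial $P(z)=z(1+z)^m$ and the $m$-dependent Jordan domain $V$ produced in Theorem \ref{paraequ-84}. First I would set up the identification between $\mathcal{F}_1^P$ and a Teichm\"{u}ller space. By Definition \ref{def-2021-9-26-1} every $f\in\mathcal{F}_1^P$ has the form $f=P\circ\varphi^{-1}$ with $\varphi\colon V\to\cc$ univalent, $\varphi(0)=0$, $\varphi'(0)=1$, admitting a quasiconformal extension to $\cc$. Two such maps give the same $f$ on $V$ iff the extensions differ by a conformal map of $\cc\setminus\ol{V}$ fixing $0$ appropriately; hence $f$ is encoded by the Beltrami coefficient $\mu=\ol{\partial}\varphi/\partial\varphi$ on $\cc\setminus\ol{V}$ modulo the equivalence that defines the Teichm\"{u}ller space $T(\cc\setminus\ol{V})$ of the open Riemann surface $\cc\setminus\ol{V}$ (equivalently, of the twice-punctured-at-$0,\infty$ exterior disk, with the appropriate marking). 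This gives the claimed one-to-one correspondence; the Teichm\"{u}ller distance on this space pulls back to the complete distance $d_{Teich}$ on $\mathcal{F}_1^P$.

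Next I would analyze how $\mathcal{R}_0$ acts on Beltrami coefficients. By Theorem \ref{paraequ-84}(c), for $f=P\circ\varphi^{-1}\in\mathcal{F}_1^P$ the parabolic renormalization is $\mathcal{R}_0 f=P\circ\psi^{-1}$ where $\psi$ extends univalently to $V'\supset\ol{V}$. Tracing through the construction of $E_f$ via Fatou coordinates and the Riemann-surface gluing of the sheets $\mathcal{U}_j^f$ — exactly the structure built in Section \ref{r0function2021-9-2-1} and Lemma \ref{paraequ-2021-9-20-2} — one sees that the quasiconformal extension of $\psi$ can be chosen so that its Beltrami coefficient $\nu=\ol{\partial}\psi/\partial\psi$ on $\cc\setminus\ol{V'}$ is the pushforward of $\mu$ on $\cc\setminus\ol{V}$ under a holomorphic map (the inverse branches of the Fatou-coordinate construction / the projection from the glued surface). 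In other words, $\mathcal{R}_0$ is realized, at the level of Teichm\"{u}ller spaces, by the map $T(\cc\setminus\ol{V})\to T(\cc\setminus\ol{V'})$ induced by the holomorphic inclusion $\cc\setminus\ol{V'}\hookrightarrow\cc\setminus\ol{V}$, composed with the canonical forgetful/restriction map $T(\cc\setminus\ol{V'})\to T(\cc\setminus\ol{V})$ coming from $\ol{V}\subset V'$.

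The contraction then follows from two standard facts. The restriction map $T(\cc\setminus\ol{V})\to T(\cc\setminus\ol{V'})$ induced by a holomorphic embedding of hyperbolic Riemann surfaces is $1$-Lipschitz (Teichm\"{u}ller maps do not increase distance under pullback), and the inclusion-induced map $T(\cc\setminus\ol{V'})\to T(\cc\setminus\ol{V})$ is strictly contracting with factor $e^{-2\pi\cdot\mathrm{mod}(V'\setminus\ol{V})}$ by the Schwarz lemma applied to the annulus $V'\setminus\ol{V}$ of modulus $\mathrm{mod}(V'\setminus\ol{V})$: this is precisely Royden's/McMullen's theorem that the Teichm\"{u}ller distance agrees with the Kobayashi distance, so an embedding with an annular collar of definite modulus contracts by $e^{-2\pi\,\mathrm{mod}}$. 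Composing gives $d_{Teich}(\mathcal{R}_0 f_1,\mathcal{R}_0 f_2)\le\ld\, d_{Teich}(f_1,f_2)$ with $\ld=e^{-2\pi\cdot\mathrm{mod}(V'\setminus\ol{V})}<1$, which is the assertion.

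The main obstacle — and the reason the hypothesis $m\ge22$ and Theorem \ref{paraequ-84} are needed — is verifying that $\mathcal{R}_0$ really does factor through the restriction to $V'$ with a genuine annular gain $V'\setminus\ol{V}$ of positive modulus, i.e. that $\psi$ (not just $\mathcal{R}_0 f$) extends univalently to the larger domain $V'$ with $\ol{V}\Subset V'$. This is exactly the content of Theorem \ref{paraequ-84}(c)--(d), whose proof rests on the delicate $m$-dependent choice of the ellipse/Jordan domain $V$ so that the image of the relevant petal under the horn-map construction is compactly contained in the domain where $P\circ\psi^{-1}$ is controlled; once that geometric input is in hand, the Teichm\"{u}ller-metric estimate above is formal. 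I would therefore present this theorem as a corollary of Theorem \ref{paraequ-84} together with the Schwarz/Royden--McMullen contraction principle, spelling out only the identification of $\mathcal{R}_0$ with the composition of inclusion- and restriction-induced maps on Teichm\"{u}ller spaces.
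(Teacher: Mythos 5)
Your overall architecture coincides with the paper's: identify $\mathcal{F}_1^P$ with the Teichm\"{u}ller space of $\cc\setminus\ol{V}$ via quasiconformal extensions (this is exactly the bijection $\rho$ of Lemma \ref{lemma-2021-9-26-5}), use Theorem \ref{paraequ-84}(c) to see that the renormalized map is $P\circ\psi^{-1}$ with $\psi$ univalent on the strictly larger $V'$, factor $\mathcal{R}_0$ through the Teichm\"{u}ller space attached to the larger domain, and obtain $\ld=e^{-2\pi\,\text{mod}}$ from the inclusion-induced extension map (quoted in the paper as \cite[Theorem 6.3]{InouShishikura2016}). But the step where you realize $\mathcal{R}_0$ at the Teichm\"{u}ller level is wrong as stated: you say it is ``the map $T(\cc\setminus\ol{V})\to T(\cc\setminus\ol{V'})$ induced by the holomorphic inclusion, composed with the canonical forgetful/restriction map.'' That composition of canonical maps contains no dynamics whatsoever — it amounts to erasing the Beltrami datum on the annulus $V'\setminus\ol{V}$ — and is not $\mathcal{R}_0$. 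The map that actually has to be controlled is $[\varphi]\mapsto[\psi]$, where $\mathcal{R}_0(P\circ\varphi^{-1})=P\circ\psi^{-1}$, viewed as a map from $\text{Teich}(\cc\setminus\ol{V})$ into the Teichm\"{u}ller space of the complement of the larger domain, and its non-expansion is the nontrivial point of the proof.

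Your substitute justification — that a quasiconformal extension of $\psi$ can be chosen whose Beltrami coefficient on $\cc\setminus\ol{V'}$ is the holomorphic pushforward of that of $\varphi$ — is asserted (``one sees that'') but not established, and the construction does not hand it to you: $\psi$ is a conformal object on $V'$ (built from the Fatou coordinates and $\Psi_1$ in Proposition \ref{attracting202091203}), and the horn-map machinery provides no preferred quasiconformal extension of $\psi$ beyond $V'$, let alone one with dilatation dominated by that of $\wh{\varphi}$. The paper closes exactly this gap by a different mechanism: the holomorphic dependence statement, Theorem \ref{paraequ-84}(d), combined with the last part of Lemma \ref{lemma-2021-9-26-5}, shows that $[\varphi]\mapsto[\psi]$ is a \emph{holomorphic} map between Teichm\"{u}ller spaces, and the Royden--Gardiner theorem then gives the $1$-Lipschitz bound; composing with the extension map $\Xi$ yields the factor $\ld$. (A secondary omission: to define a point of Teichm\"{u}ller space one needs $\psi$ to be quasiconformally extendable, which is why the paper inserts an intermediate domain $V''$ with $\ol{V}\subset V''\subset\ol{V''}\subset V'$ and real-analytic boundary, rather than working with $\cc\setminus\ol{V'}$ directly as you do.) So either replace your pushforward paragraph by the holomorphy-plus-Royden--Gardiner argument, or supply a genuine dynamical construction of the controlled extension — the latter is substantially harder and is not what the paper does.
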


\begin{corollary}\label{equ2022-2-8-1}
The parabolic renormalization $\mathcal{R}_0$ on $\mathcal{F}_1^P$ has a unique fixed point, which belongs to $\mathcal{F}_0$. For any $f\in \mathcal{F}_1^P$,
$\{\mathcal{R}_0^nf\}^{\infty}_{n=0}$ converges to the fixed point exponentially fast with respect to the metric introduced in Theorem \ref{ren2020912-2}. Furthermore, for $f\in\mathcal{F}_0$, the renormalization $\mathcal{R}_0^nf$ considered as elements in $\mathcal{F}_0$ converges to the fixed point uniformly in the compact-open topology.
\end{corollary}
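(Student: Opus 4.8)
The plan is to read off the corollary from the contraction estimate of Theorem~\ref{ren2020912-2} by a Banach fixed point argument, and then to convert the resulting Teichm\"{u}ller convergence into the two analytic convergence statements. First, Theorem~\ref{paraequ-84}(a),(c) shows that $\mathcal{R}_0$ is a well-defined self-map of $\mathcal{F}_1^P$: for $f=P\circ\varphi^{-1}\in\mathcal{F}_1^P$ one has $f''(0)\neq0$ and $cp_f\in\text{Basin}(0)$, so the parabolic renormalization $\mathcal{R}_0f$ is defined, and it is again of the form $P\circ\psi^{-1}$ with $\psi$ univalent (indeed extending univalently to $V'\supset\overline V$), hence an element of $\mathcal{F}_1^P$. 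By Theorem~\ref{ren2020912-2} the pair $(\mathcal{F}_1^P,d_{Teich})$ is a complete metric space --- it is identified with the Teichm\"{u}ller space of $\cc\setminus\overline V$ --- and $\mathcal{R}_0$ is $\lambda$-Lipschitz with $\lambda=e^{-2\pi\cdot\mathrm{mod}(V'\setminus\overline V)}<1$. The Banach fixed point theorem then gives a unique $f_*\in\mathcal{F}_1^P$ with $\mathcal{R}_0f_*=f_*$, and for every $f\in\mathcal{F}_1^P$
\[
d_{Teich}(\mathcal{R}_0^nf,f_*)\le\lambda^n\,d_{Teich}(f,f_*)\longrightarrow0\quad(n\to\infty),
\]
which is the exponential convergence claimed.

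Next I would verify $f_*\in\mathcal{F}_0$. Since $f_*=\mathcal{R}_0f_*$, Theorem~\ref{paraequ-84}(c) exhibits $f_*=P\circ\psi_*^{-1}$ with $\psi_*$ univalent on the open set $V'$ strictly containing $\overline V$; using $f_*=\mathcal{R}_0^nf_*$ for all $n$, the structural data transported from $P$ by $\psi_*^{-1}$ --- holomorphy near $0$, the fixed point relation $f_*(0)=0$, $f_*'(0)=1$, and the restriction of $f_*$ to a branched covering onto $\cc^*$ with the single critical value $cv_P$ and all critical points of local degree $2$ --- persist on a domain that does not degenerate. Thus $f_*$ satisfies the defining conditions of $\mathcal{F}_0$; equivalently, $f_*$ lies in the image $\mathcal{R}_0(\mathcal{F}_1^P)$, every member of which, by Theorem~\ref{paraequ-84}(c), already carries the $\mathcal{F}_0$ branched-covering structure (the quasiconformal-extendability required in the definition of $\mathcal{F}_1^P$ playing no role in membership in $\mathcal{F}_0$).

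For the last assertion, take $f\in\mathcal{F}_0$. Assume first $f$ is not a quadratic polynomial, so Theorem~\ref{paraequ-84}(b) provides $\iota(f)\in\mathcal{F}_1^P$, and since the embedding $\iota$ is built so as to intertwine the two copies of $\mathcal{R}_0$, the class $\mathcal{F}_0$ being itself $\mathcal{R}_0$-invariant, one has $\mathcal{R}_0^nf=\iota^{-1}(\mathcal{R}_0^n\iota(f))$ up to linear conjugacy, and $\mathcal{R}_0^n\iota(f)\to f_*$ in $d_{Teich}$. To pass to the compact--open topology, write $\mathcal{R}_0^n\iota(f)=P\circ\varphi_n^{-1}$; since $d_{Teich}(\,\cdot\,,f_*)\to0$ the normalized quasiconformal maps $\varphi_n$ (with $\varphi_n(0)=0$, $\varphi_n'(0)=1$) have uniformly bounded dilatation, hence form a normal family, and any subsequential limit must be the uniformizing map $\psi_*$ with $f_*=P\circ\psi_*^{-1}$; therefore $\varphi_n\to\psi_*$ and $\mathcal{R}_0^nf=P\circ\varphi_n^{-1}\to f_*$ uniformly on compact subsets of a fixed neighborhood of $0$ which --- by the enlargement $V'\supset\overline V$ --- is eventually contained in every $\text{Dom}(\mathcal{R}_0^nf)$. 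If $f$ is a quadratic polynomial, then $\mathcal{R}_0f\in\mathcal{F}_0$ is not a polynomial (its horn map is defined on a bounded domain), so $\mathcal{R}_0f\in\mathcal{F}_0\setminus\{\text{quadratic polynomials}\}$ and the previous case applies to $\mathcal{R}_0f$.

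The main obstacle is this last translation rather than the fixed-point step. One must make precise that convergence in the Teichm\"{u}ller metric of the complex structures on $\cc\setminus\overline V$ is equivalent to locally uniform convergence of their uniformizing maps $\varphi_n$, and hence of $f_n=P\circ\varphi_n^{-1}$, and one must control the moving domains $\text{Dom}(\mathcal{R}_0^nf)$ so that a single compact set eventually lies in all of them --- which is exactly the purpose of the intermediate domain $V'$ with $\mathrm{mod}(V'\setminus\overline V)>0$ appearing in Theorems~\ref{paraequ-84} and \ref{ren2020912-2}. A secondary point worth spelling out is the compatibility $\iota\circ\mathcal{R}_0=\mathcal{R}_0\circ\iota$ (up to linear conjugacy) on $\mathcal{F}_0\setminus\{\text{quadratic polynomials}\}$, which is implicit in the construction of $\mathcal{F}_1^P$ but should be recorded.
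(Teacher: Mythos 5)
Your proposal is correct and is, in substance, the same argument as the paper's: the paper proves this corollary by pointing to Corollary~4.1 of Inou--Shishikura, whose proof is exactly the Banach fixed-point argument you reconstruct from the contraction estimate of Theorem~\ref{ren2020912-2}, followed by the identification of the fixed point as an element of $\mathcal{F}_0$ and the translation of Teichm\"{u}ller convergence into compact-open convergence.

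Two points where your write-up is a little thinner than it should be. First, when you say that every member of $\mathcal{R}_0(\mathcal{F}_1^P)$ ``already carries the $\mathcal{F}_0$ branched-covering structure by Theorem~\ref{paraequ-84}(c),'' you are asking more of that theorem statement than it literally asserts: part~(c) gives $\mathcal{R}_0f=P\circ\psi^{-1}$ with $\psi$ univalent on $V'$, i.e.\ membership in $\mathcal{F}_2^P\subset\mathcal{F}_1^P$, not membership in $\mathcal{F}_0$. What actually puts $\mathcal{R}_0f$ into $\mathcal{F}_0$ is the identity $\mathcal{R}_0f=\Psi_0\circ E_f\circ\Psi_0^{-1}=P\circ\psi^{-1}$ from Proposition~\ref{attracting202091203}(d) combined with the covering structure of $P$ established in Subsection~\ref{nearequ-20}: the horn map (and hence its projection $\mathcal{R}_0f$) inherits a branched covering onto $\cc^*$ with a single critical value and local degree~$2$ critical points precisely because $\psi$ is univalent and $P$ is such a cover. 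Spelling this out closes the gap. Second, the intertwining $\iota\circ\mathcal{R}_0=\mathcal{R}_0\circ\iota$ up to linear conjugacy --- which you rightly flag --- holds because the Fatou coordinates, horn map and hence $\mathcal{R}_0$ depend only on the germ/dynamics of $f$ near the parabolic point and not on the particular representative domain, but this should indeed be recorded explicitly rather than left implicit.
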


\begin{definition}\label{produspace-2021-9-1}
Given a constant $\al_{*}>0$. Denote by $\mathcal{I}(\al_{*})=(-\al_*,\al_*)\setminus\{0\}$. Consider the function space:
\beqq
e^{2\pi i\mathcal{I}(\al_*)}\times\mathcal{F}_1^P=\{e^{2\pi i\al}\cdot h(z):\ \al\in\mathcal{I}(\al_*)\ \text{and}\ h(z)\in\mathcal{F}_1^P\}.
\eeqq
The distance is defined by
$$d(f,g)=d_{Teich}\bigg(\frac{f}{f^{\prime}(0)},\frac{g}{g^{\prime}(0)}\bigg)+|f^{\prime}(0)-g^{\prime}(0)|,$$
where $d_{Teich}$ is the distance induced in Theorem \ref{ren2020912-2}.
\end{definition}

\begin{theorem} (Invariance of $e^{2\pi i\mathcal{I}(\al_*)}\times\mathcal{F}_1^P$ under the near-parabolic renormalization $\mathcal{R}_{\al}$) \label{ren2020912-1}
Let $m\in\nn$ be a given constant with $m\geq22$.
There is a sufficiently small positive number $\al_*$ such that for any $\al\in\cc$ with $0<|\al|\leq\al_*$ and $|\arg\,\al|<\tfrac{\pi}{4}$ (or $|\arg\,(-\al)|<\tfrac{\pi}{4}$), then the near-parabolic renormalization operator $\mathcal{R}_{\al}$ is defined in $e^{2\pi i\mathcal{I}(\al_*)}\times\mathcal{F}_1^P$ has the following properties:
\begin{itemize}
\item [(i)] For $f\in e^{2\pi i\mathcal{I}(\al_*)}\times\mathcal{F}_1^P$, the near-parabolic renormalization $\mathcal{R}_{\al}f$ is well-defined, $\mathcal{R}_{\al}f=e^{2\pi i\al'}\cdot P\circ\psi^{-1}$, $e^{2\pi i\al'}\cdot \psi\in e^{2\pi i\mathcal{I}(\al_*)}\times\mathcal{F}_1^P$, where $\psi$ can be extended to a univalent function from $V'$ to $\cc$.
\item [(ii)] The near-parabolic renormalization operator $\mathcal{R}_{\al}$ is holomorphic in the following sense: given a family $f_{\ld}=e^{2\pi i\al}P\circ\varphi^{-1}_{\ld}$ with a holomorphic function $\varphi_{\ld}(z)$ in two variables $(\ld,z)\in\Ld\times V$, where $\Ld$ is a complex manifold, the renormalization can be written as $\mathcal{R}_{\al}(f_{\ld})=e^{2\pi i\al'}\cdot P\circ \psi^{-1}_{\ld}$ with $\psi_{\ld}$ holomorphic in $(\ld,z)\in \Ld\times V'$.
\item[(iii)]For $f_1,f_2\in e^{2\pi i\mathcal{I}(\al_*)}\times\mathcal{F}_1^P$,
$d(\mathcal{R}_{\al}(f_1),\mathcal{R}_{\al}(f_2))\leq \ld\cdot d(f_1,f_2)$,
where the distance $d$ is introduced in Definition \ref{produspace-2021-9-1} and $\ld=e^{-2\pi\cdot\text{mod}(V'\setminus \ol{V})}<1$.
\end{itemize}

\end{theorem}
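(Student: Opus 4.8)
The plan is to treat $\mathcal{R}_{\al}$ as a controlled perturbation of the parabolic renormalization operator $\mathcal{R}_0$ of Theorem \ref{paraequ-84}, bootstrapping the three assertions from that theorem together with the bifurcation results of Theorems \ref{equ2020821-2} and \ref{equ2020821-3}. Fix $f=e^{2\pi i\al}h$ with $h=P\circ\va^{-1}\in\mathcal{F}_1^P$. By Theorem \ref{paraequ-84}(a), $f_0:=h$ has a non-degenerate $1$-parabolic fixed point at $0$ with $|f_0''(0)-(2m+1)|\le1$, so $f_0$ satisfies the hypotheses of Theorems \ref{equ2020821-2}--\ref{equ2020821-3}; since $|\arg\al|<\tfrac{\pi}{4}$ and $|\al|\le\al_*$ with $\al_*$ small, $f$ lies in the neighbourhood $\mathcal{N}(f_0;K,\vep)$ furnished there. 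Consequently the croissant-shaped fundamental regions $S_{att,f},S_{rep,f}$ are defined near those of $f_0$, the perturbed Fatou coordinates $\Phi_{att,f},\Phi_{rep,f}$ and the horn map $E_f$ exist and depend continuously and holomorphically on $f$, and the transit map $\chi_f(z)=z-\tfrac{1}{\al(f)}$ between the quotient cylinders is defined. Hence the first-return map $h_f=\chi_f\circ E_f$ and $\mathcal{R}_{\al}f=\text{Exp}^{\sharp}\circ\chi_f\circ E_f\circ(\text{Exp}^{\sharp})^{-1}$ are well defined, with $(\mathcal{R}_{\al}f)'(0)=e^{-2\pi i/\al}$; the lower-end case $|\arg(-\al)|<\tfrac{\pi}{4}$ is handled symmetrically with $\text{Exp}^{\flat}$.

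The core of (i) is to factor this return map as $e^{2\pi i\al'}P\circ\psi^{-1}$ with $\psi$ univalent on $V$ and extending univalently to $V'$. As in the parabolic case, $\mathcal{R}_{\al}f$, read in the $\text{Exp}^{\sharp}$-coordinate, is a branched covering whose single critical value is the image, under the attracting Fatou coordinate, of the forward orbit of the critical point $cp_f=\va(cp_P)$; the latter lands in the attracting petal because $cp_f\in\text{Basin}(0)$ by Theorem \ref{paraequ-84}(a). The combinatorics of this covering --- the pattern of sheets and slits $\ga_{b\cdot\pm},\ga_{a\cdot},\ga_{c\cdot\pm}$ and the identifications recorded in Lemma \ref{paraequ-2021-9-20-2} and in the discussion preceding Definition \ref{paraequ-89} --- is exactly that of $P$, since it is an open condition inherited from $f_0$ and stable under the small perturbation $\al$. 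Uniformizing this covering by $P$ then produces a univalent $\psi$ on $\psi(V)\subset\cc$ with $\mathcal{R}_{\al}f=e^{2\pi i\al'}P\circ\psi^{-1}$ there, $e^{2\pi i\al'}=(\mathcal{R}_{\al}f)'(0)$. The extra analyticity domain $V'$ comes from the fact that the \emph{repelling} Fatou coordinate of $f$ extends past $\ol{S_{rep,f}}$ (a perturbed form of Theorem \ref{paraequ-85}(c)), so that $\psi^{-1}$ extends across $\ol{V}$; the precise statement $\ol{V}\subset V'$ with $\text{mod}(V'\setminus\ol{V})>0$ is the one built for $m\ge22$ in Theorem \ref{paraequ-84}, and the openness of the defining conditions of $V'$ lets the construction survive for $\al_*$ small. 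Finally $\va$ is modified by the quasiconformal glueing used in Theorem \ref{paraequ-84}(c) to a quasiconformal map of $\cc$, so that, after reducing $\al'$ modulo $\zz$, $e^{2\pi i\al'}\cdot\psi\in e^{2\pi i\mathcal{I}(\al_*)}\times\mathcal{F}_1^P$.

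Assertion (ii) follows by feeding a holomorphic family $f_{\ld}=e^{2\pi i\al}P\circ\va_{\ld}^{-1}$ into the above construction: the perturbed Fatou coordinates, the horn map $E_{f_\ld}$, and the transit map $\chi_{f_\ld}$ depend holomorphically on $(\ld,z)$ by Theorem \ref{equ2020821-2} and the holomorphic-dependence theorem preceding it, hence so does $\mathcal{R}_{\al}(f_\ld)$; the uniformizing $\psi_\ld$ inherits holomorphy in $(\ld,z)\in\Ld\times V'$ from the Cauchy-integral formula for the inverse function (the inverse function theorem recalled above) applied on the fixed annulus $V'\setminus\ol{V}$. For (iii), since $f_1,f_2$ lie in the same fibre (same $\al$), one has $(\mathcal{R}_{\al}f_1)'(0)=(\mathcal{R}_{\al}f_2)'(0)=e^{-2\pi i/\al}$, so the multiplier term in the distance $d$ of Definition \ref{produspace-2021-9-1} vanishes and it suffices to bound the Teichm\"uller term. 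Writing $\mathcal{R}_{\al}f_j=e^{2\pi i\al'}P\circ\psi_j^{-1}$, the quasiconformal extensions of the $\va_j$ to $\cc\setminus\ol{V}$ represent the inputs as points of $T(\cc\setminus\ol{V})$, while the outputs $\psi_j$ carry Beltrami data supported in $\cc\setminus\ol{V'}$, i.e.\ points of $T(\cc\setminus\ol{V'})$; the restriction map $T(\cc\setminus\ol{V'})\to T(\cc\setminus\ol{V})$ induced by the inclusion is, by the Schwarz lemma for Teichm\"uller spaces (Royden--Gardiner), a strict contraction with factor at most $e^{-2\pi\cdot\text{mod}(V'\setminus\ol{V})}=\ld$, because the annulus $V'\setminus\ol{V}$ separates the support of the dilatation from $\ol{V}$. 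This is precisely the argument of Theorem \ref{ren2020912-2} applied fibrewise, and it yields $d(\mathcal{R}_{\al}f_1,\mathcal{R}_{\al}f_2)\le\ld\,d(f_1,f_2)$.

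The main obstacle is the factorization step of the second paragraph: one must verify that \emph{after} the parabolic bifurcation the renormalized object still lands in the $P$-class \emph{with} the annular cushion $V'\setminus\ol{V}$, i.e.\ that the image of the perturbed repelling fundamental region under $\text{Exp}^{\sharp}\circ\chi_f$ remains inside $\va(V')$. This is the near-parabolic analogue of the hard part of Inou--Shishikura, and it is exactly where the $m$-dependent combinatorial construction of the domain $V$ (and the ellipse behind it) of Theorem \ref{paraequ-84} is used: those estimates are proved at $\al=0$ and must be upgraded to a full neighbourhood of $\al=0$ by a compactness and continuity argument, shrinking $\al_*$ if necessary. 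Once this containment is in place, (i)--(iii) are organized as above.
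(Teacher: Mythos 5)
Your proposal follows essentially the same route as the paper: the paper's own proof of Theorem \ref{ren2020912-1} is a one-line deferral to Main Theorem 3 (Section 7) of Inou--Shishikura, i.e.\ exactly the ingredients you assemble --- Shishikura's perturbed Fatou coordinates and transit map $\chi_f$ from Theorems \ref{equ2020821-2}--\ref{equ2020821-3}, persistence of the $P$-covering structure together with the cushion $\ol{V}\subset V'$ built in Theorem \ref{paraequ-84}, and the fiberwise Teichm\"uller contraction of Theorem \ref{ren2020912-2}. The one caveat is that the step you flag as the ``main obstacle'' (a single $\al_*$ valid uniformly over the whole class) is precisely what that citation stands in for: in Inou--Shishikura the uniformity comes not from literal compactness of $\mathcal{F}_1^P$ (the quasi-conformal extension condition is not closed under limits) but from the fact that all quantitative estimates at $\al=0$ depend only on univalence of $\va$ (Koebe-type bounds), so they hold with margins and survive the perturbation in $\al$.
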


\begin{remark}
The statements (i) and (ii) in Theorem \ref{ren2020912-1} are corresponding to statements (c) and (d) in Theorem \ref{paraequ-84}, the statement (iii) is corresponding to Theorem \ref{ren2020912-2}.
\end{remark}

\begin{corollary}\label{cor-2021-9-26-8}
There is a sufficiently large positive integer $N$ such that for $\al\in\text{Irrat}_{\geq N}$, the sequence of functions given by the near-parabolic renormalizaiton, $(f_n)_{n\geq0}$, is given by $f_{n+1}=\mathcal{R}_{\al}(f_n)$ and $(f_n)_{n\geq0}\subset e^{2\pi i\mathcal{I}(\al_*)}\times\mathcal{F}_1^P$. And, the near-parabolic renormalization is contraction under the Teichm\"{u}ller metric.
\end{corollary}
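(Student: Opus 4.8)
The plan is to deduce this corollary directly from Theorem \ref{ren2020912-1} together with the standard combinatorial bound relating continued-fraction tails to the parameter $\al_*$. First I would recall that $\al\in\text{Irrat}_{\geq N}$ means $\al$ is irrational with a continued fraction expansion $\al=[a_1,a_2,a_3,\dots]$ all of whose partial quotients satisfy $a_j\geq N$; equivalently, writing the Gauss map $G(x)=\{1/x\}$, every iterate $G^{n}(\al)$ lies in $(0,1/N)$. Applying the arithmetic identity underlying the near-parabolic renormalization — that if $\al_n:=G^{n}(\al)$ then the fiber operator sends the $\al_n$-fiber to the $\al_{n+1}$-fiber, up to the ``mod $\zz$'' normalization built into the definition $\mathcal{R}:(\al,h)\mapsto(-1/\al \bmod\zz,\mathcal{R}_\al h)$ — I would choose $N$ so large that $1/N<\al_*$, where $\al_*$ is the constant furnished by Theorem \ref{ren2020912-1}. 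Then each $\al_n\in(0,1/N)\subset(0,\al_*)=\mathcal{I}(\al_*)\cap\rr_{>0}$, and in particular the hypothesis $|\arg\al_n|<\pi/4$ (indeed $\arg\al_n=0$) of Theorem \ref{ren2020912-1} is satisfied at every level.

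Next I would set up the induction: start with $f_0\in e^{2\pi i\mathcal{I}(\al_*)}\times\mathcal{F}_1^P$ (any such initial datum, e.g.\ $f_0=e^{2\pi i\al}P$ or a representative built from $P$ as in Definition \ref{def-2021-9-26-1}); then since $\al_0=\al$ has $0<|\al_0|\le\al_*$ and $\arg\al_0=0$, part (i) of Theorem \ref{ren2020912-1} applies and gives $f_1=\mathcal{R}_{\al}(f_0)=e^{2\pi i\al'}P\circ\psi^{-1}$ with $e^{2\pi i\al'}\psi\in e^{2\pi i\mathcal{I}(\al_*)}\times\mathcal{F}_1^P$, where — and this is the point to verify carefully — the new rotation number $\al'$ is precisely $\al_1=G(\al_0)=-1/\al_0\bmod\zz$, hence again lies in $(0,1/N)\subset\mathcal{I}(\al_*)$. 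Iterating, at step $n$ the rotation number of $f_n$ is $\al_n=G^n(\al)\in(0,\al_*)$, so the hypotheses of Theorem \ref{ren2020912-1} remain valid and $f_{n+1}=\mathcal{R}_{\al}(f_n)$ is again a legitimate element of $e^{2\pi i\mathcal{I}(\al_*)}\times\mathcal{F}_1^P$. This establishes that the whole orbit $(f_n)_{n\ge0}$ stays in the invariant space, which is the first assertion.

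For the contraction statement, I would invoke part (iii) of Theorem \ref{ren2020912-1}: for any two orbits with the same $\al$, $d(\mathcal{R}_\al(f),\mathcal{R}_\al(g))\le\ld\, d(f,g)$ with $\ld=e^{-2\pi\,\mathrm{mod}(V'\setminus\ol V)}<1$; since $V'\setminus\ol V$ is a fixed annulus (depending only on $m$), $\ld$ is a fixed constant $<1$ independent of the level $n$, so the composition $f_0\mapsto f_n$ is a $\ld^n$-contraction. Because the $d$-distance dominates the Teichm\"uller distance on the fiber by Definition \ref{produspace-2021-9-1}, the near-parabolic renormalization is in particular a contraction in the Teichm\"uller metric, as claimed. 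The only genuinely delicate point is the bookkeeping in the previous paragraph — confirming that the $\al'$ produced by $\mathcal{R}_\al$ is exactly the Gauss-map image $G(\al)$ and that the sign/argument conditions ($\arg\al_n$ lands in $(-\pi/4,\pi/4)$, here trivially $=0$, rather than drifting) propagate; this is exactly where the choice $1/N<\al_*$ is used, and it is where one must be careful that ``$\mathrm{Irrat}_{\ge N}$'' is defined with all partial quotients $\ge N$ so that the bound is uniform along the orbit. Everything else is a direct citation of Theorem \ref{ren2020912-1} and Definition \ref{produspace-2021-9-1}.
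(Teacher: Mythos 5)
Your proposal is correct and follows the route the paper itself intends: Corollary \ref{cor-2021-9-26-8} is stated without a separate proof and is meant to follow exactly as you argue, by choosing $N$ with $1/N<\al_*$ and iterating Theorem \ref{ren2020912-1}(i) along the continued-fraction orbit for invariance and (iii) for the uniform contraction factor $\ld=e^{-2\pi\,\text{mod}(V'\setminus\ol{V})}$ --- the same bookkeeping the paper carries out later in its renormalization-tower subsection. One small correction: $-1/\al \bmod \zz$ equals $-G(\al)$ rather than $G(\al)$, so the renormalized rotation number is $-\al_{n+1}$ unless one conjugates by $s(z)=\ol{z}$ as the paper does in the tower construction; this is harmless for your induction since $\mathcal{I}(\al_*)=(-\al_*,\al_*)\setminus\{0\}$ is symmetric and Theorem \ref{ren2020912-1} also covers $|\arg(-\al)|<\tfrac{\pi}{4}$.
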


\begin{corollary}\label{cor-2021-9-26-6}
There is a sufficiently large positive integer $N$ such that for $\al\in\text{Irrat}_{\geq N}$
and $f(z)=e^{2\pi i\al}h(z)$ with $h\in\mathcal{F}_1^P$, the post-critical set of $f$ are contained in the domain of the definition of $f$ and can be iterated infinitely many times. Furthermore, there is an infinite sequence of periodic orbits to which the critical orbit does not accumulate.
\end{corollary}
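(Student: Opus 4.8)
The plan is to carry over the renormalization--tower argument of Inou and Shishikura \cite{InouShishikura2016} (see also Buff and Ch\'{e}ritat \cite{BuffCheritat2012}) to the present family, the tower itself being supplied by Corollary \ref{cor-2021-9-26-8}. Fix $N$ large enough for that corollary, let $\al\in\text{Irrat}_{\ge N}$ have continued fraction $[a_1,a_2,\dots]$ with all $a_j\ge N$, and set $f_0=f=e^{2\pi i\al}h$. Then $f_{n+1}=\mathcal{R}_{\al_n}(f_n)$ is defined for all $n\ge0$, where $\al_0=\al$ and $\al_{n+1}=-\tfrac{1}{\al_n}\bmod\zz$, and by Corollary \ref{cor-2021-9-26-8} each $f_n=e^{2\pi i\al_n}P\circ\psi_n^{-1}$ lies in $e^{2\pi i\mathcal{I}(\al_*)}\times\mathcal{F}_1^P$, with $\psi_n$ univalent on $V'$, $\psi_n(0)=0$, $\psi_n'(0)=1$. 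Since $\al$ is a real irrational of high type, every $\al_n$ lies in $(0,1/N)$ and is again of high type; in particular $\al_n\neq0$ and $|\arg\al_n|<\tfrac{\pi}{4}$, so the near-parabolic renormalization of Theorem \ref{ren2020912-1} applies at every level and the tower never degenerates.

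To control the post-critical set I would use the description of $\mathcal{R}_{\al_n}$ through the first return map $h_n=\chi_{f_n}\circ E_{f_n}$ of $f_n$ to $\overline{S_{rep,f_n}}\setminus\{0,\si(f_n)\}$ given by Theorems \ref{equ2020821-2}--\ref{equ2020821-3}: this produces, at each level $n$, a univalent ``renormalization change of coordinates'' $\Psi_n$ (from the $f_{n+1}$-plane into $\text{Dom}(f_n)$, with $\Psi_n(0)=0$) that carries a branch of the return dynamics of $f_n$ onto $f_{n+1}$ near $0$ and, after finitely many (say $k_n\ge1$) iterates of $f_n$, carries $cv_{f_{n+1}}$ back onto $f_n^{\,k_n}(cv_{f_n})$ with $cv_{f_n}=e^{2\pi i\al_n}cv_P$, so that the forward $f_{n+1}$-orbit of $cv_{f_{n+1}}$ corresponds under $\Psi_n$ to the $f_n$-orbit of $cv_{f_n}$ beyond the $k_n$-th step. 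Iterating this identification downward through the tower, and using that the quasiconformal extensions built into $\mathcal{F}_1^P$ make $\{\psi_n\}$ a uniformly univalent family (Koebe distortion), so that there is a single compact set $\mathcal{D}\subset\bigcap_n\text{Dom}(f_n)$ absorbing $0$, all the $cp_{f_n}$, and the finitely many ``fresh'' iterates $\{cv_{f_n},\dots,f_n^{\,k_n}(cv_{f_n})\}$ at every level, one obtains that the critical orbit of $f$ is defined for all positive iterates and
\beqq
\text{PC}(f)\ \subset\ \bigcup_{n\ge0}\Psi_0\circ\Psi_1\circ\cdots\circ\Psi_{n-1}(\mathcal{D})\ \subset\ \text{Dom}(f),
\eeqq
a nested union of univalent images of $\mathcal{D}$ shrinking towards $0\in\text{Dom}(f)$; this also gives $\overline{\text{PC}(f)}\subset\text{Dom}(f)$, which is the first assertion.

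For the cycles, Theorem \ref{equ2020821-2} equips each $f_n$ ($n\ge1$) with a second fixed point $\si_n=\si(f_n)\ne0$ (since $\al_n\ne0$), satisfying $\si_n=-2\pi i\al_n/a_2^{(n)}+o(\al_n)$ with $a_2^{(n)}=\tfrac{1}{2}f_n''(0)$ close to $\tfrac{2m+1}{2}$ by Theorem \ref{paraequ-84}(a), so $\si_n\to0$. Writing $f_n=\text{Exp}^{\sharp}\circ\chi_{f_{n-1}}\circ E_{f_{n-1}}\circ(\text{Exp}^{\sharp})^{-1}$, the fixed-point equation $f_n(\si_n)=\si_n$ says that $(\text{Exp}^{\sharp})^{-1}(\si_n)\in\cc/\zz$ is a fixed point of the return map $h_{n-1}=\chi_{f_{n-1}}\circ E_{f_{n-1}}$, hence by Theorem \ref{equ2020821-3} it corresponds to a periodic point of $f_{n-1}$ lying in $\overline{S_{rep,f_{n-1}}}$ near one of the horns; descending the tower $n$ times turns $\si_n$ into a genuine periodic cycle $\mathcal{O}_n$ of $f$, of period $q_n$. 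Each descent multiplies the period by a factor at least the relevant partial quotient, so the $q_n$ are (essentially) the continued-fraction denominators of $\al$ and $q_n\to\infty$.

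The last, and hardest, point is that $\text{PC}(f)$ does not accumulate at $\mathcal{O}_n$. Transporting everything to the $f_{n-1}$-plane by the charts $\Psi_0,\dots,\Psi_{n-2}$ (which have distortion bounded uniformly in $n$), this reduces to separating $\text{PC}(f_{n-1})$ from the periodic point lying over $\si_n$. The mechanism is that the return translation $\chi_{f_{n-1}}(z)=z-1/\al_{n-1}$ together with $|\arg\al_{n-1}|<\tfrac{\pi}{4}$ forces $\text{PC}(f_{n-1})$ to approach $0$ through the horn of $S_{rep,f_{n-1}}$ pointing at $0$, whereas $\si_n\to0$ corresponds to the upper end $+i\infty$ of the quotient cylinder $\mathcal{C}_{rep,f_{n-1}}$, so the periodic point above it sits in the \emph{opposite} horn (the one pointing at $\si(f_{n-1})$); a lower bound, uniform in $n$, on the separation of the two horns inside $\mathcal{C}_{rep,f_{n-1}}$ then produces a definite gap between $\mathcal{O}_n$ and $\text{PC}(f)$. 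I expect this separation estimate to be the main obstacle: it is the analogue of \cite[Main Theorem 3]{InouShishikura2016}, and the extra work here is to check that all the geometric constants involved are uniform over $m\ge22$, which is exactly what the combinatorial construction of $V$ and $V'$ in Theorem \ref{paraequ-84} and the contraction estimate of Theorem \ref{ren2020912-2} are designed to provide. Everything else is the standard bookkeeping of the renormalization tower.
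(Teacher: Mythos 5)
Your proposal follows the same route as the paper, which is to transplant the renormalization-tower argument of Inou--Shishikura (their Corollary 4.2 and the surrounding Sections 4 and 7) into the present setting; the paper itself gives no details beyond citing that corollary, so you are essentially fleshing out the citation. The three structural pieces you isolate are the correct ones: (i) the tower $f_{n+1}=\mathcal{R}_{\al_n}(f_n)$ stays in $e^{2\pi i\mathcal{I}(\al_*)}\times\mathcal{F}_1^P$ by Corollary \ref{cor-2021-9-26-8}, (ii) the post-critical set is swept into the domain through the univalent renormalization charts $\Psi_0\circ\cdots\circ\Psi_{n-1}$ with uniformly bounded distortion, and (iii) the periodic orbits $\mathcal{O}_n$ are produced by pushing the second fixed points $\si(f_n)\ne 0$ down the tower. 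This is all consistent with the paper.

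Where I would push back is on the last paragraph, the separation of $\mathcal{O}_n$ from $\text{PC}(f)$, which you rightly flag as the hardest point but then sketch with an internal inconsistency. You place $\si_n$ near the upper end $+i\infty$ of the repelling cylinder of $f_{n-1}$ (correct, since $\text{Exp}^{\sharp}$ sends $+i\infty$ to $0$ and $\si_n\to0$), but then assert that the periodic point of $f_{n-1}$ lying above it sits in the \emph{opposite} horn, the one pointing at $\si(f_{n-1})$. These two statements do not cohere: a point of the cylinder near $+i\infty$ projects, under $\Phi_{\text{rep},f_{n-1}}^{-1}$, into the horn of $S_{\text{rep},f_{n-1}}$ that is identified with that end, not the opposite one, so the ``opposite horn'' claim does not follow from the $+i\infty$ location and would need a separate justification. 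The mechanism Inou--Shishikura actually use is different: it is the bookkeeping of the perturbed Fatou coordinate domains $\mathcal{P}_{f_n}$ (as in the paper's Subsection 3.4 and the sets $C_j$, $U_j(\al)$) that confines the post-critical set, together with the observation that the period-$q_n$ cycle coming from $\si(f_n)$ necessarily leaves that confined region at level $n$. A correct write-up would replace the horn-versus-horn picture by the statement that $\text{PC}(f_n)$ is trapped in $\bigcup_k f_n^{ k}(\mathcal{P}_{f_n})$ while $\si(f_n)$ is a boundary point of $\mathcal{P}_{f_n}$ excluded from that union, and then transport the resulting positive gap through the distortion-controlled charts. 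Two further small points: you should be careful that after conjugation by $s(z)=\bar z$ the sequence $\al_n$ stays positive and in $(0,1/N)$, which is what makes $|\arg\al_n|<\tfrac{\pi}{4}$ automatic; and the ``uniform in $n$'' lower bound you want for the cylinder separation only needs to be positive at each level, since the pulled-back gap in the $f$-plane shrinks to zero anyway, so phrasing it as uniform is slightly misleading. With the separation mechanism corrected, the rest of the proposal is sound and is the same argument the paper is implicitly invoking.
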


\subsection{Proof of Theorem \ref{paraequ-84}-Invariance of $\mathcal{F}^P_1$ under the parabolic-renormalization $\mathcal{R}_0$}\label{equ2021-12-2}

The main problem is the study of (c) of  Theorem \ref{paraequ-84}, to show the existence of $\psi$ such that
\beqq
\mathcal{R}_0f=\Psi_0\circ E_f\circ\Psi^{-1}_0=P\circ \psi^{-1},
\eeqq
where $\Psi_0=c\cdot\text{Exp}^{\sharp}(z)$ with some constant $c\in\cc^{*}$. So, by Definition \ref{renorm2020912-3}, $\psi$ should be written as
\begin{align}\label{equ-2021-12-1}
\psi=&\Psi_0\circ(E_f)^{-1}\circ \Psi_0^{-1}\circ P=\Psi_0\circ(\Phi_{att,f}\circ\Phi_{rep,f}^{-1})^{-1}\circ \Psi_0^{-1}\circ P\nonumber\\
=&\Psi_0\circ \Phi_{rep,f}\circ\Phi_{att,f}^{-1}\circ\Psi_0^{-1}\circ P
=\Psi_0\circ\Phi_{rep,f}\circ f^{-n}\circ\Phi_{att,f}^{-1}\circ \Psi_0^{-1}\circ P.
\end{align}
The last equality is derived by the fact $\Phi_{rep,f}(f(z))=\Phi_{rep,f}(z)+1$ and $\Psi_0(z+1)=\Psi_0(z)$. And,
$\Phi_{att,f}$ and $\Phi_{rep,f}$ are defined in attracting and repelling half-neighborhoods
of $0$, which are corresponding to $\{z:\ \text{Re}\,z>L\}$ and  $\{z:\ \text{Re}\,z<-L\}$ for $F$ in Theorem \ref{paraequ-85}, then the inverse branch $f^{-n}$ maps part of attracting half-neighborhood to repelling one. Note that the multi-valuedness and branching of $f^{-n}$ should be balanced by the properties of the  map $P$ and the value of $m$ at the beginning of the composition.

The main technique is the movement of the fixed point $0$ to $\infty$, the introduction of a new map $Q$ (see \eqref{renorm2020912-5}) having a parabolic fixed point at $\infty$, several new classes of functions  (see Definition \ref{renorm202091206}), and study the relationship between these functions (see Propositions \ref{paraequ-86} and \ref{equ-20208-23-1}).

On the repelling side of the fixed point, a Riemann surface $X$ with a projection $\pi_X:\ X\to\cc$ is introduced, a function $g:\ X\to X$ corresponding to an inverse branch of $f$ is given,  the repelling Fatou coordinate is defined on the Riemann surface (see Propositions \ref{paraequ-14} and \ref{near-equat-2}).

On the attracting side of the fixed point, Proposition \ref{attracting2020912-1} provides an estimate on $\Phi_{att}$ in the region $\text{Re}\,\Phi_{att}(z)\geq1$, where the normalization $\Phi_{att}(cv_Q)=1$ is used. Two certain regions are given, and the inverse domains are given, which can be lifted to the Riemann surface $X$ (see Proposition \ref{attracting2020912-2}). Based on these results, a domain is partition into smaller pieces, on which the map $\psi$ is defined, this completes the proof of the existence of $\psi$ (see Proposition \ref{attracting202091203}).

Consider the functions
\beqq
\psi_1(z)=4\psi_{Koebe}(-\tfrac{1}{z})=-\frac{4z}{(1+z)^2},\ \psi_0(z)=-\frac{4}{z},
\eeqq
and
\beq\label{renorm2020912-5}
Q=\psi_0^{-1}\circ P\circ \psi_1=\frac{(1+z)^{2+2m}}{z(1-z)^{2m}}=z\frac{\big(1+\frac{1}{z}\big)^{2+2m}}{\big(1-\frac{1}{z}\big)^{2m}}.
\eeq

\begin{lemma}\label{paraequ-27}\cite[Lemma 5.10]{InouShishikura2016}
Consider the function
\beq\label{paraequ-1}
\zeta(w)=e_1 w+e_0+\frac{e_{-1}}{w},\ w\in\cc\setminus\{0\},
\eeq
where $e_1,e_0,e_{-1}\in\rr$ are constants, this function is a conformal map from $\cc\setminus\overline{\cd}\to\cc\setminus E_1$, that is, this map sends $\{w:\ |w|=r,\ r\geq1\}$ onto $\partial E_r$, where
\beq\label{paraequ-3}
E_r=\bigg\{x+iy:\ \bigg(\frac{x-e_0}{a_{E}(r)}\bigg)^2+\bigg(\frac{y}{b_E(r)}\bigg)^2\leq1\bigg\},
\eeq
and
\beq\label{paraequ-90}
a_{E}(r)=e_1 r+\frac{e_{-1}}{r}\ \mbox{and}\ b_{E}(r)=e_1r-\frac{e_{-1}}{r}
\eeq
are non-zero.
\end{lemma}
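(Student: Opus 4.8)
The plan is to read off all four assertions of the lemma directly from the explicit form $\zeta(w)=e_1w+e_0+e_{-1}/w$. First I would extract from the non-vanishing hypothesis the inequality $|e_{-1}|<|e_1|$ (and in particular $e_1\neq0$): if $e_{-1}/e_1\geq1$ then $b_E\bigl(\sqrt{e_{-1}/e_1}\bigr)=0$ with $\sqrt{e_{-1}/e_1}\geq1$, and if $e_{-1}/e_1\leq-1$ then $a_E\bigl(\sqrt{-e_{-1}/e_1}\bigr)=0$ with $\sqrt{-e_{-1}/e_1}\geq1$, contradicting the hypothesis in either case. This is the only quantitative input needed. Injectivity on $\cc\setminus\overline{\cd}$ then follows from a one-line identity: $\zeta(w_1)=\zeta(w_2)$ forces $e_1(w_1-w_2)=e_{-1}(w_1-w_2)/(w_1w_2)$, hence $w_1=w_2$ or $w_1w_2=e_{-1}/e_1$, and the latter is excluded for $|w_1|,|w_2|>1$ since then $|w_1w_2|>1>|e_{-1}/e_1|$. (Equivalently, $\zeta$ is a degree-two rational self-map of $\wh{\cc}$ with $\zeta(e_{-1}/(e_1w))=\zeta(w)$, and the involution $w\mapsto e_{-1}/(e_1w)$ sends $\{|w|>1\}$ into $\{|w|<1\}$.) Since $\zeta$ is holomorphic on $\cc^{*}\supset\cc\setminus\overline{\cd}$ with $\zeta'(w)=e_1-e_{-1}/w^2\neq0$ there (because $|e_{-1}/w^2|<|e_{-1}|<|e_1|$), $\zeta$ is conformal onto its image.

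Next I would identify the image circle by circle. Writing $w=re^{i\theta}$ with $r\geq1$ and separating real and imaginary parts gives $\text{Re}\,\zeta(w)=e_0+\bigl(e_1r+e_{-1}/r\bigr)\cos\theta$ and $\text{Im}\,\zeta(w)=\bigl(e_1r-e_{-1}/r\bigr)\sin\theta$, that is $\bigl(\text{Re}\,\zeta(w)-e_0\bigr)/a_E(r)=\cos\theta$ and $\text{Im}\,\zeta(w)/b_E(r)=\sin\theta$. Hence $\zeta$ maps $\{|w|=r\}$ into $\partial E_r$, and since $a_E(r),b_E(r)\neq0$ the curve $\theta\mapsto\bigl(e_0+a_E(r)\cos\theta,\,b_E(r)\sin\theta\bigr)$ traverses all of $\partial E_r$, so the image of the circle is exactly $\partial E_r$.

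Finally, for surjectivity onto $\cc\setminus E_1$ I would use that the ellipses $E_r$, $r\geq1$, are strictly nested and exhaust $\cc$: a direct computation gives $\frac{d}{dr}a_E(r)^2=\frac{d}{dr}b_E(r)^2=2\bigl(e_1^2r-e_{-1}^2/r^3\bigr)\geq2(e_1^2-e_{-1}^2)>0$ for $r\geq1$, while $a_E(r)^2,b_E(r)^2\to\infty$ as $r\to\infty$. Consequently, for $r>1$ one has $E_1\subset\text{int}\,E_r$, so $\zeta(\{|w|>1\})\subset\bigcup_{r>1}\partial E_r\subset\cc\setminus E_1$; conversely each $p\in\cc\setminus E_1$ lies on $\partial E_{r(p)}$ with $r(p):=\inf\{r\geq1:p\in E_r\}>1$, hence $p=\zeta(w)$ for some $|w|=r(p)>1$ by the previous step. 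Therefore $\zeta$ maps $\cc\setminus\overline{\cd}$ conformally onto $\cc\setminus E_1$, carrying $\{|w|=r\}$ onto $\partial E_r$, as claimed. There is no real obstacle here: the argument is a short chain of elementary computations, and the only points demanding a little care are deriving $|e_{-1}|<|e_1|$ from the non-vanishing hypothesis and verifying the monotonicity and exhaustion of $\{E_r\}_{r\geq1}$ that drive surjectivity.
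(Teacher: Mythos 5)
Your proof is correct, and it is in fact strictly more complete than the paper's own argument. The paper's proof of this lemma consists solely of the polar-coordinate computation that forms your second paragraph: writing $w=re^{i\tht}$ and reading off $\zeta(w)=e_0+\big(e_1r+\tfrac{e_{-1}}{r}\big)\cos\tht+i\big(e_1r-\tfrac{e_{-1}}{r}\big)\sin\tht$, so that $\{|w|=r\}$ lands on $\partial E_r$; injectivity, conformality, the nesting of the ellipses $E_r$, and surjectivity onto $\cc\setminus E_1$ are left implicit (the lemma is quoted from Inou--Shishikura). You supply exactly those missing pieces, and they check out: the factorization $\zeta(w_1)-\zeta(w_2)=(w_1-w_2)\big(e_1-\tfrac{e_{-1}}{w_1w_2}\big)$ excludes coincidences for $|w_1|,|w_2|>1$ once $|e_{-1}|<|e_1|$; $\zeta'(w)=e_1-e_{-1}/w^2\neq0$ (indeed injectivity alone already gives conformality); and $\tfrac{d}{dr}a_E(r)^2=\tfrac{d}{dr}b_E(r)^2=2\big(e_1^2r-e_{-1}^2/r^3\big)\geq2(e_1^2-e_{-1}^2)>0$ with both semi-axes tending to infinity gives the strict nesting and exhaustion that yield surjectivity. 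The only caveat is your opening reduction: the hypothesis that $a_E(r)$ and $b_E(r)$ are non-zero does not literally force $e_1\neq0$ (if $e_1=0$ and $e_{-1}\neq0$, neither function ever vanishes, yet the conclusion fails, the image being a punctured disk), so your derivation of $|e_{-1}|<|e_1|$ tacitly assumes $e_1\neq0$. This is an imprecision of the statement rather than a flaw in your argument: in the paper the constants are fixed as $e_1=0.84$, $e_{-1}=0.6$, and Lemma \ref{paraequ-15} imposes $e_1>e_{-1}>0$, under which your inequality and everything downstream hold.
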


\begin{proof}
Let $w=re^{i\tht}$ with $r\in\rr_{+}$ and $\tht\in\rr$, one has
\begin{align*}
\zeta(w)&=\zeta(re^{i\tht})=e_1 re^{i\tht}+e_0+\frac{e_{-1}}{re^{i\tht}}=
e_1 r(\cos(\tht)+i\sin(\tht))+e_0+\frac{e_{-1}}{r}e^{-i\tht}\\
=&e_1 r(\cos(\tht)+i\sin(\tht))+e_0+\frac{e_{-1}}{r}(\cos(\tht)-i\sin(\tht))\\
=&e_0+\bigg(e_1 r+\frac{e_{-1}}{r}\bigg)\cos(\tht)+i\bigg(e_1 r-\frac{e_{-1}}{r}\bigg)\sin(\tht).
\end{align*}
\end{proof}

\begin{definition}
In the following discussions, we choose $e_1=0.84$, $e_0=-0.18$, and $e_{-1}=0.6$. Consider the elliptic region with $a_{E}=a_E(1)=0.84+0.6=1.44$ and $b_E=b_E(1)=0.84-0.6=0.24$,
\begin{align*}
E=E_{1}=&\bigg\{z=x+iy\in\cc:\ \bigg(\frac{x-e_0}{a_E}\bigg)^2+\bigg(\frac{y}{b_E}\bigg)^2\leq1\bigg\}\\
=&\bigg\{z=x+iy\in\cc:\ \bigg(\frac{x+0.18}{1.44}\bigg)^2+\bigg(\frac{y}{0.24}\bigg)^2\leq1\bigg\}.
\end{align*}

Given any $m\in\nn$, set
\beq\label{near-equat-6}
\eta:=\frac{1}{2\pi}\left(\log(12(m+1)\times 30^{2+2m})+1\right).
\eeq
Recall in Proposition \ref{paraequ-2}, the critical value of $P$ is $cv_P=-\frac{m^m}{(m+1)^{m+1}}$, set
\begin{align}\label{paraequ-92}
V^{'}:=&\bigg(\mathcal{U}^P_{*}\bigcap P^{-1}\big(\cd(0,|cv_P|e^{2\pi\eta})\big)\bigg)\nonumber\\
&\setminus\big(\mbox{the component of}\ P^{-1}(\cd(0,|cv_P|e^{-2\pi\eta}))\ \mbox{containing}\ -1\big)
\end{align}
and
\beq\label{para2021-9-1-2}
V:=\psi_1\bigg(\widehat{\cc}\setminus \big(E\cup \overline{\cd}\big(\cot(\tfrac{\pi}{m+1})i,\tfrac{1}{\sin(\tfrac{\pi}{m+1})}\big)\cup \overline{\cd}\big(-\cot(\tfrac{\pi}{m+1})i,\tfrac{1}{\sin(\tfrac{\pi}{m+1})}\big)\big)\bigg),
\eeq
where $\mathcal{U}^P_{*}$ is introduced in \eqref{defPdiff2021-8-30-3}.

\end{definition}

\begin{proposition}\label{paraequ-86}
For $\eta$ introduced in \eqref{near-equat-6}, $V$ introduced in \eqref{para2021-9-1-2}, and $V'$ in \eqref{paraequ-92}, one has
\beqq
\ol{V}\subset V'.
\eeqq
\end{proposition}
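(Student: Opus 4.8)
The plan is to establish the inclusion $\ol{V}\subset V'$ by transporting both sets to the $z$-plane via the Koebe-type change of coordinates $\psi_1(z)=-4z/(1+z)^2$, and then comparing them with the aid of the conjugated map $Q=\psi_0^{-1}\circ P\circ\psi_1$ of \eqref{renorm2020912-5}. Recall that by construction $V=\psi_1(W)$, where $W=\widehat{\cc}\setminus\big(E\cup\ol{\cd}(\cot(\tfrac{\pi}{m+1})i,\tfrac{1}{\sin(\tfrac{\pi}{m+1})})\cup\ol{\cd}(-\cot(\tfrac{\pi}{m+1})i,\tfrac{1}{\sin(\tfrac{\pi}{m+1})})\big)$, so $\ol V=\psi_1(\ol W)$, and $V'$ is cut out of $\mathcal{U}^P_{*}$ by the two round annular conditions $|cv_P|e^{-2\pi\eta}<|P(\cdot)|<|cv_P|e^{2\pi\eta}$ (with the $-1$-component removed from the inner part). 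First I would pull the defining conditions of $V'$ back through $\psi_1$: a point $\psi_1(z)$ lies in $V'$ iff $z$ lies in the $\psi_1$-preimage of $\mathcal{U}^P_*$ (which is some explicit union of the $S_{k,\pm}$-type sectors, controlled via Lemma \ref{paraequ-2021-9-20-2}) \emph{and} $|cv_P|e^{-2\pi\eta}<|P(\psi_1(z))|<|cv_P|e^{2\pi\eta}$. Since $P\circ\psi_1=\psi_0\circ Q$ and $\psi_0(w)=-4/w$, the size condition $|P(\psi_1(z))|<|cv_P|e^{2\pi\eta}$ becomes $|Q(z)|>\tfrac{4}{|cv_P|}e^{-2\pi\eta}$, and symmetrically the inner condition becomes $|Q(z)|<\tfrac{4}{|cv_P|}e^{2\pi\eta}$.

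With that reduction in place, the heart of the matter is to show that for every $z\in\ol W$ one has the two-sided bound $\tfrac{4}{|cv_P|}e^{-2\pi\eta}<|Q(z)|<\tfrac{4}{|cv_P|}e^{2\pi\eta}$, together with the membership $z\in\psi_1^{-1}(\mathcal{U}^P_*)$. For the size estimate I would use the explicit product formula $Q(z)=z\,(1+\tfrac1z)^{2+2m}/(1-\tfrac1z)^{2m}$ from \eqref{renorm2020912-5}. The set $\ol W$ is the complement of $E$ and the two disks $\ol{\cd}(\pm\cot(\tfrac{\pi}{m+1})i,\tfrac{1}{\sin(\tfrac{\pi}{m+1})})$; crucially these two disks are exactly the images under $w\mapsto 1/\bar{}$-type inversion of the sectors bordering $\pm1$, and their removal keeps $z$ bounded away from $1$ and from $-1$ in the relevant direction, so the factors $(1\pm\tfrac1z)$ are under control. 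One shows $|z|\le 30$ on $\ol W\setminus E$ is false in general (the complement is unbounded), so instead the estimate must be arranged so that the growth of $|z|$ is compensated: on $\ol W$, $|1+\tfrac1z|$ and $|1-\tfrac1z|$ both tend to $1$ as $z\to\infty$, so $|Q(z)|\sim|z|$, and one needs $|z|$ itself to stay within $[\tfrac{4}{|cv_P|}e^{-2\pi\eta},\tfrac{4}{|cv_P|}e^{2\pi\eta}]$ — but $\ol W$ is unbounded, so in fact one must recall that $V$ is designed via $\psi_1$ precisely so that $\psi_1(\infty)=-4$ maps into the bounded region; the right move is to observe $\psi_1$ sends neighborhoods of $\infty$ to neighborhoods of $0$ and the annular conditions become the non-degenerate constraint cutting $\ol W$ down. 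Concretely I would (i) bound $|Q(z)|$ from above on $\ol W$ by $12(m+1)\cdot 30^{2+2m}\cdot(\text{something})^{-1}$ using the triangle inequality on the product, where the constant $12(m+1)\times 30^{2+2m}$ is exactly the one appearing in the definition \eqref{near-equat-6} of $\eta$ — this is not a coincidence, $\eta$ was chosen to make this work — and (ii) bound $|Q(z)|$ from below by the reciprocal expression using the same kind of estimate on $1/Q$. Matching $e^{2\pi\eta}=12(m+1)\times 30^{2+2m}\cdot e$ (from \eqref{near-equat-6}) then closes both inequalities with room to spare, the extra factor $e=e^1$ absorbing the lower-order terms and the $\tfrac{4}{|cv_P|}=4(m+1)^{m+1}/m^m$ normalization.

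For the combinatorial membership $z\in\psi_1^{-1}(\mathcal{U}^P_*)$, I would use Lemma \ref{paraequ-2021-9-20-2}: the sectors $S_{0,\pm}$, the curves $\ga_{c2\pm}$, and $\ga_{b3+}\cup\ga_{a3}$, $\ga_{b,m+1,+}\cup\ga_{a,m+1}$ are located in prescribed sub-sectors $S_{k,\pm}$, and $\mathcal{U}^P_*=\ol{\mathcal U}_1\cup\mathcal U_2\cup\mathcal U_{3+}\cup\ga_{c2+}\cup\mathcal U_{m+1,-}\cup\ga_{c,2,-}$ is assembled from precisely these pieces. Transporting through $\psi_1$ (which is a Möbius-times-Koebe map, hence conformal off its critical set and explicit about how rays through $-1$ and circles are moved), the two excised disks of \eqref{para2021-9-1-2} correspond to the complement of $\ol{\mathcal U}_1\cup\mathcal U_2\cup\mathcal U_{3+}\cup\mathcal U_{m+1,-}$, so $\ol W$ maps into the closure of $\mathcal U^P_*$; one then notes the boundary curves that get added ($\ga_{c2\pm}$) are exactly the ones included in $\mathcal U^P_*$. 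I expect the main obstacle to be this last geometric identification — verifying that the two round disks $\ol{\cd}(\pm\cot(\tfrac{\pi}{m+1})i,\tfrac{1}{\sin(\tfrac{\pi}{m+1})})$ correspond under $\psi_1$ precisely to the ``missing'' sectors around $-1$ in $\mathcal U^P_*$ (the $m-1$ components $\mathcal U_3,\dots,\mathcal U_{m+1}$ minus the half-pieces $\mathcal U_{3+}$ and $\mathcal U_{m+1,-}$) — since this requires tracking how $\psi_1$ maps the boundary rays $\arg(z+1)=\pm\tfrac{k\pi}{m+1}$ to circles through $0$ and $\infty$ and checking that the radius $\tfrac{1}{\sin(\pi/(m+1))}$ and center $\pm\cot(\pi/(m+1))i$ are the correct ones; the inequality $m\ge22$ enters to guarantee that $E$ and these two disks are disjoint and that the resulting $V$ is a genuine Jordan domain containing $0$ and $cp_P$. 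The size estimate, by contrast, is a bookkeeping exercise once $\eta$ is matched to the constant in \eqref{near-equat-6}.
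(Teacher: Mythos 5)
Your overall frame — pulling everything back through $\psi_1$, using $|P\circ\psi_1|=4/|Q|$ so that the thresholds $|cv_P|e^{\pm2\pi\eta}$ become $cv_Qe^{\mp2\pi\eta}$ for $|Q|$, and handling the sheet membership in $\mathcal{U}^P_*$ via Lemma \ref{paraequ-2021-9-20-2} and Lemma \ref{paraequ-16} — is the right one, and it is exactly how the paper supplies the content of this proposition (through Lemma \ref{paraequ-83}(b),(c) and Corollary \ref{cor-2021-10-2-1}). But the analytic statement you reduce the proposition to is false: you cannot prove the two-sided bound $cv_Qe^{-2\pi\eta}<|Q(\zeta)|<cv_Qe^{2\pi\eta}$ for every $\zeta\in\ol{W}$, because $Q(\zeta)=\zeta+(4m+2)+O(1/\zeta)$ on that set, so $|Q|\to\infty$ as $\zeta\to\infty$; and $\zeta=\infty$ is precisely the point with $\psi_1(\infty)=0\in V$ (not $-4$, as you write), so it cannot be discarded. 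Your attempted repair misses the real reason no global upper bound on $|Q|$ is needed: $V'$ does \emph{not} impose $|P|>|cv_P|e^{-2\pi\eta}$ everywhere — it only deletes the connected component of $P^{-1}(\cd(0,|cv_P|e^{-2\pi\eta}))$ containing $-1$. Points of $V$ near $0$ (i.e., $\zeta$ near $\infty$) have $|P\circ\psi_1|$ far below the inner threshold and nevertheless lie in $V'$, because they sit in the component of the small-$|P|$ locus containing $0$, not $-1$. By flattening this component structure into a pointwise inequality over all of $\ol{W}$, your step (i) cannot be carried out, and the matching with the constant $12(m+1)\cdot30^{2+2m}$ in \eqref{near-equat-6} is applied at the wrong place.

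The missing idea is a local trapping argument rather than a global estimate: within the relevant sheets, the locus where the defining conditions of $V'$ fail splits into two connected pieces, one attached to $\zeta=1$ (where $|Q|\ge cv_Qe^{2\pi\eta}$; note $\psi_1(1)=-1$, so this is exactly the deleted $-1$-component) and one attached to $\zeta=-1$ (where $|Q|\le cv_Qe^{-2\pi\eta}$), and each piece is contained in the disk of radius $0.1$ about $\pm1$. This is proved by estimating $|Q|$ on the boundary circles only — $|Q|\le 3^{3+2m}\vep_1^{-2m}<cv_Qe^{2\pi\eta}$ on $\partial\cd(1,0.1)$ and $|Q|\ge\tfrac13\,30^{-(2+2m)}>cv_Qe^{-2\pi\eta}$ on $\partial\cd(-1,0.1)$, which is where $\eta$ from \eqref{near-equat-6} is really used — combined with connectedness of each bad component; then Lemma \ref{paraequ-83}(b) places $\ol\cd(\pm1,0.1)$ (and $\ol\cd$) in the interior of $E$, hence disjoint from $\ol{W}$. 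Together with the sector bookkeeping (Corollary \ref{cor-2021-10-2-1}) that puts $\psi_1(\ol W)$ inside $\mathcal{U}^P_*$, this yields $\ol V\subset V'$; the strictness of the circle estimates and the fact that the $0.1$-disks lie in the open interior of $E$ are also what give the room needed for the \emph{closure} $\ol V$, a point your proposal never addresses. The combinatorial half of your plan is sound; the quantitative half needs to be recast in this component-trapping form to close the argument.
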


\begin{definition}
A univalent map is an injective holomorphic map, it is allowed to take value $\infty$ in general; it is called normalized if $\varphi(0)=0$ and $\varphi'(0)=1$ when $0$ is in the domain, or $\varphi(\infty)=\infty$ and $\lim_{z\to\infty}\tfrac{\varphi(z)}{z}=1$ when $\infty$ is in the domain instead of $0$.
\end{definition}

\begin{definition}\label{renorm202091206}
Three classes of functions are introduced:
\begin{equation*}
\mathcal{F}^{P}_{2}=\left\{f=P\circ \varphi^{-1}: \varphi(V') \rightarrow \mathbb{C}\ \bigg | \begin{array}{l}
\varphi:V'\to\cc\ \text { is univalent, } \varphi(0)=0, \varphi^{\prime}(0)=1
\end{array}\right\},
\end{equation*}
\begin{equation*}
\mathcal{F}^{Q}_{1}=\left\{f=Q\circ \varphi^{-1}\ \bigg | \begin{array}{l}
\varphi:\widehat{\cc}\setminus E\to\widehat{\cc}\setminus\{0\}\ \text { is a normalized univalent map } \\
\text { and } \varphi \text { has a quasi-conformal extension to }\widehat{\cc}
\end{array}\right\},
\end{equation*}
and
\begin{equation*}
\mathcal{F}^{Q}_{2}=\left\{f=Q\circ \varphi^{-1}\ \bigg | \begin{array}{l}
\varphi:\widehat{\cc}\setminus \bigg(E\cup\overline{\cd}\big(\cot(\tfrac{\pi}{m+1})i,\tfrac{1}{\sin(\tfrac{\pi}{m+1})}\big)\cup \overline{\cd}\big(-\cot(\tfrac{\pi}{m+1})i,\tfrac{1}{\sin(\tfrac{\pi}{m+1})}\big)\bigg)\\
\to\widehat{\cc}\setminus\{0\}\ \text {is a normalized univalent map and has a conformal} \\
\text {extension to}\ \widehat{\cc}\setminus E,\ \text{has a quasi-conformal extension to }\widehat{\cc}
\end{array}\right\}.
\end{equation*}
\end{definition}

\begin{definition} (Riemann surface $X$)
Let $R=2.66\times 10^m$ (see Remark \ref{nearequ-22}),\ $\rho=0.07$ (see Remark \ref{nearequ-21}), and define the sheets by
\beqq
X_{1\pm}=\{z:\in\cc:\ \pm\text{Im}\,z\geq0,\ |z|>\rho\ \text{and}\ \tfrac{\pi}{5}<\pm\text{arg}\,(z-cv_Q)\leq\pi\},
\eeqq
\beqq
X_{2\pm}=\{z:\in\cc:\ \pm\text{Im}\,z\geq0,\ \rho<|z|<R\ \text{and}\ \tfrac{\pi}{5}<\pm\text{arg}\,(z-cv_Q)\leq\pi\}.
\eeqq
The four sheets are thought of as the joining of distinct copies of $\cc$, the projection maps $\pi_{i\pm}:X_{i\pm}\to\cc$, $i=1,2$, are the natural projection. The Riemann surface $X$ is constructed as follows: $X_{1+}$ and $X_{1-}$ are glued along negative real axis, i.e., $\pi^{-1}_{1+}(x)\in X_{1+}$ and $\pi^{-1}_{1-}(x)\in X_{1-}$ are identified for $x\in(-\infty, -\rho)$; $X_{1+}$ and $X_{2-}$ are glued along $(\rho,cv_Q)$; $X_{1-}$ and $X_{2+}$ are glued along $(\rho,cv_Q)$. The projection $\pi_X:X\to\cc$ is defined as $\pi_X=\pi_{i\pm}$ on $X_{i\pm}$. The complex structure is induced by the projection.   Figure \ref{illustration-riemannsurface} is the illustration diagram of this Riemann surface, where the curves with the same color are glued together.
\end{definition}

\begin{figure}[htbp]
\begin{center}
\scalebox{0.5}{ \includegraphics{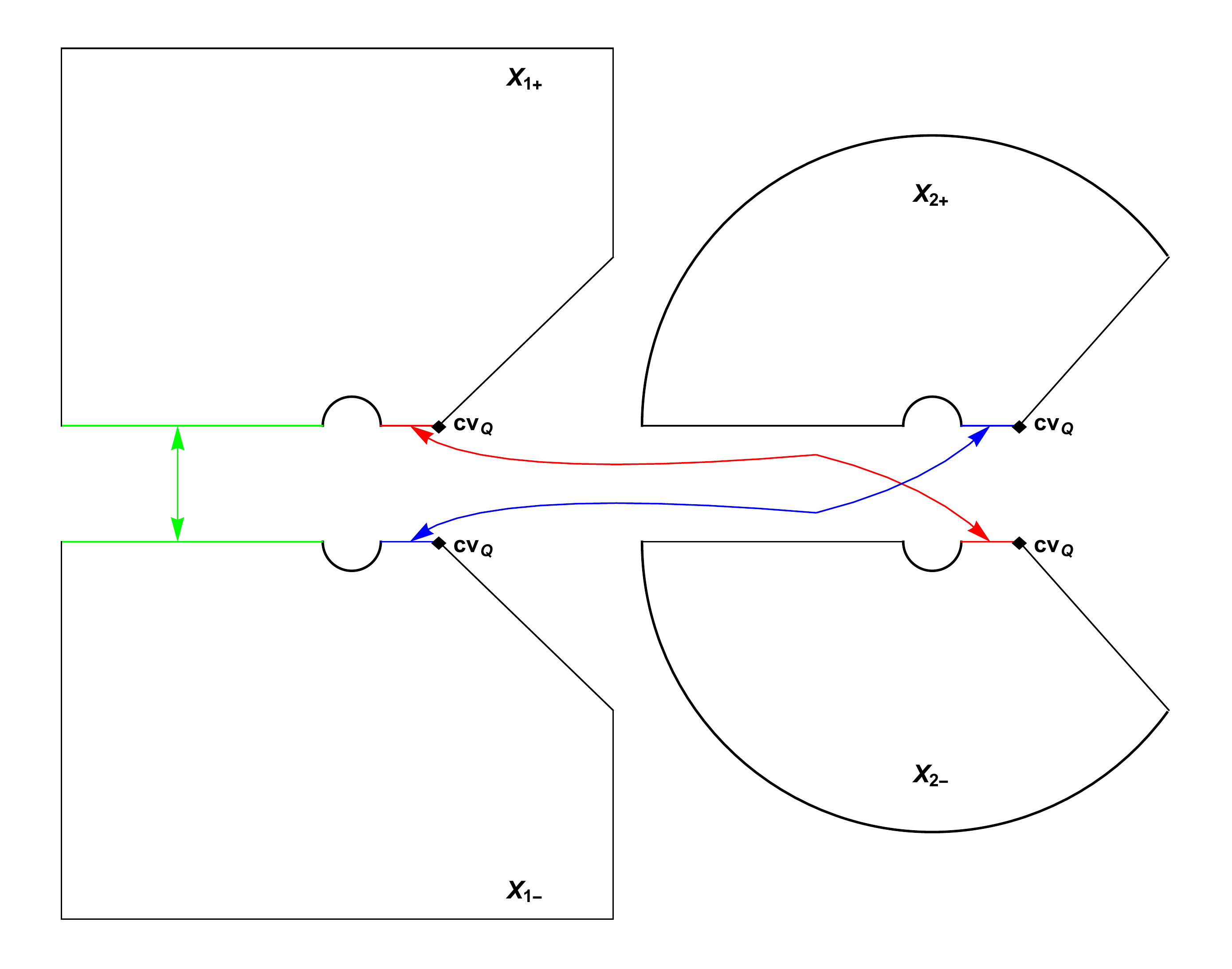}}
\renewcommand{\figure}{Fig.}
\caption{The illustration diagram of the Riemann surface, where the curves with the same color are glued together.
}\label{illustration-riemannsurface}
\end{center}
\end{figure}

\begin{proposition}\label{paraequ-14} (Lifting $Q$ and $\varphi$ to $X$)
Assume $m\geq5$. There exists an open subset $Y\subset\cc\setminus(E_{r_1}\cup\rr_+)$ with the following properties:
\begin{itemize}
\item[(a)] There exists an isomorphism $\widetilde{Q}:Y\to X$ such that $\pi_X\circ\widetilde{Q}=Q$ on $Y$ and $\widetilde{Q}^{-1}(z)=\pi_X(z)-(4m+2)+o(1)$ as $z\in X$ and $\pi_X(z)\to\infty$.
\item[(b)]  The map $\varphi$ restricted to $Y$ can be lifted to a univalent holomorphic map $\tilde{\varphi}:Y\to X$ satisfying $\pi_X\circ\tilde{\varphi}=\varphi$ on $Y$.
\end{itemize}
\end{proposition}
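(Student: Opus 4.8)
The plan is to build the lift of $Q$ directly from its factorization $Q=\psi_0^{-1}\circ P\circ\psi_1$ and then transport the covering structure of $P$ over $\cc_{slit}$ to the sheet decomposition of $X$. First I would analyze the map $Q$ near $\infty$: from \eqref{renorm2020912-5} one has $Q(z)=z(1+\tfrac1z)^{2+2m}(1-\tfrac1z)^{-2m}=z+(4m+2)+O(\tfrac1z)$, so $Q$ has a parabolic fixed point at $\infty$ with the stated asymptotic translation constant $4m+2$; this already pins down the claim $\widetilde{Q}^{-1}(z)=\pi_X(z)-(4m+2)+o(1)$. Then I would identify the critical points and critical values of $Q$ (they are the images under $\psi_0$ of those of $P$, together with possibly $0$ and $\infty$ coming from $\psi_1$), check that $cv_Q$ is the one relevant slit endpoint, and verify that the preimage $Q^{-1}$ of the slit plane $\cc\setminus(\{0\}\cup\{\text{appropriate rays}\})$ decomposes into components each mapped isomorphically — this is the $Q$-analogue of the decomposition $P^{-1}(\cc_{slit})=\bigsqcup_j\mathcal{U}_j$ used earlier in Subsection \ref{r0function2021-9-2-1}. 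The point of the hypothesis $m\geq5$ is to guarantee enough room in the angular sectors $\tfrac\pi5<\pm\arg(z-cv_Q)\le\pi$ so that exactly the branches needed to assemble $X_{1\pm},X_{2\pm}$ are available and glue correctly along the prescribed slits $(-\infty,-\rho)$ and $(\rho,cv_Q)$.

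For part (a), I would define $Y$ as the union of the relevant inverse-branch images: on each sheet $X_{i\pm}$ the set $\pi_X(X_{i\pm})$ is a slit sector in $\cc$, and over it the appropriate univalent branch of $Q^{-1}$ is well-defined; calling its image $Y_{i\pm}$, set $Y=\bigcup Y_{i\pm}$ and check that these four pieces glue along the edges exactly as the $X_{i\pm}$ do (the gluing along the negative real axis and along $(\rho,cv_Q)$ corresponds to crossing the branch cuts of $Q^{-1}$, and analytic continuation across those cuts matches the identifications defining $X$). This produces a holomorphic map $\widetilde{Q}:Y\to X$ with $\pi_X\circ\widetilde{Q}=Q$; injectivity is local by construction on each sheet and global because distinct sheets of $X$ have disjoint $\pi_X$-images away from the gluing curves, and it is onto by surjectivity of each branch onto its slit sector. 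One must also verify $Y\subset\cc\setminus(E_{r_1}\cup\rr_+)$: here I would use that $\psi_1$ sends the relevant exterior regions (exterior of $E$ and of the two disks in \eqref{para2021-9-1-2}) into the complement of the ellipse $E_{r_1}$, so the inverse-branch images of $Q$ land outside $E_{r_1}$, and the positive real axis is excluded because it is the other slit. The asymptotic expansion of $\widetilde{Q}^{-1}$ then follows by composing the expansion $Q(z)=z+(4m+2)+O(1/z)$ with the inverse function theorem (the Inverse Function Theorem lemma quoted above), reading it off on the unbounded sheets $X_{1\pm}$.

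For part (b), the lift of $\varphi$ is a pure covering-space / monodromy argument. The map $\varphi$ (from whichever of $\mathcal{F}_1^Q,\mathcal{F}_2^Q$ is in play) is univalent on a domain containing $Y$, so on $Y$ the composition $\pi_X^{-1}\circ\varphi$ is a priori multivalued, but $Y$ was cut so as to be simply connected on each sheet and the gluings of $Y$ are compatible with those of $X$; hence one applies the lifting criterion for the covering $\pi_X:X\to\cc$: since $\varphi$ maps $Y$ into $\pi_X(X)$ and $\varphi_*\pi_1(Y)$ is trivial (or at least contained in $(\pi_X)_*\pi_1(X)$), there is a unique holomorphic $\tilde\varphi:Y\to X$ with $\pi_X\circ\tilde\varphi=\varphi$ once one fixes its value at one base point — naturally the point near $\infty$, using that $\varphi$ is normalized ($\varphi(\infty)=\infty$). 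Univalence of $\tilde\varphi$ is inherited from univalence of $\varphi$ together with injectivity of $\pi_X$ on each sheet. The main obstacle I expect is the bookkeeping in part (a): carefully matching the finitely many inverse branches of $Q$ (whose count depends on $m$ through the degree $2m+2$ of $Q$) to the four prescribed sheets and checking that the gluing rules along $(-\infty,-\rho)$, $(\rho,cv_Q)$ for $X$ agree with the branch-cut crossings of $Q^{-1}$ — this is where the constants $R=2.66\times10^m$, $\rho=0.07$ and the sector bound $\pi/5$ have to be chosen compatibly, and where the standing assumption $m\geq5$ is actually used, rather than in any soft topological input.
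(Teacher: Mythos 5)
Your plan for part~(a) --- defining $Y$ as the union of the inverse-branch images $Y_{j\pm}=(Q|_{\overline{\mathcal{U}^{Q+}_{j\pm}}})^{-1}(\pi_X(X_{j\pm}))$, checking that the branch-cut crossings of $Q^{-1}$ match the gluings of $X$ along $(-\infty,-\rho)$ and $(\rho,cv_Q)$, and reading off the asymptotics of $\widetilde{Q}^{-1}$ from $Q(z)=z+(4m+2)+O(1/z)$ --- is essentially what the paper does, and is correct modulo the bookkeeping you acknowledge.

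For part~(b), however, there is a genuine gap. You propose a covering-space/monodromy argument, invoking ``the lifting criterion for the covering $\pi_X:X\to\cc$''. But $\pi_X$ is not a covering map: the fiber cardinality changes from two over the annular region $\rho<|z|<R$ (where both $X_{1\pm}$ and $X_{2\pm}$ project) to one over $|z|>R$, and the entire sector $\overline{\vv}(cv_Q,\tfrac{\pi}{5})$ is absent from $\pi_X(X)$. A covering of a connected base has constant fiber cardinality, so no covering-space lifting theorem applies here, and a soft $\pi_1$ argument cannot produce the lift. Even if one patched this up by restricting to the locus where $\pi_X$ is evenly covered, the essential difficulty --- keeping $\varphi(\zeta)$ away from the missing sector and the slits --- is quantitative, not topological, and is the actual content of the paper's proof: it uses the distortion estimates $|\varphi(\zeta)-\zeta|<3$ for $|\zeta|\geq 7$ (from Lemma~\ref{paraequ-11}), the lower bound $|\varphi(\zeta)|>\rho$ together with $|\arg(\varphi(\zeta)/\zeta)|<\pi$ on $\cc\setminus E_{r_1}$ (Lemma~\ref{nearequ-24}), and the gap between $\vv(u_*,\tfrac{\pi}{5})$ and $\overline{\vv}(cv_Q,\tfrac{\pi}{5})$, and then \emph{defines} $\widetilde{\varphi}(\zeta)$ sheet by sheet according to the signs of $\mathrm{Im}\,\zeta$ and $\arg(\varphi(\zeta)/\zeta)$, verifying continuity explicitly across the case boundaries $\mathrm{Im}\,\zeta=0$ and $\arg(\varphi(\zeta)/\zeta)=0$. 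Your proposal does not identify this explicit sheet assignment, which is where the construction lives. A related misattribution: you locate the use of $m\geq 5$ in the combinatorics of branch-matching in part~(a), whereas in the paper it enters through Lemma~\ref{near-equati-22} and the ensuing inclusion $Y\subset\widetilde{Y}=\cc\setminus(E_{r_1}\cup\rr_+\cup\overline{\vv}(u_*,\tfrac{\pi}{5}))$, i.e.\ through the estimates that make part~(b) work.
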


This will be verified in Subsection \ref{riemann-surface-2021-12-29-1}. The introduction of the Riemann surface $X$ makes the lift $F^{-1}=\varphi\circ Q^{-1}$ to a single-valued branch, such that the iteration of the lift of $F^{-1}$ will stay in $X$ by proper definition of $\varphi$.

Set
\beq
g:=\widetilde{\varphi}\circ\widetilde{Q}^{-1}.
\eeq
By the definition, $g$ is a map from $X$ to $X$.

\begin{proposition}\label{near-equat-2} (Repelling Fatou coordinate on $X$)
The map $g$ satisfies the equation $F\circ\pi_X\circ g=\pi_X$. There is an injective holomorphic map $\widetilde{\Phi}_{rep}:X\to\cc$ such that $\widetilde{\Phi}_{rep}(g(z))=\widetilde{\Phi}_{rep}(z)-1$. Furthermore, $\widetilde{\Phi}_{rep}\circ\pi^{-1}_X$ is a repelling Fatou coordinate for $F=Q\circ\varphi^{-1}$ on $\{z:\ \text{Re}\,z<-R\}$, where $R$ is specified in Lemma \ref{paraequ-15}.
\end{proposition}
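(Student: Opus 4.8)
The plan is to verify the three assertions in order, using the fact that $g = \widetilde{\varphi}\circ\widetilde{Q}^{-1}$ is built from the lifts supplied by Proposition \ref{paraequ-14}. First I would establish the functional identity $F\circ\pi_X\circ g = \pi_X$. By construction $\pi_X\circ\widetilde{Q} = Q$ on $Y$, hence $\pi_X = Q\circ\widetilde{Q}^{-1}$ on $X$; likewise $\pi_X\circ\widetilde{\varphi} = \varphi$ on $Y$. Therefore, for $z\in X$,
\[
F\circ\pi_X\circ g(z) = \varphi^{-1}\circ Q\circ\pi_X\circ\widetilde{\varphi}\circ\widetilde{Q}^{-1}(z) = \varphi^{-1}\circ Q\circ\varphi\circ\widetilde{Q}^{-1}(z).
\]
But $\varphi^{-1}\circ Q\circ\varphi = F\circ\varphi^{-1}\circ\varphi$ is not quite what is wanted; instead I would use $Q = \psi_0\circ F\circ\psi_1^{-1}$-type relations more carefully, or more directly observe $F = Q\circ\varphi^{-1}$ so $Q = F\circ\varphi$ and hence $Q\circ\widetilde{Q}^{-1} = \pi_X$ gives $F\circ\varphi\circ\widetilde{Q}^{-1} = \pi_X$, i.e. $F\circ\pi_X\circ\widetilde{\varphi}\circ\widetilde{Q}^{-1} = F\circ\pi_X\circ g$; combined with $\pi_X\circ\widetilde{\varphi} = \varphi$ on $Y$ this yields $F\circ\varphi\circ\widetilde{Q}^{-1} = \pi_X$ after untangling, which is the claim. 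The point is that $g$ is an inverse branch of $F$ read through $\pi_X$, so $g$ lifts the dynamics of $F^{-1}$ to a single-valued self-map of $X$.

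Next I would construct $\widetilde{\Phi}_{rep}$. The natural candidate is $\widetilde{\Phi}_{rep} := \Phi_{rep,F}\circ\pi_X$ wherever the right-hand side makes sense, and then extend it to all of $X$ using the functional equation as a definition: given $z\in X$, there is $n\geq 0$ with $\pi_X(g^n(z))$ lying in the repelling half-neighbourhood $\{\text{Re}\,w < -R\}$ where $\Phi_{rep,F}$ is defined (this uses the asymptotic $\widetilde{Q}^{-1}(z) = \pi_X(z) - (4m+2) + o(1)$ from Proposition \ref{paraequ-14}(a), which shows $g$ pushes points leftward by roughly $4m+2$ at each step, so the orbit escapes into the repelling petal), and we set
\[
\widetilde{\Phi}_{rep}(z) := \Phi_{rep,F}\bigl(\pi_X(g^n(z))\bigr) + n.
\]
Well-definedness (independence of $n$) follows from $\Phi_{rep,F}(F(w)) = \Phi_{rep,F}(w) + 1$ together with $F\circ\pi_X\circ g = \pi_X$, which gives $\Phi_{rep,F}(\pi_X(g^{n+1}(z))) + (n+1) = \Phi_{rep,F}(\pi_X(g^n(z))) + n$. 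Holomorphy is local: near any point the defining formula agrees with a fixed branch $\Phi_{rep,F}\circ\pi_X\circ g^n$, which is holomorphic since $\pi_X$ is (the complex structure on $X$ is defined by $\pi_X$) and $g$ is holomorphic. Injectivity follows because on each sheet $\pi_X$ is injective and $\Phi_{rep,F}$ is injective on its domain, and the identification of sheets in $X$ is designed precisely to match the identifications $w\sim F(w)$ that $\Phi_{rep,F}$ already respects; one checks the gluings along the negative real axis and along $(\rho, cv_Q)$ are consistent with the branching of $\varphi\circ Q^{-1}$. The functional equation $\widetilde{\Phi}_{rep}(g(z)) = \widetilde{\Phi}_{rep}(z) - 1$ is then immediate from the definition with $n$ replaced by $n+1$.

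Finally, that $\widetilde{\Phi}_{rep}\circ\pi_X^{-1}$ is a repelling Fatou coordinate for $F$ on $\{\text{Re}\,z < -R\}$ is essentially built in: on that half-plane $\pi_X^{-1}$ is a well-defined section into $X_{1\pm}$, and there $\widetilde{\Phi}_{rep}\circ\pi_X^{-1} = \Phi_{rep,F}$ by construction (the orbit-counting $n$ can be taken to be $0$), so it satisfies the linearizing equation $\Phi(F(z)) = \Phi(z) + 1$ and the asymptotic expansion from Theorem \ref{paraequ-85}; uniqueness up to additive constant pins it down. I expect the main obstacle to be the injectivity of $\widetilde{\Phi}_{rep}$ on all of $X$: one must verify that the four sheets $X_{1\pm}, X_{2\pm}$ with the prescribed slit-gluings are exactly the domain on which the multivalued inverse branch of $F$ becomes single-valued and injective, which requires carefully tracking how the critical value $cv_Q$ and the slits $\rr_+$, $(-\infty,-\rho)$ distribute among the preimages — this is where the geometry of $Q$ and the constants $R = 2.66\times 10^m$, $\rho = 0.07$ enter, and it is the step that genuinely depends on $m\geq 5$ via Proposition \ref{paraequ-14}. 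The quantitative content (that the orbit under $g$ actually lands in $\{\text{Re}\,w < -R\}$ after finitely many steps, uniformly on compact subsets of $X$) is routine given the $o(1)$ estimate but should be stated with the constant $R$ from Lemma \ref{paraequ-15}.
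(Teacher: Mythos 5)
Your overall strategy is the same as the paper's: verify $F\circ\pi_X\circ g=\pi_X$ by the one-line computation $F\circ\pi_X\circ\widetilde{\varphi}\circ\widetilde{Q}^{-1}=Q\circ\varphi^{-1}\circ\varphi\circ\widetilde{Q}^{-1}=Q\circ\widetilde{Q}^{-1}=\pi_X$ (your "untangled" version is correct, despite the initial wrong-order composition), then define $\widetilde{\Phi}_{rep}(z)=\Phi_{rep,F}\bigl(\pi_X(g^n(z))\bigr)+n$ and check well-definedness, the functional equation, and injectivity by pushing forward with $g^n$. That skeleton matches the paper.

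The genuine gap is the step you dismiss as "routine given the $o(1)$ estimate": the claim that for \emph{every} $z\in X$ some iterate $g^n(z)$ projects into the region where $\Phi_{rep,F}$ is defined. The asymptotic $\widetilde{Q}^{-1}(z)=\pi_X(z)-(4m+2)+o(1)$ only controls $g$ for points whose projection is large; it says nothing about points in the bounded sheets $X_{2\pm}$ (or any point with $|\pi_X(z)|$ of moderate size), whose backward $F$-orbits could a priori remain in the bounded part of $X$ forever. The paper devotes a separate lemma to exactly this (Lemma \ref{paraequ-19}): since $X$ is isomorphic to the proper subdomain $Y\subset\cc$ it is hyperbolic, $g:X\to X$ does not expand $d_X$ (Schwarz--Pick, noting also that $\pi_X$ does not expand the Poincar\'e metric of $\cc\setminus\overline{\cd}(0,\rho)$), the orbit $g^n(z_0)$ of a base point $z_0\in W'$ tends to the end of $X$ so that $d_X(g^n(z_0),\partial W')\to\infty$, and then $d_X(g^n(z),g^n(z_0))\le d_X(z,z_0)$ forces $g^n(z)\in W'$ for large $n$. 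Without this (or an equivalent global argument) your $\widetilde{\Phi}_{rep}$ is only defined on the part of $X$ whose $g$-orbit is known to escape, not on all of $X$. Relatedly, you should record that the domain of the classical coordinate is invariant under the inverse branch (the paper extends $\Phi_{rep}$ to $W=\cc\setminus\overline{\vv}(-L,\tfrac{\pi}{10})$ with $\bar g(W)\subset W$ and identifies $\pi_X\circ g\circ\pi_X^{-1}$ with $\bar g$ near $\infty$), so that the set of admissible $n$ in your formula is upward closed. By contrast, the injectivity you single out as "the main obstacle" is not where the difficulty lies: once the escape lemma is available, $\widetilde{\Phi}_{rep}(z_1)=\widetilde{\Phi}_{rep}(z_2)$ gives $\Phi_{rep,F}(\pi_X(g^n(z_1)))=\Phi_{rep,F}(\pi_X(g^n(z_2)))$ with both points in $W'$, where $\Phi_{rep,F}$ and $\pi_X|_{W'}$ are injective, so $g^n(z_1)=g^n(z_2)$ and injectivity of $g$ finishes; the covering/gluing structure of $X$ you worry about was already settled in Proposition \ref{paraequ-14}.
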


This will be studied in Subsection \ref{repell-fatou-2021-12-29-2}.

\begin{definition}
For any $z_0\in\cc$ and $\tht>0$, set
\beqq
\vv(z_0,\tht):=\{z:\ z\neq z_0,\ |\text{arg}\,(z-z_0)|<\tht\},
\eeqq
and $\overline{\vv}(z_0,\tht)$ is the closure of $\vv(z_0,\tht)$.

Set
\begin{align}\label{paraequ-81}
W_1:=&\vv\left(cv_Q,\frac{7\pi}{10}\right){\Huge\setminus}  \overline{\vv}\left(F(cv_Q),\frac{3\pi}{10}\right)\nonumber\\
=&\bigg\{z:\ |\arg\,(z-cv_Q)|<\frac{7\pi}{10}\ \text{and}\ |\arg\,(z-F(cv_Q))-\pi|<\frac{7\pi}{10}\bigg\}.
\end{align}
\end{definition}

Lemma \ref{connect-2021-12-29-3} will show that $W_1$ is connected for $m\geq22$.

\begin{proposition}\label{attracting2020912-1} (Attracting Fatou coordinate and shape of $D_1$)
Assume $m\geq22$, $u_0=\tfrac{(\sqrt{m+1}+\sqrt{m})^2+2.1}{\cos(\tfrac{\pi}{5})}$, and $R_1=2.39\times 10^m$.
\begin{itemize}
\item[(a)] The $F$ maps $\vv(u_0,\tfrac{7\pi}{10})$ into itself and $\vv(u_0,\tfrac{7\pi}{10})$ is contained in the attraction basin of $\infty$ under the map $F$. There exists an attracting Fatou coordinate $\Phi_{att}:\vv(u_0,\tfrac{7\pi}{10})\to\cc$ such that
$$\Phi_{att}(F(z))=\Phi_{att}(z)+1\ \text{and}\ \Phi_{att}(cv_Q)=1.$$
\item[(b)] There are domains $D_1$, $D^{\sharp}_1$,
$D^{\flat}_1\subset W_1\subset\vv(u_0,\tfrac{7\pi}{10})$ such that
\begin{align*}
\Phi_{att}(D_1)&=\{z:\ 1<\text{Re}\,z<2,\ -\eta<\text{Im}\,z<\eta\}\ \text{and}\ D_1\subset\cd(cv_Q,R_1),\\
\Phi_{att}(D^{\sharp}_1)&=\{z:\ 1<\text{Re}\,z<2,\ \text{Im}\,z>\eta\}\ \text{and}\ D^{\sharp}_1\subset\{z:\ \tfrac{\pi}{5}<\arg\,(z-cv_Q)<\tfrac{7\pi}{10}\},\\
\Phi_{att}(D^{\flat}_1)&=\{z:\ 1<\text{Re}\,z<2,\ \text{Im}\,z<-\eta\}\ \text{and}\
D^{\flat}_1\subset\{z:\ -\tfrac{7\pi}{10}<\arg\,(z-cv_Q)<-\tfrac{\pi}{5}\},\
\end{align*}
where $\eta$ is specified in \eqref{near-equat-6}.
\end{itemize}

\end{proposition}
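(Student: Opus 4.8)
The plan is to establish Proposition~\ref{attracting2020912-1} by a standard Fatou-coordinate analysis for the map $F=Q\circ\varphi^{-1}$ near its parabolic fixed point at $\infty$, followed by a careful tracking of the geometry of preimages of horizontal strips under $\Phi_{att}$. First I would verify part (a). Recall from \eqref{renorm2020912-5} that $Q(z)=z(1+\tfrac1z)^{2+2m}/(1-\tfrac1z)^{2m}$, so $Q(z)=z+(4m+2)+O(1/z)$ as $z\to\infty$; since $\varphi$ is normalized univalent fixing $\infty$ with $\varphi(z)=z+O(1)$, we get $F(z)=z+(4m+2)+O(1/z)$, i.e.\ $\infty$ is a parabolic fixed point with a single attracting petal of ``translation number'' $4m+2>0$. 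The first step is to check that $F$ maps the sector $\vv(u_0,\tfrac{7\pi}{10})$ into itself: one estimates $|F(z)-z-(4m+2)|$ on that sector using the explicit form of $Q$ and the Koebe/distortion bounds on $\varphi$ (here the definition $\varphi:\widehat\cc\setminus E\to\widehat\cc\setminus\{0\}$ from Definition~\ref{renorm202091206} controls the error uniformly), and shows that the displacement vector $F(z)-z$ has real part bounded below by roughly $4m+1$ and small imaginary part, so the vertex $u_0$ and half-angle $7\pi/10$ are chosen precisely so that adding this nearly-horizontal vector of length $\approx 4m+2$ keeps one inside the sector. The value $u_0=((\sqrt{m+1}+\sqrt m)^2+2.1)/\cos(\pi/5)$ is engineered so that $cv_Q$ (which by Proposition~\ref{paraequ-2}-type computation for $Q$ lies near $(\sqrt{m+1}+\sqrt m)^2$, modulo the conventions in the paper) sits inside the sector; this is why the normalization $\Phi_{att}(cv_Q)=1$ makes sense. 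Once invariance is shown, the existence and uniqueness (up to additive constant) of $\Phi_{att}$ with $\Phi_{att}(F(z))=\Phi_{att}(z)+1$ on the sector is the classical Leau--Fatou construction via $\lim_{n\to\infty}\big(Q^{\circ n}\text{-type normalized coordinate}\big)$, or equivalently Theorem~\ref{paraequ-85} transported by the change of variable $w\mapsto -1/((4m+2)^{-1}\cdot)$; uniqueness then lets us impose $\Phi_{att}(cv_Q)=1$.

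For part (b), the idea is that $\Phi_{att}$ is a biholomorphism from $\vv(u_0,\tfrac{7\pi}{10})$ onto a right half-plane-like region $\{\operatorname{Re} z>c\}$ (up to the boundary distortion near $\operatorname{Re} z=1$), and since $cv_Q$ maps to $1$, the three strips $\{1<\operatorname{Re} z<2,\ |\operatorname{Im} z|<\eta\}$, $\{\operatorname{Im} z>\eta\}$, $\{\operatorname{Im} z<-\eta\}$ have well-defined connected preimages $D_1,D_1^\sharp,D_1^\flat$ sitting in a neighborhood of $cv_Q$ inside $W_1$. The substantive part is the quantitative control: $D_1\subset\cd(cv_Q,R_1)$ with $R_1=2.39\times10^m$, and the angular sector containment for $D_1^\sharp,D_1^\flat$. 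Here one uses the asymptotics $\Phi_{att}(z)=z/(4m+2)+(\text{bounded log terms})+o(1)$ (the analog of (d) of Theorem~\ref{paraequ-85}); inverting, $z\approx (4m+2)\,\Phi_{att}(z)$, so the strip $\{1<\operatorname{Re}<2,|\operatorname{Im}|<\eta\}$ pulls back to a set at distance $\lesssim (4m+2)\sqrt{1+\eta^2}+\text{const}$ from $cv_Q$, and since $\eta=\tfrac1{2\pi}(\log(12(m+1)30^{2+2m})+1)$ is of order $m$ by \eqref{near-equat-6}, one gets a bound of order $(4m+2)\cdot\tfrac{\log 30}{2\pi}\cdot(2m+2)$, which is polynomial in $m$ and hence comfortably below $2.39\times10^m$ once $m\geq22$ — the exponential bound $R_1$ is deliberately generous so the verification is just ``large enough.'' The angular containments for $D_1^\sharp$ inside $\{\pi/5<\arg(z-cv_Q)<7\pi/10\}$ (and symmetrically for $D_1^\flat$) follow because the vertical strip $\{1<\operatorname{Re}<2,\operatorname{Im}>\eta\}$, pulled back, lies along a direction close to $+i$ emanating from $cv_Q$; the precise choice of $\eta$ guarantees the preimage stays clear of the boundary rays $\arg=\pi/5$ and $\arg=7\pi/10$ of the working sector, which is exactly where $X_{1\pm},X_{2\pm}$ were cut (so that these domains lift to the Riemann surface $X$ later).

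The key technical estimate underlying everything is a uniform (in $m$) bound on how far $\Phi_{att}$ deviates from the linear model $z\mapsto z/(4m+2)$ plus logarithmic corrections, valid on the sector $\vv(u_0,\tfrac{7\pi}{10})$; this is obtained by the usual Fatou-coordinate iteration estimate $\Phi_{att}(z)-\Phi_{att}(w)=\lim_N\big(n_N(z)-n_N(w)\big)$ combined with $\sum_{k}|F^{\circ k}(z)-F^{\circ k}(z)-(4m+2)|<\infty$ controlled via the $O(1/z)$ error and the fact that $|F^{\circ k}(z)|\gtrsim |z|+ (4m+2)k$ on the forward-invariant sector. The main obstacle I anticipate is not any single inequality but the bookkeeping: one must make the implied constants in the error term $F(z)-z-(4m+2)=O(1/z)$ genuinely explicit and uniform in $m$ (the constant grows with $m$ because $Q$ has $2m$-th powers, so one needs, e.g., $|1-\tfrac1z|^{-2m}\le(1-\tfrac{1}{|z|})^{-2m}\le e^{2m/(|z|-1)}$ and to absorb this into a harmless multiplicative factor on the sector where $|z|\ge u_0\gtrsim m$), and then propagate these through the Fatou-coordinate limit and through the inversion to land the preimages in the stated disk and sectors. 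Given how slack the target radius $R_1=2.39\times10^m$ and the angular margins are, I expect each individual estimate to close with room to spare once $m\geq22$; the work is in assembling them cleanly, and I would organize it as: (1) displacement estimate and sector invariance; (2) Fatou coordinate existence, uniqueness, normalization, and asymptotic expansion; (3) inversion and the shape/containment claims for $D_1,D_1^\sharp,D_1^\flat$.
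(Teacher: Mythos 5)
Your overall strategy — establish sector invariance, obtain $\Phi_{att}$ via the Leau--Fatou construction, normalize at $cv_Q$, then pull back the reference strips and bound the size and direction of the preimages using control of $\Phi_{att}'$ — is the same as the paper's (Lemmas~\ref{paraequ-63} and~\ref{near-equat-5} supply the invariance and the derivative bounds, and the proof of the proposition then does the pullback). Your disk-containment argument matches as well: $|\Phi'_{att}|^{-1}=O(m)$ on the sector together with $\sqrt{1+\eta^2}=O(m)$ gives a preimage diameter $O(m^2)$, far below $R_1=2.39\times 10^m$. One factual slip worth correcting: by Lemma~\ref{paraequ-25}, $cv_Q=4(m+1)(m+1)^m/m^m\approx 4e(m+1)$, not ``near $(\sqrt{m+1}+\sqrt m)^2\approx 4m+2$''; since $u_0\approx(4m+2)/\cos(\pi/5)$, the critical value sits comfortably inside the sector for a slightly different reason than the one you give, though this does not affect the logic.

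There are, however, two genuine gaps in the part-(b) argument. First, you assert that ``the precise choice of $\eta$'' keeps $D_1^{\sharp},D_1^{\flat}$ clear of the boundary rays without saying what condition on $\eta$ is actually used. The mechanism is that, on $\overline{\vv}(cv_Q,\tfrac{3\pi}{10})$, the argument bound $|\arg\Phi'_{att}(z)|<\pi/5$ gives $|\arg(\Phi_{att}(z)-1)|<\tfrac{2\pi}{5}$ for $z$ with $|\arg(z-cv_Q)|\le\pi/5$, hence $|\operatorname{Im}\Phi_{att}(z)|<\tan(\tfrac{2\pi}{5})\approx 3.08$ whenever $1<\operatorname{Re}\Phi_{att}(z)<2$; the requirement is therefore $\eta>\tan(\tfrac{2\pi}{5})$, a lower bound on $\eta$ that the definition \eqref{near-equat-6} is built to satisfy. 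Without identifying this you cannot close the angular containments. Second, and more fundamentally, the statement requires $D_1,D_1^{\sharp},D_1^{\flat}\subset W_1$, and by \eqref{paraequ-81} the set $W_1$ is \emph{not} the full sector $\vv(u_0,\tfrac{7\pi}{10})$ but $\vv(cv_Q,\tfrac{7\pi}{10})\setminus\overline{\vv}(F(cv_Q),\tfrac{3\pi}{10})$. Your sketch never addresses why the preimages avoid the excised cone $\overline{\vv}(F(cv_Q),\tfrac{3\pi}{10})$. The paper handles this by showing $\operatorname{Re}\Phi_{att}(z)>\Phi_{att}(F(cv_Q))=2$ whenever $|\arg(z-F(cv_Q))|\le\tfrac{3\pi}{10}$ (again via the bound on $\arg\Phi'_{att}$ plus the estimate $|\arg(F(cv_Q)-cv_Q)|<\tfrac{3\pi}{10}$), and this step is not merely bookkeeping — the subsequent lift of $D_1$ and its preimages to the Riemann surface $X$ in Proposition~\ref{attracting2020912-2} depends on it. Note also that $W_1$ being connected is itself nontrivial and is what Lemma~\ref{connect-2021-12-29-3} is for; your sketch takes it for granted.
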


The proof of Proposition \ref{attracting2020912-1} is provided in Subsection \ref{attractingfatoucoor-2021-10-25-2}.

\begin{proposition}\label{attracting2020912-2} (Domains around critical point) Assume $m\geq3$.
There exist disjoint Jordan domains $D_0$, $D^{\prime}_{0}$, $D_{-1}$, $D^{\prime\prime}_{-1}$, and a domain $D^{\sharp}_{0}$  such that
\begin{itemize}
\item[(a)] the closures $\ol{D}_0$, $\ol{D}^{\prime}_{0}$, $\ol{D}_{-1}$, $\ol{D}^{\prime\prime}_{-1}$, and $\ol{D}^{\sharp}_{0}$ are contained in $\text{Image}(\varphi)=\text{Dom}(F)$;
 \item[(b)] $F(D_0)=F(D^{\prime}_{0})=D_1$, $F(D_{-1})=F(D^{\prime\prime}_{-1})=D_0$, $F(D^{\sharp}_{0})=D^{\sharp}_{1}$;
 \item[(c)] $F$ is injective on each of these domains;
 \item[(d)] $cp_F=\varphi(cp_Q)\in \ol{D}_0\cap\ol{D}^{\prime}_{0}\cap\ol{D}_{-1}\cap\ol{D}^{\prime\prime}_{-1}$,
$\ol{D}_0\cap\ol{D}_1\neq\emptyset$, $\ol{D}^{\sharp}_{0}\cap\ol{D}^{\sharp}_{1}\neq\emptyset$, $\ol{D}_{-1}\cap\ol{D}^{\sharp}_{0}\neq\emptyset$;
 \item[(e)] $ \ol{D}_0\cup\ol{D}^{\prime}_{0}\cup\ol{D}_{-1}\cup\ol{D}^{\prime\prime}_{-1}\setminus\{cv_Q\}\subset\pi_X (X_{2+})\cup\pi_{X}(X_{2-})=\cd(0,R)\setminus(\ol{\cd}(0,\rho)\cup\rr_{-}\cup\ol{\vv}(cv_Q,\tfrac{3\pi}{10}))$ and $\ol{D}^{\sharp}_0\subset\pi_X(X_{1+})$.
\end{itemize}

\end{proposition}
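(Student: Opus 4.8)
The plan is to pull everything back by the attracting Fatou coordinate $\Phi_{att}$ and work in the coordinate plane, where the combinatorics become transparent. Recall from Proposition \ref{attracting2020912-1} that $D_1=\Phi_{att}^{-1}(\{1<\mathrm{Re}\,z<2,\ |\mathrm{Im}\,z|<\eta\})$ and similarly for $D_1^{\sharp}$. Since $\Phi_{att}$ conjugates $F$ to the translation $T:z\mapsto z+1$, a set mapped by $F$ onto $D_1$ (resp.\ $D_1^{\sharp}$) and on which $F$ is injective must be a connected component of $F^{-1}(D_1)$ lying in $\mathrm{Dom}(F)$; its $\Phi_{att}$-image is a translate of $\Phi_{att}(D_1)$ by $-1$, i.e.\ lies over the strip $\{0<\mathrm{Re}\,z<1,\ |\mathrm{Im}\,z|<\eta\}$. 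So the whole construction reduces to: (i) identify the connected components of $F^{-1}$ of the relevant coordinate strips, and (ii) verify, via the shape estimates already obtained for $D_1$, $D_1^{\sharp}$ together with the geometry of $Q$ near its critical point $cp_Q$ (which has local degree $2$, by the analogue of Proposition \ref{paraequ-2} for $Q$) that these preimage components are disjoint Jordan domains whose closures land in $\mathrm{Image}(\varphi)$.

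The key steps, in order: First I would use Proposition \ref{attracting2020912-1}(b) to locate $D_1$ inside $\cd(cv_Q,R_1)$ and in the sector $\overline{\vv}(cv_Q,\tfrac{7\pi}{10})$, and observe that $cv_Q\in\partial D_1$ (the normalization $\Phi_{att}(cv_Q)=1$ forces the critical value to sit on the left edge $\mathrm{Re}=1$ of $\Phi_{att}(D_1)$). Next, since $F=Q\circ\varphi^{-1}$ with $\varphi$ univalent and $Q$ a degree-$2$ branched cover near $cp_Q$ onto a neighborhood of $cv_Q$, the preimage $F^{-1}(D_1)$ near $cp_F=\varphi(cp_Q)$ splits into exactly two Jordan domains sharing only the boundary point $cp_F$; call them $D_0$ and $D_0'$. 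Then I would iterate once more: $F^{-1}(D_0)$ near $cp_F$ likewise splits into two Jordan domains $D_{-1}$, $D_{-1}''$ with $cp_F$ on the boundary of each; here one must check $D_0$ (or $D_0'$, whichever one is chosen) actually contains $cv_Q$ on its boundary so that the degree-$2$ branching recurs — this follows from tracking which of the two first-level components contains the critical value, using that $cv_Q=\Phi_{att}^{-1}(1)$ and $\Phi_{att}^{-1}(0)\in\partial D_0$. For $D_0^{\sharp}$ I would instead take the component of $F^{-1}(D_1^{\sharp})$ adjacent to $D_0$, and for the "upper" sheet placement in (e) use the sector bound $D_1^{\sharp}\subset\{\tfrac{\pi}{5}<\arg(z-cv_Q)<\tfrac{7\pi}{10}\}$ to see its preimage lands over $X_{1+}$. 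The incidence relations in (d) — $\overline{D}_0\cap\overline{D}_1\neq\emptyset$, etc.\ — are immediate in the coordinate plane, since adjacent coordinate strips $\{k<\mathrm{Re}\,z<k+1\}$ share the boundary line $\mathrm{Re}\,z=k+1$, and $\Phi_{att}$ is a homeomorphism onto its image; pulling back, the closures meet along the corresponding boundary arcs.

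The two substantive points — and the main obstacle — are the disjointness in (e) and the containment of closures in $\mathrm{Image}(\varphi)$ in (a). For disjointness one must show the preimage strips over $\{0<\mathrm{Re}\,z<1\}$ and $\{-1<\mathrm{Re}\,z<0\}$, together with the $\sharp$-piece, do not overlap as subsets of $\cc$; this is where the explicit shape bounds matter: $D_1\subset\cd(cv_Q,R_1)$ with $R_1=2.39\times10^m$ against $R=2.66\times10^m$ in the Riemann surface $X$ (the ratio, and the $\tfrac{\pi}{5}$-sector exclusion in the definition of $X_{i\pm}$, are precisely what forces the first-return preimages to sit inside $\pi_X(X_{2\pm})$ rather than escaping). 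So the plan is to estimate, using the defining formula for $Q$ in \eqref{renorm2020912-5} and the explicit bound $R_1<R$, that $F^{-1}(\cd(cv_Q,R_1))$ near $cp_F$ stays in $\cd(0,R)\setminus(\overline{\cd}(0,\rho)\cup\rr_-\cup\overline{\vv}(cv_Q,\tfrac{3\pi}{10}))$, whence (a) follows because that last set is exactly $\pi_X(X_{2+})\cup\pi_X(X_{2-})\subset\mathrm{Image}(\varphi)$ by Proposition \ref{paraequ-14}; the $\sharp$-piece is handled by the analogous inclusion into $\pi_X(X_{1+})$. I expect the disjointness of $\overline{D}_0^{\sharp}$ from $\overline{D}_{-1},\overline{D}_{-1}''$ (they are forced onto different sheets, $X_{1+}$ versus $X_{2\pm}$, so their projections are separated by the slit $\overline{\vv}(cv_Q,\tfrac{3\pi}{10})$) to be the fiddliest verification, requiring careful bookkeeping of which coordinate strip maps under $\Phi_{att}^{-1}$ into which sheet.
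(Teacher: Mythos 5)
Your overall architecture is essentially the paper's: the paper also constructs the domains as preimages of $D_1$ (resp.\ $D_1^{\sharp}$, resp.\ $D_0$) under the two inverse branches of the degree-two covering branched over $cv_Q$, and then places them via location estimates. Two cautions on that part, though. First, the splitting into exactly two Jordan domains touching at $cp_F$ does not follow from the \emph{local} degree-$2$ behaviour of $Q$ at $cp_Q$: $D_1$ has diameter comparable to $R_1=2.39\times 10^m$, so you need the global covering structure — $Q$ maps $\mathcal{U}^{Q+}_1$ and $\mathcal{U}^{Q+}_2$ each homeomorphically onto $\cc\setminus(-\infty,cv_Q]$, and at the second level one must use the re-glued half-sheets $\mathcal{U}_{1+}\cup\mathcal{U}_{2-}\cup\ga_{b1+}$ and $\mathcal{U}_{1-}\cup\mathcal{U}_{2+}\cup\ga_{b2+}$ (because $D_0$ sits differently relative to the slits than $D_1$ does). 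Second, part (a) is not automatic: one must check that these $\zeta$-plane preimages lie in the domain of $\varphi$ and control the distortion of $\varphi$ there; this needs more than $R_1<R$ — in the paper it is Lemmas \ref{near-equata-1} and \ref{region2021-11-25-1} (the regions $\widetilde W_0$, $W_0$, $\widetilde W_{-1}$) together with Lemma \ref{nearequ-24} ($|\varphi(\zeta)|>\rho$) and Lemma \ref{equ2021-11-25-5} (and the bound $|\varphi(\zeta)-\zeta|<3$) to keep the images off $\ol{\cd}(0,\rho)$ and off $\rr_{-}$.

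The genuine gap is your plan for the sector exclusion in (e). You propose to show that the branch of $F^{-1}$ through $cp_F$ maps $\cd(cv_Q,R_1)$ into $\cd(0,R)\setminus\bigl(\ol{\cd}(0,\rho)\cup\rr_{-}\cup\ol{\vv}(cv_Q,\tfrac{3\pi}{10})\bigr)$. That statement is false: the segment $(cv_Q,cv_Q+R_1)$ is the axis of the sector and lies in $\cd(cv_Q,R_1)$, and its preimage under the branch in question is, up to the bounded distortion of $\varphi$, the real ray $\ga^{Q+}_{a1}\subset(cp_{Q1},+\infty)$, which runs to the right far past $cv_Q$ and hence enters $\ol{\vv}(cv_Q,\tfrac{3\pi}{10})$; and if instead ``near $cp_F$'' means a small neighbourhood, it no longer covers the domains, which are of size comparable to $R_1$. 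More fundamentally, $cv_Q$ itself lies on $\partial D_0$ (it is the endpoint of $\partial^{l}_{+}D_1=\partial^{r}_{+}D_0$, since $\Phi_{att}(cv_Q)=1$), so excluding a sector whose vertex is at $cv_Q$ is an angular statement about how $D_0$ approaches that boundary point, which no estimate on the preimage of an enclosing disk can deliver. The paper settles it dynamically: by Lemma \ref{near-equat-5}(b), every $z'\in\ol{\vv}(cv_Q,\tfrac{3\pi}{10})\setminus\{cv_Q\}$ satisfies $\text{Re}\,\Phi_{att}(z')>1$, hence $\text{Re}\,\Phi_{att}(F(z'))>2$, whereas every point $z$ of $\ol{D}_0\cup\ol{D}^{\prime}_0\cup\ol{D}_{-1}\cup\ol{D}^{\prime\prime}_{-1}$ has $F(z)\in\ol{\mathcal{H}}_0\cup\ol{\mathcal{H}}_1$ with $\text{Re}\,\widetilde{\Phi}_{att}(F(z))\le 2$. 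You need this argument (or an equivalent control of the angle of approach of $D_1$ at $F(cv_Q)$ combined with $\arg F'(cv_Q)$); without it, part (e) — and with it the sheet placement your bookkeeping relies on — is not established.
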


Proposition \ref{attracting2020912-2} will be shown in Subsection \ref{location2021-10-25-1}.

\begin{proposition}\label{attracting202091203} (Relating $E_F$ to $P$)
The parabolic renormalization $\mathcal{R}_0F$
belongs to the class $\mathcal{F}_2^P$ (possibly after
a linear conjugacy).

Regard $D_0$, $D_0'$, $D^{\prime\prime}_{-1}$, $D^{\sharp}_0$
as subsets of $X_{1+}\cup X_{2-}\subset X$, and let
\beqq
U=\text{the interior of }\ \bigcup^{\infty}_{n=0}
g^n(\overline{D}_0\cup\overline{D}_0^{\sharp}
\cup\overline{D}_0^{\prime}\cup\overline{D}_{-1}^{\prime\prime}).
\eeqq
Then there exists a surjective holomorphic map $\Psi_1:U\to U^{P}_{\eta}\setminus\{0\}=V^{\prime}\setminus\{0\}$ such that
\begin{itemize}
\item[(a)] $P\circ\Psi_1=\Psi_0\circ\widetilde{\Phi}_{att}$ on $U$, where $\Psi_0:\cc\to\cc^*$, $\Psi_0(z)=cv_P\cdot e^{2\pi i z}=cv_P\cdot Exp^{\sharp}(z)$, and $\widetilde{\Phi}_{att}: U\to\cc$ is the natural extension of the attracting Fatou coordinate to $U$;
\item[(b)] $\Psi_1(z)=\Psi_1(z^{\prime})$ if and only if $z^{\prime}=g^n(z)$ or $z=g^n(z^{\prime})$ for some integer $n\geq0$;
\item[(c)] $\psi=\Psi_0\circ \widetilde{\Phi}_{att}\circ \Psi^{-1}_1: V^{\prime}\setminus\{0\}\to\cc^{*}$ is well-defined and extends to a normalized univalent function on $V^{\prime}$;
\item[(d)]  on $\psi(V^{\prime}\setminus\{0\})$, the following holds
\beqq
P\circ \psi^{-1}=P\circ\Psi_1\circ\wt{\Phi}^{-1}_{rep}\circ\Psi^{-1}_0
=\Psi_0\circ\wt{\Phi}_{att}\circ\wt{\Phi}^{-1}_{rep}\circ\Psi^{-1}_0
=\Psi_0\circ E_F\circ\Psi^{-1}_0;
\eeqq
\item[(e)] the holomorphic dependence as in (d) of Theorem \ref{paraequ-84} holds.
\end{itemize}

\end{proposition}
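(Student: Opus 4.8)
The plan is to follow the Inou--Shishikura scheme, adapted to $P(z)=z(1+z)^m$: build the semiconjugacy $\Psi_1$ explicitly on the finitely many first-generation tiles $D_0,D_0',D_{-1}'',D_0^\sharp$ by choosing the right inverse branches of $P$, extend it over all of $U$ using the relation $P\circ\Psi_1=\Psi_0\circ\widetilde\Phi_{att}$ together with the $g$-invariance of $\Psi_0\circ\widetilde\Phi_{att}$, and then read off $\psi$ and the horn-map identity, noting that $\Psi_0\circ E_F\circ\Psi_0^{-1}$ is $\mathcal R_0F$ up to the linear conjugacy $z\mapsto cv_P\cdot z$.

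First I would describe $\Psi_1$ on the first-generation tiles. By Proposition~\ref{attracting2020912-1}, $\Psi_0\circ\Phi_{att}$ carries $D_1$ onto the slit annulus $\{\,|cv_P|e^{-2\pi\eta}<|w|<|cv_P|e^{2\pi\eta}\,\}\setminus\rr_-$ and $D_1^\sharp$ onto the slit disc $\{\,0<|w|<|cv_P|e^{-2\pi\eta}\,\}\setminus\rr_-$; both are simply connected and, being slit along the ray through $cv_P$, admit single-valued branches of $P^{-1}$. I would fix, once and for all, the branch on $D_1$ whose image lies in $\mathcal U^P_{12}$ and straddles the critical point $cp_P$ (so that $\overline{\Psi_1(D_1)}$ meets both $\overline{\mathcal U}_1$ and $\overline{\mathcal U}_2$ along $cp_P$), and the branch on $D_1^\sharp$ landing in the sheet $\mathcal U_{3+}$ prescribed by the sector inclusions of Lemma~\ref{paraequ-2021-9-20-2}; these combinatorial choices, and the hypothesis $m\geq22$, are exactly what is needed here. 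Since $F(D_0)=F(D_0')=D_1$, $F(D_{-1}'')=D_0$ and $F(D_0^\sharp)=D_1^\sharp$, while $\Phi_{att}$ increases by $1$ under $F$ and $\Psi_0$ has period $1$, the function $\Psi_0\circ\widetilde\Phi_{att}$ is already determined on $\overline{D}_0,\overline{D}_0',\overline{D}_{-1}'',\overline{D}_0^\sharp$, and I set $\Psi_1$ there to be the corresponding branch of $P^{-1}$ applied to it. Continuity of $\Psi_1$ along the boundary arcs shared by these tiles (Proposition~\ref{attracting2020912-2}(d)) then has to be checked by matching branches, which is where the location of the tiles in the sheets of $X$, recorded in Proposition~\ref{attracting2020912-2}(e), is used.

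Next I would propagate $\Psi_1$ over $U$. Since $g$ lifts an inverse branch of $F$ (Propositions~\ref{paraequ-14} and \ref{near-equat-2}), $\widetilde\Phi_{att}$ decreases by $1$ under $g$, so $\Psi_0\circ\widetilde\Phi_{att}$ is $g$-invariant; imposing $P\circ\Psi_1=\Psi_0\circ\widetilde\Phi_{att}$ then determines $\Psi_1$ on $g^{n}(\overline{D}_0\cup\overline{D}_0'\cup\overline{D}_{-1}''\cup\overline{D}_0^\sharp)$ inductively, the continuation being single-valued because the branch choices of the previous step were made consistently; this yields $\Psi_1$ on $U=\mathrm{int}\bigcup_{n\geq0}g^n(\,\cdot\,)$, holomorphy across the discrete forward $g$-orbit of $cp_F$ being supplied by the removable-singularity theorem since $\Psi_1$ stays bounded. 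I would then prove $\Psi_1(U)=V'\setminus\{0\}=U^P_\eta\setminus\{0\}$, i.e.\ that the tiles $\Psi_1(g^n(\,\cdot\,))$ exactly fill $\mathcal U^P_*\cap P^{-1}(\cd(0,|cv_P|e^{2\pi\eta}))$ with the component of $P^{-1}(\cd(0,|cv_P|e^{-2\pi\eta}))$ about $-1$ removed; both inclusions come from the explicit shapes of $D_1,D_1^\sharp$ in Proposition~\ref{attracting2020912-1} and of the pull-back domains in Proposition~\ref{attracting2020912-2}, together with the definitions of $V'$, $\eta$ and $\mathcal U^P_*$. With this, (a) and (b) hold by construction, and one defines $\psi:=\Psi_0\circ\widetilde\Phi_{att}\circ\Psi^{-1}_1$ on $V'\setminus\{0\}$: it is single-valued because the fibers of $\Psi_1$ are $g$-orbits, along which $\widetilde\Phi_{att}$ changes by integers while $\Psi_0$ has period $1$; it is holomorphic since $\Psi_1$ is a local biholomorphism off the critical orbit; it extends across the puncture by boundedness; and the normalizations $\Phi_{att}(cv_Q)=1$, $\Psi_0=cv_P\cdot\text{Exp}^{\sharp}$, together with the asymptotics $\widetilde Q^{-1}(z)=\pi_X(z)-(4m+2)+o(1)$, give $\psi(0)=0$ and $\psi'(0)=1$, the conformal/quasiconformal extensions placing $P\circ\psi^{-1}$ in $\mathcal F^P_2$ following as in Proposition~\ref{paraequ-14}. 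This is (c).

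Finally, for (d) one uses $\widetilde\Phi_{att}\circ\widetilde\Phi_{rep}^{-1}=E_F$ on the relevant domain (the horn map realised on $X$, Proposition~\ref{near-equat-2}), whence $P\circ\psi^{-1}=P\circ\Psi_1\circ\widetilde\Phi_{rep}^{-1}\circ\Psi_0^{-1}=\Psi_0\circ\widetilde\Phi_{att}\circ\widetilde\Phi_{rep}^{-1}\circ\Psi_0^{-1}=\Psi_0\circ E_F\circ\Psi_0^{-1}$, which equals $\mathcal R_0F$ after the conjugacy $z\mapsto cv_P\cdot z$; together with (c) this gives $\mathcal R_0F=P\circ\psi^{-1}\in\mathcal F^P_2$, the headline assertion. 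Part (e) holds because every object above---$\widetilde\Phi_{att}$, $\widetilde\Phi_{rep}$, the tiles $D_\bullet$, the fixed inverse branches of $P$, hence $\Psi_1$ and $\psi$---depends holomorphically on the parameter by Theorem~\ref{paraequ-84}(d) and the holomorphic dependence of Fatou coordinates, so $\psi_\lambda(z)$ is holomorphic in $(\lambda,z)$ and extends to a fixed open set just as there. I expect the main obstacle to be the interplay between the first step and the surjectivity claim: arranging the inverse branches of $P$ on the first-generation tiles so that they both glue continuously along the shared arcs \emph{and} have $g$-orbits that tile $V'\setminus\{0\}$ bijectively---equivalently, so that $\psi$ comes out univalent rather than a ramified image of $P$. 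This is the analogue of the Inou--Shishikura ``ellipse'' choice, and it is where $m\geq22$, the definitions of $V$ and $V'$ through $\eta$, and the sector geometry of Lemma~\ref{paraequ-2021-9-20-2} are indispensable.
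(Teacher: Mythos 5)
Your overall construction of $\Psi_1$ is essentially the paper's: the paper fixes three regions $\mathcal{U}=\mathcal{U}^P_{1+}\cup\mathcal{U}^P_{1-}\cup\ga_{c1+}$, $\mathcal{U}'$, $\mathcal{U}''$, each mapped homeomorphically by $P$ onto $\cc\setminus(-\infty,0]$, sets $\Psi_1=(P|_{\mathcal{U}})^{-1}\circ\Psi_0\circ\wt{\Phi}_{att}$ on $D_n\cup D_n^{\sharp}$ (and analogously with $\mathcal{U}'$, $\mathcal{U}''$ on $D_n'$, $D_n''$), and checks the gluing along the boundary arcs using the intersection pattern of the tiles (Lemma \ref{gra-2021-10-3-2}), exactly the matching step you describe. (One detail: the branch for the $D^{\sharp}$-tiles lands in $\mathcal{U}^P_{1\pm}$ near $0$, not in $\mathcal{U}_{3+}$; the small-modulus annulus $\{0<|w|<|cv_P|e^{-2\pi\eta}\}$ is pulled back into the component of $P^{-1}$ containing $0$, since the component at $-1$ is removed from $V'$.)

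The genuine gap is in your (c)--(d). You define $\psi:=\Psi_0\circ\wt{\Phi}_{att}\circ\Psi_1^{-1}$, following the statement literally, and then argue single-valuedness, univalence and normalization for that map. But by your own identity (a), $\Psi_0\circ\wt{\Phi}_{att}=P\circ\Psi_1$ on $U$, so this $\psi$ is simply $P$ restricted to $V'\setminus\{0\}$: it is single-valued for trivial reasons, it is \emph{not} univalent (it is $2$-to-$1$ near $cp_P\in V'$), and it cannot be inverted as required in (d). The statement's (c) contains a typo; the map actually needed — and the one the paper constructs, consistently with the chain of equalities in (d) — is $\psi=\Psi_0\circ\wt{\Phi}_{rep}\circ\Psi_1^{-1}$, built from the repelling Fatou coordinate $\wt{\Phi}_{rep}$ on $X$ from Proposition \ref{near-equat-2}. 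For that map, well-definedness comes from the $g$-invariance of $\Psi_0\circ\wt{\Phi}_{rep}$ (since $\wt{\Phi}_{rep}\circ g=\wt{\Phi}_{rep}-1$ and $\Psi_0$ has period $1$) together with (b), injectivity comes from the injectivity of $\wt{\Phi}_{rep}$ on $X$, and the normalization $\psi(0)=0$, $\psi'(0)=1$ comes from the normalization $E_F(z)=z+o(1)$ as $\mathrm{Im}\,z\to+\infty$ via the identity $\wt{\Phi}_{att}\circ\wt{\Phi}_{rep}^{-1}=E_F$, not from the asymptotics of $\wt{Q}^{-1}$ you cite. Your (d) silently uses $\psi^{-1}=\Psi_1\circ\wt{\Phi}_{rep}^{-1}\circ\Psi_0^{-1}$, which contradicts the definition you gave in (c); as written, the proposal never constructs or analyzes the repelling-coordinate map whose univalence is the heart of the proposition ($\mathcal{R}_0F=P\circ\psi^{-1}\in\mathcal{F}^P_2$), so this step needs to be redone with $\wt{\Phi}_{rep}$ in place of $\wt{\Phi}_{att}$.
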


Proposition \ref{attracting202091203} will be proved in Subsection \ref{proofpro2021-10-3-1}. Figure \ref{fig2021-10-3-3} is an illustration diagram of Proposition \ref{attracting202091203}.

\begin{figure}
\begin{center}
\begin{tikzpicture}[scale=1,line width=1pt]

\draw[color=black,line width=1.5pt] (6, 5)to [out=0,in=160] (8, 4);

\draw[color=black,line width=1.5pt] (6,0)--(6,7)--cycle;

\draw[color=black,line width=1.5pt] (6,0)--(6,-7)--cycle;

\draw[color=black,line width=1.5pt] (8,0)--(8,7)--cycle;


\draw[color=black,line width=1.5pt] (8,0)--(8,-7)--cycle;

\draw[dashed,line width=1.5pt] (6,0)--(8,0);

\node at (0,0) {O};

\draw[green,line width=1] (0,2) arc (0:180:1);
\draw[green,line width=1] (-0.25,2) arc (0:180:0.25);
\draw[green,line width=1] (-1.25,2) arc (0:180:0.25);
\draw[green,line width=1] (-1.25,2)--(-0.75,2);
\draw[green,line width=1] (-0.25,2)--(0,2);
\draw[green,line width=1] (-2,2)--(-1.75,2);
\draw[dashed,line width=2] (-0.5,2.25)--(-1,3);
\draw[dashed,line width=2] (-1.5,2.25)--(-1,3);
\draw[line width=1] (-1,2)--(-1,7);

\draw[green,line width=1] (0-2.5,2) arc (0:180:1);
\draw[green,line width=1] (-0.25-2.5,2) arc (0:180:0.25);
\draw[green,line width=1] (-1.25-2.5,2) arc (0:180:0.25);
\draw[green,line width=1] (-1.25-2.5,2)--(-0.75-2.5,2);
\draw[green,line width=1] (-0.25-2.5,2)--(0-2.5,2);
\draw[green,line width=1] (-2-2.5,2)--(-1.75-2.5,2);
\draw[dashed,line width=2] (-0.5-2.5,2.25)--(-1-2.5,3);
\draw[dashed,line width=2] (-1.5-2.5,2.25)--(-1-2.5,3);
\draw[line width=1] (-1-2.5,2)--(-1-2.5,7);

\draw[green,line width=1] (0-5,2) arc (0:180:1);
\draw[green,line width=1] (-0.25-5,2) arc (0:180:0.25);
\draw[green,line width=1] (-1.25-5,2) arc (0:180:0.25);
\draw[green,line width=1] (-1.25-5,2)--(-0.75-5,2);
\draw[green,line width=1] (-0.25-5,2)--(0-5,2);
\draw[green,line width=1] (-2-5,2)--(-1.75-5,2);
\draw[dashed,line width=2] (-0.5-5,2.25)--(-1-5,3);
\draw[dashed,line width=2] (-1.5-5,2.25)--(-1-5,3);
\draw[line width=1] (-1-5,2)--(-1-5,7);


\draw[green,line width=1] (0-7.5,2) arc (0:180:1);
\draw[green,line width=1] (-0.25-7.5,2) arc (0:180:0.25);
\draw[green,line width=1] (-1.25-7.5,2) arc (0:180:0.25);
\draw[green,line width=1] (-1.25-7.5,2)--(-0.75-7.5,2);
\draw[green,line width=1] (-0.25-7.5,2)--(0-7.5,2);
\draw[green,line width=1] (-2-7.5,2)--(-1.75-7.5,2);
\draw[dashed,line width=2] (-0.5-7.5,2.25)--(-1-7.5,3);
\draw[dashed,line width=2] (-1.5-7.5,2.25)--(-1-7.5,3);
\draw[line width=1] (-1-7.5,2)--(-1-7.5,7);

\draw[green,line width=1] (1,2) arc (90:-90:2);
\draw[green,line width=1] (1,1) arc (90:-90:1);
\draw[green,line width=1] (1,1)--(1,2);
\draw[green,line width=1] (1,-1)--(1,-2);
\draw[dashed,line width=2] (2,0)--(3,0);
\draw[line width=2, black,line cap=round] (3,0) to [out=60,in=260](4,7);
\draw[line width=2, black,line cap=round] (3,0) to [out=-60,in=-260](4,-7);

\draw[dashed,line width=2] (3,0)--(6,0);

\fill[black] (8, 0) circle (0.2);

\fill[white] (6,0) circle (0.2);
\draw (6,0) circle (0.21);

\fill[white] (3,0) circle (0.2);
\draw (3,0) circle (0.21);

\draw[line width=1] (-8.5,5)--(6,5);

\draw[line width=1] (-7.5,2)--(-7,2);
\draw[line width=1] (-5,2)--(-4.5,2);
\draw[line width=1] (-2.5,2)--(-2,2);
\draw[line width=1] (0,2)--(1,2);

\draw[line width=1] (0,-2)--(1,-2);
\draw[green,line width=1] (-1,-3) arc (270:360:1);
\draw[line width=1] (-1,-3)--(-1,-7);
\draw[line width=1] (-1,-5)--(8,-5);


\node at  (7,6) {$D^{\sharp}_1$};
\node at  (5,6) {$D^{\sharp}_0$};
\node at  (1.5,6) {$D^{\sharp}_{-1}$};
\node at  (-2.2,6) {$D^{\sharp}_{-2}$};
\node at  (-4.5,6) {$D^{\sharp}_{-3}$};
\node at  (-7,6) {$D^{\sharp}_{-4}$};


\node at  (7,3) {$D_1$};
\node at  (5,3) {$D_0$};
\node at  (1.5,4) {$D_{-1}$};
\node at  (-2.2,4) {$D_{-2}$};
\node at  (-4.5,4) {$D_{-3}$};
\node at  (-7,4) {$D_{-4}$};

\node at (1.5,-4) {$D^{\prime\prime}_{-1}$};
\node at (-0.5,1) {$D^{\prime\prime}_{-2}$};
\node at (-3,1) {$D^{\prime\prime}_{-3}$};
\node at (-5.5,1) {$D^{\prime\prime}_{-4}$};
\node at (-8,1) {$D^{\prime\prime}_{-5}$};

\draw[red,line width=1] (-0.5,1+0.25)--(-0.5,2.1);
\draw[red,line width=1] (-3,1+0.25)--(-3,2.1);
\draw[red,line width=1] (-5.5,1+0.25)--(-5.5,2.1);
\draw[red,line width=1] (-8,1+0.25)--(-8,2.1);

\node at (-0.5-1,1-1) {$D^{\prime}_{-1}$};
\node at (-3-1,1-1) {$D^{\prime}_{-2}$};
\node at (-5.5-1,1-1) {$D^{\prime}_{-3}$};
\node at (-8-1,1-1) {$D^{\prime}_{-4}$};

\draw[blue,line width=1] (-0.5-1,1-1+0.3)--(-0.5-1,2.1);
\draw[blue,line width=1] (-3-1,1-1+0.3)--(-3-1,2.1);
\draw[blue,line width=1] (-5.5-1,1-1+0.3)--(-5.5-1,2.1);
\draw[blue,line width=1] (-8-1,1-1+0.3)--(-8-1,2.1);


\draw[red,line width=1] (0,2) arc (0:90:1);
\draw[red,line width=1] (-2.5,2) arc (0:90:1);
\draw[red,line width=1] (-5,2) arc (0:90:1);
\draw[red,line width=1] (-7.5,2) arc (0:90:1);

\draw[red,line width=1] (-0.25,2) arc (0:180:0.25);
\draw[red,line width=1] (-2.75,2) arc (0:180:0.25);
\draw[red,line width=1] (-5.25,2) arc (0:180:0.25);
\draw[red,line width=1] (-7.75,2) arc (0:180:0.25);

\draw[red,line width=1] (-0.25,2)--(0,2);
\draw[red,line width=1] (-1,2)--(-0.75,2);
\draw[red,line width=1] (-2.75,2)--(-2.5,2);
\draw[red,line width=1] (-3.5,2)--(-3.25,2);
\draw[red,line width=1] (-5.25,2)--(-5,2);
\draw[red,line width=1] (-6,2)--(-5.75,2);
\draw[red,line width=1] (-7.75,2)--(-7.5,2);
\draw[red,line width=1] (-8.5,2)--(-8.25,2);
\end{tikzpicture}
\end{center}
\caption{The illustration diagram of Proposition \ref{attracting202091203}}\label{fig2021-10-3-3}
\end{figure}
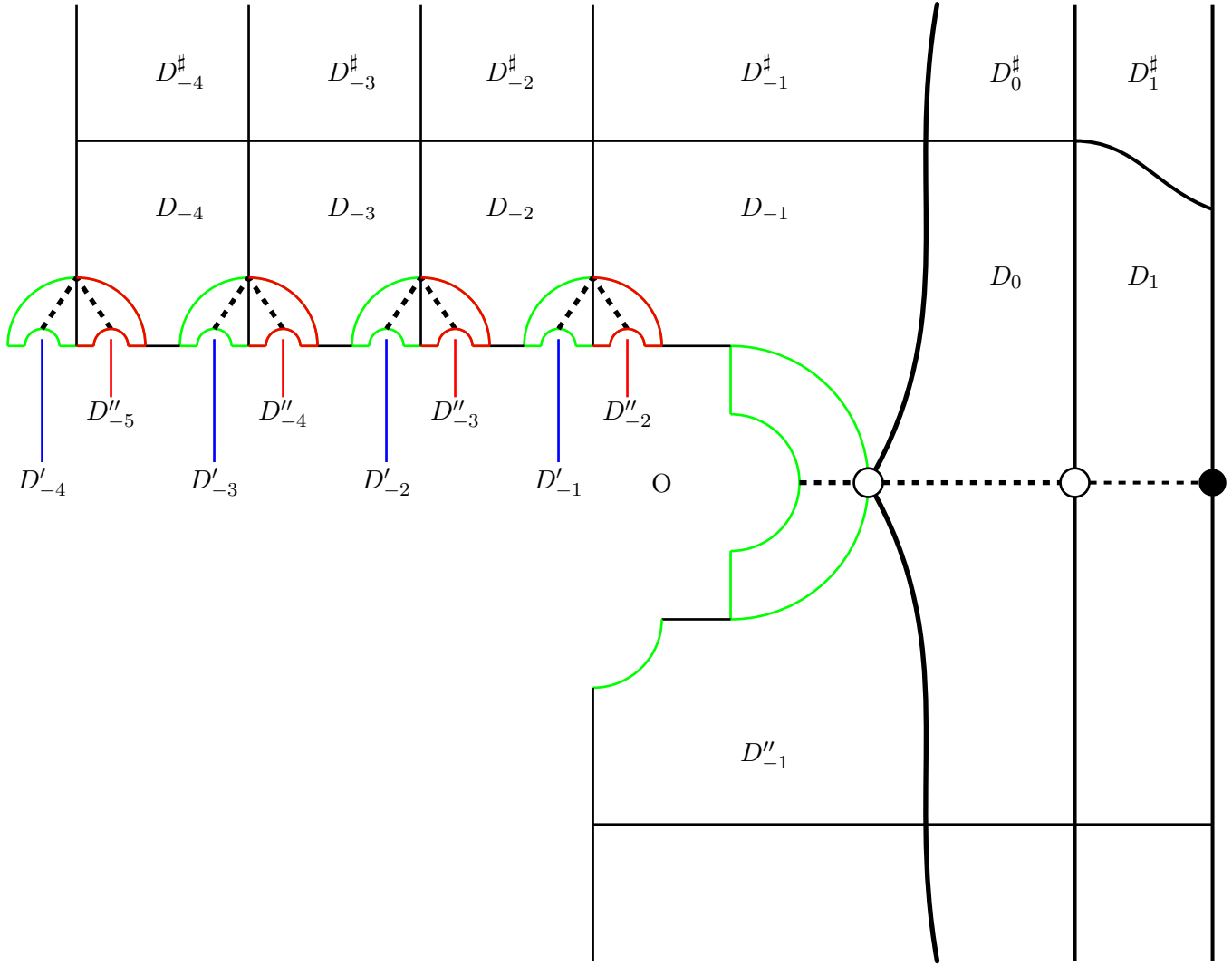

\subsection{Covering property of $f\in\mathcal{F}_0$ and $P$ as ``subcover"}\label{nearequ-20}

\begin{proposition}\label{paraequ-88}
Fix a constant $m\in\nn$. For any $f\in\mathcal{F}_0$, suppose the unique critical value
of $f$ is $cv_Q=-\frac{m^m}{(m+1)^{m+1}}$, since this can be achieved by a linear conjugacy. For the  $\mathcal{U}_j^{f}$ constructed in Subsection \ref{r0function2021-9-2-1}
 index by $I$,  where each $\mathcal{U}_j^{f}$ is mapped by $f$ isomorphically onto $\cc_{slit}$. Then, there exists an open subset contained in $\cc\setminus(-\infty,-1]$ on which there is a conformal map $\varphi$ from  this set onto an open
subset $U\subset\text{Dom}(f)$ such that $\varphi(0)=0$, $\varphi'(0)=1$, and
\beqq
f=P\circ \varphi^{-1}\ \text{on}\ U.
\eeqq
\end{proposition}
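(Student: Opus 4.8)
\emph{Proof proposal.} The plan is to produce $\varphi$ as a lift of $P$ through the branched covering $f$, assembled from inverse branches of $f$ over the individual sheets of $P^{-1}(\cc_{slit})$ and glued across the analytic slit-arcs, following the scheme Inou and Shishikura use when $m=2$. After the linear conjugacy that normalizes $cv_f=cv_Q$, apply Definition~\ref{paraequ-89} to both $P$ and $f$: with $\Ga_b=(-\infty,cv_Q]$, $\Ga_a=(cv_Q,0)$, $\Ga_c=(0,+\infty)$ and $\cc_{slit}=\cc\setminus(\{0\}\cup\Ga_b\cup\Ga_c)$, write $(\mathcal{U}_j^P)$ and $(\mathcal{U}_j^f)$ for the components of $P^{-1}(\cc_{slit})$ and $f^{-1}(\cc_{slit})$, each carried isomorphically onto $\cc_{slit}$, with their boundary arcs $\ga_{a\cdot}$, $\ga_{b\cdot\pm}$, $\ga_{c\cdot\pm}$. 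The two families share the same \emph{local} combinatorics at the origin and at the unique critical point: a component $\mathcal{U}_1$ whose closure is a full neighborhood of $0$ because $\ga_{c1+}=\ga_{c1-}$; a second component $\mathcal{U}_2$ with $\ga_{b1+}=\ga_{b2-}$ and $\ga_{b2+}=\ga_{b1-}$, meeting $\mathcal{U}_1$ at the local-degree-two critical point over $cv_Q$; and, across $\ga_{c2+}$ and $\ga_{c2-}$, two further components carrying the half-sheets $\mathcal{U}_{3+}$ and $\mathcal{U}_{m+1,-}$. This is exactly the data encoded in $\mathcal{U}_{12}^P$ and $\mathcal{U}_*^P$ of \eqref{defPdiff2021-8-30-2}--\eqref{defPdiff2021-8-30-3}; the target domain will be $\Omega=\mathcal{U}_{12}^P$ when a degree-two subcover suffices and $\Omega=\mathcal{U}_*^P$ in general.

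On $\mathcal{U}_1^P$ set $\varphi=(f|_{\mathcal{U}_1^f})^{-1}\circ P$, a biholomorphism onto $\mathcal{U}_1^f$; since $P$ and $f$ are local homeomorphisms at $0$ fixing $0$, it extends holomorphically across $0$ with $\varphi(0)=0$. On $\mathcal{U}_2^P$ set $\varphi=(f|_{\mathcal{U}_2^f})^{-1}\circ P$. The two definitions agree along $\ga_{b1+}^P=\ga_{b2-}^P$ and $\ga_{b2+}^P=\ga_{b1-}^P$, because over $\Ga_b$ the branches $(f|_{\mathcal{U}_1^f})^{-1}$ and $(f|_{\mathcal{U}_2^f})^{-1}$ are boundary values of one another (the arcs $\ga_{b1+}^f=\ga_{b2-}^f$ and $\ga_{b2+}^f=\ga_{b1-}^f$ are glued through $cp_f$) and both formulas send $cp_P$ to $cp_f$; removability of the real-analytic arcs $\ga_{b\cdot}^P$ and of the point $cp_P$ then makes $\varphi$ holomorphic on $\mathcal{U}_{12}^P$. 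In the general case, extend once more across $\ga_{c2+}^P$ and $\ga_{c2-}^P$: if $\mathcal{V}_{\pm}^f$ denote the components of $f^{-1}(\cc_{slit})$ adjacent to $\mathcal{U}_2^f$ across $\ga_{c2\pm}^f$, put $\varphi=(f|_{\mathcal{V}_+^f})^{-1}\circ P$ on $\mathcal{U}_{3+}^P$ and $\varphi=(f|_{\mathcal{V}_-^f})^{-1}\circ P$ on $\mathcal{U}_{m+1,-}^P$; these match the earlier definition along $\ga_{c2\pm}^P$ for the same reason, and removability of those arcs yields $\varphi$ holomorphic on $\Omega=\mathcal{U}_*^P$.

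It remains to record the required properties. By construction $f\circ\varphi=P$ on $\Omega$, so $f=P\circ\varphi^{-1}$ on $U:=\varphi(\Omega)$, and $\varphi(0)=0$; differentiating $f\circ\varphi=P$ at $0$ with $f'(0)=P'(0)=1$ gives $\varphi'(0)=1$. The inclusion $\Omega\subset\cc\setminus(-\infty,-1]$ is direct: $\mathcal{U}_1^P$, $\mathcal{U}_2^P$ and their boundary arcs $\ga_{a1}^P=(cp_P,0)$, $\ga_{a2}^P=(-1,cp_P)$, the $\ga_b$-arcs (each inside an open half-plane off its endpoints $0,cp_P,-1$), and $\ga_{c1}^P=(0,+\infty)$ all miss $(-\infty,-1)$, while $\mathcal{U}_{3+}^P$, $\mathcal{U}_{m+1,-}^P$ and $\ga_{c2\pm}^P$ lie, by Lemma~\ref{paraequ-2021-9-20-2}, inside sectors $S_{k,\pm}$ with $k\geq1$, on which $\arg(z+1)$ stays bounded away from $\pm\pi$, hence off the ray $(-\infty,-1)$. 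Finally $\varphi$ is injective: on each of the finitely many sheets it is a biholomorphism onto a distinct component of $f^{-1}(\cc_{slit})$ (or onto a distinguished half of some $\mathcal{V}_{\pm}^f$), and these images are glued along $\ga_{b\cdot}^f$, $\ga_{c2\pm}^f$ in $\text{Dom}(f)$ in exactly the pattern in which the sheets of $\Omega$ are glued, with no nontrivial monodromy entering; thus $\varphi$ maps $\Omega$ bijectively onto the open set $U\subset\text{Dom}(f)$, and it is conformal.

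The main obstacle is precisely the last point together with the assertion that this local model is valid for \emph{every} $f\in\mathcal{F}_0$, whose global covering structure over $\cc^{*}$ may be arbitrarily complicated (conceivably with infinitely many sheets): one must show that the $\mathcal{F}_0$-axioms---a single critical value, all critical points of local degree two, $f$ a homeomorphism near $0$---force the arc-gluings $\ga_{c1+}^f=\ga_{c1-}^f$, $\ga_{b1+}^f=\ga_{b2-}^f$, $\ga_{b2+}^f=\ga_{b1-}^f$ around $cp_f$, guarantee the existence of the adjacent components $\mathcal{V}_{\pm}^f$, and ensure $\mathcal{U}_{3+}^P$, $\mathcal{U}_{m+1,-}^P$ embed into $\mathcal{V}_{\pm}^f$ without folding. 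Once this rigidity of the local picture is established, the removability of the analytic slit-arcs and the identity $f\circ\varphi=P$ reduce the rest to bookkeeping.
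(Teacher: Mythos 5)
Your proposal is correct and follows essentially the same route as the paper: define $\varphi$ piecewise as $(f|_{\mathcal{U}_j^{f}})^{-1}\circ P$ on the sheets of $P^{-1}(\cc_{slit})$ (using $\mathcal{U}^P_{12}$ in the degree-two case and $\mathcal{U}^P_{*}$ in general), glue across the slit arcs, invoke removability at $0$, at $cp_P$ and along the analytic arcs, and differentiate $f\circ\varphi=P$ to get $\varphi'(0)=1$. The point you flag as the main obstacle is treated in the paper by the combinatorial structure already established in Subsection \ref{r0function2021-9-2-1} (the gluings $\ga^f_{c1+}=\ga^f_{c1-}$, $\ga^f_{b1+}=\ga^f_{b2-}$, $\ga^f_{b2+}=\ga^f_{b1-}$ forced by the $\mathcal{F}_0$ axioms) together with a separate case distinction according to whether $f$ is a quadratic polynomial, the adjacent components you call $\mathcal{V}^f_{\pm}$ being the paper's $\mathcal{U}^f_{3}$, $\mathcal{U}^f_{4}$ (possibly equal).
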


\begin{proof}
Suppose the critical value of $f$ is $cv_f=-\frac{m^m}{(m+1)^{m+1}}=cp_P$ by a linear conjugacy.  Since $f:\ \text{Dom}(f)\setminus\{0\}\to\cc^{*}=\cc\setminus\{0\}$, the domain of $f$ consists of the union of the closure of $\mathcal{U}_j^{f}$.

If $f$ is a quadratic polynomial, consider $\mathcal{U}_1^{f}$, $\mathcal{U}_2^{f}$, $\ga_{c2+}^{f}$, and $\ga_{c2-}^{f}$, we glue these two curves $\ga_{c2+}$ and $\ga_{c2-}$ together and add an inverse image of $0$ to $\mathcal{U}_2^{f}$, we would have a branched covering of degree $2$ branched over $cv_f$, and define $\varphi=(f|_{\mathcal{U}_{12}^{f}})^{-1}\circ P$, where $\mathcal{U}_{12}^{f}=\mathcal{U}_1^{f}\cup \mathcal{U}_2^{f}\cup \ga_{b1+}^{f}\cup \ga_{b2+}^{f}$.

If $f$ is not a quadratic polynomial, consider the components $\mathcal{U}_1^{f}$, $\mathcal{U}_2^{f}$, $\mathcal{U}_{3}^{f}$, $\mathcal{U}_{4}^{f}$, where $\mathcal{U}_{3}^{f}$ and $\mathcal{U}_{4}^{f}$ may be the same. Suppose that
$\ga_{c3+}=\ga_{c2-}$ and $\ga_{c2+}^{f}=\ga_{c4-}^{f}$ (see Figures \ref{illustration-m-3}--\ref{illustration-m-6} as an illustration).  As in Subsection \ref{r0function2021-9-2-1}, the above components and curves for $P$ are denoted by $\mathcal{U}^P_j$, $\ga^P_{aj}$, $\ga^P_{bj\pm}$, and $\ga^P_{cj\pm}$.

The map $\varphi:(\mathcal{U}^P_{1\pm}\cup \mathcal{U}^P_{2\pm}\cup \mathcal{U}^P_{3+}\cup \mathcal{U}^P_{m+1,-})\subset \cc\setminus(-\infty,-1]\to
(\mathcal{U}_{1\pm}^{f}\cup\mathcal{U}_{2\pm}^{f}\cup
\mathcal{U}_{3+}^{f}\cup \mathcal{U}_{4-}^{f})\subset\text{Dom}(f)$ is given by
\beqq
\varphi(z)=\left\{
  \begin{array}{ll}
    (f|_{\mathcal{U}_{1\pm}^{f}})^{-1}\circ P, & \hbox{if}\ z\in \mathcal{U}^P_{1\pm}, \\
     (f|_{\mathcal{U}_{2\pm}^{f}})^{-1}\circ P, & \hbox{if}\ z\in \mathcal{U}^P_{2\pm}, \\
     (f|_{\mathcal{U}_{3+}^{f}})^{-1}\circ P, & \hbox{if}\ z\in \mathcal{U}^P_{3+}, \\
     (f|_{\mathcal{U}_{4-}^{f}})^{-1}\circ P, & \hbox{if}\ z\in \mathcal{U}^P_{m+1,-}.
  \end{array}
\right.
\eeqq
It is evident that this definition can be extended continuously to the boundaries of the regions $\mathcal{U}^P_1$, $\mathcal{U}^P_2$, $\mathcal{U}^P_{3+}$, and $\mathcal{U}^P_{m+1,-}$, where the boundaries of these regions are $\ga_{b1+}^P=\ga_{b2-}^P$, $\ga_{c1+}^P=\ga_{c1-}^P$, $\ga_{b2+}^P=\ga_{b1-}^P$, $\ga_{c3+}^P=\ga_{c2-}^P$,
$\ga_{b3+}^P=\ga_{b3-}^P$, $\ga_{a3}^P$,
$\ga_{a,m+1}^P$, $\ga_{a,m+1,+}^P=\ga_{a,m+1,-}^P$, $\ga_{c2+}^P=\ga_{c,m+1,-}^P$. And ,we glue $\ga_{b3+}^P\cup\ga_{a3}^P$ and $\ga_{b,m+1,-}^P\cup\ga_{a,m+1}^P$ together (see Figures \ref{illustration-m-3}--\ref{illustration-m-6} as an illustration).

The origin is mapped to the origin, and $-\tfrac{1}{m+1}$ is mapped to the critical point of $f$. It is evident that $\varphi$ is a homeomorphism. Except for the origin and the critical point, the map $f$ is locally conformal, so $\varphi$ is holomorphic there. For the origin and the critical point, it follows from the removable singularity theorem that $\varphi$ is conformal. It follows from the definition of $\varphi$ that $f=P\circ\varphi^{-1}$ and $\varphi(0)=0$. This, together with the differentiation, implies that $\varphi^{\prime}(0)=1$.
\end{proof}

\begin{corollary}
Fix a constant $m\in\nn$. For $f\in\mathcal{F}_0$, suppose the critical value of $f$ is $cv_f=-\frac{m^m}{(m+1)^{m+1}}=cp_P$. Then,
\beqq
|f^{\prime\prime}(0)-(2m+1)|\leq1.\
\eeqq
\end{corollary}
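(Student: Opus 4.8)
\emph{Plan.} The idea is to write $f=P\circ\varphi^{-1}$ via Proposition~\ref{paraequ-88}, read off $f''(0)$ from the Taylor coefficients of $\varphi$, and reduce everything to a Bieberbach‑type coefficient estimate. After a linear conjugacy I may assume $cv_f$ equals the critical value of $P$, so Proposition~\ref{paraequ-88} gives $f=P\circ\varphi^{-1}$ on a neighbourhood of $0$, with $\varphi$ univalent, $\varphi(0)=0$, $\varphi'(0)=1$, and $\varphi^{-1}$ taking values in $\cc\setminus(-\infty,-1]$. Writing $\varphi(w)=w+a_2w^2+O(w^3)$, so that $\varphi^{-1}(z)=z-a_2z^2+O(z^3)$, and using $P(w)=w(1+w)^m=w+mw^2+O(w^3)$, composition gives $f(z)=P(\varphi^{-1}(z))=z+(m-a_2)z^2+O(z^3)$, hence $f''(0)=2m-2a_2$ and $f''(0)-(2m+1)=-(2a_2+1)$. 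Thus the corollary is equivalent to $|2a_2+1|\le1$, i.e.\ $a_2\in\ol{\cd}(-\tfrac12,\tfrac12)$; this is a sharp bound, equality $a_2=0$ being attained by $f=P$ itself, for which $P''(0)=2m$.

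The bound $a_2\in\ol{\cd}(-\tfrac12,\tfrac12)$ should come from the fact that $\varphi^{-1}$ omits the ray $(-\infty,-1]$, whose complement is uniformised by four times the Koebe function: $\kappa(w)=\tfrac{4w}{(1-w)^2}=4w+8w^2+O(w^3)$ maps $\cd$ conformally onto $\cc\setminus(-\infty,-1]$ with $\kappa(0)=0$. If $\varphi$ is conformal on $\cc\setminus(-\infty,-1]$ with the normalisation above, then $\tfrac14(\varphi\circ\kappa)$ is a normalised univalent function on $\cd$ whose second Taylor coefficient equals $2+4a_2$, so Bieberbach's inequality yields $|2+4a_2|\le2$, i.e.\ $a_2\in\ol{\cd}(-\tfrac12,\tfrac12)$, which together with the previous paragraph proves the corollary. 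Equivalently one can pass through the right half‑plane: $1+\varphi^{-1}$ omits $(-\infty,0]$, so $\sqrt{1+\varphi^{-1}}$ (principal branch, value $1$ at $0$) is univalent into $\{\,\text{Re}\,w>0\,\}$, a Cayley transform then carries it into $\cd$, and the Schwarz--Bieberbach estimates on this disc‑valued map give the same conclusion.

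The delicate point --- and the one I expect to be the real work --- is the hypothesis in the second paragraph that $\varphi$ is controlled on all (or conformally enough) of $\cc\setminus(-\infty,-1]$ for the comparison with the Koebe uniformisation to be legitimate: Proposition~\ref{paraequ-88} as stated supplies $\varphi$ only on a proper subdomain, and on a small disc a second coefficient is unconstrained, so univalence of $\varphi$ alone does not suffice. Closing this gap is exactly where the branched‑covering hypothesis on $f\in\mathcal F_0$ (not merely local univalence) has to be used: as in the proof of Proposition~\ref{paraequ-88}, $f$ restricted to the sheet $\mathcal U_1^f$ through $0$ is a biholomorphism onto a slit plane $\cc\setminus(-\infty,cv_f]$ --- the slit being healable because $\ga_{c1+}=\ga_{c1-}$ --- and the same holds for $P$ on $\mathcal U_1^P$; composing these inverse branches, with the sheets $(\mathcal U_j^f)$, $(\mathcal U_j^P)$ and their gluings tracked as in that proof, realises $\varphi^{-1}$ as a univalent map omitting $(-\infty,-1]$ on a domain with the correct extremal geometry, after which the estimate of the second paragraph applies verbatim. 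Matching up the combinatorics of these two sheeted covers is the only step that is not routine.
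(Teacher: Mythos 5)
Your proof is correct and takes essentially the same approach as the paper: both reduce $f''(0)-(2m+1)$ to the second Taylor coefficient of $\varphi$, uniformize $\cc\setminus(-\infty,-1]$ by $4\psi_{Koebe}$, and apply Bieberbach's coefficient bound to $\tfrac14\,\varphi\bigl(4\psi_{Koebe}(\cdot)\bigr)\in\mathcal{S}$ to obtain $|\varphi''(0)+1|\le 1$. The domain concern you flag is apt but applies equally to the paper's own argument, which tacitly reads Proposition~\ref{paraequ-88} as supplying $\varphi$ on all of $\cc\setminus(-\infty,-1]$; the sheeted construction in that proposition is designed to deliver exactly this even though the statement only promises an open subdomain.
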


\begin{proof}
It follows from $f=P\circ \varphi^{-1}$ that
$$f'=P'(\varphi^{-1})\cdot(\varphi^{-1})'\ \mbox{and}\ f^{\prime\prime}=P^{\prime\prime}(\varphi^{-1})\cdot((\varphi^{-1})')^2+P'(\varphi^{-1})\cdot(\varphi^{-1})^{\prime\prime}.$$ So, $f'(0)=P'(0)$ and
$$f^{\prime\prime}(0)=P^{\prime\prime}(0)+P'(0)\cdot(\varphi^{-1})^{\prime\prime}(0).$$

It follows from Proposition \ref{paraequ-2} that $P'(0)=1$ and
$$P^{\prime\prime}(z)=
(m-1)(1+z)^{m-2}(1+(m+1)z)+(1+z)^{m-1}(m+1),$$ implying that
$P^{\prime\prime}(0)=(m-1)+(m+1)=2m$. So, $f^{\prime\prime}(0)=2m+(\varphi^{-1})^{\prime\prime}(0)$.

The Koebe function $\psi_{Koebe}(z)=\tfrac{z}{(1-z)^2}$ is conformal from the unit disk onto $\cc\setminus(-\infty,-1/4]$, $\widehat{\varphi}(z)=\tfrac{1}{4}\varphi(4\psi_{Koebe}(z))$ is a univalent function in the class $\mathcal{S}$, where  $\mathcal{S}$ is the collection of univalent functions in the open unit disk with the fixed point $0$ and derivative $1$ at the fixed point $0$.
By direct calculation, $$\psi'_{Koebe}(z)=\tfrac{1}{(1-z)^2}+\tfrac{2z}{(1-z)^3}=\tfrac{1+z}{(1-z)^3}$$ and
$$\psi^{\prime\prime}_{Koebe}(z)=\tfrac{(1-z)^3-(1+z)(-3(1-z)^2)}{(1-z)^6}=\tfrac{2(z+2)}{(1-z)^4}.$$

By Theorem \ref{paraequ-87} (a), $|\widehat{\varphi}^{\prime\prime}(0)|\leq4$. By direct computation,
one has $$\widehat{\varphi}^{\prime}(z)=\tfrac{1}{4}\varphi^{\prime}(4\psi_{Koebe}(z))\cdot(4\psi^{\prime}_{Koebe}(z))=
\varphi^{\prime}(4\psi_{Koebe}(z))\cdot(\psi^{\prime}_{Koebe}(z))$$
and
$$\widehat{\varphi}^{\prime\prime}(z)
=4\varphi^{\prime\prime}(4\psi_{Koebe}(z))\cdot(\psi^{\prime}_{Koebe}(z))^2+\varphi^{\prime}(4\psi_{Koebe}(z))\cdot(\psi^{\prime\prime}_{Koebe}(z)).$$
So,
$$\widehat{\varphi}^{\prime\prime}(0)
=4\varphi^{\prime\prime}(4\psi_{Koebe}(0))\cdot(\psi^{\prime}_{Koebe}(0))^2+\varphi^{\prime}(4\psi_{Koebe}(0))\cdot(\psi^{\prime\prime}_{Koebe}(0))
=4\varphi^{\prime\prime}(0)+4.$$ Hence, $|\varphi^{\prime\prime}(0)+1|\leq1$.

Since $f=P\circ \varphi^{-1}$, $f\circ \varphi=P$, and $f'(\varphi)\cdot\varphi'=P'$, one has $$f^{\prime\prime}(\varphi)\cdot(\varphi')^2+f'(\varphi)\cdot(\varphi^{\prime\prime})=P^{\prime\prime}.$$ So, $$f^{\prime\prime}(0)+f'(0)\cdot(\varphi^{\prime\prime}(0))=f^{\prime\prime}(0)+P'(0)\cdot(\varphi^{\prime\prime}(0))=f^{\prime\prime}(0)+\varphi^{\prime\prime}(0)=P^{\prime\prime}(0).$$
Thus, $\varphi^{\prime\prime}(0)=P^{\prime\prime}(0)-f^{\prime\prime}(0)=2m-f^{\prime\prime}(0)$.

Therefore, $|2m+1-f^{\prime\prime}(0)|\leq1$.
\end{proof}

\subsection{From $P$ to $Q$}

\begin{proposition} (Relationship among $\mathcal{F}_0$, $\mathcal{F}^{P}_1$, $\mathcal{F}^{P}_2$, $\mathcal{F}^Q_1$, and $\mathcal{F}^Q_2$)\label{equ-20208-23-1} Fix a constant $m\in\nn$.
By the construction above, one has the following statements:
\begin{itemize}
\item[(a)] There is a natural injection $((\mathcal{F}_0\setminus\{quadratic\ polynomials\})/\underset{linear}\sim)\hookrightarrow\mathcal{F}^P_2$.
\item[(b)] There is a natural injection $\mathcal{F}^{P}_2\hookrightarrow\mathcal{F}^{P}_1$, defined by the restriction of $\varphi$ to $V$ for $f=P\circ\varphi^{-1}\in\mathcal{F}^{P}_2$.
\item[(c)] There is a natural injection $\mathcal{F}^{Q}_1\hookrightarrow\mathcal{F}^{Q}_2$, defined by the restriction of $\varphi$ to $\widehat{\cc}\setminus \bigg(E\cup\overline{\cd}\big(\cot(\tfrac{\pi}{m+1})i,\tfrac{1}{\sin(\tfrac{\pi}{m+1})}\big)\cup \overline{\cd}\big(-\cot(\tfrac{\pi}{m+1})i,\tfrac{1}{\sin(\tfrac{\pi}{m+1})}\big)\bigg)$ for $f=P\circ\varphi^{-1}\in\mathcal{F}^{Q}_1$.
\item[(d)] There exists a one-to-one correspondence between $\mathcal{F}^{P}_1$ and $\mathcal{F}^{Q}_2$, defined by $\varphi\in\mathcal{F}^{P}_1\to\widehat{\varphi}= \psi^{-1}_0\circ\varphi\circ\psi_1\in\mathcal{F}^{Q}_2$, since
\begin{align*}
\mathcal{F}^{P}_1\ni f=P\circ\varphi^{-1}\to F=\psi^{-1}_0\circ f\circ \psi_0=\psi^{-1}_0\circ P\circ\psi_1\circ\psi^{-1}_1\circ\varphi^{-1}\circ \psi_0=Q\circ\widehat{\varphi}^{-1}\in\mathcal{F}^{Q}_2,
\end{align*}
where $\widehat{\varphi}^{-1}(z)=\psi^{-1}_1\circ\varphi^{-1}\circ \psi_0(z)$ or $\widehat{\varphi}(z)= \psi^{-1}_0\circ\varphi\circ\psi_1(z)$. In this case, if $\widehat{\varphi}(z)=z+c_0+O(\tfrac{1}{z})$ near $\infty$, then $f^{\prime\prime}(0)=\tfrac{4m+2-c_0}{2}$, where $c_0\in\cc$ is a constant.
\end{itemize}
\end{proposition}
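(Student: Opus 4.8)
The plan is to prove the five assertions in turn, each by transporting the defining data of one function class through an explicit change of coordinate and checking that normalization, univalence and the prescribed (quasi)conformal extension conditions are preserved. For (a), I would feed a representative $f\in\mathcal{F}_0$, normalized by a linear conjugacy so that $cv_f=cp_P$ (possible, and unique within a linear class, since every $f\in\mathcal{F}_0$ has $cv_f\in\cc^{*}$), into Proposition~\ref{paraequ-88}. Because $f$ is not a quadratic polynomial, the construction there produces a conformal $\varphi$ with $\varphi(0)=0$, $\varphi'(0)=1$ and $f=P\circ\varphi^{-1}$, defined on the region glued from $\mathcal{U}^P_{1\pm}\cup\mathcal{U}^P_{2\pm}\cup\mathcal{U}^P_{3+}\cup\mathcal{U}^P_{m+1,-}$ together with the relevant boundary arcs; since $V'\subset\mathcal{U}^P_{*}$ by~\eqref{paraequ-92} and $\mathcal{U}^P_{*}$ lies inside this region, the restriction $\varphi|_{V'}$ is univalent and normalized, so $f\mapsto P\circ(\varphi|_{V'})^{-1}$ is a well-defined map into $\mathcal{F}^P_2$, injective because $\varphi$ is canonically determined by $f$ and $\varphi|_{V'}$ determines $\varphi$ by the identity theorem on the connected set $V'$. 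For (b), the map is literal restriction $\varphi\mapsto\varphi|_V$, legitimate because $\ol V\subset V'$ by Proposition~\ref{paraequ-86}; univalence and normalization persist and injectivity again comes from the identity theorem on the connected set $V$. The only delicate point is that $\varphi|_V$ extends quasiconformally to $\cc$: $\varphi$ is conformal on the neighborhood $V'$ of $\ol V$ and $\partial V$ is a smooth Jordan curve (Theorem~\ref{paraequ-84}), so $\varphi$ carries $\ol V$ diffeomorphically onto a smooth Jordan domain, and such a diffeomorphism extends to a diffeomorphism — hence a quasiconformal homeomorphism — of $\widehat{\cc}$; equivalently, one interpolates across the annulus $V'\setminus\ol V$, whose modulus is positive.

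Turning to (c), one restricts the domain of $\varphi$ from $\widehat{\cc}\setminus E$ down to $\widehat{\cc}\setminus(E\cup\ol{\cd}(\cot(\tfrac{\pi}{m+1})i,\tfrac{1}{\sin(\tfrac{\pi}{m+1})})\cup\ol{\cd}(-\cot(\tfrac{\pi}{m+1})i,\tfrac{1}{\sin(\tfrac{\pi}{m+1})}))$. Univalence and the normalization at $\infty$ survive, $\varphi$ itself furnishes the conformal extension to $\widehat{\cc}\setminus E$ required by $\mathcal{F}^Q_2$, and the quasiconformal extension to $\widehat{\cc}$ is inherited, so the restriction lies in $\mathcal{F}^Q_2$. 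Injectivity follows from the identity theorem on the smaller (connected) domain together with the fact that $Q(z)=z+(4m+2)+O(1/z)$ near $\infty$, so $Q$ is locally injective there; this lets one recover $\varphi_1^{-1}=\varphi_2^{-1}$ near $\infty$ from $Q\circ\varphi_1^{-1}=Q\circ\varphi_2^{-1}$, hence $\varphi_1=\varphi_2$.

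For (d), the correspondence is the one forced by $Q=\psi_0^{-1}\circ P\circ\psi_1$: conjugating $f=P\circ\varphi^{-1}$ by $\psi_0$ gives $F=\psi_0^{-1}\circ f\circ\psi_0=Q\circ\widehat{\varphi}^{-1}$ with $\widehat{\varphi}=\psi_0^{-1}\circ\varphi\circ\psi_1$. The geometric key is that $\psi_1(z)=-\tfrac{4z}{(1+z)^2}$ satisfies $\psi_1(z)=\psi_1(1/z)$ and is therefore injective on $\widehat{\cc}\setminus\ol{\cd}$, and that the closed unit disk lies in $\ol{\cd}(\cot(\tfrac{\pi}{m+1})i,\tfrac{1}{\sin(\tfrac{\pi}{m+1})})\cup\ol{\cd}(-\cot(\tfrac{\pi}{m+1})i,\tfrac{1}{\sin(\tfrac{\pi}{m+1})})$, which is a one–line estimate from $1+\cot^2(\tfrac{\pi}{m+1})=1/\sin^2(\tfrac{\pi}{m+1})$ and the sign of $\text{Im}\,z$; hence $\psi_1$ restricts to a biholomorphism from $\widehat{\cc}\setminus(E\cup\text{the two disks})$ onto $V$ by the definition~\eqref{para2021-9-1-2} of $V$, so $\widehat{\varphi}$ is a well-defined univalent map on exactly the domain appearing in $\mathcal{F}^Q_2$. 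Tracking leading terms through $\psi_1(\infty)=0$, $\varphi(0)=0$ and $\psi_0^{-1}(w)=-4/w$ shows $\widehat{\varphi}(\infty)=\infty$ and $\widehat{\varphi}(z)/z\to1$, so $\widehat{\varphi}$ is normalized, and the conformal and quasiconformal extension properties transport through $\psi_0$ and $\psi_1$ since these are rational; the inverse assignment $\widehat{\varphi}\mapsto\psi_0\circ\widehat{\varphi}\circ\psi_1^{-1}$, with $\psi_1^{-1}$ the branch into $\widehat{\cc}\setminus\ol{\cd}$ (so $\psi_1^{-1}(0)=\infty$), undoes it, giving the bijection. For the second–derivative formula I would expand $\psi_1^{-1}(w)=-\tfrac{4}{w}-2+\tfrac{w}{4}+O(w^2)$ near $w=0$, substitute into $\widehat{\varphi}(z)=z+c_0+O(1/z)$ and apply $\psi_0$, obtaining $\varphi(w)=w+\tfrac{c_0-2}{4}\,w^2+O(w^3)$; since $f\circ\varphi=P$ and $P''(0)=2m$ by Proposition~\ref{paraequ-2}, differentiating twice at $0$ gives $f''(0)+\varphi''(0)=2m$, hence $f''(0)=2m-\tfrac{c_0-2}{2}=\tfrac{4m+2-c_0}{2}$.

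The main obstacle I anticipate is the bookkeeping in part (d): confirming that under the two–to–one map $\psi_1$, with its branch involution $z\leftrightarrow 1/z$, the two–tiered structure of $\mathcal{F}^Q_2$ (conformality on $\widehat{\cc}\setminus E$ together with a quasiconformal extension to $\widehat{\cc}$) matches precisely the two–tiered structure of $\mathcal{F}^P_1$ (conformality on $V$ together with a quasiconformal extension to $\cc$). This hinges on the exact mutual position of the ellipse $E$, the two disks, the unit disk $\ol{\cd}$ and the domains $V\subset V'$ — that is, on Proposition~\ref{paraequ-86} and the geometric lemmas on the fibers of $P$ — rather than on any single sharp inequality; a secondary nuisance is checking in (a) that the region produced by Proposition~\ref{paraequ-88} genuinely contains $\mathcal{U}^P_{*}$, including the boundary arcs along which $\varphi$ is continued.
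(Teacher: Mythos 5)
Your proposal is correct, and the structure parallels the paper's, but you take a genuinely different route on the derivative formula in (d). The paper's own proof is very terse: it handles (a) essentially as you do (via Proposition~\ref{paraequ-88} and restriction to $V'$), says nothing explicit about (b) or (c), and obtains $f''(0)$ by first using the Laurent expansion $Q(z)=z+(4m+2)+O(1/z)$ from \eqref{paraequ-66} together with $F=Q\circ\widehat{\varphi}^{-1}$ to write $F(w)=w+(4m+2-c_0)+O(1/w)$, then conjugating back by $\psi_0$: $f(z)=\psi_0\circ F\circ\psi_0^{-1}(z)=z+\tfrac{4m+2-c_0}{4}z^2+O(z^3)$. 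You instead expand $\psi_1^{-1}(w)=-\tfrac{4}{w}-2+\tfrac{w}{4}+O(w^2)$ directly, deduce $\varphi''(0)=\tfrac{c_0-2}{2}$, and close with $f''(0)+\varphi''(0)=P''(0)=2m$ by differentiating $f\circ\varphi=P$ twice; this is the same algebraic identity the paper uses in the corollary following Proposition~\ref{paraequ-88} to bound $|f''(0)-(2m+1)|$. Your route is somewhat more self-contained, since it avoids invoking Lemma~\ref{paraequ-12}'s expansion of $Q$; the paper's route is shorter once \eqref{paraequ-66} is at hand. Both land at $f''(0)=\tfrac{4m+2-c_0}{2}$. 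The additional justifications you supply for (b) and (c) (identity theorem for injectivity; interpolation across the annulus $V'\setminus\ol{V}$ for the quasiconformal extension) and the one-line observation $\ol{\cd}\subset\ol{\cd}\bigl(\cot(\tfrac{\pi}{m+1})i,\tfrac{1}{\sin(\tfrac{\pi}{m+1})}\bigr)\cup\ol{\cd}\bigl(-\cot(\tfrac{\pi}{m+1})i,\tfrac{1}{\sin(\tfrac{\pi}{m+1})}\bigr)$, which makes $\psi_1$ single-valued on the relevant domain, are all correct and genuinely tighten points the paper leaves implicit.
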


\begin{proof}
Suppose $f\in\mathcal{F}_0$, it follows from Proposition \ref{paraequ-88} that $f$ can be written as $f=P\circ \varphi^{-1}$ on $U\subset\text{Dom}(f)$, where $\varphi:\cc\setminus(-\infty,-1]\to U$ is conformal with $\varphi(0)=0$ and $\varphi'(0)=1$. This, together with $V'\subset\cc\setminus(-\infty,-1]$, yields that the restriction of $f=P\circ\varphi^{-1}$ to $\varphi(V')$ is an element of $\mathcal{F}^P_2$. Since we only need to consider the holomorphic functions in these classes (without the quasi-conformal extension), this kind of embedding is injective.

Note that $\psi_1:\widehat{\cc}\setminus \bigg(E\cup\overline{\cd}\big(\cot(\tfrac{\pi}{m+1})i,\tfrac{1}{\sin(\tfrac{\pi}{m+1})}\big)\cup \overline{\cd}\big(-\cot(\tfrac{\pi}{m+1})i,\tfrac{1}{\sin(\tfrac{\pi}{m+1})}\big)\bigg)\to V$ is conformal and $\varphi$ is normalized at $0$  if and only if $\widehat{\varphi}=\psi^{-1}_0\circ\varphi\circ\psi_1$ is normalized at $\infty$. Since $F=Q\circ\varphi^{-1}$, $\varphi(0)=0$, $\varphi'(0)=1$, by \eqref{paraequ-66}, one has $z+(4m+2-c_0)+O(\tfrac{1}{z})$, implying that $f(z)=\psi^{-1}_0\circ F\circ \psi_0=\tfrac{-4}{-4/z+(4m+2-c_0)+O(z)}=z+\tfrac{4m+2-c_0}{4}z^2+O(z^3)$. So, $f^{\prime\prime}(0)=\tfrac{4m+2-c_0}{2}$.
\end{proof}

\subsection{Preparation}

\begin{lemma}\label{paraequ-13}\cite[Lemma 5.9]{InouShishikura2016}
 \begin{itemize}
 \item[(a)] If $a,b\in\cc$ and $|a|>|b|$, then $|\text{arg}\,(a+b)-\text{arg}\,a|\leq\arcsin\bigg(\frac{|b|}{|a|}\bigg)$.
\item[(b)] If $0\leq x\leq\tfrac{1}{2}$, then $\arcsin(x)\leq\tfrac{\pi}{3}x$.
\end{itemize}
\end{lemma}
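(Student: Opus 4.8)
The plan is to handle the two parts independently; both are elementary, and the only care needed is the branch bookkeeping for the argument in part~(a).

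For part~(a), I would first normalize. Multiplying both $a$ and $b$ by the common nonzero constant $1/a$ rotates and rescales $a+b$ and $a$ by the same factor, hence changes neither $\arg(a+b)-\arg a$ nor $|b|/|a|$; so it suffices to prove $|\arg(1+b)|\le\arcsin|b|$ when $|b|<1$. Write $b=re^{i\tht}$ with $0\le r<1$. Since $\mathrm{Re}(1+b)=1+r\cos\tht\ge 1-r>0$, the point $1+b$ lies in the open right half-plane, so $\arg(1+b)\in(-\tfrac{\pi}{2},\tfrac{\pi}{2})$; also $\arcsin r\in[0,\tfrac{\pi}{2})$, and $\sin$ is strictly increasing on this interval, so it is enough to compare sines. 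A direct computation gives
\[
\sin^2\!\big(\arg(1+b)\big)=\frac{\big(\mathrm{Im}(1+b)\big)^2}{|1+b|^2}=\frac{r^2\sin^2\tht}{1+2r\cos\tht+r^2},
\]
and, clearing the positive denominator and using $\sin^2\tht-1=-\cos^2\tht$, the bound $\sin^2(\arg(1+b))\le r^2$ reduces to $-(\cos\tht+r)^2\le 0$, which is immediate. Equivalently one can argue geometrically: $a+b$ lies on the circle of radius $|b|$ centered at $a$, the origin lies outside this circle because $|a|>|b|$, and the half-angle of the cone of tangent lines from the origin to that circle equals $\arcsin(|b|/|a|)$, which bounds the angle at the origin between the directions to $a$ and to any point of that circle.

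For part~(b), I would invoke convexity. On $[0,1)$ the derivative of $\arcsin$ is $(1-x^2)^{-1/2}$, which is increasing, so $\arcsin$ is convex there. Consequently on the subinterval $[0,\tfrac12]$ its graph lies below the chord joining $(0,\arcsin 0)=(0,0)$ to $(\tfrac12,\arcsin\tfrac12)=(\tfrac12,\tfrac{\pi}{6})$; that chord is the line $y=\tfrac{\pi}{3}x$, which yields $\arcsin x\le\tfrac{\pi}{3}x$ for $0\le x\le\tfrac12$.

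I do not expect a genuine obstacle here. The only place deserving a line of attention is the reduction and branch choice in part~(a): one must check that $1+b$ stays in the right half-plane, so that passing from the inequality for $\sin$ back to the inequality for $\arg$ is legitimate, and that conjugating by $1/a$ indeed preserves both relevant quantities.
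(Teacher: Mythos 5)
Your proof is correct on both counts. For part~(a) the reduction to $|\arg(1+b)|\le\arcsin|b|$ after dividing by $a$ is legitimate since the operation is a rotation-dilation preserving both the angular difference and the ratio $|b|/|a|$, the check that $1+b$ stays in the right half-plane justifies passing between $\sin$ and $\arg$, and the algebraic reduction to $(\cos\theta+r)^2\ge 0$ is clean; the tangent-cone geometric picture is the same argument phrased differently. For part~(b) the convexity of $\arcsin$ on $[0,1)$ (second derivative $x(1-x^2)^{-3/2}\ge 0$) and the chord through $(0,0)$ and $(\tfrac12,\tfrac{\pi}{6})$ give exactly the stated bound. Note that the paper itself provides no proof of this lemma -- it simply cites it as \cite[Lemma 5.9]{InouShishikura2016} -- so there is no internal argument to compare against; your supplied proof stands on its own and fills that citation with an elementary verification.
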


\begin{lemma}\label{paraequ-10}\cite[Lemma 5.11]{InouShishikura2016}
 \begin{itemize}
\item[(a)] If $\text{Re}\,(ze^{-i\tht})>t>0$ with $\tht\in\mathbb{R}$, then
\beqq
\frac{1}{z}\in\cd\bigg(\frac{e^{-i\tht}}{2t},\frac{1}{2t}\bigg).
\eeqq
\item[(b)] If $H=\{z:\ \text{Re}\,(z\,e^{-i\tht})>t\}$
and $z_0\in H$ with $u=\text{Re}\,(z_0e^{-i\tht})-t$, then
\beqq
\cd_H(z_0,s(r))=\cd\bigg(z_0+\frac{2ur^2e^{i\tht}}{1-r^2},\frac{2ur}{1-r^2}\bigg),\ 0<r<1,
\eeqq
where the right hand side is an Euclidean disk
and $s(r)=d_{\cd}(0,r)=\log\frac{1+r}{1-r}$.
\end{itemize}
\end{lemma}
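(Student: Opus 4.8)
\emph{Proof strategy.} The plan is to transport both statements to a standard normal form by explicit conformal maps and then compute with the elementary fact that a Möbius map sends round circles to round circles. For part (a), observe that the set $H_\tht:=\{z:\ \text{Re}(ze^{-i\tht})>t\}$ is, on the Riemann sphere, a round disk through $\infty$ bounded by the line $\text{Re}(ze^{-i\tht})=t$, so the inversion $z\mapsto 1/z$ carries it onto a genuine Euclidean disk. Writing $z=e^{i\tht}w$ with $\text{Re}\,w>t$, a boundary point $w=t+is$ satisfies $1/w=(t-is)/(t^{2}+s^{2})$, and eliminating $s$ gives $|1/w-\tfrac{1}{2t}|^{2}=\tfrac{1}{4t^{2}}$; hence $1/w$ traces $\partial\cd(\tfrac{1}{2t},\tfrac{1}{2t})$, and testing $w=2t$ shows the half-plane maps into the interior. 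Since $1/z=e^{-i\tht}(1/w)$, rotation by $e^{-i\tht}$ yields $1/z\in\cd\!\big(\tfrac{e^{-i\tht}}{2t},\tfrac{1}{2t}\big)$, which is (a).

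For part (b), first pin down the normalization: because $s(r)=d_{\cd}(0,r)=\log\tfrac{1+r}{1-r}$ is the distance for the hyperbolic metric $2|dz|/(1-|z|^{2})$ on $\cd$, the induced metric on $H_\tht$ is $|dz|/(\text{Re}(ze^{-i\tht})-t)$ and $\cd_{H}(z_{0},s(r))$ denotes the corresponding hyperbolic ball. Now apply three isometries in succession. The Euclidean isometry $\zeta=ze^{-i\tht}-t$ maps $H_\tht$ onto $\hh_{0}:=\{\text{Re}\,\zeta>0\}$ with metric $|d\zeta|/\text{Re}\,\zeta$, sends $z_{0}$ to some $\zeta_{0}$ with $\text{Re}\,\zeta_{0}=u$, and preserves both hyperbolic and Euclidean balls. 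Then $\zeta\mapsto(\zeta-i\,\text{Im}\,\zeta_{0})/u$ is a hyperbolic automorphism of $\hh_{0}$ which is simultaneously a Euclidean similarity and moves the center to $1$, so it remains to identify $\cd_{\hh_{0}}(1,s(r))$. For this use the Cayley transform $C(z)=(1+z)/(1-z)$, a hyperbolic isometry $\cd\to\hh_{0}$ with $C(0)=1$: then $\cd_{\hh_{0}}(1,s(r))=C(\{|z|<r\})$, which is a Euclidean disk whose diameter is the image of the real diameter $[-r,r]$, with antipodal endpoints $C(\pm r)=\tfrac{1\pm r}{1\mp r}$. Its center is $\tfrac12\big(\tfrac{1+r}{1-r}+\tfrac{1-r}{1+r}\big)=\tfrac{1+r^{2}}{1-r^{2}}=1+\tfrac{2r^{2}}{1-r^{2}}$ and its radius is $\tfrac12\big(\tfrac{1+r}{1-r}-\tfrac{1-r}{1+r}\big)=\tfrac{2r}{1-r^{2}}$.

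Finally reassemble by undoing the reductions. Undoing $\zeta\mapsto(\zeta-i\,\text{Im}\,\zeta_{0})/u$ scales by $u$ and translates by $i\,\text{Im}\,\zeta_{0}$, giving $\cd_{\hh_{0}}(\zeta_{0},s(r))=\cd\!\big(\zeta_{0}+\tfrac{2ur^{2}}{1-r^{2}},\tfrac{2ur}{1-r^{2}}\big)$; undoing $\zeta=ze^{-i\tht}-t$, i.e.\ applying $z=(\zeta+t)e^{i\tht}$, sends a disk $\cd(c,\rho)$ to $\cd((c+t)e^{i\tht},\rho)$, and since $(\zeta_{0}+t)e^{i\tht}=z_{0}$ the center becomes $z_{0}+\tfrac{2ur^{2}e^{i\tht}}{1-r^{2}}$ while the radius is unchanged at $\tfrac{2ur}{1-r^{2}}$, which is exactly the asserted formula. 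The only place needing genuine care — rather than the Obstacle being any deep point — is bookkeeping: one must keep the metric normalizations consistent (it is precisely the factor $2$ in $2|dz|/(1-|z|^{2})$ that makes $\log\tfrac{1+r}{1-r}$ the hyperbolic radius realized along the real axis in $\hh_{0}$), and one must remember that Euclidean centers are \emph{not} transported to Euclidean centers by these Möbius maps, so at each stage the new diameter has to be located explicitly (here via the real axis and the points $C(\pm r)$) rather than taken to be the image of the previous center. Sanity checks confirm the result: as $r\to0^{+}$ the disk collapses to $z_{0}$, and as $r\to1^{-}$ both center and radius blow up with the center drifting in the inward normal direction $e^{i\tht}$, the ball exhausting $H_\tht$.
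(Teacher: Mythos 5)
Your proof is correct. Note that the paper itself does not supply a proof for this lemma; it is quoted verbatim from Inou--Shishikura (their Lemma~5.11), so there is nothing in the present source to compare against. On its own terms your argument is sound: part~(a) is the standard computation that inversion sends the half-plane $\{\mathrm{Re}(ze^{-i\theta})>t\}$ (a disk through $\infty$ on $\widehat{\cc}$) to the Euclidean disk tangent to $0$, with the identity $|1/w-\tfrac{1}{2t}|^2=\tfrac{1}{4t^2}$ on the boundary line and a single interior test point to settle orientation. For part~(b) you correctly pin down the metric normalization ($2|dz|/(1-|z|^2)$ on $\cd$, hence $|d\zeta|/\mathrm{Re}\,\zeta$ on the right half-plane, so that $d_\cd(0,r)=\log\tfrac{1+r}{1-r}$), reduce to $\cd_{\hh_0}(1,s(r))=C(\cd(0,r))$ via the Cayley map, and --- crucially --- locate the Euclidean center of the image via the real diameter $[-r,r]$, which is legitimate because $C$ commutes with complex conjugation so the real axis is an axis of symmetry of the image disk. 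The midpoint/half-difference computation $\tfrac{1+r^2}{1-r^2}\pm\tfrac{2r}{1-r^2}$ and the subsequent affine transport back through $\zeta\mapsto u\zeta+i\,\mathrm{Im}\,\zeta_0$ and $z=(\zeta+t)e^{i\theta}$ are all carried out correctly, and you rightly flag that Möbius maps do not preserve Euclidean centers, which is the only place a careless computation would go wrong.
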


\subsection{Construction from $P$ to $Q$}\label{near629-equ-1}

\begin{lemma}\label{paraequ-25}
\begin{itemize}
\item[(a)] The two rational maps $P$ and $Q$ are related by
\beq\label{nearequ-1}
Q=\psi_0^{-1}\circ P\circ \psi_1=\frac{(1+z)^{2+2m}}{z(1-z)^{2m}}=z\frac{\big(1+\frac{1}{z}\big)^{2+2m}}{\big(1-\frac{1}{z}\big)^{2m}}.
\eeq
\item[(b)] The derivative of $Q$ is
\beq\label{paraequ-26}
Q'(z)=
\bigg(1-\frac{(\sqrt{m+1}+\sqrt{m})^2 }{z}\bigg)\cdot\bigg(1-\frac{(\sqrt{m+1}-\sqrt{m})^2}{z}\bigg)\cdot\bigg(\frac{ 1+\frac{1}{z}}{1-\frac{1}{z}}\bigg)^{2 m+1}.
\eeq
\item[(c)] The critical points of  $Q$ are $\pm1$,
\beqq
cp_{Q1}:=(2m+1)+2\sqrt{m+m^2}=(\sqrt{m+1}+\sqrt{m})^2\approx4m+2,
\eeqq
and
\beqq
cp_{Q2}:=(2m+1)-2\sqrt{m+m^2}=(\sqrt{m+1}-\sqrt{m})^2\approx0;
\eeqq
the critical values are
\beqq
Q(-1)=0,\ Q(1)=\infty,\
\eeqq
\beqq
cv_Q:=Q(cp_{Q1})=Q(cp_{Q2})=4(m+1)\frac{(m+1)^{m}}{m^{m}}\to 4e(m+1)\ \text{as}\ m\to+\infty,
\eeqq
where $cp_{Q1}$ and $cp_{Q2}$ are simple critical points, the local degree of $z=1$ is $2m$, and the local degree of $z=-1$ is $2m+2$.
\end{itemize}
\end{lemma}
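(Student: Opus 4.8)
The plan is to establish all three parts by direct computation from the explicit rational formula for $Q$; there is no conceptual obstacle here, only algebraic bookkeeping.

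For part (a), I would substitute the definitions $\psi_1(z)=-4z/(1+z)^2$ and $\psi_0(z)=-4/z$, so that $\psi_0^{-1}(w)=-4/w$. The one nontrivial identity is $1+\psi_1(z)=\frac{(1+z)^2-4z}{(1+z)^2}=\frac{(1-z)^2}{(1+z)^2}$, from which $P(\psi_1(z))=\psi_1(z)\,(1+\psi_1(z))^m=-\frac{4z(1-z)^{2m}}{(1+z)^{2m+2}}$, and applying $\psi_0^{-1}$ gives $Q(z)=(1+z)^{2m+2}/(z(1-z)^{2m})$, which is \eqref{nearequ-1}; the two further rewritings of $Q$ there are immediate rearrangements.

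For part (b), I would take the logarithmic derivative: from $\log Q=(2m+2)\log(1+z)-\log z-2m\log(1-z)$ one gets $Q'/Q=\frac{2m+2}{1+z}-\frac1z+\frac{2m}{1-z}$. Put over the common denominator $z(1-z)(1+z)$; the numerator simplifies to $-(z^2-(4m+2)z+1)$. The two roots of $z^2-(4m+2)z+1$ are $(2m+1)\pm2\sqrt{m^2+m}$, and the identity $(\sqrt{m+1}\pm\sqrt{m})^2=(2m+1)\pm2\sqrt{m^2+m}$ identifies them with $cp_{Q1}$ and $cp_{Q2}$. Multiplying $Q'/Q$ back by $Q$, factoring $z^2$ out of the quadratic, and using $(z-1)^{2m+1}=-(1-z)^{2m+1}$ (as $2m+1$ is odd) to absorb the sign, one arrives at the product form \eqref{paraequ-26}.

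For part (c), I would determine the critical points of $Q$ regarded as a degree-$(2m+2)$ rational self-map of $\widehat{\cc}$ (the degree being the maximum of the numerator degree $2m+2$ and the denominator degree $2m+1$). From the formula in (b), $Q'$ vanishes simply at $cp_{Q1}$ and $cp_{Q2}$, while the factor $(1+z)^{2m+1}$ gives a zero of order $2m+1$ at $z=-1$; correspondingly $Q$ has a zero of order $2m+2$ at $z=-1$, so the local degree there is $2m+2$. For the remaining special points I would inspect $Q$ directly: $z=1$ is a pole of order $2m$, hence a critical point of multiplicity $2m-1$; $z=0$ is a simple pole, so not critical; and the expansion $Q(z)=z+(4m+2)+O(1/z)$ at $\infty$ shows $\infty$ is a fixed point of local degree $1$. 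The total critical multiplicity is $1+1+(2m+1)+(2m-1)=4m+2=2(2m+2)-2$, matching Riemann--Hurwitz, so nothing has been missed. The critical values $Q(-1)=0$ and $Q(1)=\infty$ are read off at once. For $cv_Q=Q(cp_{Q2})$ I would use $cp_{Q2}=(\sqrt{m+1}-\sqrt{m})^2$ together with $1+cp_{Q2}=2\sqrt{m+1}\,(\sqrt{m+1}-\sqrt{m})$ and $1-cp_{Q2}=2\sqrt{m}\,(\sqrt{m+1}-\sqrt{m})$: substituting into $Q(cp_{Q2})=(1+cp_{Q2})^{2m+2}/(cp_{Q2}(1-cp_{Q2})^{2m})$, the powers of $\sqrt{m+1}-\sqrt{m}$ cancel and one is left with $4(m+1)(m+1)^m/m^m$, which tends to $4e(m+1)$ since $(1+1/m)^m\to e$. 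Finally the symmetry $Q(1/z)=Q(z)$ — immediate from the formula since $(z-1)^{2m}=(1-z)^{2m}$ — together with $cp_{Q1}cp_{Q2}=1$ (product of the roots of $z^2-(4m+2)z+1$) gives $Q(cp_{Q1})=Q(1/cp_{Q1})=Q(cp_{Q2})=cv_Q$. The only place that calls for care is the surd arithmetic in the evaluation of $cv_Q$ and making sure, via the Riemann--Hurwitz count, that the local-degree analysis at the poles $z=1$, $z=0$ and at $\infty$ has caught every critical point.
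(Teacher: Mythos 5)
Your computations are correct, and for parts (a) and (b) they coincide in substance with the paper's proof (the paper simply differentiates and factors directly, where you pass through the logarithmic derivative and the quadratic $z^2-(4m+2)z+1$ — the same factorization in the end; note the paper's written proof does not even verify (a), which you supply). Where you genuinely diverge is in part (c). The paper evaluates $Q(cp_{Q1})$ and $Q(cp_{Q2})$ simultaneously by the surd factorizations $1+cp_{Q}=2\sqrt{m+1}(\sqrt{m+1}\pm\sqrt{m})$ and $1-cp_{Q}=-2\sqrt{m}(\sqrt{m}\pm\sqrt{m+1})$, carrying the $\pm$ signs through and letting the even powers kill them; you instead compute only $Q(cp_{Q2})$ and then invoke the symmetry $Q(1/z)=Q(z)$ together with $cp_{Q1}cp_{Q2}=1$ (product of the roots of $z^2-(4m+2)z+1$) to conclude $Q(cp_{Q1})=Q(cp_{Q2})$, which is a cleaner way to avoid the double sign-chase. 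You also add a Riemann--Hurwitz bookkeeping step ($1+1+(2m+1)+(2m-1)=4m+2=2\deg Q-2$) certifying that $\pm1$, $cp_{Q1}$, $cp_{Q2}$ exhaust the critical points and that the local degrees $2m+2$ at $-1$ and $2m$ at $1$ are as claimed; the paper asserts these local-degree statements without argument, so your completeness check is a genuine (if small) addition rather than a redundancy.
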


\begin{remark}\label{nearpara-5}
Consider the map $\psi_{1,1}(z)=\tfrac{z-1}{z+1}$ and the image of the unit circle $\{e^{i\tht}:\ \tht\in\rr\}$, by direct computation,
\begin{align}\label{region2020913-1}
\psi_{1,1}(e^{i\tht})=\frac{e^{i\tht}-1}{e^{i\tht}+1}=\frac{\cos\tht+i\sin\tht-1}{\cos\tht+i\sin\tht+1}=\frac{\sin\tht}{1+\cos\tht}i=\tan(\tfrac{\tht}{2})i.
\end{align}

It is evident that $\psi_1=\psi_{1,2}\circ\psi_{1,1}$, where $\psi_{1,1}=\frac{z-1}{z+1}$ and $\psi_{1,2}=w^2-1$,
$\psi_{1,1}$ is a Mobious transformation, mapping $\cc\setminus\overline{\cd} $ to the right half plane and mapping $\cd$ to the left half plane, and $\psi_{1,2}$ maps the right half plane or the left half plane onto $\cc\setminus(-\infty,-1]$.

This, together with the fact the inverse of $\psi_1$ can be written as $\psi^{-1}_{1,\pm}=\psi^{-1}_{1,1}\circ\psi^{-1}_{1,2,\pm}$, where $\psi^{-1}_{1,1} (z)=\frac{z+1}{1-z}$, $\psi^{-1}_{1,2,\pm}(z)=\pm\sqrt{z+1}$, yields that $\psi^{-1}_{1,+}=\psi^{-1}_{1,1}\circ\psi^{-1}_{1,2,+}$ maps $\cc\setminus(-\infty,-1]$ to $\cc\setminus\overline{\cd}$, and $\psi^{-1}_{1,-}=\psi^{-1}_{1,1}\circ\psi^{-1}_{1,2,-}$ maps $\cc\setminus(-\infty,-1]$ to $\cd$.
\end{remark}

\begin{proof}

By direct calculation, the derivative of $Q$ is
\begin{align*}
Q'(z)&=-\frac{(1-z)^{-2 m-1} (z+1)^{2 m+1} \left(z^2-2 (2 m+1) z+1\right)}{z^2}\\
=&\frac{ \left(z^2-2 (2 m+1) z+1\right)}{z^2}\frac{ (z+1)^{2 m+1}}{(z-1)^{2 m+1}}\\
=&\bigg(1-\frac{2 (2 m+1) }{z}+\frac{1}{z^2}\bigg)\cdot\bigg(\frac{ 1+\frac{1}{z}}{1-\frac{1}{z}}\bigg)^{2 m+1}\\
=&\bigg(1-\frac{(\sqrt{m+1}+\sqrt{m})^2 }{z}\bigg)\cdot\bigg(1-\frac{(\sqrt{m+1}-\sqrt{m})^2}{z}\bigg)\cdot\bigg(\frac{ 1+\frac{1}{z}}{1-\frac{1}{z}}\bigg)^{2 m+1},
\end{align*}
so, the critical points are $(2m+1)\pm2\sqrt{m+m^2}=(\sqrt{m+1}\pm\sqrt{m})^2$, if $m$ is large enough, then $(2m+1)\pm2\sqrt{m+m^2}\approx(2m+1)\pm2\sqrt{m+m^2+1/4}\approx 4m+2,\ or\ 0$.

The critical values are
\begin{align*}
Q(cp)=&\frac{(1+(2m+1)\pm2\sqrt{m+m^2})^{2+2m}}{((2m+1)\pm2\sqrt{m+m^2})(1-((2m+1)\pm2\sqrt{m+m^2}))^{2m}}\\
=&\frac{(2\sqrt{m+1})^{2+2m}(\sqrt{m+1}\pm\sqrt{m})^{2+2m}}{((\sqrt{m+1}\pm\sqrt{m})^2)(-2m\mp2\sqrt{m+m^2}))^{2m}}\\
=&\frac{(2\sqrt{m+1})^{2+2m}(\sqrt{m+1}\pm\sqrt{m})^{2+2m}}{((\sqrt{m+1}\pm\sqrt{m})^2)(2\sqrt{m})^{2m}(\sqrt{m}\pm\sqrt{m+1}))^{2m}}\\
=&\frac{(2\sqrt{m+1})^{2+2m}}{(2\sqrt{m})^{2m}}=4(m+1)\frac{(m+1)^{m}}{m^{m}}\to 4e(m+1).\\
\end{align*}

\end{proof}

\begin{definition}\label{near-para-equ-2}
Note that $Q=\psi_0^{-1}\circ P\circ \psi_1$ and $Q^{-1}=\psi_1^{-1}\circ P^{-1}\circ\psi_0$ by \eqref{nearequ-1}. For $\psi_0(z)=-\tfrac{4}{z}$, let $z=re^{i\tht}$, one has $\psi_0(z)=\psi_0(re^{i\tht})=-\tfrac{4}{z}=-\tfrac{4}{re^{i\tht}}=\tfrac{4}{r}e^{i(\pi-\tht)}$. So, $\psi_0$ maps the upper (lower) half plane to the upper (lower) half plane.

By definition \ref{paraequ-89} and  $P\in\mathcal{F}_0$, denote by
\beqq
 \Ga_b^{P}=(-\infty,cv_P],\ \Ga_a^{P}=(cv_P,0),\ \Ga_c^{P}=(0,+\infty),
\eeqq
and
\beqq
\Ga^Q_b=\psi_0^{-1}( \Ga_b^{P})=(0,cv_Q],\ \Ga^Q_a=\psi_0^{-1}( \Ga_a^{P})=(cv_Q,+\infty),\ \Ga^Q_c=\psi_0^{-1}( \Ga_c^{P})=(-\infty,0).
\eeqq

By Remark \ref{nearpara-5}, for $\ga_{a1}^{P}=(cp_P,0)$, $\ga_{a2}^{P}=(-1,cp_P)$, $\ga_{c1\pm}^{P}=(0,+\infty)$,
one has
\beqq
\ga^{Q+}_{a1}=\psi^{-1}_{1,+}(\ga_{a1}^{P})=(cp_{Q1},+\infty),\ \ga^{Q+}_{a2}=\psi^{-1}_{1,+}(\ga_{a2}^{P})=(1,cp_{Q1}),\ \ga^{Q+}_{c1\pm}=\psi^{-1}_{1,+}(\ga_{c1\pm}^{P})=(-\infty,-1),
\eeqq
and
\beqq
\ga^{Q-}_{a1}=\psi^{-1}_{1,-}(\ga_{a1}^{P})=(0,cp_{Q2}),\ \ga^{Q-}_{a2}=\psi^{-1}_{1,-}(\ga_{a2}^{P})=(cp_{Q2},1),\ \ga^{Q-}_{c1\pm}=\psi^{-1}_{1,-}(\ga_{c1\pm}^{P})=(-1,0).
\eeqq
Similarly, it follows from Remark \ref{nearpara-5} that we can introduce the following subsets
\beqq
\mathcal{U}^{Q+}_{i\pm}=\psi^{-1}_{1,+}(\mathcal{U}^P_{i\pm})\subset\cc\setminus\overline{\cd},\
\mathcal{U}^{Q-}_{i\pm}=\psi^{-1}_{1,-}(\mathcal{U}^P_{i\pm})\subset\cd,
\eeqq
 $\ga^{Q+}_{ai}=\psi^{-1}_{1,+}(\ga^P_{ai})$, $\ga^{Q-}_{ai}=\psi^{-1}_{1,-}(\ga^P_{ai})$, and so on.
\end{definition}

\begin{figure}[htbp]
\begin{center}
\scalebox{0.5}{ \includegraphics{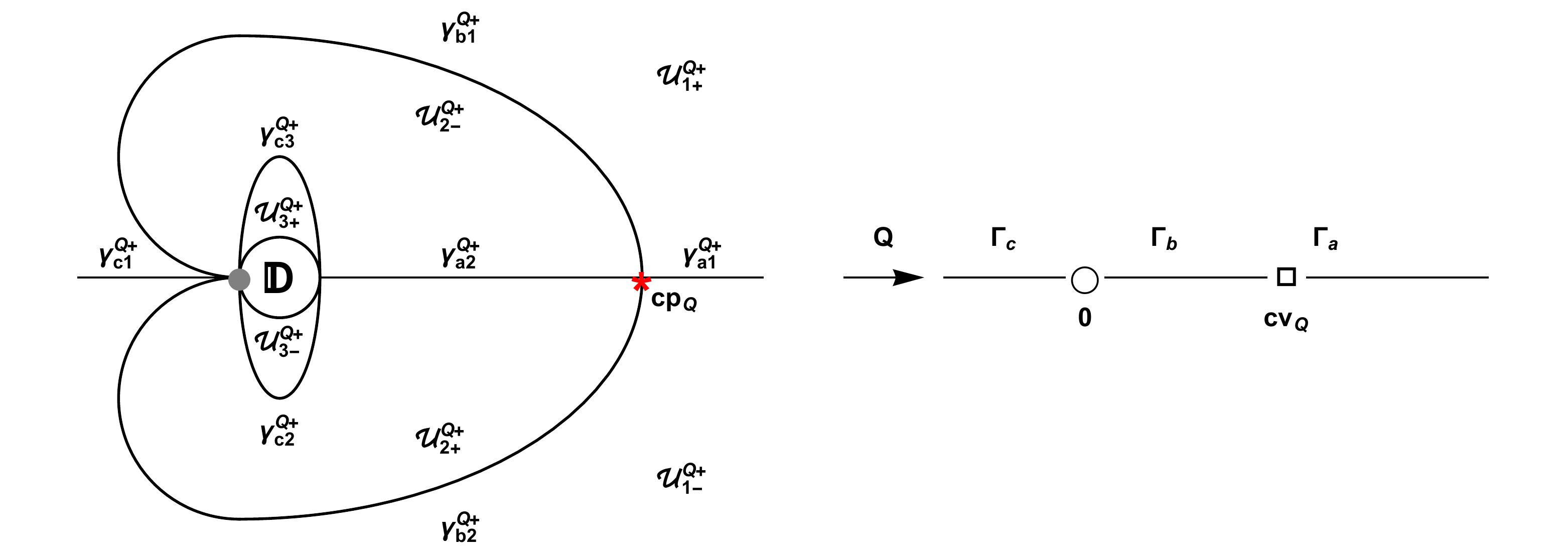}}
\renewcommand{\figure}{Fig.}
\caption{The illustration diagram of the inverse image of $Q$.
}\label{illustration-inverse-Q}
\end{center}
\end{figure}

\begin{lemma}\label{paraequ-16}
For any $\be\in(0,\tfrac{\pi}{2})$ and $\psi_1=\psi_{1,2}\circ\psi_{1,1}$ specified in Remark \ref{nearpara-5}, one has
\beqq
\psi_{1,1}\bigg(\overline{\big(\partial\cd\big(\cot(\be)i,\tfrac{1}{\sin(\be)}\big)\big)\setminus\overline{\cd}}\bigg)
=\{w:\ w=0,\ w=\infty,\ \text{arg}\,w=\be\},
\eeqq
\beqq
\psi_{1,1}\bigg(\overline{\big(\partial\cd\big(-\cot(\be)i,\tfrac{1}{\sin(\be)}\big)\big)\setminus\overline{\cd}}\bigg)
=\{w:\ w=0,\ w=\infty,\ \text{arg}\,w=-\be\},
\eeqq
\beqq
\psi_{1,1}\bigg(\cd\big(\cot(\be)i,\tfrac{1}{\sin(\be)}\big)\setminus\overline{\cd}\bigg)
=\bigg\{w:\ w\neq0,\ \be< \text{arg}\,w<\frac{\pi}{2}\bigg\},
\eeqq
and
\beqq
\psi_{1,1}\bigg(\cd\big(-\cot(\be)i,\tfrac{1}{\sin(\be)}\big)\setminus\overline{\cd}\bigg)
=\bigg\{w:\ w\neq0,\ -\frac{\pi}{2}< \text{arg}\,w<-\be\bigg\}.
\eeqq
Furthermore, one has
\beqq
\psi_{1,2}\circ\psi_{1,1}\bigg(\cd\big(\cot(\be)i,\tfrac{1}{\sin(\be)}\big)\setminus\ol{\cd}\bigg)
=\bigg\{z:\ z\neq-1,\ 2\be< \text{arg}\,(z+1)<\pi\bigg\}
\eeqq
and
\beqq
\psi_{1,2}\circ\psi_{1,1}\bigg(\cd\big(-\cot(\be)i,\tfrac{1}{\sin(\be)}\big)\setminus\ol{\cd}\bigg)
=\bigg\{z:\ z\neq-1,\ -\pi< \text{arg}\,(z+1)<-2\be\bigg\}.
\eeqq
\end{lemma}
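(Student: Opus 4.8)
The plan is to exploit the factorization $\psi_{1}=\psi_{1,2}\circ\psi_{1,1}$ from Remark \ref{nearpara-5}: I will first analyze the M\"obius map $\psi_{1,1}$ on the relevant circles and lunes, which already delivers the first four identities, and then push everything forward by the squaring map $\psi_{1,2}(w)=w^{2}-1$ to obtain the last two. The fact that drives the whole argument is that the circle $\partial\cd\big(\cot(\be)i,\tfrac{1}{\sin\be}\big)$ passes through \emph{both} $+1$ and $-1$: indeed $\big|\cot(\be)i\pm1\big|^{2}=1+\cot^{2}\be=\csc^{2}\be$. Since $\psi_{1,1}(z)=\tfrac{z-1}{z+1}$ has real coefficients with $\psi_{1,1}(1)=0$ and $\psi_{1,1}(-1)=\infty$, it sends this circle to a straight line through the origin, and I read off the direction of that line from the two points $i(\cot\be\pm\csc\be)$ where the circle meets the imaginary axis. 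The half-angle identities $\cot\be+\csc\be=\cot(\tfrac{\be}{2})$ and $\cot\be-\csc\be=-\tan(\tfrac{\be}{2})$ rewrite these as $i\cot(\tfrac{\be}{2})$ and $-i\tan(\tfrac{\be}{2})$, and a one-line computation from $\psi_{1,1}(is)=\tfrac{(s^{2}-1)+2is}{s^{2}+1}$ gives $\psi_{1,1}\!\big(i\cot(\tfrac{\be}{2})\big)=e^{i\be}$ and $\psi_{1,1}\!\big(-i\tan(\tfrac{\be}{2})\big)=-e^{i\be}$; so the image line is $\{w:\arg w\in\{\be,\be+\pi\}\}\cup\{0,\infty\}$.

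Next I sort out which parts of these objects lie outside the closed unit disk, using only the elementary chain $0<\tan(\tfrac{\be}{2})<1<\cot(\tfrac{\be}{2})$, valid for $\be\in(0,\tfrac{\pi}{2})$. It shows $i\cot(\tfrac{\be}{2})$ lies outside $\ol\cd$ while $-i\tan(\tfrac{\be}{2})$ lies inside it, so $\big(\partial\cd(\cot(\be)i,\tfrac{1}{\sin\be})\big)\setminus\ol\cd$ is exactly the open sub-arc of that circle joining $1$ to $-1$ through $i\cot(\tfrac{\be}{2})$, and its closure maps onto $\{w:w=0,\ w=\infty,\ \arg w=\be\}$ --- the first identity. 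The second is its image under $z\mapsto\bar z$, since $\cd(-\cot(\be)i,\tfrac{1}{\sin\be})$ is the reflection of $\cd(\cot(\be)i,\tfrac{1}{\sin\be})$ across $\rr$ and $\psi_{1,1}$ commutes with conjugation. For the third identity I view the lune $\cd(\cot(\be)i,\tfrac{1}{\sin\be})\setminus\ol\cd$ as bounded by the arc just described together with the part of $\partial\cd$ lying inside $\cd(\cot(\be)i,\tfrac{1}{\sin\be})$; the same inequalities show $i$ lies in that disk and $-i$ does not, so this second boundary piece is the upper unit semicircle, which by $\psi_{1,1}(e^{i\theta})=i\tan(\tfrac{\theta}{2})$ goes onto $\{\arg w=\tfrac{\pi}{2}\}$. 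Being the M\"obius image of a lune whose two corners $\pm1$ are the preimages of $0$ and $\infty$, the image of the lune is the full region between the rays $\{\arg w=\be\}$ and $\{\arg w=\tfrac{\pi}{2}\}$; since the lune sits in $\cc\setminus\ol\cd$, whose $\psi_{1,1}$-image is the right half-plane (Remark \ref{nearpara-5}), this region must be the sector $\{w:w\neq0,\ \be<\arg w<\tfrac{\pi}{2}\}$ rather than its complement, a choice one can double-check by following the image of the segment $\{ti:1<t<\cot(\tfrac{\be}{2})\}$. The fourth identity is again the complex conjugate of the third.

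Finally I compose with $\psi_{1,2}(w)=w^{2}-1$. The sector $\{w\neq0:\be<\arg w<\tfrac{\pi}{2}\}$ has angular width $\tfrac{\pi}{2}-\be<\pi$, so $w\mapsto w^{2}$ is injective on it and simply doubles arguments, carrying it onto $\{v\neq0:2\be<\arg v<\pi\}$, where the condition $2\be<\pi$ is precisely the hypothesis $\be<\tfrac{\pi}{2}$; the translation $v\mapsto v-1$ then turns this into $\{z:z\neq-1,\ 2\be<\arg(z+1)<\pi\}$, the fifth identity, and the sixth is its conjugate. I do not anticipate any deep obstacle: the ingredients are standard facts about M\"obius maps and the power map. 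The one point that genuinely needs care --- and which I regard as the main (if modest) obstacle --- is the orientation bookkeeping, namely deciding which sub-arc lies outside $\ol\cd$, which arc bounds the lune, and hence which of the two candidate sectors is the correct image. All of this is settled once the chain $0<\tan(\tfrac{\be}{2})<1<\cot(\tfrac{\be}{2})$ for $\be\in(0,\tfrac{\pi}{2})$ is recorded, after which the rest is routine verification.
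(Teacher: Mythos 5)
Your argument is correct and follows essentially the same route as the paper's proof: factor $\psi_1=\psi_{1,2}\circ\psi_{1,1}$, observe that the circle $\partial\cd\big(\cot(\be)i,\tfrac{1}{\sin\be}\big)$ passes through $\pm1$ so that $\psi_{1,1}$ sends it to a straight line through $0$ and $\infty$, identify that line's direction, and then double the angle with $w\mapsto w^2-1$. The only difference is how the direction is pinned down: the paper appeals to conformality of the M\"obius map (the circle meets $\partial\cd$ at $\pm1$ at angle $\tfrac{\pi}{2}-\be$, so the image line meets the imaginary axis at that same angle) together with the image of one point, while you compute both intersection points $i\cot(\tfrac{\be}{2})$ and $-i\tan(\tfrac{\be}{2})$ explicitly and map them to $\pm e^{i\be}$; you also spell out, via $0<\tan(\tfrac{\be}{2})<1<\cot(\tfrac{\be}{2})$, the bookkeeping about which sub-arc lies outside $\ol\cd$ and which sector is the image, steps the paper compresses. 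Both are sound; yours is merely more explicit.
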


\begin{proof}
Now, the set $\psi_1\bigg(\cd\big(\cot(\be)i,\tfrac{1}{\sin(\be)}\big)\setminus\overline{\cd}\bigg)$ is analyzed.

By Remark \ref{nearpara-5}, $\psi_1=\psi_{1,2}\circ\psi_{1,1}$, where $\psi_{1,1}=\frac{z-1}{z+1}$ and $\psi_{1,2}=w^2-1$.  So, $\partial\cd\big(\cot(\be)i,\tfrac{1}{\sin(\be)}\big)$ is a circle that interests the unit circle at the points $\pm1$ where the angle between the tangent line at the $\pm1$ of the unit circle and the tangent line of the circle $\partial\cd\big(\cot(\be)i,\tfrac{1}{\sin(\be)}\big)$ is $\tfrac{\pi}{2}-\be$. The Mobious transformation $\psi_{1,1}=\tfrac{z-1}{z+1}$ maps the unit circle to the imaginary axis, the image of $1$ is $0$, and the image of $-1$ is $\infty$. So,  the image of $\partial\cd\big(\cot(\be)i,\tfrac{1}{\sin(\be)}\big)\setminus\overline{\cd}$ under the map $\psi_{1,1}$ is a line from $0$ to $\infty$, where the image intersects the imaginary axis at $0$ and $\infty$ with angle $\tfrac{\pi}{2}-\be$, and contains the point $\psi_{1,1}((\cot(\be)+\tfrac{1}{\sin(\be)})i)$. Hence,
\beqq
\psi_{1,1}\bigg(\overline{\big(\partial\cd\big(\cot(\be)i,\tfrac{1}{\sin(\be)}\big)\big)\setminus\overline{\cd}}\bigg)
=\{w:\ w=0,\ w=\infty,\ \text{arg}\,w=\be\},
\eeqq
implying that
\beqq
\psi_{1,1}\bigg(\cd\big(\cot(\be)i,\tfrac{1}{\sin(\be)}\big)\setminus\ol{\cd}\bigg)
=\bigg\{w:\ w\neq0,\ \be< \text{arg}\,w<\frac{\pi}{2}\bigg\}.
\eeqq
Furthermore,
\beqq
\psi_{1,2}\circ\psi_{1,1}\bigg(\cd\big(\cot(\be)i,\tfrac{1}{\sin(\be)}\big)\setminus\ol{\cd}\bigg)
=\bigg\{z:\ z\neq-1,\ 2\be< \text{arg}\,(z+1)<\pi\bigg\},
\eeqq
where $\psi_{1,2}(w)=w^2-1$. Hence, one has
\beqq
\psi_1\bigg(\cd\big(\cot(\be)i,\tfrac{1}{\sin(\be)}\big)\setminus\overline{\cd}\bigg)
=\bigg\{z:\ z\neq-1,\ 2\be< \text{arg}\,(z+1)<\pi\bigg\}.
\eeqq

\end{proof}

\begin{corollary}\label{cor-2021-10-2-1}
 Let $\al=\tfrac{\pi}{(m+1)}$,  one has
$$\overline{\cd}\big(\pm\cot(\al)i,\tfrac{1}{\sin(\al)}\big)\setminus\overline{\cd}\subset\cc\setminus(\mathcal{U}^{Q+}_1\cup\mathcal{U}^{Q+}_2),$$
\beqq
\mathcal{U}^{Q+}_1\cup\mathcal{U}^{Q+}_2\subset\cc\setminus\bigg(\overline{\cd}\big(\cot(\al)i,\tfrac{1}{\sin(\al)}\big)\cup \overline{\cd}\big(-\cot(\al)i,\tfrac{1}{\sin(\al)}\big)\bigg),
\eeqq
and
\beqq
\cc\setminus\bigg(\overline{\cd}\big(\cot(\tfrac{\al}{2})i,\tfrac{1}{\sin(\tfrac{\al}{2})}\big)\cup \overline{\cd}\big(-\cot(\tfrac{\al}{2})i,\tfrac{1}{\sin(\tfrac{\al}{2})}\big)\bigg)\subset \mathcal{U}^{Q+}_1\cup\mathcal{U}^{Q+}_2.
\eeqq
\end{corollary}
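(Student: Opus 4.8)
The plan is to push the whole question back, through the $2$-valued map $\psi_1$, to the geometry already analyzed in Lemma \ref{paraequ-2021-9-20-2}. Write $\al=\tfrac{\pi}{m+1}$, and for $0<\tht<\pi$ put $\Sigma_{\tht}=\{z\in\cc:\ z\neq-1,\ |\arg(z+1)|<\tht\}$, so that $\ol{\Sigma}_{\tht}=\{z:\ |\arg(z+1)|\le\tht\}\cup\{-1\}$ and $\mathrm{int}\,\ol{\Sigma}_{\tht}=\Sigma_{\tht}$. By Remark \ref{nearpara-5}, $\psi^{-1}_{1,+}$ is a biholomorphism of $\cc\setminus(-\infty,-1]=\Sigma_{\pi}$ onto $\cc\setminus\ol{\cd}$ whose inverse is $\psi_1|_{\cc\setminus\ol{\cd}}$, and by Definition \ref{near-para-equ-2} the set $\mathcal{U}^{Q+}_1\cup\mathcal{U}^{Q+}_2$ is the $\psi^{-1}_{1,+}$-image of $\mathcal{U}^P_1\cup\mathcal{U}^P_2$ (more precisely of the glued region $\mathcal{U}^P_{12}=\ol{\mathcal{U}}^P_1\cup\mathcal{U}^P_2$ of \eqref{defPdiff2021-8-30-2}). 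Applying Lemma \ref{paraequ-16} with $\be=\al$ and with $\be=\tfrac{\al}{2}$ gives $\psi_1\big(\cd(\pm\cot(\be)i,\tfrac{1}{\sin(\be)})\setminus\ol{\cd}\big)=\{z\neq-1:\ 2\be<\pm\arg(z+1)<\pi\}$, so, since $\psi_1$ carries $\cc\setminus\ol{\cd}$ onto $\Sigma_{\pi}$,
\[
\psi^{-1}_{1,+}(\Sigma_{2\be})=(\cc\setminus\ol{\cd})\setminus\big(\ol{\cd}(\cot(\be)i,\tfrac{1}{\sin(\be)})\cup\ol{\cd}(-\cot(\be)i,\tfrac{1}{\sin(\be)})\big).
\]
Moreover, for every $\be\in(0,\tfrac{\pi}{2})$ one has $\ol{\cd}\subset\ol{\cd}(\cot(\be)i,\tfrac{1}{\sin(\be)})\cup\ol{\cd}(-\cot(\be)i,\tfrac{1}{\sin(\be)})$, by the one-line estimate that for $\rho e^{i\tht}\in\ol{\cd}$ with $\tht\in[0,\pi]$ we have $|\rho e^{i\tht}-\cot(\be)i|^2=\rho^2-2\rho\cot(\be)\sin\tht+\cot^2(\be)\le 1+\cot^2(\be)=\sin^{-2}(\be)$ (and symmetrically for $\tht\in[-\pi,0]$); hence the left-hand disk-complement above may be replaced by $\cc\setminus(\ol{\cd}(\cot(\be)i,\tfrac{1}{\sin(\be)})\cup\ol{\cd}(-\cot(\be)i,\tfrac{1}{\sin(\be)}))$. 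As statements (1) and (2) are the same (both assert that $\mathcal{U}^{Q+}_1\cup\mathcal{U}^{Q+}_2$ is disjoint from the two lens disks of parameter $\al$, using $\mathcal{U}^{Q+}_1\cup\mathcal{U}^{Q+}_2\subset\cc\setminus\ol{\cd}$), the corollary is reduced to the two inclusions
\[
\mathcal{U}^P_1\cup\mathcal{U}^P_2\subset\Sigma_{2\al}\qquad\text{and}\qquad\Sigma_{\al}\subset\mathcal{U}^P_{12}
\]
in the $P$-plane, coming from $\be=\al$ and $\be=\tfrac{\al}{2}$ respectively.

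For the first inclusion I would argue $\mathcal{U}^P_1\subset\Sigma_{\al}$ directly, and $\mathcal{U}^P_2\subset\Sigma_{2\al}$ via a boundary-location argument. For $\mathcal{U}^P_1$: the boundary of $\mathcal{U}^P_{1,+}$ is the real ray $(cp_P,+\infty)$ together with $\ga^P_{b1+}$, along which $\arg P(z)=\arg z+m\arg(z+1)=\pi$ with $\arg(z+1)\in[0,\al)$; in particular $\partial\mathcal{U}^P_{1,+}$ lies in the closed upper half-plane, so $\mathcal{U}^P_{1,+}\subset\{\mathrm{Im}\,z>0\}$. Then for $z\in\mathcal{U}^P_{1,+}$ we have $0<\arg(z+1)\le\arg z$, while the continuous branch of $\arg P(z)=\arg z+m\arg(z+1)$ lies in $(0,\pi)$ (as $P$ maps $\mathcal{U}^P_{1,+}$ into $\cc_{slit+}$), forcing $(m+1)\arg(z+1)<\pi$, i.e. $\arg(z+1)<\al$. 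The symmetric treatment of $\mathcal{U}^P_{1,-}$ and the trivial inclusion $\ga^P_{a1}\subset\rr$ give $\mathcal{U}^P_1\subset\Sigma_{\al}\subset\Sigma_{2\al}$. For $\mathcal{U}^P_2$ I would invoke Lemma \ref{paraequ-2021-9-20-2} to place its full boundary in $\ol{\Sigma}_{2\al}$: $\partial_{\wh{\cc}}\mathcal{U}^P_2$ is made up of $\ga^P_{b2\pm}=\ga^P_{b1\mp}\subset\ol{\Sigma}_{\al}$, of $\ga^P_{c2\pm}\subset S_{1,\pm}\subset\ol{\Sigma}_{2\al}$, of $\ga^P_{a2}=(-1,cp_P)\subset\rr$, and of the three tips $cp_P$, $-1$, $\infty$, all in $\ol{\Sigma}_{2\al}$. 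Since $\mathcal{U}^P_2$ is a connected domain (a conformal image of the simply connected $\cc_{slit}$) with $\partial_{\wh{\cc}}\mathcal{U}^P_2\subset\ol{\Sigma}_{2\al}$, and since for $m\ge22$ the set $\wh{\cc}\setminus\ol{\Sigma}_{2\al}$ is a connected domain containing $\infty\notin\mathcal{U}^P_2$, the connected set $\wh{\cc}\setminus\ol{\Sigma}_{2\al}$ must lie entirely outside $\mathcal{U}^P_2$; hence $\mathcal{U}^P_2\subset\ol{\Sigma}_{2\al}$, and as $\mathcal{U}^P_2$ is open this improves to $\mathcal{U}^P_2\subset\mathrm{int}\,\ol{\Sigma}_{2\al}=\Sigma_{2\al}$. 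For the second inclusion, write $\Sigma_{\al}=S_{0,+}\cup S_{0,-}\cup\ga^P_{a2}\cup\{cp_P\}\cup\ga^P_{a1}\cup\{0\}\cup\ga^P_{c1}$; by Lemma \ref{paraequ-2021-9-20-2}(a), $S_{0,\pm}\subset\ol{\mathcal{U}}^P_{1,\pm}\cup\ol{\mathcal{U}}^P_{2,\mp}$, and among the boundary arcs of $\mathcal{U}^P_{2,\mp}$ only $\ga^P_{b1\pm}$ can meet the open sector $S_{0,\pm}$ (the others, $\ga^P_{c2\mp}$ and $\ga^P_{a2}$, lie in $S_{1,\mp}$ resp. on $\rr$), and $\ga^P_{b1\pm}\subset\partial\mathcal{U}^P_1\subset\ol{\mathcal{U}}^P_1$; hence $S_{0,\pm}\subset\ol{\mathcal{U}}^P_1\cup\mathcal{U}^P_2=\mathcal{U}^P_{12}$. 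The remaining pieces lie in $\mathcal{U}^P_{12}$ trivially ($\ga^P_{a1}\subset\mathcal{U}^P_1$, $\ga^P_{a2}\subset\mathcal{U}^P_2$, and $cp_P$, $0$, $\ga^P_{c1}=(0,+\infty)$ all belong to $\ol{\mathcal{U}}^P_1$), so $\Sigma_{\al}\subset\mathcal{U}^P_{12}$.

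The step I expect to be the main obstacle is the boundary-location argument for $\mathcal{U}^P_2$: enumerating all of $\partial\mathcal{U}^P_2$ and checking it sits inside $\ol{\Sigma}_{2\al}$ is exactly where the value of $m$ enters, through the argument count \eqref{est-2021-9-20-1} and the asymptotic directions of $\ga^P_{b1+}$ (which approaches $\arg(z+1)=\al$) and of $\ga^P_{c2-}$ (which lies in $S_{1,+}$ and approaches $\arg(z+1)=2\al$), both near $\infty$ and near the critical point $-1$. A secondary, purely notational point: one should read $\mathcal{U}^{Q+}_1\cup\mathcal{U}^{Q+}_2$ as the glued region $\psi^{-1}_{1,+}(\ol{\mathcal{U}}^P_1\cup\mathcal{U}^P_2)$ in the sense of \eqref{defPdiff2021-8-30-2}, since the bare open union would miss the analytic gluing curves $\psi^{-1}_{1,+}(\ga^P_{b1\pm})$ and $\psi^{-1}_{1,+}(\ga^P_{c1})=(-\infty,-1)$, which do belong to the sector $\Sigma_{\al}$.
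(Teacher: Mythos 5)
Your overall approach matches the paper's one-line citation of Lemmas~\ref{paraequ-2021-9-20-2} and~\ref{paraequ-16}: transfer everything through $\psi_1$ into the $P$-plane and chase angular sectors there. The reduction of all three inclusions to the pair $\mathcal{U}^P_1\cup\mathcal{U}^P_2\subset\Sigma_{2\al}$ and $\Sigma_\al\subset\mathcal{U}^P_{12}$, the direct estimate $\arg(z+1)<\al$ on $\mathcal{U}^P_{1,+}$ from $\arg(z+1)\le\arg z$ together with $\arg z+m\arg(z+1)\in(0,\pi)$, and the decomposition of $\Sigma_\al$ into $S_{0,\pm}$ plus the real rays, are all sound; your closing observation that $\mathcal{U}^{Q+}_1\cup\mathcal{U}^{Q+}_2$ should be read as the glued region $\psi^{-1}_{1,+}(\ol{\mathcal{U}}^P_1\cup\mathcal{U}^P_2)$ is also fair, since $\Sigma_\al$ contains $\ga^P_{b1\pm}$ and $\ga^P_{c1}$, which the bare open union would miss.

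The one step that fails as written is in the boundary-localization argument for $\mathcal{U}^P_2$. You assert $\partial_{\wh\cc}\mathcal{U}^P_2\subset\ol{\Sigma}_{2\al}$ with $\infty$ explicitly listed among the tips lying in $\ol{\Sigma}_{2\al}$, and two lines later you assert that $\wh\cc\setminus\ol{\Sigma}_{2\al}$ is a connected domain containing $\infty\notin\mathcal{U}^P_2$. These cannot both hold: with your definition $\ol{\Sigma}_\tht=\{|\arg(z+1)|\le\tht\}\cup\{-1\}$ (closure in $\cc$), $\infty\notin\ol{\Sigma}_{2\al}$, so $\infty\in\partial_{\wh\cc}\mathcal{U}^P_2$ already violates the boundary containment you rely on; with the closure taken in $\wh\cc$ instead, $\infty\in\ol{\Sigma}_{2\al}$ and cannot certify that the complement is not swallowed by $\mathcal{U}^P_2$. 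The needed witness is easy to produce another way: take any real $z<-1$ with $|z|$ large. Then $\arg(z+1)=\pi>2\al$, so $z\in\wh\cc\setminus\ol{\Sigma}_{2\al}$, while $P(z)=z(1+z)^m\in\rr\setminus\{0\}$ lies in $\Ga_c$ for $m$ odd and in $\Ga_b$ for $m$ even and $|z|$ large, so $z\notin P^{-1}(\cc_{slit})\supset\mathcal{U}^P_2$. With that substitution the rest of your topological argument closes correctly.
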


\begin{proof}
This is a direct conclusion by Lemmas \ref{paraequ-2021-9-20-2} and \ref{paraequ-16}.
\end{proof}

\subsection{Estimates on $Q$: Part I}

\begin{lemma}\label{paraequ-83}
 {Fix the constants $e_1 = 0.84$, $e_{0}=-0.18$, and $e_{-1} = 0.6$.} For $\vep_1=0.1$, $\vep_2=0.1$, and $\eta$ specified in \eqref{near-equat-6}, one has the following properties:
\begin{itemize}
\item[(a)] The disks $\overline{\cd}(1,\vep_1)$ and $\overline{\cd}(-1,\vep_2)$, and $\overline{\cd}$ are contained in the interior of the ellipse $E_{r}$ with $r\geq1.7$, denoted by $r_0=1.7$. 
\item[(b)] The disks $\overline{\cd}(1,\vep_1)$, $\overline{\cd}(-1,\vep_2)$, and $\overline{\cd}(0,0.2)$ are contained in the interior of $E=E_{1}$.
    \item[(c)]  Let
$$U_{\eta}^{Q}=\psi^{-1}_1(V')\setminus\ol{\cd}.$$ Then the set $$\cc\setminus(U_{\eta}^{Q}\cup \cd\big(   \cot(\tfrac{\pi}{2(m+1)})i,\tfrac{1}{\sin(\tfrac{\pi}{2(m+1)})}\big)\cup\cd\big(-\cot(\tfrac{\pi}{2(m+1)})i,\tfrac{1}{\sin(\tfrac{\pi}{2(m+1)})}\big) )$$
is covered by the two disks $\cd(1,\vep_1)$ and $\cd(-1,\vep_2)$.
\end{itemize}
\end{lemma}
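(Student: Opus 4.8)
These are finite computations with the ellipse. By Lemma~\ref{paraequ-27}, $E_r$ is the closed ellipse centred at $e_0=-0.18$ with semi-axes $a_E(r)=0.84\,r+0.6/r$ and $b_E(r)=0.84\,r-0.6/r$, both increasing in $r$ for $r\ge1$, so $E_1\subset E_r$ for all $r\ge1$ and it suffices to prove (a) at $r=r_0=1.7$ and (b) at $r=1$. For each of $\ol\cd(1,\vep_1)$, $\ol\cd(-1,\vep_2)$, $\ol\cd(0,0.2)$, $\ol\cd$, I would parametrize the boundary circle ($z=1+\vep_1 e^{i\theta}$, etc.), substitute into $\bigl(\tfrac{x-e_0}{a_E}\bigr)^2+\bigl(\tfrac{y}{b_E}\bigr)^2$, reduce it to a quadratic in $\cos\theta$ on $[-1,1]$, and check that its maximum stays below $1$ (it in fact stays below $0.89$); this gives strict containment in the interiors. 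I expect no difficulty here.

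\textbf{Part (c): reduction via $Q$ and the coordinate $t$.} The idea is to transport the question through $\psi_1,\psi_0$ using $Q=\psi_0^{-1}\circ P\circ\psi_1$ and the identity $cv_Q=4/|cv_P|$ of Lemma~\ref{paraequ-25}, and then to use $t=\psi_{1,1}(z)=\tfrac{z-1}{z+1}$, for which $z=\tfrac{1+t}{1-t}$, $1+z=\tfrac{2}{1-t}$, $1-z=\tfrac{-2t}{1-t}$, and a direct computation gives $Q(z)=\tfrac{4}{(1-t^2)t^{2m}}$, $P(\psi_1(z))=(t^2-1)t^{2m}$, $\psi_1(z)+1=t^2$. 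Unwinding \eqref{paraequ-92}, for $|z|>1$ the statement $z\in U_\eta^Q=\psi_1^{-1}(V')\setminus\ol\cd$ is equivalent to the conjunction of: (i) $\psi_1(z)\in\mathcal{U}^P_*$; (ii) $|t^2-1|\,|t|^{2m}=|P(\psi_1(z))|<|cv_P|e^{2\pi\eta}$; (iii) $\psi_1(z)$ avoids the component $C_{-1}$ of $P^{-1}(\cd(0,|cv_P|e^{-2\pi\eta}))$ containing $-1$. First I would dispose of $|z|\le1$: with $\be=\tfrac{\pi}{2(m+1)}$, one has $z\in\cd(\pm\cot(\be)i,\tfrac1{\sin\be})$ iff $|z|^2\mp2\cot(\be)\,\mathrm{Im}\,z<1$, so for $|z|\le1$ one of these holds unless $z=\pm1$; hence $\ol\cd\setminus\{\pm1\}$ already lies in the two removed disks, and $\pm1\in\cd(1,\vep_1)\cup\cd(-1,\vep_2)$, so no $z$ with $|z|\le1$ survives in the left-hand set.

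\textbf{Part (c): the case analysis for $|z|>1$.} Take $z$ with $|z|>1$, outside $\cd(\pm\cot(\be)i,\tfrac1{\sin\be})$ and outside $\cd(1,\vep_1)\cup\cd(-1,\vep_2)$; I must show $z\in U_\eta^Q$. Being outside the two lens-disks forces, via Lemma~\ref{paraequ-16}, $|\arg(\psi_1(z)+1)|\le\tfrac{\pi}{m+1}$, i.e.\ $|\arg t|\le\be$ with $\mathrm{Re}\,t>0$; and the third inclusion of Corollary~\ref{cor-2021-10-2-1} gives $z\in\mathcal{U}^{Q+}_1\cup\mathcal{U}^{Q+}_2$, hence $\psi_1(z)\in\mathcal{U}^P_1\cup\mathcal{U}^P_2\subset\mathcal{U}^P_*$, so (i) holds (boundary curves of the disks are absorbed by the closure $\ol{\mathcal{U}}^P_1$ built into $\mathcal{U}^P_*$). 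If (ii) fails then $|t^2-1|\,|t|^{2m}\ge|cv_P|e^{2\pi\eta}$; since $|t|<1$ would give $|P(\psi_1(z))|<2$, necessarily $|t|\ge1$, and then $|t^2-1|\le2|t|^2$ yields $|t|^{2m+2}\ge\tfrac12|cv_P|e^{2\pi\eta}=6e\,30^{2m+2}(m/(m+1))^m$, so $|t|\ge30\,(m/(m+1))^{m/(2m+2)}$, which exceeds $21$ for every $m\ge22$ (the value at $m=22$ being $30\,(22/23)^{11/23}>29$, and the expression is increasing in $m$); hence $|1-t|>20$ and $|1+z|=\tfrac2{|1-t|}<\vep_2$, contradicting $z\notin\cd(-1,\vep_2)$. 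If instead (iii) fails, i.e.\ $\psi_1(z)\in C_{-1}$: since $|P(w)|=|w|\,|1+w|^m\ge(1-r)r^m>|cv_P|e^{-2\pi\eta}$ on $\partial\cd(-1,r)$ for $r=\tfrac1{600}$ (using $(|cv_P|e^{-2\pi\eta})^{1/m}<\tfrac m{900(m+1)}<\tfrac1{900}$), the component $C_{-1}$ is contained in $\cd(-1,r)$, so $|t|^2=|\psi_1(z)+1|<r$ and $|z-1|=\tfrac{2|t|}{|1-t|}\le\tfrac{2\sqrt r}{1-\sqrt r}<\vep_1$, contradicting $z\notin\cd(1,\vep_1)$. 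In every case we reach a contradiction, so $z\in U_\eta^Q$, which proves (c).

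\textbf{Main obstacle.} The substantive point is the numerical calibration: one must verify that the explicit $\eta=\tfrac1{2\pi}(\log(12(m+1)30^{2+2m})+1)$ makes both thresholds work uniformly in $m\ge22$; after substituting $|cv_P|=m^m/(m+1)^{m+1}$ these collapse to estimates of the shape $30\,(m/(m+1))^{m/(2m+2)}>21$ and a matching smallness bound for $C_{-1}$, provable by monotonicity in $m$. The only remaining care is bookkeeping of open versus closed disks and of the slit boundary curves, for which the closures already present in $\mathcal{U}^P_*$ and in \eqref{paraequ-92} are exactly what is needed.
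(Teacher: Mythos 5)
Your proof is correct in substance, and for part (c) it takes a genuinely different route from the paper's. The paper argues topologically: it invokes Lemmas~\ref{paraequ-2021-9-20-2} and \ref{paraequ-16} to assert that the set in question has exactly two connected components, $W$ (with $1$ on its boundary, on which $|Q|\ge cv_Q e^{2\pi\eta}$) and $W'$ (with $-1$ on its boundary, on which $|Q|\le cv_Q e^{-2\pi\eta}$), and then only needs to check that $|Q|<cv_Q e^{2\pi\eta}$ on $\partial\cd(1,\vep_1)$ and $|Q|>cv_Q e^{-2\pi\eta}$ on $\partial\cd(-1,\vep_2)$ to conclude $W\subset\cd(1,\vep_1)$, $W'\subset\cd(-1,\vep_2)$. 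You instead do a pointwise case analysis in the $t=\psi_{1,1}(z)$ coordinate, pushing all three defining conditions of $V'$ to explicit contradictions. The computations match: the paper's bound $3^{3+2m}\vep_1^{-2-2m}\le cv_Q e^{2\pi\eta}$ is exactly your threshold $|t|^{2m+2}\ge\tfrac12|cv_P|e^{2\pi\eta}$ once you use $|Q|=4/|P\circ\psi_1|$ and $cv_Q=4/|cv_P|$; and your $C_{-1}\subset\cd(-1,1/600)$ argument plays the role of the paper's bound $3^{-3-2m}\vep_2^{2+2m}\ge cv_Q e^{-2\pi\eta}$. Your route is more explicit and self-contained; the paper's is slicker but leans on an unjustified component-count. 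For parts (a) and (b) your parametrization-of-the-circle computation is dual to the paper's parametrization-of-the-ellipse, and both are routine. The one point I would press you to clean up is the boundary bookkeeping at $\partial\cd(\pm\cot\be\,i,1/\sin\be)$: points on these boundary circles with $|z|>1$ are in the complement set, and for them you need $\psi_1(z)$ to lie in $\mathcal{U}^P_*$, which, because $\arg(\psi_1(z)+1)=\pm 2\be$ sits on the separating ray between $S_{0,\pm}$ and $S_{1,\pm}$, requires checking exactly which boundary curves of the $\mathcal{U}^P_j$ are included in $\mathcal{U}^P_*$ as defined in \eqref{defPdiff2021-8-30-3}. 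Your remark that the closure $\ol{\mathcal{U}}_1$ absorbs them is plausible but not established; the paper's proof is equally terse on this, so I would not call it a gap so much as a shared loose end that a careful write-up should pin down via Lemma~\ref{paraequ-2021-9-20-2}.
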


\begin{proof}
(a) We show that $\overline{\cd}$, $\overline{\cd}(1,\vep_1)$ $\overline{\cd}(-1,\vep_2)$ are contained in the interior of $E_r$ with $0<\vep_1\leq0.1$, $0<\vep_2\leq0.1$, and $r\geq1.7$.

For fixed $r$, a parametrization of the boundary of $E_r$ is given by a parameter $t\in[-1,1]$:
\beq\label{paraequ-91}
x=a_E(r)t+e_0\ \text{and}\ y=b_E(r)\sqrt{1-t^2},\ t\in[-1,1],
\eeq
where $E_r$, $a_E(r)$, and $b_E(r)$ are specified in \eqref{paraequ-3} and \eqref{paraequ-90}.

Plugging this equation into the expressions of the boundaries of $\overline{\cd}$, $\overline{\cd}(1,\vep_1)$, and $\overline{\cd}(-1,\vep_2)$, one has
\beqq
h_1(t)=x^2+y^2-1=(a_E(r)t+e_0)^2+(b_E(r)\sqrt{1-t^2})^2-1
\eeqq
\beqq
h_2(t)=(x_1-1)^2+y^2-\vep_1^2=(a_E(r)t+e_0-1)^2+(b_E(r)\sqrt{1-t^2})^2-\vep_1^2
\eeqq
\beqq
h_3(t)=(x+1)^2+y^2-\vep_2^2=(a_E(r)t+e_0+1)^2+(b_E(r)\sqrt{1-t^2})^2-\vep_2^2,
\eeqq
where $h_1(t)$ is the boundary of $\overline{\cd}$, $h_2(t)$ is the boundary of $\overline{\cd}(1,\vep_1)$, and $h_3(t)$ is the boundary of $\overline{\cd}(-1,\vep_2)$.

For the minimum value of these three functions, if they are bigger than $0$, then these regions are contained in the interior of $E_r$. Consider the following function
\beqq
(at+b)^2+(c\sqrt{1-t^2})^2+d=a^2t^2+2abt+b^2+c^2(1-t^2)+d=(a^2-c^2)t^2+2abt+b^2+c^2+d
\eeqq
for $t\in[-1,1]$, the minimum value is given by the formula (this can be derived by using standard arguments in analysis):
\beqq
\left\{
  \begin{array}{ll}
    -2ab+b^2+c^2+d, & \hbox{if}\  a^2-c^2=0\ \text{and}\ ab>0\\
    2ab+b^2+c^2+d, & \hbox{if}\  a^2-c^2=0\ \text{and}\ ab<0\\
    b^2+c^2+d-\frac{a^2b^2}{a^2-c^2}, & \hbox{if}\  a^2-c^2>0\ \text{and}\ -1\leq \tfrac{ab}{a^2-c^2}\leq1\\
    (a^2-c^2)-2ab+b^2+c^2+d, & \hbox{if}\  a^2-c^2>0\ \text{and}\ \tfrac{ab}{a^2-c^2}<-1\\
    (a^2-c^2)+2ab+b^2+c^2+d, &  \hbox{if}\  a^2-c^2>0\ \text{and}\ \tfrac{ab}{a^2-c^2}>1\\
    \min\{(a^2-c^2)-2ab,(a^2-c^2)+2ab\}+b^2+c^2+d, & \hbox{if}\  a^2-c^2<0\ \text{and}\ -1\leq \tfrac{ab}{a^2-c^2}\leq1\\
    (a^2-c^2)-2ab+b^2+c^2+d , & \hbox{if}\  a^2-c^2<0\ \text{and}\ \tfrac{ab}{a^2-c^2}<-1\\
     (a^2-c^2)+2ab+b^2+c^2+d, &\hbox{if}\  a^2-c^2<0\ \text{and}\ \tfrac{ab}{a^2-c^2}>1.
  \end{array}
\right.
\eeqq

Take the following constants $e_1 = 0.84$, $e_{0}=-0.18$, $e_{-1} = 0.6$, and $r_0=1.7$.

By direct calculation, one has
\beqq
a_E(r_0)=e_1r_0+\frac{e_{-1}}{r_0}=0.84\times 1.7+0.6/1.7\approx1.78094,
\eeqq
\beqq
b_E(r_0)=e_1r_0-\frac{e_{-1}}{r_0}=0.84\times 1.7-0.6/1.7\approx1.07506.
\eeqq

For the function $h_1(t)$ with $r=r_0$, one has
\beqq
\frac{a_E(r_0)e_0}{(a_E(r_0))^2-(b_E(r_0))^2}\approx-0.159013,
\eeqq
so the minimum value is
\beqq
e_0^2+(b_E(r_0))^2 + (-1) - \frac{(a_E(r_0)\cdot e_0)^2}{((a_E(r_0))^2 - (b_E(r_0))^2)}\approx0.137177>0.
\eeqq

For the $h_2(t)$ with $\vep_1=0.1$ and $r=r_0$, one has
 \beqq
\frac{a_E(r_0)(e_0-1)}{(a_E(r_0))^2-(b_E(r_0))^2}\approx-1.04242,
\eeqq
so the minimum value is
\beqq
(a_E(r_0))^2 - (b_E(r_0))^2  - 2(a_E(r_0)\cdot e_0) + (e_0-1)^2 (b_E(r_0))^2-\vep_1^2\approx8.75717>0.
\eeqq

For the $h_3(t)$ with $\vep_2=0.1$ and $r=r_0$, one has
 \beqq
\frac{a_E(r_0)(e_0+1)}{(a_E(r_0))^2-(b_E(r_0))^2}\approx0.724391,
\eeqq
the minimum value is
\beqq
(e_0+1)^2+(b_E(r_0))^2 + (-\vep_2^2) - \frac{(a_E(r_0)\cdot (e_0+1))^2}{((a_E(r_0))^2 - (b_E(r_0))^2)}\approx0.760272>0.
\eeqq

For these three functions, since the minimum value is bigger than $0$, these regions are contained in the interior of the ellipse.

(b) Plugging \eqref{paraequ-91} into the expression of the boundary of $\overline{\cd}(0,0.2)$, one has
\beqq
h_4(t)=x^2+y^2-0.2^2=(a_E(r)t+e_0)^2+(b_E(r)\sqrt{1-t^2})^2-0.2^2.
\eeqq

Recall the constants $e_1 = 0.84$, $e_{0}=-0.18$, and $e_{-1} = 0.6$.

By direct calculation, one has
\beqq
a_E(1)=e_1+e_{-1}=0.84+0.6=1.44,
\eeqq
\beqq
b_E(1)=e_1-e_{-1}=0.84-0.6=0.24.
\eeqq

For the function $h_4(t)$ with $r=1$, one has
\beqq
\frac{a_E(1)e_0}{(a_E(1))^2-(b_E(1))^2}=\frac{1.44\times (-0.18)}{1.44^2-0.24^2}\approx-0.128571,
\eeqq
so the minimum value is
\begin{align*}
&e_0^2+(b_E(1))^2 + (-0.2^2) - \frac{(a_E(1)\cdot e_0)^2}{((a_E(1))^2 - (b_E(1))^2)}\\
=&(-0.18)^2+0.24^2 -0.2^2 - \frac{(1.44\times (-0.18))^2}{1.44^2 - 0.24^2}\approx0.0166743>0.
\end{align*}

For the $h_2(t)$ with $\vep_1=0.1$ and $r=1$, one has
 \beqq
\frac{a_E(1)(e_0-1)}{(a_E(1))^2-(b_E(1))^2}=\frac{1.44\times(-0.18-1)}{1.44^2-0.24^2}\approx0.842857,
\eeqq
so the minimum value is
\begin{align*}
&(e_0-1)^2+(b_E(1))^2 + (-0.1^2) - \frac{(a_E(1)\cdot (e_0-1))^2}{((a_E(1))^2 - (b_E(1))^2)}\\
=&(-0.18-1)^2+0.24^2 -0.1^2 - \frac{(1.44\times (-0.18-1))^2}{1.44^2 - 0.24^2}\approx0.00781714>0.
\end{align*}

For the $h_3(t)$ with $\vep_2=0.1$ and $r=1$, one has
 \beqq
\frac{a_E(1)(e_0+1)}{(a_E(1))^2-(b_E(1))^2}=\frac{1.44\times(-0.18+1)}{1.44^2-0.24^2}\approx-0.585714,
\eeqq
the minimum value is
\begin{align*}
&(e_0+1)^2+(b_E(1))^2 + (-\vep_2^2) - \frac{(a_E(1)\cdot (e_0+1))^2}{((a_E(1))^2 - (b_E(1))^2)}\\
=&(-0.18+1)^2+0.24^2 -0.1^2 - \frac{(1.44\times (-0.18+1))^2}{1.44^2 - 0.24^2}\approx0.0283886>0.
\end{align*}

(c)
By direct computation, if $|\xi-1|=\vep_1<2/3$, then
\begin{align}\label{nearpara-6}
|Q(\xi)|\leq&\frac{(1+|\xi|)^{2+2m}}{|\xi|\cdot|1-\xi|^{2m}}\leq
\frac{(2+\vep_1)^{2+2m}}{(1-\vep_1)\vep_1^{2m}}\leq \frac{3^{2+2m}}{(1/3)\vep_1^{2m}}\nonumber\\
=&3^{3+2m}\vep_1^{-2m}< 3^{3+2m}\vep_1^{-2-2m}\to+\infty\ \text{as}\ \vep_1\to0,
\end{align}
and
\beqq
|Q(\xi)|=\frac{|2+(\xi-1)|^{2+2m}}{|1+(\xi-1)|\cdot|1-\xi|^{2m}}\geq
\frac{(2-\vep_1)^{2+2m}}{(1+\vep_1)\vep_1^{2m}}\geq \frac{\big(\tfrac{4}{3}\big)^{2+2m}}{(5/3)\cdot\vep_1^{2m}}\to+\infty\ \text{as}\ \vep_1\to0;
\eeqq
if $|\xi+1|=\vep_2<1$, then
\beqq
|Q(\xi)|\geq\frac{\vep_2^{2+2m}}{(1+\vep_2)(2+\vep_2)^{2m}}>\frac{\vep_2^{2+2m}}{2\times3^{2m}}>\frac{\vep_2^{2+2m}}{3^{3+2m}}\to0\ \text{as}\ \vep_2\to0,
\eeqq
and
\beqq
|Q(\xi)|=\frac{|\xi+1|^{2+2m}}{|1-(\xi+1)|\cdot|2-(\xi+1)|^{2m}}\leq\frac{\vep_2^{2+2m}}{(1-\vep_2)(2-\vep_2)^{2m}}\to0\ \text{as}\ \vep_2\to0.
\eeqq

So, for $\vep_1=\vep_2=0.1$, by the choice of $\eta$ in \eqref{near-equat-6}, one has
\beq\label{nearpara-2}
3^{3+2m}\vep_1^{-2-2m}=3\cdot\left(\frac{3}{\vep_1}\right)^{2+2m}=3\cdot30^{2+2m}
\leq \frac{4(m+1)^{m+1}}{m^{m}}e^{2\pi\eta}\leq4(m+1)e^{2\pi\eta+1}
\eeq
and
\beq\label{nearpara-3}
3^{-3-2m}\vep_2^{2+2m}=\frac{1}{3}\cdot\left(\frac{\vep_2}{3}\right)^{2+2m}=\frac{1}{3}\cdot\frac{1}{30^{2+2m}}
\geq 4(m+1)e^{-2\pi\eta+1}\geq \frac{4(m+1)^{m+1}}{m^{m}}e^{-2\pi\eta},
\eeq
where the last inequality can be derived by the inequality
\beqq
3\cdot30^{2+2m}\leq\frac{1}{4(m+1)}e^{2\pi\eta-1}.
\eeqq

Recall $cv_Q=4(m+1)\frac{(m+1)^{m}}{m^{m}}$ in Lemma \ref{paraequ-25} and $cv_{\tiny{P}}=-\frac{m^m}{(m+1)^{m+1}}$ in \eqref{critip2011-8-30-1}. By Lemmas \ref{paraequ-2021-9-20-2} and \ref{paraequ-16},  the set $\cc\setminus(U_{\eta}^{Q}\cup \cd\big(   \cot(\tfrac{\pi}{2(m+1)})i,\tfrac{1}{\sin(\tfrac{\pi}{2(m+1)})}\big)\cup\cd\big(-\cot(\tfrac{\pi}{2(m+1)})i,\tfrac{1}{\sin(\tfrac{\pi}{2(m+1)})}\big) )$ consists of two connected components $W$ and $W'$ such that
$W$ contains $1$ in its boundary and $|Q(\zeta)|\geq cv_Q\cdot e^{2\pi\eta}$ in $W$, whereas $W'$ contains $-1$ in its boundary and $|Q(\zeta)|\leq cv_Q\cdot e^{-2\pi\eta}$ in $W'$.

By \eqref{nearpara-2}, $|Q(\zeta)|<cv_Q\cdot e^{2\pi\eta}$ on $\partial \cd(-1,\vep_1)$; by \eqref{nearpara-3}, $|Q(\zeta)|>cv_Q\cdot e^{-2\pi\eta}$ on $\partial \cd(1,\vep_2)$. Hence, $W\subset \cd(1,\vep_1)$ and $W'\subset \cd(-1,\vep_2)$.

This completes the proof.
\end{proof}

\begin{lemma}\label{paraequ-15}

Let $\al=\tfrac{m\pi}{2(m+1)}$.

Choose real constants $e_{1},e_{0}, e_{-1}$ with $e_{1}>e_{-1}>0$,
\beqq
\tfrac{2}{3}\leq\vep_3<1,\ 1<\vep_4<\frac{1}{\sin(\al)},\ R<\frac{(4+\vep_3^2)^{1+m}}{(1+\vep_3)\vep_3^{2m}},\ \frac{\vep_4^{2+2m}}{\sqrt{1+\vep_4^2}(4+\vep_4^2)^{m}}<\rho=0.07,\ \ r_1=1.4.
\eeqq
\begin{itemize}
\item[(a)] If $\zeta\in\cc\setminus\big(\overline{\cd}\big(\cot(\al)i,\tfrac{1}{\sin(\al)}\big)\cup\overline{\cd}\big(-\cot(\al)i,\tfrac{1}{\sin(\al)}\big)\big)$ and $|\zeta-1|\leq\vep_3$, then $|Q(\zeta)|>R$.
\item[(b)] If $\zeta\in\cc\setminus\big(\overline{\cd}\big(\cot(\al)i,\tfrac{1}{\sin(\al)}\big)\cup\overline{\cd}\big(-\cot(\al)i,\tfrac{1}{\sin(\al)}\big)\big)$ and $|\zeta+1|\leq\vep_4$, then $|Q(\zeta)|<\rho$.
\item[(c)] The set $E_{r_1}$ can be covered by $\overline{\cd}\big(\cot(\al)i,\tfrac{1}{\sin(\al)}\big)$, $\overline{\cd}\big(-\cot(\al)i,\tfrac{1}{\sin(\al)}\big)$, $\overline{\cd}(1,\vep_3)$, and $\overline{\cd}(-1,\vep_4)$, where $E_{r_1}$ is specified in \eqref{paraequ-3}. As a consequence,
\beqq
\cc\setminus\big(\overline{\cd}\big(\cot(\al)i,\tfrac{1}{\sin(\al)}\big)\cup\overline{\cd}\big(-\cot(\al)i,\tfrac{1}{\sin(\al)}\big)\cup
\overline{\cd}(1,\vep_3)\cup\overline{\cd}(-1,\vep_4)\big)\subset\cc\setminus E_{r_1}.
\eeqq
\item[(d)] If $\zeta\in(\mathcal{U}^{Q+}_1\cup\mathcal{U}^{Q+}_2)$ and $\rho\leq|Q(\zeta)|\leq R$, then
$$\zeta\in\cc\setminus \bigg(E_{r_1}\cup\overline{\cd}\big(\cot(\tfrac{\pi}{m+1})i,\tfrac{1}{\sin(\tfrac{\pi}{m+1})}\big)\cup \overline{\cd}\big(-\cot(\tfrac{\pi}{m+1})i,\tfrac{1}{\sin(\tfrac{\pi}{m+1})}\big)\bigg).$$
Moreover, if $\zeta\in\overline{\mathcal{U}}^{Q+}_1$ and $|Q(\zeta)|>R$, then
$$\zeta\in\cc\setminus \bigg(E_{r_1}\cup\overline{\cd}\big(\cot(\tfrac{\pi}{m+1})i,\tfrac{1}{\sin(\tfrac{\pi}{m+1})}\big)\cup \overline{\cd}\big(-\cot(\tfrac{\pi}{m+1})i,\tfrac{1}{\sin(\tfrac{\pi}{m+1})}\big)\bigg).$$
\end{itemize}
\end{lemma}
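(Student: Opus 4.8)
All four parts are explicit estimates on the rational map $Q$ of \eqref{nearequ-1}, and the organising device is the M\"obius substitution $w=\psi_{1,1}(\zeta)=\frac{\zeta-1}{\zeta+1}$ from Remark~\ref{nearpara-5}. Since $Q=\psi_0^{-1}\circ P\circ\psi_{1,2}\circ\psi_{1,1}$ with $\psi_{1,2}(w)=w^2-1$ and $\psi_0^{-1}(z)=-\tfrac4z$, a one-line computation gives
\beqq
Q(\zeta)=\frac{4}{(1-w^2)\,w^{2m}},\qquad \zeta=\frac{1+w}{1-w},\quad \zeta-1=\frac{2w}{1-w},\quad \zeta+1=\frac{2}{1-w}.
\eeqq
By Lemma~\ref{paraequ-16} (together with the fact that the M\"obius map $\psi_{1,1}$ carries each of the disks $\ol{\cd}\big(\pm\cot(\al)i,\tfrac1{\sin\al}\big)$, which both pass through $\pm1$, onto a closed half-plane through $0$), the hypothesis $\zeta\in\cc\setminus\big(\ol{\cd}(\cot(\al)i,\tfrac1{\sin\al})\cup\ol{\cd}(-\cot(\al)i,\tfrac1{\sin\al})\big)$ is equivalent to $|\arg w|<\al$. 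Because $\al=\tfrac{m\pi}{2(m+1)}<\tfrac\pi2$, this forces $\mathrm{Re}\,w>0$, hence $|1-w|^2=1-2\mathrm{Re}\,w+|w|^2\le 1+|w|^2$ and $|1-w^2|^2=|1-w|^2|1+w|^2=(1+|w|^2)^2-4(\mathrm{Re}\,w)^2\le(1+|w|^2)^2$. These two inequalities drive (a) and (b).

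\textbf{Parts (a) and (b).} In (a), the condition $|\zeta-1|\le\vep_3$ becomes $\tfrac{2|w|}{|1-w|}\le\vep_3$, so $4|w|^2\le\vep_3^2|1-w|^2\le\vep_3^2(1+|w|^2)$, i.e. $|w|^2\le\tfrac{\vep_3^2}{4-\vep_3^2}$; then $|1-w^2|\le1+|w|^2\le\tfrac4{4-\vep_3^2}$, and
\beqq
|Q(\zeta)|=\frac{4}{|1-w^2|\,|w|^{2m}}\ \ge\ \frac{4}{\tfrac4{4-\vep_3^2}\big(\tfrac{\vep_3^2}{4-\vep_3^2}\big)^m}=\frac{(4-\vep_3^2)^{m+1}}{\vep_3^{2m}}\ >\ R
\eeqq
by the choice of $R$. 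In (b), the condition $|\zeta+1|\le\vep_4$ becomes $|1-w|\ge\tfrac2{\vep_4}$, so $1+|w|^2>|1-w|^2\ge\tfrac4{\vep_4^2}$, giving $|w|^2>\tfrac{4-\vep_4^2}{\vep_4^2}$; since $1<\vep_4<\tfrac1{\sin\al}<\sqrt2$ (the last inequality because $\sin\al=\cos\tfrac{\pi}{2(m+1)}>\tfrac1{\sqrt2}$) one gets $\tfrac{\sqrt{4-\vep_4^2}}{\vep_4}>1$, hence $|w|>1$, $|1+w|\ge\sqrt{1+|w|^2}>\tfrac2{\vep_4}$, so $|1-w^2|=|1-w||1+w|>\tfrac4{\vep_4^2}$ and
\beqq
|Q(\zeta)|=\frac{4}{|1-w^2|\,|w|^{2m}}\ <\ \frac{4}{\tfrac4{\vep_4^2}\big(\tfrac{4-\vep_4^2}{\vep_4^2}\big)^m}=\frac{\vep_4^{2m+2}}{(4-\vep_4^2)^m}\ <\ \rho
\eeqq
by the smallness assumption on $\rho$.

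\textbf{Part (c).} This is a finite explicit computation with the fixed constants $e_1=0.84$, $e_0=-0.18$, $e_{-1}=0.6$, $r_1=1.4$. Since $\al<\tfrac\pi2$, the union $\ol{\cd}(\cot(\al)i,\tfrac1{\sin\al})\cup\ol{\cd}(-\cot(\al)i,\tfrac1{\sin\al})$ is exactly the set of $\zeta$ from which the segment $[-1,1]$ subtends an angle $\ge\al$, and this set contains $\ol{\cd}$; hence it suffices to prove $E_{r_1}\setminus\big(\ol{\cd}(1,\vep_3)\cup\ol{\cd}(-1,\vep_4)\big)\subset\ol{\cd}$. Plugging the parametrization $x=a_E(r_1)t+e_0$, $y=\pm b_E(r_1)\sqrt{1-t^2}$ ($t\in[-1,1]$) of $\partial E_{r_1}$ into the three quadratics $x^2+y^2-1$, $(x-1)^2+y^2-\vep_3^2$, $(x+1)^2+y^2-\vep_4^2$ and minimizing each in $t$, exactly as in the proof of Lemma~\ref{paraequ-83}(a),(b), one checks that every point of $E_{r_1}$ lying outside the two small disks satisfies $x^2+y^2\le1$.

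\textbf{Part (d).} Let $\zeta\in\mathcal{U}^{Q+}_1\cup\mathcal{U}^{Q+}_2$ with $\rho\le|Q(\zeta)|\le R$, and suppose for contradiction $\zeta\in E_{r_1}$. By (c), $\zeta$ lies in one of the four disks. By Corollary~\ref{cor-2021-10-2-1}, $\zeta$ lies outside $\ol{\cd}(\cot(\tfrac\pi{m+1})i,\tfrac1{\sin(\pi/(m+1))})\cup\ol{\cd}(-\cot(\tfrac\pi{m+1})i,\tfrac1{\sin(\pi/(m+1))})$; since $\tfrac\pi{m+1}<\al$, this union contains the union of the two $\al$-disks (both being the loci where $[-1,1]$ subtends an angle at least the relevant parameter), so $\zeta$ lies outside the $\al$-disks as well, whence $\zeta\in\ol{\cd}(1,\vep_3)\cup\ol{\cd}(-1,\vep_4)$. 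If $\zeta\in\ol{\cd}(1,\vep_3)$, part (a) gives $|Q(\zeta)|>R$; if $\zeta\in\ol{\cd}(-1,\vep_4)$, part (b) gives $|Q(\zeta)|<\rho$; both contradict $\rho\le|Q(\zeta)|\le R$. Hence $\zeta\notin E_{r_1}$, which with Corollary~\ref{cor-2021-10-2-1} is the assertion. For the ``moreover'' statement ($\zeta\in\ol{\mathcal{U}}^{Q+}_1$, $|Q(\zeta)|>R$) one argues the same way; the extra ingredient is that on $\ol{\mathcal{U}}^{Q+}_1$ the only point at which $Q$ blows up is $\zeta=\infty$ --- the point $z=1$, which $\psi_1$ maps to the critical point $-1$ of $P$, does not belong to $\ol{\mathcal{U}}^{Q+}_1$ since $-1\notin\ol{\mathcal{U}^{P}_1}$ --- so $|Q(\zeta)|>R$ with $R$ large forces $|\zeta|$ large, in particular $\zeta\notin E_{r_1}$ and $\zeta\notin\ol{\cd}(1,\vep_3)$, and one concludes as before.

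\textbf{Expected main obstacle.} The only genuinely delicate steps are the estimates in (a) and (b): the naive triangle-inequality bounds coming from $|\zeta\mp1|\le\vep$ alone are far too weak (they blow up as $\vep_3,\vep_4\to1$), and it is precisely the inequality $\mathrm{Re}\,w>0$ --- which is where avoiding the $\al$-disks enters --- that tames $|1-w|$ and $|1-w^2|$ and makes the estimates close. Parts (c) and (d) are then routine: (c) is a bounded explicit computation of the same kind already carried out for Lemma~\ref{paraequ-83}, and (d) is a purely combinatorial assembly of (a),(b),(c) with Corollary~\ref{cor-2021-10-2-1}.
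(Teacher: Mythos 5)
Your reduction to the M\"obius coordinate $w=\psi_{1,1}(\zeta)=\frac{\zeta-1}{\zeta+1}$ and the formula $Q(\zeta)=\frac{4}{(1-w^2)w^{2m}}$ are correct, and so is the observation that avoiding the two $\al$-disks corresponds to $|\arg w|<\al$. However, after that you only use the consequence $\mathrm{Re}\,w>0$, which is exactly the condition $|\zeta|>1$, and this is far too weak to prove~(a). Concretely, $\mathrm{Re}\,w>0$ gives $|1-w|^2\le 1+|w|^2$, i.e.\ $|\zeta+1|^2\ge 4-r^2$ with $r=|\zeta-1|$, and your chain of inequalities then yields $|Q(\zeta)|\ge\frac{(4-\vep_3^2)^{m+1}}{\vep_3^{2m}}$. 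This is achieved (with equality) on the unit circle at $|\zeta-1|=\vep_3$, where $|\zeta+1|^2=4-\vep_3^2$, and it is exponentially smaller than the threshold $\frac{(4+\vep_3^2)^{1+m}}{(1+\vep_3)\vep_3^{2m}}$ against which $R$ is chosen: their ratio is $\frac{1}{1+\vep_3}\big(\frac{4+\vep_3^2}{4-\vep_3^2}\big)^{m+1}\to\infty$, and numerically with $\vep_3=\frac23$ your bound is $\frac{32}{9}\cdot 8^m$ while $R=2.66\times 10^m$, so the claim $|Q(\zeta)|>R$ does not follow for $m\ge 2$. The ingredient you are missing is the paper's geometric observation that
\begin{equation*}
\overline{\cd}\Big(1,\tfrac{1}{\sin\al}\Big)\cap\{\zeta:\ \mathrm{Re}\,\zeta\le 1\}\subset
\overline{\cd}\Big(\cot(\al)i,\tfrac{1}{\sin\al}\Big)\cup\overline{\cd}\Big(-\cot(\al)i,\tfrac{1}{\sin\al}\Big),
\end{equation*}
so that a point with $|\zeta-1|\le\vep_3<\frac{1}{\sin\al}$ lying outside both $\al$-disks must have $\mathrm{Re}\,\zeta>1$; this upgrades the bound to $|\zeta+1|^2\ge 4+r^2$ (note the critical sign change) and, together with $|\zeta|\le 1+r$, produces the required $|Q(\zeta)|\ge\frac{(4+r^2)^{1+m}}{(1+r)r^{2m}}$ after a monotonicity argument in $r$. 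The same structural defect is present in your~(b): you derive $|Q(\zeta)|<\frac{\vep_4^{2m+2}}{(4-\vep_4^2)^m}$ and invoke the smallness assumption on $\rho$, but the lemma's hypothesis only asserts $\frac{\vep_4^{2m+2}}{\sqrt{1+\vep_4^2}\,(4+\vep_4^2)^m}<\rho$, which is a strictly smaller quantity, so your upper bound is not covered by the stated hypothesis (the correct input is again $\mathrm{Re}\,\zeta<-1$, giving $|\zeta-1|\ge\sqrt{4+r^2}$, not merely $|\zeta|>1$). Parts (c) and (d) are otherwise in the right spirit (the ellipse-arc checking in (c) in fact uses Sublemma~\ref{arc-2021-10-26-1} rather than a minimisation of quadratics as in Lemma~\ref{paraequ-83}, but the numerics are comparable), yet (d) relies on (a) and (b), so the gap propagates.
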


\begin{remark}\label{nearequ-21}
Since $1<\vep_4<\frac{1}{\sin(\al)}$ and $\al=\tfrac{m}{2(m+1)}\pi$, one has
\begin{align*}
&\frac{\vep_4^{2+2m}}{\sqrt{1+\vep_4^2}(4+\vep_4^2)^{m}}\leq \frac{(1/\sin(\tfrac{\pi}{3}))^{2+2m}}{\sqrt{1+1^2}(4+1^2)^{m}}
=\frac{(\tfrac{2}{\sqrt{3}})^{2+2m}}{\sqrt{2}\cdot5^{m}}\\
=&\frac{4}{3\sqrt{2}}\bigg(\frac{4}{15}\bigg)^{m}\leq\frac{4}{3\sqrt{2}}\bigg(\frac{4}{15}\bigg)^{2}
\approx0.0670442.
\end{align*}
So, we can take $\rho=0.07$.
\end{remark}

\begin{remark}\label{nearequ-22}
For $\tfrac{2}{3}\leq\vep_3<1$, by the arguments in the following proof, one has
\begin{align*}
\frac{(4+\vep_3^2)^{1+m}}{(1+\vep_3)\vep_3^{2m}}\geq\frac{(4+1^2)^{1+m}}{(1+1)\cdot 1^{2m}}=\frac{5^{1+m}}{2}.
\end{align*}
\begin{align*}
\frac{(4+\vep_3^2)^{1+m}}{(1+\vep_3)\vep_3^{2m}}\leq\frac{(4+(\tfrac{2}{3})^2)^{1+m}}{(1+\tfrac{2}{3})\cdot (\tfrac{2}{3})^{2m}}=\frac{(\tfrac{40}{9})^{1+m}}{\tfrac{5}{3}\cdot (\tfrac{4}{9})^{m}}=\frac{(\tfrac{4}{9})^{1+m}\cdot10^{1+m}}{\tfrac{5}{3}\cdot (\tfrac{4}{9})^{m}}=\frac{8}{3}\cdot 10^m.
\end{align*}

So, we can take $R=2.66\times10^{m}$.
\end{remark}

\begin{remark}\label{near-equat-11}
Consider the function $g(x)=x+\frac{a}{x}$ with real parameter $a\neq0$, the domain of this function is $(-\infty,0)\cup(0,\infty)$, and the derivative of this function is
$g'(x)=1-\frac{a}{x^2}$. Consider two different cases: (i) $a>0$ and (ii) $a<0$. (i) For $a>0$, the solution to $g'(x)=0$ is $x=\pm\sqrt{a}$, the derivative $g'(x)>0$ for $x\in(-\infty,-\sqrt{a})\cup(\sqrt{a},+\infty)$, and $g'(x)<0$ for $x\in(-\sqrt{a},0)\cup(0,\sqrt{a})$.  For $x<0$, the maximum value of $g(x)$ is at $-\sqrt{a}$, $g(-\sqrt{a})=-2\sqrt{a}$, and $\lim_{x\to-\infty}g(x)=-\infty$ and $\lim_{x\to0-}g(x)=-\infty$. For $x>0$, the minimum value of $g(x)$ is at $\sqrt{a}$, $g(\sqrt{a})=2\sqrt{a}$, and $\lim_{x\to+\infty}g(x)=+\infty$ and $\lim_{x\to0+}g(x)=+\infty$. (ii) For $a<0$, $g'(x)>0$ for all $x\in(-\infty,0)\cup(0,+\infty)$.  For $x<0$, $\lim_{x\to-\infty}g(x)=-\infty$ and $\lim_{x\to0-}g(x)=+\infty$. For $x>0$, $\lim_{x\to0+}g(x)=-\infty$ and $\lim_{x\to+\infty}g(x)=+\infty$. Hence, $E_{r}\subset E_{r'}$ for $1\leq r<r'$.
\end{remark}

\begin{proof}
Case (a). By direct computation, one has that
\beqq
\overline{\cd}\big(1,\tfrac{1}{\sin(\al)}\big)\cap\{\zeta:\ \text{Re}\zeta\leq1\}\subset \overline{\cd}\big(\cot(\al)i,\tfrac{1}{\sin(\al)}\big)\cup\overline{\cd}\big(-\cot(\al)i,\tfrac{1}{\sin(\al)}\big).
\eeqq
So, for $\zeta$ satisfying the assumptions of (a), one has $\text{Re}\,\zeta>1$ and $|\zeta+1|\geq\sqrt{4+r^2}$ with $r=|\zeta-1|\leq\vep_3$. Hence,
\beqq
|Q(\zeta)|=\bigg|\frac{(1+\zeta)^{2+2m}}{\zeta(1-\zeta)^{2m}}\bigg|
\geq\frac{(\sqrt{4+r^2})^{2+2m}}{(1+r)r^{2m}}=\frac{(4+r^2)^{1+m}}{(1+r)r^{2m}}
:=h_4(r).
\eeqq
Since
\begin{align*}
&(\log(h_4(r))'=((1+m)\log(4+r^2)-\log(1+r)-2m\log r)'=
\frac{2r(1+m)}{4+r^2}-\frac{1}{1+r}-\frac{2m}{r}\\
\leq&\frac{2(1+m)}{4}-0-2m=\frac{1-3m}{2}<0\ \text{for}\ 0<r<1,
\end{align*}
one has
\beqq
|Q(\zeta)|\geq h_4(r)\geq h_4(\vep_3)=\frac{(4+\vep_3^2)^{1+m}}{(1+\vep_3)\vep_3^{2m}}.
\eeqq

Case (b). For $\zeta$ satisfying the assumptions of (b) and $\vep_4<\frac{1}{\sin(\al)}$, one has $\text{Re}\,\zeta<-1$, $|\zeta|\geq\sqrt{1+r^2}$, and $|\zeta-1|\geq\sqrt{4+r^2}$, where $r=|\zeta+1|\leq\vep_4$. So,
\beqq
|Q(\zeta)|=\bigg|\frac{(1+\zeta)^{2+2m}}{\zeta(1-\zeta)^{2m}}\bigg|
\leq\frac{r^{2+2m}}{\sqrt{1+r^2}(\sqrt{4+r^2})^{2m}}=
\frac{r^{2+2m}}{\sqrt{1+r^2}(4+r^2)^{m}}
:=h_5(r^2)=h_5(s).
\eeqq
Since
\begin{align*}
&(\log(h_5(s))'=((1+m)\log s-\frac{1}{2}\log(1+s)-m\log (4+s))'=
\frac{1+m}{s}-\frac{1}{2(1+s)}-\frac{m}{4+s}\\
\geq&\frac{1+m}{s}-\frac{1}{2s}-\frac{m}{s}=\frac{1}{2s}>0\ \text{for}\ s>0,
\end{align*}
one has
\beqq
|Q(\zeta)|\leq h_5(r^2)\leq h_5(\vep^2_4)=\frac{\vep_4^{2+2m}}{\sqrt{1+\vep_4^2}(4+\vep_4^2)^{m}}.
\eeqq

Case (c).  By the symmetric properties of these regions with respect to the real axis, it is sufficient to show that the upper part of $E_{r_1}$ is covered by $\overline{\cd}\big(\cot(\al)i,\tfrac{1}{\sin(\al)}\big)$, $\overline{\cd}(1,\vep_3)$, and  $\overline{\cd}(-1,\vep_4)$.

It is evident that $\overline{\cd}\big(\cot(\al)i,\tfrac{1}{\sin(\al)}\big)\cup \overline{\cd}\big(-\cot(\al)i,\tfrac{1}{\sin(\al)}\big)$ covers $\overline{\cd}(0,1)$.
For $\al=\tfrac{m}{2(m+1)}\pi$ dependent on $m$, it is evident that $\al$ is increasing with respect to $m$, that is, $\tfrac{m}{2(m+1)}\pi<\tfrac{m+1}{2(m+2)}\pi$, and $\lim_{m\to\infty}\al=\tfrac{\pi}{2}$. This, together with the fact that the functions $\cot(x)$ and $\tfrac{1}{\sin(x)}$ are decreasing for $x\in(0,\tfrac{\pi}{2}]$, implies that
the Hausdorff distance between $\overline{\cd}\big(\cot(\al)i,\tfrac{1}{\sin(\al)}\big)$ and $\overline{\cd}(0,1)$ goes to zero as $m\to+\infty$. So,  it only needs to show that the upper part of $E_{r_1}$ is covered by $\overline{\cd}(0,1)$, $\overline{\cd}(1,\vep_3)$, and  $\overline{\cd}(-1,\vep_4)$. Further, by the assumptions of $\vep_3$ and $\vep_4$, this is simplified as that the upper part of $E_{r_1}$ is covered by $\overline{\cd}(0,1)$, $\overline{\cd}(1,2/3)$, and  $\overline{\cd}(-1,1)$.

Consider the curve $\Ga=\partial E_{r_1}\cap\{\zeta:\ \text{Im}\,\zeta\geq0\}$ given by the parametric equation $y(x)=b_E(r_1)\sqrt{1-\big(\tfrac{x-e_0}{a_E(r_1)}\big)^2}$ for $x\in[e_0-a_E(r_1),e_0+a_E(r_1)]$, where  $a_{E}(r)=e_1 r+\tfrac{e_{-1}}{r}$ and $b_{E}(r)=e_1r-\tfrac{e_{-1}}{r}$. Take two points $z_1=x_1+iy(x_1)$ and $z_2=x_2+iy(x_2)$ with  $e_0-a_E(r_1)<x_1<x_2<e_0+a_E(r_1)$, which divides $\Ga$ into three subarcs $\Ga_1$, $\Ga_2$, and $\Ga_3$, from left to right, such that the left endpoints of $\Ga$ and $z_1$ are contained in $\cd(-1,1)$, the points $z_1$ and $z_2$ are contained in $\cd(0,1)$, and the points $z_2$ and the right endpoint of $\Ga$ are contained in $\cd(1,2/3)$, where the following sublemma guarantees that these conditions are sufficient.

\begin{sublemma}\label{arc-2021-10-26-1}\cite[Lemma 5.18]{InouShishikura2016}
Let $\Ga=\{x+iy:\ (\tfrac{x}{a})^2+(\tfrac{y}{b})^2=1,\ y\geq0\}$ with $a>b>0$. If two points
$z_1,z_2\in\Ga$ are contained in a disk $\cd(\zeta_0,r)$ with $\text{Im}\,\zeta_0\geq0$, then the subarc of $\Ga$ between $z_1$ and $z_2$ is contained in the same disk.
\end{sublemma}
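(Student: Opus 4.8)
The plan is to reduce the statement to the convexity of a single real function of one variable. I would parametrize the arc by $x\mapsto \ga(x):=x+i\,y(x)$ with $y(x)=b\sqrt{1-x^2/a^2}$ for $x\in[-a,a]$; this is a homeomorphism of $[-a,a]$ onto $\Ga$, and if $z_1=\ga(x_1)$ and $z_2=\ga(x_2)$ with $x_1<x_2$, then the subarc of $\Ga$ between $z_1$ and $z_2$ is exactly $\ga([x_1,x_2])$. Writing $\zeta_0=p+iq$ with $q\geq0$, it then suffices to prove that the function
\beqq
\phi(x):=|\ga(x)-\zeta_0|^2=(x-p)^2+(y(x)-q)^2
\eeqq
has the property that $\phi(x_1)\leq r^2$ and $\phi(x_2)\leq r^2$ force $\phi(x)\leq r^2$ for all $x\in[x_1,x_2]$, since $z\in\cd(\zeta_0,r)$ if and only if $|z-\zeta_0|^2\leq r^2$.

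First I would expand, using $y(x)^2=b^2(1-x^2/a^2)$,
\beqq
\phi(x)=\Big(1-\tfrac{b^2}{a^2}\Big)x^2-2px+(p^2+b^2+q^2)-2qb\sqrt{1-\tfrac{x^2}{a^2}},
\eeqq
and then show that $\phi$ is convex on $[-a,a]$. The quadratic part $(1-b^2/a^2)x^2-2px+\mathrm{const}$ is convex because $a>b$ makes $1-b^2/a^2>0$. For the remaining term, set $s(x)=\sqrt{1-x^2/a^2}$; differentiating $s(x)^2=1-x^2/a^2$ twice gives $(s')^2+s\,s''=-1/a^2$, hence $s\,s''=-1/a^2-(s')^2<0$, so $s''<0$ on $(-a,a)$, i.e. $s$ is concave there. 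Since $q\geq0$ and $b>0$, the term $-2qb\,s(x)$ is therefore convex on $(-a,a)$, and by continuity convex on $[-a,a]$. As a sum of convex functions, $\phi$ is convex on $[-a,a]$, so each sublevel set $\{x:\phi(x)\leq r^2\}$ is an interval, which is exactly the claim.

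The only place where the hypotheses are genuinely used is this sign condition: the convexity of $-2qb\,s(x)$ needs $q=\mathrm{Im}\,\zeta_0\geq0$, together with the arc lying in $\{y\geq0\}$ so that $y(x)=+b\sqrt{1-x^2/a^2}$ — precisely the assumptions of the sublemma; for $q<0$ this term is concave and the conclusion fails. The apparent issue at the endpoints $x=\pm a$, where $s'(x)$ blows up, is harmless, since $\phi$ is continuous there and convexity on the open interval passes to the closed one. I do not expect a real obstacle here — it is an elementary one-variable convexity argument — so the only care required is the bookkeeping of the parametrization and keeping track of the sign of $q$.
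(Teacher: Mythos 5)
Your proof is correct and self-contained. The paper does not actually prove this sublemma; it defers to Inou--Shishikura, Lemma 5.18. Your reduction to the convexity of $\phi(x)=|\gamma(x)-\zeta_0|^2$ on $[-a,a]$ is the natural argument: the hypothesis $a>b$ makes the coefficient $1-b^2/a^2$ of $x^2$ strictly positive, the function $s(x)=\sqrt{1-x^2/a^2}$ is concave on $(-a,a)$ (with convexity extending to $[-a,a]$ by continuity), and the hypothesis $q=\operatorname{Im}\zeta_0\ge 0$ together with $b>0$ is exactly what makes the remaining term $-2qb\,s(x)$ convex rather than concave. Summing gives $\phi$ convex, hence its sublevel sets $\{x:\phi(x)\le r^2\}$ are intervals, which is the claim. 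Your remark that both hypotheses ($y\ge 0$ on the arc, fixing the $+$ branch of the square root, and $q\ge 0$) are load-bearing, and that for $q<0$ the term flips sign and the conclusion genuinely fails, is also accurate and shows the statement is sharp.
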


Take the following constants $e_1 = 0.84$, $e_{0}=-0.18$, $e_{-1} = 0.6$, $r_1=1.4$, $x_1 = -0.5$, and $x_2 = 0.8$.

So, the arc $\Ga_1\subset\overline{\cd}(-1,1)$ is guaranteed by the following inequalities:
\beqq
a_E(r_1)-e_0<2\ \text{and}\ (x_1+1)^2+(y(x_1))^2<1;
\eeqq
the arc $\Ga_2\subset\overline{\cd}(0,1)$ is guaranteed by the following inequalities:
\beqq
x_1^2+(y(x_1))^2<1\ \text{and}\, x_2^2+(y(x_2))^2<1;
\eeqq
the arc $\Ga_3\subset\overline{\cd}(1,2/3)$ is guaranteed by the following inequalities:
\beqq
a_E(r_1)+e_0<1+\frac{2}{3}=\frac{5}{3}\ \text{and}\ (x_2-1)^2+(y(x_2))^2<\frac{2^2}{3^2}=\frac{4}{9}.
\eeqq

By direct computation, one has
\beqq
a_E(r_1)=0.84\times1.4+0.6/1.4=1.60457, \ b_E(r_1)=0.84\times1.4-0.6/1.4=0.747429,
\eeqq
\beqq
y(x_1)=b_E(r_1)\sqrt{1-\big(\tfrac{x_1-e_0}{a_E(r_1)}\big)^2}=
0.747429\sqrt{1-\big(\tfrac{-0.5-(-0.18)}{1.60457}\big)^2}\approx0.732415,
\eeqq
\beqq
y(x_2)=b_E(r_1)\sqrt{1-\big(\tfrac{x_2-e_0}{a_E(r_1)}\big)^2}=
0.747429\sqrt{1-\big(\tfrac{0.8-(-0.18)}{1.60457}\big)^2}\approx0.591829.
\eeqq
So,
\beqq
a_E(r_1)-e_0=1.60457+0.18=1.78457<2\
 \eeqq
and
\beqq
(x_1+1)^2+(y(x_1))^2=0.5^2 + 0.732415^2\approx0.786432<1;
\eeqq
\beqq
x_1^2+(y(x_1))^2=(-0.5)^2 + 0.732415^2\approx0.786432<1\
\eeqq
and
\beqq
 x_2^2+(y(x_2))^2=0.8^2+0.591829^2\approx0.990262<1;
\eeqq
\beqq
a_E(r_1)+e_0=1.60457-0.18=1.42457<\frac{5}{3}\approx1.66667\
\eeqq
and
\beqq (x_2-1)^2+(y(x_2))^2=0.2^2+0.591829^2\approx0.390262<\frac{4}{9}\approx0.44444.
\eeqq

(d)
For $\zeta\in\mathcal{U}^{Q+}_1\cup\mathcal{U}^{Q+}_2$, by Corollary \ref{cor-2021-10-2-1}, one has
\beq\label{equ2021-10-27-1}
\zeta\in\cc\setminus\bigg(\overline{\cd}\big(\cot(\tfrac{\pi}{m+1})i,\tfrac{1}{\sin(\tfrac{\pi}{m+1})}\big)\cup \overline{\cd}\big(-\cot(\tfrac{\pi}{m+1})i,\tfrac{1}{\sin(\tfrac{\pi}{m+1})}\big)\bigg).
\eeq
By Lemma \ref{paraequ-16}, one has
\begin{align}\label{equ2021-10-27-2}
&\bigg(\overline{\cd}\big(\cot(\tfrac{m\pi}{2(m+1)})i,\tfrac{1}{\sin(\tfrac{m\pi}{2(m+1)})}\big)\cup \overline{\cd}\big(-\cot(\tfrac{m\pi}{2(m+1)})i,\tfrac{1}{\sin(\tfrac{m\pi}{2(m+1)})}\big)\bigg)\nonumber\\
\subset&
\bigg(\overline{\cd}\big(\cot(\tfrac{\pi}{m+1})i,\tfrac{1}{\sin(\tfrac{\pi}{m+1})}\big)\cup \overline{\cd}\big(-\cot(\tfrac{\pi}{m+1})i,\tfrac{1}{\sin(\tfrac{\pi}{m+1})}\big)\bigg).
\end{align}
For $\zeta$ with $\rho\leq|Q(\zeta)|\leq R$, by (a) and (b) of this lemma, $\zeta$ does not lie in $\ol{\cd}(1,\ep_3)\cup\ol{\cd}(-1,\ep_4)$. This, together with (c) of this lemma, yields that
$$\zeta\in\cc\setminus \bigg(E_{r_1}\cup\overline{\cd}\big(\cot(\tfrac{\pi}{m+1})i,\tfrac{1}{\sin(\tfrac{\pi}{m+1})}\big)\cup \overline{\cd}\big(-\cot(\tfrac{\pi}{m+1})i,\tfrac{1}{\sin(\tfrac{\pi}{m+1})}\big)\bigg).$$

Consider the inverse image of $\cc\setminus\ol{\cd}(0,R)$ by the map $Q|_{\cc\setminus\ol{\cd}}$. By \eqref{nearequ-1} and Remark \ref{nearpara-5},
\begin{align*}
&Q^{-1}(\cc\setminus\ol{\cd}(0,R))\bigcap(\cc\setminus\ol{\cd}(0,R))
=\psi^{-1}_{1,+}\circ P^{-1}\circ \psi_0(\cc\setminus\ol{\cd}(0,R))\bigcap(\cc\setminus\ol{\cd}(0,R))\\
=&\psi^{-1}_{1,+}\circ P^{-1}(\cd(0,\tfrac{4}{R}))\bigcap(\cc\setminus\ol{\cd}(0,R))
=\psi^{-1}_{1,+}\bigg(P^{-1}(\cd(0,\tfrac{4}{R}))\bigcap(\psi_1(\cc\setminus\ol{\cd}(0,R)))\bigg).
\end{align*}
By the above discussions in Subsection \ref{nearequ-20} and Remark \ref{nearpara-5} $P^{-1}(\cd(0,\tfrac{4}{R}))\bigcap(\psi_1(\cc\setminus\ol{\cd}(0,R)))$ have two connected components, one component contains the point $0$, the closure of another component contains the point $-1$. So, $Q^{-1}(\cc\setminus\ol{\cd}(0,R))\bigcap(\cc\setminus\ol{\cd}(0,R))$ contains two connected components, denoted by $U$ and $U'$, $U$ is contained in $\mathcal{U}^{Q+}_1\cup \ga^{Q+}_{c1+}$, and $U'$ is contained in $(\mathcal{U}^{Q+}_2\cup\cdots\cup\mathcal{U}^{Q+}_{m+1})\cup(\ga^{Q+}_{c2}\cup\cdots\cup\ga^{Q+}_{c,m+1}) $, respectively. It is evident that $\infty\in\ol{U}$ and $1\in\ol{U'}$. By (a), $\widetilde{W}=\ol{\cd(1,\vep_3)}\setminus\big(\overline{\cd}\big(\cot(\al)i,\tfrac{1}{\sin(\al)}\big)\cup\overline{\cd}\big(-\cot(\al)i,\tfrac{1}{\sin(\al)}\big)\big)$ is contained in the component $U'$. So,
$\widetilde{W}\cap\ol{\mathcal{U}}^{Q+}_1=\emptyset$. This, together with the above discussions in \eqref{equ2021-10-27-1} and \eqref{equ2021-10-27-2}, implies that
$$\zeta\in\cc\setminus \bigg(E_{r_1}\cup\overline{\cd}\big(\cot(\tfrac{\pi}{m+1})i,\tfrac{1}{\sin(\tfrac{\pi}{m+1})}\big)\cup \overline{\cd}\big(-\cot(\tfrac{\pi}{m+1})i,\tfrac{1}{\sin(\tfrac{\pi}{m+1})}\big)\bigg).$$

\end{proof}

\begin{lemma} \label{paraequ-56}
\begin{itemize}
\item[(a)] For $|z|\geq r>cp_{Q1}$, one has
\begin{align*}
 &\log Q'(z)\\
 =&-\frac{1+8m+8m^2 }{z^2}
-\frac{ 32 m + 96 m^2 + 64 m^3}{3z^3}\\
&-\sum^{\infty}_{n=4}\frac{1}{n}\bigg(\frac{(\sqrt{m+1}+\sqrt{m})^2 }{z}\bigg)^n
-\sum^{\infty}_{n=4}\frac{1}{n}\bigg(\frac{(\sqrt{m+1}-\sqrt{m})^2 }{z}\bigg)^n+
\sum^{\infty}_{k=3}\frac{2(2m+1)}{(2k-1)z^{2k-1}}.
\end{align*}
Moreover, if $|z|\geq r>cp_{Q1}$, then
\begin{align*}
 &|\log Q'(z)|\\
 \leq&\text{Log}DQ_{max}(r):=
\frac{1+8m+8m^2 }{r^2}+
\frac{ 32 m + 96 m^2 + 64 m^3}{3r^3}\\
&+\frac{1}{4}\bigg(\frac{\big(\tfrac{(\sqrt{m+1}+\sqrt{m})^2 }{r}\big)^4}{1-\tfrac{(\sqrt{m+1}+\sqrt{m})^2 }{r}}+\frac{\big(\tfrac{(\sqrt{m+1}-\sqrt{m})^2 }{r}\big)^4}{1-\tfrac{(\sqrt{m+1}-\sqrt{m})^2 }{r}}\bigg)+
\frac{\frac{2(2m+1)}{5r^{5}}}{1-\tfrac{1}{r^2}}.\\
\end{align*}
\item[(b)] If $|z|>cp_{Q1}$, then $\text{Re}\,Q'(z)>0$. For any $\tht\in\rr$, $Q$ is injective in $\{z:\ \text{Re}(\zeta\,e^{-i\tht})>cp_{Q1}\}$.
\end{itemize}
\end{lemma}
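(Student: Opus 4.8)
\emph{Plan for part (a).} Write $a_{\pm}=(\sqrt{m+1}\pm\sqrt m)^{2}$, so $cp_{Q1}=a_{+}$, $cp_{Q2}=a_{-}$, and, being the two roots of $z^{2}-(4m+2)z+1$, they satisfy $a_{+}+a_{-}=4m+2$ and $a_{+}a_{-}=1$. For $|z|\ge r>cp_{Q1}=a_{+}$ one has $|a_{\pm}/z|\le a_{\pm}/r<1$ and $|1/z|\le1/r<1$, so I would take the logarithm of the product \eqref{paraequ-26}, using $\log(1-w)=-\sum_{n\ge1}w^{n}/n$ on the two linear factors and $\log\frac{1+w}{1-w}=2\sum_{j\ge0}w^{2j+1}/(2j+1)$ on the third, to obtain
\beqq
\log Q'(z)=-\sum_{n\ge1}\frac{a_{+}^{n}+a_{-}^{n}}{n\,z^{n}}+2(2m+1)\sum_{j\ge0}\frac{1}{(2j+1)z^{2j+1}}.
\eeqq
The $1/z$ terms cancel since $a_{+}+a_{-}=2(2m+1)$ (matching $Q'(\infty)=1$); for the $1/z^{2}$ and $1/z^{3}$ coefficients one substitutes $a_{+}^{2}+a_{-}^{2}=(4m+2)^{2}-2$ and $a_{+}^{3}+a_{-}^{3}=(4m+2)\big((4m+2)^{2}-3\big)=(4m+2)(16m^{2}+16m+1)$ and folds in the $j=1$ term, which gives exactly $-(1+8m+8m^{2})$ and $-(32m+96m^{2}+64m^{3})/3$; the odd terms with $j\ge2$ become the displayed sum over $k\ge3$ via $2j+1=2k-1$. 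The inequality then follows from a termwise estimate for $|z|\ge r$: bound the two explicit terms by their moduli, bound $\sum_{n\ge4}\frac1n(a_{\pm}/z)^{n}$ by $\frac14\sum_{n\ge4}(a_{\pm}/r)^{n}=\frac14\,\frac{(a_{\pm}/r)^{4}}{1-a_{\pm}/r}$, and bound $\sum_{k\ge3}\frac{2(2m+1)}{(2k-1)z^{2k-1}}$ by $\frac{2(2m+1)}{5}\sum_{k\ge3}r^{-(2k-1)}=\frac{2(2m+1)}{5}\,\frac{r^{-5}}{1-r^{-2}}$; summing yields $\text{Log}DQ_{max}(r)$.

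\emph{Plan for part (b).} By Lemma \ref{paraequ-25}(c) the critical points of $Q$ are $\pm1$, $a_{+}$ and $a_{-}=1/a_{+}$, all but $a_{+}$ of modulus $<a_{+}$, and the poles of $Q$, hence of $Q'$, are at $0$ and $1$; so $Q'$ is holomorphic and nowhere zero on the simply connected set $\{|z|>cp_{Q1}\}\cup\{\infty\}\subset\widehat{\cc}$, whence there is a single-valued branch $L$ of $\log Q'$ on $\{|z|>cp_{Q1}\}$ with $L(z)\to0$ as $z\to\infty$, and since $\text{Re}\,Q'(z)=|Q'(z)|\cos(\text{Im}\,L(z))$ it suffices to prove $|\text{Im}\,L(z)|<\tfrac{\pi}{2}$. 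Because $Q$ has real coefficients I would reduce to $\text{Im}\,z\ge0$; for $\text{Im}\,z=0$, $z>a_{+}$, all three factors of \eqref{paraequ-26} are positive so $Q'(z)>0$. For $\text{Im}\,z>0$ put $A=1-\tfrac{a_{+}}{z}$, $B=1-\tfrac{a_{-}}{z}$; since $|z-1|\ge|z|-1>a_{+}-1=2m+2\sqrt{m^{2}+m}>4m$, each of $A$, $B$, $\tfrac{z+1}{z-1}=1+\tfrac{2}{z-1}$ lies in $\cd(1,1)$, hence avoids $(-\infty,0]$, so $L=\text{Log}\,A+\text{Log}\,B+(2m+1)\text{Log}\tfrac{z+1}{z-1}$ with principal logarithms. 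As $\text{Im}(a_{\pm}/z)<0$, both $A,B$ also have positive imaginary part, so $\text{arg}\,A,\text{arg}\,B\in(0,\tfrac{\pi}{2})$ and $\text{arg}\,A+\text{arg}\,B=\text{arg}(AB)$. Writing $z=\rho e^{i\psi}$ and using $a_{+}+a_{-}=4m+2$, $a_{+}a_{-}=1$ one computes
\beqq
\text{Re}(AB)=\text{Re}\Big(1-\tfrac{4m+2}{z}+\tfrac{1}{z^{2}}\Big)=\frac{1}{\rho^{2}}\big(\rho^{2}-(4m+2)\rho\cos\psi+2\cos^{2}\psi-1\big),
\eeqq
and the bracket, an upward parabola in $\cos\psi$ with vertex $\tfrac{(2m+1)\rho}{2}>1$, is minimized over $[-1,1]$ at $\cos\psi=1$ with value $(\rho-a_{+})(\rho-a_{-})>0$; hence $\text{arg}(AB)\in(0,\tfrac{\pi}{2})$. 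Finally $\text{Im}\,z>0$ forces $\text{Im}\tfrac{2}{z-1}<0$, so $(2m+1)\text{arg}\tfrac{z+1}{z-1}<0$ with modulus $\le(2m+1)\arcsin\tfrac{2}{|z-1|}\le(2m+1)\tfrac{\pi}{3}\tfrac{2}{a_{+}-1}<\tfrac{\pi}{2}$ (Lemma \ref{paraequ-13} and $a_{+}-1>4m$, valid once $m\ge2$, which the standing hypotheses subsume). Therefore $\text{Im}\,L=\text{arg}(AB)+(2m+1)\text{arg}\tfrac{z+1}{z-1}\in(-\tfrac{\pi}{2},\tfrac{\pi}{2})$, i.e. $\text{Re}\,Q'(z)>0$. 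For the injectivity claim, the half-plane $\{z:\text{Re}(ze^{-i\tht})>cp_{Q1}\}$ is convex and, since $\text{Re}(ze^{-i\tht})\le|z|$, contained in $\{|z|>cp_{Q1}\}$ where $\text{Re}\,Q'>0$; the Noshiro--Warschawski criterion then yields that $Q$ is injective on it.

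\emph{Expected main obstacle.} The one nonroutine step is the upper bound $\text{Im}\,L(z)<\tfrac{\pi}{2}$: a direct triangle inequality on arguments is too weak because $\text{arg}\,A(z)$ alone approaches $\tfrac{\pi}{2}$ as $z\to cp_{Q1}$. It is saved by the identity-driven computation $\text{Re}(AB)>0$ on $\{|z|>cp_{Q1}\}$, which pins $\text{arg}\,A+\text{arg}\,B=\text{arg}(AB)$ strictly below $\tfrac{\pi}{2}$; the third factor's argument then only helps, having the opposite sign and size $(2m+1)\cdot O(1/m)<\tfrac{\pi}{2}$. Everything else — the expansion in (a), its term-by-term estimate, and the passage from $\text{Re}\,Q'>0$ to injectivity — is bookkeeping.
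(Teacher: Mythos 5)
Your proof is correct. For part (a) it is the same as the paper's: expand $\log$ of each of the three factors of $Q'$ via $-\log(1-w)=\sum w^n/n$, use $a_+ + a_- = 4m+2$, $a_+ a_- = 1$ to collect the $z^{-1}$, $z^{-2}$, $z^{-3}$ terms, and then bound the tails termwise for $|z|\ge r$. No difference there.

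For part (b) you go a genuinely different (and more complete) route. The paper's written argument establishes the chain $\arg(1+\tfrac1z)\le 0\le\arg(1-\tfrac{a_-}{z})\le\arg(1-\tfrac1z)\le\arg(1-\tfrac{a_+}{z})<\tfrac\pi2$ for $\mathrm{Im}\,z\ge 0$ and then only derives the lower bound $\arg Q'(z)>-\tfrac\pi2$ explicitly; the upper bound $\arg Q'(z)<\tfrac\pi2$ is not written out. (It does follow from the same chain by using $\arg(1-\tfrac{a_-}{z})\le\arg(1-\tfrac1z)$ to absorb that term into the $-(2m+1)\arg(1-\tfrac1z)$ contribution, giving $\arg Q'\le \arg(1-\tfrac{a_+}{z})-2m\arg(1-\tfrac1z)<\tfrac\pi2$; but the paper does not record this step, and without it the conclusion $\mathrm{Re}\,Q'>0$ is not yet established.) You correctly identify exactly this as the non-routine step — the naive triangle inequality fails because $\arg(1-\tfrac{a_+}{z})$ alone can approach $\tfrac\pi2$ as $z\to cp_{Q1}$ — and you repair it by the identity $(1-\tfrac{a_+}{z})(1-\tfrac{a_-}{z})=1-\tfrac{4m+2}{z}+\tfrac1{z^2}$ together with the minimization in $\cos\psi$ giving $\mathrm{Re}(AB)=\rho^{-2}(\rho-a_+)(\rho-a_-)>0$ at the extreme. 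This is arguably the cleaner route: it pins $\arg(AB)\in(0,\tfrac\pi2)$ and the third factor's argument in $(-\tfrac\pi2,0)$ simultaneously, producing both one-sided bounds from a single computation instead of two separate argument chases. The final injectivity step (Noshiro–Warschawski on the convex half-plane, which is contained in $\{|z|>cp_{Q1}\}$) is the same as the paper's line-segment integral $\mathrm{Re}\int_0^1 Q'(z_0+t(z_1-z_0))\,dt>0$, only named.

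One small remark on the paper rather than on you: in the paper's chain the step $-(2m+1)\arcsin\bigl|\tfrac{2}{z-1}\bigr|\ge -2(2m+1)\arcsin\bigl|\tfrac{1}{z-1}\bigr|$ has the inequality the wrong way (since $\arcsin$ is convex, $\arcsin(2x)\ge 2\arcsin x$), though the eventual numerical conclusion still holds. Your bound $(2m+1)\arcsin\tfrac{2}{|z-1|}\le(2m+1)\tfrac{\pi}{3}\tfrac{2}{a_+-1}<\tfrac\pi2$ avoids this slip.
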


\begin{proof}
(a) Since $-\log(1-x)=\sum^{\infty}_{n=1}\frac{x^n}{n}=x+\frac{x^2}{2}+\frac{x^3}{3}+\sum^{\infty}_{n=4}\frac{x^n}{n}$ and \eqref{paraequ-26}, one has
\begin{align*}
&\log Q'(z)=\log \bigg(1-\frac{(\sqrt{m+1}+\sqrt{m})^2 }{z}\bigg)+\log\bigg(1-\frac{(\sqrt{m+1}-\sqrt{m})^2}{z}\bigg)+\log\bigg(\frac{ 1+\frac{1}{z}}{1-\frac{1}{z}}\bigg)^{2 m+1}\\
=& \log \bigg(1-\frac{(\sqrt{m+1}+\sqrt{m})^2 }{z}\bigg)+\log\bigg(1-\frac{(\sqrt{m+1}-\sqrt{m})^2}{z}\bigg)\\
&+\log\bigg( 1+\frac{1}{z}\bigg)^{2 m+1}-\log\bigg(1-\frac{1}{z}\bigg)^{2 m+1}\\
=&-\sum^{\infty}_{n=1}\frac{1}{n}\bigg(\frac{(\sqrt{m+1}+\sqrt{m})^2 }{z}\bigg)^n
-\sum^{\infty}_{n=1}\frac{1}{n}\bigg(\frac{(\sqrt{m+1}-\sqrt{m})^2 }{z}\bigg)^n\\
&-(2m+1)\sum^{\infty}_{n=1}\frac{1}{n}\bigg(-\frac{1}{z}\bigg)^n+(2m+1)\sum^{\infty}_{n=1}\frac{1}{n}\bigg(\frac{1}{z}\bigg)^n\\
=&-\sum^{\infty}_{n=1}\frac{1}{n}\bigg(\frac{(\sqrt{m+1}+\sqrt{m})^2 }{z}\bigg)^n
-\sum^{\infty}_{n=1}\frac{1}{n}\bigg(\frac{(\sqrt{m+1}-\sqrt{m})^2 }{z}\bigg)^n+
\sum^{\infty}_{k=1}\frac{2(2m+1)}{(2k-1)z^{2k-1}}\\
=&-\sum^{\infty}_{n=4}\frac{1}{n}\bigg(\frac{(\sqrt{m+1}+\sqrt{m})^2 }{z}\bigg)^n
-\sum^{\infty}_{n=4}\frac{1}{n}\bigg(\frac{(\sqrt{m+1}-\sqrt{m})^2 }{z}\bigg)^n+
\sum^{\infty}_{k=3}\frac{2(2m+1)}{(2k-1)z^{2k-1}}\\
&-\sum^{3}_{n=1}\frac{1}{n}\bigg(\frac{(\sqrt{m+1}+\sqrt{m})^2 }{z}\bigg)^n
-\sum^{3}_{n=1}\frac{1}{n}\bigg(\frac{(\sqrt{m+1}-\sqrt{m})^2 }{z}\bigg)^n+
\sum^{2}_{k=1}\frac{2(2m+1)}{(2k-1)z^{2k-1}}\\
=&-\sum^{\infty}_{n=4}\frac{1}{n}\bigg(\frac{(\sqrt{m+1}+\sqrt{m})^2 }{z}\bigg)^n
-\sum^{\infty}_{n=4}\frac{1}{n}\bigg(\frac{(\sqrt{m+1}-\sqrt{m})^2 }{z}\bigg)^n+
\sum^{\infty}_{k=3}\frac{2(2m+1)}{(2k-1)z^{2k-1}}\\
&-\sum^{3}_{n=2}\frac{1}{n}\bigg(\frac{(\sqrt{m+1}+\sqrt{m})^2 }{z}\bigg)^n
-\sum^{3}_{n=2}\frac{1}{n}\bigg(\frac{(\sqrt{m+1}-\sqrt{m})^2 }{z}\bigg)^n+
\frac{2(2m+1)}{3z^{3}}\\
=&-\sum^{\infty}_{n=4}\frac{1}{n}\bigg(\frac{(\sqrt{m+1}+\sqrt{m})^2 }{z}\bigg)^n
-\sum^{\infty}_{n=4}\frac{1}{n}\bigg(\frac{(\sqrt{m+1}-\sqrt{m})^2 }{z}\bigg)^n+
\sum^{\infty}_{k=3}\frac{2(2m+1)}{(2k-1)z^{2k-1}}\\
&-\frac{1+8m+8m^2 }{z^2}
-\frac{2 + 36 m + 96 m^2 + 64 m^3}{3z^3}+
\frac{2(2m+1)}{3z^{3}}\\
=&-\sum^{\infty}_{n=4}\frac{1}{n}\bigg(\frac{(\sqrt{m+1}+\sqrt{m})^2 }{z}\bigg)^n
-\sum^{\infty}_{n=4}\frac{1}{n}\bigg(\frac{(\sqrt{m+1}-\sqrt{m})^2 }{z}\bigg)^n+
\sum^{\infty}_{k=3}\frac{2(2m+1)}{(2k-1)z^{2k-1}}\\
&-\frac{1+8m+8m^2 }{z^2}
-\frac{ 32 m + 96 m^2 + 64 m^3}{3z^3}.
\end{align*}

(b) For $|z|>(2m+1)+2\sqrt{m+m^2}=(\sqrt{m+1}+\sqrt{m})^2$, suppose that
$\text{Im}\, z\geq0$, it is evident that for $|z|>(2m+1)+2\sqrt{m+m^2}=(\sqrt{m+1}+\sqrt{m})^2$, one has $\text{Im}\,\frac{1}{z}\leq0$ and $\big|\frac{(\sqrt{m+1}+\sqrt{m})^2}{z}\big|<1$, implying that
\begin{align*}
&\arg\, \bigg(1+\frac{1}{z}\bigg)\leq0\leq  \arg\, \bigg(1-\frac{(\sqrt{m+1}-\sqrt{m})^2}{z}\bigg)\\
&\leq  \arg\, \bigg(1-\frac{1}{z}\bigg)\leq \arg\, \bigg(1-\frac{(\sqrt{m+1}+\sqrt{m})^2}{z}\bigg)<\frac{\pi}{2},
\end{align*}
where $(2m+1)+2\sqrt{m+m^2}\approx(2m+1)+2\sqrt{m+m^2+1/4}\approx 4m+2$ and
$(2m+1)-2\sqrt{m+m^2}\approx(2m+1)-2\sqrt{m+m^2+1/4}\approx0$  for large enough $m$.

By Lemma \ref{paraequ-13}, $\arg Q'(\zeta)=\text{Im}\,\log Q'(\zeta)$, and the discussions in (b), one has
\begin{align*}
&\arg\, Q'(z)\geq (2m+1)\text{arg}\, \bigg(1+\frac{1}{z}\bigg)-(2m+1)\text{arg}\, \bigg(1-\frac{1}{z}\bigg)
\geq -(2m+1)\arcsin\bigg|\frac{\tfrac{2}{z}}{1-\tfrac{1}{z}}\bigg|\\
\geq& -2(2m+1)\arcsin\bigg|\frac{1}{z-1}\bigg|\geq -2(2m+1)\arcsin\frac{1}{4m-1}\geq-\frac{\pi}{3}\cdot\frac{4m+2}{4m-1}>-\frac{\pi}{2},
\end{align*}
where $a=1-\tfrac{1}{z}$, $b=\tfrac{2}{z}$,  $4m<(\sqrt{m+1}+\sqrt{m})^2<4(m+1)$, and $m\geq2$.

For two distinct points $z_0$ and $z_1$ joined by a line segment within $\{z:\ |z|>(\sqrt{m+1}+\sqrt{m})^2\}$, one has
\beq\label{paraequ-75}
\frac{Q(z_1)-Q(z_0)}{z_1-z_0}=\frac{1}{z_1-z_0}\int^1_0\frac{d}{dt}Q(z_0+t(z_1-z_0))dt
=\int^1_0Q'(z_0+t(z_1-z_0))dt.
\eeq
By the above discussion, $\text{Re}\,Q'(z_0+t(z_1-z_0))>0$, implying $\text{Re}\,\frac{Q(z_1)-Q(z_0)}{z_1-z_0}>0$. So, $Q(z_0)\neq Q(z_1)$, $Q$ is injective in
$\{z:\ \text{Re}\, (ze^{-i\tht})>(\sqrt{m+1}+\sqrt{m})^2\}$.

\end{proof}

\begin{lemma}\label{paraequ-12}
There is an estimate on $Q(\zeta)$:
\beq\label{paraequ-66}
Q(z)=z+(4m+2)+
\frac{8m(m+1)+1}{z}+Q_2(z),
\eeq
where
\beqq
Q_2(z)=\frac{8C^3_{2m}+16m}{z(z-1)}+\sum^{2m}_{j=4}\frac{C^j_{2m}2^{j}}{z(z-1)^{j-2}}+
\sum^{2m}_{j=2}\frac{4C^j_{2m}2^j}{(z-1)^{j}}
\eeqq
and for $|z|\geq r>1$, one has
\begin{align}\label{near-equat-14}
&|Q_2(z)|\leq Q_{2,max}(r)\nonumber\\
:=& \frac{8C^3_{2m}+16m}{r(r-1)}+\frac{2(2m)(2m-1)(2m-2)(2m-3)}{3r(r-1)^2}\bigg(\frac{r+1}{r-1}\bigg)^{2m-4}\nonumber\\
&+\frac{8(2m)(2m-1)}{(r-1)^2}\bigg(\frac{r+1}{r-1}\bigg)^{2m-2}.
\end{align}

For any positive number $a$, one has
\begin{itemize}
\item if $r>(\tfrac{32}{3a}m^3)^{1/2}+1$, then
\begin{align}\label{paraequ-20}
\frac{8C^3_{2m}}{r(r-1)}<a;
\end{align}
\item if $r>(\tfrac{16m}{a})^{1/2}+1$, then
\begin{align}\label{paraequ-21}
\frac{16m}{r(r-1)}<a;
\end{align}
\item if $r>\max\{(\tfrac{2^5m^4}{a})^{1/3}+1,4m\}$, then
\begin{align}\label{paraequ-22}
\frac{2(2m)(2m-1)(2m-2)(2m-3)}{3r(r-1)^2}\bigg(\frac{r+1}{r-1}\bigg)^{2m-4}
<a;
\end{align}
\item if $r>\max\{\tfrac{4\sqrt{6}m}{\sqrt{a}}+1,4m\}$, then
\begin{align}\label{paraequ-23}
\frac{8(2m)(2m-1)}{(r-1)^2}\bigg(\frac{r+1}{r-1}\bigg)^{2m-2}
<a.
\end{align}
\end{itemize}

\end{lemma}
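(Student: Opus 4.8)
The plan is to derive the identity \eqref{paraequ-66} by a Laurent expansion of $Q$ at $z=\infty$ and then to bound $Q_2$ summand by summand; the four inequalities \eqref{paraequ-20}--\eqref{paraequ-23} are afterward purely arithmetic.

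First I would multiply by $z$ and substitute $w=z-1$, giving
\[
zQ(z)=\frac{(z+1)^{2m+2}}{(z-1)^{2m}}=\frac{(w+2)^{2m+2}}{w^{2m}}=\sum_{i=0}^{2m+2}\binom{2m+2}{i}2^{i}w^{2-i}.
\]
The polynomial part ($i=0,1,2$) is $w^{2}+(4m+4)w+4(m+1)(2m+1)$ and the principal part is $\sum_{k=1}^{2m}\binom{2m+2}{k+2}2^{k+2}w^{-k}$. Euclidean division $w^{2}+(4m+4)w+4(m+1)(2m+1)=(w+1)(w+4m+3)+(8m^{2}+8m+1)$, together with $w+1=z$, turns the polynomial part (after dividing back by $z$) into $z+(4m+2)+\frac{8m^{2}+8m+1}{z}$; dividing the principal part by $z$ gives the provisional form $Q_2(z)=\sum_{k=1}^{2m}\frac{\binom{2m+2}{k+2}2^{k+2}}{z(z-1)^{k}}$. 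To reach the stated expression I would insert the Pascal identity $\binom{2m+2}{k+2}=\binom{2m}{k}+2\binom{2m}{k+1}+\binom{2m}{k+2}$, split the sum into three, reindex them by $j=k$, $j=k+1$, $j=k+2$, and combine the first two via $\frac{1}{z(z-1)^{j}}+\frac{1}{z(z-1)^{j-1}}=\frac{1}{(z-1)^{j}}$; this collapses them to $\frac{16m}{z(z-1)}+\sum_{j=2}^{2m}\frac{4\binom{2m}{j}2^{j}}{(z-1)^{j}}$, while the third becomes $\frac{8\binom{2m}{3}}{z(z-1)}+\sum_{j=4}^{2m}\frac{\binom{2m}{j}2^{j}}{z(z-1)^{j-2}}$, and the two leftover $z(z-1)^{-1}$ pieces add to $\frac{8\binom{2m}{3}+16m}{z(z-1)}$, exactly as claimed.

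For the bound \eqref{near-equat-14}, note that $|z|\ge r>1$ forces $|z-1|\ge|z|-1\ge r-1>0$, so replacing each summand by its absolute value with $z,z-1$ replaced by $r,r-1$ only enlarges it; the first term of $Q_2$ is then at most $\frac{8\binom{2m}{3}+16m}{r(r-1)}$. For the two sums I would use the elementary inequalities $\binom{2m}{i+2}\le\binom{2m}{2}\binom{2m-2}{i}$ and $\binom{2m}{i+4}\le\binom{2m}{4}\binom{2m-4}{i}$ (immediate from the factorial formula, since $\binom{i+2}{2}\ge1$ and $\binom{i+4}{4}\ge1$), together with the binomial theorem $\sum_{i\ge0}\binom{n}{i}x^{i}=(1+x)^{n}$ evaluated at $x=\frac{2}{r-1}$ with $n=2m-2$ and $n=2m-4$; this produces the growing factors $\big(\frac{r+1}{r-1}\big)^{2m-2}$, $\big(\frac{r+1}{r-1}\big)^{2m-4}$, and the prefactors come out as $16\binom{2m}{2}=8(2m)(2m-1)$ and $16\binom{2m}{4}=\frac23(2m)(2m-1)(2m-2)(2m-3)$, matching $Q_{2,max}(r)$ (with the middle sum also carrying the factor $\frac1r$ coming from $|z|\ge r$).

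Finally, \eqref{paraequ-20} follows from $8\binom{2m}{3}\le\frac{32}{3}m^{3}$ and \eqref{paraequ-21} is direct, both using that $r>(\,\cdot\,)^{1/2}+1$ forces $r>(\,\cdot\,)^{1/2}$ and $r-1>(\,\cdot\,)^{1/2}$, hence $r(r-1)>(\,\cdot\,)$. For \eqref{paraequ-22} and \eqref{paraequ-23} I would bound the falling products by $(2m)^{4}=16m^{4}$ and $(2m)^{2}=4m^{2}$ and use the hypothesis $r>4m$ to tame the growing factors: then $2m-4\le\frac{r-1}{2}$ and $2m-2\le\frac{r-1}{2}$, so $\big(\frac{r+1}{r-1}\big)^{2m-4}=\big(1+\frac{2}{r-1}\big)^{2m-4}\le\big(1+\frac{2}{r-1}\big)^{(r-1)/2}<e<3$, and similarly for the other exponent; combining this with the thresholds $r>(\frac{2^{5}m^{4}}{a})^{1/3}+1$ and $r>\frac{4\sqrt{6}\,m}{\sqrt a}+1$, which give $r(r-1)^{2}>\frac{32m^{4}}{a}$ and $(r-1)^{2}>\frac{96m^{2}}{a}$, closes both estimates ($\sqrt{96}=4\sqrt6$ and $96=3\cdot2^{5}$ are exactly the constants appearing there). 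The one genuinely fiddly step is the reindexing-and-telescoping that turns the provisional $Q_2$ into the stated one; everything else is routine algebra and estimation.
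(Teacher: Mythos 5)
Your proof is correct and reaches exactly the same bound $Q_{2,\max}(r)$ and the same four thresholds, but the algebra you use to obtain \eqref{paraequ-66} is organized differently from the paper's. The paper first splits $\frac{(z+1)^2}{z}=\frac{(z-1)^2}{z}+4$, then binomially expands $((z-1)+2)^{2m}$ \emph{once} and regroups the two resulting sums (with coefficients $C^j_{2m}$) into the stated form. You instead expand $zQ(z)=\frac{(w+2)^{2m+2}}{w^{2m}}$ directly (coefficients $C^i_{2m+2}$), perform the Euclidean division of the quadratic part by $z=w+1$ to extract $z+(4m+2)+\frac{8m(m+1)+1}{z}$, and then convert the principal part by the ``Pascal-squared'' identity $C^{k+2}_{2m+2}=C^{k}_{2m}+2C^{k+1}_{2m}+C^{k+2}_{2m}$ followed by the telescoping $\frac{1}{z(z-1)^{j}}+\frac{1}{z(z-1)^{j-1}}=\frac{1}{(z-1)^{j}}$; the collapse works precisely because the factor $2$ in the middle Pascal term makes the two reindexed coefficients agree, a point worth stating explicitly. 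From there your estimate of $Q_2$ coincides with the paper's: the inequalities $C^{i+4}_{2m}\le C^{4}_{2m}C^{i}_{2m-4}$ and $C^{i+2}_{2m}\le C^{2}_{2m}C^{i}_{2m-2}$ are exactly the factorial-ratio bounds the paper writes out long-hand, and the binomial theorem rolls the sums up into the $\big(\frac{r+1}{r-1}\big)^{2m-4}$ and $\big(\frac{r+1}{r-1}\big)^{2m-2}$ factors. Your handling of \eqref{paraequ-20}--\eqref{paraequ-23} is also the paper's: bound the falling products by $(2m)^{3},(2m)^{4},(2m)^{2}$ respectively, use $r(r-1)\ge(r-1)^{2}$ or $r(r-1)^{2}\ge(r-1)^{3}$, and for the last two invoke $r>4m$ so that $\big(1+\tfrac{2}{r-1}\big)^{2m-4}\le e<3$ (and likewise for the exponent $2m-2$), which absorbs the $\tfrac13$ and produces the constants $2^{5}$ and $96=3\cdot2^{5}$ (hence $4\sqrt6=\sqrt{96}$). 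So: same ideas throughout, a somewhat slicker bookkeeping route to the explicit $Q_2$; the paper's route is arguably easier to check at a glance since it only ever uses $C^j_{2m}$, yours is shorter once one trusts the double Pascal and the telescoping.
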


\begin{proof}
By direct calculation, one has
\begin{align*}
Q(z)&=\frac{(z+1)^2}{z}\cdot\bigg(\frac{z+1}{z-1}\bigg)^{2m}=\frac{(z-1)^2+4z}{z}\cdot\bigg(\frac{(z-1)+2}{z-1}\bigg)^{2m}\\
=&\frac{(z-1)^2}{z}\cdot\bigg(\frac{(z-1)+2}{z-1}\bigg)^{2m}+4\cdot\bigg(\frac{(z-1)+2}{z-1}\bigg)^{2m}\\
=&\frac{(z-1)^2}{z}\cdot\bigg(\frac{\sum^{2m}_{j=0}C^j_{2m}(z-1)^{2m-j}2^{j}}{(z-1)^{2m}}\bigg)+4\cdot\bigg(\frac{\sum^{2m}_{j=0}C^j_{2m}(z-1)^{2m-j}2^j}{(z-1)^{2m}}\bigg)\\
=&\frac{(z-1)^2}{z}\cdot\bigg(\frac{\sum^{3}_{j=0}C^j_{2m}(z-1)^{2m-j}2^{j}}{(z-1)^{2m}}+\frac{\sum^{2m}_{j=4}C^j_{2m}(z-1)^{2m-j}2^{j}}{(z-1)^{2m}}\bigg)\\
&+4\cdot\bigg(\frac{\sum^{1}_{j=0}C^j_{2m}(z-1)^{2m-j}2^j}{(z-1)^{2m}}+\frac{\sum^{2m}_{j=2}C^j_{2m}(z-1)^{2m-j}2^j}{(z-1)^{2m}}\bigg)\\
=&\frac{(z-1)^2}{z}\cdot\bigg(1+\frac{4m}{(z-1)}+
+\frac{4C^2_{2m}}{(z-1)^{2}}
+\frac{8C^3_{2m}}{(z-1)^{3}}+\frac{\sum^{2m}_{j=4}C^j_{2m}(z-1)^{2m-j}2^{j}}{(z-1)^{2m}}\bigg)\\
&+4\cdot\bigg(1+
\frac{4m}{(z-1)}
+\frac{\sum^{2m}_{j=2}C^j_{2m}(z-1)^{2m-j}2^j}{(z-1)^{2m}}\bigg)\\
=&\frac{(z-1)^2}{z}+\frac{4m(z-1)}{z}+
\frac{4C^2_{2m}}{z}
+\frac{8C^3_{2m}}{z(z-1)}+\sum^{2m}_{j=4}\frac{C^j_{2m}2^{j}}{z(z-1)^{j-2}}+4+
\frac{16m}{(z-1)}
+\sum^{2m}_{j=2}\frac{4C^j_{2m}2^j}{(z-1)^{j}}\\
=&z+(4m+2)+
\frac{8m(m-1)+1}{z}+\frac{16m}{(z-1)}
+\frac{8C^3_{2m}}{z(z-1)}+\sum^{2m}_{j=4}\frac{C^j_{2m}2^{j}}{z(z-1)^{j-2}}+
\sum^{2m}_{j=2}\frac{4C^j_{2m}2^j}{(z-1)^{j}}\\
=&z+(4m+2)+
\frac{8m(m+1)+1}{z}
+\frac{8C^3_{2m}+16m}{z(z-1)}+\sum^{2m}_{j=4}\frac{C^j_{2m}2^{j}}{z(z-1)^{j-2}}+
\sum^{2m}_{j=2}\frac{4C^j_{2m}2^j}{(z-1)^{j}}.\\
\end{align*}

Consider the following function with $r\in(1,+\infty)$:
\begin{align*}
&\sum^{2m}_{j=4}\frac{C^j_{2m}2^{j}}{r(r-1)^{j-2}}+
\sum^{2m}_{j=2}\frac{4C^j_{2m}2^j}{(r-1)^{j}}=\sum^{2m-4}_{j=0}\frac{C^{j+4}_{2m}2^{j+4}}{r(r-1)^{j+2}}+
\sum^{2m-2}_{j=0}\frac{4C^{j+2}_{2m}2^{j+2}}{(r-1)^{j+2}}\\
=&\sum^{2m-4}_{j=0}\frac{(2m)!}{(j+4)!(2m-j-4)!}\frac{2^{j+4}}{r(r-1)^{j+2}}+
\sum^{2m-2}_{j=0}\frac{(2m)!}{(j+2)!(2m-j-2)!}\frac{2^{j+4}}{(r-1)^{j+2}}\\
=&\sum^{2m-4}_{j=0}\frac{(2m)(2m-1)(2m-2)(2m-3)}{(j+4)(j+3)(j+2)(j+1)}\frac{(2m-4)!}{j!(2m-4-j)!}\frac{2^{j+4}}{r(r-1)^{j+2}}\\
&+\sum^{2m-2}_{j=0}\frac{(2m)(2m-1)}{(j+2)(j+1)}\frac{(2m-2)!}{j!(2m-2-j)!}\frac{2^{j+4}}{(r-1)^{j+2}}\\
\leq&\frac{(2m)(2m-1)(2m-2)(2m-3)}{4!}\frac{2^4}{r(r-1)^2}\sum^{2m-4}_{j=0}\frac{(2m-4)!}{j!(2m-4-j)!}\cdot\frac{2^{j}}{(r-1)^{j}}\cdot1^{2m-4-j}\\
&+\frac{(2m)(2m-1)}{2!}\frac{2^4}{(r-1)^2}\sum^{2m-2}_{j=0}\frac{(2m-2)!}{j!(2m-2-j)!}\cdot\frac{2^{j}}{(r-1)^{j}}\cdot1^{2m-2-j}\\
=&\frac{2(2m)(2m-1)(2m-2)(2m-3)}{3r(r-1)^2}\bigg(\frac{r+1}{r-1}\bigg)^{2m-4}+\frac{8(2m)(2m-1)}{(r-1)^2}\bigg(\frac{r+1}{r-1}\bigg)^{2m-2}.\\
\end{align*}

Consider the function $g(x)=(1+\tfrac{1}{x})^x$ with $x\in[2,+\infty)$. The derivative is
\begin{align}\label{paraequ-36}
&(\log g(x))'=(x\log(1+\tfrac{1}{x}))'=\log(1+\tfrac{1}{x})+x\frac{-\tfrac{1}{x^2}}{1+\tfrac{1}{x}}=\log(1+\tfrac{1}{x})-\frac{\tfrac{1}{x}}{1+\tfrac{1}{x}}\nonumber\\
=&\sum^{+\infty}_{n=1}\frac{(-1)^{n+1}}{n}\frac{1}{x^n}-\sum^{+\infty}_{n=1}(-1)^{n+1}\frac{1}{x^n}
=\sum^{+\infty}_{n=1}(-1)^{n+1}\bigg(\frac{1}{n}-1\bigg)\frac{1}{x^n}
=\sum^{+\infty}_{n=2}(-1)^{n}\bigg(1-\frac{1}{n}\bigg)\frac{1}{x^n}>0,
\end{align}
where $(1-\tfrac{1}{n})\tfrac{1}{x^n}>(1-\tfrac{1}{n+1})\tfrac{1}{x^{n+1}}$ by $x\geq2$. This, together with $\lim_{x\to+\infty}g(x)=e$, implies that $g(x)\leq e$ for $x\in[2,+\infty)$.  Similarly, one has that  $g(x)=(1+\tfrac{1}{x})^x$ with $x\in(-\infty,-2]$ is increasing and $\lim_{x\to-\infty}g(x)=e$, implying that $e\leq g(x)\leq g(-2)=4$ for $x\in(-\infty,-2]$.

The derivative is
\begin{align*}
\bigg(\log\bigg(\frac{x+1}{x-1}\bigg)\bigg)'=(\log(x+1)-\log(x-1))'=\frac{1}{x+1}-\frac{1}{x-1}<0\ \mbox{for}\ x\in(1,+\infty).
\end{align*}

For $r\geq 4m$, one has
\begin{align}\label{near-equat-12}
 &\bigg(\frac{r+1}{r-1}\bigg)^{2m-4}= \bigg(1+\frac{2}{r-1}\bigg)^{2m-4}\leq \bigg(1+\frac{2}{4m-1}\bigg)^{2m-4}\nonumber\\
=&\bigg(1+\frac{2}{4m-1}\bigg)^{2m-1/2}\cdot\bigg(1+\frac{2}{4m-1}\bigg)^{-7/2}
\leq e\cdot1=e
\end{align}
and
\begin{align}\label{near-equat-13}
 &\bigg(\frac{r+1}{r-1}\bigg)^{2m-2}= \bigg(1+\frac{2}{r-1}\bigg)^{2m-2}\leq \bigg(1+\frac{2}{4m-1}\bigg)^{2m-2}\nonumber\\
=&\bigg(1+\frac{2}{4m-1}\bigg)^{2m-1/2}\cdot\bigg(1+\frac{2}{4m-1}\bigg)^{-3/2}
\leq e\cdot1=e.
\end{align}

Hence, for any positive number $a$, one has
\begin{itemize}
\item if $r>(\tfrac{32}{3a}m^3)^{1/2}+1$, then
\begin{align*}
\frac{8C^3_{2m}}{r(r-1)}<\frac{\tfrac{4}{3}(2m)^3}{(r-1)^2}<a;
\end{align*}
\item if $r>(\tfrac{16m}{a})^{1/2}+1$, then
\begin{align*}
\frac{16m}{r(r-1)}<\frac{16m}{(r-1)^2}<a;
\end{align*}
\item if $r>\max\{(\tfrac{2^5m^4}{a})^{1/3}+1,4m\}$, then
\begin{align*}
\frac{2(2m)(2m-1)(2m-2)(2m-3)}{3r(r-1)^2}\bigg(\frac{r+1}{r-1}\bigg)^{2m-4}
<\frac{2(2m)^4}{3(r-1)^3}e<\frac{2(2m)^4}{(r-1)^3}<a;
\end{align*}
\item if $r>\max\{\tfrac{4\sqrt{6}m}{\sqrt{a}}+1,4m\}$, then
\begin{align*}
\frac{8(2m)(2m-1)}{(r-1)^2}\bigg(\frac{r+1}{r-1}\bigg)^{2m-2}
<\frac{8(2m)^2}{(r-1)^2}e<\frac{24(2m)^2}{(r-1)^2}<a.
\end{align*}
\end{itemize}

\end{proof}

\begin{lemma}\label{near-equati-22}
For the map $Q$ and $m\geq5$, one has
\beqq
Q\big(\overline{\vv}\big((4e-1)m,\tfrac{\pi}{5}\big)\big)\subset\vv\big(4e(m+1),\tfrac{\pi}{5}\big)\subset\vv\big(cv_{Q},\tfrac{\pi}{5}\big),
\eeqq
\end{lemma}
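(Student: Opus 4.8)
The plan is to base everything on the expansion $Q(z)=z+(4m+2)+\tfrac{8m(m+1)+1}{z}+Q_2(z)$ of Lemma~\ref{paraequ-12} and the accompanying bound $|Q_2(z)|\le Q_{2,max}(r)$ for $|z|\ge r>1$. First I would record that every $z\in\ol{\vv}((4e-1)m,\tfrac{\pi}{5})$ satisfies $\text{Re}\,z\ge(4e-1)m$ and $|\text{Im}\,z|\le\tan(\tfrac{\pi}{5})\,(\text{Re}\,z-(4e-1)m)$, hence $|z|\ge(4e-1)m$; since $(4e-1)m>(\sqrt{m+1}+\sqrt m)^2=cp_{Q1}$ and $((4e-1)m)^2>8m(m+1)+1$ for $m\ge5$, the expansion is valid on the whole sector (in particular $z\neq0,1$ there), and writing $a:=8m(m+1)+1$ and $c:=4e(m+1)-(4m+2)=(4e-4)m+(4e-2)>0$ we obtain
\[
Q(z)-4e(m+1)=\Big(z+\tfrac{a}{z}-c\Big)+Q_2(z)=:w_1+Q_2(z).
\]
The strategy is to show that $w_1$ lies strictly inside $\vv(0,\tfrac{\pi}{5})$ with a quantitative margin, that $|Q_2(z)|$ is smaller than this margin, and then to conclude via Lemma~\ref{paraequ-13}(a); the final inclusion $\vv(4e(m+1),\tfrac{\pi}{5})\subset\vv(cv_Q,\tfrac{\pi}{5})$ will follow for free.

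For the margin, note $\text{Re}\,(z+\tfrac az)=\text{Re}\,z\,(1+\tfrac{a}{|z|^2})$, $\text{Im}\,(z+\tfrac az)=\text{Im}\,z\,(1-\tfrac{a}{|z|^2})$, and $0<1-\tfrac{a}{|z|^2}<1$ on the sector, so the $\tfrac az$-term only shrinks the imaginary part. Combining this with $|\text{Im}\,z|\le\tan(\tfrac{\pi}{5})(\text{Re}\,z-(4e-1)m)$ and $2\,\text{Re}\,z-(4e-1)m\ge(4e-1)m>0$, a direct computation gives
\[
\tan(\tfrac{\pi}{5})\,\text{Re}\,w_1-|\text{Im}\,w_1|\ \ge\ \tan(\tfrac{\pi}{5})\Big(\tfrac{a}{|z|^2}\big(2\,\text{Re}\,z-(4e-1)m\big)+(3m-4e+2)\Big)\ \ge\ \tan(\tfrac{\pi}{5})\,(3m-4e+2).
\]
Since $3m-4e+2>0$ for $m\ge3$, this shows $\text{Re}\,w_1>0$ and $|\arg w_1|<\tfrac{\pi}{5}$, and after multiplying by $\cos(\tfrac{\pi}{5})/|w_1|$ it gives $\sin(\tfrac{\pi}{5}-|\arg w_1|)\ge\delta_m/|w_1|$ with $\delta_m:=\sin(\tfrac{\pi}{5})(3m-4e+2)$.

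It then remains to bound $|Q_2(z)|\le Q_{2,max}((4e-1)m)$ ($Q_{2,max}$ being decreasing in $r$, as each of its terms is) and to check $Q_{2,max}((4e-1)m)<\delta_m$ for $m\ge5$. Using $r=(4e-1)m\ge4m$, so that $(\tfrac{r+1}{r-1})^{2m-4},(\tfrac{r+1}{r-1})^{2m-2}\le e$ by \eqref{near-equat-12}--\eqref{near-equat-13}, together with $8C^3_{2m}\le\tfrac{32}{3}m^3$, $(2m)(2m-1)(2m-2)(2m-3)\le(2m)^4$, $(2m)(2m-1)\le(2m)^2$ and $r-1\ge(4e-2)m$, one bounds $Q_{2,max}((4e-1)m)$ by an explicit affine function of $m$ that is $<\sin(\tfrac{\pi}{5})(3m-4e+2)$ precisely when $m\ge5$; this is the one place where the hypothesis $m\ge5$ is consumed and is the main computational obstacle. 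Granting it, $|w_1|>|Q_2(z)|$, and Lemma~\ref{paraequ-13}(a) yields $|\arg(Q(z)-4e(m+1))-\arg w_1|\le\arcsin\tfrac{|Q_2(z)|}{|w_1|}<\arcsin\tfrac{\delta_m}{|w_1|}\le\tfrac{\pi}{5}-|\arg w_1|$, whence $|\arg(Q(z)-4e(m+1))|<\tfrac{\pi}{5}$, i.e. $Q(z)\in\vv(4e(m+1),\tfrac{\pi}{5})$. Finally, $cv_Q=4(m+1)(1+\tfrac1m)^m<4e(m+1)$ by Lemma~\ref{paraequ-25}, so $4e(m+1)-cv_Q>0$, and adding this positive real to any point of $\vv(0,\tfrac{\pi}{5})$ keeps it in $\vv(0,\tfrac{\pi}{5})$; this gives $\vv(4e(m+1),\tfrac{\pi}{5})\subset\vv(cv_Q,\tfrac{\pi}{5})$ and completes the proof.
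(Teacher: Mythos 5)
Your proof is correct, and it rests on the same two pillars as the paper's argument: the expansion \eqref{paraequ-66} of Lemma \ref{paraequ-12} together with its bound on $Q_2$, and the perturbation-of-argument estimate of Lemma \ref{paraequ-13}(a). The difference lies in how the terms are grouped. The paper compares $Q(\zeta)$ with the shifted point $\zeta'=\zeta+(m+4e)$, so that $Q(\zeta)-\zeta'=\tfrac{8m(m+1)+1}{\zeta}+(3m+2-4e)+Q_2(\zeta)$; it notes that $|\arg\zeta|<\tfrac{\pi}{5}$ places the term $\tfrac{8m(m+1)+1}{\zeta}$ in the sector automatically, absorbs $Q_2(\zeta)$ into the positive real constant $3m+2-4e$ through the single numerical check $|Q_2(\zeta)|/(3m+2-4e)<\sin(\tfrac{\pi}{5})$ (using only $|\zeta|\geq 9m+1$), and concludes by convexity of the sector after adding back $\zeta'-4e(m+1)=\zeta-(4e-1)m\in\overline{\vv}\big(0,\tfrac{\pi}{5}\big)$. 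You instead fold $z+\tfrac{a}{z}-c$ into one main term $w_1$ and prove the quantitative margin $\tan(\tfrac{\pi}{5})\,\mathrm{Re}\,w_1-|\mathrm{Im}\,w_1|\geq\tan(\tfrac{\pi}{5})(3m-4e+2)$, then absorb $Q_2$ into that margin with the sharper radius $r=(4e-1)m$; your one numerical obligation, $Q_{2,max}\big((4e-1)m\big)<\sin(\tfrac{\pi}{5})(3m-4e+2)$ for $m\geq5$, does hold (with $r\geq 4m$ and \eqref{near-equat-12}--\eqref{near-equat-13} one gets an affine-in-$m$ bound of size roughly $0.16\,m+1.2$, well below $1.76\,m-5.3$ once $m\geq5$), so there is no gap. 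The trade-off is mild: the paper's shift trick avoids any $\mathrm{Re}/\mathrm{Im}$ computation but needs the separate observation $|\arg\zeta|<\tfrac{\pi}{5}$ plus the convexity step, while your version yields an explicit angular margin $\delta_m$ that could be reused elsewhere. Your justification of the final inclusion $\vv\big(4e(m+1),\tfrac{\pi}{5}\big)\subset\vv\big(cv_Q,\tfrac{\pi}{5}\big)$ via $cv_Q=4(m+1)\big(1+\tfrac1m\big)^m<4e(m+1)$ is exactly what the paper uses implicitly (cf.\ Remark \ref{near-equat-21}).
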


\begin{remark}\label{nearequ-23}
By direct calculation, one has
\[
\sin \left(\frac{\pi}{5}\right)=\sqrt{\frac{5-\sqrt{5}}{8}}=\frac{1}{4} \sqrt{10-2 \sqrt{5}}\approx0.587785\ \text{and}\
\cos \left(\frac{\pi}{5}\right)=\frac{1}{4}(1+\sqrt{5})\approx0.809017.
\]
This can be derived by using the multiple-angle formula
\[
\sin (5 \theta)=5 \sin \theta-20 \sin ^{3} \theta+16 \sin ^{5} \theta.
\]
\end{remark}

\begin{remark}\label{near-equat-21}
 By Lemma \ref{paraequ-25}, for $m\geq5$,
\begin{align*}
&4(m+1)\cdot e\geq cv_Q=4(m+1)\frac{(m+1)^{m}}{m^{m}}\geq4(m+1) \frac{(5+1)^{5}}{5^{5}}\\
=&\frac{7776}{3125}\cdot4(m+1)\approx9.95328(m+1)>(4e-1)m\approx9.87313m.
\end{align*}
\end{remark}

\begin{proof}
Suppose $\zeta\in\overline{\vv}\big((4e-1)m,\tfrac{\pi}{5}\big)$ and $\zeta'=\zeta+(m+4e)$, so $\zeta'\in \vv\big(4e(m+1),\tfrac{\pi}{5}\big)$. It is sufficient to show that $|\arg(Q(\zeta)-\zeta')|=\bigg|\arg\bigg(\tfrac{8m(m+1)+1}{\zeta}+(3m+2-4e+Q_2(\zeta))\bigg)\bigg|<\tfrac{\pi}{5}$ by \eqref{paraequ-66}.

It follows from $\zeta\in\overline{\vv}\big((4e-1)m,\tfrac{\pi}{5}\big)$ that $|\arg\,\zeta|<\tfrac{\pi}{5}$ and $|\arg\,\tfrac{8m(m+1)+1}{\zeta}|<\tfrac{\pi}{5}$.

Now, we give an estimate on $Q_2(\zeta)$.
Since $\zeta\in\overline{\vv}\big((4e-1)m,\tfrac{\pi}{5}\big)$, $|\zeta|\geq(4e-1)m\approx9.87313m$. This, together with $m\geq3$, yields that $|\zeta|>9m+1$.

\begin{itemize}
\item By \eqref{paraequ-20},  if $a=\tfrac{2^5}{3^5}m$ and $r>9m+1$, then
\begin{align*}
\frac{8C^3_{2m}}{r(r-1)}<\frac{2^5}{3^5}m;
\end{align*}
\item by \eqref{paraequ-21},  if $a=\tfrac{2^4}{3^4m}$ and $r>9m+1$, then
\begin{align*}
\frac{16m}{r(r-1)}<\frac{2^4}{3^4m};
\end{align*}
\item by \eqref{paraequ-22}, if $a=\tfrac{2^5}{3^6}m$ and $r>9m+1$, then
\begin{align*}
\frac{2(2m)(2m-1)(2m-2)(2m-3)}{3r(r-1)^2}\bigg(\frac{r+1}{r-1}\bigg)^{2m-4}
<\frac{2^5}{3^6}m;
\end{align*}
\item by \eqref{paraequ-23}, if $a=\tfrac{2^5}{3^3}$ and $r>9m+1$, then
\begin{align*}
\frac{8(2m)(2m-1)}{(r-1)^2}\bigg(\frac{r+1}{r-1}\bigg)^{2m-2}
<\frac{2^5}{3^3}.
\end{align*}
\end{itemize}
So,
\beq\label{near-equat-20}
|Q_2(\zeta)|\leq \frac{2^7}{3^6}m+\frac{2^4}{3^4}\cdot\frac{1}{m}+\frac{2^5}{3^3}\ \forall |\zeta|\geq9m+1.
\eeq

By direct numerical computation, the inequality
\begin{align*}
\frac{\frac{2^7}{3^6}m+\frac{2^4}{3^4}\cdot\frac{1}{m}+\frac{2^5}{3^3}}{3m+2-4e}
<\frac{\frac{2^7}{3^6}m+\frac{2^4}{3^4}\cdot\frac{1}{3}+\frac{2^5}{3^3}}{3m+2-4e}<0.5877
\end{align*}
is equivalent to
\beqq
-6.46577 + 1.58752 m>0,
\eeqq
which can be derived by $m\geq5$.

So,
\begin{align*}
&|\arg(3m+2-4e+Q_2(\zeta))|=\bigg|(3m+2-4e)\arg\bigg(1+\frac{Q_2(\zeta)}{3m+2-4e}\bigg)\bigg|\\
=&\bigg|\arg\bigg(1+\frac{Q_2(\zeta)}{3m+2-4e}\bigg)\bigg|
\leq\arcsin\bigg(\bigg|\frac{Q_2(\zeta)}{3m+2-4e}\bigg|\bigg)\leq\arcsin(0.5877)\\
<&\arcsin\bigg(\frac{1}{4}(1+\sqrt{5})\bigg)=\frac{\pi}{5}.
\end{align*}

This, together with $|\arg\,\tfrac{8m(m+1)+1}{\zeta}|<\tfrac{\pi}{5}$, implies that
$$|\arg(Q(\zeta)-\zeta')|=\bigg|\arg\bigg(\tfrac{8m(m+1)+1}{\zeta}+(3m+2-4e+Q_2(\zeta))\bigg)\bigg|<\tfrac{\pi}{5}.$$
\end{proof}

\subsection{Univalent function}
\cite{Duren1983,Pommerenke1975}

A map $f$ is called conformal/univalent if it is analytic and one-to-one, (holomorphic and injective).

Let $\mathcal{S}$ be the collection of univalent functions in the open unit disk $\De=\{z\in\mathbb{C}:\ |z|<1 \}$ such that
$f(0)=0$ and $f'(0)=1$. The compactness properties of this class of functions are crucial.

\beqq
\Sigma:=\{g:\cc\setminus\cd\to\cc:\ g\ \mbox{is univalent and}\ \lim_{z\to\infty}\tfrac{g(z)}{z}=1\},
\eeqq
\beqq
\Sigma_0:=\{g\in\Sigma:\ c_0= \lim_{z\to\infty}(g(z)-z)=0\},
\eeqq
\beq\label{paraequ-7}
\Sigma_*:=\{g\in\Sigma:\ 0\not\in \mbox{Image}(g)\}.
\eeq

\begin{theorem}\label{paraequ-87} If $f\in\mathcal{S}$, then
\begin{itemize}
\item $|f^{\prime\prime}(0)|\leq4$,
\item $\bigg|\log(z\frac{f'(z)}{f(z)})\bigg|\leq\log\frac{1+|z|}{1-|z|}$\ for $|z|<1$;
\item $\big|\log(\frac{f(z)}{z})+\log(1-|z|^2)\big|\leq\log(\frac{1+|z|}{1-|z|})$ for $|z|<1$.
\end{itemize}

\end{theorem}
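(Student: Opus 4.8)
The plan is to recognize the three items as the classical Bieberbach coefficient estimate and the logarithmic distortion theorems for $\mathcal{S}$ (cf. \cite{Duren1983,Pommerenke1975}), and to base everything on the Area Theorem together with the disk-automorphism (Koebe) transform. For the first item I would start from the Area Theorem: if $g(\zeta)=\zeta+b_0+\sum_{n\ge1}b_n\zeta^{-n}$ is univalent on $\{|\zeta|>1\}$, then $\sum_{n\ge1}n|b_n|^2\le1$, proved by expressing the area of the compact set $\cc\setminus g(\{|\zeta|>1\})$ by Green's theorem and using that it is nonnegative. Given $f\in\mathcal{S}$ with $f(z)=z+a_2z^2+\cdots$, the odd square-root transform $h(z)=\sqrt{f(z^2)}=z+\tfrac{a_2}{2}z^3+\cdots$ is well-defined (since $f(z^2)/z^2$ is nonvanishing on $\De$ with value $1$ at $0$) and again univalent (using oddness and injectivity of $f$); hence $G(\zeta)=1/h(1/\zeta)\in\Sigma$ has leading inverse-coefficient $b_1=-a_2/2$, and the Area Theorem forces $|a_2|\le2$, i.e. $|f''(0)|=|2a_2|\le4$.

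For the second and third items I would first observe that they are equivalent to one another. For $\zeta\in\De$ the Koebe transform
\beqq
f_\zeta(w)=\frac{f\!\left(\frac{w+\zeta}{1+\ol\zeta w}\right)-f(\zeta)}{(1-|\zeta|^2)f'(\zeta)}
\eeqq
lies in $\mathcal{S}$, and evaluating at $w=-\zeta$ one computes $f_\zeta(-\zeta)=-f(\zeta)/((1-|\zeta|^2)f'(\zeta))$ and $f_\zeta'(-\zeta)=1/((1-|\zeta|^2)^2 f'(\zeta))$, whence
\beqq
\frac{w f_\zeta'(w)}{f_\zeta(w)}\Big|_{w=-\zeta}=\left[(1-|\zeta|^2)\frac{f(\zeta)}{\zeta}\right]^{-1}.
\eeqq
Thus the second item applied to $f_\zeta$ at the point $-\zeta$ is exactly the third item applied to $f$ at $\zeta$, so it suffices to prove the second. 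Its modulus half, $\tfrac{1-|z|}{1+|z|}\le\big|\tfrac{zf'(z)}{f(z)}\big|\le\tfrac{1+|z|}{1-|z|}$, drops out of the classical growth estimate $\tfrac{s}{(1+s)^2}\le|g(w)|\le\tfrac{s}{(1-s)^2}$ (for $g\in\mathcal{S}$, $|w|=s$) applied to $g=f_z$ at $w=-z$, since $|f_z(-z)|=|f(z)|/((1-|z|^2)|f'(z)|)$. The argument half, $\big|\arg\tfrac{zf'(z)}{f(z)}\big|\le\log\tfrac{1+|z|}{1-|z|}$, I would extract from the pointwise inequality $\big|(1-|z|^2)\tfrac{f''(z)}{f'(z)}-2\ol z\big|\le4$ — which is $|b_2|\le2$ for the Koebe transform — by integrating $(\log f')'$ and $(\log(f(z)/z))'$ along the segment from $0$ to $z$ and controlling the imaginary parts, the antiderivative of $\tfrac{2}{1-\rho^2}$ producing the constant $\log\tfrac{1+|z|}{1-|z|}$. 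Combining the two halves gives $\big|\log\tfrac{zf'(z)}{f(z)}\big|\le\log\tfrac{1+|z|}{1-|z|}$, and the equivalence above then yields the third item.

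\textbf{Main obstacle.} The coefficient bound and the modulus halves of (b) and (c) are routine once the Area Theorem and the Koebe transform are in place. The delicate point is the argument (imaginary-part) estimate with the \emph{sharp} constant: a crude radial integration of $\big|\text{Im}((1-|z|^2)f''/f')\big|\le4$ yields only $2\log\tfrac{1+|z|}{1-|z|}$, off by a factor of two, and one must exploit the coupling between the real and imaginary parts in the pointwise inequality — equivalently, the special symmetry of the disk in which $zf'(z)/f(z)$ is constrained to lie — in order to recover the correct constant $\log\tfrac{1+|z|}{1-|z|}$. This is precisely the content of the classical logarithmic distortion theorem, for which I would invoke \cite{Duren1983,Pommerenke1975} to complete the verification of this last step.
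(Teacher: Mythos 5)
The paper supplies no proof of Theorem \ref{paraequ-87}; it simply records these three estimates as standard facts from \cite{Duren1983,Pommerenke1975}, so any worked argument is a different route. Your derivation of the coefficient bound via the Area Theorem and the odd square-root transform is the standard one and is correct, and the reduction of item three to item two via the Koebe transform $f_\zeta$ checks out: $\frac{wf_\zeta'(w)}{f_\zeta(w)}\big|_{w=-\zeta}=\big[(1-|\zeta|^2)\frac{f(\zeta)}{\zeta}\big]^{-1}$ exactly as you compute.

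There is, however, a genuine gap in your treatment of item two. You decompose $\big|\log\frac{zf'(z)}{f(z)}\big|$ into a ``modulus half'' $\big|\log\big|\frac{zf'(z)}{f(z)}\big|\big|$ and an ``argument half'' $\big|\arg\frac{zf'(z)}{f(z)}\big|$ and propose to bound each by $\log\frac{1+r}{1-r}$. Even granting both halves with the sharp constant, recombining via $|\log w|^2=(\log|w|)^2+(\arg w)^2$ gives only $\big|\log\frac{zf'(z)}{f(z)}\big|\le\sqrt{2}\,\log\frac{1+r}{1-r}$, so the split-and-recombine strategy is structurally unable to reach the claimed bound regardless of how sharply each half is estimated. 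You sense this (you flag a factor-of-two loss in the crude integration and a need to ``exploit the coupling''), but the remedy is not to re-couple two separately proved halves; one has to control the full complex logarithm in a single inequality, e.g.\ via a variational (L\"owner) argument or Grunsky-type inequalities. An economical route that stays inside the paper: item three is exactly the $\mathcal{S}$-pullback of Theorem \ref{paraequ-8}(d) under $g(z)=1/f(1/z)\in\Sigma_*$ --- writing $w=1/z$, the inequality $\big|\log\frac{g(z)}{z}-\log(1-\frac{1}{|z|^2})\big|\le\log\frac{|z|+1}{|z|-1}$ becomes $\big|\log\frac{f(w)}{w}+\log(1-|w|^2)\big|\le\log\frac{1+|w|}{1-|w|}$ --- and item two then follows by your Koebe-transform equivalence applied in reverse.
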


\begin{theorem} (Koebe one-quater theorem)\cite[Theorem 1.3]{Carleson1993}
If $f\in\mathcal{S}$, then the image of $f$ covers the open disk centered at $0$ of radius one-quater, that is, $f(\De)\supset \De(0,1/4)$. The Koebe function $\psi_{Koebe}(z)=z/(1-z)^2=z(\tfrac{1}{(1-z)})'=z(1+z+z^2+z^3+\cdots)'=z(1+2z+3z^2+\cdots)=\sum^{\infty}_{n=1}nz^n$
maps the disk to the slit plane $\mathbb{C}\setminus(-\infty,-1/4]$. This is the optimal bound.
\end{theorem}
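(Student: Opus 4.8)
The plan is to derive the statement from the second-coefficient bound already recorded in Theorem~\ref{paraequ-87}, by means of the classical Koebe transform. For $f\in\mathcal{S}$ write $f(z)=z+a_2z^2+O(z^3)$; since $a_2=f''(0)/2$, Theorem~\ref{paraequ-87} gives $|a_2|\le 2$. Suppose, for contradiction, that some $w_0\in\cc^*$ is omitted by $f$, i.e.\ $w_0\notin f(\De)$. I would then introduce the M\"obius transformation
\beqq
T(w)=\frac{w_0\,w}{w_0-w},
\eeqq
which is injective on $\wh{\cc}$ with its only pole at $w=w_0$ and satisfies $T(0)=0$, $T'(0)=1$. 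Because $f$ never takes the value $w_0$, the composition $g=T\circ f:\De\to\cc$ is holomorphic and injective on $\De$.

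Next I would verify $g\in\mathcal{S}$ and read off its second Taylor coefficient. From $g'(z)=w_0^2f'(z)/(w_0-f(z))^2$ one gets $g(0)=0$ and $g'(0)=1$, so $g\in\mathcal{S}$; and expanding
\beqq
g(z)=\frac{f(z)}{1-f(z)/w_0}=f(z)+\frac{f(z)^2}{w_0}+O(z^3)=z+\Big(a_2+\frac{1}{w_0}\Big)z^2+O(z^3),
\eeqq
Theorem~\ref{paraequ-87} applied to $g$ gives $|a_2+1/w_0|\le 2$. The triangle inequality then yields $|1/w_0|\le|a_2+1/w_0|+|a_2|\le 4$, hence $|w_0|\ge\tfrac14$. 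Therefore every value omitted by $f$ lies outside the disk $\De(0,\tfrac14)$, which is exactly the assertion $f(\De)\supset\De(0,\tfrac14)$.

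For the optimality claim I would exhibit the Koebe function $\psi_{Koebe}(z)=z/(1-z)^2=\sum_{n\ge1}nz^n$, which lies in $\mathcal{S}$, and identify its image through the factorization $\psi_{Koebe}(z)=\tfrac14\big(((1+z)/(1-z))^2-1\big)$: the Cayley map $z\mapsto(1+z)/(1-z)$ sends $\De$ onto the right half-plane $\{\mathrm{Re}\,w>0\}$, squaring sends that onto $\cc\setminus(-\infty,0]$, and the affine change $w\mapsto\tfrac14(w-1)$ then gives $\psi_{Koebe}(\De)=\cc\setminus(-\infty,-\tfrac14]$. Since $-\tfrac14\notin\psi_{Koebe}(\De)$, the radius $\tfrac14$ cannot be improved.

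There is essentially no obstacle here: the whole content sits in the inequality $|a_2|\le 2$, which is already available as Theorem~\ref{paraequ-87}, and everything else is a one-line normalization computation. Should a fully self-contained proof be preferred, the only nontrivial ingredient to add would be $|a_2|\le 2$ itself, which one obtains from the odd square-root transform $h(z)=\sqrt{f(z^2)}\in\mathcal{S}$ together with the area theorem applied to $1/h(1/z)\in\Sigma_0$; the Koebe transform step above is then unchanged.
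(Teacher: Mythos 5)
Your proof is correct: the omitted-value (Koebe transform) argument combined with the second-coefficient bound $|f''(0)|\le 4$ of Theorem~\ref{paraequ-87} yields $|w_0|\ge\tfrac14$ for any omitted value, and your factorization of $\psi_{Koebe}$ correctly identifies its image as $\cc\setminus(-\infty,-\tfrac14]$, establishing optimality. The paper itself gives no proof of this statement (it is quoted from \cite[Theorem 1.3]{Carleson1993}), and your argument is precisely the standard one in that reference, so there is nothing to reconcile beyond noting that a fully self-contained treatment would also need Bieberbach's bound $|a_2|\le 2$, which you correctly flag as coming from the area theorem.
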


\begin{theorem}\cite[Theorem 1.5]{Carleson1993}
If $f\in\mathcal{S}$, then
\beqq
\bigg|\frac{zf^{\prime\prime}(z)}{f'(z)}-\frac{2|z|^2}{1-|z|^2}\bigg|\leq\frac{4|z|}{1-|z|^2}.
\eeqq
\end{theorem}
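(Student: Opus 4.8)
\emph{Proof proposal.} The plan is to derive the distortion estimate from the second–coefficient bound for the class $\mathcal{S}$ (equivalently $|g''(0)|\le 4$, the first item of Theorem \ref{paraequ-87}) by applying that bound to a suitably renormalized precomposition of $f$. First I would fix $z_0\in\De$ and introduce the disk automorphism $\phi(z)=\tfrac{z+z_0}{1+\ol{z_0}z}$, for which $\phi(0)=z_0$, $\phi'(z)=\tfrac{1-|z_0|^2}{(1+\ol{z_0}z)^2}$, so $\phi'(0)=1-|z_0|^2$ and $\phi''(0)=-2\ol{z_0}(1-|z_0|^2)$. Then I would set
\beqq
g(z)=\frac{f(\phi(z))-f(z_0)}{(1-|z_0|^2)f'(z_0)}
\eeqq
and check that $g\in\mathcal{S}$: it is univalent on $\De$ because $\phi$ is a conformal automorphism of $\De$ and $f$ is univalent, while the normalizations $g(0)=0$ and $g'(0)=\tfrac{f'(z_0)\phi'(0)}{(1-|z_0|^2)f'(z_0)}=1$ are built into the definition.

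Next I would compute $g''(0)$ by the chain rule. Differentiating twice gives
\beqq
g''(z)=\frac{f''(\phi(z))(\phi'(z))^2+f'(\phi(z))\phi''(z)}{(1-|z_0|^2)f'(z_0)},
\eeqq
and substituting $\phi(0)=z_0$, $\phi'(0)=1-|z_0|^2$, $\phi''(0)=-2\ol{z_0}(1-|z_0|^2)$ yields
\beqq
g''(0)=\frac{(1-|z_0|^2)f''(z_0)}{f'(z_0)}-2\ol{z_0}.
\eeqq
Applying $|g''(0)|\le 4$ from Theorem \ref{paraequ-87} and dividing by $2$ gives
\beqq
\left|\frac{(1-|z_0|^2)f''(z_0)}{2f'(z_0)}-\ol{z_0}\right|\le 2.
\eeqq

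To conclude I would multiply this inequality by the complex number $\tfrac{2z_0}{1-|z_0|^2}$, whose modulus is $\tfrac{2|z_0|}{1-|z_0|^2}$; distributing the factor across the two terms inside the modulus converts the left-hand side into $\left|\tfrac{z_0 f''(z_0)}{f'(z_0)}-\tfrac{2|z_0|^2}{1-|z_0|^2}\right|$ and the right-hand side into $\tfrac{4|z_0|}{1-|z_0|^2}$, which is exactly the asserted estimate at the point $z_0$; since $z_0\in\De$ was arbitrary, the bound holds on all of $\De$. I do not expect a genuine obstacle in this argument: the only delicate points are verifying that the renormalized map $g$ lies in $\mathcal{S}$ — in particular that $g'(0)=1$, which is what forces the normalizing constant $(1-|z_0|^2)f'(z_0)$ — and the sign bookkeeping in the computations of $\phi''(0)$ and $g''(0)$, where it is easy to slip.
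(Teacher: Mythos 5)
Your proof is correct: the Koebe transform $g(z)=\frac{f(\phi(z))-f(z_0)}{(1-|z_0|^2)f'(z_0)}$ with the disk automorphism $\phi$, the computation $g''(0)=\frac{(1-|z_0|^2)f''(z_0)}{f'(z_0)}-2\ol{z_0}$, and the bound $|g''(0)|\le 4$ yield exactly the stated inequality after multiplying by $\tfrac{2z_0}{1-|z_0|^2}$. The paper itself gives no proof (it only cites \cite[Theorem 1.5]{Carleson1993}), and your argument is precisely the standard one used in that reference, resting on the same second-coefficient bound recorded in Theorem \ref{paraequ-87}.
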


\begin{theorem}(Distortion theorem)\cite[Theorem 1.6]{Carleson1993}
If $f\in\mathcal{S}$, then
\beqq
\frac{|z|}{(1+|z|)^2}\leq|f(z)|\leq\frac{|z|}{(1-|z|)^2}
\eeqq
and
\beqq
\frac{1-|z|}{(1+|z|)^3}\leq|f'(z)|\leq\frac{1+|z|}{(1-|z|)^3}.
\eeqq
\end{theorem}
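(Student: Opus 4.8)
The plan is to deduce all four inequalities from the pointwise bound $\bigl|\tfrac{zf^{\prime\prime}(z)}{f'(z)}-\tfrac{2|z|^2}{1-|z|^2}\bigr|\leq\tfrac{4|z|}{1-|z|^2}$ of the preceding theorem, by integrating along radial segments from the origin, and then to convert the resulting estimates on $|f'(z)|$ into growth estimates for $|f(z)|$, invoking the Koebe one-quarter theorem for the lower growth bound. Throughout, $f$ univalent forces $f'\neq0$ on the simply connected disk $\De$, so a holomorphic branch of $\log f'$ exists with $\log f'(0)=0$.

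First I would fix $\tht\in\rr$ and set $\phi(r)=\log|f'(re^{i\tht})|$ for $0\leq r<1$, so $\phi(0)=0$. Since $\tfrac{d}{dr}\log f'(re^{i\tht})=\tfrac{e^{i\tht}f^{\prime\prime}(re^{i\tht})}{f'(re^{i\tht})}$, taking real parts gives $r\phi'(r)=\text{Re}\bigl(\tfrac{zf^{\prime\prime}(z)}{f'(z)}\bigr)$ with $z=re^{i\tht}$, $|z|=r$. The preceding theorem then yields $\tfrac{2r-4}{1-r^2}\leq\phi'(r)\leq\tfrac{2r+4}{1-r^2}$. Integrating from $0$ to $|z|$ and using the elementary identities $\int_0^R\tfrac{2r+4}{1-r^2}\,dr=\log\tfrac{1+R}{(1-R)^3}$ and $\int_0^R\tfrac{2r-4}{1-r^2}\,dr=\log\tfrac{1-R}{(1+R)^3}$ (partial fractions) produces exactly the asserted bounds on $|f'(z)|$.

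Next, for the upper bound on $|f(z)|$ I would integrate $f'$ along the segment from $0$ to $z$: $|f(z)|\leq\int_0^{|z|}|f'(te^{i\tht})|\,dt\leq\int_0^{|z|}\tfrac{1+t}{(1-t)^3}\,dt=\tfrac{|z|}{(1-|z|)^2}$, the last integral computed via $u=1-t$. For the lower bound, if $|f(z)|\geq\tfrac14$ there is nothing to prove, because $\tfrac{r}{(1+r)^2}\leq\tfrac14$ for every $r\in[0,1]$. Otherwise $f(z)\in\De(0,\tfrac14)\subset f(\De)$ by the Koebe one-quarter theorem, so the segment $[0,f(z)]$ lies in $f(\De)$, and $\ga=f^{-1}([0,f(z)])$ is a curve in $\De$ joining $0$ to $z$. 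Along $\ga$,
\beqq
|f(z)|=\int_{\ga}|f'(\zeta)|\,|d\zeta|\geq\int_{\ga}\frac{1-|\zeta|}{(1+|\zeta|)^3}\,|d\zeta|\geq\biggl|\int_{\ga}\frac{1-|\zeta|}{(1+|\zeta|)^3}\,d|\zeta|\biggr|=G(|z|),
\eeqq
where $G(r)=\int_0^r\tfrac{1-t}{(1+t)^3}\,dt=\tfrac{r}{(1+r)^2}$; here I use $|d\zeta|\geq|d|\zeta||$, the nonnegativity of $\tfrac{1-t}{(1+t)^3}$ on $[0,1)$, and the fact that $\int_{\ga}dG(|\zeta|)$ depends only on the endpoint radii $0$ and $|z|$. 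This gives $|f(z)|\geq\tfrac{|z|}{(1+|z|)^2}$.

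The only step requiring genuine care is the lower growth bound: the preimage curve $\ga$ need not be radially monotone, so one cannot simply reduce the path integral to a one-dimensional integral over $[0,|z|]$. This is circumvented by first passing from $|d\zeta|$ to $d|\zeta|$ — legitimate since the integrand is nonnegative — and then recognizing $\tfrac{1-|\zeta|}{(1+|\zeta|)^3}\,d|\zeta|$ as the differential of $G(|\zeta|)$, whose integral along $\ga$ is $G(|z|)-G(0)=G(|z|)\geq0$. The derivative estimates and the upper growth bound are then routine consequences of the preceding theorem and standard antidifferentiation.
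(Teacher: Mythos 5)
Your argument is correct. Note that the paper itself offers no proof of this statement: it is quoted as a classical result with a citation to Carleson--Gamelin, so there is nothing internal to compare against. Your derivation is precisely the standard one from that reference — integrate the bound on $\tfrac{zf^{\prime\prime}(z)}{f'(z)}-\tfrac{2|z|^2}{1-|z|^2}$ radially to get the bounds on $\log|f'|$, integrate $f'$ along a radius for the upper growth bound, and use the Koebe one-quarter theorem together with the preimage of the segment $[0,f(z)]$ and the inequality $|d\zeta|\geq|d|\zeta||$ for the lower growth bound — and all the computations (the antiderivatives $\log\tfrac{1+R}{(1-R)^3}$, $\log\tfrac{1-R}{(1+R)^3}$, $\tfrac{R}{(1-R)^2}$, $\tfrac{R}{(1+R)^2}$, and the reduction to the case $|f(z)|<\tfrac14$) check out.
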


\begin{theorem} \label{paraequ-8}\cite[Theorem A.2]{InouShishikura2016}
Let $g(z)=z+c_0+g_1(z)\in\Sigma$. Then the following estimates hold:
\begin{itemize}
\item[(a)] $\{z:\in\cc:\ |z-c_0|>2\}\subset\mbox{Image}(g)$. In particular, if $g\in\Sigma_*$, then $|c_0|\leq2$.
\item[(b)] $|g_1(z)|\leq\sqrt{\log\frac{1}{1-|z|^{-2}}}$.
\item[(c)] $|\log g'(z)|\leq\log\frac{1}{1-|z|^{-2}}$.
\item[(d)] If $g\in\Sigma_*$, then
\beqq
\bigg|\log\frac{g(z)}{z}-\log\bigg(1-\frac{1}{|z|^2}\bigg)\bigg|\leq\log\frac{|z|+1}{|z|-1}.
\eeqq
In particular,
\beqq
|z|\bigg(1-\frac{1}{|z|}\bigg)^2\leq|g(z)|\leq|z|\bigg(1+\frac{1}{|z|}\bigg)^2\ \mbox{and}\ \bigg|\mbox{arg}\,\frac{g(z)}{z}\bigg|\leq\log\frac{|z|+1}{|z|-1}.
\eeqq
\end{itemize}
\end{theorem}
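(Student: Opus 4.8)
The estimates (a)--(d) are the classical area-theorem and distortion bounds for the exterior class $\Sigma$, so the plan is to reduce each of them to two ingredients that are essentially already at hand: the Gronwall area theorem for $\Sigma$, and the corresponding estimates for the class $\mathcal{S}$ recorded in Theorem~\ref{paraequ-87}, supplemented by the Grunsky inequalities. None of the steps are new, and the whole statement may also simply be cited from \cite{Pommerenke1975,Duren1983}.

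First I would record the area theorem: writing $g(z)=z+c_0+\sum_{n\ge1}b_nz^{-n}\in\Sigma$ and $E:=\cc\setminus\text{Image}(g)$, a Green's-theorem computation of the nonnegative area of $E$ yields $\sum_{n\ge1}n|b_n|^2\le1$, so in particular $|b_1|\le1$. Part (b) is then immediate from Cauchy--Schwarz: $|g_1(z)|\le\sum_{n\ge1}|b_n|\,|z|^{-n}\le\big(\sum_{n\ge1}n|b_n|^2\big)^{1/2}\big(\sum_{n\ge1}\tfrac{|z|^{-2n}}{n}\big)^{1/2}\le\big(-\log(1-|z|^{-2})\big)^{1/2}$.

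For part (a) the key device is the inversion $F(w):=1/g(1/w)$. If $g\in\Sigma_*$ then $0$ is omitted, so $F$ extends holomorphically across $w=0$ with $F(0)=0$, $F'(0)=1$, $F''(0)=-2c_0$, and $F$ inherits injectivity from $g$; hence $F\in\mathcal{S}$, and the Bieberbach bound $|F''(0)|\le4$ from Theorem~\ref{paraequ-87} gives $|c_0|\le2$. For a general $g\in\Sigma$, pick any omitted value $w_0\in E$ (the set $E$ is nonempty, of logarithmic capacity one); then $g-w_0\in\Sigma_*$ with leading coefficient $c_0-w_0$, so $|c_0-w_0|\le2$, and since $w_0\in E$ was arbitrary this says exactly $E\subset\overline{\cd}(c_0,2)$, i.e. $\{z:|z-c_0|>2\}\subset\text{Image}(g)$. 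Part (d) uses the same inversion for $g\in\Sigma_*$: writing $w=1/z$ one has $g(z)/z=w/F(w)$, hence $\log\tfrac{g(z)}{z}=-\log\tfrac{F(w)}{w}$, so the asserted bound $\big|\log\tfrac{g(z)}{z}-\log(1-|z|^{-2})\big|\le\log\tfrac{|z|+1}{|z|-1}$ is precisely the third bullet of Theorem~\ref{paraequ-87} applied to $F$ at the point $w$; the two ``in particular'' inequalities then follow by separating modulus and argument.

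Part (c) does not reduce cleanly to the $\mathcal{S}$-estimates (the obvious attempt loses a factor $\tfrac{1+|w|}{1-|w|}$), so here I would invoke the Grunsky inequalities: writing $\log\tfrac{g(z)-g(\zeta)}{z-\zeta}=-\sum_{m,n\ge1}\beta_{mn}z^{-m}\zeta^{-n}$ with $\beta_{mn}=\beta_{nm}$, one has $\big|\sum_{m,n}\beta_{mn}\lambda_m\lambda_n\big|\le\sum_m\tfrac{|\lambda_m|^2}{m}$ for every finitely supported sequence $(\lambda_m)$. Letting $\zeta\to z$ gives $\log g'(z)=-\sum_{m,n}\beta_{mn}z^{-m-n}$, and taking the test sequence $\lambda_m=z^{-m}$ yields $|\log g'(z)|\le\sum_m\tfrac{|z|^{-2m}}{m}=\log\tfrac{1}{1-|z|^{-2}}$; the restriction to $\Sigma_*$ used above is harmless since $(g-w_0)'=g'$. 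The main obstacle is thus supplying the Grunsky inequalities themselves (their proof is again an area-theorem argument, now applied to the two-variable kernel $\log\tfrac{g(z)-g(\zeta)}{z-\zeta}$), together with the routine but careful bookkeeping of the normalizations under $w\mapsto1/g(1/w)$.
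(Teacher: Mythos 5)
Your argument is correct, and it is worth noting up front that the paper does not prove Theorem \ref{paraequ-8} at all: it is quoted verbatim from \cite[Theorem A.2]{InouShishikura2016}, so there is no in-paper proof to compare against. What you have written is a sound self-contained reconstruction by the standard route: the Gronwall area theorem gives $\sum n|b_n|^2\le1$ and hence (b) by Cauchy--Schwarz; the inversion $F(w)=1/g(1/w)$ (resp. $1/(g(1/w)-w_0)$ for an omitted value $w_0$) lands in $\mathcal{S}$ with $F''(0)=-2c_0$ (resp. $-2(c_0-w_0)$), so the Bieberbach bound in Theorem \ref{paraequ-87} gives (a), and the third bullet of Theorem \ref{paraequ-87} transported through $\log\tfrac{g(z)}{z}=-\log\tfrac{F(w)}{w}$, $|w|=1/|z|$, gives (d), with the two ``in particular'' inequalities following from the identities $(1-|z|^{-2})\tfrac{|z|\mp1}{|z|\pm1}=(1\mp|z|^{-1})^2$. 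For (c) your appeal to the Grunsky inequalities works; two small remarks. First, the test sequence $\lambda_m=z^{-m}$ is not finitely supported, so one must truncate and pass to the limit (routine, but say it), and the parenthetical about reducing to $\Sigma_*$ is unnecessary since Grunsky holds on all of $\Sigma$. Second, a slightly cheaper route inside this paper's own toolkit is to let $\xi\to\infty$ in the Golusin inequality quoted immediately afterwards (\cite[Theorem A.3]{InouShishikura2016}): since $g(\xi)-\xi\to c_0$ and $g'(\xi)\to1$, the left side tends to $|\log g'(z)|$, while $d_{\Om}(z,\infty)=\log\tfrac{|z|+1}{|z|-1}$ gives $2\log\cosh\tfrac{d}{2}=\log\tfrac{1}{1-|z|^{-2}}$, which is exactly (c); of course Golusin is itself Grunsky-based, so this only reshuffles where the area-theorem work is done. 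Alternatively, as you say, the whole statement can simply be cited from \cite{Pommerenke1975,Duren1983}, which is effectively what the paper does.
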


\begin{theorem} (A consequence of Golusin inequalities) \cite[Theorem A.3]{InouShishikura2016}
Let $\Om$ be a disk or a half plane in $\widehat{\cc}$ (including the case of the complement of a closed disk). If $g:\Om\to\widehat{\cc}$ is a uninvalent holomorphic map, then for $z,\xi\in\Om$ with $z,\xi,g(z),g(\xi)\neq\infty$ and $z\neq\xi$,
\beqq
\bigg|\log\bigg(\frac{g'(z)g'(\xi)(z-\xi)^2}{(g(z)-g(\xi))^2}\bigg)\bigg|\leq 2\log\cosh\frac{d_{\Om}(z,\xi)}{2}
\eeqq

\end{theorem}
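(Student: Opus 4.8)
The plan is to reduce the statement to the classical Goluzin inequality for univalent functions on the unit disc, exploiting that both sides are Möbius invariant. The key elementary fact is that for a Möbius transformation $M$ and points $p,q$ in its domain, $\dfrac{M'(p)M'(q)(p-q)^2}{(M(p)-M(q))^2}=1$, which is immediate from $M(p)-M(q)=\dfrac{(ad-bc)(p-q)}{(cp+d)(cq+d)}$ and $M'(p)=\dfrac{ad-bc}{(cp+d)^2}$. Consequently the quantity $\Lambda_g(z,\xi):=\log\dfrac{g'(z)g'(\xi)(z-\xi)^2}{(g(z)-g(\xi))^2}$ — with the branch of $\log$ being the holomorphic one on $\Om\times\Om$ vanishing on the diagonal, which exists because $\frac{g(z)-g(\xi)}{z-\xi}$ is holomorphic and non-vanishing there by univalence and equals $g'$ on the diagonal — is unchanged when $g$ is replaced by $M_2\circ g\circ M_1$ with $M_1,M_2$ Möbius: inserting and removing the factors $(g(z)-g(\xi))^2$ and $(M_1(z)-M_1(\xi))^2$ turns the pre- and post-composition contributions into the factor $1$ above, and the branch (being $\equiv 1$ on the diagonal in all cases) is respected. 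Since the hyperbolic distance $d_\Om$ is likewise a Möbius invariant — each admissible $\Om$ (a disc, a half-plane, or the exterior of a closed disc) is a spherical disc and hence the image of a Möbius map $M_1\colon\cd\to\Om$, which is then an isometry $(\cd,d_{\cd})\to(\Om,d_\Om)$ — both members of the claimed inequality are invariant under $g\mapsto M_2\circ g\circ M_1$, so it suffices to treat $\Om=\cd$.

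After pre-composing with an automorphism of $\cd$ I may take $\xi=0$; since $g(\cd)\neq\widehat{\cc}$ it omits a point, so post-composing with a Möbius map I may take $g(\cd)\subset\cc$, and a final complex-affine post-composition gives $g(0)=0$, $g'(0)=1$, i.e. $g=f\in\mathcal{S}$; none of these operations changes $\Lambda_g$ or the relevant value of $d_{\cd}$. With $\xi=0$ we have $d_{\cd}(z,0)=\log\frac{1+|z|}{1-|z|}$, hence $\cosh\tfrac{d_{\cd}(z,0)}{2}=(1-|z|^2)^{-1/2}$ and $2\log\cosh\tfrac{d_{\cd}(z,0)}{2}=\log\frac{1}{1-|z|^2}$, while $\Lambda_f(z,0)=\log\dfrac{z^2f'(z)}{f(z)^2}$. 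Thus the statement reduces to
\[
\Bigl|\log\tfrac{z^2f'(z)}{f(z)^2}\Bigr|\le\log\tfrac{1}{1-|z|^2}\qquad(f\in\mathcal{S},\ z\in\cd).
\]
I would obtain this from the Goluzin inequality: for $f\in\mathcal{S}$, points $z_1,\dots,z_n\in\cd$ and $\lambda_1,\dots,\lambda_n\in\cc$,
\[
\Bigl|\,\sum_{j,k=1}^n\lambda_j\lambda_k\log\tfrac{f(z_j)-f(z_k)}{z_j-z_k}\,\Bigr|\le\sum_{j,k=1}^n\lambda_j\overline{\lambda_k}\,\log\tfrac{1}{1-z_j\overline{z_k}},
\]
where the diagonal terms read $\log f'(z_j)$ and $\log\frac{1}{1-|z_j|^2}$; this belongs to the Goluzin/Grunsky circle of inequalities and is available in \cite{Duren1983,Pommerenke1975}. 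Taking $n=2$, $z_1=z$, $z_2=0$, $\lambda_1=1$, $\lambda_2=-1$, the left side collapses to $\bigl|\log f'(z)-2\log\tfrac{f(z)}{z}\bigr|=\bigl|\log\tfrac{z^2f'(z)}{f(z)^2}\bigr|$ and the right side collapses to $\log\tfrac{1}{1-|z|^2}$, since the two off-diagonal terms contain $\log\tfrac{1}{1-z\cdot 0}=0$. This is precisely the required bound, and it is sharp: the Koebe function $k(z)=z/(1-z)^2$ gives $\tfrac{z^2k'(z)}{k(z)^2}=1-z^2$, hence equality for real $z$.

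The only genuine obstacle is bookkeeping: verifying the Möbius identity, keeping track of which branch of $\log$ is in force through the successive normalizations, and — if one does not wish simply to quote Goluzin's inequality — establishing that inequality itself from the area theorem via the Grunsky coefficients, which is where the substance of the classical univalent-function theory sits; since it is contained in the cited references, that step is quoted rather than reproved.
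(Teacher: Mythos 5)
Your strategy is sound and is essentially the proof found in the cited Inou--Shishikura appendix: the paper at hand quotes this as Theorem~A.3 without reproducing the argument, and both the argument there and yours reduce by M\"obius invariance (of the logarithmic quantity $\Lambda_g$ and of $d_\Omega$ simultaneously) to the case $\Omega=\mathbb{D}$, $\xi=0$, $g\in\mathcal{S}$, where the bound is an instance of Golusin's inequality. The M\"obius-invariance calculation, the choice of branch, and the evaluation $2\log\cosh\frac{d_{\mathbb{D}}(z,0)}{2}=\log\frac{1}{1-|z|^2}$ are all correct, and the sharpness check with the Koebe function is right.

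One inaccuracy needs fixing. The form of Golusin's inequality you record,
\[
\Bigl|\sum_{j,k}\lambda_j\lambda_k\log\tfrac{f(z_j)-f(z_k)}{z_j-z_k}\Bigr|
\le\sum_{j,k}\lambda_j\overline{\lambda_k}\log\tfrac{1}{1-z_j\overline{z_k}},
\qquad f\in\mathcal{S},
\]
is \emph{not} true without the side condition $\sum_j\lambda_j=0$. Indeed at $n=1$, $\lambda_1=1$, $z_1=r>0$ it would assert $\log k'(r)=\log\frac{1+r}{(1-r)^3}\le\log\frac{1}{1-r^2}$ for the Koebe function, which fails. What is true for $\mathcal{S}$, transported from the class-$\Sigma$ inequality by $w\mapsto 1/z$ and $F(w)=1/f(1/w)$, is
\[
\Bigl|\sum_{j,k}\lambda_j\lambda_k\log\tfrac{z_jz_k\,(f(z_j)-f(z_k))}{(z_j-z_k)\,f(z_j)f(z_k)}\Bigr|
\le\sum_{j,k}\lambda_j\overline{\lambda_k}\log\tfrac{1}{1-z_j\overline{z_k}}
\]
with no side condition, and the plain form you wrote is its corollary only under $\sum_j\lambda_j=0$ (the $\log\frac{z_j}{f(z_j)}$ contributions then cancel). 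Your choice $\lambda_1=1,\lambda_2=-1$ does satisfy the proviso, so the \emph{application} is sound and your final computation is correct; but the proviso has to be stated, since you present the inequality as a standalone citation. Alternatively, the second form above delivers your bound already at $n=1$ (its diagonal term is exactly $\log\frac{z^2f'(z)}{f(z)^2}$), avoiding both the side condition and the need to handle $z_2=0$ as a limiting case.
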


\begin{theorem}\label{paraequ-64} (A general estimate on Fatou coordinate) \cite[Theorem 5.12]{InouShishikura2016}
Let $\Om$ be a disk or a half plane and $f:\Om\to\cc$ a holomorphic function with $f(z)\neq z$. Suppose that $f$ has a univalent Fatou coordinate $\Phi:\Om\to\cc$, i.e., $\Phi(f(z))=\Phi(z)+1$ when $z,f(z)\in\Om$. If $z\in\Om$ and $f(z)\in\Om$, then
\beqq
\bigg|\log\Phi'(z)+\log(f(z)-z)-\frac{1}{2}\log f'(z)\bigg|\leq\log\cosh\frac{d_{\Om}(z,f(z))}{2}=\frac{1}{2}\log\frac{1}{1-r^2},
\eeqq
where $r$ is a real number such that $0\leq r<1$ and $d_{\cd}(0,r)=d_{\Om}(z,f(z))$.
\end{theorem}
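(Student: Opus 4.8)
The plan is to obtain Theorem~\ref{paraequ-64} as an almost immediate consequence of the Golusin-inequality estimate stated just before it, applied to the univalent map $g=\Phi$ at the pair of points $z$ and $\xi=f(z)$.

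First I would differentiate the functional equation $\Phi(f(z))=\Phi(z)+1$, which holds on the open set where both $z$ and $f(z)$ lie in $\Om$. The chain rule gives $\Phi'(f(z))\,f'(z)=\Phi'(z)$; since $\Phi$ is univalent, $\Phi'$ never vanishes, so this identity also shows $f'(z)\neq0$ and we may write $\Phi'(f(z))=\Phi'(z)/f'(z)$. I would also record the trivial fact $\Phi(z)-\Phi(f(z))=-1$, hence $(\Phi(z)-\Phi(f(z)))^2=1$.

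Next, because $\Phi:\Om\to\cc$ is a univalent holomorphic map and $\Om$ is a disk or a half plane, the Golusin-type theorem applies to $g=\Phi$ with the two points $z$ and $\xi=f(z)$; note that $z\neq f(z)$ by the hypothesis $f(z)\neq z$, and none of $z,f(z),\Phi(z),\Phi(f(z))$ equals $\infty$. It yields
\[
\left|\log\!\left(\frac{\Phi'(z)\,\Phi'(f(z))\,(z-f(z))^2}{(\Phi(z)-\Phi(f(z)))^2}\right)\right|\le 2\log\cosh\frac{d_{\Om}(z,f(z))}{2}.
\]
Substituting $\Phi'(f(z))=\Phi'(z)/f'(z)$ and $(\Phi(z)-\Phi(f(z)))^2=1$, the argument of the logarithm collapses to $\dfrac{\Phi'(z)^2\,(f(z)-z)^2}{f'(z)}$, so
\[
\left|\log\!\left(\frac{\Phi'(z)^2\,(f(z)-z)^2}{f'(z)}\right)\right|\le 2\log\cosh\frac{d_{\Om}(z,f(z))}{2}.
\]
Dividing by $2$ and rewriting the left-hand side as $\log\Phi'(z)+\log(f(z)-z)-\tfrac12\log f'(z)$ gives the asserted inequality. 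The final equality $\log\cosh\frac{d}{2}=\tfrac12\log\frac{1}{1-r^2}$, with $d=d_{\cd}(0,r)=\log\frac{1+r}{1-r}$, is elementary: from $e^{d/2}=\sqrt{(1+r)/(1-r)}$ one computes $\cosh(d/2)=\tfrac12\big(\sqrt{(1+r)/(1-r)}+\sqrt{(1-r)/(1+r)}\big)=1/\sqrt{1-r^2}$.

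The one place that requires care — and the main, albeit minor, obstacle — is the branch of the logarithm when passing from the single expression $\log\big(\Phi'(z)^2(f(z)-z)^2/f'(z)\big)$ to the sum $2\log\Phi'(z)+2\log(f(z)-z)-\log f'(z)$ and then halving: the Golusin inequality controls one specific continuous branch of the cross-ratio-type quantity, and one must verify that this matches, after division by $2$, the branch intended in the statement. This is handled by the usual normalization/continuity argument: fix the branches of $\log\Phi'$, $\log(f(z)-z)$ and $\log f'$ by analytic continuation starting from a configuration in which $z$ and $f(z)$ are close (so that $d_{\Om}(z,f(z))$ is small and each term is near its reference value), where the identification is forced; since the estimate is a closed condition it persists along the continuation. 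With the branches so fixed, the left-hand side of the displayed inequality is exactly $\log\Phi'(z)+\log(f(z)-z)-\tfrac12\log f'(z)$, which completes the argument.
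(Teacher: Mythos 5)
Your argument is correct and is exactly the intended derivation: the paper cites this result from Inou–Shishikura without reproducing a proof, and there it is obtained precisely by applying the Golusin-type inequality (the theorem stated just above) to $g=\Phi$ at the pair $z$, $\xi=f(z)$, using $\Phi'(f(z))f'(z)=\Phi'(z)$ and $(\Phi(z)-\Phi(f(z)))^2=1$, together with the elementary identity $\log\cosh\tfrac{d}{2}=\tfrac12\log\tfrac{1}{1-r^2}$ for $d=\log\tfrac{1+r}{1-r}$. Your remark about matching branches of the logarithm when halving is the right caveat and is handled, as you say, by continuous deformation from the nearly-degenerate configuration where each factor is close to $1$.
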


\begin{lemma}\label{paraequ-11}\cite[Lemma 5.22]{InouShishikura2016}
 Let $\varphi:\widehat{\cc}\setminus E_1\to\widehat{\cc}\setminus\{0\}$ be a normalized univalent map, which is written as
\beq\label{paraequ-9}
\varphi(\zeta)=\zeta+c_0+\varphi_1(\zeta)
\eeq
where $\zeta(w)$ is specified in \eqref{paraequ-1}, $E_1$ is introduced in \eqref{paraequ-3} with $r=1$, $e_1\neq0$, $c_0\in\cc$,  $a_E(1)=e_1+e_{-1}\neq0$, $b_E(1)=e_1-e_{-1}\neq0$, $\lim_{\zeta\to\infty}\varphi_1(\zeta)=0$. Then, the following estimates hold:
\begin{itemize}
\item[(a)] $|c_0+e_0|\leq 2|e_1|$;
\item[(b)] $\text{Image}\,(\varphi)\supset\{z:\ |z-(c_0+e_0)|>2|e_1|\}\supset\{z:\ |z|>4|e_1|\}$;
\item[(c)] $|e_1||w|(1-\tfrac{1}{|w|})^2\leq|\varphi(\zeta(w))|\leq |e_1||w|(1+\tfrac{1}{|w|})^2$ for $|w|>1$;
\item[(d)] $|\text{arg}\,\tfrac{\varphi(\zeta(w))}{w}|\leq\log\tfrac{|w|+1}{|w|-1}$ for $|w|>1$;
\item[(e)] $|\varphi_1(\zeta)|\leq\varphi_{1,max}(r):=|a_E(1)|\sqrt{-\log\bigg(1-\bigg(\frac{|a_E(1)|}{r-|e_0|}\bigg)^2\bigg)}$ for $|\zeta|\geq r>|a_E(1)|+|e_0|$;
\item[(f)] $|\log \varphi'(\zeta)|\leq \log D\varphi_{max}(r):=-\log\bigg(1-\bigg(\frac{|a_E(1)|}{r-|e_0|}\bigg)^2\bigg)$ for $|\zeta|\geq r>|a_E(1)|+|e_0|$.
\end{itemize}
\end{lemma}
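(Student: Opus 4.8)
The plan is to deduce all six estimates from Theorem \ref{paraequ-8} (the class-$\Sigma_*$ estimates of Inou and Shishikura) by pre-composing $\varphi$ with a conformal uniformization of $\cc\setminus\overline{\cd}$ onto the relevant exterior domain. Two uniformizations will be used: the Joukowski-type map $\zeta(w)=e_1w+e_0+e_{-1}/w$ of Lemma \ref{paraequ-27}, whose leading coefficient at $\infty$ is exactly $e_1$ and which therefore yields the sharp constants in parts (a)--(d); and the affine map $u\mapsto a_E(1)u+e_0$, which sends $\cc\setminus\overline{\cd}$ onto $\cc\setminus\overline{\cd}(e_0,a_E(1))\subset\cc\setminus E_1$ and is adapted to the radius-dependent bounds (e)--(f).

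For (a)--(d): by Lemma \ref{paraequ-27}, $\zeta:\cc\setminus\overline{\cd}\to\cc\setminus E_1$ is a conformal bijection onto the domain of $\varphi$, so $g:=\varphi\circ\zeta$ is univalent on $\cc\setminus\overline{\cd}$, fixes $\infty$, and obeys $g(w)/w=\frac{\varphi(\zeta(w))}{\zeta(w)}\cdot\frac{\zeta(w)}{w}\to1\cdot e_1=e_1$. Thus $\tilde g:=g/e_1\in\Sigma$, and $\tilde g\in\Sigma_*$ because $0\notin\text{Image}(\varphi)=\text{Image}(g)$. Expanding $\zeta(w)=e_1w+e_0+O(1/w)$ and $\varphi(\zeta)=\zeta+c_0+o(1)$ gives $\tilde g(w)=w+\tilde c_0+\tilde g_1(w)$ with $\tilde c_0=(c_0+e_0)/e_1$. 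Then Theorem \ref{paraequ-8}(a) gives $|\tilde c_0|\leq2$, i.e.\ (a); its image statement gives $\text{Image}(\varphi)=e_1\,\text{Image}(\tilde g)\supset\{z:|z-(c_0+e_0)|>2|e_1|\}$, and combining with (a) by the triangle inequality yields $\{z:|z|>4|e_1|\}\subset\{z:|z-(c_0+e_0)|>2|e_1|\}$, i.e.\ (b); multiplying the distortion estimate of Theorem \ref{paraequ-8}(d) for $\tilde g$ by $|e_1|$ and using $\varphi(\zeta(w))=e_1\tilde g(w)$ gives (c); and the argument estimate of Theorem \ref{paraequ-8}(d) together with $\arg(\varphi(\zeta(w))/w)=\arg e_1+\arg(\tilde g(w)/w)=\arg(\tilde g(w)/w)$ (using $e_1>0$) gives (d).

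For (e)--(f): since $E_1$ is the ellipse centred at $e_0$ with semi-axes $a_E(1)\geq b_E(1)>0$, it lies in $\overline{\cd}(e_0,a_E(1))$, so $\varphi$ is univalent on $\cc\setminus\overline{\cd}(e_0,a_E(1))$ and $h(u):=\frac{1}{a_E(1)}\varphi(a_E(1)u+e_0)$ belongs to $\Sigma$, indeed to $\Sigma_*$ (as $\varphi$ omits $0$). Writing $h(u)=u+b_0+h_1(u)$ and putting $u=(\zeta-e_0)/a_E(1)$, one reads off $\varphi_1(\zeta)=a_E(1)h_1(u)$ and $\varphi'(\zeta)=h'(u)$. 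For $|\zeta|\geq r>|a_E(1)|+|e_0|$ one has $|u|\geq(r-|e_0|)/a_E(1)$, hence $|u|^{-2}\leq(|a_E(1)|/(r-|e_0|))^2<1$; inserting this into the monotone bounds of Theorem \ref{paraequ-8}(b),(c) for $h$ gives $|\varphi_1(\zeta)|=a_E(1)|h_1(u)|\leq|a_E(1)|\sqrt{-\log\big(1-(|a_E(1)|/(r-|e_0|))^2\big)}=\varphi_{1,max}(r)$, which is (e), and $|\log\varphi'(\zeta)|=|\log h'(u)|\leq-\log\big(1-(|a_E(1)|/(r-|e_0|))^2\big)=\log D\varphi_{max}(r)$, which is (f).

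The argument is essentially bookkeeping, so I do not anticipate a real obstacle: the analytic content is entirely contained in Theorem \ref{paraequ-8} (area/Golusin-type inequalities for $\Sigma_*$). The points needing attention are (i) using the Joukowski model for (a)--(d) so that the optimal leading constant $e_1$ appears rather than the larger $a_E(1)=e_1+e_{-1}$; (ii) verifying that $\{z:|z|\geq r\}$ with $r>|a_E(1)|+|e_0|$ genuinely lies in the domain $\cc\setminus\overline{\cd}(e_0,a_E(1))$ of $h$, and transferring the estimates back through $u=(\zeta-e_0)/a_E(1)$ with the right constant; and (iii) the (harmless) sign convention $e_1>0$ used in the argument estimate (d).
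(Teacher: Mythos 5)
Your proof is correct and follows essentially the same route as the paper: you form $\hat{\varphi}(w)=\varphi(\zeta(w))/e_1\in\Sigma_*$ for (a)--(d) and $\tilde{\varphi}(\tilde\zeta)=\varphi(a_E(1)\tilde\zeta+e_0)/a_E(1)\in\Sigma_*$ for (e)--(f), then quote Theorem \ref{paraequ-8}; the only cosmetic difference is that you spell out the intermediate factorization $g=\varphi\circ\zeta$ and explicitly flag the $e_1>0$ convention needed in (d), both of which the paper leaves implicit.
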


\begin{remark}
In \cite[Lemma 5.22]{InouShishikura2016}, the constants are given. Actually, the arguments there work for general conditions. So, the proof is given here.
\end{remark}

\begin{remark}\label{paraequ-29}
In  (e), the function $\varphi_{1,max}(r)=|a_E(1)|\sqrt{-\log\bigg(1-\bigg(\frac{|a_E(1)|}{r-|e_0|}\bigg)^2\bigg)}$ can be thought of as $\widetilde{\varphi}_{1,max}(r,|a_E(1)|,|e_0|)$. It is evident that this function is increasing with respect to $|a_E(1)|$ or $|e_0|$, and is decreasing with respect to $r$. So, suppose that $|a_E(1)|\leq3$ and $|e_0|\leq1$, consider the following function for convenience $\widehat{\varphi}_{1,max}(r)=3\sqrt{-\log\bigg(1-\bigg(\frac{3}{r-1}\bigg)^2\bigg)}$. By numerical calculation, one has for $r\geq4m+2\sqrt{6}-3\geq5+2\sqrt{6}$, $\widehat{\varphi}_{1,max}(r)\leq\varphi_{1,max}(5+2\sqrt{6})\approx1.03$.

In (f), the function $\log D\phi_{max}(r):=-\log\bigg(1-\bigg(\frac{|a_E(1)|}{r-|e_0|}\bigg)^2\bigg)$, $r>|a_E(1)|+|e_0|$,  can be thought of as $\widetilde{ \log D\varphi}_{max}(r,|a_E(1)|,|e_0|)$, it is evident that this function is increasing with respect to $|a_E(1)|$ or $|e_0|$, and is decreasing with respect to $r$.  So, suppose that $|a_E(1)|\leq3$ and $|e_0|\leq1$, consider the following function for convenience $\widehat{\log D\varphi}_{max}(r):=-\log\bigg(1-\bigg(\frac{3}{r-1}\bigg)^2\bigg)$. By numerical calculation, one has for $r\geq4m+2\sqrt{6}-3\geq5+2\sqrt{6}$, $\widehat{\log D\varphi}_{max}(r)\leq\log D\varphi_{max}(5+2\sqrt{6})\approx0.120641$.

 Take the following constants $e_1 = 0.84$, $e_{0}=-0.18$, $e_{-1} = 0.6$, one has, for $r\geq4m+2\sqrt{6}-3\geq5+2\sqrt{6}\approx9.89898$,
\beq\label{paraequ-30-1-21-1}
\varphi_{1,max}(r)\leq(0.84+0.6)\sqrt{-\log\bigg(1-\bigg(\frac{0.86+0.6}{5+2\sqrt{6}-0.18}\bigg)^2\bigg)}
\approx0.214541
\eeq
and
\beq\label{paraequ-31}
\log D\phi_{max}(r)\leq-\log\bigg(1-\bigg(\frac{0.84+0.6}{5+2\sqrt{6}-0.18}\bigg)^2\bigg)
\approx0.022197.
\eeq

\end{remark}

\begin{proof}
Let $\hat{\varphi}(w)=\tfrac{1}{e_1}\varphi(\zeta(w))$. By the assumption $\varphi:\widehat{\cc}\setminus E\to\widehat{\cc}\setminus\{0\}$, $\hat{\varphi}\in\Sigma_*$, where $\Sigma_*$ is introduced in \eqref{paraequ-7}. By direct computation,
\beqq
\hat{\varphi}(w)=w+\frac{c_0+e_0}{e_1}+\frac{1}{e_1}\bigg(\varphi_1(\zeta(w))+\frac{e_{-1}}{w}\bigg)=
w+\frac{c_0+e_0}{e_1}+O\bigg(\frac{1}{w}\bigg),
\eeqq
this, together with Theorem \ref{paraequ-8} (a), implies that $|\tfrac{c_0+e_0}{e_1}|\leq2$ and $\{z:\ |z|>4\}\subset\{z:\ |z-\tfrac{c_0+e_0}{e_1}|>2\}\subset\text{Image}\,\hat{\varphi}$. So, (a) and (b) holds. And, (c) and (d) can be derived by applying Theorem \ref{paraequ-8} (d) to $\hat{\varphi}$.

Let $\tilde{\zeta}=\frac{\zeta-e_0}{a_E(1)}$. If $|\tilde{\zeta}|>1$, then $\zeta=e_0+a_E(1)\tilde{\zeta}\in\cc\setminus E_1$ and
\begin{align*}
&\tilde{\varphi}(\tilde{\zeta})=\frac{1}{a_E(1)}\varphi(e_0+a_E(1)\tilde{\zeta})
=\frac{1}{a_E(1)}(e_0+a_E(1)\tilde{\zeta}+c_0+\varphi_1(e_0+a_E(1)\tilde{\zeta}))\\
=&\tilde{\zeta}+\frac{e_0+c_0}{a_E(1)}+\frac{\varphi_1(e_0+a_E(1)\tilde{\zeta})}{a_E(1)},
\end{align*}
\beqq
\lim_{\tilde{\zeta}\to\infty}\bigg(\frac{e_0+c_0}{\tilde{\zeta}a_E(1)}+\frac{\varphi_1(e_0+a_E(1)\tilde{\zeta})}{a_E(1)\tilde{\zeta}}\bigg)
=\lim_{\tilde{\zeta}\to\infty}\bigg(\frac{e_0+a_E(1)\tilde{\zeta}}{a_E(1)\tilde{\zeta}}\cdot\frac{\varphi_1(e_0+a_E(1)\tilde{\zeta})}{e_0+a_E(1)\tilde{\zeta}}\bigg)=0,
\eeqq
and it follows from the assumption of $\varphi$ that $0\not\in\text{Image}\,\tilde{\varphi}$. So, $\tilde{\varphi}(\tilde{\zeta})=\tfrac{1}{a_E(1)}\varphi(e_0+a_E(1)\tilde{\zeta})\in\Sigma_*$
is well-defined. Hence, (e) and (f) can be derived by applying Theorem \ref{paraequ-8} (b) and (c) to $\tilde{\varphi}\in\Sigma_*$.
\end{proof}

\begin{lemma}\label{nearequ-24}
Fix the given constants $e_1=0.84$, $e_0=-0.18$, $e_{-1}=0.6$, $r_1=1.4$.
If $\zeta\in\cc\setminus\text{int}\,E_{r_1}$, then $|\varphi(\zeta)|>\rho=0.07$ and $|\text{arg}\,\tfrac{\varphi(\zeta)}{\zeta}|<\pi$.
\end{lemma}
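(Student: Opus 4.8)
\emph{Proof plan.} The idea is to pull everything back to the exterior of the unit disk through the conformal parametrization $\zeta(w)=e_1w+e_0+e_{-1}/w$ of Lemma~\ref{paraequ-27}, which maps $\{|w|=r\}$ onto $\partial E_r$ for $r\geq1$, and then to invoke the distortion estimates of Lemma~\ref{paraequ-11}. First I would clear away two routine points. Since the curves $\partial E_r$ ($r\geq1$) are the mutually disjoint images under $\zeta$ of the circles $\{|w|=r\}$ and $r_1=1.4>1$, the ellipse $E_1$ lies in $\text{int}\,E_{r_1}$ (cf.\ Remark~\ref{near-equat-11}), so $\cc\setminus\text{int}\,E_{r_1}\subset\cc\setminus E_1=\text{Dom}(\varphi)$ and $\varphi(\zeta)$ is defined; a direct check ($(0.18/a_E(1.4))^2<1$ with $a_E(1.4)\approx1.60457$) shows $0\in\text{int}\,E_{r_1}$, so for $\zeta\in\cc\setminus\text{int}\,E_{r_1}$ one has $\zeta\neq0$ and $\varphi(\zeta)\neq0,\infty$ (because $0\notin\text{Image}(\varphi)$ and $\varphi(\infty)=\infty$), hence $\varphi(\zeta)/\zeta$ makes sense. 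Writing $\zeta=\zeta(w)$, the set $\cc\setminus\text{int}\,E_{r_1}$ corresponds to $\{|w|\geq r_1=1.4\}$; in particular $|w|>1$, so Lemma~\ref{paraequ-11} applies.

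\textbf{Lower bound for $|\varphi(\zeta)|$.} By Lemma~\ref{paraequ-11}(c), $|\varphi(\zeta(w))|\geq|e_1|\,|w|\,(1-|w|^{-1})^2=|e_1|\,(|w|-1)^2/|w|$. Since $\frac{d}{dt}\bigl((t-1)^2/t\bigr)=(t-1)(t+1)/t^2>0$ for $t>1$, the function $t\mapsto(t-1)^2/t$ increases on $[1.4,\infty)$, so its minimum there equals $0.16/1.4$. Therefore $|\varphi(\zeta)|\geq 0.84\cdot(0.16/1.4)=0.096>0.07=\rho$, which is the first assertion.

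\textbf{Bound on $\arg(\varphi(\zeta)/\zeta)$.} I would write $\dfrac{\varphi(\zeta)}{\zeta}=\dfrac{\varphi(\zeta(w))}{w}\cdot\dfrac{w}{\zeta(w)}$, so that $\bigl|\arg\tfrac{\varphi(\zeta)}{\zeta}\bigr|\leq\bigl|\arg\tfrac{\varphi(\zeta(w))}{w}\bigr|+\bigl|\arg\tfrac{\zeta(w)}{w}\bigr|$. By Lemma~\ref{paraequ-11}(d) the first term is at most $\log\frac{|w|+1}{|w|-1}$, a quantity decreasing in $|w|$ and hence $\leq\log 6$ for $|w|\geq1.4$. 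For the second, $\zeta(w)/w=e_1+(e_0/w+e_{-1}/w^2)$, where the perturbation has modulus at most $|e_0|/1.4+|e_{-1}|/1.96<0.435<0.84=e_1$ (the expression $|e_0|/t+|e_{-1}|/t^2$ being decreasing for $t\geq1.4$); hence Lemma~\ref{paraequ-13}(a) gives $\bigl|\arg\tfrac{\zeta(w)}{w}\bigr|\leq\arcsin(0.435/0.84)<0.55$. Adding, $\bigl|\arg\tfrac{\varphi(\zeta)}{\zeta}\bigr|<\log 6+0.55<2.35<\pi$, which is the second assertion.

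I do not anticipate any genuine obstacle; the one step needing care is the numerical book-keeping in the argument estimate---confirming that $\log\frac{|w|+1}{|w|-1}$ (worst at $|w|=1.4$, i.e.\ $\log 6\approx1.792$) plus the contribution of the $O(1/w)$ tail of $\zeta(w)/w$ stays safely below $\pi$---together with checking that the minimum of $(t-1)^2/t$ on $[1.4,\infty)$ exceeds $\rho/e_1=0.07/0.84\approx0.083$. Both margins are comfortable, and everything reduces to the distortion bounds of Lemma~\ref{paraequ-11}, the arctangent-type inequality of Lemma~\ref{paraequ-13}(a), and the parametrization of Lemma~\ref{paraequ-27}.
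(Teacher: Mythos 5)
Your proof is correct and follows essentially the same route as the paper: the same lower bound $|\varphi(\zeta(w))|\geq e_1|w|(1-1/|w|)^2$ from Lemma~\ref{paraequ-11}(c) with monotonicity at $|w|=1.4$, and the same splitting $|\arg(\varphi(\zeta)/\zeta)|\leq|\arg(\varphi(\zeta(w))/w)|+|\arg(\zeta(w)/w)|$ handled by Lemma~\ref{paraequ-11}(d) and Lemma~\ref{paraequ-13}(a). The only (harmless) deviation is that you bound $\arcsin(0.435/0.84)$ directly instead of invoking the linearization $\arcsin x\leq\tfrac{\pi}{3}x$, which in fact avoids the paper's slight overreach of applying that inequality at $x\approx0.517>1/2$.
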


\begin{proof}
For $\zeta\in\cc\setminus\text{int}\,E_{r_1}$ with $r_1=1.4$, $\zeta=\zeta(w)$ with $|w|>1.4$. By Lemma \ref{paraequ-11} (c) and the fact that $r(1-\tfrac{1}{r})^2$ is increasing for $r>1$, one has
\beqq
|\varphi(\zeta)|=|\varphi(\zeta(w))|\geq e_1|w|(1-\tfrac{1}{|w|})^2\geq0.84\times1.4(1-\tfrac{1}{1.4})^2\approx0.096>\rho=0.07.
\eeqq

It follows from Lemma \ref{paraequ-11} (d) that
\beqq
\bigg|\arg\frac{\varphi(\zeta(w))}{w}\bigg|\leq\log\frac{1.4+1}{1.4-1}\approx1.79176\approx 0.570335\pi.
\eeqq
On the other hand,
\beqq
\bigg|\frac{e_0}{e_1w}+\frac{e_{-1}}{e_1w^2}\bigg|\leq
\bigg|\frac{e_0}{e_1w}\bigg|+\bigg|\frac{e_{-1}}{e_1w^2}\bigg|\leq
\frac{0.18}{0.84\times 1.4}+\frac{0.6}{0.84\times 1.4^2}\approx0.517493,
\eeqq
together with Lemma \ref{paraequ-13}, implies that
\begin{align*}
&\bigg|\arg\frac{\zeta(w)}{w}\bigg|=\bigg|\arg\bigg(1+\frac{e_0}{e_1w}+\frac{e_{-1}}{e_1w^2}\bigg)\bigg|=
\bigg|\arg\bigg(1+\frac{e_0}{e_1w}+\frac{e_{-1}}{e_1w^2}\bigg)-\arg(1)\bigg|\\
\leq&
\arcsin\bigg(\bigg|\frac{e_0}{e_1w}+\frac{e_{-1}}{e_1w^2}\bigg|\bigg)
\leq\frac{\pi}{3}\cdot0.517493=0.172498\pi.
\end{align*}
Hence,
\beqq
\bigg|\arg\frac{\varphi(\zeta)}{\zeta}\bigg|\leq\bigg|\arg\frac{\varphi(\zeta(w))}{w}\bigg|+\bigg|\arg\frac{\zeta(w)}{w}\bigg|
<0.570335\pi+0.172498\pi<\pi.
\eeqq
\end{proof}

\begin{lemma}\label{equ2021-11-25-5}
Let $\al\in(0,\tfrac{\pi}{2})$. If
$$\zeta\in\cc\setminus\bigg(\cd\big(   \cot(\al)i,\tfrac{1}{\sin(\al)}\big)
\cup
\cd\big(-\cot(\al)i,\tfrac{1}{\sin(\al)}\big) \cup\mbox{int}\,E_{r_1}\bigg)$$
and $\mbox{Re}\ \zeta\geq e_0=-0.18$,
where $r_1=1.4$, then $\varphi(\zeta)\not\in\rr_{-}$.
\end{lemma}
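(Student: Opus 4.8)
The plan is to argue by contradiction. Suppose $\varphi(\zeta)\in\rr_-$; by Lemma \ref{nearequ-24} we already know $\varphi(\zeta)\neq 0$, so $\varphi(\zeta)=-t$ with $t>0$. Since $\zeta\notin\mbox{int}\,E_{r_1}$ and $E_1\subset\mbox{int}\,E_{r_1}$, the conformal parametrization $\zeta(\cdot)$ of $\cc\setminus E_1$ from Lemma \ref{paraequ-27}, which carries $\{|w|=r\}$ onto $\partial E_r$, lets us write $\zeta=\zeta(w)=e_1w+e_0+\tfrac{e_{-1}}{w}$ with $|w|\ge r_1=1.4$. Put $\phi=\arg w$; after replacing $\phi$ by $|\phi|$ — legitimate because every estimate below depends only on $\cos\phi=\cos|\phi|$ — we may assume $\phi\in[0,\pi]$.

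Next I would extract two constraints on the pair $(|w|,\phi)$. First, Lemma \ref{paraequ-11}(d) gives $\big|\arg\tfrac{\varphi(\zeta(w))}{w}\big|\le\log\tfrac{|w|+1}{|w|-1}$, and since $\varphi(\zeta(w))=-t$ one has $\arg\tfrac{-t}{w}=\pi-\phi$, whence
\[\phi\ \ge\ \pi-\log\tfrac{|w|+1}{|w|-1}.\]
Second, from $\mbox{Re}\,\zeta(w)=\big(e_1|w|+\tfrac{e_{-1}}{|w|}\big)\cos\phi+e_0\ge e_0$ and $e_1|w|+\tfrac{e_{-1}}{|w|}>0$ we get $\cos\phi\ge 0$, i.e. $\phi\le\tfrac\pi2$ (in particular $\phi\in(0,\tfrac\pi2]$, so the degenerate cases $\phi\in\{0,\pi\}$ never occur). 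These two inequalities are compatible only when $\log\tfrac{|w|+1}{|w|-1}\ge\tfrac\pi2$, i.e. $1.4\le|w|\le\coth(\tfrac\pi4)<1.53$, and, using monotonicity of $x\mapsto\log\tfrac{x+1}{x-1}$, they force $0\le\cos\phi\le\cos\!\big(\pi-\log 6\big)<0.22$.

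Then I would feed the explicit form of $\zeta(w)$ back in. Writing $A=e_1|w|+\tfrac{e_{-1}}{|w|}$, $B=e_1|w|-\tfrac{e_{-1}}{|w|}$ and $c=\cos\phi$, a direct computation gives
\[|\zeta(w)|^2=4e_1e_{-1}\,c^2+2Ae_0\,c+\big(e_0^2+B^2\big),\]
a parabola in $c$ opening upward, with vertex abscissa $c_v=-Ae_0/(4e_1e_{-1})$. With $e_1=0.84$, $e_0=-0.18$, $e_{-1}=0.6$ and $1.4\le|w|<1.53$ one has $c_v\ge 0.14$, hence $0\le c\le 0.22\le 2c_v$; on the interval $[0,2c_v]$ the parabola does not exceed its value at $c=0$, so $|\zeta|^2\le e_0^2+B^2=0.0324+(0.84|w|-0.6/|w|)^2$, which is increasing in $|w|$ on $[1.4,1.53]$ and therefore at most $0.83<1$. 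Thus $|\zeta|<1$. But this contradicts the hypothesis that $\zeta$ avoids both disks: as already used in the proof of Lemma \ref{paraequ-15}(c), the two closed disks $\ol\cd(\pm\cot(\al)i,\tfrac1{\sin(\al)})$ cover $\ol\cd(0,1)$ for every $\al\in(0,\tfrac\pi2)$, and their boundary circles meet $\ol\cd(0,1)$ only at $\pm1$, which lie in $\mbox{int}\,E_{r_1}$; since $\zeta$ avoids $\mbox{int}\,E_{r_1}$ and both open disks we must have $|\zeta|>1$. This contradiction proves the lemma.

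The only part requiring genuine work is the geometric squeeze just exploited: $\varphi(\zeta)\in\rr_-$ forces $\arg w$ close to $\pi$ while the normalization $\mbox{Re}\,\zeta\ge e_0$ forces $\arg w\le\tfrac\pi2$, and these two together trap $\zeta$ in the thin elliptic shell between $\partial E_{1.4}$ and $\partial E_{1.53}$ on the side $\{\mbox{Re}\,\zeta\ge e_0\}$, where a short numerical check shows $|\zeta|<1$ — i.e. $\zeta$ would be swallowed by the two disks, so the admissible set is empty. I expect the bookkeeping of these nested numerical bounds ($|w|<\coth(\pi/4)$, $\cos\phi<0.22$, and the final $|\zeta|^2<1$) to be the main, but routine, obstacle; all of the analytic input is supplied by Lemmas \ref{paraequ-11} and \ref{nearequ-24}.
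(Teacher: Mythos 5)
Your proof is correct, and it does the same essential job as the paper's — both use Lemma~\ref{paraequ-11}(d) to force $|\arg w|\ge\pi-\log\tfrac{|w|+1}{|w|-1}$, use $\operatorname{Re}\zeta\ge e_0$ to force $|\arg w|\le\tfrac\pi2$, and then show the pinched parameter range lands $\zeta$ inside one of the two circular disks. The routes through the final step differ and yours is somewhat cleaner. The paper splits into three ranges of $(|w|,\arg w)$ and, in the nontrivial case $1.4\le|w|\le 1.54$, $0.4\pi\le\arg w\le\tfrac\pi2$, invokes a convex-hull ``Claim'' (built on Sublemma~\ref{arc-2021-10-26-1}) together with explicit checks at the two extreme points $\zeta(1.54e^{i 0.4\pi})$ and $\zeta(1.54i)$, verifying $|\zeta-\cot(\alpha)i|^2<1/\sin^2(\alpha)$; since that inequality reduces to $|\zeta|^2<1+2\operatorname{Im}(\zeta)\cot\alpha$, it is effectively $|\zeta|<1$ in disguise. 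You instead expand $|\zeta(w)|^2$ directly as an upward parabola $4e_1e_{-1}c^2+2Ae_0c+(e_0^2+B^2)$ in $c=\cos\phi$, observe $c$ sits in $[0,2c_v]$, and bound it by the value at $c=0$; this bypasses the convexity lemma altogether and gives the uniform bound $|\zeta|<1$ in one stroke, after which the observation that $\overline{\mathbb D}(0,1)\subset\mathbb D\big(\cot(\alpha)i,\tfrac1{\sin\alpha}\big)\cup\mathbb D\big(-\cot(\alpha)i,\tfrac1{\sin\alpha}\big)\cup\operatorname{int}E_{r_1}$ (using that $\pm1\in\operatorname{int}E_{1.4}$) finishes the contradiction. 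Two small places to tighten for a written-up version: (i) the ``replace $\phi$ by $|\phi|$'' step should be unpacked slightly, since $\varphi$ need not commute with conjugation — the point is only that $\big|\arg\tfrac{-t}{w}\big|=\pi-|\arg w|$ and every inequality you invoke after that depends only on $|w|$ and $\cos|\arg w|$; and (ii) in the last step you should note explicitly that you need $\zeta\ne\pm1$, which follows because $\pm1\in\operatorname{int}E_{1.4}$ — both $h_2$ and $h_3$ from Lemma~\ref{paraequ-83} do not quite give this at $r=1.4$, but a one-line direct check $(1+0.18)^2/(1.6046)^2<1$ and $(-1+0.18)^2/(1.6046)^2<1$ does.
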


\begin{proof}
It follows from (d) of Lemma \ref{paraequ-11} that
\beqq
|\arg(\varphi(\zeta(w)))|\leq|\arg\,w|+\bigg|\arg\frac{\varphi(\zeta(w))}{w}\bigg|
\leq|\arg\,w|+\log\frac{|w|+1}{|w|-1}\ \forall\ |w|>1.
\eeqq
By the proof of Lemma \ref{paraequ-27}, for $w=re^{i\tht}$ with $r=|w|$ and $\arg w=\tht$, one has
\begin{align*}
\zeta(w)=e_0+\bigg(e_1 r+\frac{e_{-1}}{r}\bigg)\cos(\tht)+i\bigg(e_1 r-\frac{e_{-1}}{r}\bigg)\sin(\tht).
\end{align*}
For $\zeta\in\cc\setminus E_{r_1}$ and $\text{Re}\,\zeta\geq e_0$, it suffices to consider $\zeta=\zeta(w)$, where $w=re^{i\tht}$, $r=|w|\geq r_1=1.4$, and $\tht=\arg w\in[-\tfrac{\pi}{2},\tfrac{\pi}{2}]$. It is sufficient to study the following statement: if $r\geq r_1$ and $0\leq\tht\leq\tfrac{\pi}{2}$, then either $\tht+\log\tfrac{r+1}{r-1}<\pi$ or $\zeta(re^{i\tht})\in \cd\big(   \cot(\al)i,\tfrac{1}{\sin(\al)}\big)$.

Consider the following different situations:
\begin{itemize}
\item[(a)] $r>r_1=1.4$ and $0\leq\tht\leq0.4\pi$;
\item[(b)] $r\geq 1.54$ and $0\leq\tht\leq\tfrac{\pi}{2}$;
\item[(c)] $1.4\leq r\leq1.54$ and $0.4\pi\leq\tht\leq\tfrac{\pi}{2}$.
\end{itemize}

For Case (a), by direct calculation, one has
\beqq
\log\frac{1.4+1}{1.4-1}\approx1.79176\approx 0.570335\pi.
\eeqq
For Case (b), consider the root of the equation
\beqq
\log\bigg(\frac{x+1}{x-1}\bigg)=\frac{\pi}{2},
\eeqq
the solution is
\beqq
x=\frac{e^{\pi/2}+1}{e^{\pi/2}-1}\approx1.52487<1.54.
\eeqq

For Case (c), one needs to study the following statement.

{\bf Claim} Let $\al\in(0,\tfrac{\pi}{2})$. Let $1\leq s_1<s_2$ and $\tht_1=0.4\pi$. If $\zeta(s_2 i)$ and $\zeta(s_2e^{i\tht_1})$ are contained in  $\cd\big(\cot(\al)i,\tfrac{1}{\sin(\al)}\big)$,
then the set
\beqq
Z(s_1,s_2,\tht_1)=\bigg\{\zeta(w):\ s_1\leq|w|\leq s_2\ \mbox{and}\ \tht_1\leq\tht\leq\frac{\pi}{2}\bigg\}
\eeqq
is also contained in $\cd\big(\cot(\al)i,\tfrac{1}{\sin(\al)}\big)$.

Now, the proof of the statement is provided.

By the proof of Lemma \ref{paraequ-27}, one has
\begin{align*}
\zeta(w)=e_0+\bigg(e_1 r+\frac{e_{-1}}{r}\bigg)\cos(\tht)+i\bigg(e_1 r-\frac{e_{-1}}{r}\bigg)\sin(\tht).
\end{align*}

A hyperbolic curve is represented by the parameter $r\in[s_1,s_2]$:
\beqq
e_0+\bigg(e_1 r+\frac{e_{-1}}{r}\bigg)\cos(\tht_1)+i\bigg(e_1 r-\frac{e_{-1}}{r}\bigg)\sin(\tht_1).
\eeqq

Suppose $x=e_0+\bigg(e_1 r+\frac{e_{-1}}{r}\bigg)\cos(\tht_1)$ and $y=\bigg(e_1 r-\frac{e_{-1}}{r}\bigg)\sin(\tht_1)$, by applying $x$ and $y$, one would eliminate the variable $r$. That is,
\beq\label{equ2021-10-25-5}
\bigg(\frac{x-e_0}{\cos(\tht_1)}\bigg)^2=\bigg(e_1 r+\frac{e_{-1}}{r}\bigg)^2
=e_1^2 r^2+\frac{e_{-1}^2}{r^2}+2e_1e_{-1}
\eeq
and
\beq\label{equ2021-10-25-6}
\bigg(\frac{y}{\sin(\tht_1)}\bigg)^2=\bigg(e_1 r-\frac{e_{-1}}{r}\bigg)^2
=e_1^2 r^2+\frac{e_{-1}^2}{r^2}-2e_1e_{-1}.
\eeq
So, substracting \eqref{equ2021-10-25-6} from \eqref{equ2021-10-25-5}, one has
\beqq
\bigg(\frac{x-e_0}{\cos(\tht_1)}\bigg)^2-\bigg(\frac{y}{\sin(\tht_1)}\bigg)^2=4e_1e_{-1}.
\eeqq
Recall the constants $e_1=0.84$, $e_0=-0.18$, $e_{-1}=0.6$, $\tht_1=0.4\pi$. One has
\beq\label{equ2021-10-26-2}
e_0+2\sqrt{e_1e_{-1}}\cos\tht_1=-0.18+2\sqrt{0.84\times0.6}\cos(0.4\pi)
\approx0.258761<1
\eeq

It follows from the assumption of this claim and Sublemma \ref{arc-2021-10-26-1}, the subarc $\partial E_{s_2}\cap Z(s_1,s_2,\tht_1)$ is contained in
$\cd\big(   \cot(\al)i,\tfrac{1}{\sin(\al)}\big)$.

By the geometric description, $Z(s_1,s_2,\tht_1)$ is the region bounded by $\{\zeta:\ \text{Re}\,\zeta=e_0\}$, $\partial E_{s_1}$, $\partial E_{s_2}$, and the upper right part of a hyperbola
\beqq
\bigg(\frac{x-e_0}{\cos(\tht_1)}\bigg)^2-\bigg(\frac{y}{\sin(\tht_1)}\bigg)^2=4e_1e_{-1},\ x\geq e_0+2\sqrt{e_1e_{-1}}\cos\tht_1\ \mbox{and}\ y\geq0,
\eeqq
which is concave. So, the region $Z(s_1,s_2,\tht_1)$ is contained in the convex hull of $(\partial E_{s_2}\cap Z(s_1,s_2,\tht_1))\cup[e_{0},e_{0}+2\sqrt{e_1e_{-1}}\cos\tht_1]$. This, together with \eqref{equ2021-10-26-2}, implies that this convex hull is contained in $\cd\big(   \cot(\al)i,\tfrac{1}{\sin(\al)}\big)$. This completes the proof of the claim.

Applying this claim to $s_1=1.4$ and $s_2=1.54$, one needs to check the following inequalities:
\begin{align*}
&|\zeta(1.54e^{i0.4\pi})-\cot(\al)i|^2\\
=&\bigg(e_0+\bigg(e_1 r+\frac{e_{-1}}{r}\bigg)\cos(\tht_1)\bigg)^2+\bigg(\bigg(e_1 r-\frac{e_{-1}}{r}\bigg)\sin(\tht_1)-\cot(\al)\bigg)^2\\
=&\bigg(-0.18+\bigg(0.84\times 1.54+\frac{0.6}{1.54}\bigg)\cos(0.4\pi)\bigg)^2+\bigg(\bigg(0.84\times 1.54-\frac{0.6}{1.54}\bigg)\sin(0.4\pi)-\cot(\al)\bigg)^2\\
=&\bigg(-0.18+\bigg(0.84\times 1.54+\frac{0.6}{1.54}\bigg)\cos(0.4\pi)\bigg)^2
+\bigg(\bigg(0.84\times 1.54-\frac{0.6}{1.54}\bigg)\sin(0.4\pi)\bigg)^2\\
&-2\bigg(\bigg(0.84\times 1.54-\frac{0.6}{1.54}\bigg)\sin(0.4\pi)\bigg)\cot(\al)+
\cot^2(\al)\\
\approx&0.854857-1.71949\cot(\al)+
\cot^2(\al)\\
<&\bigg(\frac{1}{\sin(\al)}\bigg)^2
\end{align*}
and
\begin{align*}
&|\zeta(1.54e^{i\pi/2})-\cot(\al)i|^2\\
=&\bigg(e_0+\bigg(e_1 r+\frac{e_{-1}}{r}\bigg)\cos(\pi/2)\bigg)^2+\bigg(\bigg(e_1 r-\frac{e_{-1}}{r}\bigg)\sin(\pi/2)-\cot(\al)\bigg)^2\\
=&\bigg(-0.18+\bigg(0.84\times 1.54+\frac{0.6}{1.54}\bigg)\cos(\pi/2)\bigg)^2+\bigg(\bigg(0.84\times 1.54-\frac{0.6}{1.54}\bigg)\sin(\pi/2)-\cot(\al)\bigg)^2\\
=&\bigg(-0.18+\bigg(0.84\times 1.54+\frac{0.6}{1.54}\bigg)\cos(\pi/2)\bigg)^2
+\bigg(\bigg(0.84\times 1.54-\frac{0.6}{1.54}\bigg)\sin(\pi/2)\bigg)^2\\
&-2\bigg(\bigg(0.84\times 1.54-\frac{0.6}{1.54}\bigg)\sin(\pi/2)\bigg)\cot(\al)+
\cot^2(\al)\\
\approx&0.849597-1.80798\cot(\al)+
\cot^2(\al)\\
<&\bigg(\frac{1}{\sin(\al)}\bigg)^2.
\end{align*}
Hence, in Case (c), $Z(1.4,1.54,0.4\pi)$ is contained in $\cd\big(\cot(\al)i,\tfrac{1}{\sin(\al)}\big)$. This completes the proof.

\end{proof}

\subsection{Riemann surface}\label{riemann-surface-2021-12-29-1}

\begin{definition} Assume $m\geq5$.
Denote by
$$Y_{j\pm}:=(Q|_{\overline{\mathcal{U}^{Q+}_{j\pm}}})^{-1}(\pi_X(X_{j\pm}))=\overline{\mathcal{U}^{Q+}_{j\pm}}\bigcap Q^{-1}(\pi_X(X_{j\pm})),\ j=1,2.$$
Let
\beqq
Y=Y_{1+}\cup Y_{1-}\cup Y_{2+}\cup Y_{2-},
\eeqq
which is a subset of $\mathcal{U}^{Q+}_1\cup\mathcal{U}^{Q+}_2\cup\rr_{-}\subset\cc$. Define $\widetilde{Q}:Y\to X$ (whose well-definiedness is to be verified) by
\beq\label{paraequ-18}
\widetilde{Q}(\zeta)=(\pi|_{X_{j\pm}})^{-1}(Q(\zeta))\in X_{j\pm}\ \text{for}\ \zeta\in Y_{j\pm}.
\eeq
Also, we define
\beqq
\widetilde{Y}=\cc\setminus(E_{r_1}\cup\rr_{+}\cup\overline{\vv}(u_*,\tfrac{\pi}{5})),
\eeqq
where $r_1=1.4$ is introduced in Lemma \ref{paraequ-83}, and $u_*=(4e-1)m$ is specified in Lemma \ref{near-equati-22}.
\end{definition}

\begin{lemma} Assume $m\geq5$.
By the above construction, one has $Y\subset\widetilde{Y}$.
\end{lemma}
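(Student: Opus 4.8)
The plan is to check that $Y$ is disjoint from each of the three sets whose union is removed in the definition of $\widetilde{Y}$, namely from $E_{r_1}$, from $\rr_+$, and from $\overline{\vv}(u_*,\tfrac{\pi}{5})$; since $Y\subset\cc$, this gives $Y\subset\widetilde{Y}$. The mechanism common to all three is that, by the very definition of $Y_{j\pm}$, every $\zeta\in Y_{j\pm}$ satisfies $Q(\zeta)\in\pi_X(X_{j\pm})$, whence $|Q(\zeta)|>\rho$ (and $|Q(\zeta)|<R$ when $j=2$) and $\tfrac{\pi}{5}<|\arg(Q(\zeta)-cv_Q)|\le\pi$, the last being the defining angular restriction of the sheets $X_{j\pm}$. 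I will also use the recorded inclusion $Y\subset\mathcal{U}^{Q+}_1\cup\mathcal{U}^{Q+}_2\cup\rr_-$ together with $Y_{j\pm}\subset\overline{\mathcal{U}^{Q+}_j}$; since $\mathcal{U}^{Q+}_1$ and $\mathcal{U}^{Q+}_2$ are disjoint open sets, this shows each $\zeta\in Y$ lies either in $\mathcal{U}^{Q+}_1\cup\mathcal{U}^{Q+}_2$ or on $(-\infty,-1)$ (and $-1,\,cp_{Q1}\notin Y$, since their $Q$-images $0$ and $cv_Q$ are not in $\pi_X(X_{j\pm})$).

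First I would dispose of the wedge. If $\zeta\in Y_{j\pm}\cap\overline{\vv}(u_*,\tfrac{\pi}{5})$, then because $u_*=(4e-1)m$ and $m\ge5$, Lemma \ref{near-equati-22} gives $Q(\zeta)\in\vv(cv_Q,\tfrac{\pi}{5})$, i.e.\ $|\arg(Q(\zeta)-cv_Q)|<\tfrac{\pi}{5}$, contradicting $|\arg(Q(\zeta)-cv_Q)|>\tfrac{\pi}{5}$; so $Y\cap\overline{\vv}(u_*,\tfrac{\pi}{5})=\emptyset$. Next, for $\rr_+$: by the inclusion above, $Y\cap\rr_+\subset(\mathcal{U}^{Q+}_1\cup\mathcal{U}^{Q+}_2)\cap\rr_+$, and by Definition \ref{near-para-equ-2} this set equals $\ga^{Q+}_{a1}\cup\ga^{Q+}_{a2}=(1,+\infty)\setminus\{cp_{Q1}\}$. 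Since $Q$ has no critical point on $(1,+\infty)\setminus\{cp_{Q1}\}$ (Lemma \ref{paraequ-25}), is continuous with $Q(cp_{Q1})=cv_Q$, and tends to $+\infty$ both as $z\to1^+$ and as $z\to+\infty$, it maps $(1,+\infty)\setminus\{cp_{Q1}\}$ into $(cv_Q,+\infty)$; hence $\arg(Q(\zeta)-cv_Q)=0$ for any $\zeta\in Y\cap\rr_+$, again a contradiction.

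It remains to show $Y\cap E_{r_1}=\emptyset$, and here I would use the dichotomy from the first paragraph. For $\zeta\in Y$ with $\zeta\in(-\infty,-1)$: the two discs $\overline{\cd}(\pm\cot\al\,i,\tfrac{1}{\sin\al})$ with $\al=\tfrac{m\pi}{2(m+1)}$ meet $\rr$ only in $[-1,1]$, so $\zeta$ is in neither of them, and since $|Q(\zeta)|>\rho$ the contrapositive of Lemma \ref{paraequ-15}(b) forces $|\zeta+1|>\vep_4>1$; thus $\zeta<-2<e_0-a_E(r_1)$, and as $E_{r_1}\cap\rr=[e_0-a_E(r_1),e_0+a_E(r_1)]$ we conclude $\zeta\notin E_{r_1}$. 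For $\zeta\in Y$ with $\zeta\in\mathcal{U}^{Q+}_1\cup\mathcal{U}^{Q+}_2$: Corollary \ref{cor-2021-10-2-1} places $\zeta$ outside the discs about $\pm\cot(\tfrac{\pi}{m+1})i$, and then, if $\rho\le|Q(\zeta)|\le R$ (automatic when $j=2$), Lemma \ref{paraequ-15}(d) gives $\zeta\notin E_{r_1}$ at once; the only remaining case is $j=1$ with $|Q(\zeta)|>R$, and since $\zeta\in\overline{\mathcal{U}^{Q+}_1}$ there, the second assertion of Lemma \ref{paraequ-15}(d) again gives $\zeta\notin E_{r_1}$. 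Combining the three steps yields $Y\subset\widetilde{Y}$.

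The step I expect to be most delicate is the last one, disjointness from $E_{r_1}$ — not because of any hard estimate, since every inequality it needs is already available in Lemmas \ref{paraequ-15} and \ref{paraequ-56} and the constants ($\rho=0.07$, $R=2.66\times10^m$, $r_1=1.4$, $\vep_3,\vep_4,\al$) were fixed precisely to make them hold for $m\ge5$, but because of the bookkeeping: one must separate the part of $Y$ on $(-\infty,-1)$ from the part in $\mathcal{U}^{Q+}_1\cup\mathcal{U}^{Q+}_2$, and within the latter treat $|Q(\zeta)|\le R$ and $|Q(\zeta)|>R$ separately, all while checking that every boundary arc of $\mathcal{U}^{Q+}_{j\pm}$ that can occur in $\overline{\mathcal{U}^{Q+}_{j\pm}}$ falls under the hypotheses of Lemma \ref{paraequ-15}(b) or (d). The other two steps are one-line consequences of Lemma \ref{near-equati-22} and of the real-axis behaviour of $Q$.
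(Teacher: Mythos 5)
Your proof is correct and follows the same three-part strategy as the paper: verify disjointness of $Y$ from $E_{r_1}$ (via Lemma \ref{paraequ-15}), from $\overline{\vv}(u_*,\tfrac{\pi}{5})$ (via Lemma \ref{near-equati-22}), and from $\rr_+$ (via the real-axis behaviour of $Q$ on $(1,+\infty)$). Your handling of the $E_{r_1}$ step is in fact more scrupulous than the paper's one-line citation of Lemma \ref{paraequ-15}(d): by separating out the boundary ray $\zeta\in(-\infty,-1)$ and disposing of it with the contrapositive of Lemma \ref{paraequ-15}(b), you correctly cover the case $\rho<|Q(\zeta)|\le R$ on that ray, which the first clause of (d) (stated only for the open set $\mathcal{U}^{Q+}_1\cup\mathcal{U}^{Q+}_2$) does not literally address.
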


\begin{proof}
For $\zeta\in Y$, $\zeta\in Y_{1\pm}$ or $\zeta\in Y_{2\pm}$. If $\zeta\in Y_{1\pm}\subset \overline{\mathcal{U}^{Q+}_{1\pm}}$, by the definition, $|Q(\zeta)|>\rho$; if $\zeta\in Y_{2\pm}\subset \overline{\mathcal{U}^{Q+}_{1\pm}}\cup \overline{\mathcal{U}^{Q+}_{2\pm}}$, then $\rho<|Q(\zeta)|<R$. So, it follows from Lemma \ref{paraequ-15} (d) that $\zeta\in\cc\setminus E_{r_1}$.

Since $\pi_X(X)\cap \overline{\vv}\big(cv_Q,\tfrac{\pi}{5}\big)=\emptyset$, one has $Q(\zeta)\not\in \overline{\vv}\big(cv_Q,\tfrac{\pi}{5}\big)$. It follows from Lemma \ref{near-equati-22} that $Q(\zeta)\not\in \overline{\vv}\big(u_*,\tfrac{\pi}{5}\big)$, where $u_*$ is specified in Lemma \ref{near-equati-22}.

Finally, from $Q((1,+\infty))=[cv_Q,+\infty)\subset \overline{\vv}\big(cv_Q,\tfrac{\pi}{5}\big)$ by Lemma \ref{paraequ-25}, and $Y\subset\cc\setminus\ol{\cd}$, it follows that $\zeta\not\in\rr_{+}$. Hence, $\zeta\in\widetilde{Y}$.
\end{proof}

\begin{proof} (Proof of Proposition \ref{paraequ-14})

(a) By the discussions in Subsections \ref{nearequ-20} and \ref{near629-equ-1}, $Q$
is an isomorphism between $\mathcal{U}^{Q+}_{j\pm}$ and $\{z:\ \text{Im}\,z>0\}$, implying that $\widetilde{Q}$ is an isomorphism between $Y_{j\pm}$ and $X_{j\pm}$, $j=1,2$. So, if the definition of $\widetilde{Q}$ restricted to the boundaries is meaningful, then the map $\widetilde{Q}$ is well-defined. So, we only need to verify the consistence along the boundaries of $X_{j\pm}$.

By the discussions in Subsection \ref{near629-equ-1},
\beqq
\overline{\mathcal{U}^{Q+}_{1+}}\cap \overline{\mathcal{U}^{Q+}_{2+}}=\{cp_{Q1},-1\},\ \overline{\mathcal{U}^{Q+}_{1-}}\cap \overline{\mathcal{U}^{Q+}_{2-}}=\{cp_{Q1},-1\},
\eeqq
\beqq
\overline{\mathcal{U}^{Q+}_{1+}}\cap \overline{\mathcal{U}^{Q+}_{1-}}=\ol{\ga^{Q+}_{a1}}\cup \ol{\ga^{Q+}_{c1}},\ \overline{\mathcal{U}^{Q+}_{1+}}\cap \overline{\mathcal{U}^{Q+}_{2-}}=\ol{\ga^{Q+}_{b1}},\ \overline{\mathcal{U}^{Q+}_{1-}}\cap \overline{\mathcal{U}^{Q+}_{2+}}=\ol{\ga^{Q+}_{b2}},\ \overline{\mathcal{U}^{Q+}_{2+}}\cap \overline{\mathcal{U}^{Q+}_{2-}}=\ol{\ga^{Q+}_{a2}}.
\eeqq

First, $X_{1+}$ and $X_{1-}$ are glued along the negative real axis $\Ga_c$. By the inverse image of $Q$ in Figure \ref{illustration-inverse-Q},
along the negative side of the real axis, $Y_{1+}$ and $Y_{1-}$ might intersect, this is consistent for $Y_{1+}$ and $Y_{1-}$, implying that $\widetilde{Q}$ is continuous.

Second, $X_{1+}$ and $X_{2-}$ are glued along $(\rho,cv_Q)$. And, $Y_{1+}$ and $Y_{2-}$ might intersect along $\ga_{b1}$. So, the gluing there is also consistent. Similarly, the same arguments work for that $X_{2+}$ and $X_{1-}$ are glued along $(\rho,cv_Q)$.

Hence, $\widetilde{Q}: Y\to X$ is an isomorphisim.

By the construction above, $\pi_X\circ\widetilde{Q}=Q$ on $Y$. If $z\in X$ with $|\pi_X(z)|>R$, then $z\in X_{1+}\cup X_{1-}$, so $\widetilde{Q}^{-1}(z)\in Y_{1+}\cup Y_{1-}\subset\mathcal{U}^{Q+}_{1+}\cup\mathcal{U}^{Q+}_{1-}$. If $\pi_X(z)\to\infty$, then $\widetilde{Q}^{-1}(z)$ is in the inverse branch of $Q$ near $\infty$, this, together with Lemma \ref{paraequ-12}, implies that $\widetilde{Q}^{-1}(z)$ has the asymptotic expansion $\pi_X(z)-(4m+2)+o(1)$.

(b) We will show that $\varphi:\widetilde{Y}\subset\cc\to \cc$ can be lifted to  $\widetilde{\varphi}:\widetilde{Y}\to X$, which is well-defined and holomorphic.

Take the following constants $e_1 = 0.84$, $e_{0}=-0.18$, $e_{-1} = 0.6$, one has,
\beq\label{paraequ-30-1-21-2}
\varphi_{1,max}(7)\leq(0.84+0.6)\sqrt{-\log\bigg(1-\bigg(\frac{0.86+0.6}{7-0.18}\bigg)^2\bigg)}
\approx0.30752.
\eeq

If $|\zeta|\geq7$, then, by Lemma \ref{paraequ-11},
\beq\label{near-equat-7}
|\varphi(\zeta)-\zeta|\leq |e_{0}|+2|e_1|+\varphi_{1,max}(7)=0.18+2\times 0.84+0.30752\approx2.16752<3;
\eeq
if $\zeta\in\cc\setminus E$ and $|\zeta|\leq7$, then
\beqq
|\varphi(\zeta)|\leq|\zeta|+|e_{0}|+2|e_1|+\varphi_{1,max}(7)<7+0.18+2\times 0.84+0.30752\approx7+2.16752<10,
\eeqq
where the last term is given by the maximum modulus theorem, the fact that the set $\{\zeta\in\cc\setminus E_1:\ |\zeta|<7\}$ is surrounded by the curve $\{\zeta:\ |\zeta|=7\}$, and $\varphi$ is univalent. Therefore, if $\zeta\in\cc\setminus \overline{\vv}(u_*,\tfrac{\pi}{5})$, then $\varphi(\zeta)$ cannot be in $\overline{\vv}(cv_Q,\tfrac{\pi}{5})$, since the distance between $\partial \vv(u_*,\tfrac{\pi}{5})$ and $\overline{\vv}(cv_Q,\tfrac{\pi}{5})$ is bigger than $
(\tfrac{7776}{3125}\cdot4(m+1)-(4e-1)m)\sin(\tfrac{\pi}{5})\approx(9.95328(m+1)-9.87313m)\sin(\tfrac{\pi}{5})
>9.9\sin(\tfrac{\pi}{5})\approx5.81907>3$ by Remark \ref{near-equat-21}.

Take $\zeta\in\widetilde{Y}$, it follows from Lemma \ref{nearequ-24} that $|\arg\tfrac{\varphi(\zeta)}{\zeta}|<\pi$ and $|\varphi(\zeta)|>\rho$. Define $\widetilde{\varphi}(\zeta)\in X$ as follows
\begin{itemize}
\item $\pi_X(\widetilde{\varphi}(\zeta))=\varphi(\zeta)$;
\item $\widetilde{\varphi}(\zeta)\in X_{1+}\cup X_{2-}$\quad\quad if $\text{Im}\,\zeta\geq0$ and $-\pi<\arg\tfrac{\varphi(\zeta)}{\zeta}\leq0$;
\item $\widetilde{\varphi}(\zeta)\in X_{1-}\cup X_{2+}$\quad\quad if $\text{Im}\,\zeta\leq0$ and $0\leq\arg\tfrac{\varphi(\zeta)}{\zeta}<\pi$;
\item $\widetilde{\varphi}(\zeta)\in X_{1+}\cup X_{1-}$\quad\quad otherwise.
\end{itemize}
Now, we verify this extension is reasonable. First, we show that for $\widetilde{\varphi}(\zeta)\in X_{2\pm}$, $|\varphi(\zeta)|<R$. For the first case of
$\widetilde{\varphi}(\zeta)\in X_{1+}\cup X_{2-}$, it is evident that $\varphi(\zeta)$ is in the half plane $H=\{w:\ \arg\,\zeta-\pi<\arg\,w<\arg\,\zeta\}$ and not in $\ol{\vv}(u_*,\tfrac{\pi}{5})$, and the distance between $\zeta$ and $H\setminus\ol{\cd}(0,R)\cup\ol(u_*,\tfrac{\pi}{5})$ is bounded blow by the distance between $\partial\cd(0,R)\cup\partial\vv(u_*,\tfrac{\pi}{5})$ and the real axis, which is greater than $(R-u_*)\sin(\tfrac{\pi}{5})=(2.66\times 10^m- (4e-1)m)\sin(\tfrac{\pi}{5})>3,$ where the details of the computation can be found in \eqref{near-equat-8}.

Next,  we prove the continuity of $\widetilde{\varphi}$. It is sufficient to study the cases that $\text{Im}\,\zeta=0$ or $\arg\,\tfrac{\varphi(\zeta)}{\zeta}=0$. If $\zeta\in\widetilde{Y}$ and $\text{Im}\,\zeta=0$, then $\zeta\in\rr_{-}$, then $\widetilde{\varphi}(\zeta)\in X_{1+}\cup X_{1-}$. If $\text{Im}\,\zeta\neq0$ and $\arg\,\tfrac{\varphi(\zeta)}{\zeta}=0$, then $\widetilde{\varphi}(\zeta)\in X_{1+}\cup X_{1-}$. Hence, for these cases, $\widetilde{\varphi}(\zeta)$ should be in $X_{1+}\cup X_{1-}$, and this does not cause any discontinuity. Therefore, this extension of $\varphi$ is holomorphic.
\end{proof}

\subsection{Estimates on $F$}

\begin{lemma}\label{paraequ-24}
For the critical value $cp_{Q1}:=(2m+1)+2\sqrt{m+m^2}=(\sqrt{m+1}+\sqrt{m})^2$, one has
\beqq
cp_{Q1}\geq 4m+2\sqrt{6}-3\approx 4m+1.89898\ \forall m\geq2.
\eeqq
\end{lemma}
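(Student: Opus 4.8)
The plan is to establish the elementary inequality $cp_{Q1} = (\sqrt{m+1}+\sqrt{m})^2 \ge 4m + 2\sqrt{6} - 3$ for all $m \ge 2$ by a direct algebraic manipulation, reducing it to a comparison of the surd $\sqrt{m^2+m}$ with a linear expression in $m$ and then squaring. First I would expand the left-hand side: $(\sqrt{m+1}+\sqrt{m})^2 = (m+1) + m + 2\sqrt{m(m+1)} = 2m+1 + 2\sqrt{m^2+m}$. So the claimed inequality is equivalent to
\begin{equation*}
2m + 1 + 2\sqrt{m^2+m} \ge 4m + 2\sqrt{6} - 3,
\end{equation*}
that is, $2\sqrt{m^2+m} \ge 2m + 2\sqrt{6} - 4$, or equivalently $\sqrt{m^2+m} \ge m + \sqrt{6} - 2$.

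Next I would check the sign of the right-hand side and square. Since $\sqrt{6} \approx 2.449 > 2$, the quantity $m + \sqrt{6} - 2$ is positive for all $m \ge 1$, so squaring is a reversible step. The inequality becomes
\begin{equation*}
m^2 + m \ge (m + \sqrt{6} - 2)^2 = m^2 + 2(\sqrt{6}-2)m + (\sqrt{6}-2)^2,
\end{equation*}
which simplifies to $m \ge 2(\sqrt{6}-2)m + (\sqrt{6}-2)^2$, i.e. $(1 - 2\sqrt{6} + 4)m \ge (\sqrt{6}-2)^2$, i.e. $(5 - 2\sqrt{6})m \ge (\sqrt{6}-2)^2 = 10 - 4\sqrt{6}$. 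Since $5 - 2\sqrt{6} = (\sqrt{6}-\sqrt{2}\cdot\ldots)$ — more simply, $5 - 2\sqrt{6} \approx 0.101 > 0$ — one divides to get the equivalent condition $m \ge \frac{10 - 4\sqrt{6}}{5 - 2\sqrt{6}}$. A short numerical estimate gives $\frac{10 - 4\sqrt{6}}{5 - 2\sqrt{6}} \approx \frac{0.202}{0.101} \approx 2$; in fact one can verify $2(5 - 2\sqrt{6}) = 10 - 4\sqrt{6}$ exactly, so the threshold is precisely $m \ge 2$, and the inequality holds for every integer $m \ge 2$ with equality impossible strictly but the bound valid.

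The only mild subtlety — and the "main obstacle," such as it is — is the bookkeeping of the two nested squarings and verifying that the final constant $\frac{10-4\sqrt6}{5-2\sqrt6}$ equals exactly $2$, so that the stated range $m \ge 2$ is tight rather than merely sufficient; this is a one-line check since $2(5-2\sqrt6) = 10 - 4\sqrt6$. For completeness I would also record the numerical value $4m + 2\sqrt{6} - 3 \approx 4m + 1.89898$ cited in the statement, which follows from $2\sqrt{6} \approx 4.89898$. This completes the proof.
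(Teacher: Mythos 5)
Your proof is correct and follows essentially the same route as the paper: both reduce the claim to $\sqrt{m^2+m}\ge m+(\sqrt{6}-2)$, square, and land on the linear condition $(5-2\sqrt{6})m\ge 10-4\sqrt{6}$, which holds exactly when $m\ge 2$. The only difference is direction of travel—you verify the stated constant directly, while the paper parametrizes $cp_{Q1}\ge 4m+a$ with $b=(a-1)/2$ and solves $(1-2b)m\ge b^2$ for the optimal $b$—but the underlying algebra is identical.
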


\begin{proof}
Suppose $(2m+1)+2\sqrt{m+m^2}\geq4m+a$ for some $a>0$. This is equivalent to
$\sqrt{m+m^2}\geq m+\tfrac{a-1}{2}$. It is evident that $a\geq1$. So, assume that $\sqrt{m+m^2}\geq m+b$. That is, $m+m^2\geq(m+b)^2$. So, $(1-2b)m\geq b^2$ and $b\leq1/2$. Since $m\geq2$, one has $2(1-2b)\geq b^2$ and $b\leq 1/2$. Hence, the maximum value for $b$ is $\sqrt{6}-2\approx0.44949$.
\end{proof}

\begin{lemma}\label{paraequ-55}
Take the constants $e_1 = 0.84$, $e_{0}=-0.18$, and $e_{-1} = 0.6$.
Suppose that $r>cp_{Q1}:=(2m+1)+2\sqrt{m+m^2}=(\sqrt{m+1}+\sqrt{m})^2>4m+1$, $m\geq3$, $\tht\in[-\tfrac{\pi}{2},\frac{\pi}{2}]$, $\text{Re}\,(\zeta e^{-i\tht})>r$, and $\varphi$ is introduced in Lemma \ref{paraequ-11}. Then the following estimates hold for $z=\varphi(\zeta)$:
\begin{itemize}
\item[(a)] $F(z)-z\in\cd(\al(r,\tht),\be_{max}(r))$, where
\beqq
\al(r,\tht)=(4m+2)+e_0+
\frac{(8m(m+1)+1)e^{-i\tht}}{2r}
\eeqq
and
\beqq
\be_{max}(r)=2|e_1|+\tfrac{8m(m+1)+1}{2r}+Q_{2,max}(r)+\varphi_{1,max}(r),
\eeqq
$Q_{2,max}(r)$ is given in Lemma \ref{paraequ-12}, and $\varphi_{1,max}(r)$ is defined in Lemma \ref{paraequ-11} (e);
\item[(b)]
\beqq
\text{Arg}\,\De F_{min}(r,\tht)\leq\arg\,(F(z)-z)\leq\text{Arg}\,\De F_{max}(r,\tht),
\eeqq
where
\begin{align*}
&\text{Arg}\,\De F_{\big\{\substack{max\\ min}\big\}}(r,\tht)=-\arctan\bigg(\frac{\tfrac{(8m(m+1)+1)\sin(\tht)}{2r}}{(4m+2)+e_0+
\tfrac{(8m(m+1)+1)\cos(\tht)}{2r}}\bigg)\\
&\pm\arcsin\bigg(\frac{\be_{max}(r)}{\sqrt{((4m+2)+e_0)^2+
\big(\tfrac{8m(m+1)+1}{2r}\big)^2+2((4m+2)+e_0)\tfrac{(8m(m+1)+1)\cos\tht}{2r}}}\bigg)\\
=&\arctan\bigg(\frac{\text{Im}\,\al(r,\tht)}{\text{Re}\,\al(r,\tht)}\bigg)\pm\arcsin\bigg(\frac{\be_{max}(r)}{|\al(r,\tht)|}\bigg),
\end{align*}
$\al(r,\tht)$ is a complex number and $|\al(r,\tht)|$ is the norm of $\al(r,\tht)$;
\item[(c)] one has
\beqq
\text{Abs}\,\De F_{min}(r,\tht)\leq|F(z)-z|\leq\text{Abs}\,\De F_{max}(r,\tht),
\eeqq
where
\begin{align*}
&\text{Abs}\,\De F_{\big\{\substack{max\\ min}\big\}}(r,\tht)\\
=&\sqrt{((4m+2)+e_0)^2+
\big(\tfrac{8m(m+1)+1}{2r}\big)^2+2((4m+2)+e_0)\tfrac{(8m(m+1)+1)\cos\tht}{2r}}\pm\be_{max}(r)\\
=&|\al(r,\tht)|\pm\be_{max}(r);
\end{align*}
\item[(d)] one has
\beqq
|\log F'(z)|\leq\text{Log}DF_{max}(r):=\text{Log}DQ_{max}(r)+\text{Log}D\varphi_{max}(r),
\eeqq
where $\text{Log}\,DQ_{max}(r)$ is introduced in Lemma \ref{paraequ-56}, and $\text{Log}\,D\varphi_{max}(r)$ is specified in Lemma \ref{paraequ-11} (f).
\end{itemize}
\end{lemma}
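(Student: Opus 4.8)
The plan is to exploit the identity $F(z)=Q(\zeta)$, which holds because $z=\varphi(\zeta)$ with $\varphi$ injective, so that $\varphi^{-1}(z)=\zeta$ and $F(z)=Q\circ\varphi^{-1}(z)=Q(\zeta)$. Hence $F(z)-z=Q(\zeta)-\varphi(\zeta)$, and I will substitute the expansion of $Q$ from Lemma \ref{paraequ-12} together with the normalized representation $\varphi(\zeta)=\zeta+c_0+\varphi_1(\zeta)$ of Lemma \ref{paraequ-11} (which applies because the half-plane $\{\text{Re}(\zeta e^{-i\tht})>r\}$ with $r>cp_{Q1}$ lies well outside $E_1$). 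The leading terms cancel, leaving
\beqq
F(z)-z=\bigl((4m+2)-c_0\bigr)+\frac{8m(m+1)+1}{\zeta}+Q_2(\zeta)-\varphi_1(\zeta).
\eeqq
Since $\text{Re}(\zeta e^{-i\tht})>r$ forces $|\zeta|\ge r$ and $r>cp_{Q1}>|a_E(1)|+|e_0|$, all the estimates of Lemmas \ref{paraequ-12}, \ref{paraequ-11} and \ref{paraequ-56} are available at radius $r$.

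For part (a) I will put each of the four summands into a Euclidean disk and add. Lemma \ref{paraequ-11}(a) gives $-c_0\in\cd(e_0,2|e_1|)$; Lemma \ref{paraequ-10}(a) with $t=r$ gives $1/\zeta\in\cd\bigl(e^{-i\tht}/(2r),\,1/(2r)\bigr)$, so $(8m(m+1)+1)/\zeta$ lies in the disk with centre $(8m(m+1)+1)e^{-i\tht}/(2r)$ and radius $(8m(m+1)+1)/(2r)$; Lemma \ref{paraequ-12} gives $Q_2(\zeta)\in\cd(0,Q_{2,max}(r))$; and Lemma \ref{paraequ-11}(e) gives $\varphi_1(\zeta)\in\cd(0,\varphi_{1,max}(r))$. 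Adding the four centres yields exactly $\al(r,\tht)$ and adding the four radii yields $\be_{max}(r)$, which is (a).

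Parts (b) and (c) then follow from (a) by elementary plane geometry. Writing $F(z)-z=\al(r,\tht)+\omega$ with $|\omega|\le\be_{max}(r)$, a direct computation with $e^{-i\tht}=\cos\tht-i\sin\tht$ produces the stated formulas for $\text{Re}\,\al(r,\tht)$, $\text{Im}\,\al(r,\tht)$ and $|\al(r,\tht)|^{2}$; since $\text{Re}\,\al(r,\tht)>0$ (using $m\ge3$, $|e_0|<1$ and $r>4m+1$), we have $\arg\al(r,\tht)=\arctan\bigl(\text{Im}\,\al(r,\tht)/\text{Re}\,\al(r,\tht)\bigr)$, and Lemma \ref{paraequ-13}(a) bounds $|\arg(F(z)-z)-\arg\al(r,\tht)|$ by $\arcsin(\be_{max}(r)/|\al(r,\tht)|)$, which is (b); the triangle inequality $\bigl|\,|F(z)-z|-|\al(r,\tht)|\,\bigr|\le\be_{max}(r)$ is (c). For (d), the chain rule applied to $F=Q\circ\varphi^{-1}$ with $(\varphi^{-1})'(z)=1/\varphi'(\zeta)$ gives $F'(z)=Q'(\zeta)/\varphi'(\zeta)$, hence $\log F'(z)=\log Q'(\zeta)-\log\varphi'(\zeta)$; bounding $|\log Q'(\zeta)|$ by Lemma \ref{paraequ-56}(a) and $|\log\varphi'(\zeta)|$ by Lemma \ref{paraequ-11}(f) and adding yields $|\log F'(z)|\le\text{Log}DF_{max}(r)$.

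Everything here is mechanical once the earlier lemmas are in hand, so I expect no conceptual obstacle; the main effort is the bookkeeping of which centre and radius each term of $F(z)-z$ contributes. The one point needing care is that the argument and modulus estimates in (b)--(c) implicitly require $0\notin\cd(\al(r,\tht),\be_{max}(r))$, i.e.\ $\be_{max}(r)<|\al(r,\tht)|$, so that $\arcsin$ receives an admissible argument and $\arg\al(r,\tht)$ is genuinely the argument of a point with positive real part; this holds throughout the relevant (large) range of $r$, where $\be_{max}(r)$ is of order $m$ while $|\al(r,\tht)|\approx 4m+2$.
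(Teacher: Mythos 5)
Your route is the same as the paper's: write $F(z)-z=Q(\zeta)-\varphi(\zeta)$, cancel the linear term, take the centre $\al(r,\tht)=(4m+2)+e_0+\tfrac{(8m(m+1)+1)e^{-i\tht}}{2r}$ and bound the remainder by $2|e_1|+\tfrac{8m(m+1)+1}{2r}+Q_{2,max}(r)+\varphi_{1,max}(r)$ using Lemma \ref{paraequ-10}(a), Lemma \ref{paraequ-11}(a),(e) and Lemma \ref{paraequ-12}; then (b) follows from Lemma \ref{paraequ-13}(a), (c) from the triangle inequality, and (d) from $F'(z)\varphi'(\zeta)=Q'(\zeta)$ together with Lemma \ref{paraequ-56} and Lemma \ref{paraequ-11}(f). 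Up to the cosmetic difference of summing four disks versus grouping the error terms into one $\be(\zeta,r,\tht)$, this is exactly the paper's argument for (a)--(d).

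However, the point you acknowledge but dispose of with an order-of-magnitude remark --- that $\be_{max}(r)<|\al(r,\tht)|$ --- is precisely where the paper's proof spends most of its effort, and as it stands this is a genuine gap: without it Lemma \ref{paraequ-13}(a) (which requires $|a|>|b|$) does not apply and the $\arcsin$ in (b) need not be defined. To close it one needs a quantitative bound on $Q_{2,max}(r)$ for $r\geq cp_{Q1}$; the paper derives $|Q_2(\zeta)|\leq\tfrac{7}{6}m+\tfrac{1}{m}+6$ for $|\zeta|\geq 4m+1$ from the estimates \eqref{paraequ-20}--\eqref{paraequ-23}, combines it with $\varphi_{1,max}(r)\leq 0.22$ from \eqref{paraequ-30-1-21-1} and $\tfrac{8m(m+1)+1}{2r}<1.3m$, and obtains $\be_{max}(r)<\tfrac{37}{15}m+8+\tfrac{1}{m}\leq(4m+2)+e_0\leq|\al(r,\tht)|$ --- but this chain only works for $m\geq5$, and the lemma is claimed for $m\geq3$. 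The cases $m=3$ and $m=4$ are handled in the paper by explicit numerical evaluation of $\be_{max}$ at $r=cp_{Q1}$ (giving roughly $9.22<13.82$ and $11.45<17.82$ respectively), which your heuristic ``$\be_{max}(r)$ is of order $m$ while $|\al(r,\tht)|\approx 4m+2$'' does not capture, since the crude linear-in-$m$ bound actually fails for small $m$. Supplying these estimates (or an equivalent verification covering all $m\geq3$, all $r\geq cp_{Q1}$ and all $\tht\in[-\tfrac{\pi}{2},\tfrac{\pi}{2}]$) is what is missing from your proposal.
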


\begin{proof}
It is evident that
\begin{align*}
&\al(r,\tht)=(4m+2)+
\frac{(8m(m+1)+1)e^{-i\tht}}{2r}+e_0\\
=&(4m+2)+e_0+\frac{(8m(m+1)+1)\cos(\tht)}{2r}-i\frac{(8m(m+1)+1)\sin(\tht)}{2r},
\end{align*}
\begin{align*}
\text{Re}\,\al(r,\tht)=(4m+2)+e_0+\frac{(8m(m+1)+1)\cos(\tht)}{2r},
\end{align*}
\begin{align*}
\text{Im}\,\al(r,\tht)=-\frac{(8m(m+1)+1)\sin(\tht)}{2r},
\end{align*}
and
\begin{align*}
&|\al(r,\tht)|=\sqrt{(\text{Re}\,\al(r,\tht))^2+(\text{Im}\,\al(r,\tht))^2}\\
=&\sqrt{\big((4m+2)+e_0+\tfrac{(8m(m+1)+1)\cos(\tht)}{2r}\big)^2+\big(-\tfrac{(8m(m+1)+1)\sin(\tht)}{2r}\big)^2}\\
=&\sqrt{((4m+2)+e_0)^2+
\big(\tfrac{8m(m+1)+1}{2r}\big)^2+2((4m+2)+e_0)\tfrac{(8m(m+1)+1)\cos\tht}{2r}}.
\end{align*}

(a) For $z=\varphi(\zeta)$, by \eqref{paraequ-9}, $\varphi(\zeta)=\zeta+c_0+\varphi_1(\zeta)$, by \eqref{paraequ-66}, one has
\begin{align}\label{paraequ-57}
&F(z)-z=Q(\zeta)-\varphi(\zeta)\\
=&\bigg(\zeta+(4m+2)+
\frac{8m(m+1)+1}{\zeta}+Q_2(\zeta)\bigg)-(\zeta+c_0+\varphi_1(\zeta))\nonumber\\
=&(4m+2)+
\frac{8m(m+1)+1}{\zeta}
+Q_2(\zeta)-c_0-\varphi_1(\zeta)\nonumber\\
=&\bigg((4m+2)+e_0+\frac{(8m(m+1)+1)e^{-i\tht}}{2r}\bigg)\nonumber\\
&+\bigg(\frac{8m(m+1)+1}{\zeta}-\frac{(8m(m+1)+1)e^{-i\tht}}{2r}+Q_2(\zeta)-(c_0+e_0)-\varphi_1(\zeta)\bigg)\nonumber\\
=&\al(r,\tht)+\be(\zeta,r,\tht).
\end{align}

By Lemma \ref{paraequ-10} (a), $|\tfrac{1}{\zeta}-\tfrac{e^{-i\tht}}{2r}|\leq\tfrac{1}{2r}$. So, by Lemma \ref{paraequ-11} (a) and Lemma \ref{paraequ-12},
\begin{align*}
&|\be(\zeta,r,\tht)|\\
=&\bigg|\frac{8m(m+1)+1}{\zeta}-\frac{(8m(m+1)+1)e^{-i\tht}}{2r}+Q_2(\zeta)-(c_0+e_0)-\varphi_1(\zeta)\bigg|\\
\leq&\bigg|\frac{8m(m+1)+1}{\zeta}-\frac{(8m(m+1)+1)e^{-i\tht}}{2r}\bigg|+|Q_2(\zeta)|+2|e_1|+|\varphi_1(\zeta)|\\
\leq&2|e_1|+\tfrac{8m(m+1)+1}{2r}+Q_{2,max}(r)+\varphi_{1,max}(r)=\be_{max}(r).
\end{align*}
So, this number is contained in a disk with center $\al(r,\tht)$ and radius $\be_{\max}(r)$.
This shows (a).

Next, we show that for $r\geq cp_{Q1}$, $m\geq3$, and $\tht\in[-\tfrac{\pi}{2},\frac{\pi}{2}]$, one has
\beq\label{paraequ-58}
\be_{\max}(r)<|\al(r,\tht)|.
\eeq

It follows from $r> 4m+1$ and $m\geq3$ that
\beqq
\frac{8m(m+1)+1}{r}\leq\frac{8m(m+1)+1}{4m+1}<2.6m.
\eeqq

Furthermore, one has
\begin{itemize}
\item by \eqref{paraequ-20}, if $a=\tfrac{2}{3}m$ and $r>4m+1$, then
\begin{align*}
\frac{8C^3_{2m}}{r(r-1)}<\frac{2}{3}m;
\end{align*}
\item by \eqref{paraequ-21}, if $a=\tfrac{1}{m}$ and $r>4m+1$, then
\begin{align*}
\frac{16m}{r(r-1)}<\frac{1}{m};
\end{align*}
\item by \eqref{paraequ-22}, if $a=\tfrac{m}{2}$ and $r>4m+1$, then
\begin{align*}
\frac{2(2m)(2m-1)(2m-2)(2m-3)}{3r(r-1)^2}\bigg(\frac{r+1}{r-1}\bigg)^{2m-4}
<\frac{m}{2};
\end{align*}
\item by \eqref{paraequ-23}, if $a=6$ and $r>4m+1$, then
\begin{align*}
\frac{8(2m)(2m-1)}{(r-1)^2}\bigg(\frac{r+1}{r-1}\bigg)^{2m-2}
<6.
\end{align*}
\end{itemize}

Hence, combining the above estimates, one has
\beq\label{paraequ-28}
|Q_2(\zeta)|\leq Q_{2,max}(r)\leq \frac{2}{3}m+\frac{1}{m}+\frac{m}{2}+6=\frac{7}{6}m+\frac{1}{m}+6,\ |\zeta|=r\geq 4m+1.
\eeq

 Take the following constants $e_1=0.84$, $e_{0}=-0.18$, $e_{-1}=0.6$.

So, by \eqref{paraequ-30-1-21-1}, the radius is
\begin{align}\label{paraequ-54}
&2|e_1|+\tfrac{8m(m+1)+1}{2r}+Q_{2,max}(r)+\varphi_{1,max}(r)\nonumber\\
\leq& 2\times0.84+1.3m+\bigg(\frac{2}{3}m+\frac{1}{m}+\frac{m}{2}+6\bigg)+0.214541\nonumber\\
<&\frac{37}{15}m+8+\frac{1}{m}\leq(4m+2)+e_0<|\al(r,\tht)|,
\end{align}
where $m\geq5$ and $\tht\in[-\tfrac{\pi}{2},\tfrac{\pi}{2}]$ are used.

Now, we study the case $m=4$.
For $m=4$ and $r=(2m+1)+2\sqrt{m+m^2}
=(2\times4+1)+2\sqrt{4+4^2}=9+4\sqrt{5}$, direct computation gives
\beqq
\frac{8m(m+1)+1}{r}=\frac{8\times4\times5+1}{9+4\sqrt{5}}\approx 8.97222,
\eeqq
\begin{align*}
\frac{8C^3_{2m}}{r(r-1)}
=\frac{8\times\tfrac{8\times7\times6}{3\times2\times1}}{(9+4\sqrt{5})(8+4\sqrt{5})}\approx 1.47343,
\end{align*}
\begin{align*}
\frac{16m}{r(r-1)}=\frac{16\times4}{(9+4\sqrt{5})(8+4\sqrt{5})}\approx 0.21049,
\end{align*}
\begin{align*}
&\frac{2(2m)(2m-1)(2m-2)(2m-3)}{3r(r-1)^2}\bigg(\frac{r+1}{r-1}\bigg)^{2m-4}\\
=&\frac{2(2\times4)(2\times4-1)(2\times4-2)(2\times4-3)}{3(9+4\sqrt{5})(8+4\sqrt{5})^2}\bigg(\frac{10+4\sqrt{5}}{8+4\sqrt{5}}\bigg)^{4}
\approx 0.339677,
\end{align*}
\begin{align*}
&\frac{8(2m)(2m-1)}{(r-1)^2}\bigg(\frac{r+1}{r-1}\bigg)^{2m-2}\\
=&\frac{8(2\times4)(2\times4-1)}{(8+4\sqrt{5})^2}\bigg(\frac{10+4\sqrt{5}}{8+4\sqrt{5}}\bigg)^{6}
\approx3.04763.
\end{align*}
So, one has
\begin{align*}
&\be_{max}(9+4\sqrt{5})\\
\leq&2\times0.84+8.97222/2+1.47343+0.21049+ 0.339677+3.04763+0.214541\\
\approx&11.4519<17.82=(4\times4+2)-0.18<|\al(9+4\sqrt{5},\tht)|.
\end{align*}

The left case is $m=3$. For $m=3$ and $r=(2m+1)+2\sqrt{m+m^2}=(2\times3+1)+2\sqrt{3+3^2}=7+4\sqrt{3}$, direct computation gives
\beqq
\frac{8m(m+1)+1}{r}=\frac{8\times3\times4+1}{7+4\sqrt{3}}\approx6.96429,
\eeqq
\begin{align*}
\frac{8C^3_{2m}}{r(r-1)}
=\frac{8\times\tfrac{6\times5\times4}{3\times2\times1}}{(7+4\sqrt{3})(6+4\sqrt{3})}\approx0.88856,
\end{align*}
\begin{align*}
\frac{16m}{r(r-1)}=\frac{16\times3}{(7+4\sqrt{3})(6+4\sqrt{3})}\approx0.266568,
\end{align*}
\begin{align*}
&\frac{2(2m)(2m-1)(2m-2)(2m-3)}{3r(r-1)^2}\bigg(\frac{r+1}{r-1}\bigg)^{2m-4}\\
=&\frac{2(2\times3)(2\times3-1)(2\times3-2)(2\times3-3)}{3(7+4\sqrt{3})(6+4\sqrt{3})^2}\bigg(\frac{8+4\sqrt{3}}{6+4\sqrt{3}}\bigg)^{2}
\approx0.137461,
\end{align*}
\begin{align*}
&\frac{8(2m)(2m-1)}{(r-1)^2}\bigg(\frac{r+1}{r-1}\bigg)^{2m-2}\\
=&\frac{8(2\times3)(2\times3-1)}{(6+4\sqrt{3})^2}\bigg(\frac{8+4\sqrt{3}}{6+4\sqrt{3}}\bigg)^{4}
\approx2.55277.
\end{align*}
So, one has
\begin{align*}
&\be_{max}(7+4\sqrt{3})\\
\leq&2\times0.84+6.96429/2+0.88856+0.266568+0.137461+2.55277+0.214541\\
\approx&9.22205<13.82=(3\times4+2)-0.18<|\al(7+4\sqrt{3},\tht)|.
\end{align*}

(b) It follows from \eqref{paraequ-57}, \eqref{paraequ-58}, and Lemma \ref{paraequ-13} (a) that
\begin{align*}
&|\arg\,(F(z)-z)-\arg\,\al(r,\tht)|\leq\bigg|\arg\,\bigg(1+\frac{\be(\zeta,r,\tht)}{\al(r,\tht)}\bigg)\bigg|\\
\leq&\arcsin\bigg|\frac{\be(\zeta,r,\tht)}{\al(r,\tht)}\bigg|\leq\arcsin\bigg(\frac{\be_{max}(r)}{|\al(r,\tht)|}\bigg).
\end{align*}
This, together with $\arg\,\al(r,\tht)=\arctan(\tfrac{\text{Im}\,\al(r,\tht)}{\text{Re}\,\al(r,\tht)})$, yields the conclusion of (b).

(c) By \eqref{paraequ-57}, one has
\begin{align*}
|\al(r,\tht)|-\be_{max}(r)\leq|F(z)-z|\leq|\al(r,\tht)|+\be_{max}(r).
\end{align*}

(d) Since $z=\varphi(\zeta)$, $F(z)=F\circ\varphi(\zeta)=Q(\zeta)$,  and $F'(z)\cdot\varphi'(\zeta)=Q'(\zeta)$, this is derived by Lemma \ref{paraequ-56} and Lemma \ref{paraequ-11} (f).
\end{proof}

\begin{remark}
In Lemma \ref{paraequ-55}, $\tht\in[-\tfrac{\pi}{2},\tfrac{\pi}{2}]$ can be replaced by
$\tht\in\mathbb{R}$,  the corresponding statements also hold but one should assume that $m$ is sufficiently large because of \eqref{paraequ-54}.
\end{remark}

\subsection{Repelling Fatou coordinate $\widetilde{\Phi}_{rep}$ on $X$}\label{repell-fatou-2021-12-29-2}

\begin{proof} (Proof of Proposition \ref{near-equat-2})
By the definitions of $g$, $\widetilde{\varphi}$ in Proposition \ref{paraequ-14}, and $\widetilde{Q}$ in \eqref{paraequ-18}, one has
\beqq
F\circ\pi_X\circ g=Q\circ \varphi^{-1}\circ\pi_X\circ\widetilde{\varphi}\circ\widetilde{Q}^{-1}=Q\circ\widetilde{Q}^{-1}=\pi_X.
\eeqq

So, $F$ has an inverse branch near $\infty$, denoted by $\ol{g}(z)=z-(4m+2-c_0)+o(1)$ as $z\to\infty$. It follows from Lemmas \ref{paraequ-13} and \ref{paraequ-11} that
\begin{align*}
\color{red}&|\arg(4m+2-c_0)|=\bigg|(4m+2)\arg\bigg(1+\frac{-c_0}{4m+2}\bigg)\bigg|
=\bigg|\arg\bigg(1+\frac{-c_0}{4m+2}\bigg)\bigg|\\
\leq&\arcsin\bigg(\frac{|e_0|+2|e_1|}{4m+2}\bigg)\leq\frac{\pi}{3}\cdot\frac{1.86}{4m+2}<\frac{\pi}{10}\ \forall m\geq2.
\end{align*}

For sufficiently large $L>0$, $\ol{g}$ exists and is injective in
\beqq
W=\cc\setminus\overline{\vv}(-L,\tfrac{\pi}{10}),
\eeqq
and $\bar{g}$ satisfies $|\arg(\bar{g}(z)-z)-\pi|<\tfrac{\pi}{10}$, so $\bar{g}(W)\subset W$, and
\beqq
\text{Re}\,\bar{g}(z)<\text{Re}\,z-(4m+2)+|e_0|+2|e_1|+1=\text{Re}\,z-(4m+2)+2.86=\text{Re}\,z-4m+0.86.
\eeqq
By Proposition \ref{paraequ-14} (a), the asymptotic formula for $\widetilde{Q}^{-1}(z)$, one has that $\pi_X\circ g\circ \pi^{-1}_X(z)\to\infty$ as $z\in\pi_X(X)$ and $z\to\infty$. Hence, $\pi_X\circ g\circ \pi^{-1}_X(z)$ and $\bar{g}(z)$ should coincide as the only inverse of $F$ near $\infty$. So, $\pi_X\circ g(z)=\bar{g}\circ \pi_X(z)$ for $\pi_X(z)$ sufficiently large.

By the discussions of the theory of Fatou coordinates, there is a Fatou coordinate $\Phi_{rep}(z)$, which is holomorphic and injective in $\{z:\ \text{Re}\,z<-L'\}$ for sufficiently large $L'>L$ and satisfies $\Phi_{rep}\circ\bar{g}(z)=\Phi_{rep}(z)-1$. The Fatou coordinate $\Phi_{rep}(z)$ can be extended to $W$, since $\Phi_{rep}(z)$ and $\bar{g}|W$ are injective.

Let $W'=\pi^{-1}_X(W\cap\pi_X(X))$, then $\pi_X|W'$ is injective if $L$ is large. Define
$\widetilde{\Phi}_{rep}\circ\pi_X$ on $W'$. It is evident that $\widetilde{\Phi}_{rep}\circ g(z)=\widetilde{\Phi}_{rep}(z)-1$ in $W'$.  By the following Lemma \ref{paraequ-19},  the Fatou coordinates $\widetilde{\Phi}_{rep}$ can be extended to $X$ as follows:
\beqq
\widetilde{\Phi}_{rep}(z)=\widetilde{\Phi}_{rep}(g^n(z))+n,
\eeqq
where $n$ is chosen such that $g^n(z)\in W'$. This is well-defined by the functional equation. By the injectivity of $\Phi_{rep}$ and $g$, $\widetilde{\Phi}_{rep}$ is injective. It is evident that for any $z\in X$, $\text{Re}\,\pi_X(g^n(z))\to-\infty$ as $n\to\infty$.
\end{proof}

\begin{lemma}\label{paraequ-19}
For any point $z\in X$, there exists a positive integer $n$ such that $g^n(z)\in W'$.
\end{lemma}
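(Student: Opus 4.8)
The plan is to exploit the contraction that $g$ inherits from the inverse branch $\bar g$ of $F$ near $\infty$. Recall from the proof of Proposition \ref{near-equat-2} that on $W'=\pi^{-1}_X(W\cap\pi_X(X))$ the map $g$ coincides with the lift of $\bar g$, where $W=\cc\setminus\overline{\vv}(-L,\tfrac{\pi}{10})$, that $\bar g(W)\subset W$, and that $\text{Re}\,\bar g(z)<\text{Re}\,z-4m+0.86$. Thus every time an orbit point lies in $W'$, its real part under $\pi_X$ drops by at least $4m-0.86>0$, so once the orbit enters $W'$ it stays there and drifts to $\text{Re}\to-\infty$. The content of the lemma is therefore that \emph{every} orbit eventually enters $W'$, i.e. that there is no point of $X$ whose forward $g$-orbit avoids $W'$ forever. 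Since $W'$ is the complement in $X$ of the preimage of the ``bad cone'' $\overline{\vv}(-L,\tfrac{\pi}{10})$, what must be ruled out is an orbit that stays in $\pi_X^{-1}(\overline{\vv}(-L,\tfrac{\pi}{10}))\subset X$ for all time.

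First I would record the basic mapping properties: $g=\widetilde\varphi\circ\widetilde Q^{-1}$ satisfies $F\circ\pi_X\circ g=\pi_X$ (Proposition \ref{near-equat-2}), so $\pi_X(g(z))$ is the appropriate inverse branch of $F$ applied to $\pi_X(z)$; and $\widetilde Q:Y\to X$ is an isomorphism with $\widetilde Q^{-1}(w)=\pi_X(w)-(4m+2)+o(1)$ as $\pi_X(w)\to\infty$ (Proposition \ref{paraequ-14}(a)), while $\widetilde\varphi:\widetilde Y\to X$ is univalent with $\pi_X\circ\widetilde\varphi=\varphi$ and $|\varphi(\zeta)-\zeta|<3$ on $\{|\zeta|\ge 7\}$ by \eqref{near-equat-7}. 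Combining these, for $z\in X$ with $|\pi_X(z)|$ large one has $\pi_X(g(z))=\pi_X(z)-(4m+2)+O(1)$, hence $\text{Re}\,\pi_X(g(z))<\text{Re}\,\pi_X(z)-4m+C$ for an absolute constant $C$; this is the quantitative drift estimate. Next I would argue that the $g$-orbit of any $z\in X$ cannot stay in any set of the form $\{w\in X:\ \text{Re}\,\pi_X(w)\ge -M\}$: on such a set $\pi_X$ is bounded below, so either $\pi_X\circ g$ decreases $\text{Re}$ by the definite amount $4m-C$ each step (when $\pi_X$ is large) forcing an exit, or the orbit remains in a compact part of $X$, which is impossible because $g$ is a holomorphic self-map of $X$ with no fixed points there (a fixed point $g(z)=z$ would give $F(\pi_X(z))=\pi_X(z)$, but $F=Q\circ\varphi^{-1}$ has its only finite fixed point at $0$ while $0\notin\pi_X(X)$ away from the relevant region, and anyway the parabolic fixed point sits at $\infty$), and an orbit of a fixed-point-free holomorphic self-map of a hyperbolic-type surface cannot accumulate in the interior. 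Concretely, I would use the Fatou-coordinate description: on $W'$ the function $\widetilde\Phi_{rep}$ satisfies $\widetilde\Phi_{rep}(g(z))=\widetilde\Phi_{rep}(z)-1$, so $\text{Re}\,\widetilde\Phi_{rep}$ strictly decreases by $1$ along orbits in $W'$, which both shows the orbit leaves any bounded region and, by the asymptotic $\widetilde\Phi_{rep}\circ\pi_X^{-1}(z)=z+o(z)$, forces $\text{Re}\,\pi_X\to-\infty$; for the finitely many initial steps before entering $W'$, a compactness/no-fixed-point argument on $X\setminus W'$ supplies the needed finite bound on $n$.

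The main obstacle is the last point: controlling the orbit on the piece $X\setminus W'$, i.e. showing that the ``bad cone'' $\pi_X^{-1}(\overline{\vv}(-L,\tfrac{\pi}{10}))$ is escaped in finitely many steps rather than serving as a trap. The clean way around this is to observe that $g$ maps $X\setminus W'$ forward into $W'$ after a bounded number of iterates because of the directional estimate $|\arg(\bar g(z)-z)-\pi|<\tfrac{\pi}{10}$ established in the proof of Proposition \ref{near-equat-2}: each application of $g$ (equivalently $\bar g$ composed with the sheet structure) moves $\pi_X$ of the point by a vector of argument within $\tfrac{\pi}{10}$ of $\pi$ and of modulus at least $4m-C$, so the orbit is pushed uniformly into the left half-plane along a nearly-horizontal direction, and after finitely many steps it must land in $W=\cc\setminus\overline{\vv}(-L,\tfrac{\pi}{10})$; the number of steps is bounded in terms of $L$, $m$, and the starting point, but for the qualitative statement only finiteness is needed. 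Once $\pi_X$ of the orbit lies in $W$, the point lies in $W'$ by definition, which is the desired conclusion.
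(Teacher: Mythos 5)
Your proposal is genuinely different from the paper's argument, and unfortunately the key step has a gap. You want to show that an orbit starting in $X\setminus W'$ escapes after finitely many iterates of $g$, and you propose to do this via the directional estimate $|\arg(\bar g(z)-z)-\pi|<\tfrac{\pi}{10}$ together with the drift estimate $\text{Re}\,\bar g(z)<\text{Re}\,z-(4m-0.86)$. But those estimates are established in the proof of Proposition \ref{near-equat-2} for the inverse branch $\bar g$ \emph{on} $W=\cc\setminus\overline{\vv}(-L,\tfrac{\pi}{10})$, where $\bar g$ exists and is identified with $g$ via $\pi_X$; they say nothing about how $g$ behaves on $X\setminus W'$, where $\pi_X$ lands inside the cone $\overline{\vv}(-L,\tfrac{\pi}{10})$ and $\bar g\circ\pi_X$ is not even defined. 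Likewise, the asymptotic $\widetilde Q^{-1}(w)=\pi_X(w)-(4m+2)+o(1)$ only governs $g$ when $\pi_X(w)$ is large, so neither ingredient controls the orbit on $X\setminus W'$. The fallback ``compactness/no-fixed-point argument on $X\setminus W'$'' is not worked out and cannot be used as stated: $X\setminus W'$ is bounded under $\pi_X$ but not compact in $X$ (it has boundary along $|\pi_X|=\rho$), and it is not obvious from those observations alone that no orbit can linger there indefinitely.

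The paper's proof avoids tracking the orbit on $X\setminus W'$ altogether. One picks a reference point $z_0\in W'$ (whose orbit stays in $W'$, since $\bar g(W)\subset W$) and notes that $\pi_X(g^n(z_0))\to\infty$ while $\pi_X(\partial W')$ is relatively compact in the hyperbolic domain $\cc\setminus\overline{\cd}(0,\rho)$; hence $d_{\cc\setminus\overline{\cd}(0,\rho)}(\pi_X(g^n(z_0)),\pi_X(\partial W'))\to\infty$, and because $\pi_X$ is holomorphic and non-expanding for the Poincar\'e metrics (Schwarz--Pick) the same divergence holds for $d_X(g^n(z_0),\partial W')$. On the other hand, applying Schwarz--Pick to the holomorphic self-map $g^n:X\to X$ gives $d_X(g^n(z),g^n(z_0))\leq d_X(z,z_0)$, a fixed finite number. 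Once $n$ is so large that $d_X(z,z_0)<d_X(g^n(z_0),\partial W')$, the point $g^n(z)$ cannot lie outside $W'$. This soft hyperbolic-geometry argument sidesteps exactly the quantitative control your approach lacks; if you want to pursue the direct route you would need to establish drift/direction estimates for $g$ valid uniformly on $X\setminus W'$, which is not available in the text and seems to require genuinely new work.
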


\begin{proof}
Pick a point $z_0\in W'$. Let $\partial W'$ be the boundary of $W'$ restricted to $X$. Then $\pi_X(\partial W')$ is a union of two finite segments. Since $X$ is isomorphic to $Y$ as a proper subdomain,  one has that $X$ is a hyperbolic Riemann surface. Since $\partial W'$ is relatively compact in $\cc\setminus\overline{\cd}(0,\rho)$, in which $g^n(z_0)$ tends to the boundary, the Poincar\'{e} distance $d_{\cc\setminus\overline{\cd}(0,\rho)}(g^n(z_0),\partial W')\to\infty$ as $n\to\infty$. The same estimate holds, where the the distance is the restriction of the Poincar\'{e} metric on $X$, denoted by $d_X$, where the Schwarz-Pick  theorem implies that the projection $\pi_X:X\to\cc\setminus\overline{\cd}(0,\rho)$ does not expand the Poincar\'{e} distance. So, for sufficiently large $n$, by applying the Schwarz-Pick theorem to $g^n$, one has
\beqq
d_X(g^n(z),g^n(z_0))\leq d_X(z,z_0)<d_X(g^n(z_0),\partial W').
\eeqq
So, $g^n(z)\in W'$ for these sufficiently large $n$.
\end{proof}

\subsection{Attracting Fatou coordinate $\Phi_{att}$ and the domains $D_1$ and $D^{\sharp}_1$}\label{attractingfatoucoor-2021-10-25-2}

\begin{definition}
For any $z\in\cc$, set
\beqq
pr_{+}(z):=\text{Re}(z e^{-i\pi/5})\ \text{and}\ pr_{-}(z):=\text{Re}(z e^{i\pi/5}),
\eeqq
which are the orthogonal projection to the line with angle $\pm\tfrac{\pi}{5}$ to the real axis.
\end{definition}

Set
\begin{align*}
H^{\pm}_1:&=\{z:\ pr_{\pm}(z)>u_1=(\sqrt{m+1}+\sqrt{m})^2+2.1\},\\
 H^{\pm}_2:&=\{z:\ pr_{\pm}(z)>u_2=cp_{Q1}=(\sqrt{m+1}+\sqrt{m})^2\},\\
H^{\pm}_3:&=\bigg\{z:\ pr_{\pm}(z)>u_3=pr_{+}(cv_Q)=\frac{4(m+1)^{m+1}}{m^{m}}\cos(\tfrac{\pi}{5})
=\frac{(1+\sqrt{5})(m+1)^{m+1}}{m^{m}}\bigg\}, \\
H^{\pm}_4:&=\bigg\{z:\ pr_{\pm}(z)>u_4=\frac{4(m+1)^{m+1}}{m^{m}}\cos(\tfrac{\pi}{5})-2.1=
\frac{(1+\sqrt{5})(m+1)^{m+1}}{m^{m}}-2.1\bigg\}.
\end{align*}

\begin{remark}\label{near-equat-1}
For any positive number $u$ and $\tht\in(0,\tfrac{\pi}{2})$,  the Euclidean coordinate $(x,y)$ is used to represent the points on the complex plane $\cc$, the boundary of $\{\zeta:\ \text{Re}(z e^{-i\tht})>u\}$ is given by the line equation
\beqq
y=\tan\left(\frac{\pi}{2}+\tht\right)(x-u\cos(\tht))+u\sin(\tht);
\eeqq
the boundary of $\{\zeta:\ \text{Re}(z e^{i\tht})>u\}$ is given by the line equation
\beqq
y=\tan\left(\frac{\pi}{2}-\tht\right)(x-u\cos(\tht))-u\sin(\tht).
\eeqq
\end{remark}

\begin{lemma} \cite[Page 11]{BorweinErdelyi1995}\label{paraequ-62}
Given any real-coefficient polynomial $P(x)$ of degree $n, n \geq 1,$ in the form of
\[
P(x)=a_{n}\left(x^{n}+a_{n-1} x^{n-1}+\cdots+a_{0}\right)
\]
the signs of both its value and the value of its derivative do not change in $(-\infty,-r) \cup(r, \infty),$ where
\[
r:=1+\max _{0 \leq k \leq n-1}\left|a_{k}\right|.
\]
\end{lemma}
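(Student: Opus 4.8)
The plan is to deduce this from the classical Cauchy bound on the moduli of the roots of a real polynomial, applied once to $P$ and once to $P'$. Since multiplying by the nonzero constant $a_n$ changes neither the zero set nor the location of sign changes of $P$ (and $a_n$ itself has constant sign on all of $(-\infty,-r)\cup(r,\infty)$), I may assume $P$ is monic, $P(x)=x^n+a_{n-1}x^{n-1}+\cdots+a_0$, and set $M=\max_{0\le k\le n-1}|a_k|$ so that $r=1+M$. The whole argument then rests on the elementary observation that $P$ and $P'$ have no real zero of modulus exceeding $r$, together with the fact that $(-\infty,-r)$ and $(r,\infty)$ are connected.

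First I would prove $P$ has no zero with $|x|>r$. For $|x|>1$ the geometric sum satisfies $\sum_{k=0}^{n-1}|x|^k=\frac{|x|^n-1}{|x|-1}<\frac{|x|^n}{|x|-1}$, so the triangle inequality gives
\[
|P(x)|\ \ge\ |x|^n-M\sum_{k=0}^{n-1}|x|^k\ >\ |x|^n\bigg(1-\frac{M}{|x|-1}\bigg),
\]
and the right-hand side is strictly positive once $|x|-1>M$, i.e. $|x|>r$. Because $P$ restricted to $\rr$ is continuous and real-valued and has no zero on either of the two intervals $(-\infty,-r)$ and $(r,\infty)$, while each interval is connected, the intermediate value theorem forces $P$ to keep a constant sign on each of them.

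Next I would handle the derivative. Writing $P'(x)=\sum_{k=1}^{n}k a_k x^{k-1}=n\big(x^{n-1}+\sum_{k=1}^{n-1}\tfrac{k}{n}a_k x^{k-1}\big)$, the monic polynomial inside the parentheses has degree $n-1$ and coefficients of absolute value $\tfrac{k}{n}|a_k|\le|a_k|\le M$ for $1\le k\le n-1$, since $\tfrac{k}{n}\le 1$. Applying the bound just established to this polynomial shows all its zeros — hence all zeros of $P'$ — lie in $\{|x|\le 1+M=r\}$, so $P'$ is nowhere zero on $(-\infty,-r)\cup(r,\infty)$, and the same connectedness argument gives $P'$ constant sign on each interval. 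I expect no genuine obstacle here; the only point needing a little care is recognizing that passing from $P$ to $P'$ replaces each $a_k$ by $\tfrac{k}{n}a_k$, whose absolute value is still at most $M$, so that the \emph{same} radius $r$ (indeed a possibly smaller one) controls $P'$ as well as $P$.
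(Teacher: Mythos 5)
The paper does not prove this lemma; it is cited verbatim from Borwein--Erd\'elyi \cite[Page 11]{BorweinErdelyi1995} as a known fact, so there is no in-paper proof to compare against. Your argument is correct and is the standard one: normalize to the monic case, derive the Cauchy bound $|P(x)|\geq |x|^n\bigl(1-\tfrac{M}{|x|-1}\bigr)>0$ for $|x|>1+M$, observe that $\tfrac{1}{n}P'$ is monic of degree $n-1$ with lower coefficients $\tfrac{k}{n}a_k$ of modulus $\leq M$ so the same radius works, and conclude by continuity and connectedness of each of the two rays that both $P$ and $P'$ have constant sign on each component of $(-\infty,-r)\cup(r,\infty)$. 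One small remark on reading the statement: the lemma should be understood as asserting constant sign on \emph{each} of the two rays separately, not on the union --- for $P(x)=x$ with $r=1$ the sign flips between the two components --- and your connectedness argument correctly delivers exactly that.
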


\begin{lemma} (Attracting Fatou coordinate)\label{paraequ-63}
 Take the constants $e_1 = 0.84$, $e_{0}=-0.18$, and $e_{-1} = 0.6$.
 Suppose that $\varphi$ is specified in Lemma \ref{paraequ-11}. The following statements hold:
\begin{itemize}
 \item[(a)]  $H^{\pm}_1\subset\varphi(H^{\pm}_2)$, $H^{\pm}_3\subset\varphi(H^{\pm}_4)$. As a consequence, $F$ is defined on $H^{+}_1\cup H^{-}_1$.
 \item[(b)] $Q$ is injective in $H^{\pm}_2$, implying that $F$ is injective in $H^{\pm}_1$.
 \item[(c)] Assume $m\geq6$. If $z\in H^{\pm}_1$, then $|\arg\,(F(z)-z)|<\tfrac{3\pi}{10}$, hence $F(H^{\pm}_1)\subset H^{\pm}_1$. Therefore, the sector $H^{+}_1\cup H^{-}_1=\vv(u_0,\tfrac{7\pi}{10})$ is forward invariant under $F$ and contained in $Basin(\infty)$, where $u_0=\tfrac{u_1}{\cos(\tfrac{\pi}{5})}$ (see Remark \ref{near-equat-1}).
\item[(d)] An attracting Fatou coordinate $\Phi_{att}$ for $F$ exists in $\vv(u_0,\tfrac{7\pi}{10})$ and is injective in each of $H^{\pm}_1$.
\end{itemize}
\end{lemma}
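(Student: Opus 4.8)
The plan is to establish (a)--(d) in that order, since each uses the preceding ones, drawing all the quantitative input from Lemma~\ref{paraequ-11} (for $\varphi$), Lemmas~\ref{paraequ-12} and~\ref{paraequ-56} (for $Q$), and Lemma~\ref{paraequ-55} (for $F$). For (a), first observe that $H^{\pm}_2=\{z:\mathrm{Re}(ze^{\mp i\pi/5})>cp_{Q1}\}$ is disjoint from $E_1$ (its points have modulus $\ge cp_{Q1}$, far larger than the semi-axes of $E_1$), so $\varphi$ is defined there, and that by Lemma~\ref{paraequ-11}(b) $\mathrm{Image}(\varphi)\supset\{|z|>4|e_1|\}$, which contains $H^{\pm}_1$ and $H^{\pm}_3$ since their points have modulus $\gg 4|e_1|$. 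Next, Lemma~\ref{paraequ-11}(a),(e) together with Lemma~\ref{paraequ-24} and the estimate \eqref{paraequ-30-1-21-1} give the uniform bound $|\varphi(\zeta)-\zeta|\le |e_0|+2|e_1|+\varphi_{1,max}(|\zeta|)<2.1$ once $|\zeta|$ is large enough (one uses the max-modulus bound $|\varphi(\zeta)|<10$ for $\zeta\in\cc\setminus E_1$, $|\zeta|\le 7$, to bootstrap $|\zeta|$ beyond the threshold $\approx cp_{Q1}-2.1$). Since $u_1-u_2=u_3-u_4=2.1$ and $pr_{\pm}$ is $1$-Lipschitz, for $w\in H^{\pm}_1$ the preimage $\zeta=\varphi^{-1}(w)$ satisfies $pr_{\pm}(\zeta)>pr_{\pm}(w)-2.1>u_2$, hence $\zeta\in H^{\pm}_2$; this proves $H^{\pm}_1\subset\varphi(H^{\pm}_2)$, and the same computation with $u_3,u_4$ gives $H^{\pm}_3\subset\varphi(H^{\pm}_4)$. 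In particular $F=Q\circ\varphi^{-1}$ is defined on $H^{+}_1\cup H^{-}_1=\vv(u_0,\tfrac{7\pi}{10})$.

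For (b), the half-plane $H^{\pm}_2$ is exactly of the form $\{z:\mathrm{Re}(ze^{-i\theta})>cp_{Q1}\}$ with $\theta=\pm\tfrac{\pi}{5}$, to which Lemma~\ref{paraequ-56}(b) applies, so $Q$ is injective on $H^{\pm}_2$; combined with (a) and the injectivity of $\varphi$, $F=Q\circ\varphi^{-1}$ is injective on $\varphi(H^{\pm}_2)\supset H^{\pm}_1$. For (c), write $z=\varphi(\zeta)$ with $\zeta\in H^{\pm}_2$ (by (a)) and apply Lemma~\ref{paraequ-55}(b) with $\theta=\pm\tfrac{\pi}{5}$ and an admissible $r$ just above $cp_{Q1}$: since $\alpha(r,\theta)\to 4m+2+e_0\in\rr_{+}$ and $\beta_{max}(r)$ is decreasing in $r$, the worst case is $r\downarrow cp_{Q1}$, and using the accounting of the $\beta_{max}$ terms from \eqref{paraequ-54} one checks $|\mathrm{Arg}\,\Delta F_{\mathrm{min}}(r,\theta)|,\,|\mathrm{Arg}\,\Delta F_{\mathrm{max}}(r,\theta)|<\tfrac{3\pi}{10}$, i.e. $\big|\arctan\tfrac{\mathrm{Im}\,\alpha}{\mathrm{Re}\,\alpha}\pm\arcsin\tfrac{\beta_{max}(r)}{|\alpha(r,\theta)|}\big|<\tfrac{3\pi}{10}$; this is exactly where the hypothesis $m\ge6$ enters (the inequality fails for small $m$). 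Granting $|\arg(F(z)-z)|<\tfrac{3\pi}{10}$, for $z\in H^{+}_1$ one has $pr_{+}(F(z))=pr_{+}(z)+\mathrm{Re}\big((F(z)-z)e^{-i\pi/5}\big)$ with $\arg\big((F(z)-z)e^{-i\pi/5}\big)\in(-\tfrac{\pi}{2},\tfrac{\pi}{10})$, so the added term is positive and $F(z)\in H^{+}_1$; symmetrically $F(H^{-}_1)\subset H^{-}_1$, whence $F$ maps $\vv(u_0,\tfrac{7\pi}{10})$ into itself. Finally Lemma~\ref{paraequ-55}(c) gives $|F(z)-z|\ge |\alpha(r,\theta)|-\beta_{max}(r)>0$, bounded below of order $4m$, so $\mathrm{Re}\,F^{n}(z)\to+\infty$ and the sector lies in $\mathrm{Basin}(\infty)$.

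For (d), I would run the classical construction of the attracting Fatou coordinate on the forward-invariant petal $\vv(u_0,\tfrac{7\pi}{10})$. Near $\infty$ one has $F(z)=z+(4m+2-c_0)+o(1)$ with $4m+2-c_0\neq0$ (by Lemma~\ref{paraequ-12} and $|c_0|\le 2|e_1|+|e_0|$ from Lemma~\ref{paraequ-11}(a)), so the standard parabolic theory produces a univalent $\Phi_{att}$ with $\Phi_{att}(F(z))=\Phi_{att}(z)+1$ that is injective on a region $\{\mathrm{Re}\,z>L\}$; by (c) every point of the petal is carried into that region by some iterate, so setting $\Phi_{att}(z):=\Phi_{att}(F^{n}(z))-n$ for $n$ large extends $\Phi_{att}$ holomorphically to all of $\vv(u_0,\tfrac{7\pi}{10})$, well-definedness being the functional equation, exactly as in the proof of Proposition~\ref{near-equat-2}. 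For injectivity on $H^{\pm}_1$: if $z,z'\in H^{+}_1$ with $\Phi_{att}(z)=\Phi_{att}(z')$, then $\Phi_{att}(F^{n}(z))=\Phi_{att}(F^{n}(z'))$ for all $n$, and for $n$ large both iterates lie in $\{\mathrm{Re}\,z>L\}$ where $\Phi_{att}$ is injective, forcing $F^{n}(z)=F^{n}(z')$; since $F$ is injective on the forward-invariant set $H^{+}_1$ by (b), peeling off one iterate at a time yields $z=z'$, and likewise on $H^{-}_1$.

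The step I expect to be the main obstacle is (c): upgrading the soft fact ``$F(z)-z$ is nearly the constant $4m+2$'' to the sharp bound $|\arg(F(z)-z)|<\tfrac{3\pi}{10}$ uniformly over $H^{\pm}_1$ forces a careful tally of all the $\beta_{max}(r)$ contributions of Lemma~\ref{paraequ-55}, and it is precisely this tally that pins down the threshold $m\ge6$. By contrast, (a) and (b) are short consequences of Lemmas~\ref{paraequ-11} and~\ref{paraequ-56}, and (d) is the standard petal/Fatou-coordinate argument already carried out for $\widetilde{\Phi}_{rep}$ in Proposition~\ref{near-equat-2}.
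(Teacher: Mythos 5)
Your proposal is correct and follows essentially the same route as the paper: part (a) from the uniform bound $|\varphi(\zeta)-\zeta|<2.1$ via Lemma~\ref{paraequ-11}, part (b) from Lemma~\ref{paraequ-56}(b), part (c) from Lemma~\ref{paraequ-55}(b) at $\theta=\pm\tfrac{\pi}{5}$ with $r$ near $cp_{Q1}$ (and yes, the $m\ge 6$ threshold is exactly what the $\arcsin$-term bookkeeping forces), and part (d) by the standard Fatou-coordinate construction as in Proposition~\ref{near-equat-2}. The only cosmetic difference is in (a): the paper avoids your modulus-bootstrap by evaluating $pr_\pm(\varphi(\zeta))$ directly on $\partial H^\pm_2$ where $|\zeta|\ge cp_{Q1}$, so the bound $\varphi_{1,max}(cp_{Q1})\le 0.2146$ applies immediately and no iterative lower bound on $|\zeta|$ is needed.
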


\begin{remark}
In the following discussions, the Fatou coordinate $\Phi_{att}$ is normalized such that $\Phi_{att}(cv_Q)=1$.
\end{remark}

\begin{proof}

 Take the following constants $e_1 = 0.84$, $e_{0}=-0.18$, $e_{-1} = 0.6$.

By Lemma \ref{paraequ-11} (b) and $u_1>4m+1+2.1>4e_1=3.36$, $H^{\pm}_{1}$  is contained in the image of $\varphi$.

If $\zeta\in\partial H^{\pm}_{2}$, by $u_2=cp_{Q1}>|a_E(1)|+|e_0|=|e_1|+|e_{-1}|+|e_0|=0.84+0.18+0.6=1.08$, Lemma \ref{paraequ-11} (e), \eqref{paraequ-30-1-21-1}
\begin{align*}
&pr_{\pm}(\varphi(\zeta))=pr_{\pm}(\zeta)+ pr_{\pm}(-e_0)+ pr_{\pm}(c_0+e_{0})+ pr_{\pm}(\varphi_1(\zeta))\\
\leq&cp+|e_0|\cdot|\cos(\tfrac{\pi}{5})|+2|e_1|+\varphi_{1,max}(cp_{Q1})\\
\leq&cp+0.18\cdot\bigg(\frac{1}{4}(1+\sqrt{5})\bigg)+1.68+0.214541\\
\approx& cp+2.0401640589874903<(\sqrt{m+1}+\sqrt{m})^2+2.1,
\end{align*}
where $\zeta$ and $c_0+e_{0}$ might be complex numbers, and $-e_0$ is a real number,
implying that $\varphi(\zeta)\not\in H^{\pm}_{1}$. So, $\varphi^{-1}(H^{\pm}_1)$ is contained in one side of $\partial H^{\pm}_2$. Furthermore, for any point $\zeta\in H^{\pm}_{2}$ far from the boundary of $H^{\pm}_{2}$, it is evident that $\varphi(\zeta)\in H^{\pm}_1$. So, $\varphi^{-1}(H^{\pm}_1)$ is contained in $H^{\pm}_2$. Hence, $H^{\pm}_1\subset\varphi(H^{\pm}_2)$.

If $\zeta\in\partial H^{\pm}_{4}$ with $|u|>cp>|a_E(1)|+|x_E|$, then
\begin{align*}
&pr_{\pm}(\varphi(\zeta))=pr_{\pm}(\zeta)+ pr_{\pm}(-e_0)+ pr_{\pm}(c_0+e_0)+ pr_{\pm}(\varphi_1(\zeta))\\
\leq&u_4+|e_0|\cdot|\cos(\tht)|+|c_0+e_0|+\varphi_{1,max}(u_4)\\
\leq& u_4+0.18\cdot\bigg(\frac{1}{4}(1+\sqrt{5})\bigg)+1.68+0.214541\approx u_4+2.0401640589874903\\
<&2.1+\frac{(1+\sqrt{5})(m+1)^{m+1}}{m^{m}}-2.1
=\frac{(1+\sqrt{5})(m+1)^{m+1}}{m^{m}}.
\end{align*}

(c) For $z\in H^{\pm}_1$, then $\varphi^{-1}(z)\in H^{\pm}_2$ by (a). By Lemma \ref{paraequ-55} (b), we will show that
\begin{align*}
|\arg(F(z)-z)|\leq\max\{\text{Arg}\De F_{max}(cp_{Q},\pm\tfrac{\pi}{5}),\ -\text{Arg}\De F_{min}(cp_{Q},\pm\tfrac{\pi}{5})\}
\end{align*}

For convenience, set
\begin{align*}
Part(i):=\frac{\tfrac{(8m(m+1)+1)\sin(\tht)}{2r}}{(4m+2)+e_0+
\tfrac{(8m(m+1)+1)\cos(\tht)}{2r}}
\end{align*}
and
\begin{align*}
Part(ii):=\frac{\be_{max}(r)}{\sqrt{((4m+2)+e_0)^2+
\big(\tfrac{8m(m+1)+1}{2r}\big)^2+2((4m+2)+e_0)\tfrac{(8m(m+1)+1)\cos\tht}{2r}}}.\\
\end{align*}

\begin{align*}
&\arctan\bigg(\frac{\tfrac{(8m(m+1)+1)\sin(\tht)}{2r}}{(4m+2)+e_0+
\tfrac{(8m(m+1)+1)\cos(\tht)}{2r}}\bigg)\\
&-\arcsin\bigg(\frac{\be_{max}(r)}{\sqrt{((4m+2)+e_0)^2+
\big(\tfrac{8m(m+1)+1}{2r}\big)^2+2((4m+2)+e_0)\tfrac{(8m(m+1)+1)\cos\tht}{2r}}}\bigg)\\
\end{align*}

Suppose that $r\geq am+b$, where $a\geq4$, $b\geq1$, $c>0$, and $\tht\in[0,\tfrac{\pi}{2}]$ are constants, which are given later.

Next, some estimates on Parts (i) and (ii) are provided.

{\bf Part (i):}
\begin{align*}
\frac{\tfrac{(8m(m+1)+1)\sin(\tht)}{2r}}{(4m+2)+e_0+
\tfrac{(8m(m+1)+1)\cos(\tht)}{2r}}\leq c
\end{align*}
is equivalent to
\begin{align*}
&(8m(m+1)+1)\sin(\tht)-c(2r(4m+2+e_0)+(8m(m+1)+1)\cos(\tht))\\
=&8\sin(\tht)m^2+8\sin(\tht)m+\sin(\tht)\\
&-c(8m(am+b)+2(2+e_0)(am+b)+8\cos(\tht)m^2+8\cos(\tht)m+\cos(\tht))\\
=&(8\sin(\tht)-8ca-8c\cos(\tht))m^2+(8\sin(\tht)-8cb-2ca(2+e_0)-8c\cos(\tht))m\\
&+(\sin(\tht)-2c(2+e_0)b-c\cos(\tht))\leq0.\\
\end{align*}
This inequality can be derived by the following inequality:
\begin{align}\label{paraequ-59}
c\geq\max\bigg\{\frac{\sin(\tht)}{a+\cos(\tht)},\ \frac{4\sin(\tht)}{4b+a(2+e_0)+4\cos(\tht)},\
\frac{\sin(\tht)}{2(2+e_0)b+\cos(\tht)}\bigg\}.
\end{align}

{\bf Part (ii):}

Take the following constants $e_1 = 0.84$, $e_{0}=-0.18$, and $e_{-1} = 0.6$.

By \eqref{paraequ-30-1-21-1} and \eqref{paraequ-28}, one has
\begin{align*}
&\frac{\be_{max}(r)}{\sqrt{((4m+2)+e_0)^2+
\big(\tfrac{8m(m+1)+1}{2r}\big)^2+2((4m+2)+e_0)\tfrac{(8m(m+1)+1)\cos\tht}{2r}}}\\
=&\frac{2|e_1|+\tfrac{8m(m+1)+1}{2r}+Q_{2,max}(r)+\varphi_{1,max}(r)}{\sqrt{((4m+2)+e_0)^2+
\big(\tfrac{8m(m+1)+1}{2r}\big)^2+2((4m+2)+e_0)\tfrac{(8m(m+1)+1)\cos\tht}{2r}}}\\
<&\frac{1.68+\tfrac{8m(m+1)+1}{2r}+\tfrac{7}{6}m+\tfrac{1}{m}+6+0.22}{\sqrt{((4m+2)+e_0)^2+
\big(\tfrac{8m(m+1)+1}{2r}\big)^2+2((4m+2)+e_0)\tfrac{(8m(m+1)+1)\cos\tht}{2r}}}\\
<&\frac{\tfrac{8m(m+1)+1}{2r}+\tfrac{7}{6}m+\tfrac{1}{m}+8}{\sqrt{((4m+2)+e_0)^2+
\big(\tfrac{8m(m+1)+1}{2r}\big)^2+2((4m+2)+e_0)\tfrac{(8m(m+1)+1)\cos\tht}{2r}}}\\
\leq&\frac{\tfrac{8m(m+1)+1}{2r}+\tfrac{7}{6}m+\tfrac{1}{3}+8}{\sqrt{((4m+2)+e_0)^2+
\big(\tfrac{8m(m+1)+1}{2r}\big)^2+2((4m+2)+e_0)\tfrac{(8m(m+1)+1)\cos\tht}{2r}}}<c.
\end{align*}
Plugging $r=am+b$ into this inequality, this is transformed into
\begin{align}\label{paraequ-60}
&m^4 \left(-64 a^2 c^2+\frac{49 a^2}{9}-128 a c^2
   \cos \left(\tht\right)+\frac{112 a}{3}-64 c^2+64\right)\nonumber\\
+&m^3 \left(-32 a^2 c^2 e_0-64 a^2 c^2+\frac{700 a^2}{9}-128 a b c^2+\frac{98
   a b}{9}-32 a c^2 e_0 \cos \left(\tht\right)-192 a c^2 \cos \left(\tht\right)\right)\nonumber\\
&+m^3\left(304 a-128 b c^2 \cos \left(\tht\right)+\frac{112 b}{3}-128 c^2+128\right)\nonumber\\
+&m^2 \left(-4 a^2 c^2 e_0^2-16 a^2 c^2 e_0-16 a^2 c^2+\frac{2500 a^2}{9}-64 a b c^2 e_0-128 a b c^2+\frac{1400 a b}{9}\right)\nonumber\\
&+m^2\left(-32 a c^2 e_0 \cos \left(\tht\right)-80 a c^2 \cos \left(\tht\right)+\frac{814
   a}{3}-64 b^2 c^2+\frac{49 b^2}{9}\right)\nonumber\\
&+m^2\left(-32 b c^2 e_0 \cos \left(\tht\right)-192 b c^2 \cos \left(\tht\right)+304 b-80 c^2+80\right)\nonumber\\
+&m \left(-8 a b c^2 e_0^2-32 a b c^2 e_0-32 a b c^2+\frac{5000 a b}{9}-4 a c^2 e_0 \cos \left(\tht\right)-8 a c^2 \cos \left(\tht\right)\right)\nonumber\\
&+m\left(\frac{100
   a}{3}-32 b^2 c^2 e_0-64 b^2 c^2+\frac{700 b^2}{9}-32 b c^2 e_0 \cos \left(\tht\right)-80 b c^2 \cos \left(\tht\right)+\frac{814 b}{3}-16 c^2+16\right)\nonumber\\
+&\left(-4 b^2 c^2 e_0^2-16 b^2 c^2 e_0-16 b^2
   c^2+\frac{2500 b^2}{9}-4 b c^2 e_0 \cos \left(\tht\right)-8 b c^2 \cos \left(\tht\right)+\frac{100 b}{3}-c^2+1\right)<0.
\end{align}
Note that if the coefficient in front of $m^4$ is negative, then the inequality holds for sufficiently large $m$. The the coefficient in front of $m^4$ is negative is equivalent to
\begin{align}\label{paraequ-61}
c^2>\frac{\frac{49 a^2}{9}+\frac{112 a}{3}+64}{64 a^2
   +128 a \cos \left(\tht\right)+64}.
\end{align}

Next, we give the numerical estimates.

Since $m\geq3$, so $r\geq cp_{Q1}$, by Lemma \ref{paraequ-24}, taking $a=4$, $b=2\sqrt{6}-3$, and $\tht=\tfrac{\pi}{5}$, plugging these numbers into \eqref{paraequ-59} and \eqref{paraequ-61}, one has
\begin{align*}
&\max\bigg\{\frac{\sin(\tht)}{a+\cos(\tht)},\ \frac{4\sin(\tht)}{4b+a(2+e_0)+4\cos(\tht)},\
\frac{\sin(\tht)}{2(2+e_0)b+\cos(\tht)}\bigg\}\\
\approx&\max\{0.122226,\ 0.129811,\ 0.0761251\}= 0.129811
\end{align*}
and
\beqq
\frac{\frac{49 a^2}{9}+\frac{112 a}{3}+64}{64 a^2
   +128 a \cos \left(\tht\right)+64}\approx0.125444\approx(0.354181)^{2};
\eeqq
plugging this into the fourth polynomial \eqref{paraequ-60}, one has
\begin{align*}
&\left(-128 \sqrt{5} c^2-1216 c^2+\frac{2704}{9}\right) m^4+\left(-2831.45 c^2+\frac{1456 \sqrt{\frac{2}{3}}}{3}+\frac{21112}{9}\right) m^3\\
&+\left(-1933.93 c^2+\frac{16084
   \sqrt{\frac{2}{3}}}{3}+\frac{27097}{9}\right) m^2+\left(-564.92 c^2+\frac{36484 \sqrt{\frac{2}{3}}}{3}-\frac{14294}{3}\right) m\\
&-59.964 c^2-9800 \sqrt{\frac{2}{3}}+\frac{27203}{3}\\
\approx& m^4 \left(300.444\, -1502.22 c^2\right)+m^3 \left(2742.05\, -2831.45 c^2\right)+m^2 \left(7388.29\, -1933.93 c^2\right)\\
&+m \left(5165.02\, -564.92 c^2\right)-59.964 c^2+1066.
\end{align*}
For this fourth degree polynomial, choosing $c=0.72$, the above polynomial becomes
\begin{align*}
1034.91 + 4872.17 m + 6385.74 m^2 + 1274.23 m^3 - 478.305 m^4.
\end{align*}
By direct computation, one has
\begin{align*}
1034.91 + 4872.17\times 6 + 6385.74\times 6^2 + 1274.23\times 6^3 - 478.305\times 6^4\approx-84495<0.
\end{align*}
So, it follows from numerical computation that
\begin{align*}
1034.91 + 4872.17 m + 6385.74 m^2 + 1274.23 m^3 - 478.305 m^4<0\ \forall m\geq6.
\end{align*}
Note that it follows from Lemma \ref{paraequ-62} that this polynomial is always negative for
\beqq
m\geq15>1+\max\left\{\frac{1034.91}{ 478.305},\ \frac{4872.17}{ 478.305},\ \frac{6385.74}{ 478.305},\ \frac{1274.23}{ 478.305}\right\}\approx14.3508.
\eeqq

Hence, if $z\in H^{\pm}_1$, then $\zeta=\varphi^{-1}(z)\in H^{\pm}_2$. From Lemma \ref{paraequ-55} (b), it follows that
\begin{align*}
&|\arg\,(F(z)-z)|\leq\max\{\text{Arg}\,\De F_{max}(cp_{Q1},\pm\tfrac{\pi}{5}),\ -\text{Arg}\,\De F_{min}(cp_{Q1},\pm\tfrac{\pi}{5})\}\\
\leq&\arctan(0.13)+\arcsin(0.72)\approx0.129275+0.803802=0.933077<\frac{3\pi}{10}\approx0.942478.
\end{align*}

(d) This can be verified by the arguments in the proof of Proposition \ref{near-equat-2}.
\end{proof}

\begin{lemma}\label{near-equat-5}
For $m\geq22$, one has the following estimates:
\begin{itemize}
\item[(a)] the attracting Fatou coordinate $\Phi_{att}$ satisfies the following estimates:
\beq\label{paraequ-79}
-\frac{\pi}{5}<\arg \Phi'_{att}(z)<\frac{\pi}{4}\ \text{for}\ z\in H^{+}_3,
\eeq
\beq\label{paraequ-80}
-\frac{\pi}{4}<\arg \Phi'_{att}(z)<\frac{\pi}{5}\ \text{for}\ z\in H^{-}_3,
\eeq
and
\beq\label{near-equat-4}
\frac{0.800739}{4.35 + 7.25 m}<|\Phi'_{att}(z)|< 0.0613686\ \text{for}\ z\in H^{+}_3\cup H^{-}_3=\overline{\vv}\left(cv_Q,\frac{7\pi}{10}\right);
\eeq
\item[(b)] $\Phi_{att}$ is injective in $H^{+}_3\cup H^{-}_3=\overline{\vv}\left(cv_Q,\frac{7\pi}{10}\right)$. There is a domain $\mathcal{H}_1$ such that $\Phi_{att}$ is a homeomorphism from $\overline{\mathcal{H}}_1$ onto $\{z:\ \text{Re}\, z\geq1\}$, and $\mathcal{H}_1$ satisfies
$\overline{\vv}(cv_Q,\tfrac{3\pi}{10})\subset\mathcal{H}_1\cup\{cv_Q\}\subset\overline{\mathcal{H}}_1\subset\overline{\vv}(cv_Q,\tfrac{7\pi}{10})\cup\{cv_Q\}$
and $cv_Q\in\partial \mathcal{H}_1$.
\end{itemize}
\end{lemma}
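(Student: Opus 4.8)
The plan is to derive everything in part~(a) from the general Fatou-coordinate estimate of Theorem~\ref{paraequ-64}, applied on the half-planes $H^{\pm}_1$, combined with the bounds on $F(z)-z$ and $\log F'(z)$ already furnished by Lemma~\ref{paraequ-55}; part~(b) will then follow from the resulting control of $\arg\Phi'_{att}$ by a monotonicity/boundary-curve argument of Inou--Shishikura type.

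First I would record two containments. Since $u_3=cv_Q\cos(\tfrac{\pi}{5})$ with $cv_Q=4(m+1)(1+\tfrac1m)^m\geq4(m+1)(1+\tfrac1{22})^{22}$ for $m\geq22$, while $cp_{Q1}<4m+2$, one checks $u_3>cp_{Q1}+2.1=u_1$, hence $H^{\pm}_3\subset H^{\pm}_1$; and $\varphi^{-1}(H^{\pm}_3)\subset H^{\pm}_4$ by Lemma~\ref{paraequ-63}(a). So for $z\in H^{+}_3$ we may set $\zeta=\varphi^{-1}(z)\in H^{+}_4$, $r=|\zeta|\geq u_4$, $\tht=\arg\zeta$, and Lemma~\ref{paraequ-55} applies (for $m\geq22$ the normalization $\tht\in[-\tfrac\pi2,\tfrac\pi2]$ is unnecessary, by the Remark after it). As $\Phi_{att}$ is a univalent Fatou coordinate on $H^{+}_1$ (Lemma~\ref{paraequ-63}(d)) with $F(H^{+}_1)\subset H^{+}_1$ (Lemma~\ref{paraequ-63}(c)), Theorem~\ref{paraequ-64} with $\Om=H^{+}_1$ gives
\beqq
\bigl|\log\Phi'_{att}(z)+\log(F(z)-z)-\tfrac12\log F'(z)\bigr|\leq\log\cosh\tfrac{d_{H^{+}_1}(z,F(z))}{2}.
\eeqq
I would bound the right side using Lemma~\ref{paraequ-10}(b) with $\tht=\tfrac\pi5$, $t=u_1$: the Euclidean distance of $z$ to $\partial H^{+}_1$ equals $pr_{+}(z)-u_1\geq u_3-u_1$, while $|F(z)-z|\leq\text{Abs}\,\De F_{max}(u_4,\cdot)$ by Lemma~\ref{paraequ-55}(c), so $d_{H^{+}_1}(z,F(z))$ is at most an explicit $O(1)$ quantity and $\log\cosh(\cdot/2)$ a small explicit constant. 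Taking real and imaginary parts of the displayed inequality and substituting the bounds of Lemma~\ref{paraequ-55}(b) for $\arg(F(z)-z)$, of Lemma~\ref{paraequ-55}(c) for $|F(z)-z|$ (with $\be_{max}<|\al|$ from \eqref{paraequ-58}), and of Lemma~\ref{paraequ-55}(d) together with $\text{Re}\,Q'>0$ from Lemma~\ref{paraequ-56}(b) for $\log F'(z)$, yields numerical two-sided bounds for $\arg\Phi'_{att}(z)$ and $|\Phi'_{att}(z)|$; checking that for $m\geq22$ these collapse to $-\tfrac\pi5<\arg\Phi'_{att}<\tfrac\pi4$ and $\tfrac{0.800739}{4.35+7.25m}<|\Phi'_{att}|<0.0613686$ is where the hypothesis $m\geq22$ enters. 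The asymmetry of the argument bounds is built in: the admissible range of $\tht=\arg\varphi^{-1}(z)$ on $H^{+}_3$ is $(-\tfrac{3\pi}{10},\tfrac{7\pi}{10})$, so $\text{Im}\,\al(r,\tht)=-\tfrac{(8m(m+1)+1)\sin\tht}{2r}$ is skewed negative and $-\arg(F(z)-z)$ skewed positive. The case $z\in H^{-}_3$ is identical with $H^{-}_1$, $H^{-}_4$ and $\tht\mapsto-\tht$, giving \eqref{paraequ-80}; and \eqref{near-equat-4} then holds on $H^{+}_3\cup H^{-}_3=\overline{\vv}(cv_Q,\tfrac{7\pi}{10})$.

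For part~(b): since $|\arg(F(z)-z)|<\tfrac{3\pi}{10}$ on $H^{\pm}_1$, one has $pr_{\pm}(F(z))\geq pr_{\pm}(z)$ there, hence $F(H^{\pm}_3)\subset H^{\pm}_3$ and $F(\vv(cv_Q,\tfrac{7\pi}{10}))\subset\vv(cv_Q,\tfrac{7\pi}{10})$, so $\Phi_{att}$ is a Fatou coordinate on the sector $\vv(cv_Q,\tfrac{7\pi}{10})=H^{+}_3\cup H^{-}_3$, injective on each $H^{\pm}_3$. By \eqref{paraequ-79}--\eqref{paraequ-80}, the image under $\Phi_{att}$ of the boundary ray of $\vv(cv_Q,\tfrac{7\pi}{10})$ in direction $e^{i7\pi/10}$ (which lies in $\overline{H^{+}_3}$) has tangent direction in $\tfrac{7\pi}{10}+(-\tfrac\pi5,\tfrac\pi4)\subset(\tfrac\pi2,\pi)$, so it issues from $\Phi_{att}(cv_Q)=1$, keeps $\text{Re}\leq1$, and runs off to $+i\infty$; symmetrically the ray in direction $e^{-i7\pi/10}$ maps to a curve in $\{\text{Re}\leq1\}$ running to $-i\infty$. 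A winding-number count on an exhaustion of $\vv(cv_Q,\tfrac{7\pi}{10})$ by compact regions (using also that the Fatou coordinate, pushed forward by $F$, covers a right half-plane, and that $\Phi_{att}$ is a local homeomorphism near $cv_Q$ since $\Phi'_{att}(cv_Q)\neq0$ by part~(a)) then shows $\Phi_{att}$ is injective on $\vv(cv_Q,\tfrac{7\pi}{10})$ and maps it onto a domain containing $\{\text{Re}\,z>1\}$. Set $\mathcal{H}_1=\Phi_{att}^{-1}(\{\text{Re}\,z>1\})\cap\vv(cv_Q,\tfrac{7\pi}{10})$; then $\Phi_{att}\colon\overline{\mathcal{H}}_1\to\{\text{Re}\,z\geq1\}$ is a homeomorphism with $cv_Q\in\partial\mathcal{H}_1$. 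For the sandwich inclusions: on $\vv(cv_Q,\tfrac{3\pi}{10})=H^{+}_3\cap H^{-}_3$ both \eqref{paraequ-79} and \eqref{paraequ-80} hold, so $\arg\Phi'_{att}\in(-\tfrac\pi5,\tfrac\pi5)$ there; hence along each ray $t\mapsto cv_Q+te^{i\phi}$ with $|\phi|\leq\tfrac{3\pi}{10}$ we have $\tfrac{d}{dt}\text{Re}\,\Phi_{att}=|\Phi'_{att}|\cos(\phi+\arg\Phi'_{att})>0$, and since $\text{Re}\,\Phi_{att}(cv_Q)=1$ this gives $\text{Re}\,\Phi_{att}\geq1$ on $\overline{\vv}(cv_Q,\tfrac{3\pi}{10})$, i.e. $\overline{\vv}(cv_Q,\tfrac{3\pi}{10})\subset\overline{\mathcal{H}}_1$ and $\vv(cv_Q,\tfrac{3\pi}{10})\subset\mathcal{H}_1\cup\{cv_Q\}$; the inclusion $\overline{\mathcal{H}}_1\subset\overline{\vv}(cv_Q,\tfrac{7\pi}{10})\cup\{cv_Q\}$ holds because $\text{Re}\,\Phi_{att}\leq1$ on the two boundary rays of the sector, so $\overline{\mathcal{H}}_1$ meets $\partial\vv(cv_Q,\tfrac{7\pi}{10})$ only at $cv_Q$.

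The main obstacle will be the quantitative part of~(a). With $r=u_4\asymp 4e(m+1)\cos(\tfrac\pi5)$, the term $\be_{max}(u_4)$ appearing in Lemma~\ref{paraequ-55} is of order $m$, only a bounded factor smaller than $|\al(u_4,\tht)|$, so $\arcsin(\be_{max}/|\al|)$ is not small; together with the non-negligible $\log\cosh$ contribution, forcing the total below $\tfrac\pi4$ (and below $\tfrac\pi5$ on the other side, with the correct asymmetry) requires careful bookkeeping of \eqref{paraequ-20}--\eqref{paraequ-23} for $Q_{2,max}$ and of \eqref{paraequ-30-1-21-1}--\eqref{paraequ-31} for $\varphi_{1,max}$, all evaluated at $r=u_4$ rather than at $r=cp_{Q1}$, and this is precisely what pins the threshold to $m\geq22$. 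The topological half of~(b) is routine once~(a) is in hand.
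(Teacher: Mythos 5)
Your overall strategy is the same as the paper's: for (a) you apply Theorem \ref{paraequ-64} on the half-planes $H^{\pm}_1$, feed in Lemma \ref{paraequ-55} for $\arg(F(z)-z)$, $|F(z)-z|$ and $\log F'(z)$ evaluated at $r=u_4$, $\tht=\pm\pi/5$, and reduce to numerics; for (b) your segment/difference-quotient injectivity argument, the images of the two boundary rays, the winding-number count, the definition $\mathcal{H}_1=\Phi_{att}^{-1}(\{\mathrm{Re}\,z>1\})$, and the sandwich via the simultaneous validity of \eqref{paraequ-79}--\eqref{paraequ-80} on $\overline{\vv}(cv_Q,\tfrac{3\pi}{10})$ are essentially the paper's proof of part (b).

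The genuine gap is in your treatment of the term $\log\cosh\tfrac{d_{H^{+}_1}(z,F(z))}{2}$, and it matters because the margin is razor thin (the paper's final upper bound is $0.753<\pi/4\approx0.785$). You propose to bound $d_{H^{+}_1}(z,F(z))$ from the two inputs $|F(z)-z|\le \mathrm{Abs}\,\De F_{max}(u_4,\cdot)$ and $\mathrm{dist}(z,\partial H^{+}_1)\ge u_5=u_3-u_1$. But $\mathrm{Abs}\,\De F_{max}(u_4,\pi/5)$ is of order $6m$--$7m$ while $u_5\approx 4.35m$, so the modulus bound alone cannot place $F(z)$ inside any hyperbolic ball of Lemma \ref{paraequ-10}(b) (that would require $|F(z)-z|\le \tfrac{2u_5 r}{1+r}<u_5$), and the best one gets from the straight-segment length (using $F(H^{+}_1)\subset H^{+}_1$) is $d\lesssim 1.4$, i.e. a $\log\cosh$ contribution of roughly $0.22$--$0.31$ instead of the paper's $\tfrac12\log\tfrac{1}{1-0.48^2}\approx 0.131$. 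Adding even $0.22$ to the other unavoidable terms (in particular $\arcsin(0.45)\approx 0.467$, which by the paper's Part (ii) polynomial cannot be decreased at $m=22$) gives about $0.85>\pi/4$, so \eqref{paraequ-79} would not follow from the estimate as you set it up. The paper closes exactly this point by using the direction of $F(z)-z$, not just its modulus: it verifies the containment $F(z)-z\in\cd\bigl(\tfrac{2u_5 r_4^2 e^{i\pi/5}}{1-r_4^2},\tfrac{2u_5 r_4}{1-r_4^2}\bigr)$ with $r_4=0.48$ through inequality \eqref{near-equat-3}, which uses the real and imaginary parts of $\al(u_4,\pi/5)$ separately and holds for $m\ge14$; only with this refined bound does the total land at $0.753<\pi/4$ (and symmetrically above $-\pi/5$). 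So you have the right tool in Lemma \ref{paraequ-10}(b) but feed it insufficient data; the directional containment of $F(z)-z$ is the missing ingredient, and without it the quantitative part of (a), hence the threshold $m\ge22$, is not established.
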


\begin{proof}
First, we estimate the following third term:
\beqq
Part(iii):=\frac{1+8m+8m^2 }{2r^2}.
\eeqq
{\bf Part (iii):} For $r=am+b$ with $a>0$ and $b>0$,
\begin{align*}
\frac{1+8m+8m^2 }{2r^2}<c
\end{align*}
is equivalent to
\begin{align*}
(8-2ca^2)m^2+(8-4abc)m+1-2b^2c<0,
\end{align*}
which could be derived by
\beq\label{paraequ-39}
c>\max\bigg\{\frac{4}{a^2},\ \frac{2}{ab},\ \frac{1}{2b^2}\bigg\}.
\eeq
Note that if $2b>a$, then $\max\{\tfrac{4}{a^2},\ \tfrac{2}{ab},\ \tfrac{1}{2b^2}\}=\tfrac{4}{a^2}$.

In the following discussions, assume that $m\geq9$.

By Lemma \ref{paraequ-63}, suppose $z\in H^{+}_{3}$, $\zeta=\varphi^{-1}(z)\in H^{+}_{4}$, i.e., $\text{Re}\,(\zeta\,e^{-i\pi/5})\geq u_4=\frac{(1+\sqrt{5})(m+1)^{m+1}}{m^{m}}-2.1$.

Taking $r_4=0.48$, we show that
\beqq
F(z)\in\cd_{H^{+}_{1}}(z,s(r_4))
\eeqq
where $s(r)=\log\frac{1+r}{1-r}$ is introduced in Lemma \ref{paraequ-10} (b).
By Lemma \ref{paraequ-10} (b) with $H=H^{+}_1$, $t=u_1=(\sqrt{m+1}+\sqrt{m})^2+2.1$, $u=pr_{+}(z)-u_1$, $r=r_4$, $\tht=\tfrac{\pi}{5}$, this is equivalent to
\beqq
F(z)-z\in\cd\bigg(\frac{2ur^2_4e^{i\pi/5}}{1-r^2_4},\frac{2ur_4}{1-r^2_4}\bigg).
\eeqq

By Lemma \ref{paraequ-55}, $F(z)-z\in\cd(\al(u_4,\tfrac{\pi}{5}),\be_{max}(u_4))$, where
\beqq
\al(u_4,\tfrac{\pi}{5})=(4m+2)+
\frac{(8m(m+1)+1)e^{-\tfrac{i\pi}{5}}}{2u_4}+e_0
\eeqq
and
\beqq
\be_{max}(u_4)=2|e_1|+\tfrac{8m(m+1)+1}{2u_4}+Q_{2,max}(u_4)+\varphi_{1,max}(u_4).
\eeqq

Next, we provide an estimate on this item:
\begin{align}\label{paraequ-74}
&\bigg|\al(z)-\frac{2u_5r^2_4e^{i\pi/5}}{1-r^2_4}\bigg|+\be_{max}(u_4)-\frac{2u_5r_4}{1-r^2_4}
\end{align}
and show that if $r_4=0.48$ and $m\geq14$, then this term is negative.

By simple computation, one has
\begin{align}\label{paraequ-42}
4m=(2\sqrt{m})^2<(\sqrt{m+1}+\sqrt{m})^2<(2\sqrt{m+1})^2=4(m+1),\ \forall m\geq2.
\end{align}

Next, assume
\beqq
u_5=u_3-u_1=pr_{+}(cv_Q)-(\sqrt{m+1}+\sqrt{m})^2-2.1
=\frac{(1+\sqrt{5})(m+1)^{m+1}}{m^{m}}-(\sqrt{m+1}+\sqrt{m})^2-2.1,
\eeqq
it is easy to obtain that for $m\geq9$, by \eqref{paraequ-36} and \eqref{paraequ-42},
\begin{align*}
u_5&=\frac{(1+\sqrt{5})(m+1)^{m+1}}{m^{m}}-(\sqrt{m+1}+\sqrt{m})^2-2.1\\
\geq&(1+\sqrt{5})(m+1) \frac{(m+1)^{m}}{m^{m}}-4(m+1)-2.1
\geq(1+\sqrt{5})(m+1) \frac{10^{9}}{9^{9}}-4(m+1)-2.1
\end{align*}
and
\begin{align*}
u_5=(1+\sqrt{5})(m+1)\frac{(m+1)^{m}}{m^{m}}-4m-2.1<(1+\sqrt{5})(m+1)e-4m-2.1.
\end{align*}
Similarly, one has
\begin{align}\label{parqequ-74}
u_4&=\frac{(1+\sqrt{5})(m+1)^{m+1}}{m^{m}}-2.1\nonumber\\
&=(1+\sqrt{5})(m+1)\frac{(m+1)^{m}}{m^{m}}-2.1
\geq(1+\sqrt{5})(m+1)\frac{10^{9}}{9^{9}}-2.1
\end{align}
and
\begin{align}\label{paraequ-48}
u_4\leq(1+\sqrt{5})(m+1)e-2.1.
\end{align}

So, the numerical computation with $e<2.72$ gives us that
\begin{align}\label{paraequ-77}
4.35286m+2.25286\leq u_5\leq4.8021m+6.7021
\end{align}
and
\begin{align}\label{paraequ-65}
8.35286m+6.25286\leq u_4\leq 8.8021m+10.7021.
\end{align}

Choose $r_4=0.48$, one has
\begin{align}\label{paraequ-78}
\frac{2\cos(\tfrac{\pi}{5})r^2_4}{1-r^2_4}\approx0.484401,\ \frac{2\sin(\tfrac{\pi}{5})r^2_4}{1-r^2_4}\approx0.351938,\ \frac{2r_4}{1-r^2_4}\approx1.2474.
\end{align}

So, by \eqref{paraequ-77}, \eqref{paraequ-65}, and the estimates in Part (iii) with $a=8.35286$ and $b=6.25286$,
\begin{align*}
&(4m+2)+
\frac{(8m(m+1)+1)\cos(\tfrac{\pi}{5})}{2u_4}+e_0-\frac{2\cos(\tfrac{\pi}{5})u_5r^2_4}{1-r^2_4}\\
\leq&(4m+2)+\frac{4\cos(\tfrac{\pi}{5})}{8.35286^2} (8.8021m+10.7021)-0.18-0.484401\times(4.35286m+2.25286)\\
\approx&1.22509 + 2.29973 m
\end{align*}
and
\begin{align*}
&\frac{(8m(m+1)+1)\sin(\tfrac{\pi}{5})}{2u_4}+\frac{2\sin(\tfrac{\pi}{5})u_5r^2_4}{1-r^2_4}\\
\leq&\frac{4\sin(\tfrac{\pi}{5})}{8.35286^2} (8.8021m+10.7021)+0.351938\times(4.8021m+6.7021)\\
\approx&2.71937 + 1.98666 m.
\end{align*}

By \eqref{paraequ-65} and $m\geq9$,
$u_4\geq8.35286m+6.25286\geq 4m+2\sqrt{6}$. So,
for $r\geq u_4=\frac{2\sqrt{3}(m+1)^{m+1}}{m^{m}}-2.1 \geq4m+2\sqrt{6}$ and $m\geq9$, by \eqref{paraequ-28}, \eqref{paraequ-30-1-21-1}, and the estimates in Part (iii) with $a=8.35286$ and $b=6.25286$, one has
\begin{align*}
&\be_{max}(r)=2|e_1|+\tfrac{8m(m+1)+1}{2r}+Q_{2,max}(r)+\varphi_{1,max}(r)\\
<&1.68+\tfrac{8m(m+1)+1}{2r}+\frac{7}{6}m+\frac{1}{m}+6+0.22\\
<&\frac{4}{8.35286^2} (8.8021m+10.7021)+\frac{7}{6}m+8+\frac{1}{9}.
\end{align*}
So, by \eqref{paraequ-77} and \eqref{paraequ-78}, one has
\begin{align*}
&\be_{max}(u_4)-\frac{2u_5r_4}{1-r^2_4}\\
<&\frac{4}{8.35286^2} (8.8021m+10.7021)+\frac{7}{6}m+8+\frac{1}{9}-1.2474\times(4.35286m+2.25286)\\
<&5.91446 - 3.75846 m.
\end{align*}
Hence, one has
\begin{align}\label{near-equat-3}
&\bigg|\al(z)-\frac{2u_5r^2_4e^{i\pi/5}}{1-r^2_4}\bigg|+\be_{max}(u_4)-\frac{2u_5r_4}{1-r^2_4}\nonumber\\
<&\sqrt{(1.22509 + 2.29973 m)^2 + (2.71937 + 1.98666 m)^2}+5.91446 - 3.75846 m\nonumber\\
=&\sqrt{8.89582 + 16.4397 m + 9.23558 m^2}+5.91446 - 3.75846 m\nonumber\\
<&\sqrt{(3.03901 m + 2.70479)+1.3^2}+5.91446 - 3.75846 m\nonumber\\
<&(3.03901 m + 2.70479)+1.3+5.91446 - 3.75846 m=9.91925 - 0.71945 m.
\end{align}
Since $9.91925/0.71945 m\approx13.7873$, if $m\geq14$, then this number is negative.
\medskip

An estimate in Theorem \ref{paraequ-64} gives that
\begin{align*}
&\arg \Phi'_{att}(z)\leq-\arg(F(z)-z)+\frac{1}{2}|\log F'(z)|+\frac{1}{2}\log\frac{1}{1-r^2_4}\\
\leq& -\text{Arg}\De F_{min}(u_4,\tfrac{\pi}{5})+\frac{1}{2}\text{Log} DF_{max}(u_4)-\frac{1}{2}\log(1-r^2_4)\\
\leq&\arctan\bigg(\frac{\tfrac{(8m(m+1)+1)\sin(\tfrac{\pi}{5})}{2u_4}}{(4m+2)+e_0+
\tfrac{(8m(m+1)+1)\cos(\tfrac{\pi}{5})}{2u_4}}\bigg)(:=Part (i))\\
&+\arcsin\bigg(\frac{\be_{max}(u_4)}{\sqrt{((4m+2)+e_0)^2+
\big(\tfrac{8m(m+1)+1}{2u_4}\big)^2+2((4m+2)+e_0)\tfrac{(8m(m+1)+1)\cos(\tfrac{\pi}{5})}{2u_4}}}\bigg)\\
&(:=Part (ii))+\frac{1+8m+8m^2 }{2u_4^2}(:=Part (iii))+\frac{ 32 m + 96 m^2 + 64 m^3}{6u_4^3}(:=Part (iv))\\
&+\frac{1}{8}\bigg(\frac{\big(\tfrac{(\sqrt{m+1}+\sqrt{m})^2 }{u_4}\big)^4}{1-\tfrac{(\sqrt{m+1}+\sqrt{m})^2 }{u_4}}+\frac{\big(\tfrac{(\sqrt{m+1}-\sqrt{m})^2 }{u_4}\big)^4}{1-\tfrac{(\sqrt{m+1}-\sqrt{m})^2 }{u_4}}\bigg)(:=Part (v))+
\frac{\frac{(2m+1)}{5u_4^{5}}}{1-\tfrac{1}{u_4^2}}(:=Part (vi))\\
&-\frac{1}{2}\log\bigg(1-\bigg(\frac{|a_E|}{u_4-|x_E|}\bigg)^2\bigg)(:=Part (vii))-\frac{1}{2}\log(1-r^2_4)(:=Part (viii)).
\end{align*}
On the other hand,
\begin{align*}
&\arg \Phi'_{att}(z)\geq-\arg(F(z)-z)+\frac{1}{2}|\log F'(z)|-\frac{1}{2}\log\frac{1}{1-r^2_4}\\
\geq& -\text{Arg}\De F_{max}(u_4,\tfrac{\pi}{5})-\frac{1}{2}\text{Log} DF_{max}(u_4)+\frac{1}{2}\log(1-r^2_4)\\
\geq&\arctan\bigg(\frac{\tfrac{(8m(m+1)+1)\sin(\tfrac{\pi}{5})}{2u_4}}{(4m+2)+e_0+
\tfrac{(8m(m+1)+1)\cos(\tfrac{\pi}{5})}{2u_4}}\bigg)\\
&-\arcsin\bigg(\frac{\be_{max}(u_4)}{\sqrt{((4m+2)+e_0)^2+
\big(\tfrac{8m(m+1)+1}{2u_4}\big)^2+2((4m+2)+e_0)\tfrac{(8m(m+1)+1)\cos(\tfrac{\pi}{5})}{2u_4}}}\bigg)\\
&-\frac{1+8m+8m^2 }{2u_4^2}-
\frac{ 32 m + 96 m^2 + 64 m^3}{6u_4^3}\\
&-\frac{1}{8}\bigg(\frac{\big(\tfrac{(\sqrt{m+1}+\sqrt{m})^2 }{u_4}\big)^4}{1-\tfrac{(\sqrt{m+1}-\sqrt{m})^2 }{u_4}}-\frac{\big(\tfrac{(\sqrt{m+1}-\sqrt{m})^2 }{u_4}\big)^4}{1-\tfrac{(\sqrt{m+1}-\sqrt{m})^2 }{u_4}}\bigg)-
\frac{\frac{(2m+1)}{5u_4^{5}}}{1-\tfrac{1}{u_4^2}}\\
&+\frac{1}{2}\log\bigg(1-\bigg(\frac{|a_E|}{u_4-|x_E|}\bigg)^2\bigg)+\frac{1}{2}\log(1-r^2_4).
\end{align*}

For the estimate of these constants, we divide these estimates into the following parts:

Suppose that $r\geq am+b$, where $a$, $b$, $c$ are positive constants, $\tht\in[0,\tfrac{\pi}{2}]$ determined later.

Consider the $m\geq9$ in the following computation.

{\bf Part (i):}
This is already done in the proof of Lemma \ref{paraequ-63}. By \eqref{paraequ-59}, the minimal bound for Part (i) is
\beqq
\max\bigg\{\frac{\sin(\tfrac{\pi}{5})}{a+\cos(\tfrac{\pi}{5})},\ \frac{4\sin(\tfrac{\pi}{5})}{4b+a(2+e_0)+4\cos(\tfrac{\pi}{5})},\
\frac{\sin(\tfrac{\pi}{5})}{2(2+e_0)b+\cos(\tfrac{\pi}{5})}\bigg\}.
\eeqq

{\bf Part (ii):}
Take the following constants $e_1 = 0.84$, $e_{0}=-0.18$, $e_{-1} = 0.6$.

Please refer to the proof of Lemma \ref{paraequ-63}, where $m\geq3$ in Lemma \ref{paraequ-63}. Here, we assume $m\geq9$. So, we modify the arguments there.

For $r\geq4m+1$, by \eqref{paraequ-28}, \eqref{paraequ-30-1-21-1},
\begin{align*}
&\frac{\be_{max}(r)}{\sqrt{((4m+2)+e_0)^2+
\big(\tfrac{8m(m+1)+1}{2r}\big)^2+2((4m+2)+e_0)\tfrac{(8m(m+1)+1)\cos\tht}{2r}}}\\
=&\frac{2|e_1|+\tfrac{8m(m+1)+1}{2r}+Q_{2,max}(r)+\varphi_{1,max}(r)}{\sqrt{((4m+2)+e_0)^2+
\big(\tfrac{8m(m+1)+1}{2r}\big)^2+2((4m+2)+e_0)\tfrac{(8m(m+1)+1)\cos\tht}{2r}}}\\
<&\frac{1.68+\tfrac{8m(m+1)+1}{2r}+\tfrac{7}{6}m+\tfrac{1}{m}+6+0.22}{\sqrt{((4m+2)+e_0)^2+
\big(\tfrac{8m(m+1)+1}{2r}\big)^2+2((4m+2)+e_0)\tfrac{(8m(m+1)+1)\cos\tht}{2r}}}\\
<&\frac{\tfrac{8m(m+1)+1}{2r}+\tfrac{7}{6}m+8+\tfrac{1}{m}}{\sqrt{((4m+2)+e_0)^2+
\big(\tfrac{8m(m+1)+1}{2r}\big)^2+2((4m+2)+e_0)\tfrac{(8m(m+1)+1)\cos\tht}{2r}}}<c.
\end{align*}
This is equivalent to the following inequality with $r=am+b$,
\begin{align}\label{paraequ-38}
&m^4 \left(-64 a^2 c^2+\frac{49 a^2}{9}-128 a c^2 \cos \left(\tht\right)+\frac{112 a}{3}-64 c^2+64\right)\nonumber\\
+&m^3 \left(-32 a^2 c^2 e_0-64 a^2 c^2+\frac{224 a^2}{3}-128 a b c^2+\frac{98 a b}{9}-32 a c^2 e_0 \cos \left(\tht\right)-192 a c^2 \cos \left(\tht\right)\right)\nonumber\\
&+m^3\left(\frac{880 a}{3}-128 bc^2 \cos \left(\tht\right)+\frac{112 b}{3}-128 c^2+128\right)\nonumber\\
+&m^2 \left(-4 a^2 c^2 e_0^2-16 a^2 c^2 e_0-16 a^2 c^2+\frac{796 a^2}{3}-64 a b c^2 e_0-128 a b c^2+\frac{448 a b}{3}\right)\nonumber\\
&+m^2\left(-32 a c^2 e_0 \cos \left(\tht\right)-80 a c^2 \cos \left(\tht\right)+\frac{878 a}{3}-64
   b^2 c^2+\frac{49 b^2}{9}-32 b c^2 e_0 \cos \left(\tht\right)\right)\nonumber\\
&+m^2\left(-192 b c^2 \cos \left(\tht\right)+\frac{880 b}{3}-80 c^2+80\right)\nonumber\\
+&m \left(64 a^2-8 a b c^2 e_0^2-32 a b c^2 e_0-32 a b c^2+\frac{1592 a
   b}{3}-4 a c^2 e_0 \cos \left(\tht\right)-8 a c^2 \cos \left(\tht\right)\right)\nonumber\\
&+m\left(64 a-32 b^2 c^2 e_0-64 b^2 c^2+\frac{224 b^2}{3}-32 b c^2 e_0 \cos \left(\tht\right)-80 b c^2 \cos \left(\tht\right)+\frac{878
   b}{3}-16 c^2+16\right)\nonumber\\
+&\frac{8 a b+64 b^2+4
   b}{m}+\frac{4 b^2}{m^2}\nonumber\\
+&\left(4 a^2+128 a b+4 a-4 b^2 c^2 e_0^2-16 b^2 c^2 e_0-16 b^2 c^2\right)\nonumber\\
&+\left(\frac{796 b^2}{3}-4 b c^2 e_0 \cos \left(\tht\right)-8 b c^2 \cos \left(\tht\right)+64 b-c^2+1\right)<0.
\end{align}

{\bf Part (iii):} For $r=am+b$,
\begin{align*}
\frac{1+8m+8m^2 }{2r^2}<c
\end{align*}
has been studied above. By \eqref{paraequ-39}, the minimal bound for Part (iii) is
\beqq
\max\bigg\{\frac{4}{a^2},\ \frac{2}{ab},\ \frac{1}{2b^2}\bigg\}.
\eeqq

{\bf Part (iv):} For $r=am+b$,
\begin{align*}
\frac{ 32 m + 96 m^2 + 64 m^3}{6r^3}=\frac{16 m + 48 m^2 + 32 m^3}{3a^3 m^3+9 a^2 b m^2+9 a b^2 m+3b^3}<c
\end{align*}
is equivalent to
\begin{align*}
(3a^3c-32) m^3+(9 a^2 bc-48) m^2+(9 a b^2c-16) m+3b^3c>0,
\end{align*}
which could be derived by
\beq\label{paraequ-40}
c>\max\bigg\{\frac{32}{3a^3},\ \frac{48}{9 a^2 b},\ \frac{16}{9ab^2}\bigg\}.
\eeq

{\bf Part (v):} It is evident that the following function is decreasing with respect to $r$:
\begin{align*}
\frac{1}{8}\bigg(\frac{\big(\tfrac{(\sqrt{m+1}+\sqrt{m})^2 }{r}\big)^4}{1-\tfrac{(\sqrt{m+1}+\sqrt{m})^2 }{r}}+\frac{\big(\tfrac{(\sqrt{m+1}-\sqrt{m})^2 }{r}\big)^4}{1-\tfrac{(\sqrt{m+1}-\sqrt{m})^2 }{r}}\bigg).
\end{align*}
It is evident that
\begin{align*}
4m=(2\sqrt{m})^2<(\sqrt{m+1}+\sqrt{m})^2<(2\sqrt{m+1})^2=4(m+1),\ \forall m\geq2.
\end{align*}
and
\begin{align*}
\frac{4}{m}=\bigg(\frac{2}{\sqrt{m}}\bigg)^2>\bigg(\frac{1}{\sqrt{m+1}+\sqrt{m}}\bigg)^2=
(\sqrt{m+1}-\sqrt{m})^2>\bigg(\frac{2}{\sqrt{m+1}}\bigg)^2=\frac{4}{m+1}.
\end{align*}

For $r>4(m+1)$, one has
\begin{align*}
&\frac{1}{8}\bigg(\frac{\big(\tfrac{(\sqrt{m+1}+\sqrt{m})^2 }{r}\big)^4}{1-\tfrac{(\sqrt{m+1}+\sqrt{m})^2 }{r}}+\frac{\big(\tfrac{(\sqrt{m+1}-\sqrt{m})^2 }{r}\big)^4}{1-\tfrac{(\sqrt{m+1}-\sqrt{m})^2 }{r}}\bigg)\\
=&\frac{1}{8}\bigg(\frac{(\sqrt{m+1}+\sqrt{m})^8}{r^3(r-(\sqrt{m+1}+\sqrt{m})^2)}
+\frac{(\sqrt{m+1}-\sqrt{m})^8}{r^3(r-(\sqrt{m+1}-\sqrt{m})^2)}\bigg)\\
\leq&\frac{1}{8}\bigg(\frac{(4(m+1))^4}{r^3(r-4(m+1))}
+\frac{(\tfrac{4}{m})^4}{r^3(r-\tfrac{4}{m})}\bigg).
\end{align*}
Further, assume that $r=am+b$ with $a>4$, $b>4$, $2b-4>a$, where $2b-4>a$ implies that
$\tfrac{b}{a}>\tfrac{1}{2}$ and $\tfrac{b-4}{a-4}>\tfrac{1}{2}$,
\begin{align}\label{paraequ-67}
&\frac{1}{8}\bigg(\frac{(4(m+1))^4}{r^3(r-4(m+1))}
+\frac{(\tfrac{4}{m})^4}{r^3(r-\tfrac{4}{m})}\bigg)\nonumber\\
=&\frac{1}{8}\bigg(\frac{(4(m+1))^4}{(am+b)^3(am+b-4(m+1))}
+\frac{(\tfrac{4}{m})^4}{(am+b)^3(am+b-\tfrac{4}{m})}\bigg)\nonumber\\
=&\frac{1}{8}\bigg(\frac{(4(m+1))^4}{(am+b)^3((a-4)m+(b-4)))}
+\frac{(\tfrac{4}{m})^4}{(am+b)^3(am+b-\tfrac{4}{m})}\bigg)\nonumber\\
<&\frac{1}{8}\bigg(\frac{(4(m+1))^4}{a^3\times (m+1/2)^3\times (a-4)\times(m+1/2))}
+\frac{(\tfrac{4}{m})^4}{(am)^3(am)}\bigg)\nonumber\\
<&\frac{1}{8}\bigg(\frac{4^4}{a^3\times (a-4)}\times\bigg(\frac{m+1}{m+\tfrac{1}{2}}\bigg)^4
+\bigg(\frac{4}{am^2}\bigg)^4\bigg).
\end{align}

{\bf Part (vi):}
This function is decreasing with respect to $r$:
\begin{align*}
\frac{\frac{(2m+1)}{5r^{5}}}{1-\tfrac{1}{r^2}}=\frac{(2m+1)}{5r^3(r^2-1)}.
\end{align*}
Moreover, assume that $r=am+b$ with $a>4$, $b>4$, and $2b-2>a$, where $2b-2>a$ implies that $\tfrac{2}{a}(b-1)>1$, one has
\begin{align}\label{paraequ-68}
&\frac{(2m+1)}{5r^3(r^2-1)}=\frac{(2m+1)}{5(am+b)^3((am+b)^2-1)}\nonumber\\
=&\frac{(2m+1)}{5(am+b)^3(am+b-1)(am+b+1)}\nonumber\\
=&\frac{(2m+1)}{5(am+b)^3\cdot(a/2(2m+(b-1)\times(2/a)))\cdot(am+b)}\nonumber\\
\leq&\frac{(2m+1)}{5(am+b)^3\cdot(a/2(2m+1))\cdot(am+b+1)}
=\frac{2}{5a(am+b)^3(am+b+1)}.
\end{align}

Next, we give the numerical estimates for the above items.

For $m\geq9$, by \eqref{parqequ-74} and \eqref{paraequ-65},
\begin{align*}
u_4\geq8.35286m+6.25286.
\end{align*}
So, we choose $a=8.35286$ and $b=6.25286$.

For Part (i), by \eqref{paraequ-59},
$a=8.35286$, $b=6.25286$, $\tht=\tfrac{\pi}{5}$,
\begin{align}\label{paraequ-69}
&\max\bigg\{\frac{\sin(\tht)}{a+\cos(\tht)},\ \frac{4\sin(\tht)}{4b+a(2+e_0)+4\cos(\tht)},\
\frac{\sin(\tht)}{2(2+e_0)b+\cos(\tht)}\bigg\}\nonumber\\
\approx&\{0.0641555,\ 0.0541118,\ 0.0249385\}=0.0641555.
\end{align}

For Part (ii),
for  \eqref{paraequ-38}, if $m$ is sufficiently large and
\beqq
\frac{\frac{49 a^2}{9}+\frac{112 a}{3}+64}{64 a^2+128 a\cos \left(\tht\right)+64}< c^2,
\eeqq
then \eqref{paraequ-38} holds.

Let $e_0=-0.18$, $\tht=\tfrac{\pi}{5}$, and $m\geq9$, \eqref{paraequ-38} is simplified as follows:
\begin{align*}
&m^4 \left(755.7\, -5394.27 c^2\right)+m^3 \left(8589.84\, -12782.8 c^2\right)+m^2 \left(30883.6\, -11034.2 c^2\right)\\
&+m \left(37481.6\, -4101.87 c^2\right)+17773.1\, -555.863
   c^2+\frac{156.393}{m^2}+\frac{2945.13}{m}<0.
\end{align*}
Since $m\geq14$ in \eqref{near-equat-3}, plugging $m=14$ into $\tfrac{156.393}{m^2}+\tfrac{2945.13}{m}$, the above inequality is simplified as follows:
\begin{align*}
&m^4 \left(755.7\, -5394.27 c^2\right)+m^3 \left(8589.84\, -12782.8 c^2\right)+m^2 \left(30883.6\, -11034.2 c^2\right)\\
&+m \left(37481.6\, -4101.87 c^2\right)+17984.3\, -555.863
   c^2<0.
\end{align*}
Take $c = 0.45$, the above polynomial is written as
\beqq
17871.7 + 36651 m + 28649.2 m^2 + 6001.32 m^3 - 336.64 m^4<0
\eeqq
By numerical computation, one has that this inequality holds for $m\geq22$.

For Part (iii), by \eqref{paraequ-39},
$a=8.35286$, $b=6.25286$, $\tht=\tfrac{\pi}{5}$,
\begin{align}\label{paraequ-70}
c>&\max\bigg\{\frac{4}{a^2},\ \frac{2}{ab},\ \frac{1}{2b^2}\bigg\}=
\max\bigg\{\frac{4}{8.35286^2},\ \frac{2}{8.35286\times6.25286},\ \frac{1}{2\times 6.25286^2}\bigg\}\nonumber\\
\approx&\max\{0.057331,\ 0.0382927,\ 0.0127883\}=0.057331.
\end{align}

For Part (iv), by \eqref{paraequ-40}, $a=8.35286$, $b=6.25286$
\begin{align}\label{paraequ-71}
c>&\max\bigg\{\frac{32}{3a^3},\ \frac{48}{9 a^2 b},\ \frac{16}{9ab^2}\bigg\}\nonumber\\
=&\max\bigg\{\frac{32}{3\times 8.35286^3},\ \frac{48}{9\times 8.35286^2\times 6.25286},\ \frac{16}{9\times 8.35286\times 6.25286^2}\bigg\}\nonumber\\
\approx&\max\{0.018303,\ 0.012225,\ 0.00544358\}=0.018303.
\end{align}

For Part (v), $a=8.35286$, $b=6.25286$,  by \eqref{paraequ-67},
\begin{align}\label{paraequ-72}
&\frac{1}{8}\bigg(\frac{4^4}{a^3\times (a-4)}\times\bigg(\frac{m+1}{m+\tfrac{1}{2}}\bigg)^4
+\bigg(\frac{4}{am^2}\bigg)^4\bigg)\nonumber\\
<&\frac{1}{8}\bigg(\frac{4^4}{8.35286^3\times (8.35286-4)}\times\bigg(\frac{9+1}{9+\tfrac{1}{2}}\bigg)^4
+\bigg(\frac{4}{8.35286\times 9^2}\bigg)^4\bigg)\approx0.0154873.
\end{align}
where the function $(\tfrac{x+1}{x+1/2})^4$ is decreasing for $x>1$.

For Part (vi), $a=8.35286$, $b=6.25286$, by \eqref{paraequ-68}, one has
\begin{align*}
&\frac{2}{5a(am+b)^3(am+b+1)}\\
=&\frac{2}{5\times 8.35286(8.35286\times9+6.25286)^3(8.35286\times9+6.25286+1)}
\approx1.07601\times10^{-9}.
\end{align*}

For Part (vii),  by Remark \ref{paraequ-29}, $a=8.35286$, $b=6.25286$, $m\geq9$,
recall the constants $e_1 = 0.84$, $e_{0}=-0.18$, $e_{-1} = 0.6$,
one has
\begin{align}\label{paraequ-73}
&-\frac{1}{2}\log\bigg(1-\bigg(\frac{|a_E|}{u_4-|x_E|}\bigg)^2\bigg)\nonumber\\
\leq&-\frac{1}{2}\log\bigg(1-\bigg(\frac{0.84+0.6}{8.35286\times9+6.25286-0.18}\bigg)^2\bigg)
\approx0.000157084.
\end{align}
For Part (viii),
$r_4=0.48$,
\beq\label{paraequ-76}
-\frac{1}{2}\log(1-r^2_4)=0.130942.
\eeq

Hence, by the above estimates for Parts (i)--(vii) and \eqref{paraequ-31}, one has
\begin{align*}
&\arg \Phi'_{att}(z)\\
\leq&\arctan(0.0641555) + \arcsin(0.45) + 0.057331 +0.018303\\
&+0.0154873 +10^{-8}+ 0.000157084 + 0.130942\\
=&0.753053<\frac{\pi}{4}(\approx0.785398)
\end{align*}
and
\begin{align*}
&\arg \Phi'_{att}(z)\\
\geq&\arctan(0.0641555) - \arcsin(0.45) - 0.057331 - 0.018303\\
 &- 0.0154873 -10^{-8}-0.000157084 -0.130942\\
=&-0.624918>-\frac{\pi}{5}(\approx-0.628319).
\end{align*}

Next, we estimate $|\Phi'_{att}(z)|$ on $H^+_3$ or $H^-_3$.
It follows from Theorem \ref{paraequ-64} and Lemma \ref{paraequ-55} and the estimates in Part (iii)--(vi) that
\begin{align}\label{paraequ-49}
&|\Phi'_{att}(z)|\leq\exp\bigg(-\log|F(z)-z|+\frac{1}{2}|\log F'(z)|+\frac{1}{2}\log\frac{1}{1-r^2_4}\bigg)\nonumber\\
\leq&\frac{\exp\bigg(\frac{1}{2}\text{Log} DF_{max}(u_4)\bigg)}{\text{Abs}\De F_{min}(u_4,\tfrac{\pi}{5})\sqrt{1-r^2_4}}\nonumber\\
\leq&\frac{\exp(0.057331+0.018303+0.0154873 +10^{-8}+ 0.000157084)}{(0.55 + 2.2m)\sqrt{1-0.48^2}}\nonumber\\
\leq&
\frac{\exp( 0.0912784)}{(0.55 + 2.2\times9)\sqrt{1-0.48^2}}\nonumber\\
\approx&\frac{1.24885}{0.55 + 2.2\times9}\approx0.0613686
\end{align}
and
\begin{align*}
&|\Phi'_{att}(z)|\geq\exp\bigg(-\log|F(z)-z|-\frac{1}{2}|\log F'(z)|-\frac{1}{2}\log\frac{1}{1-r^2_4}\bigg)\\
\geq&\frac{\sqrt{1-r^2_4}}{\text{Abs}\De F_{max}(u_4,\tfrac{\pi}{5})\exp\bigg(\frac{1}{2}\text{Log} DF_{max}(u_4)\bigg)}\\
\geq&\frac{\sqrt{1-0.48^2}}{(4.35 + 7.25 m)\exp( 0.057331+0.018303+0.0154873 +10^{-8}+ 0.000157084)}\\
\approx&\frac{\sqrt{1-0.48^2}}{(4.35 + 7.25 m)\exp( 0.0912784)}
\approx\frac{0.800739}{4.35 + 7.25 m}.
\end{align*}

By estimates in Part (iii),  \eqref{paraequ-48}, one has

$$u_4=\frac{2\sqrt{3}(m+1)^{m+1}}{m^{m}}-2.1=2\sqrt{3}(m+1)\frac{(m+1)^{m}}{m^{m}}-2.1
\leq2\sqrt{3}(m+1)e-2.1,$$
and
\begin{align*}
&((4m+2)+e_0)^2+
\big(\tfrac{8m(m+1)+1}{2u_4}\big)^2+2((4m+2)+e_0)\tfrac{(8m(m+1)+1)\cos(\tfrac{\pi}{5})}{2u_4}\\
\leq&((4m+2)+e_0)^2+
\big(0.051 u_4\big)^2+2((4m+2)+e_0)\times\cos(\tfrac{\pi}{5})\times(0.051u_4)\\
<&((4m+2)+e_0)^2+
\big(0.051( 2\sqrt{3}(m+1)e-2.1)\big)^2\\
&+0.102((4m+2)+e_0)\cos(\tfrac{\pi}{5})(2\sqrt{3}(m+1)e-2.1)\\
<&((4m+2)-0.18)^2+
\big(0.051( 2\sqrt{3}(m+1)\times2.72-2.1)\big)^2\\
&+0.102((4m+2)-0.18)\cos(\tfrac{\pi}{5})(2\sqrt{3}(m+1)\times2.72-2.1)\\
\approx&4.55157 + 18.751 m + 19.341 m^2<9+30m+25m^2=(5m+3)^2.
\end{align*}
So, by estimates in Part (ii), one has
\begin{align*}
\text{Abs}\De F_{max}(u_4,\tfrac{\pi}{5})\leq(5m+3)(1+0.45)=4.35 + 7.25 m
\end{align*}
and
\begin{align*}
\text{Abs}\De F_{min}(u_4,\tfrac{\pi}{5})\geq(4m+1)(1-0.45)=0.55 + 2.2 m.
\end{align*}

(b)  First, we show that $\Phi_{att}$ is injective in $\overline{\vv}(cv_{Q},\tfrac{7\pi}{10})$.

Take two points $z_1$ and $z_2\in $ from $\overline{\vv}(cv_{Q},\tfrac{7\pi}{10})$, suppose these two points can be joined by a non-trivial line segment within $\overline{\vv}(cv_{Q},\tfrac{7\pi}{10})$, and this line segment is denoted by $[z_1,z_2]$ with $[z_1,z_2]\subset \overline{\vv}(cv_{Q},\tfrac{7\pi}{10})$. Applying a similar argument in \eqref{paraequ-49}, one has
\begin{align}\label{parqequ-49}
\text{if}\, \tht<\arg\,\Phi'_{att}(z)<\tht'\leq\tht+\pi\ \text{on}\ [z_1,z_2],\ \text{then}\ \tht<\arg\,\frac{\Phi_{att}(z_2)-\Phi_{att}(z_1)}{z_2-z_1}<\tht'.
\end{align}

This, together with \eqref{paraequ-49}, $-\tfrac{\pi}{5}<\arg\,\Phi'_{att}(z)<\tfrac{\pi}{4}$, implies that $$\text{Re}\,\Phi'_{att}(z)>0,\ \text{Re}\,\frac{\Phi_{att}(z_2)-\Phi_{att}(z_1)}{z_2-z_1}>0,\ \text{and}\ \Phi_{att}(z_2)\neq\Phi_{att}(z_1).$$

If two points $z_1,z_2\in\overline{\vv}(cv_{Q},\tfrac{7\pi}{10})$ cannot be joined by a segment within $\overline{\vv}(cv_{Q},\tfrac{7\pi}{10})$, there is $z_3$ so that $[z_1,z_3]$ and $[z_3,z_2]$ are two non-trivial line segments contained in  $\overline{\vv}(cv_{Q},\tfrac{7\pi}{10})$ satisfying that $\tfrac{3\pi}{10}\leq\arg\,(z_3-z_1)\leq\tfrac{7\pi}{10}$ and $\tfrac{3\pi}{10}\leq\arg\,(z_2-z_3)\leq\tfrac{7\pi}{10}$. By \eqref{parqequ-49}, $0<\tfrac{3\pi}{10}-\tfrac{\pi}{5}<\arg\,(\Phi_{att}(z_3)-\Phi_{att}(z_1))<\tfrac{7\pi}{10}+\tfrac{\pi}{4}<\pi$, thus $\text{Im}\,(\Phi_{att}(z_3)-\Phi_{att}(z_1))>0$. Similarly $0<\tfrac{3\pi}{10}-\tfrac{\pi}{5}<\arg\,(\Phi_{att}(z_2)-\Phi_{att}(z_3))<\tfrac{7\pi}{10}+\tfrac{\pi}{4}<\pi$ and $\text{Im}\,(\Phi_{att}(z_2)-\Phi_{att}(z_3))>0$. So, $\text{Im}\,(\Phi_{att}(z_2)-\Phi_{att}(z_1))=\text{Im}\,(\Phi_{att}(z_2)-\Phi_{att}(z_3))+\text{Im}\,(\Phi_{att}(z_3)-\Phi_{att}(z_1))>0$. Hence, $\Phi_{att}$ is injective in $\overline{\vv}(cv_{Q},\tfrac{7\pi}{10})$.

Take two points $z_1,z_2\in H^+_3$ with $z_1\neq z_2$ and
$-\tfrac{3\pi}{10}\leq\arg\,(z_2-z_1)\leq\tfrac{7\pi}{10}$.
Since $H^+_3$ is convex, $[z_1,z_2]\subset H^+_3$. By \eqref{paraequ-49},
\begin{align}\label{paraequ-50}
\arg\,(z_2-z_1)-\frac{\pi}{5}< \arg\,(\Phi_{att}(z_2)-\Phi_{att}(z_1))
< \arg\,(z_2-z_1)+\frac{\pi}{4}.
\end{align}
For points on the upper boundary of $\overline{\vv}(cv_{Q},\tfrac{7\pi}{10})$,
one has $\arg\,(z-cv_{Q})=\tfrac{7\pi}{10}$. This, together
with $\Phi_{att}(cv_{Q})=1$ and \eqref{paraequ-50}, yields that
$\tfrac{\pi}{2}=\tfrac{7\pi}{10}-\tfrac{\pi}{5}
<\arg\,(\Phi_{att}(z)-1)<\tfrac{7\pi}{10}+\tfrac{\pi}{4}<\pi$.
Thus $\text{Re}\,(\Phi_{att}(z)-1)<0$ and $\text{Im}\,(\Phi_{att}(z)-1)>0$.

Similarly, for points $z_1,z_2\in H^-_3$ with $z_1\neq z_2$ and
$-\tfrac{7\pi}{10}\leq\arg\,(z_2-z_1)\leq\tfrac{3\pi}{10}$, one has $[z_1,z_2]\subset H^-_3$ and
\begin{align}\label{paraequ-52}
\arg\,(z_2-z_1)-\frac{\pi}{4}< \arg\,(\Phi_{att}(z_2)-\Phi_{att}(z_1))
< \arg\,(z_2-z_1)+\frac{\pi}{5}.
\end{align}
For points on the lower boundary of $\overline{\vv}(cv_{Q},\tfrac{7\pi}{10})$,
one has $\arg\,(z-cv_{Q})=-\tfrac{7\pi}{10}$. So,
$-\pi<-\tfrac{7\pi}{10}-\tfrac{\pi}{4}
<\arg\,(\Phi_{att}(z)-1)<-\tfrac{7\pi}{10}+\tfrac{\pi}{5}=-\tfrac{\pi}{2}$.
Thus $\text{Re}\,(\Phi_{att}(z)-1)<0$ and $\text{Im}\,(\Phi_{att}(z)-1)<0$.

It follows from \eqref{paraequ-75}, \eqref{paraequ-79},\eqref{paraequ-80}, and \eqref{near-equat-4} that
\begin{align*}
\bigg|\frac{\Phi_{att}(z)-1}{z-cv_{Q}}\bigg|\geq\int^{1}_0\text{Re}\,\Phi'_{att}(cv_{Q}+t(z-cv_{Q}))dt\geq
\frac{0.800739}{4.35 + 7.25 m}\cos(\tfrac{\pi}{4})>0.
\end{align*}
So, $\Phi_{att}(z)\to\infty$ as $z\to\infty$ in $\overline{\vv}(cv_{Q},\tfrac{7\pi}{10})$.

For any given positive constant $R'>0$, choose
$R^{\prime\prime}>\frac{0.800739}{4.35 + 7.25 m}\cos(\tfrac{\pi}{4})R'$
and denote $G=\vv(cv_{Q},\tfrac{7\pi}{10})\cap\cd(cv_{Q},R^{\prime\prime}).$
By the above discussions, one has that
$\Phi_{att}(\partial G)\cap(\{z:\ \text{Re}\,z\geq0\}\cap
\overline{\cd}(1,R'))=\{cv_Q\}$.
By \eqref{paraequ-52}, $\{z:\ \text{Re}\,z\geq0\}\cap
\overline{\cd}(1,R')$
contains at least one point of $\Phi_{att}(G)$, for example
$cv_Q+t$ with $t>0$ small enough, such that Jordan curve
$\Phi_{att}(\partial G)$ has winding number $1$ around this point. Hence, this is true for any point
in $\{z:\ \text{Re}\,z\geq1\}\cap
\overline{\cd}(1,R')$ except $cv_{Q}$. Hence, it follows from the Argument Principle
that $\{z:\ \text{Re}\,z\geq1\}\cap
\overline{\cd}(1,R')\subset\Phi_{att}(G)\cup\{cv_Q\}$. Since $R'>0$ was arbitrary,
$\{z:\ \text{Re}\,z\geq1\}$
is contained in the image of  $\vv(cv_{Q},\tfrac{7\pi}{10})\cup\{cv_Q\}$.
Further, $R'>0$ was arbitrarily chosen, $\{z:\ \text{Re}\,z\geq1\}$ is contained in the image of $\vv(cv_{Q},\tfrac{7\pi}{10})\cup\{cv_Q\}$ by $\Phi_{att}$.

Set
\beq\label{h1region8-20-1}
\mathcal{H}_1:=\Phi^{-1}_{att}
(\{z:\ \text{Re}\,z>1\}).
\eeq
For $z\in\overline{\vv}(cv_Q,\tfrac{3\pi}{10})=H^+_3\cap H^-_{3}$,  the two inequalities \eqref{paraequ-79} and \eqref{paraequ-80} should hold at the same time, so for any  $z_1,z_2\in\overline{\vv}(cv_Q,\tfrac{3\pi}{10})$,
\begin{align}\label{paraequ-52}
\arg\,(z_2-z_1)-\frac{\pi}{5}< \arg\,(\Phi_{att}(z_2)-\Phi_{att}(z_1))
< \arg\,(z_2-z_1)+\frac{\pi}{5},
\end{align}
implying that
\beqq
|\arg\,(\Phi_{att}(z)-1)|<\frac{\pi}{5}+\frac{3\pi}{10}=\frac{\pi}{2}\ \forall z\in\overline{\vv}(cv_Q,\tfrac{3\pi}{10}).
\eeqq
Thus, $\Phi_{att}(\overline{\vv}(cv_Q,\tfrac{3\pi}{10}))\subset\{z:\ \text{Re}\,z>1\}\cup\{cv_Q\}$. Therefore,
\beqq
\overline{\vv}(cv_Q,\tfrac{3\pi}{10})\subset\mathcal{H}_1\cup\{cv_Q\}\subset\overline{\mathcal{H}}_1\subset
\overline{\vv}(cv_Q,\tfrac{7\pi}{10})\cup\{cv_Q\}.
\eeqq
\end{proof}

\begin{lemma}\label{connect-2021-12-29-3}
For $m\geq22$ and $F=Q\circ \varphi^{-1}$, one has
\begin{align*}
\text{Re}\,F(cv_Q)>cv_Q,
\end{align*}
implying that $W_1$ is connected, where $W_1$ is introduced in \eqref{paraequ-81}.
\end{lemma}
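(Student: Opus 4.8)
The plan is to reduce the statement to the single scalar inequality $\text{Re}\,F(cv_Q)>cv_Q$ and then obtain the connectivity of $W_1$ from it by an elementary planar argument. Put $\zeta_0:=\varphi^{-1}(cv_Q)$, where $\varphi$ is the univalent map of Lemma~\ref{paraequ-11} with $F=Q\circ\varphi^{-1}$. By Remark~\ref{near-equat-21} we have $cv_Q\ge 9.95\,(m+1)>4|e_1|$, so Lemma~\ref{paraequ-11}(b) gives $cv_Q\in\text{Image}(\varphi)$; hence $\zeta_0$ is well defined and $F(cv_Q)=Q(\zeta_0)$.

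First I would locate $\zeta_0$. Since $|\varphi(\zeta)|<10$ for every $\zeta\in\cc\setminus E$ with $|\zeta|\le 7$ (this is shown in the proof of Proposition~\ref{paraequ-14}), while $|\varphi(\zeta_0)|=cv_Q\ge 9.95\,(m+1)$, we must have $|\zeta_0|>7$. Then from \eqref{paraequ-9}, $\varphi(\zeta_0)=\zeta_0+c_0+\varphi_1(\zeta_0)=cv_Q$, so $|\zeta_0-cv_Q|\le|c_0|+|\varphi_1(\zeta_0)|$, where Lemma~\ref{paraequ-11}(a) gives $|c_0|\le 2|e_1|+|e_0|=1.86$ and \eqref{paraequ-30-1-21-2} gives $|\varphi_1(\zeta_0)|\le\varphi_{1,max}(7)<0.31$. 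Consequently $\text{Re}\,\zeta_0\ge cv_Q-2.17\ge 9.95\,(m+1)-2.17>9m+1$; in particular $\text{Re}\,\zeta_0>0$ and $|\zeta_0|>9m+1>cp_{Q1}$, so the estimates of Lemma~\ref{paraequ-12} are available at $\zeta_0$.

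Next I would substitute into the expansion \eqref{paraequ-66} of Lemma~\ref{paraequ-12}: since $|\zeta_0|\ge 9m+1$,
\[
\text{Re}\,F(cv_Q)-cv_Q=(4m+2)-\text{Re}\bigl(c_0+\varphi_1(\zeta_0)\bigr)+\text{Re}\,\frac{8m(m+1)+1}{\zeta_0}+\text{Re}\,Q_2(\zeta_0).
\]
Because $\text{Re}\,\zeta_0>0$, the third term equals $\bigl(8m(m+1)+1\bigr)\text{Re}\,\zeta_0/|\zeta_0|^2\ge 0$ and may be dropped, while $|c_0|+|\varphi_1(\zeta_0)|\le 2.17$ and, by \eqref{near-equat-20}, $|Q_2(\zeta_0)|\le\tfrac{2^7}{3^6}m+\tfrac{2^4}{3^4}\tfrac1m+\tfrac{2^5}{3^3}$. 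Hence
\[
\text{Re}\,F(cv_Q)-cv_Q\ \ge\ (4m+2)-2.17-\Bigl(\tfrac{2^7}{3^6}m+\tfrac{2^4}{3^4}\tfrac1m+\tfrac{2^5}{3^3}\Bigr)\ >\ 0
\]
for all $m\ge 22$ (in fact for all $m\ge1$, since $\tfrac{2^7}{3^6}<0.18$), which is a one-line numerical check.

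Finally, for the connectivity of $W_1$: as the half-angle $\tfrac{3\pi}{10}$ is strictly less than $\tfrac\pi2$, the removed closed cone $C_B:=\overline{\vv}\bigl(F(cv_Q),\tfrac{3\pi}{10}\bigr)$ is contained in the half-plane $\{z:\text{Re}\,z\ge\text{Re}\,F(cv_Q)\}$, which by the inequality just proved lies inside $\{z:\text{Re}\,z>cv_Q\}\subset\vv\bigl(cv_Q,\tfrac{7\pi}{10}\bigr)$. Thus $\vv\bigl(cv_Q,\tfrac{7\pi}{10}\bigr)\cap\{\text{Re}\,z<\text{Re}\,F(cv_Q)\}\subset W_1$, and a routine planar argument shows that deleting the convex cone $C_B$, whose apex lies strictly to the right of $cv_Q$, from the sector $\vv\bigl(cv_Q,\tfrac{7\pi}{10}\bigr)$ cannot disconnect it: every point of $W_1$ can be joined within $W_1$ to the nonempty vertical strip $\{cv_Q<\text{Re}\,z<\text{Re}\,F(cv_Q)\}\subset W_1$, either directly or by first moving along a ray toward the relevant apex and then rotating about it. The step I expect to be the only real obstacle is the bookkeeping: confirming that $\zeta_0$ lands where the $Q$-estimates are valid and that the accumulated errors $|c_0|,|\varphi_1(\zeta_0)|,|Q_2(\zeta_0)|$ stay safely below the dominant term $4m+2$, together with making the last elementary connectivity argument fully airtight.
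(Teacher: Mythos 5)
Your argument is correct and reaches the same inequality, but you reorganize it more economically than the paper does. The paper writes $\text{Re}\,Q(\zeta_0)\ge(\text{Re}\,\zeta_0)+(4m+2)+\text{Re}\,\tfrac{8m(m+1)+1}{\zeta_0}+\text{Re}\,Q_2(\zeta_0)$, bounds $\text{Re}\,\zeta_0\ge cv_Q-3$ via \eqref{near-equat-7}, and then invests real effort in a sharp lower bound $\text{Re}\,\tfrac{8m(m+1)+1}{\zeta_0}\ge\tfrac{1.96m}{e}$ (using the elementary estimate on $\text{Re}\,\tfrac{1}{a+be^{i\theta}}$ with $a>2b$), before comparing against $cv_Q\ge\tfrac{23^{22}}{22^{22}}\cdot 4(m+1)$. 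You instead substitute $\zeta_0-cv_Q=-(c_0+\varphi_1(\zeta_0))$ exactly, so $cv_Q$ cancels on both sides and the quantity to be shown positive becomes $(4m+2)-\text{Re}(c_0+\varphi_1(\zeta_0))+\text{Re}\,\tfrac{8m(m+1)+1}{\zeta_0}+\text{Re}\,Q_2(\zeta_0)$; then you simply discard the middle term as nonnegative (since $\text{Re}\,\zeta_0>0$) and conclude from the trivial fact that $4m+2$ dominates $|c_0+\varphi_1(\zeta_0)|+|Q_2(\zeta_0)|$, whose leading coefficient in $m$ is $\tfrac{2^7}{3^6}<0.2$. This saves the entire $\text{Re}\,\tfrac{1}{a+be^{i\theta}}$ calculation and the comparison with $\tfrac{23^{22}}{22^{22}}$; the trade-off is that it gives a weaker (but still sufficient) margin $\approx 3.8m$ instead of the paper's $\approx 11.2m$. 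Both proofs need the preliminary localization $|\zeta_0|\ge 9m+1$ to invoke \eqref{near-equat-20}, and you handle that correctly. The connectivity step is stated without a full argument in both places; your sketch (the removed closed cone $\overline{\vv}(F(cv_Q),\tfrac{3\pi}{10})$ lies in $\{\text{Re}\,z\ge\text{Re}\,F(cv_Q)\}$, hence misses the nonempty convex strip $\{cv_Q<\text{Re}\,z<\text{Re}\,F(cv_Q)\}\subset W_1$, to which everything else can be joined) is on the right track and could be tightened into a short rigorous argument if needed.
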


\begin{proof}
Recall that $\varphi$ satisfies \eqref{paraequ-9} and \eqref{near-equat-7}. By Lemma \ref{paraequ-25}, for $m\geq3$,
\beqq
4(m+1)\cdot e\geq cv_Q=4(m+1)\frac{(m+1)^{m}}{m^{m}}\geq4(m+1) \frac{(3+1)^{3}}{3^{3}}=\frac{64}{27}\cdot4(m+1)>9(m+1).
\eeqq

By \eqref{paraequ-66} of Lemma \ref{paraequ-12}, there is an estimate on $Q(\zeta)$
\beqq
Q(z)=z+(4m+2)+
\frac{8m(m+1)+1}{z}+Q_2(z).
\eeqq

Hence, by \eqref{near-equat-20} and $9(m+1)>9m+4=(9m+1)+3$,
\beqq
|Q_2\circ \varphi^{-1}(cv_Q)|\leq \frac{2^7}{3^6}m+\frac{2^4}{3^4}\cdot\frac{1}{m}+\frac{2^5}{3^3}.
\eeqq

Next, we give an estimate on the real part of $\tfrac{1}{a+b\cdot e^{i\tht}}$, where $a>2b>0$ are positive numbers and $\tht\in[0,2\pi).$ By direct computation, one has
\begin{align*}
\frac{1}{a+b\cdot e^{i\tht}}=\frac{1}{a+b(\cos(\tht)+i\sin(\tht))}=\frac{1}{(a+b\cos(\tht))+ib\sin(\tht)}
=\frac{(a+b\cos(\tht))-ib\sin(\tht)}{(a+b\cos(\tht))^2+(b\sin(\tht))^2}.
\end{align*}
The real part is
\begin{align*}
\frac{(a+b\cos(\tht))}{(a+b\cos(\tht))^2+(b\sin(\tht))^2}\geq
\frac{(a+b\cos(\tht))}{(a+b\cos(\tht))^2+b^2}\geq\frac{a+b}{(a+b)^2+b^2},
\end{align*}
where the last inequality can be derived by considering the function $\tfrac{x}{x^2+b^2}$ for $x\in[a-b,a+b]$, and $(\tfrac{x}{x^2+b^2})^{\prime}=\tfrac{b^2-x^2}{(x^2+b^2)^2}<0$ for $x\in[a-b,a+b]$.
Note that the function $\frac{a+b}{(a+b)^2+b^2}$ is decreasing with respect to the variable $a$.

Hence, the estimate of the lower bound of the real part of $\tfrac{8m(m+1)+1}{ \varphi^{-1}(cv_Q)}$ for $m\geq22$ is
\begin{align*}
&\frac{(8m(m+1)+1)\cdot (4e(m+1)+3)}{ (4e(m+1)+3)^2+3^2}
=\frac{(8m(m+1)+1)}{ (4e(m+1)+3)+\frac{3^2}{4e(m+1)+3}}\\
>&\frac{(8m(m+1)+1)}{ (4e(m+1)+3)+1}
>\frac{8m(m+1)}{ 4e(m+1)+4}=\frac{2m(m+1)}{ e(m+1)+1}>\frac{1.96m}{e},
\end{align*}
where $0.04\times e\times (m+1)>0.04\times 2.71\times (22+1)\approx2.4932>1.96$ is used in the last inequality.

So, by \eqref{paraequ-66}, for $m\geq22$, one has
\begin{align*}
&\text{Re}\,(Q\circ \varphi^{-1}(cv_Q))\\
\geq& \frac{23^{22}}{22^{22}}\cdot4(m+1)-3+(4m+2)+
\frac{1.96m}{e}-\left(\frac{2^7}{3^6}m+\frac{2^4}{3^4}\cdot\frac{1}{m}+\frac{2^5}{3^3}\right)\\
\geq&\frac{23^{22}}{22^{22}}\cdot4(m+1)-3+4(m+1)-2+
\frac{0.49}{e}\cdot 4(m+1)\\
&-\frac{1.96}{e}-\frac{2^5}{3^6}\cdot4(m+1)+\frac{2^7}{3^6}-\frac{2^4}{3^4}\cdot\frac{1}{22}-\frac{2^5}{3^3}\\
=&\left(\frac{23^{22}}{22^{22}}+\frac{0.49}{e}-\frac{2^5}{3^6}\right)\cdot4(m+1)+\left(-5-\frac{1.96}{e}+\frac{2^7}{3^6}-\frac{2^4}{22\times3^4}-\frac{2^5}{3^3}\right)\\
>&\left(\frac{23^{22}}{22^{22}}+\frac{0.49}{2.72}-\frac{2^5}{3^6}\right)\cdot4(m+1)+\left(-5-\frac{1.96}{2.71}+\frac{2^7}{3^6}-\frac{2^4}{22\times3^4}-\frac{2^5}{3^3}\right)\\
\approx&2.79522\cdot 4(m+1)-6.74183>2.72\cdot 4(m+1)>cv_Q,
\end{align*}
where $(2.79522 - 2.72)\times4\times(22+1)=6.92024>6.74183$ is used.
\end{proof}

Now, the proof of Proposition \ref{attracting2020912-1} is provided.

\begin{proof} (Proof of Proposition \ref{attracting2020912-1})

By the above discussions on $\Phi_{att}$, define the following sets:
\beqq
D_1=\Phi_{att}^{-1}(\{z:\ 1<\text{Re}\, z<2,\ -\eta<\text{Im}\,z<\eta\})\bigcap \overline{\vv}(cv_Q,\tfrac{7\pi}{10}),
\eeqq
\beqq
D^{\sharp}_1=\Phi_{att}^{-1}(\{z:\ 1<\text{Re}\, z<2,\ \text{Im}\,z>\eta\})\bigcap \overline{\vv}(cv_Q,\tfrac{7\pi}{10}),
\eeqq
\beqq
D^{\flat}_1=\Phi_{att}^{-1}(\{z:\ 1<\text{Re}\, z<2,\ \text{Im}\,z<-\eta\})\bigcap \overline{\vv}(cv_Q,\tfrac{7\pi}{10}).
\eeqq

Suppose that $|\arg\,(z-F(cv_Q))|\leq\tfrac{3\pi}{10}$, this is on the right of $W_1$, where $W_1$ is introduced in \eqref{paraequ-81}.

It follows from \eqref{paraequ-65} that $\text{Re}\,(\zeta\,e^{-i\pi/5})>cp_{Q1}$
for $\zeta\in H^{+}_4$, $H^{\pm}_3\subset\varphi(H^{\pm}_4)$ by Lemma \ref{paraequ-63}, so there is $\zeta\in H^{+}_4$ such that $\varphi(\zeta)=cv_Q$, thus
$|\arg\,(F(cv_Q)-cv_Q)|<\tfrac{3\pi}{10}$ by the estimates in Parts (i) and (ii) (of $\Phi'_{att}$) in the proof of Lemma \ref{near-equat-5} . Hence, by $\arg\,(z-cv_Q)=\arg\,(z-F(cv_Q)+F(cv_Q)-cv_Q)$, one has
$|\arg\,(z-cv_Q)|\leq\tfrac{3\pi}{10}$. By \eqref{paraequ-52}, one has
\begin{align*}
\arg\,(z-F(cv_Q))-\frac{\pi}{5}< \arg\,(\Phi_{att}(z)-\Phi_{att}(F(cv_Q)))
< \arg\,(z-F(cv_Q))+\frac{\pi}{5}.
\end{align*}
Thus, $|\arg\,(\Phi_{att}(z)-\Phi_{att}(F(cv_Q)))|<\tfrac{3\pi}{10}+\tfrac{\pi}{5}=\tfrac{\pi}{2}$,
implying that
$$\text{Re}\,\Phi_{att}(z)>\text{Re}\,\Phi_{att}(F(cv_Q))=2.$$ Therefore, $D_1$, $D^{\sharp}_1$,  and $D^{\flat}_1$ are contained in $W_1$.

Now, we show that for $z\in\{z:\ |\arg\,(z-cv_Q)|\leq\tfrac{\pi}{5}\}$, $\Phi_{att}(z)$ cannot be in $\{z:\ 1<\text{Re}\,z<2,\ |\text{Im}\,z|>\eta\}$. By direct computation, one has $|\arg\,(\Phi_{att}(z)-1)|<\tfrac{\pi}{5}+\tfrac{\pi}{5}=\tfrac{2\pi}{5}$, $\tan(\tfrac{2\pi}{5})=\sqrt{5 + 2\sqrt{5}}\approx3.07768<\eta$,
where
\begin{align*}
&\eta=\frac{1}{2\pi}\left(\log(12(m+1)\cdot 30^{2+2m})+1\right)
>\frac{1}{2\times 4}\left(\log(12)+\log(m+1)+ (2+2m)\log(30)+1\right)\\
>&\frac{1}{2\times 4}\left(\log(12)+\log(22+1)+ (2+2\times22)\log(30)+1\right)\approx
20.38\ \forall m\geq22.
\end{align*}

So, $D^{\sharp}_1$  and $D^{\flat}_1$ are in $\{z:\ \tfrac{\pi}{5}<|\arg\,(z-cv_Q)|<\tfrac{7\pi}{10}\}$.

Finally, we verify that $D_1\subset\cd(cv_Q,R_1)$. By \eqref{near-equat-4}, the derivative of $\Phi_{att}^{-1}$ is bounded by $\tfrac{4.35 + 7.25 m}{0.800739}$, and
$\{z:\ 1<\text{Re}\,z<2,\ -\eta<\text{Im}\,z<\eta\}\subset \cd(1,\sqrt{1+\eta^2})$, one has
$D_1\subset\cd(cv_Q,\sqrt{1+\eta^2}\times\tfrac{4.35 + 7.25 m}{0.800739})$.  So, it is sufficient to show that $\sqrt{1+\eta^2}\times\tfrac{4.35 + 7.25 m}{0.800739}<R_1=2.39\times 10^m$.

Since
\begin{align*}
& (1.5m+3.5)(4.35 + 7.25 m)=15.225 + 31.9 m + 10.875 m^2<16+32m+11m^2,\\
&16+32\times 3+11\times 3^2=211<1000=10^3,\ 22< (\log(10))^2\times10,\\ &32+22\times2=76<100<\log(10)\times10^2,
\end{align*}
one has
\beq\label{near-equat-8}
(1.5m+3.5)(4.35 + 7.25 m)<16+32m+11m^2<10^m\ \forall m\geq3.
\eeq
So, for $m\geq3$, one has
\begin{align*}
&\sqrt{1+\eta^2}=\sqrt{1+\left(\frac{1}{2\pi}\left(\log(12(m+1)\cdot 30^{2+2m})+1\right)\right)^2}\\
<&1+\frac{1}{2\pi}\left(\log(12(m+1)\cdot 30^{2+2m})+1\right)\\
<&\frac{1}{2\pi}\left(\log(12(m+1)\cdot 30^{2+2m})+9\right)\\
=&\frac{1}{2\pi}\left(\log(12)+\log(m+1)+(2+2m)\log(30)+9\right)\\
<&\frac{1}{2\times3}\left(3+(m+1)+4(2+2m)+9\right)=2+1.5(m+1)=1.5m+3.5\\
<&\frac{10^m}{4.35 + 7.25 m}<1.91377\times \frac{10^m}{4.35 + 7.25 m}=2.39\times 10^m\times \frac{0.800739}{4.35 + 7.25 m}.
\end{align*}

\end{proof}

\subsection{Locating domains $D_0$, $D'_0$, $D_{-1}$, and $D^{\prime\prime}_{-1}$}\label{location2021-10-25-1}

Note that $Q$ maps both $\mathcal{U}^{Q+}_1$ and $\mathcal{U}^{Q+}_2$ homeomorphically onto $\cc\setminus(-\infty,cv_Q]$. Define
\beqq
\widetilde{\mathcal{H}}_0=(Q|_{\mathcal{U}^{Q+}_1})^{-1}(\mathcal{H}_1),\
\widetilde{D}_0=(Q|_{\mathcal{U}^{Q+}_1})^{-1}(D_1),\
\widetilde{D}^{\sharp}_0=(Q|_{\mathcal{U}^{Q+}_1})^{-1}(D^{\sharp}_1),\
\widetilde{D}^{\prime}_0=(Q|_{\mathcal{U}^{Q+}_2})^{-1}(D_1),
\eeqq
where $\mathcal{H}_1$ is specified in \eqref{h1region8-20-1}.
By (b) of Lemma \ref{near-equat-5}, $\mathcal{H}_1\cup D^{\sharp}_1\subset\ol{\vv}(cv,\tfrac{7\pi}{10})\subset\cc\setminus\ol{\cd}(0,\rho)$; by the proof of Proposition \ref{attracting2020912-1}, $D_1\subset W_1$, so
$D_1\subset\ol{\cd}(0,R)\setminus\ol{\cd}(0,\rho)$.
This, together with (d) of Lemma \ref{paraequ-15}, implies that these domains, $\widetilde{\mathcal{H}}_0$,
$\widetilde{D}_0$,
$\widetilde{D}^{\sharp}_0$, and
$\widetilde{D}^{\prime}_0$, are contained in
$$\cc\setminus \bigg(E_{r_1}\cup\overline{\cd}\big(\cot(\tfrac{\pi}{m+1})i,\tfrac{1}{\sin(\tfrac{\pi}{m+1})}\big)\cup \overline{\cd}\big(-\cot(\tfrac{\pi}{m+1})i,\tfrac{1}{\sin(\tfrac{\pi}{m+1})}\big)\bigg).$$ So,
it is reasonable to define
\beqq
\mathcal{H}_0=\varphi(\widetilde{H}_0),\ D_0=\varphi(\widetilde{D}_0),\
D^{\sharp}_0=\varphi(\widetilde{D}^{\sharp}_0),\ D^{\prime}_0=\varphi(\widetilde{D}^{\prime}_0).
\eeqq

Since $Q$ maps $\mathcal{U}_{1+}\cup\mathcal{U}_{2-}\cup\ga_{b1+}$ and $\mathcal{U}_{1-}\cup\mathcal{U}_{2+}\cup\ga_{b2+}$ homeomorphically onto $\cc\setminus(-\infty,0]\cup(cv_Q,+\infty)$, we can define
\beqq
\widetilde{D}_{-1}=(Q|_{\mathcal{U}_{1+}\cup\mathcal{U}_{2-}\cup\ga_{b1+}})^{-1}(D_0)\ \text{and}\ \widetilde{D}^{\prime\prime}_{-1}=(Q|_{\mathcal{U}_{1-}\cup\mathcal{U}_{2+}\cup\ga_{b2+}})^{-1}(D_0).
\eeqq
These domains are contained in $\cc\setminus E_{r_1}$ by Lemma \ref{near-equata-1} below. It is reasonable to define
\beqq
D_{-1}=\varphi(\widetilde{D}_{-1})\ \text{and}\ D^{\prime\prime}_{-1}=\varphi(\widetilde{D}^{\prime\prime}_{-1}).
\eeqq

\begin{lemma}\label{near-equata-1}
Let $u_{\sharp}=cv_Q-(4m+2)$.
 \begin{itemize}
\item[(a)] $\widetilde{D}_0\cup\widetilde{D}^{\prime}_0\subset \cd(u_{\sharp},R_1+1)$, $D_0\cup D^{\prime}_0\subset\cd(u_{\sharp},R_1+4)$;
\item[(b)]
 $\widetilde{D}_{-1}\cup\widetilde{D}_{-1}^{\prime\prime}\subset\cd(4m,R_1+12m+9-cv_Q)$, and
$D_{-1}\cup D_{-1}^{\prime\prime}\subset \cd(4m,R_1+12m+12-cv_Q)$;
further,
$\widetilde{D}_0\cup\widetilde{D}^{\prime}_0\cup\widetilde{D}_{-1}\cup\widetilde{D}^{\prime\prime}_{-1}\subset\cd(0,R_1+8m+11)\cap\mathcal{U}_{12}\cap(\cc\setminus E_{r_1})$;
\item[(c)] $D_0\cup D^{\prime}_0\cup D_{-1}\cup D^{\prime\prime}_{-1}\subset\cd(0,R_1+8m+14)$.
\end{itemize}
\end{lemma}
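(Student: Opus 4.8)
The plan is to push the inclusion $D_1\subset\cd(cv_Q,R_1)$ of Proposition~\ref{attracting2020912-1} backwards through the two branches of $Q$ on $\mathcal{U}^{Q+}_1\cup\mathcal{U}^{Q+}_2$ and then through $\varphi$, controlling the inverse branches by the Laurent expansion $Q(\zeta)=\zeta+(4m+2)+\tfrac{8m(m+1)+1}{\zeta}+Q_2(\zeta)$ of Lemma~\ref{paraequ-12}, and then observing that $R_1=2.39\times10^{m}$ swallows every polynomial-in-$m$ error term. Throughout I will use $u_\sharp+(4m+2)=cv_Q$ and the elementary bounds $10.4(m+1)<cv_Q<4e(m+1)<12m+4$, valid for $m\ge22$ by Lemma~\ref{paraequ-25} since $2.6<(1+\tfrac1m)^m<e$ there.

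First I would prove (a). Take $w\in D_1$ and let $\zeta$ be either of $(Q|_{\mathcal{U}^{Q+}_1})^{-1}(w)$, $(Q|_{\mathcal{U}^{Q+}_2})^{-1}(w)$; by \eqref{paraequ-66}, $\zeta-u_\sharp=(w-cv_Q)-\bigl(\tfrac{8m(m+1)+1}{\zeta}+Q_2(\zeta)\bigr)$. Set the explicit threshold $T:=16m(m+1)+2$. Then $\tfrac{8m(m+1)+1}{T}\le\tfrac12$, and the decreasing-in-$r$ estimates \eqref{paraequ-20}--\eqref{paraequ-23} (each used with $a=\tfrac18$) give $Q_{2,max}(T)\le\tfrac12$, so $\bigl|\tfrac{8m(m+1)+1}{\zeta}+Q_2(\zeta)\bigr|\le1$ whenever $|\zeta|\ge T$. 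Hence either $|\zeta|\ge T$, in which case $|\zeta-u_\sharp|\le|w-cv_Q|+1<R_1+1$, or $|\zeta|<T$, in which case $|\zeta-u_\sharp|\le|\zeta|+u_\sharp<T+cv_Q$, still $<R_1+1$ because $T+cv_Q=O(m^2)\ll R_1$. This gives $\widetilde D_0\cup\widetilde D_0'\subset\cd(u_\sharp,R_1+1)$. For $D_0=\varphi(\widetilde D_0)$ and $D_0'=\varphi(\widetilde D_0')$ I add the $\varphi$-distortion: if $|\zeta|\ge7$ then $|\varphi(\zeta)-\zeta|<3$ by \eqref{near-equat-7}; if $|\zeta|<7$ then, since $\widetilde D_0\subset\cc\setminus E_{r_1}\subset\cc\setminus E$ by Remark~\ref{near-equat-11}, the bound $|\varphi(\zeta)|<10$ established in the proof of Proposition~\ref{paraequ-14}(b) applies. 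Either way $D_0\cup D_0'\subset\cd(u_\sharp,R_1+4)$.

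Next I would treat (b) and (c). From (a), every $w\in D_0$ has $|w|\le u_\sharp+R_1+4<R$ (since $u_\sharp=O(m)$ and $R=2.66\times10^m$), and $|w|=|\varphi(\zeta)|>\rho$ for $\zeta\in\widetilde D_0\subset\cc\setminus\text{int}\,E_{r_1}$ by Lemma~\ref{nearequ-24}; since $\widetilde D_{-1},\widetilde D_{-1}''\subset\mathcal{U}^{Q+}_1\cup\mathcal{U}^{Q+}_2$ have $Q$-image inside $\{\rho\le|w|\le R\}$, Lemma~\ref{paraequ-15}(d) gives $\widetilde D_{-1}\cup\widetilde D_{-1}''\subset\cc\setminus E_{r_1}\subset\cc\setminus E$, which is the $\cc\setminus E_{r_1}$ assertion of (b). Now I repeat the previous step with $D_0$ in place of $D_1$: for $w\in D_0$ and $\zeta$ the relevant preimage, \eqref{paraequ-66} gives $\zeta-4m=(w-(8m+2))-\bigl(\tfrac{8m(m+1)+1}{\zeta}+Q_2(\zeta)\bigr)$, and $|w-(8m+2)|\le|w-u_\sharp|+|u_\sharp-(8m+2)|<(R_1+4)+(12m+4-cv_Q)$ since $cv_Q<12m+4$; splitting on $|\zeta|\gtrless T$ as before yields $\widetilde D_{-1}\cup\widetilde D_{-1}''\subset\cd(4m,R_1+12m+9-cv_Q)$, and the $\varphi$-distortion gives $D_{-1}\cup D_{-1}''\subset\cd(4m,R_1+12m+12-cv_Q)$. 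The centred inclusions then follow from the triangle inequality together with $u_\sharp<8m+2$ and $16m+9-cv_Q<8m+11$ (using $cv_Q>10.4(m+1)$), giving the $\cd(0,R_1+8m+11)$ assertion of (b); the $\mathcal{U}_{12}$ part is immediate from the defining formulas ($\widetilde D_0\subset\mathcal{U}^{Q+}_1$, $\widetilde D_0'\subset\mathcal{U}^{Q+}_2$, $\widetilde D_{-1},\widetilde D_{-1}''\subset\ol{\mathcal{U}}^{Q+}_1\cup\ol{\mathcal{U}}^{Q+}_2$), and the same arithmetic plus a further $3$ for the $\varphi$-distortion gives (c).

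The only step that needs any real thought is the dichotomy in the first step: one must verify that the error $\tfrac{8m(m+1)+1}{\zeta}+Q_2(\zeta)$ genuinely drops below $1$ once $|\zeta|$ passes the $O(m^2)$ threshold $T$ — which is precisely what the estimates \eqref{paraequ-20}--\eqref{paraequ-23} of Lemma~\ref{paraequ-12} are designed to deliver — while in the complementary regime $|\zeta|<T$ one should resist any urge to be sharp and simply absorb $T+cv_Q$ into the exponentially large $R_1$. Everything else is bookkeeping with the fixed numerical constants $\rho$, $R$, $R_1$, the two-sided bound on $cv_Q$, and the distortion bounds for $\varphi$.
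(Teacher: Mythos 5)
Your proposal is correct and follows essentially the same route as the paper: the same Laurent expansion of $Q$ from Lemma \ref{paraequ-12}, the same threshold $16m(m+1)+2$ where the estimates \eqref{paraequ-20}--\eqref{paraequ-23} with $a=\tfrac18$ force the error $\tfrac{8m(m+1)+1}{\zeta}+Q_2(\zeta)$ below $1$, and the same $\varphi$-distortion bound from \eqref{near-equat-7}. The only cosmetic differences are that you argue the $Q$-preimage inclusion directly via a dichotomy on $|\zeta|\gtrless T$ where the paper uses the contrapositive, and your pointwise estimate for $\varphi$ plays the role of the paper's observation that $\varphi$ carries the boundary circle $\{|\zeta-u_\sharp|=R_1+1\}$ into $\cd(u_\sharp,R_1+4)$ and hence the whole preimage disk as well.
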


\begin{lemma}\label{region2021-11-25-1}
Take $\tht_0\in(0,\pi/2)$ with $\cos(\tht_0)=\tfrac{1}{3}$, where $\arccos(\tfrac{1}{3})\approx0.391827\pi$.
Consider the two lines
$$l^+_1=\{\zeta=(4m+1)e^{i\tht_0}+s e^{i(\tht_0+\pi/2)}\ \mbox{with}\ s\geq0\ \mbox{is the parameter}\}$$
and
$$l^-_1=\{\zeta=(4m+1)e^{-i\tht_0}+s e^{i(-\tht_0-\pi/2)}\ \mbox{with}\ s\geq0\ \mbox{is the parameter}\},$$
and the arc $\ga_1=\ga_1(\tht)=cp_{Q1}\cdot e^{i\tht}$ with $\tht\in[-(\tht_0+\tht_1),(\tht_0+\tht_1)]$, where $\tht_1=\arccos(\tfrac{4m+1}{cp_{Q1}})$.
It is evident that the union $l^+_1\cup l^-_1\cup\ga_1$ separates the complex plane into two parts. Let
$\widetilde{W}_0$ be the region on the right side of the $l^+_1\cup l^-_1\cup\ga_1$.

Consider the two lines
$$l^+_2=\{\zeta=(4m-2)e^{i\tht_0}+s e^{i(\tht_0+\pi/2)}\ \mbox{with}\ s\geq0\ \mbox{is the parameter}\}$$
and
$$l^-_2=\{\zeta=(4m-2)e^{-i\tht_0}+s e^{i(-\tht_0-\pi/2)}\ \mbox{with}\ s\geq0\ \mbox{is the parameter}\},$$
and the arc $\ga_2=\ga(\tht)=(cp_{Q1}-3)\cdot e^{i\tht}$ with $\tht\in[-(\tht_0+\tht_2),(\tht_0+\tht_2)]$, where $\tht_2=\arccos(\tfrac{4m-2}{cp_{Q1}-3})$.
It is evident that the union $l^+_2\cup l^-_2\cup\ga_2$ separates the complex plane into two parts. Let
$W_0$ be the region on the right side of the $l^+_2\cup l^-_2\cup\ga_2$.

Then,
\begin{itemize}
\item[(a)] $\vv(cv_{Q},\tfrac{7\pi}{10})\subset Q(\widetilde{W}_0)\subset\cc\setminus(-\infty,cv_{Q}]$ and $\widetilde{W}_0\subset\mathcal{U}_1$;
\item[(b)] $\varphi(\widetilde{W}_0)\subset W_0$;
\item[(c)] $Q^{-1}(W_0)\setminus\overline{\cd}\subset\widetilde{W}_{-1}:
    =\vv(0,\tfrac{9\pi}{10})\setminus(\overline{\cd}\cup\{\zeta:\ \text{Re}\,\zeta\leq0\ \text{and}\ |\zeta|\leq4m-3\})$.
\end{itemize}
\end{lemma}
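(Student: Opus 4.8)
The plan is to treat the three items in the order stated, all with one toolkit: the expansion $Q(z)=z+(4m+2)+\frac{8m(m+1)+1}{z}+Q_2(z)$ of Lemma \ref{paraequ-12} with the error bounds \eqref{near-equat-14}, \eqref{paraequ-28} and \eqref{near-equat-20}; the injectivity of $Q$ on half-planes $\{\mathrm{Re}(\zeta e^{-i\tht})>cp_{Q1}\}$ from Lemma \ref{paraequ-56}(b); the combinatorial picture of $Q^{-1}(\cc\setminus(-\infty,cv_Q])$ from Subsection \ref{near629-equ-1} and Figure \ref{illustration-inverse-Q} (which tells us into which component $\mathcal{U}_j^{Q+}$ a connected region falls); Lemma \ref{paraequ-15}, which says $|Q|<\rho$ near $-1$ and $|Q|$ is huge near $+1$; Lemma \ref{near-equati-22}, which puts $Q(\overline{\vv}((4e-1)m,\tfrac{\pi}{5}))$ inside $\vv(cv_Q,\tfrac{\pi}{5})$; and, for (b), the closeness of $\varphi$ to the identity from Lemma \ref{paraequ-11} together with \eqref{near-equat-7}.

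For (a), first I would show $\widetilde{W}_0\subset\mathcal{U}_1^{Q+}$. Since $\widetilde{W}_0$ is connected and $\widetilde{W}_0\cap\rr=(cp_{Q1},+\infty)=\ga_{a1}^{Q+}\subset\mathcal{U}_1^{Q+}$ (on this ray $Q$ takes values in $\Ga_a^Q=(cv_Q,+\infty)\subset\cc_{slit}^Q$), it suffices to check $Q(\widetilde{W}_0)\cap(-\infty,cv_Q]=\emptyset$, and for this to check the boundary $\partial\widetilde{W}_0=l_1^+\cup\ga_1\cup l_1^-$. On $l_1^\pm$ one has $|\zeta|\ge 4m+1$ and, by the geometry, $|\mathrm{Im}\,\zeta|\ge(4m+1)\sin\tht_0$; since $\frac{8m(m+1)+1}{4m+1}+Q_{2,max}(4m+1)<(4m+1)\sin\tht_0$ for $m\ge 22$, the error term $\frac{8m(m+1)+1}{\zeta}+Q_2(\zeta)$ cannot flip the sign of $\mathrm{Im}\,Q(\zeta)$, so $Q(l_1^\pm)\cap\rr=\emptyset$. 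On $\ga_1$, for $\tht$ bounded away from $\pm(\tht_0+\tht_1)$ I would use that near $cp_{Q1}$ the map is a quadratic fold with $Q''(cp_{Q1})>0$, so $Q(cp_{Q1}e^{i\tht})$ stays on the side $\mathrm{Re}>cv_Q$ of $cv_Q$; near the endpoints the expansion estimate takes over and matches the conclusion on $l_1^\pm$. This gives $\widetilde{W}_0\subset\mathcal{U}_1^{Q+}$, hence $Q|_{\widetilde{W}_0}$ injective and $Q(\widetilde{W}_0)\subset\cc\setminus(-\infty,cv_Q]$. Then to get $\vv(cv_Q,\tfrac{7\pi}{10})\subset Q(\widetilde{W}_0)$, note that $(Q|_{\mathcal{U}_1^{Q+}})^{-1}\!\big(\vv(cv_Q,\tfrac{7\pi}{10})\big)$ is connected; I show its image under $Q$-boundary, i.e. $Q(\partial\widetilde{W}_0)$, misses $\vv(cv_Q,\tfrac{7\pi}{10})$: on $l_1^\pm$, $Q(\zeta)=\zeta+(4m+2)+\cdots$ so $\arg(Q(\zeta)-cv_Q)$ differs from the direction $\pm(\tht_0+\tfrac{\pi}{2})$ of $l_1^\pm$ by a small error, and $|\tht_0+\tfrac{\pi}{2}|>\tfrac{7\pi}{10}$; on the part of $\ga_1$ near $\rr_+$, Lemma \ref{near-equati-22} confines $Q(\ga_1)$ to $\vv(cv_Q,\tfrac{\pi}{5})$, which does not reach $\partial\vv(cv_Q,\tfrac{7\pi}{10})$. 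Since $\vv(cv_Q,\tfrac{7\pi}{10})$ is connected and meets $Q(\widetilde{W}_0)$ (e.g. $Q$ of real points just past $cp_{Q1}$), it is contained in $Q(\widetilde{W}_0)$, and $\widetilde{W}_0\subset\mathcal{U}_1$ has been recorded along the way.

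For (b), on $\widetilde{W}_0$ every point is either outside the circle of radius $cp_{Q1}$ or in $\{\mathrm{Re}(\zeta e^{-i\tht_0})>4m+1\}\cap\{\mathrm{Re}(\zeta e^{i\tht_0})>4m+1\}$, so $|\zeta|>4m+1$ throughout; hence by Lemma \ref{paraequ-11}(a),(e) and the computation \eqref{near-equat-7}, $|\varphi(\zeta)-\zeta|\le|e_0|+2|e_1|+\varphi_{1,max}(4m+1)<3$. By construction $W_0$ is obtained from $\widetilde{W}_0$ by translating the two bounding lines inward from distance $4m+1$ to $4m-2$ and shrinking the bounding arc from radius $cp_{Q1}$ to $cp_{Q1}-3$, i.e. each boundary piece of $W_0$ lies at distance $\ge 3$ inside $\widetilde{W}_0$; checking that the feet of the perpendiculars and the arc endpoints match up exactly as in the construction of Lemma \ref{region2021-11-25-1}, a displacement of magnitude $<3$ cannot carry a point of $\widetilde{W}_0$ out of $W_0$, so $\varphi(\widetilde{W}_0)\subset W_0$.

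For (c), I must show $\zeta\notin\overline{\cd}$ and $Q(\zeta)\in W_0$ force $\zeta\in\widetilde{W}_{-1}$; equivalently, for $|\zeta|>1$ with either $|\arg\zeta|\ge\tfrac{9\pi}{10}$ or ($\mathrm{Re}\,\zeta\le0$ and $|\zeta|\le 4m-3$), I show $Q(\zeta)\notin W_0$. If $\zeta$ is near $\rr_-$ with $|\zeta|$ bounded away from $1$, then $\mathrm{Re}\big(\tfrac{8m(m+1)+1}{\zeta}\big)$ is negative of large modulus, pushing $\mathrm{Re}\,Q(\zeta)$ far left of $W_0$; if $|\zeta|$ is close to $1$ then $\zeta$ is near $-1$ (the case $\zeta$ near $+1$ being excluded by $\mathrm{Re}\,\zeta\le0$ in the second alternative), and Lemma \ref{paraequ-15}(b) gives $|Q(\zeta)|<\rho$, disjoint from $W_0$. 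For the left half-annulus, I would first observe that $Q(\zeta)\in W_0$ with $|Q(\zeta)|$ bounded forces $|\zeta|$ comparable to $\sqrt{8m(m+1)}$ by self-consistency of the functional equation, so the error terms are controlled; then $\mathrm{Re}\,\zeta\le0$ and $\mathrm{Re}(1/\zeta)\le 0$ give $\mathrm{Re}\,Q(\zeta)\le(4m+2)+Q_{2,max}(|\zeta|)$, while the angular position of $Q(\zeta)$, governed by $\mathrm{Im}\,\zeta$ and the $1/\zeta$ term, keeps $Q(\zeta)$ on the left of the line $l_2^+$ or $l_2^-$, so $Q(\zeta)\notin W_0$; the residual pieces hugging the unit circle are again absorbed by Lemma \ref{paraequ-15}.

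The hardest part is (c). Unlike (a)–(b), the region $\cc\setminus(\widetilde{W}_{-1}\cup\overline{\cd})$ approaches the unit circle, where the bound $Q_{2,max}(r)$ degenerates, and the moderate-modulus sub-case cannot be settled by a single scalar inequality on $\mathrm{Re}\,Q(\zeta)$ — since $(4m+2)$ already exceeds $cp_{Q1}-3$ — but only by combining the real-part estimate with the wedge (angular) structure of $W_0$. Making the numerical constants line up for every $m\ge 22$, both for $\tfrac{8m(m+1)+1}{4m+1}+Q_{2,max}(4m+1)<(4m+1)\sin\tht_0$ in (a) and for the wedge estimate in (c), is the delicate bookkeeping.
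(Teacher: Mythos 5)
Your overall strategy for (a) and (b) is in the same spirit as the paper's (boundary estimates via the expansion of $Q$, combined with the covering picture of $\mathcal{U}_j^{Q+}$ and the closeness of $\varphi$ to the identity), and your connectedness argument for $\widetilde{W}_0\subset\mathcal{U}_1^{Q+}$ is a clean reorganization, but there are two concrete gaps.

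The first is in your treatment of the arc $\gamma_1$ in part (a). You propose to control $Q(\gamma_1)$ near the positive real axis by Lemma \ref{near-equati-22}, but that lemma only constrains $Q$ on $\overline{\vv}\big((4e-1)m,\tfrac{\pi}{5}\big)$, whereas $\gamma_1$ lies on the circle $|\zeta|=cp_{Q1}=(\sqrt{m+1}+\sqrt{m})^2\approx 4m+2$, which for every $m\ge 2$ satisfies $cp_{Q1}<(4e-1)m\approx 9.87m$. So no point of $\gamma_1$ is in the sector where Lemma \ref{near-equati-22} applies. Your backup heuristic, ``near $cp_{Q1}$ the map is a quadratic fold with $Q''(cp_{Q1})>0$,'' is not verified and, even if true, would only give local information near the critical point, not the uniform arc estimate you need. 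The paper instead computes $\arg(Q(\zeta)-cv_Q)$ along $\gamma_1$ directly via the contour integral $Q(\zeta)-cv_Q=\int_{\gamma_1}Q'(z)\,dz$ using the explicit factorization of $Q'$ from Lemma \ref{paraequ-25}, together with Taylor-with-remainder estimates for $\arctan$ and $\sin$, to obtain $\tfrac{7\pi}{10}<\arg(Q(\zeta)-cv_Q)<\tfrac{13\pi}{10}$ on the arc. This quantitative argument-along-the-arc computation is the missing core of your proof of (a), and it is not a bookkeeping detail but a genuinely different technique from the $Q(z)=z+(4m+2)+\dots$ expansion you are using on the lines.

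The second gap is in part (c). For the intermediate-modulus range you invoke ``self-consistency of the functional equation'' to place $|\zeta|$ near $\sqrt{8m(m+1)}$, but this does not follow: on the annulus $\{1<|\zeta|\le 4m-3,\ \mathrm{Re}\,\zeta\le 0\}$ the term $Q_2(\zeta)$ is not controlled by the $r\ge 4m$ bounds, and no scale near $\sqrt{8m(m+1)}$ is singled out by $Q(\zeta)\in W_0$. The paper instead writes $Q=\big(\tfrac1\zeta-\zeta\big)\big(\tfrac{1+\zeta}{1-\zeta}\big)^{2m+1}$ and observes that on this annulus $|1+\zeta|\le|1-\zeta|$, giving $|Q(\zeta)|\le|\zeta|+\tfrac1{|\zeta|}\le 4m-3+\tfrac1{4m-3}<4m-2$, which already excludes $W_0$; and on the far wedge $\{|\zeta|\ge 4m-3,\ \tfrac{9\pi}{10}\le\arg\zeta\le\tfrac{11\pi}{10}\}$ it estimates $\mathrm{Re}\big(Q(\zeta)e^{\mp i\tht_0}\big)<4m-2$ directly using $\mathrm{Re}(\zeta e^{\mp i\tht_0})\le 0$ and refreshed bounds on $Q_2$ valid for $r\ge 4m-3$ (not $r\ge 4m+1$). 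Your sketch points in roughly the right direction (real part plus wedge geometry), but it omits the elementary $|Q|$-bound on the annulus, relies on the nonexistent $\sqrt{8m(m+1)}$ reduction, and does not re-derive the $Q_2$ bounds for the smaller radius $4m-3$, all of which are essential.

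Part (b) is essentially sound, modulo the geometric verification you yourself flag about the feet of the perpendiculars and the matching of arc endpoints; the paper handles this the same way, via $|\varphi(\zeta)-\zeta|<3$ from Lemma \ref{paraequ-11} and \eqref{near-equat-7} combined with the definition of $W_0$ as a $3$-enlargement of $\widetilde{W}_0$.
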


\begin{lemma}\label{equ-22-2-12-1} For $R=2.66\times 10^m$ and $R_1=2.39\times 10^m$,  and $W_0$, $\widetilde{W}_0$, and  $\widetilde{W}_{-1}$ introduced in Lemma \ref{region2021-11-25-1}, one has
\begin{itemize}
\item[(a)] $\widetilde{D}_0\subset\widetilde{W}_0\cap\cd(cv_Q-(4m+2),R_1+1)$; $D_0\subset W_0\cap\cd(cv_Q-(4m+2),R_1+4) $;
\item[(b)] $\widetilde{D}_0\cup \widetilde{D}^{\prime}_0\cup\widetilde{D}_{-1}\cup \widetilde{D}^{\prime\prime}_{-1}\subset\widetilde{W}_{-1}\cap\cd(0,R_1+cv_Q-(4m+1))\cap\mathcal{U}^{Q+}_{12}\cap(\cc\setminus E_{r_1})$;
\item[(c)] $D_0\cup D^{\prime}_0\cup D_{-1}\cup D^{\prime\prime}_{-1}\subset\cd(0,R_1+cv_Q-(4m+1))$.
\end{itemize}
\end{lemma}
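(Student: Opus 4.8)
The plan is to read Lemma~\ref{equ-22-2-12-1} as a bookkeeping statement: each of the three assertions is obtained by intersecting the domain‑by‑domain containments already established in Lemmas~\ref{near-equata-1} and~\ref{region2021-11-25-1}, together with the distortion bound for $\varphi$ recorded in \eqref{near-equat-7}. No genuinely new estimate is needed.

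For part~(a), the disk inclusion $\widetilde D_0\subset\cd(cv_Q-(4m+2),R_1+1)$ is exactly Lemma~\ref{near-equata-1}(a), so the only thing to check is $\widetilde D_0\subset\widetilde W_0$. I would use that $\widetilde D_0=(Q|_{\mathcal U^{Q+}_1})^{-1}(D_1)$ and that, by the construction of $D_1$ in the proof of Proposition~\ref{attracting2020912-1}, $D_1\subset\ol{\vv}(cv_Q,\tfrac{7\pi}{10})$; since Lemma~\ref{region2021-11-25-1}(a) gives $\vv(cv_Q,\tfrac{7\pi}{10})\subset Q(\widetilde W_0)$ with $\widetilde W_0\subset\mathcal U^{Q+}_1$ and $Q$ injective on $\mathcal U^{Q+}_1$, pulling $D_1$ back through $Q|_{\mathcal U^{Q+}_1}$ lands inside $\widetilde W_0$. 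Applying $\varphi$ and invoking Lemma~\ref{region2021-11-25-1}(b) (namely $\varphi(\widetilde W_0)\subset W_0$) together with the radius‑$(R_1+4)$ disk bound of Lemma~\ref{near-equata-1}(a) then yields the assertion for $D_0$.

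For part~(b), the memberships in $\mathcal U^{Q+}_{12}$ and in $\cc\setminus E_{r_1}$ are already part of Lemma~\ref{near-equata-1}(b). For the disk bound, Lemma~\ref{near-equata-1}(a),(b) put $\widetilde D_0\cup\widetilde D'_0$ in $\cd(cv_Q-(4m+2),R_1+1)$ and $\widetilde D_{-1}\cup\widetilde D''_{-1}$ in $\cd(4m,R_1+12m+9-cv_Q)$, and both disks lie in $\cd(0,R_1+cv_Q-(4m+1))$ once one records $cv_Q\ge 10m+7$ for $m\ge 22$; this follows from $(1+\tfrac1m)^m\ge(\tfrac{23}{22})^{22}>2.65$, so $cv_Q=4(m+1)(1+\tfrac1m)^m>10.6(m+1)$. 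For the $\widetilde W_{-1}$‑membership, since $D_0\subset W_0$ by part~(a) and $\widetilde D_{-1},\widetilde D''_{-1}$ are branches of $Q^{-1}(D_0)$ contained in $\cc\setminus\ol{\cd}$, Lemma~\ref{region2021-11-25-1}(c) gives $\widetilde D_{-1}\cup\widetilde D''_{-1}\subset\widetilde W_{-1}$; and $\widetilde D_0,\widetilde D'_0\subset\widetilde W_0\subset\widetilde W_{-1}$, the last inclusion being a direct comparison of the explicit region formulas in Lemma~\ref{region2021-11-25-1} (both sit in $\vv(0,\tfrac{9\pi}{10})\setminus\ol{\cd}$, and $\widetilde W_0\subset\mathcal U^{Q+}_1$ keeps it off the slab $\{z:\ \text{Re}\,z\le0,\ |z|\le 4m-3\}$).

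Part~(c) is then obtained by applying $\varphi$ to part~(b): each of $\widetilde D_0,\widetilde D'_0,\widetilde D_{-1},\widetilde D''_{-1}$ lies in $\cc\setminus\ol{\cd}$, so \eqref{near-equat-7} gives $|\varphi(\zeta)-\zeta|<3$ for $|\zeta|\ge 7$ while $|\varphi(\zeta)|<10$ for $1<|\zeta|<7$; hence $\varphi$ enlarges the relevant disk by at most an $O(1)$ amount, which is absorbed into $\cd(0,R_1+cv_Q-(4m+1))$ because the true radius of the $\widetilde D$‑disks is only $O(m\log m)$ (it comes from $D_1\subset\cd(cv_Q,\sqrt{1+\eta^2}\cdot\tfrac{4.35+7.25m}{0.800739})$, far below $R_1=2.39\times 10^m$), so that adding $3$ still keeps everything inside a disk of radius $R_1$ about $cv_Q-(4m+2)$. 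I expect no real obstacle: the only mildly delicate points are the numerical check $cv_Q\ge 10m+7$ for $m\ge 22$ and reading off the elementary inclusion $\widetilde W_0\subset\widetilde W_{-1}$ from the region formulas of Lemma~\ref{region2021-11-25-1}.
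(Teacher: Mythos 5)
Your overall route (assemble the lemma from Lemmas~\ref{near-equata-1} and~\ref{region2021-11-25-1}, which is exactly what the paper's one-line proof asserts) is the right one, and your treatment of part~(a), of $\widetilde D_{-1}\cup\widetilde D''_{-1}$ in part~(b), and of the radius bookkeeping via $cv_Q>10m+7$ is fine. But there is one step that fails as written: in part~(b) you dispose of $\widetilde D'_0$ by claiming $\widetilde D_0,\widetilde D'_0\subset\widetilde W_0\subset\widetilde W_{-1}$. By Lemma~\ref{region2021-11-25-1}(a) one has $\widetilde W_0\subset\mathcal U^{Q+}_1$, whereas by construction $\widetilde D'_0=(Q|_{\mathcal U^{Q+}_2})^{-1}(D_1)\subset\mathcal U^{Q+}_2$, and $\mathcal U^{Q+}_1$, $\mathcal U^{Q+}_2$ are disjoint components of $Q^{-1}(\cc_{slit})$; hence $\widetilde D'_0\cap\widetilde W_0=\emptyset$ and the inclusion $\widetilde D'_0\subset\widetilde W_0$ cannot hold. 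So your argument covers $\widetilde D_0$ (via $\widetilde W_0\subset\widetilde W_{-1}$, which is indeed a direct comparison of the defining curves) but leaves $\widetilde D'_0\subset\widetilde W_{-1}$ unproved. A repair consistent with the paper's intended "direct corollary" is to verify $D_1\subset W_0$ (for instance from $D_1\subset\vv(cv_Q,\tfrac{7\pi}{10})$ together with $|\zeta|\ge cv_Q\sin(\tfrac{3\pi}{10})>cp_{Q1}-3$ on that sector and the half-plane description of $\partial W_0$, a short computation in the style of Step~1 of the proof of Lemma~\ref{region2021-11-25-1}); then Lemma~\ref{region2021-11-25-1}(c) applied to $Q^{-1}(W_0)\setminus\ol\cd$ handles \emph{both} branches $\widetilde D_0$ and $\widetilde D'_0$ at once, exactly as it does for $\widetilde D_{-1},\widetilde D''_{-1}$ via $D_0\subset W_0$. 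Without some such substitute, part~(b) is not established for $\widetilde D'_0$.

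A secondary, smaller point: in part~(c) you correctly notice that the stated bound $D_0\cup D'_0\subset\cd(u_\sharp,R_1+4)$ of Lemma~\ref{near-equata-1} overshoots the target radius $R_1+cv_Q-(4m+1)$ by $3$, and you propose to absorb this using the enormous slack between the actual size of $D_1$ and $R_1$. That is the right idea (and more careful than the paper, whose "direct corollary" has the same $+3$ looseness), but to be rigorous you should say how: e.g.\ since $D_1\subset\cd(cv_Q,R_1-10)$ (its true radius is $O(m^2)$, not $O(m\log m)$, but in any case far below $2.39\times10^m$), rerunning the estimate of Lemma~\ref{near-equata-1}(a) with $R_1-10$ in place of $R_1$ gives $\widetilde D_0\cup\widetilde D'_0\subset\cd(u_\sharp,R_1-9)$, and then $|\varphi(\zeta)-\zeta|<3$ from \eqref{near-equat-7} puts $D_0\cup D'_0\subset\cd(u_\sharp,R_1-6)\subset\cd(0,R_1+cv_Q-(4m+1))$.
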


\begin{proof}
This is a direct corollary by Lemmas \ref{near-equata-1} and \ref{region2021-11-25-1}.
\end{proof}

Now, the proof of Proposition \ref{attracting2020912-2} is given.

\begin{proof} (Proof of Proposition \ref{attracting2020912-2})
By Lemma \ref{equ-22-2-12-1} and the construction above, one can show that (a)--(d) of Proposition \ref{attracting2020912-2} hold. It is enough to check that
$ \ol{D}_0\cup\ol{D}^{\prime}_{0}\cup\ol{D}_{-1}\cup\ol{D}^{\prime\prime}_{-1}\setminus\{cv_Q\}\subset\cd(0,R)\setminus(\ol{\cd}(0,\rho)\cup\rr_{-}\cup\ol{\vv}(cv_Q,\tfrac{3\pi}{10}))=\pi_X (X_{2+})\cup\pi_{X}(X_{2-})$. It follows from (c) of Lemma \ref{equ-22-2-12-1} that $\ol{D_0\cup D^{\prime}_0\cup D_{-1}\cup D^{\prime\prime}_{-1}}\subset\ol{\cd}(0,R_1+cv_Q-(4m+1))$.

By direct computation, one has
\begin{align*}
R-R_1=(2.66-2.39)\times10^m=0.27\times 10^m>8m+14\ \forall m\geq3,
\end{align*}
since $0.27\times10^3=270>38=8\times3+14$ and $0.27\times\log(10)\times 10^3>0.27\times10^3>8$.

For $\zeta\in\ol{\widetilde{D}_0\cup\widetilde{D}^{\prime}_0\cup\widetilde{D}_{-1}\cup\widetilde{D}^{\prime\prime}_{-1}}$,
Lemma \ref{near-equata-1} (b) implies that $\zeta\not\in\mbox{int}E_{r_1}$. Hence, by Lemma \ref{nearequ-24}, $|\varphi(\zeta)|>\rho$. Furthermore, by Lemma \ref{equ2021-11-25-5}, if
$\text{Re}\,\zeta\geq0$, $\varphi(\zeta)\not\in\rr_{-}$.
If $\text{Re}\,\zeta\leq0$, then $\zeta\in\text{closure}(\widetilde{W}_{-1})$, hence $|\text{Im}\,\zeta|\geq(4m-3)\sin(\tfrac{9\pi}{10})>(4\times 21-3)\times \frac{(-1 + \sqrt{5})}{4} \approx25.0304>7$. So, since $|\zeta|\geq7$, one has $|\varphi(\zeta)-\zeta|<3$ by \eqref{near-equat-7}. Therefore, $\varphi(\zeta)\not\in\rr_{-}$.

For $z\in\ol{D}_0\cup\ol{D}^{\prime}_{0}\cup\ol{D}_{-1}\cup\ol{D}^{\prime\prime}_{-1}$, one has $F(z)\in\ol{\mathcal{H}}_0\cup\ol{\mathcal{H}}_1$ and $0\leq \text{Re}\,\Phi_{att}(F(z))\leq2$. On the other hand, by (b) of Lemma \ref{near-equat-5}, for $z'\in \overline{\vv}(cv_Q,\tfrac{3\pi}{10})\setminus\{cv_Q\}$, one has $\text{Re}\,\Phi_{att}(z')>1$ implying that $\text{Re}\,\Phi_{att}(F(z'))>2$. Hence, it is impossible that $z\in\overline{\vv}(cv_Q,\tfrac{3\pi}{10})\setminus\{cv_Q\}$. Hence, (e) of Proposition \ref{attracting2020912-2} is proved.

\end{proof}

\begin{proof} ( Proof of Lemma \ref{near-equata-1})
(a) Now, we give an estimate on $Q_2(z)$ in \eqref{paraequ-66}.
\begin{itemize}
\item By \eqref{paraequ-20}, if $a=\tfrac{1}{2^3}$ and $r>\tfrac{2^4}{\sqrt{3}}m^{3/2}+1$, then
\begin{align*}
\frac{8C^3_{2m}}{r(r-1)}<\frac{1}{2^3};
\end{align*}
\item by \eqref{paraequ-21}, if $a=\tfrac{1}{2^3}$ and $r>2^{7/2}m^{1/2}+1$, then
\begin{align*}
\frac{16m}{r(r-1)}<\frac{1}{2^3};
\end{align*}
\item by \eqref{paraequ-22}, if $a=\tfrac{1}{2^3}$ and $r>\max\{2^{8/3}m^{4/3}+1,4m\}$, then
\begin{align*}
\frac{2(2m)(2m-1)(2m-2)(2m-3)}{3r(r-1)^2}\bigg(\frac{r+1}{r-1}\bigg)^{2m-4}
<\frac{1}{2^3};
\end{align*}
\item by \eqref{paraequ-23}, if $a=\tfrac{1}{2^3}$ and $r>\max\{2\sqrt{3}m+1,4m\}$, then
\begin{align*}
\frac{8(2m)(2m-1)}{(r-1)^2}\bigg(\frac{r+1}{r-1}\bigg)^{2m-2}
<\frac{1}{2^3}.
\end{align*}
\end{itemize}

By direct computation, one has
\beqq
16m(m+1)+2\geq\max\bigg\{\tfrac{2^4}{\sqrt{3}}m^{3/2}+1,\ 2^{7/2}m^{1/2}+1,\ 2^{8/3}m^{4/3}+1,\ 2\sqrt{3}m+1,\ 4m\bigg\}.
\eeqq

If $|\zeta|\geq16m(m+1)+2$, by Lemma \ref{paraequ-12}, one has
\begin{align*}
|Q(\zeta)-(\zeta+4m+2)|\leq \frac{1}{2}+Q_{2,max}(16m(m+1)+2)<1.
\end{align*}

By direct computation,
\begin{align*}
u_{\sharp}\leq 4e(m+1)-(4m+2)=4m(e-1)+4e-2<4m(3-1)+4\times3-2=8m+10.
\end{align*}
One has $16m(m+1)+2+8m+10=16m^2+24m+12<R_1$ for $m\geq2$, since it follows from direct calculation that
\begin{align*}
&16\times 2^2+24\times2+12=124<2.39\times10^2=239,\\
& 32\times 2+24=88<2.39\times(\log10)\times10^2\approx550.318,\\
&32<2.39\times (\log10)^2\times 10^2\approx1267.15.
\end{align*}
As a consequence, if $|\zeta-u_{\sharp}|\geq R_1+1$, then $|\zeta|>R_1-u_{\sharp}>16m(m+1)+2$, implying that
\begin{align*}
|Q(\zeta)-cv_Q|=|(Q(\zeta)-(\zeta+4m+2))+(\zeta-u_{\sharp})|\geq|\zeta-u_{\sharp}|-|Q(\zeta)-(\zeta+4m+2)|>R_1.
\end{align*}
So,
\beqq
\widetilde{D}_0\cup\widetilde{D}^{\prime}_0\subset Q^{-1}(\cd(cv_Q,R_1))\subset\cd(u_{\sharp},R_1+1).
\eeqq
On the other hand, if $\zeta\in\cc\setminus E_{r_1}$ and $|\zeta-u_{\sharp}|<R_1+1$,  its image $\varphi(\zeta)$ is surrounded by the Jordan curve $\varphi(\{\zeta^{\prime}:\ |\zeta^{\prime}-u_{\sharp}|=R_1+1\})$, which is contained in $\cd(u_{\sharp},R_1+4)$ by \eqref{near-equat-7}. Thus, $D_0\cup D^{\prime}_0\subset\cd(u_{\sharp},R_1+4)$.

(b)
Now, we show $1\leq 12m+4-cv_Q<3m$. By direct computation,
\beqq
12m+4-cv_Q\geq 12m+4-4e(m+1)\geq 12m+4-4\times 2.72(m+1)=-6.88 + 1.12 m>0\ \forall m\geq7,
\eeqq
\beqq
12\times 2+4-cv_Q= 28-4\cdot\tfrac{3^3}{2^2}=1,\ m=2,
\eeqq
\beqq
12\times 3+4-cv_Q= 40-4\cdot\tfrac{4^4}{3^3}=\frac{56}{27},\ m=3,
\eeqq
\beqq
12\times 4+4-cv_Q= 52-4\cdot\tfrac{5^5}{4^4}=\frac{203}{64},\ m=4,
\eeqq
\beqq
12\times 5+4-cv_Q= 64-4\cdot\tfrac{6^6}{5^5}=\frac{13376}{3125},\ m=5,
\eeqq
\beqq
12\times 6+4-cv_Q= 76-4\cdot\tfrac{7^7}{6^6}=\frac{62921}{11664},\ m=6.
\eeqq
On the other hand,
\beqq
12m+4-cv_Q\leq 12m+4-4(m+1)\frac{3^2}{2^2}= 3m-5<3m\ \forall m\geq2.
\eeqq

 If $|\zeta-4m|\geq R_1+12m+9-cv_Q$, then $|\zeta|\geq R_1-4m>R_1-(8m+10)>16m(m+1)+2$, and
\begin{align*}
&|Q(\zeta)-u_{\sharp}|
=|(Q(\zeta)-(\zeta+(4m+2)))+(\zeta-4m)+(4m+(4m+2)-cv_Q+4m+2)|\\
\geq&|(\zeta-4m)|-|Q(\zeta)-(\zeta+(4m+2))|-|12m+4-cv_Q|
>R_1+4.
\end{align*}
So, $\widetilde{D}_{-1}\cup\widetilde{D}_{-1}^{\prime\prime}\subset Q^{-1}(\cd(u_{\sharp},R_1+4))\subset\cd(4m,R_1+12m+9-cv_Q)$, and
$D_{-1}\cup D_{-1}^{\prime\prime}\subset \cd(4m,R_1+12m+8-cv_Q)$.

(c) By the discussions in (a) and (b), it is evident that $\cd(u_{\sharp},R_1+4)\cup\cd(4m,R_1+12m+12-cv_Q)\subset\cd(0,R_1+8m+14)$. This completes the proof.

\end{proof}

\begin{proof} (Proof of Lemma \ref{region2021-11-25-1})
{\bf (a)} The proof is divided into two parts, each part is separated into several steps.

{\bf Step 1(i)} For the $\tht_0\in(0,\pi/2)$ with $\cos(\tht_0)=\tfrac{1}{3}$, one has $\arccos(\tfrac{1}{3})\approx0.391827\pi$.
So, by Remark \ref{nearequ-23}, one has
\beqq
\sin(\tht_0)=\sqrt{1-(\cos(\tht_0))^2}=\frac{2\sqrt{2}}{3}\approx0.942809,
\eeqq
\begin{align*}
&\cos(\tht_0+\pi/5)=\cos(\tht_0)\cos(\pi/5)-\sin(\tht_0)\sin(\pi/5)\\
=&
\frac{1}{3}\cdot\frac{1}{4}(1+\sqrt{5})-\frac{2\sqrt{2}}{3}\cdot\frac{1}{4} \sqrt{10-2 \sqrt{5}}\\
=&\frac{1+\sqrt{5}}{12}-\frac{\sqrt{2}}{6}\cdot \sqrt{10-2 \sqrt{5}}\approx-0.284497,
\end{align*}
\begin{align*}
&\cos(\tht_0-\pi/5)=\cos(\tht_0)\cos(\pi/5)+\sin(\tht_0)\sin(\pi/5)\\
=&
\frac{1}{3}\cdot\frac{1}{4}(1+\sqrt{5})+\frac{2\sqrt{2}}{3}\cdot\frac{1}{4} \sqrt{10-2 \sqrt{5}}\\
=&\frac{1+\sqrt{5}}{12}+\frac{\sqrt{2}}{6}\cdot \sqrt{10-2 \sqrt{5}}\approx0.823842,
\end{align*}
\begin{align*}
\cos(7\pi/10)=\cos(\pi/2+\pi/5)=-\sin(\pi/5)=-\frac{1}{4} \sqrt{10-2 \sqrt{5}}\approx-0.587785,
\end{align*}
\begin{align*}
\sin(7\pi/10)=\sin(\pi/2+\pi/5)=\cos(\pi/5)=\frac{1}{4}(1+\sqrt{5})\approx0.809017,
\end{align*}
and
\begin{align*}
&\cos(\tht_0+7\pi/10)=\cos(\tht_0)\cos(7\pi/10)-\sin(\tht_0)\sin(7\pi/10)\\
=&
-\frac{1}{3}\cdot\frac{1}{4} \sqrt{10-2 \sqrt{5}}-\frac{2\sqrt{2}}{3}\cdot\frac{1}{4}(1+\sqrt{5})\\
=&-\frac{\sqrt{10-2 \sqrt{5}}}{12}-\frac{\sqrt{2}}{6}\cdot (1+\sqrt{5})\approx-0.958677.
\end{align*}

{\bf Step 1(ii)}  For $\zeta\in l^+_1$, show $pr_{+}(Q(\zeta))<pr_{+}(cv_Q)$.

By Lemma \ref{paraequ-12}, one has
\beqq
Q(\zeta)=\zeta+(4m+2)+
\frac{8m(m+1)+1}{\zeta}+Q_2(\zeta).
\eeqq
By \eqref{paraequ-28}, one has
\beqq
|Q_2(\zeta)|\leq Q_{2,max}(r)\leq \frac{7}{6}m+\frac{1}{m}+6,\ |\zeta|=r\geq 4m+1.
\eeqq
By direct computation, one has, for $\zeta\in\l^+_1$,
\beq\label{equ2021-10-16-1}
\zeta=(4m+1)e^{i\tht_0}+s e^{i(\tht_0+\pi/2)}=[(4m+1)\cos(\tht_0)-s\sin(\tht_0)]+i[(4m+1)\sin(\tht_0)+s\cos(\tht_0)],
\eeq
$$pr_{+}(\zeta)=(4m+1)\cos(\tht_0-\tfrac{\pi}{5})-s\sin(\tht_0-\tfrac{\pi}{5})\leq (4m+1)\cos(\tht_0-\tfrac{\pi}{5}),$$
$$\tht_0\leq\arg\zeta\leq\tht_0+\tfrac{\pi}{2},$$
$$pr_+(4m+2)=(4m+2)\cos(\tfrac{\pi}{5}),$$
\begin{align*}
&\frac{e^{-i\pi/5}}{\zeta}=\frac{e^{-i\pi/5}}{(4m+1)e^{i\tht_0}+s e^{i(\tht_0+\pi/2)}}\\
=&\frac{e^{-i\pi/5}}{[(4m+1)\cos(\tht_0)-s\sin(\tht_0)]+i[(4m+1)\sin(\tht_0)+s\cos(\tht_0)]}\\
=&e^{-i\pi/5}\frac{[(4m+1)\cos(\tht_0)-s\sin(\tht_0)]-i[(4m+1)\sin(\tht_0)+s\cos(\tht_0)]}{(4m+1)^2+s^2}\\
=&e^{-i\pi/5}\frac{(4m+1)e^{-i\tht_0}+se^{-i(\tht_0+\pi/2)}}{(4m+1)^2+s^2}\\
=&\frac{(4m+1)e^{-i(\tht_0+\pi/5)}+se^{-i(\tht_0+7\pi/10)}}{(4m+1)^2+s^2},
\end{align*}
and
\begin{align*}
  pr_{+}\bigg(\frac{e^{-i\pi/5}}{\zeta}\bigg)=
\frac{(4m+1)\cos(-(\tht_0+\pi/5))+s\cos(-(\tht_0+7\pi/10))}{(4m+1)^2+s^2}.
\end{align*}

So, for $m\geq22$, one has
\begin{align*}
&pr_{+}(Q(\zeta))\\
\leq&(4m+1)\cos(\tht_0-\tfrac{\pi}{5})+(4m+2)\cos(\tfrac{\pi}{5})\\
&+
\frac{(4m+1)\cos(-(\tht_0+\pi/5))+s\cos(-(\tht_0+7\pi/10))}{(4m+1)^2+s^2}\cdot (8m(m+1)+1)+\frac{7}{6}m+\frac{1}{m}+6
\\
\approx&(4m+1)\cdot0.823842+(4m+2)\cdot 0.809017\\
&+
\frac{(4m+1)\cdot(-0.284497)+s\cdot(-0.958677)}{(4m+1)^2+s^2}\cdot (8m(m+1)+1)+\frac{7}{6}m+\frac{1}{m}+6
\\
\leq& (4m+1)\cdot0.823842+(4m+1)\cdot 0.809017+0.809017+
\frac{7}{24}\cdot(4m+1)-\frac{7}{24}+\frac{1}{m}+6
\\
\leq&\bigg(1.63286+\frac{7}{24}\bigg)\cdot(4m+1)+0.809017-\frac{7}{24}+\frac{1}{22}+6\\
\approx&1.92453\cdot(4m+1)+6.5628<2(4m+1)+7<2.1(4m+1)\\
<&pr_{+}(cv_Q)=4\frac{(m+1)^{m+1}}{m^m}\cos(\tfrac{\pi}{5})\approx 2.15115\cdot4(m+1),
\end{align*}
where $0.1\times 4\times 22=8.8$,
$\frac{23^{22}}{22^{22}}=\frac{907846434775996175406740561329}{341427877364219557396646723584}
\approx2.65897$,
and
$2.65897\times \cos(\tfrac{\pi}{5})\approx2.15115$ are used.

{\bf Step 1(iii)}  Show $\Im Q(\zeta)>0$.

Note $l^+_1$ is a half line intersect orthogonally
$\{\zeta':\ \arg\zeta'=\tht_0\}$ at distance $4m+1$ from the origin. Its image by $\zeta:\to\tfrac{1}{\zeta}$ is on the circle that passes through $0$ and intersects orthogonally $\{\zeta':\ \arg\zeta'=-\tht_0\}$ at distance $\tfrac{1}{4m+1}$ from the origin. So, the center of the circle is $\tfrac{1}{2(4m+1)}e^{-i\tht_0}$ and the radius is  $\tfrac{1}{2(4m+1)}$. The imaginary part on this circle is $-\tfrac{1}{2(4m+1)}(1+\sin(\tht_0))$. Hence, the imaginary part is
\begin{align*}
&\mbox{Im}\, Q(\zeta)\\
\geq& (4m+1)\sin(\tht_0)+s\cos(\tht_0)-(8m(m+1)+1)\cdot \tfrac{1}{2(4m+1)}(1+\sin(\tht_0))-\bigg(\frac{7}{6}m+\frac{1}{m}+6\bigg)\\
=& (4m+1)\frac{2\sqrt{2}}{3}+\frac{s}{3}-(8m(m+1)+1)\cdot \tfrac{1}{2(4m+1)}\bigg(1+\frac{2\sqrt{2}}{3}\bigg)-\bigg(\frac{7}{6}m+\frac{1}{m}+6\bigg)\\
\approx& (4m+1)\cdot0.942809+\frac{s}{3}-(8m(m+1)+1)\cdot \tfrac{1}{2(4m+1)}(1+0.942809)-\bigg(\frac{7}{6}m+\frac{1}{m}+6\bigg)\\
>&(4m+1)\cdot0.942809-(m+1)\cdot1.942809-\frac{7}{6}m-7\\
\approx&0.66176m-8\geq0.66176\times 22-8=6.5587
>0.
\end{align*}

{\bf Step 1(iv)} Show that $l^+_1$ does not intersect $ \cd\big(   \cot(\tfrac{\pi}{2(m+1)})i,\tfrac{1}{\sin(\tfrac{\pi}{2(m+1)})}\big)$. Similarly, one can show that $l^{-}_1$ does not intersect
$\cd\big(-\cot(\tfrac{\pi}{2(m+1)})i,\tfrac{1}{\sin(\tfrac{\pi}{2(m+1)})}\big)$.

The intersection of $l^+_1$ and the imaginary axis is the point $i\frac{(4m+1)}{\sin(\tht_0)}$, since this solution is corresponding to the $s$ such that
the real part of \eqref{equ2021-10-16-1} is zero, that is,
$s=\frac{(4m+1)\cos(\tht_0)}{\sin(\tht_0)}$, plugging this $s$ into the imaginary part of \eqref{equ2021-10-16-1}, we have the intersection of $l^+_1$ and the imaginary axis.

Denote by $w*$ the intersection of $l^+_1$ and the orthogonal line of $l^+_1$ passing through the point $\cot(\tfrac{\pi}{2(m+1)})i$.
Consider two triangles, one with the vertices $0$, $i\frac{(4m+1)}{\sin(\tht_0)}$, and $(4m+1)e^{i\tht_0}$, another with the vertices $\cot(\tfrac{\pi}{2(m+1)})i$, $i\frac{(4m+1)}{\sin(\tht_0)}$, and $w^*$. These two triangles are similar. By using this fact, the distance between the point $\cot(\tfrac{\pi}{2(m+1)})i$ and the line $l^+_1$ can be obtained as follows:
$$(4m+1)\cdot\frac{\tfrac{4m+1}{\sin(\tht_0)}-\cot(\tfrac{\pi}{2(m+1)})}{\tfrac{4m+1}{\sin(\tht_0)}}
=(4m+1)-\cot(\tfrac{\pi}{2(m+1)})\sin(\tht_0).$$

By (b) of Lemma \ref{paraequ-13}, $\frac{3}{\pi}x\leq \sin(x)$ for $0\leq x\leq\tfrac{3}{2\pi}$. So, for $m\geq22$, one has $\tfrac{\pi}{2(m+1)}<\tfrac{3}{2\pi}$ and
\begin{align*}
4m+1>\frac{\pi}{3}\cdot(\frac{4(m+1)}{\pi})=
\frac{2}{\tfrac{3}{\pi}\cdot(\frac{\pi}{2(m+1)})}\geq\frac{2}{\sin(\frac{\pi}{2(m+1)})}
\geq\frac{\cos(\tfrac{\pi}{2(m+1)})\sin(\tht_0)+1}{\sin(\tfrac{\pi}{2(m+1)})}.
\end{align*}
Hence,
\beqq
(4m+1)-\cot(\tfrac{\pi}{2(m+1)})\sin(\tht_0)>\frac{1}{\sin(\tfrac{\pi}{2(m+1)})},
\eeqq
implying that $l^+_1$ does not intersect $ \cd\big(   \cot(\tfrac{\pi}{2(m+1)})i,\tfrac{1}{\sin(\tfrac{\pi}{2(m+1)})}\big)$.

{\bf Step 2(i)} Consider the equation $Q(\zeta)-cv_Q$ for $\zeta$ on the arc $\ga_1$, that is, $\zeta=\ga_1(\tht)=cp_{Q1}\cdot e^{i\tht}$.

Note that
$\ga_1(\tht)=cp_{Q1}\cdot e^{i\tht}$ with $\tht\in[-(\tht_0+\tht_1),(\tht_0+\tht_1)]$, where $\tht_1=\arccos(\tfrac{4m+1}{cp_{Q1}})$, by Lemmas
\ref{paraequ-25} and \ref{paraequ-24}, one has $\arccos(\tfrac{4m+1}{cp_{Q1}})=\arcsin\bigg(\tfrac{\sqrt{cp^2_{Q1}-(4m+1)^2}}{cp_{Q1}}\bigg)$, and
\begin{align*}
\frac{\sqrt{cp^2_{Q1}-(4m+1)^2}}{cp_{Q1}}
=\frac{\sqrt{(\sqrt{m+1}+\sqrt{m})^2-(4m+1)^2}}{(\sqrt{m+1}+\sqrt{m})^2}
=\frac{-8m^2+4(2m+1)\sqrt{m(m+1)}}{(\sqrt{m+1}+\sqrt{m})^2}
\end{align*}
So,
\begin{align*}
&\frac{\sqrt{m}}{2(m+1)}=\frac{-8m^2+4(2m+1)m}{4(m+1)}\\
\leq&\frac{-8m^2+4(2m+1)\sqrt{m(m+1)}}{(\sqrt{m+1}+\sqrt{m})^2}\\
\leq&\frac{-8m^2+4(2m+1)(m+1)}{4m}=\frac{\sqrt{3m+1}}{2m}.
\end{align*}
Hence, one has, for $m\geq22$,
\beq\label{equ2021-11-25-1}
\tht_0+\tht_1\leq\arccos\bigg(\frac{1}{3}\bigg)+\frac{\sqrt{3m+1}}{2m}
\leq\arccos\bigg(\frac{1}{3}\bigg)+\frac{\sqrt{3\times22+1}}{2\times22}\approx
1.41699=0.451042\pi.
\eeq

It is evident that $Q(cp_{Q1})-cv_Q=0$. By the derivative of $Q$ in \eqref{paraequ-26}, one has
\begin{align*}
&Q(\zeta)-cv_Q=
\int_{\ga_1}Q'(z)dz\\
=&\int_{\ga_1}\bigg(1-\frac{(\sqrt{m+1}+\sqrt{m})^2 }{z}\bigg)\cdot\bigg(1-\frac{(\sqrt{m+1}-\sqrt{m})^2}{z}\bigg)\cdot\bigg(\frac{ 1+\frac{1}{z}}{1-\frac{1}{z}}\bigg)^{2 m+1}dz\\
=&\int_{\ga_1}\bigg(1-\frac{(\sqrt{m+1}+\sqrt{m})^2 }{z}\bigg)\cdot\bigg(1-\frac{(\sqrt{m+1}-\sqrt{m})^2}{z}\bigg)\cdot\bigg(\frac{ z+1}{z-1}\bigg)^{2 m+1}dz.
\end{align*}

{\bf Step 2(ii)}
For $z\in\ga_1$, one has, for $\tht\geq0$,
\begin{align*}
&\bigg(1-\frac{(\sqrt{m+1}+\sqrt{m})^2 }{z}\bigg)
=1-e^{-i\tht}=1-(\cos\tht-i\sin\tht)=(1-\cos\tht)+i\sin\tht\\
=&2\sin^2\frac{\tht}{2}+i2\sin\frac{\tht}{2}\cos\frac{\tht}{2}=
2\sin\frac{\tht}{2}\bigg(\sin\frac{\tht}{2}+i\cos\frac{\tht}{2}\bigg)
=2\sin\frac{\tht}{2}\bigg(\cos\bigg(\frac{\pi}{2}-\frac{\tht}{2}\bigg)+i\sin\bigg(\frac{\pi}{2}-\frac{\tht}{2}\bigg)\bigg).
\end{align*}
Note that, for $\tht\leq0$, the above formula should be written as
\begin{align*}
\bigg(1-\frac{(\sqrt{m+1}+\sqrt{m})^2 }{z}\bigg)
=2\sin\bigg(\frac{-\tht}{2}\bigg)\bigg(\cos\bigg(\frac{3\pi}{2}-\frac{\tht}{2}\bigg)+i\sin\bigg(\frac{3\pi}{2}-\frac{\tht}{2}\bigg)\bigg).
\end{align*}

{\bf Step 2(iii)}
For $z\in\ga_1$, one has
\begin{align*}
\bigg(1-\frac{(\sqrt{m+1}-\sqrt{m})^2}{z}\bigg)
=1-\frac{(\sqrt{m+1}-\sqrt{m})^2}{(\sqrt{m+1}+\sqrt{m})^2}e^{-i\tht}
=1-\frac{e^{-i\tht}}{(\sqrt{m+1}+\sqrt{m})^4}.
\end{align*}

By Lemma \ref{paraequ-13}, one has
\begin{align*}
&\bigg|\arg\bigg(1-\frac{(\sqrt{m+1}-\sqrt{m})^2}{(\sqrt{m+1}+\sqrt{m})^2}e^{-i\tht}\bigg)\bigg|=
\bigg|\arg\bigg(1-\frac{(\sqrt{m+1}-\sqrt{m})^2}{(\sqrt{m+1}+\sqrt{m})^2}e^{-i\tht}\bigg)-\arg 1\bigg|\\
\leq& \arcsin\bigg(\frac{1}{(\sqrt{m+1}+\sqrt{m})^4}\bigg)\leq
\frac{\pi}{3}\cdot\frac{1}{(\sqrt{m+1}+\sqrt{m})^4}.
\end{align*}

{\bf Step 2(iv)} Consider the image of the circle $\{z:\ |z|=\sqrt{R}\}$ under the map $w=\psi_3(z)=\frac{z+1}{z-1}$, where $R>1$ is a constant. The expression for the circle is $z\ol{z}=R$. Substituting $z=\frac{w+1}{w-1}$ (the inverse of $w=\frac{z+1}{z-1}$) into $z\ol{z}=R$, one has
\beqq
\bigg(\frac{w+1}{w-1}\bigg)\cdot\bigg( \ol{\frac{w+1}{w-1}}\bigg)=R.
\eeqq
So,
\beqq
(R-1)w\ol{w}-(R+1)(w+\ol{w})+R-1=0.
\eeqq
Suppose $w=x+iy$, one has
\beqq
(x^2+y^2)-\frac{R+1}{R-1}(2x)+1=0,
\eeqq
which can be written as
\beqq
\bigg(x-\frac{R+1}{R-1}\bigg)^2+y^2=\frac{4R}{(R-1)^2}.
\eeqq
This is a circle with center $\frac{R+1}{R-1}$ and radius $\frac{2\sqrt{R}}{R-1}$, denoted by $S_0$. The intersection of the image of the circle $\{z:\ z\ol{z}=R\}$
and the $x$-axis is at the points $\frac{\sqrt{R}-1}{\sqrt{R}+1}$ and $\frac{\sqrt{R}+1}{\sqrt{R}-1}$.

By direct computation, one has $\psi_{3}(\sqrt{R})=\frac{\sqrt{R}+1}{\sqrt{R}-1}$,
$\psi_{3}(-\sqrt{R})=\frac{\sqrt{R}-1}{\sqrt{R}+1}$,
\begin{align*}
&\psi_{3}(\sqrt{R}e^{i\tfrac{\pi}{2}})=
\frac{\sqrt{R}i+1}{\sqrt{R}i-1}=\frac{(\sqrt{R}i+1)^2}{-(\sqrt{R})^2-1}
=\frac{-R+2i\sqrt{R}+1}{-R-1}\\
=&\frac{R-1}{R+1}-i\frac{2\sqrt{R}}{R+1},
\end{align*}
\begin{align*}
&\psi_{3}((\sqrt{m+1}+\sqrt{m})^2)=
\frac{(\sqrt{m+1}+\sqrt{m})^2+1}{(\sqrt{m+1}+\sqrt{m})^2-1}
=\frac{\sqrt{m+1}}{\sqrt{m}},
\end{align*}
and
\begin{align*}
&\psi_{3}(re^{i\tht})=
\frac{re^{i\tht}+1}{re^{i\tht}-1}=
\frac{r\cos\tht+ir\sin\tht+1}{r\cos\tht+ir\sin\tht-1}
=\frac{[(r\cos\tht+1)+ir\sin\tht][(r\cos\tht-1)-ir\sin\tht]}{(r\cos\tht-1)^2+r^2\sin^2\tht}\\
=&\frac{r^2-1-2ir\sin\tht}{r^2-2r\cos\tht+1}
=\frac{r^2-1}{r^2-2r\cos\tht+1}
-i\frac{2r\sin\tht}{r^2-2r\cos\tht+1}.
\end{align*}
The estimate of the argument of this complex number is given by this following item:
\begin{align*}
\arctan\bigg(\frac{-2r\sin\tht}{r^2-1}\bigg).
\end{align*}
For $r=cp_{Q1}\geq 4m+1$, one has
\begin{align}\label{equ2021-11-25-2}
\frac{2r(2m+1)}{r^2-1}=(2m+1)\bigg(\frac{1}{r+1}+\frac{1}{r-1}\bigg)
\leq(2m+1)\bigg(\frac{1}{4m+2}+\frac{1}{4m}\bigg)=1+\frac{1}{4m},
\end{align}
and for $r=cp_{Q1}\leq 4(m+1)$, one has
\begin{align}\label{equ2021-11-25-3}
&\frac{2r(2m+1)}{r^2-1}=(2m+1)\bigg(\frac{1}{r+1}+\frac{1}{r-1}\bigg)
\geq(2m+1)\bigg(\frac{1}{4(m+1)+1}+\frac{1}{4(m+1)-1}\bigg)\nonumber\\
=&\frac{4m+2}{4(m+1)+1}=1-\frac{2}{4(m+1)+1}.
\end{align}

{\bf Step 2(v)} By the formula of Taylor expansion with Lagranginan error term, one has
\beqq
(\arctan x)^{\prime}=\frac{1}{1+x^2},\ \bigg(\frac{1}{1+x^2}\bigg)^{\prime}=\frac{-2x}{(1+x^2)^2},
\eeqq
and
\beqq
\bigg(\frac{-2x}{(1+x^2)^2}\bigg)^{\prime}=
\frac{-2(1+x^2)^2+2x(4x(1+x^2))}{(1+x^2)^4}
=\frac{6x^2-2}{(1+x^2)^3}.
\eeqq
So,
\beqq
\bigg|\frac{6x^2-2}{(1+x^2)^3}\bigg|\leq
\frac{6x^2+6}{(1+x^2)^3}=\frac{6}{(1+x^2)^2}\leq 6.
\eeqq
So,
\beqq
x-x^3\leq\arctan x\leq x+x^3\ \forall x\geq0.
\eeqq
Similarly, one has
\beqq
(\sin x)^{\prime}=\cos x,\ (\cos x)^{\prime}=-\sin x,\ (-\sin x)^{\prime}=-\cos x,
\eeqq
\beqq
x-\frac{x^3}{6}\leq\sin x\leq x,\  \forall x\geq0.
\eeqq
Hence, by $m\geq22$, $r=cp_{Q1}\geq 4m+1$, \eqref{equ2021-11-25-1}, \eqref{equ2021-11-25-2}, and the above discussions, one has, for $\zeta=cp_{Q1}\cdot e^{i\tht}$ with $\tht\in[0,\tht_0+\tht_1]$,
\begin{align*}
 &\arg(Q(\zeta)-cv_Q)\\
\geq&  \frac{\pi}{2}-\frac{\tht}{2}-\frac{\pi}{3}\cdot\frac{1}{(\sqrt{m+1}+\sqrt{m})^4}-\frac{2r(2m+1)\sin\tht}{r^2-1}-(2m+1)\bigg(\frac{2r\sin\tht}{r^2-1}\bigg)^3+\frac{\pi}{2}+\tht\\  \geq&
\frac{\pi}{2}-\frac{\tht}{2}-\frac{\pi}{3}\cdot\frac{1}{(\sqrt{m+1}+\sqrt{m})^4}-\bigg(1+\frac{1}{4m}\bigg)\tht-\bigg(1+\frac{1}{4m}\bigg)\bigg(\frac{1}{2m}\bigg)^2+\frac{\pi}{2}+\tht\\ \geq&
\pi-\frac{\tht}{2}-\frac{\pi}{3}\cdot\frac{1}{(\sqrt{22+1}+\sqrt{22})^4}-\frac{\tht}{4\times22}-\bigg(1+\frac{1}{4\times22}\bigg)\bigg(\frac{1}{2\times22}\bigg)^2\\ \geq&\pi-\frac{1.41699}{2}-\frac{\pi}{3}\cdot\frac{1}{(\sqrt{22+1}+\sqrt{22})^4}-\frac{1.41699}{4\times22}-\bigg(1+\frac{1}{4\times22}\bigg)\bigg(\frac{1}{2\times22}\bigg)^2\\
\approx&\pi-0.225521\pi-0.0000411624\pi-0.00512548\pi-0.000166285\pi\\
=&0.769146\pi>\frac{7\pi}{10}.
\end{align*}

Similarly, one has
\begin{align*}
  &\arg(Q(\zeta)-cv_Q)\\
  \leq&\frac{\pi}{2}-\frac{\tht}{2}+\frac{\pi}{3}\cdot\frac{1}{(\sqrt{m+1}+\sqrt{m})^4}-\frac{2r(2m+1)\sin\tht}{r^2-1}+(2m+1)\bigg(\frac{2r\sin\tht}{r^2-1}\bigg)^3+\frac{\pi}{2}+\tht\\
\leq&
\pi+\frac{\tht}{2}+\frac{\pi}{3}\cdot\frac{1}{(\sqrt{m+1}+\sqrt{m})^4}+\bigg(1-\frac{2}{4(m+1)+1}\bigg)\bigg(-\tht+\frac{\tht^3}{6}\bigg)+\bigg(1+\frac{1}{4m}\bigg)\bigg(\frac{1}{2m}\bigg)^2\\ \leq&
\pi+\frac{\tht}{2}+\frac{\pi}{3}\cdot\frac{1}{(\sqrt{22+1}+\sqrt{22})^4}+\bigg(1+\frac{1}{4\times22}\bigg)\bigg(\frac{1}{2\times22}\bigg)^2\\ \leq&\pi+\frac{1.41699}{2}+\frac{\pi}{3}\cdot\frac{1}{(\sqrt{22+1}+\sqrt{22})^4}+\bigg(1+\frac{1}{4\times22}\bigg)\bigg(\frac{1}{2\times22}\bigg)^2\\
\approx&\pi+0.225521\pi+0.0000411624\pi+0.000166285\pi\\
=&1.225728\pi<\frac{13\pi}{10},
\end{align*}
where $-\tht+\frac{\tht^3}{6}\leq0$ for $\tht\in[0,\tht_0+\tht_1]$ because of $\sqrt{6}\approx2.44949>1.41699$ by \eqref{equ2021-11-25-1}.

By applying similar discussions, one has, for $\zeta=cp_{Q1}\cdot e^{i\tht}$ with $\tht\in[-(\tht_0+\tht_1),0]$,
\beqq
\frac{7\pi}{10}<\arg(Q(\zeta)-cv_Q)<\frac{13\pi}{10}.
\eeqq

{\bf (b)} For $\zeta\in\widetilde{W}_0$, $|\zeta|\geq 4m+1>7$, by Lemma \ref{paraequ-24}, \eqref{paraequ-9}, \eqref{paraequ-30-1-21-2}, \eqref{near-equat-7}, and the definition of the region $W_0$,
one can show the conclusion.

{\bf (c)} This part is divided into four parts.

{\bf Step (i)} Suppose $\text{Re}\,\zeta\leq0$ and $1\leq|\zeta|\leq4m-3$, one has $|\zeta+1|\leq|\zeta-1|$ and
\begin{align*}
&|Q(\zeta)|=\bigg|\frac{(1+\zeta)^{2+2m}}{\zeta(1-\zeta)^{2m}}\bigg|=\bigg|\frac{(1+\zeta)(1-\zeta)}{\zeta}\cdot\frac{(1+\zeta)^{2m+1}}{(1-\zeta)^{2m+1}}\bigg|
\\
=&\bigg|\bigg(\frac{1}{\zeta}-\zeta\bigg)\bigg(\frac{1+\zeta}{1-\zeta}\bigg)^{2m+1}\bigg|\leq|\zeta|+\frac{1}{|\zeta|}\leq 4m-3+\frac{1}{4m-3}\\
<&4m-3+\frac{1}{4*22-3}\approx\ 4m-2.98824<4m-2,
\end{align*}
where the discussions in Remark \ref{near-equat-11} are used in the last but one inequality.

{\bf Step (ii)} The estimate of $|Q_2(\zeta)|$ for $|\zeta|\geq4m-3$ is provided, where $Q_2(\zeta)$ is introduced in \eqref{paraequ-66}.

Following the arguments in the proof of Lemma \ref{paraequ-12}, one has,
for $r\geq 4m-3$,
\begin{align}\label{near-equat-2021-11-22-1}
 &\bigg(\frac{r+1}{r-1}\bigg)^{2m-4}= \bigg(1+\frac{2}{r-1}\bigg)^{2m-4}\leq \bigg(1+\frac{2}{4m-4}\bigg)^{2m-4}\nonumber\\
=&\bigg(1+\frac{1}{2m-2}\bigg)^{2m-2}\cdot\bigg(1+\frac{1}{2m-2}\bigg)^{-2}
\leq e\cdot1=e
\end{align}
and
\begin{align}\label{near-equat-2021-11-22-2}
 &\bigg(\frac{r+1}{r-1}\bigg)^{2m-2}= \bigg(1+\frac{2}{r-1}\bigg)^{2m-2}\leq \bigg(1+\frac{2}{4m-4}\bigg)^{2m-2}\nonumber\\
=&\bigg(1+\frac{1}{2m-2}\bigg)^{2m-2}\leq e.
\end{align}

Hence, one has
\begin{itemize}
\item for $a_1=\tfrac{2m}{3}\big(\tfrac{m}{m-1}\big)^2$, if \beqq
r\geq(\tfrac{32}{3a_1}m^3)^{1/2}+1=
(16(m-1)^2)^{1/2}+1=4m-3,
\eeqq
then
\begin{align*}
\frac{8C^3_{2m}}{r(r-1)}<\frac{\tfrac{4}{3}(2m)^3}{(r-1)^2}<a_1
=\tfrac{2m}{3}\big(1+\tfrac{1}{m-1}\big)^2\leq
\frac{2m}{3}\bigg(\frac{22}{21}\bigg)^2\approx 0.73167m;
\end{align*}
\item for $a_1=\tfrac{m}{(m-1)^2}$, if $r>(\tfrac{16m}{a_1})^{1/2}+1=4m-3$, then
\begin{align*}
\frac{16m}{r(r-1)}<\frac{16m}{(r-1)^2}<a_1
=\tfrac{m}{(m-1)^2}<\frac{4m}{m^2}=\frac{4}{m};
\end{align*}
\item for $a_1=\tfrac{m^4}{2(m-1)^3}$, if $r>(\tfrac{2^5m^4}{a_1})^{1/3}+1=4m-3$, then
\begin{align*}
&\frac{2(2m)(2m-1)(2m-2)(2m-3)}{3r(r-1)^2}\bigg(\frac{r+1}{r-1}\bigg)^{2m-4}
<\frac{2(2m)^4}{3(r-1)^3}e<\frac{2(2m)^4}{(r-1)^3}\\
<&a_1
=\tfrac{m^4}{2(m-1)^3}
=\frac{m}{2}\bigg(\frac{m}{m-1}\bigg)^3
\leq\frac{m}{2}\bigg(\frac{22}{21}\bigg)^3\approx 0.574884m;
\end{align*}
\item for $a_1=\tfrac{6m^2}{(m-1)^2}$, if $r>\tfrac{4\sqrt{6}m}{\sqrt{a_1}}+1=4m-3$, then
\begin{align*}
&\frac{8(2m)(2m-1)}{(r-1)^2}\bigg(\frac{r+1}{r-1}\bigg)^{2m-2}
<\frac{8(2m)^2}{(r-1)^2}e<\frac{24(2m)^2}{(r-1)^2}\\
<&a_1=\frac{6m^2}{(m-1)^2}\leq6\bigg(\frac{22}{21}\bigg)^2
\approx6.58503.
\end{align*}
\end{itemize}
Hence, for $|\zeta|=r\geq4m-3$, one has
\beqq
|Q_2(\zeta)|\leq0.73167m+\frac{4}{m}+0.574884m+6.58503\approx
1.30655m+\frac{4}{m}+6.58503.
\eeqq

{\bf Step (iii)}
For the $\tht_0\in(0,\pi/2)$ with $\cos(\tht_0)=\tfrac{1}{3}$ and $|\zeta|=r\geq 4m-3$ and $\tfrac{9\pi}{10}\leq\arg\,\zeta\leq\tfrac{11\pi}{10}$, one would show
\begin{align*}
\text{Re}(Q(\zeta) e^{-i\tht_0})<4m-2.
\end{align*}

As for $\text{Re}(Q(\zeta) e^{-i\tht_0})$,
$|\zeta|=r\geq 4m-3$ and $\tfrac{9\pi}{10}\leq\arg\,\zeta\leq\tfrac{11\pi}{10}$,
one has $\text{Re}(\zeta\cdot e^{-i\tht_0})\leq0$ and $-\tfrac{11\pi}{10}-\tht_0\leq\arg\,(\tfrac{e^{-i\tht_0}}{\zeta})\leq-\tfrac{9\pi}{10}-\tht_0$, one has
\begin{align*}
&\text{Re}(Q(\zeta) e^{-i\tht_0})\\
\leq& 0+\text{Re}((4m+2) e^{-i\tht_0})+\frac{(8m(m+1)+1)\cos(-11\pi/10-\tht_0)}{r}+
\text{Re}\big( Q_2(\zeta)\cdot e^{-i\tht_0}\big)\\
\leq&(4m+2)\cos(\tht_0)+0+|Q_2(\zeta)|\\
\leq&(4m+2)\cos(\tht_0)+1.30655m+\frac{4}{m}+6.58503\\
\approx&7.2517 + \frac{4}{m} + 2.63988 m
<4m-2.
\end{align*}

{\bf Step (iv)}
As for $\text{Re}(Q(\zeta)\cdot e^{i\tht_0})$,
$|\zeta|=r\geq 4m-3$ and $\tfrac{9\pi}{10}\leq\arg\,\zeta\leq\tfrac{11\pi}{10}$,
one has $\text{Re}(\zeta\cdot e^{i\tht_0})\leq0$ and $-\tfrac{11\pi}{10}+\tht_0\leq\arg\,(\tfrac{e^{i\tht_0}}{\zeta})\leq-\tfrac{9\pi}{10}+\tht_0$, one has
\begin{align*}
&\text{Re}(Q(\zeta) e^{i\tht_0})\\
\leq&0+\text{Re}((4m+2) e^{i\tht_0})+\frac{(8m(m+1)+1)\cos(-9\pi/10+\tht_0)}{r}+
\text{Re}\big( Q_2(\zeta)\cdot e^{i\tht_0}\big)\\
\leq&(4m+2)\cos(\tht_0)+0+|Q_2(\zeta)|\\
\leq&(4m+2)\cos(\tht_0)+1.30655m+\frac{4}{m}+6.58503\\
\approx&7.2517 + \frac{4}{m} + 2.63988 m
<4m-2.
\end{align*}

\end{proof}

Finally, a simpler estimate is given.
\begin{lemma}
Let
\beqq
\widetilde{W}_0:=\bigg\{\zeta:\ \text{Re}\,\zeta>cp_Q\ \text{or}\ pr_{+}(\zeta)>\tfrac{cp_Q}{\cos(\tfrac{\pi}{5})}\ \text{or}\ pr_{-}(\zeta)>\tfrac{cp_Q}{\cos(\tfrac{\pi}{5})}\bigg\}.
\eeqq Then,
\begin{itemize}
\item[(a)]
$\varphi(\widetilde{W}_0)\subset W_0:=\{z:\  \text{Re}\,z>4m+0.18\ \text{or}\ pr_{+}(z)>4.9m+0.5\ \text{or}\ pr_{-}(z)>4.9m+0.5\}$;
\item[(b)] $Q^{-1}(W_0)\setminus\overline{\cd}\subset\widetilde{W}_{-1}:=\vv(0,\tfrac{7\pi}{10})\setminus(\overline{\cd}\cup\{\zeta:\ \text{Re}\,\zeta\leq0\ \text{and}\ |\zeta|\leq4m\})$.
\end{itemize}
\end{lemma}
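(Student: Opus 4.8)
The plan is to prove (a) and (b) by direct estimates, using the expansion $\varphi(\zeta)=\zeta+c_0+\varphi_1(\zeta)$ of Lemma \ref{paraequ-11} (with $|c_0+e_0|\le 2|e_1|=1.68$, $|\varphi_1(\zeta)|\le\varphi_{1,max}(|\zeta|)$) for part (a), and the expansions of $Q$ in Lemmas \ref{paraequ-25} and \ref{paraequ-12} together with the lower bound $cp_{Q1}\ge 4m+2\sqrt6-3$ from Lemma \ref{paraequ-24} for part (b). No new ideas beyond the earlier lemmas are needed; it is essentially a coarsening of Lemma \ref{region2021-11-25-1}.

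For (a): every $\zeta\in\widetilde W_0$ has $|\zeta|\ge cp_Q=cp_{Q1}\ge 4m+2\sqrt6-3\ge 5+2\sqrt6$, so $\zeta\notin E$, $\varphi(\zeta)$ is defined, and $|\varphi_1(\zeta)|\le\varphi_{1,max}(5+2\sqrt6)\le 0.215$ by Remark \ref{paraequ-29}. Writing $\varphi(\zeta)-\zeta=(c_0+e_0)-e_0+\varphi_1(\zeta)$ with $e_0=-0.18$, one gets $\mathrm{Re}(\varphi(\zeta)-\zeta)\ge 0.18-1.68-0.215=-1.715$, and $pr_\pm(\varphi(\zeta)-\zeta)\ge -1.68-|e_0|-0.215\ge -2.08$; more precisely, $pr_\pm$ of the positive real number $-e_0$ contributes at least $-|e_0|$, so in fact $pr_\pm(\varphi(\zeta)-\zeta)\ge -1.68+e_0\cos\tfrac\pi5-0.215\ge-1.75$. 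Feeding these into the three defining inequalities of $\widetilde W_0$: if $\mathrm{Re}\,\zeta>cp_Q$ then $\mathrm{Re}\,\varphi(\zeta)>cp_Q-1.715\ge 4m+(2\sqrt6-3-1.715)>4m+0.18$; if $pr_+(\zeta)>cp_Q/\cos\tfrac\pi5$ then $pr_+(\varphi(\zeta))>cp_Q/\cos\tfrac\pi5-1.75\ge (4m+1.899)/\cos\tfrac\pi5-1.75>4.9m+0.5$ (the gain $4/\cos\tfrac\pi5>4.9$ in the slope is what makes this work), and symmetrically for $pr_-$. Hence $\varphi(\zeta)\in W_0$.

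For (b) I would argue by contraposition: assume $\zeta\notin\overline\cd$ and $\zeta\notin\widetilde W_{-1}$, i.e. either $|\arg\zeta|\ge\tfrac{7\pi}{10}$, or ($\mathrm{Re}\,\zeta\le 0$ and $|\zeta|\le 4m$), and show $Q(\zeta)\notin W_0$. Two preliminary facts: each of the three half-planes defining $W_0$ forces $|z|>4m+0.18$, so $W_0\subset\{|z|>4m+0.18\}$; and from $Q(\zeta)=(\tfrac1\zeta-\zeta)\big(\tfrac{1+\zeta}{1-\zeta}\big)^{2m+1}$ one has $|Q(\zeta)|\le|\zeta|+|\zeta|^{-1}$ whenever $\mathrm{Re}\,\zeta\le 0$ (since then $\big|\tfrac{1+\zeta}{1-\zeta}\big|\le 1$). \textbf{Case I} ($\mathrm{Re}\,\zeta\le 0$, $1<|\zeta|\le 4m$, which also absorbs $|\arg\zeta|\ge\tfrac{7\pi}{10}$ with $|\zeta|\le 4m$): $|Q(\zeta)|\le 4m+\tfrac1{4m}<4m+0.18$ for $m\ge 2$, so $Q(\zeta)\notin W_0$. \textbf{Case II} ($|\arg\zeta|\ge\tfrac{7\pi}{10}$, $|\zeta|>4m$): by the conjugation symmetry $Q(\bar\zeta)=\overline{Q(\zeta)}$ and $\overline{W_0}=W_0$ we may take $\phi:=\arg\zeta\in[\tfrac{7\pi}{10},\pi]$, so $\mathrm{Re}\,\zeta=|\zeta|\cos\phi<4m\cos\phi\le-4m\sin\tfrac\pi5$. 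Using $Q(\zeta)=\zeta+(4m+2)+\tfrac{8m(m+1)+1}{\zeta}+Q_2(\zeta)$ from Lemma \ref{paraequ-12}, with $|Q_2(\zeta)|\le Q_{2,max}(|\zeta|)\le\tfrac76m+\tfrac1m+6$ (from \eqref{paraequ-28}, extended to $|\zeta|\ge 4m$ via the explicit formula \eqref{near-equat-14} at the cost of a slightly larger constant), I apply each of $L\in\{\mathrm{Re},pr_+,pr_-\}$ termwise and check: $L(\zeta)\le 0$ (because $\phi,\phi-\tfrac\pi5,\phi+\tfrac\pi5\in[\tfrac\pi2,\tfrac{6\pi}{5}]$), $L\big(\tfrac{8m(m+1)+1}{\zeta}\big)\le 0$ (same angular intervals, with argument $-\phi$), while $L(4m+2)=4m+2$ for $L=\mathrm{Re}$ and $L(4m+2)=(4m+2)\cos\tfrac\pi5=(1+\sqrt5)(m+\tfrac12)$ for $L=pr_\pm$. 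Hence $pr_\pm(Q(\zeta))\le(1+\sqrt5)(m+\tfrac12)+Q_{2,max}(|\zeta|)<4.9m+0.5$ and $\mathrm{Re}\,Q(\zeta)\le-4m\sin\tfrac\pi5+(4m+2)+Q_{2,max}(|\zeta|)<4m+0.18$ for $m\ge 22$, so $Q(\zeta)\notin W_0$.

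The routine parts are the numerical margin verifications (leading coefficients $4.9-(1+\sqrt5)-\tfrac76>0$ and $4\sin\tfrac\pi5-\tfrac76>0$, with the $O(1)$ constants absorbed for $m$ large). The main obstacle is the sign bookkeeping in Case II: the point is that the only term of $Q(\zeta)-\zeta$ with a genuinely positive contribution to $pr_\pm$ is the constant $(4m+2)$, and its $pr_\pm$-projection is shrunk by the factor $\cos\tfrac\pi5$ — this is precisely the slack the constant $4.9$ in $W_0$ is built to absorb — while $\mathrm{Re}\,\zeta<-4m\sin\tfrac\pi5$ absorbs it for $L=\mathrm{Re}$; one also has to be slightly careful in the thin annulus $4m<|\zeta|<4m+1$, where the clean estimate \eqref{paraequ-28} on $Q_{2,max}$ is not literally available and must be replaced either by a direct bound from \eqref{near-equat-14} or by the observation that there $\big|\tfrac{1+\zeta}{1-\zeta}\big|^{2m+1}$ is bounded away from $1$, giving $|Q(\zeta)|<4m+0.18$ outright.
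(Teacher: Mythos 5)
Your argument follows the paper's proof closely in both parts: (a) via the $\varphi=\zeta+c_0+\varphi_1$ expansion with the bounds of Lemma~\ref{paraequ-11} and Remark~\ref{paraequ-29}, and (b) by contraposition with the same two-case split and termwise $\mathrm{Re}/pr_{\pm}$ estimates from the $Q$-expansion of Lemma~\ref{paraequ-12}. The one place you diverge is the thin annulus $4m<|\zeta|<4m+1$, where \eqref{paraequ-28} is not directly applicable: the paper reworks \eqref{near-equat-14} into a quadratic bound on $|\zeta Q_{2}(\zeta)|$ valid for all $|\zeta|\geq 4m$, whereas you flag the issue and sketch a workaround; your second suggested patch (that $\big|\tfrac{1+\zeta}{1-\zeta}\big|^{2m+1}$ is bounded away from $1$ when $|\arg\zeta|\geq\tfrac{7\pi}{10}$ and $|\zeta|$ is near $4m$) does close the gap, since that factor is roughly $e^{-\sin(\pi/5)}\approx 0.56$, but as written it is a pointer rather than a verified estimate, and there is a (harmless) sign slip in your displayed refinement of the $pr_{\pm}(\varphi(\zeta)-\zeta)$ lower bound even though the final number $-1.75$ is right.
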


\begin{proof}
{\bf Case (a)} Suppose $\zeta\in\widetilde{W}_0$,
if $\text{Re}\,\zeta>cp_Q$, then, by Lemma \ref{paraequ-24}, \eqref{paraequ-9} and \eqref{paraequ-30-1-21-1},
\begin{align*}
&\text{Re}\,\varphi(\zeta)\geq cp_Q-e_0-2|e_1|-\varphi_{1,max}(cp_Q)\\
\geq& 4m+1.89898+0.18-2\times 0.84-0.214541=4m+0.18444>4m+0.18;
\end{align*}
if $pr_{\pm}(\zeta)>\tfrac{cp_Q}{\cos(\tfrac{\pi}{5})}$, then,
\begin{align*}
&\text{Re}\,\varphi(\zeta)\geq \tfrac{cp_Q}{\cos(\tfrac{\pi}{5})}-e_0\cos(\tfrac{\pi}{5})-2|e_1|-\varphi_{1,max}(\tfrac{cp_Q}{\cos(\tfrac{\pi}{5})})\\
\geq& \frac{16m}{1+\sqrt{5}}+ \frac{4\times1.89898}{1+\sqrt{5}}+0.18\times0.809017-2\times 0.84-0.214541\\
=& \frac{16m}{1+\sqrt{5}}+2.34727-1.74892\approx4.94427m+0.59835>4.9m+0.5.
\end{align*}

{\bf Case (b)} By the assumption, one has $\cc\setminus(\widetilde{W}_{-1}\cup\ol{\cd})=
\{\zeta:\ \text{Re}\,\zeta\leq0\ \text{and}\ 1<|\zeta|\leq4m\}\cup\{\zeta:\ |\zeta|\geq 4m\ \text{and}\ \tfrac{3\pi}{10}\leq\zeta\leq\tfrac{13\pi}{10}\}$, implying that one needs to prove that if $\zeta$ is not in this set, then $Q(\zeta)\not\in W_0$.

Suppose $\text{Re}\,\zeta\leq0$ and $1\leq|\zeta|\leq4m$, one has $|\zeta+1|\leq|\zeta-1|$ and
\begin{align*}
&|Q(\zeta)|=\bigg|\frac{(1+\zeta)^{2+2m}}{\zeta(1-\zeta)^{2m}}\bigg|=\bigg|\frac{(1+\zeta)(1-\zeta)}{\zeta}\cdot\frac{(1+\zeta)^{2m+1}}{(1-\zeta)^{2m+1}}\bigg|
\\
=&\bigg|\bigg(\frac{1}{\zeta}-\zeta\bigg)\bigg(\frac{1+\zeta}{1-\zeta}\bigg)^{2m+1}\bigg|\leq|\zeta|+\frac{1}{|\zeta|}\leq 4m+\frac{1}{4m}<4m+0.18\ \forall m\geq2,
\end{align*}
where the discussions in Remark \ref{near-equat-11} are used in the last but one inequality.
Hence, $Q(\zeta)\in\cd(0,4m+0.18)\subset\cc\setminus W_0$.

By \eqref{near-equat-14} in Lemma \ref{paraequ-12}, the discussions in the proof of Lemma \ref{paraequ-12} (\eqref{near-equat-12} and \eqref{near-equat-13}),
for $|\zeta|\geq r\geq4m$,
\begin{align*}
&|\zeta Q_2(\zeta)|\leq
\frac{8C^3_{2m}+16m}{(r-1)}\\
&+\frac{2(2m)(2m-1)(2m-2)(2m-3)}{3(r-1)^2}\bigg(\frac{r+1}{r-1}\bigg)^{2m-4}+\frac{r8(2m)(2m-1)}{(r-1)^2}\bigg(\frac{r+1}{r-1}\bigg)^{2m-2}\\
\leq&\frac{8C^3_{2m}+16m}{4m-1}
+\frac{2(2m)(2m-1)(2m-2)(2m-3)e}{3(4m-1)^2}+\frac{8(4m)(2m)(2m-1)e}{(4m-1)^2}.
\end{align*}
For $m\geq3$, one has
\begin{align*}
\frac{4m}{4m-1}=\frac{1}{1-\frac{1}{4m}}\leq\frac{1}{1-\frac{1}{4\times3}}=\frac{12}{11}.
\end{align*}
So,
\begin{align*}
&\frac{8C^3_{2m}+16m}{4m-1}=\frac{4(2m)(2m-1)(2m-2)+12(4m)}{3(4m-1)}
\leq\frac{2(2m-1)(2m-2)}{3}\cdot\frac{12}{11}+\frac{48}{11}\\
=&\frac{2^4(2m-1)(m-1)}{11}+\frac{48}{11},
\end{align*}
\begin{align*}
&\frac{2(2m)(2m-1)(2m-2)(2m-3)e}{3(4m-1)^2}
=\frac{4m}{4m-1}\cdot\frac{4m-2}{4m-1}
\cdot\frac{(m-1)(2m-3)e}{3}\\
\leq&\frac{12}{11}\cdot\frac{(m-1)(2m-3)e}{3}
=\frac{4(m-1)(2m-3)e}{11},
\end{align*}
and
\begin{align*}
\frac{8(4m)(2m)(2m-1)e}{(4m-1)^2}
=\frac{4m}{4m-1}\cdot\frac{4m-2}{4m-1}\cdot
(2^3me)\leq\frac{12}{11}\cdot(2^3me)=\frac{2^5\cdot(3m)e}{11}.
\end{align*}
Hence, for $|\zeta|\geq4m$,
\begin{align*}
&|\zeta Q_2(\zeta)|\leq\frac{2^4(2m-1)(m-1)}{11}+\frac{48}{11}+\frac{4(m-1)(2m-3)e}{11}+
\frac{2^5\cdot(3m)e}{11}\\
\leq&\frac{2^4(2m-1)(m-1)}{11}+\frac{48}{11}+\frac{4(m-1)(2m-3)\times 2.72}{11}+
\frac{2^5\cdot(3m)\times 2.72}{11}\\
\approx&8.78545 + 14.4291 m + 4.88727 m^2.
\end{align*}

Now, we are ready to study the situation that $|\zeta|=r\geq 4m$ and $\tfrac{7\pi}{10}\leq\arg\,\zeta\leq\pi$. It is evident that $|\zeta-1|\geq|\zeta|=r\geq4m$. By the above discussions and \eqref{paraequ-66}, one has
\begin{align*}
&\text{Re}\,(Q(\zeta))\leq r\cos\left(\frac{7\pi}{10}\right)+(4m+2)+\frac{(8m(m+1)+1)\cos(\tfrac{7\pi}{10})}{r}+
\text{Re}\,\left(\frac{\zeta Q_2(\zeta)}{\zeta}\right)\\
\leq&4m\cos\left(\frac{7\pi}{10}\right)+(4m+2)+\frac{(8m(m+1)+1)\cos(\tfrac{7\pi}{10})}{r}+
\frac{8.78545 + 14.4291 m + 4.88727 m^2}{r}\\
\approx&-2.35114m+(4m+2)+\frac{8.19766 + 9.72682 m + 0.184988 m^2}{4m}\leq 4m.
\end{align*}

As for $pr_{+}(Q(\zeta))$, one has $pr_{+}(\zeta)\leq0$ and $-\tfrac{6\pi}{5}\leq\arg\,(\tfrac{e^{-i\pi/5}}{\zeta})\leq-\tfrac{9\pi}{10}$, one has
\begin{align*}
&pr_{+}(Q(\zeta))\leq 0+pr_{+}(4m+2)+\frac{(8m(m+1)+1)\cos(\tfrac{9\pi}{10})}{r}+
pr_{+}\left(\frac{\zeta Q_2(\zeta)}{\zeta}\right)\\
\leq&(4m+2)\cos\left(\frac{\pi}{5}\right)+\frac{1}{r}\left((8m(m+1)+1)\cos(\tfrac{9\pi}{10})+8.78545 + 14.4291 m + 4.88727 m^2\right)\\
\approx&(4m+2)\cos\left(\frac{\pi}{5}\right)+\frac{1}{4m}\left(7.83439 + 6.82065 m - 2.72118 m^2\right)<4.9m+0.5.
\end{align*}
As for  $pr_{-}(Q(\zeta))$, one has $pr_{-}(\zeta)\leq-4m\cos(\tfrac{\pi}{5})$ and
$-\tfrac{4\pi}{5}\leq\arg\,(\tfrac{e^{i\pi/5}}{\zeta})\leq\-\tfrac{\pi}{2}$, so $pr_{-}(\tfrac{1}{\zeta})\leq0$, one has
\begin{align*}
&pr_{-}(Q(\zeta))\leq-4m\cos\left(\frac{\pi}{5}\right)+pr_{-}(4m+2)+0+
pr_{-}\left(\frac{\zeta Q_2(\zeta)}{\zeta}\right)\\
\leq&-4m\cos\left(\frac{\pi}{5}\right)+(4m+2)\cos\left(\frac{\pi}{5}\right)+\frac{1}{r}\left(8.78545 + 14.4291 m + 4.88727 m^2\right)\\
\approx&2\cos\left(\frac{\pi}{5}\right)+\frac{1}{4m}\left(8.78545 + 14.4291 m + 4.88727 m^2\right)<4.9m+0.5.
\end{align*}

These three inequalities imply that $Q(\zeta)\not\in W_0$. The same conclusion holds for $\pi\leq\arg\,\zeta\leq\tfrac{13\pi}{10}$.
\end{proof}

\subsection{Construction of $\Psi_1$: Relating $D_n$'s to $P$}\label{proofpro2021-10-3-1}

In this subsection, the proof of Proposition \ref{attracting202091203} is provided.

The set $\pi_X(X_{1+}\cup X_{2-})$ contains the sets $D_0$, $D_0^{\prime}$, $D_{-1}$, $D^{\prime\prime}_{-1}$, $D_0^{\sharp}$, and $D^{\sharp}_1$, which could be thought of as the subsets of $X_{1+}\cup X_{2-}$.

Since
\beqq
F(D_0)=F(D_0')=D_1,\ F(D^{\sharp}_{0})=D^{\sharp}_1,\ F(D_{-1})=F(D_{-1}^{\prime\prime})=D_0,\
\eeqq
it is reasonable to define the following sets:
\beqq
D_{-n-1}=g^n(D_{-1}),\ D^{\sharp}_{-n}=g^n(D^{\sharp}_{0}),\ D^{\prime}_{-n}=g^{n}(D_0^{\prime}),\
D^{\prime\prime}_{-n-1}=g^{n}(D_{-1}^{\prime\prime}),\ \forall n\in\mathbb{N}.
\eeqq

Let
\beq\label{equ-2021-11-30-2}
\mathbf{D}=\{z:\ 0<\text{Re}\,z<1\ \text{and}\ |\text{Im}\,z|<\eta\}\ \text{and}\
\mathbf{D}^{\sharp}=\{z:\ 0<\text{Re}\,z<1\ \text{and}\ \eta<\text{Im}\,z\}.
\eeq
The boundary segments can be represented by the following notations:
\begin{align*}
&\partial^{l}_{+}\mathbf{D}=0+i[0,\eta];\ \partial^{l}_{-}\mathbf{D}=0+i[-\eta,0];\ \partial^{r}_{+}\mathbf{D}=1+i[0,\eta];\ \partial^{r}_{-}\mathbf{D}=1+i[-\eta,0];\\
&\partial^{h}_{+}\mathbf{D}=\partial^{h}\mathbf{D}^{\sharp}=[0,1]+i\eta;\ \partial^{h}_{-}\mathbf{D}=[0,1]-i\eta;\ \partial^{l}\mathbf{D}^{\sharp}=0+i[\eta,+\infty];\ \partial^r\mathbf{D}^{\sharp}=1+i[\eta,+\infty],
\end{align*}
where $l$,  $r$, and $h$ stand for left, right, and horizontal boundaries.  Figure \ref{fig2021-2-25-1} is an illustration diagram.

Since $\Phi_{att}(z)-1$ maps $D_1$ onto $\mathbf{D}$ homeomorphically including the boundaries, maps $D^{\sharp}_1$ onto $\mathbf{D}^{\sharp}$ homeomorphically including the boundaries. As a consequence, the boundary segments of $D_1$ and $D^{\sharp}_1$ are denoted by $\partial^l_{+}D_1$, $\partial^hD^{\sharp}_1$ etc according to their images by $\Phi_{att}(z)-1$.  The same notations are also defined for $D_n$, $D^{\prime}_n$, $D^{\prime\prime}_n$, $D^{\sharp}_n$, etc according to their images by iterates of $F$ for $n\leq0$. Figure \ref{fig2021-10-3-3}  is an illustration diagram.

\begin{figure}
\begin{center}
\begin{tikzpicture}[scale=1,line width=1pt]
\path (0,-2) coordinate (p00);
\path (5,-2) coordinate (p01);
\path (5,2) coordinate (p11);
\path (0,2) coordinate (p10);
\draw[color=blue,line width=3pt] (p00)--(p01)--(p11)--(p10)--cycle;
\draw [<->,line width=1.5pt] (0,6.5) node (yaxis) [above] {$y$}
    |- (6.5,0) node (xaxis) [right] {$x$};

\path (-0.25,-0.25) node(axis0){$O$};

\draw[->,color=blue,line width=3pt](0,2)--(0,5) --cycle;
\draw[->,color=blue,line width=3pt](5,2)--(5,5) --cycle;

\path (-0.5,1.5) node(text1)[below]{$\partial^l_{+}D$};
\path (-0.5,-1) node(text1)[below]{$\partial^l_{-}D$};

\path (5.5,1.5) node(text1)[below]{$\partial^r_{+}D$};
\path (5.5,-1) node(text1)[below]{$\partial^r_{-}D$};

\path (2.5,-2) node(text1)[below]{$\partial^h_{-}D$};

\path (2.5,2.8) node(text1)[below]{$\partial^h_{+}D=\partial^hD^{\sharp}$};

\path (-0.5,4.5) node(text1)[below]{$\partial^lD^{\sharp}$};
\path (5.5,4.5) node(text1)[below]{$\partial^rD^{\sharp}$};
\end{tikzpicture}
\end{center}
\caption{The illustration diagram of the boundary segments}\label{fig2021-2-25-1}
\end{figure}
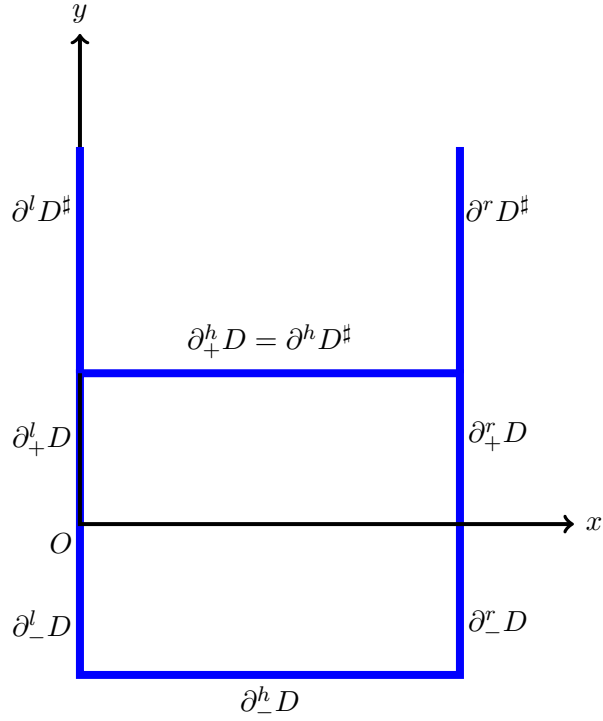

\begin{lemma}\label{gra-2021-10-3-2}
\begin{itemize}
\item[(a)] $g(D_0)=D_{-1}$ and $g(D^{\sharp}_1)=D^{\sharp}_0$.
\item[(b)] Among closed domains $\{\ol{D}_n,\ \ol{D}^{\prime}_n,\ \ol{D}^{\prime\prime}_{n-1},\ \ol{D}^{\sharp}_n\}$ for $n\in\mathbb{Z}_{-}=\{0,-1,-2,...\}$, the intersection of these subsets are given as follows:
\begin{equation}\label{equ-2021-11-30-1}
\left\{
\begin{array}{l}
(b.1)\ \ol{D}_{n}\cap\ol{D}_{n-1}=\partial^{l}_{+}D_{n}=\partial^{r}_{+}D_{n-1},\\
(b.2)\ \ol{D}_{n-1}\cap\ol{D}_{n}^{\prime}=\partial^{r}_{-}D_{n-1}=\partial^{l}_{-}D_{n}^{\prime},\\
(b.3)\ \ol{D}_{n}^{\prime}\cap\ol{D}_{n-1}^{\prime\prime}=\partial^{l}_{+}D_{n}^{\prime}=\partial^{r}_{+}D_{n-1}^{\prime\prime},\\
(b.4)\ \ol{D}_{n-1}^{\prime\prime}\cap\ol{D}_{n}=\partial^{r}_{-}D_{n-1}^{\prime\prime}=\partial^{l}_{-}D_{n},\\
(b.5)\ \ol{D}_{n}\cap\ol{D}_{n}^{\prime}=\ol{D}_{n-1}\cap\ol{D}_{n-1}^{\prime\prime}=\text{a singleton set},\\
(b.6)\ \ol{D}_{n}\cap\ol{D}_{n}^{\sharp}=\partial^{h}_{+}D_{n}=\partial^{h}D_{n}^{\sharp},\\
(b.7)\ \ol{D}_{n}^{\sharp}\cap\ol{D}_{n-1}^{\sharp}=\partial^{l}D_{n}^{\sharp}=\partial^{r}D_{n-1}^{\sharp},\\
(b.8)\ \ol{D}_{n}\cap\ol{D}_{n-1}^{\sharp}=\ol{D}_{n-1}\cap\ol{D}^{\sharp}_{n}=\text{a singleton set}.
\end{array}
\right.
\end{equation}

\end{itemize}
\end{lemma}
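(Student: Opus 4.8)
\emph{Overview.} The plan is to prove (a) first and then to bootstrap (b) from it together with the incidence pattern at the ``top'' of the staircase of domains; throughout, the decisive structural fact is that $g\colon X\to X$ is a homeomorphism conjugating the configuration one step downward.

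\emph{Part (a).} First I would unwind the definitions of $g=\widetilde{\varphi}\circ\widetilde{Q}^{-1}$ and of the domains $\widetilde{D}_{-1},D_{-1}$ and $\widetilde{D}^{\sharp}_{0},D^{\sharp}_{0}$. By Proposition \ref{near-equat-2} the map $g$ satisfies $F\circ\pi_X\circ g=\pi_X$ and is injective, since $\widetilde{\Phi}_{rep}$ conjugates it to $w\mapsto w-1$; thus $\pi_X\circ g$ is the single-valued branch of $F^{-1}$ obtained by following $\widetilde{Q}^{-1}\colon X\to Y$ and then the lift $\widetilde{\varphi}$ of $\varphi$. Regarding $D_0$ as a subset of $X_{1+}\cup X_{2-}\subset X$, the relation $F(D_{-1})=D_0$ gives $F(\pi_X(g(D_0)))=\pi_X(D_0)$, so $\pi_X(g(D_0))$ is a connected component of $F^{-1}(\pi_X(D_0))$. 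By construction (Subsection \ref{location2021-10-25-1}) $\widetilde{D}_{-1}$ is precisely the $Q$-preimage of $D_0$ lying in $\mathcal{U}_{1+}\cup\mathcal{U}_{2-}\cup\ga_{b1+}$, which by the location of these sets in the sheets of $X$ (Proposition \ref{attracting2020912-2}(e) and Lemmas \ref{near-equata-1}, \ref{region2021-11-25-1}, \ref{equ-22-2-12-1}) is exactly the branch realized by $\widetilde{\varphi}\circ\widetilde{Q}^{-1}$ there; hence $g(D_0)=D_{-1}$. Since the angular location of $D^{\sharp}_1$ in Proposition \ref{attracting2020912-1}(b) places it inside the sector defining $X_{1+}$, and $F(D^{\sharp}_0)=D^{\sharp}_1$ with $\ol{D}^{\sharp}_0\subset\pi_X(X_{1+})$ (Proposition \ref{attracting2020912-2}(e)), the same argument gives $g(D^{\sharp}_1)=D^{\sharp}_0$.

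\emph{Part (b), reduction to $n=0$.} All the domains $D_n,D'_n,D''_{n-1},D^{\sharp}_n$ for $n\in\mathbb{Z}_{-}$, and their labelled boundary arcs $\partial^{l}_{\pm},\partial^{r}_{\pm},\partial^{h}$, are by definition the images under iterates of $g$ of the corresponding $n=0$ data. Combining the recursive definitions with part (a) shows that $g$ carries $D_n\mapsto D_{n-1}$, $D'_n\mapsto D'_{n-1}$, $D''_{n-1}\mapsto D''_{n-2}$, $D^{\sharp}_n\mapsto D^{\sharp}_{n-1}$ (for $n\le0$), transporting the boundary arcs accordingly; note it is precisely part (a) that produces the one-step lag of the $D$-chain behind the $D'$- and $D''$-chains visible in (b.1)--(b.4), (b.8). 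Since $g$ is a homeomorphism of $X$, applying it to the $n=0$ instance of each relation in \eqref{equ-2021-11-30-1} yields the $n-1$ instance; hence an induction reduces (b) to the case $n=0$.

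\emph{Part (b), the seed $n=0$.} Here I would argue in two layers. The \emph{vertical} incidences (b.6)--(b.8): $\Phi_{att}-1$ is a homeomorphism of $\ol{D}_1,\ol{D}^{\sharp}_1$ onto $\ol{\mathbf{D}},\ol{\mathbf{D}}^{\sharp}$, and $\ol{\mathbf{D}}\cap\ol{\mathbf{D}}^{\sharp}=[0,1]+i\eta$, so $\ol{D}_1\cap\ol{D}^{\sharp}_1=\partial^{h}_{+}D_1=\partial^{h}D^{\sharp}_1$; pulling back by the homeomorphisms $F|\ol{D}_0$ and $F|\ol{D}^{\sharp}_0$ (Proposition \ref{attracting2020912-2}(b)--(c)) gives (b.6), and similarly (b.7), (b.8). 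The \emph{horizontal} incidences (b.1)--(b.5): the key input is that $cp_F=\varphi(cp_Q)$ is a critical point of $F$ of local degree $2$ lying in $\ol{D}_0\cap\ol{D}'_0\cap\ol{D}_{-1}\cap\ol{D}''_{-1}$ (Proposition \ref{attracting2020912-2}(d)), so the two components $D_0,D'_0$ of $F^{-1}(D_1)$ near $cp_F$, and the two components $D_{-1},D''_{-1}$ of $F^{-1}(D_0)$ near $cp_F$, meet only at $cp_F$; this yields the singleton statements in (b.5) and, with the vertical layer, those in (b.8). The matching of the remaining free arcs $\partial^{l}_{\pm},\partial^{r}_{\pm}$ is then read off from how the slit-preimage curves $\ga^{Q+}_{b\,\cdot},\ga^{Q+}_{c\,\cdot},\ga^{Q+}_{a\,\cdot}$ of $\Gamma^{Q}_b,\Gamma^{Q}_c,\Gamma^{Q}_a$ are glued in the description of $\mathcal{U}^{Q+}_1,\mathcal{U}^{Q+}_2$ (Subsections \ref{nearequ-20}, \ref{near629-equ-1} and Figure \ref{illustration-inverse-Q}), together with the fact, guaranteed by Lemmas \ref{near-equata-1}, \ref{region2021-11-25-1}, \ref{equ-22-2-12-1}, that $D_0,D'_0,D_{-1},D''_{-1}$ all lie inside regions on which $Q$ (respectively $F$) is injective, so that no intersections occur beyond the claimed arcs.

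\emph{Expected main obstacle.} I expect the bottleneck to be this seed step: the combinatorial bookkeeping of which of the arcs $\partial^{l/r}_{\pm}$, $\partial^{h}$ equals which slit-preimage curve, confirming that the two pairs $\{D_0,D'_0\}$ and $\{D_{-1},D''_{-1}\}$ touch in exactly the single point $cp_F$, and excluding accidental overlaps of these thin domains via the location lemmas. Pinning down the \emph{correct} inverse branch in part (a) is the other delicate point; once (a) and the $n=0$ configuration are in hand, the passage to all $n\in\mathbb{Z}_{-}$ is a formal induction under the homeomorphism $g$.
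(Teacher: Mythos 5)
Your proof follows essentially the same strategy as the paper's: for (a), identify $g$ as the distinguished inverse branch of $F$ (you do this by tracking components and sheets; the paper does it by matching boundary arcs, starting from the asymptotic behaviour near $\infty$ and continuing down $\partial^{r}D^{\sharp}_1\cup\partial^{r}_{+}D_0$ to $cv_Q$); and for (b), establish the $n=0$ configuration from the branched-covering structure at the critical point together with straightening by $\Phi_{att}-1$, then transport by iterates of $g$. Your seed-step reasoning for (b.1)--(b.5) (double cover branched at $cp_F$) and for (b.6)--(b.8) (pullback of $\ol{\mathbf{D}}\cap\ol{\mathbf{D}}^{\sharp}$ by $F$) matches the paper's.

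There is, however, a genuine gap. The lemma asserts not only the listed incidences but also, implicitly, that every pairwise intersection not listed is empty --- in particular $\ol{D}_n\cap\ol{D}_{n'}=\emptyset$, $\ol{D}_n\cap\ol{D}'_{n'}=\emptyset$, $\ol{D}^{\sharp}_n\cap\ol{D}^{\sharp}_{n'}=\emptyset$, etc.\ whenever $|n-n'|\geq 2$. Your induction ``applying $g$ to the $n=0$ instance yields the $n-1$ instance'' propagates only the positive relations (b.1)--(b.8); it gives no control over intersections between the $n$-collection and the $(n-2)$-collection. Knowing the $n=0$ incidences and that $g$ is a homeomorphism does not by itself exclude the chain curling back so that, say, $D_{-2}=g^{2}(D_0)$ meets $D_0$. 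The fallback you propose --- ``excluding accidental overlaps via the location lemmas'' (Lemmas \ref{near-equata-1}, \ref{region2021-11-25-1}, \ref{equ-22-2-12-1}) --- is inadequate here: those lemmas bound only the $n=0,-1$ domains and only by coarse Euclidean envelopes, and in any case cannot separate $D_n$ from $D_{n'}$ for large $|n-n'|$ because the whole chain accumulates at the same point. What you are missing is the paper's short argument for this step: push any alleged common point forward by enough iterates of $F$ (equivalently, read it off the extended Fatou coordinate $\widetilde{\Phi}_{att}$ of Proposition \ref{attracting202091203}); the two domains then land in translated strips $\{k<\text{Re}\,w<k+1\}$ whose integers $k$ differ by at least two, hence the domains are disjoint. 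You should state and use this forward-image/Fatou-coordinate argument in place of the location lemmas.
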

\begin{proof}
(a) By the definition of $g$ in, $g$ is the unique branch of $F^{-1}$ near $\infty$, and $g(\partial^rD^{\sharp}_1)=\partial^lD^{\sharp}_1$ near $\infty$. By the construction above, $\partial^lD^{\sharp}_1=\partial^rD^{\sharp}_0$, implying that $g$ maps the left side $\partial^rD^{\sharp}_1$ to the left side $\partial^rD^{\sharp}_0$. Hence, one has $g(D^{\sharp}_1)=D^{\sharp}_0$. So, $g(\partial^rD^{\sharp}_0)=g(\partial^lD^{\sharp}_1)=\partial^lD^{\sharp}_0$.
In the definition above, $D_0$ and $D_0^{\sharp}$ are defined such that $\partial^rD^{\sharp}_0\cup \partial^r_{+}D_0$ is a single arc joining the critical value $cv_{Q1}$ and $\infty$. Continuing the branch $g$ along this curve up to $cv_{Q1}$, one has $g(\partial^r_{+}D_0)=\partial^l_{+}D_0=\partial^r_{+}D_{-1}$. Consider the left side of these curves, one has $g(D_0)=D_{-1}$.

(b) First of all, consider the case $n=0$ in \eqref{equ-2021-11-30-1}. Recall that $D_0$, $D_0^{\prime}$,
$D_{-1}$, and $D_{-1}^{\prime\prime}$ are defined by the image of $\varphi$ on $\widetilde{D}_0$, $\widetilde{D}_0^{\prime}$,
$\widetilde{D}_{-1}$, and $\widetilde{D}_{-1}^{\prime\prime}$, which are inverse images of $D_a$ and $D_0$ by two-fold branched covering $Q:\mathcal{U}_1\cup\mathcal{U}_2\to\cc\setminus(-\infty,0]$ branched only over $cv_Q$. This, together with the fact that $D_1$ and $D_0$ meet at $cv_Q$ along $\partial^l_{+}D_1=\partial^r_{+}D_0$ and $\partial^l_{-}D_1=\partial^r_{-}D_0$, yields that (b.1)--(b.5) hold for $n=0$ and $\widetilde{D}_0$ etc., implying (b.1)--(b.5) hold for $D_0=\varphi(\widetilde{D}_0)$ etc.

For (b.6)--(b.8), the intersection of $\ol{D}^{\sharp}_0\cup \ol{D}^{\sharp}_1$ with $\ol{D}_0\cup\ol{D}_1$ can be lifted to $\partial^h_{+}D_{-1}\cup\partial^h_{+}D_0$, and the lift $\ol{D}^{\prime}_0\cup\ol{D}^{\prime\prime}_{-1}$ cannot intersect with $\ol{D}^{\sharp}_{-1}\cup \ol{D}^{\sharp}_0$. Hence, one can conclude that (b.6)--(b.8) hold for $n=0$.

For $n>0$, the intersection relations in \eqref{equ-2021-11-30-1} between two domains whose indices are the same or differ by one can be verified by $g$. Further, two domains whose indices differ by two or more cannot intersect, since there images are disjoint sets under $F$.
\end{proof}

Now, the proof of Proposition \ref{attracting202091203} is given.
\begin{proof} (Proof of Proposition \ref{attracting202091203})
Consider the following sets
\beqq
\mathcal{U}=\mathcal{U}^{P}_{1+}\cup\mathcal{U}^{P}_{1-}\cup \ga_{c1+},\ \mathcal{U}^{\prime}=\mathcal{U}^{P}_{2-}\cup\mathcal{U}^{P}_{3+}\cup \ga_{c2-},\
\mathcal{U}^{\prime\prime}=\mathcal{U}^{P}_{2+}\cup\mathcal{U}^{P}_{(m+1),-}\cup \ga_{c2+}.
\eeqq
Each region is mapped homeomorphically by $P$ onto $\cc\setminus(-\infty,0]$. The map $\Psi_0(z)=cv_P\cdot e^{2\pi iz}=-\frac{m^m}{(m+1)^{m+1}}\cdot e^{2\pi iz}$ maps $\mathbf{D}$ onto $(\cc\setminus(-\infty,0])\cap\{z:\ e^{-2\pi\eta}<|z|<e^{2\pi\eta}\}$ and
$\mathbf{D}^{\sharp}$ onto $(\cc\setminus(-\infty,0])\cap\{z:\ 0<|z|<e^{-2\pi\eta}\}$, where $\mathbf{D}$ and $\mathbf{D}^{\sharp}$ are introduced in \eqref{equ-2021-11-30-2}.

Define the following map
\beqq
\Psi_1(z)=\left\{
  \begin{array}{ll}
   (P|_{\mathcal{U}})^{-1}\circ\Psi_0\circ\widetilde{\Phi}_{att}\quad\quad &\text{for}\ z\in D_{n}\cup D_{n}^{\sharp}\\
 (P|_{\mathcal{U}^{\prime}})^{-1}\circ\Psi_0\circ\widetilde{\Phi}_{att}\quad\quad &\text{for}\ z\in D_{n}^{\prime}\\
 (P|_{\mathcal{U}^{\prime\prime}})^{-1}\circ\Psi_0\circ\widetilde{\Phi}_{att}\quad\quad &\text{for}\ z\in D_{n}^{\prime\prime}.\\
  \end{array}
\right.
\eeqq
This $\Psi_1$ is a homeomorphism on each domain, which can be extended continuously to the closure.

Now, one needs to show that two extensions are consistent on a common boundary on two domains. This consistence implies that $\Psi_1$ is holomorphic, where the points corresponding to the critical value of $P$ can be used by the removable singularity theorem.

Next, the matching conditions on the intersection of two domains will be checked by the intersection relation \eqref{equ-2021-11-30-1}.

For $z\in D_n$ tending to $\partial^l_{+}D_n$, one has that $\Psi_0\circ\widetilde{\Phi}_{att}(z)$ tends to $[cv_P,0)=\Ga^P_{a}$ from lower side, implying that $\Psi_1(z)\in\mathcal{U}$ tends to $[cp_P,0)=\ga^{P}_{a1}$ from lower side. For $z\in D_{n-1}$ tending to the same boundary curve $\partial^l_{+}D_n=\partial^r_{+}D_{n-1}$ from the other side, one has that $\Psi_0\circ\widetilde{\Phi}_{att}(z)$ tends to $\Ga^P_a$ from upper side, implying that $\Psi_1(z)\in\mathcal{U}$ tends to $\ga^{P}_{a1}$ from upper side.

Since $P$ is homeomorphic in a neighborhood of $\ga^P_{ai}$, $\Psi_1$ matches completely along $\ol{D}_n\cap\ol{D}_{n-1}=\partial^l_{+}D_n=\partial^r_{+}D_{n-1}$, and is holomorphic there.

Similarly, for $z\in D_{n-1}$ tending to $\partial^r_{-}D_{n-1}$, one has that $\Psi_1(z)\in\mathcal{U}$ tends to $\ga^{P}_{b1+}$, while for $z\in D^{\prime}_{n}$ tending to $\partial^l_{-}D^{\prime}_{n}$, one has that $\Psi_1(z)\in\mathcal{U}^{\prime}$ tends to $\ga^{P}_{b2-}=\ga^{P}_{b1+}$. Hence, $\Psi_1$ matches along $\ol{D}_{n-1}\cap\ol{D}^{\prime}_{n}=\partial^r_{-}D_{n-1}=\partial^l_{-}D^{\prime}_n$.

The rest intersection relation in \eqref{equ-2021-11-30-1} can be studied similarly.
By the definition,  $\partial^l_{+}D^{\prime}_n=\partial^r_{+}D^{\prime\prime}_{n-1}$ corresponds to $\ga^P_{a2}=\ga^P_{a2+}=\ga^P_{a2-}$, and $\partial^r_{-}D^{\prime\prime}_{n-1}=\partial^l_{-}D_n$ to $\ga^{P}_{b2+}=\ga^{P}_{b1-}$.

Hence, $\Psi_1$ can be defined on $U=\text{the interior of}\ \cup^{0}_{-\infty}(\ol{D}_n\cup\ol{D}^{\prime}_n\cup\ol{D}^{\prime\prime}_{n-1}\cup\ol{D}^{\sharp}_n)$.

By the above discussions, one has $P\circ\Psi_1=\Psi_0\circ\widetilde{\Phi}_{att}$ and it is surjective onto $V^{\prime}$, where $V^{\prime}$ is introduced in \eqref{paraequ-92}.

By the description of the images $\mathcal{U}$, $\mathcal{U}^{\prime}$, $\mathcal{U}^{\prime\prime}$ and matching relations, the case (b) of Proposition Proposition \ref{attracting202091203} can be verified. By (b), the map
\beqq
\psi=\Psi_0\circ\widetilde{\Phi}_{rep}\circ\Psi^{-1}_1:\ V^{\prime}\setminus\{0\}\to\cc^{*}
\eeqq
is well defined and injective.

The relation in (c) can be obtained directly by the construction above:
\begin{align*}
P\circ\psi^{-1}=P\circ\Psi_1\circ\widetilde{\Phi}_{rep}^{-1}\circ\Psi^{-1}_0
=\Psi_0\circ\widetilde{\Phi}_{att}\circ\widetilde{\Phi}^{-1}_{rep}\circ\Psi^{-1}_0=\Psi_0\circ E_F\circ\Psi^{-1}_0.
\end{align*}
So, $\widetilde{\Phi}_{att}$ and $\widetilde{\Phi}_{rep}$ are the lifted versions of $\Phi_{att}$ and $\Phi_{rep}$, implying that $\widetilde{\Phi}_{att}\circ\widetilde{\Phi}^{-1}_{rep}=E_F$. This, together with the normalization $E_F(z)=z+o(1)$ with $\text{Im}\,z\to+i\infty$, implies that $\psi$ can be extended holomorphically to $z=0$ satisfying that $\psi(0)=0$ and $\psi'(0)=1$.

Recall the formal expression \eqref{equ-2021-12-1} at the beginning of Subsection \ref{equ2021-12-2}, where $\Phi_{rep}\circ F^{-n}\circ\Phi^{-1}_{att}$ can be interpreted as follows: the inverse image of $\Phi_{att}$ in the right half plane $\{z:\ \text{Re}\,z>L\}$ can be chosen, since it is injective; next the inverse orbits
along an appropriate inverse branches of $F^{-1}$ can be studied; finally $\Phi_{rep}$ can be considered in the left plane $\{z:\ \text{Re}\,z<-L\}$, which is well defined.

The choice of the inverse branches should be defined according to the above construction. Further, The local branching should be included only when it is related to the critical orbit of $F$, which corresponds to $cp_{P}$ in the domain of definition of $\psi$. Given a holomorphic family $\varphi_{\ld}$, the definitions of the Fatou coordinates (on the right/left half planes) and local branches of $F^{-n}$ can be introduced such that they are dependent holomorphically on $\ld$ (the parameter should be adjusted if necessary), except along the critical orbit. Therefore, the function $\psi_{\ld}(z)$ is dependent holomorphically on $\ld$, except at $cp_{P}$. But the exception at the critical point can be removed by the removable singularity theorem and the holomorphic dependence for all $z\in V^{\prime}$ is thus proved.

\end{proof}

\subsection{Teichm\"{u}ller space}

In this subsection, the connection between the space $\mathcal{F}_1^P$ specified in Definition \ref{def-2021-9-26-1} and the Teichm\"{u}ller space of a punctured disk. The arguments are based on \cite[Section 6]{InouShishikura2016}. The outline of the arguments is provided, refer to \cite[Section 6]{InouShishikura2016} for more details.

\begin{definition} \cite[Definition of Teichm\"{u}ller space]{InouShishikura2016}
Let $W_1$ be a Jordan domain in $\widehat{\cc}=\cc\cup\{\infty\}$, a point $p\in W_1$ be fixed, and $W=W_1\setminus\{p\}$ (which is isomorphic to $\cd\setminus\{0\}$). The map $\varphi:\ol{W}\to\widehat{\cc}$ is called a quasi-conformal map if $\varphi:\ol{W}\to\varphi(\ol{W})\subset \widehat{\cc}$ is a homeomorphism and $\varphi:\ W\to\varphi(W)$ is quasi-conformal in the usual sense. The Teichm\"{u}ller space of $W$ is
\beqq
\mbox{Teich}(W)=\{\varphi:\overline{W}\to\widehat{\mathbb{C}}\ \mbox{quasi-conformal map}\}/\sim
\eeqq
where $\varphi\sim\psi$ if and only if there exists a conformal map $h:\varphi(W)\to\psi(W)$ (automatically extending homemorphically to the closure) which coincides with $\psi\circ \varphi^{-1}$ on the boundary.

The Teichm\"{u}ller distance between $[\varphi]$ and $[\phi]$ is defined to be
\begin{equation*}
d_{Teich}([\varphi],[\psi])=\inf\left\{\log K\ \bigg | \begin{array}{l}
\mbox{there is a K-quasi-conformal map}\  h:\varphi(W)\to\psi(W), \\
 \mbox{which coincides with}\ \psi\circ\varphi^{-1}\ \mbox{on the boundary}
\end{array}\right\}.
\end{equation*}
The Teichm\"{u}ller space is complete with this metric.
\end{definition}

\begin{remark}
The boundary of $W$ contains the point $p$. The Teichm\"{u}ller space can also be regarded as the quotient space of measurable Beltrami differentials $\bm{\mu}=\mu(z)\frac{d\bar{z}}{dz}$ with $\|\bm{\mu}\|_{\infty}<1$, where $\|\bm{\mu}\|_{\infty}=\mbox{esssup}|\mu(z)|$ is the $L^{\infty}$-norm. The connection of these two definitions is $\varphi\to\bm{\mu}_{\varphi}=(\tfrac{\partial\varphi}{\partial\ol{z}}/\tfrac{\partial\varphi}{\partial z})\tfrac{d\ol{z}}{dz}$.
\end{remark}

\begin{theorem} (Measurable Riemann mapping theorem)\cite[Proposition 1.5.2]{MorosawaNishimuraTaniguchiUeda2000}\label{equ202091-1}
Let $\bm{\mu}=\mu(z)\tfrac{d\ol{z}}{dz}$ be a measurable function on $\cc$ satisfying $\|\bm{\mu}\|_{\infty}<1$. Then there exists a quasi-conformal map $f:\cc\to\cc$ with $\bm{\mu}_f=\bm{\mu}$ almost everywhere on $\cc$. Such an $f$ is unique under the condition that $f(0)=0$ and $f(1)=1$.
\end{theorem}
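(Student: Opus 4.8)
The statement is the classical Ahlfors--Bers measurable Riemann mapping theorem, and the plan is to follow the standard singular-integral-operator approach. The two tools are the \emph{Cauchy (Pompeiu) transform} $Ph(z)=-\tfrac{1}{\pi}\int_{\cc}\tfrac{h(\zeta)}{\zeta-z}\,d\xi\,d\eta$, which satisfies $\partial_{\bar z}(Ph)=h$, and the \emph{Beurling transform} $Th(z)=-\tfrac{1}{\pi}\,\mathrm{p.v.}\!\int_{\cc}\tfrac{h(\zeta)}{(\zeta-z)^2}\,d\xi\,d\eta$, which satisfies $\partial_z(Ph)=Th$ in the distributional sense. The crucial analytic input is that $T$ is an $L^2$-isometry and is bounded on $L^p(\cc)$ for every $p\in(1,\infty)$ with operator norm depending continuously on $p$ and equal to $1$ at $p=2$; hence, writing $k:=\|\bm{\mu}\|_{\infty}<1$, one may fix $p>2$ so close to $2$ that $k\,\|T\|_{L^p}<1$.

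First I would treat the case where $\mu$ has compact support. Seeking a solution of the form $f(z)=z+Ph(z)$ with $h\in L^p(\cc)$, the Beltrami equation $\partial_{\bar z}f=\mu\,\partial_z f$ becomes $h=\mu(1+Th)$, i.e. $(I-\mu T)h=\mu$. Since $\|\mu T\|_{L^p}\le k\|T\|_{L^p}<1$, the operator $I-\mu T$ is invertible on $L^p$ and $h=\sum_{n\ge0}(\mu T)^n\mu$ converges in $L^p$. Then $f\in W^{1,p}_{\mathrm{loc}}$ with $p>2$, so $f$ is Hölder continuous, $f(z)-z\to0$ as $z\to\infty$, and $f$ satisfies the Beltrami equation almost everywhere. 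To upgrade $f$ to a homeomorphism of $\widehat{\cc}$ fixing $\infty$, I would first establish this for \emph{smooth} compactly supported $\mu$, where the solution is a diffeomorphism by classical elliptic theory together with the inverse function theorem and a properness argument, and then pass to a general compactly supported $\mu$ by approximating it by smooth Beltrami coefficients $\mu_n\to\mu$ a.e. with $\|\mu_n\|_{\infty}\le k$: the corresponding normalized quasiconformal maps form a normal family whose limits are non-constant and quasiconformal, and Weyl's lemma (a $W^{1,p}_{\mathrm{loc}}$ map with $\partial_{\bar z}=0$ a.e. is holomorphic) identifies the limit as the solution for $\mu$.

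For a general measurable $\mu$ with $\|\bm{\mu}\|_{\infty}<1$ I would reduce to the compactly supported case by a cut-and-paste composition: solve the Beltrami equation for $\mu_1:=\mu\,\chi_{\cd}$ to obtain a quasiconformal $g_1$, then transfer the remaining part of $\mu$, which is supported on $\cc\setminus\cd$, through the inversion $z\mapsto1/z$ so that its support becomes bounded, solve there, and compose; the composition formula for Beltrami coefficients shows the resulting map has Beltrami coefficient $\mu$ a.e., and one tracks the normalizations throughout. Post-composing with the unique affine map sending the images of $0$ and $1$ back to $0$ and $1$ then yields the normalized solution. Uniqueness follows again from Weyl's lemma: if $f_1,f_2$ both solve the equation for $\mu$, then $f_2\circ f_1^{-1}$ has Beltrami coefficient $0$ a.e., hence is conformal on $\widehat{\cc}$ and fixes $\infty$, so it is affine, and the normalization $f_i(0)=0$, $f_i(1)=1$ forces it to be the identity.

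The main obstacle is the passage from a ``continuous $W^{1,p}_{\mathrm{loc}}$ solution of the PDE'' to a ``homeomorphism of $\widehat{\cc}$'': the Neumann-series construction only produces a Sobolev solution, and injectivity and surjectivity are not automatic. Handling this cleanly is precisely why one first treats the smooth case and then argues by approximation and normal families, invoking the openness and discreteness of quasiregular maps (or, alternatively, a direct winding-number count) to exclude degeneration of the limit.
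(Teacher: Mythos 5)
The paper does not prove this statement: it is the classical Ahlfors--Bers measurable Riemann mapping theorem, cited verbatim from \cite[Proposition 1.5.2]{MorosawaNishimuraTaniguchiUeda2000} and used as a black box. There is therefore no ``paper's proof'' to compare against.

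Your sketch is a correct outline of the standard Calder\'on--Zygmund proof: Cauchy transform $P$ and Beurling--Ahlfors transform $T$, Neumann series $(I-\mu T)^{-1}$ in $L^p$ for $p>2$ close enough to $2$ that $k\,\|T\|_{L^p}<1$, Morrey embedding giving a H\"older continuous $f=z+Ph$, the upgrade from $W^{1,p}_{\mathrm{loc}}$ solution to homeomorphism via smooth approximation and normal families, the reduction of non-compactly-supported $\mu$ by splitting at $\partial\cd$ and transferring the unbounded piece through $z\mapsto 1/z$, and uniqueness through Weyl's lemma. You also correctly flag the one genuinely nontrivial step, which is promoting the Sobolev solution to a global homeomorphism; the normal-families-plus-openness/discreteness argument you indicate (or, equivalently, a Hurwitz/winding-number argument to rule out local degree $\geq 2$) is exactly how this is handled. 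A small bookkeeping remark: when you compose the two half-solutions, the Beltrami coefficient of the second factor must be computed by the pullback formula through $g_1$ and the inversion, not taken to be $\mu\chi_{\cc\setminus\cd}$ itself; and the final normalization by an affine map works because postcomposition by a conformal map preserves Beltrami coefficients. Neither issue is a gap in the plan, just a place where care is needed when fleshing it out.
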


\begin{lemma}\label{lemma-2021-9-26-5}\cite[Lemma 6.1]{InouShishikura2016}
Let $W$ be as above with the puncture at $p=\infty$ and assume that $V=\cc\setminus\ol{W}$ contains $0$, and the boundaries of $\partial W$ consist of two disjoint smooth and non-singular Jordan curves except for the point $p$. Define
\begin{equation*}
\mathcal{S}^{qc}(V):=\left\{\varphi:V\to\cc:\ \bigg | \begin{array}{l}
\varphi\ \text {is univalent with}\ \varphi(0)=0, \varphi^{\prime}(0)=1, \\
\text { and } \varphi \text { has a quasi-conformal extension to } \mathbb{C}
\end{array}\right\}.
\end{equation*}
Then there is a bijection $\rho:\mathcal{S}^{qc}(V)\to\mbox{Teich}(W)$ defined by $\rho(\varphi)=[\hat{\varphi}|W]$, where $\hat{\varphi}:\cc\to\cc$ is a quasi-conformal extension of $\varphi$. If $\varphi_n,\varphi\in\mathcal{S}^{qc}(V)$ and $d_{Teich}([\varphi_n],[\varphi])\to0$, then $\{\varphi_n\}$ converges to $\varphi$ uniformly on compact sets in $V$. A map $\tau(\ld)$ from a complex manifold $\Ld$ to $\mbox{Teich}(W)$ is holomorphic if and only if there exists a holomorphic function ${\mathbf\varphi}:\Ld\times V\to\cc$ such that $\varphi_{\ld}:={\bf{\varphi}}(\ld,\cdot)\in\mathcal{S}^{qc}(V)$ and $\rho(\varphi_{\ld})=\tau(\ld)$.
\end{lemma}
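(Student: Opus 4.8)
The plan is to reduce the statement to the standard correspondence between spaces of normalized univalent maps and Teichm\"{u}ller spaces, using only four tools: the Measurable Riemann Mapping Theorem (Theorem~\ref{equ202091-1}) together with its parametrized Ahlfors--Bers version, Weyl's lemma that a $1$-quasiconformal homeomorphism is conformal, the quasiconformal removability of smooth Jordan arcs, and the compactness of normalized $K$-quasiconformal homeomorphisms of $\widehat{\cc}$ as $K\to1$. I will repeatedly use that a quasiconformal homeomorphism of $\cc$ extends to a homeomorphism of $\widehat{\cc}$ fixing $p=\infty$, and that any conformal isomorphism between once-punctured domains carries the puncture to the puncture (removable singularity); hence no separate bookkeeping of the marked point $p$ is needed.

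\emph{Well-definedness and injectivity of $\rho$.} If $\hat\varphi_1,\hat\varphi_2$ are two quasiconformal extensions of the same $\varphi\in\mathcal{S}^{qc}(V)$, they agree on $\ol{V}\supset\partial W$, so they map $W$ onto the same component of $\widehat{\cc}\setminus\hat\varphi_1(\partial W)$ (the one containing $\infty$); thus $\hat\varphi_1(W)=\hat\varphi_2(W)$ and the identity realizes the equivalence $\hat\varphi_1|_W\sim\hat\varphi_2|_W$, so $\rho(\varphi)=[\hat\varphi|_W]$ is well defined. For injectivity, suppose $\rho(\varphi_1)=\rho(\varphi_2)$ via a conformal $h:\hat\varphi_1(W)\to\hat\varphi_2(W)$ equal to $\hat\varphi_2\circ\hat\varphi_1^{-1}$ on $\hat\varphi_1(\partial W)$. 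Glue $G:=h\circ\hat\varphi_1$ on $\ol{W}$ to $G:=\hat\varphi_2$ on $\ol{V}$: the two definitions agree along $\partial W=\partial V$, so $G$ is a homeomorphism of $\widehat{\cc}$, conformal on $V$ and quasiconformal on $W$; since $\partial W$ is a finite union of smooth curves it is quasiconformally removable, so $G$ is quasiconformal on $\widehat{\cc}$ with $\mu_G=\mu_{\hat\varphi_1}$ a.e. (both vanish on $V$, and on $W$ post-composition by the conformal $h$ does not change the Beltrami coefficient). Hence $G\circ\hat\varphi_1^{-1}$ is $1$-quasiconformal, so a M\"{o}bius map; it fixes $0$ and $\infty$ and has derivative $1$ at $0$ (all read off on $V$), so $G=\hat\varphi_1$ and therefore $\varphi_2=G|_V=\varphi_1$.

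\emph{Surjectivity and continuity.} Given $[\phi]\in\text{Teich}(W)$, extend the Beltrami coefficient $\mu_\phi$ by $0$ across $V$ to get $\mu$ on $\cc$ with $\|\mu\|_\infty<1$, solve the Beltrami equation (Theorem~\ref{equ202091-1}) to obtain a quasiconformal $f$ with $\mu_f=\mu$, which is holomorphic and locally injective near $0$, and put $\hat\varphi:=(f-f(0))/f'(0)$, $\varphi:=\hat\varphi|_V\in\mathcal{S}^{qc}(V)$; then $\phi\circ(\hat\varphi|_W)^{-1}$ is conformal (same Beltrami coefficient) and agrees with $\phi\circ\hat\varphi^{-1}$ on the boundary, so $\rho(\varphi)=[\phi]$. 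For continuity, if $d_{Teich}([\varphi_n],[\varphi])=\log K_n\to0$ choose $K_n$-quasiconformal $h_n:\hat\varphi_n(W)\to\hat\varphi(W)$ agreeing with $\hat\varphi\circ\hat\varphi_n^{-1}$ on the boundary, glue $G_n:=h_n\circ\hat\varphi_n$ on $\ol{W}$ to $G_n:=\hat\varphi$ on $\ol{V}$, and set $\Gamma_n:=G_n\circ\hat\varphi_n^{-1}$. On $\varphi_n(V)$ one has $\Gamma_n=\hat\varphi\circ\varphi_n^{-1}$ (conformal) and on $\hat\varphi_n(W)$ one has $\Gamma_n=h_n$ ($K_n$-quasiconformal), so, again by removability of $\partial W$, $\Gamma_n$ is a $K_n$-quasiconformal homeomorphism of $\widehat{\cc}$ fixing $0$ and $\infty$ with derivative $1$ at $0$. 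As $K_n\to1$, $\Gamma_n\to\mathrm{id}$ and $\Gamma_n^{-1}\to\mathrm{id}$ locally uniformly; since $G_n=\hat\varphi$ on $V$ gives $\varphi_n=\Gamma_n^{-1}\circ\varphi$ there, we conclude $\varphi_n\to\varphi$ uniformly on compact subsets of $V$.

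\emph{Holomorphy, and the main obstacle.} If $\tau:\Ld\to\text{Teich}(W)$ is holomorphic, locally lift it to a holomorphic family $\ld\mapsto\mu_\ld$ of Beltrami coefficients on $W$ (Bers's description of holomorphic maps into Teichm\"{u}ller space), extend by $0$ to $\cc$, and apply the Ahlfors--Bers theorem to get solutions $f_\ld$ depending holomorphically on $\ld$; normalizing as above (a holomorphic operation, since $\ld\mapsto f_\ld(0),f_\ld'(0)$ are holomorphic) produces the desired holomorphic $\varphi:\Ld\times V\to\cc$ with $\varphi_\ld\in\mathcal{S}^{qc}(V)$ and $\rho(\varphi_\ld)=\tau(\ld)$. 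Conversely, from a holomorphic $\varphi:\Ld\times V\to\cc$ one must produce a $\ld$-holomorphic family of quasiconformal extensions $\hat\varphi_\ld$ of $\varphi_\ld$ (equivalently, a $\ld$-holomorphic family of Beltrami differentials on $W$ representing $\rho(\varphi_\ld)$), after which $\tau(\ld)=\rho(\varphi_\ld)$ is holomorphic by Bers's criterion; such a family is obtained from an Ahlfors--Bers type extension built out of the boundary parametrization, which varies holomorphically with $\ld$. This converse is the one delicate point: every other step is routine once one is willing to glue maps across the smooth curve $\partial W$ and to invoke uniqueness and compactness for quasiconformal maps, and all remaining verifications (continuity up to the boundary, the pasting lemma, that the glued maps fix $0$ and $\infty$, and so on) are standard.
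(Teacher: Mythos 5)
The paper does not actually prove this lemma: it is quoted verbatim from Inou--Shishikura (their Lemma~6.1) and used as a black box, so your attempt has to be judged on its own merits. Your treatment of the bijection and of the continuity statement is essentially the standard argument and is sound: well-definedness and injectivity via gluing across $\partial W$ plus quasiconformal removability of the smooth boundary curves and Weyl's lemma, surjectivity by extending the Beltrami coefficient by $0$ over $V$ and solving the Beltrami equation, and continuity via normalized $K_n$-quasiconformal maps with $K_n\to1$ (here you should at least note that fixing $0,\infty$ and the derivative at $0$ is enough for normality only because the maps $\Gamma_n$ are conformal on a fixed Koebe disk $\cd(0,r_V/4)\subset\varphi_n(V)$, which pins down a third point up to uniform constants; otherwise the family $z\mapsto nz$ shows the normalization is insufficient). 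The forward holomorphy direction is also acceptable, granted the standard fact that holomorphic maps into $\mbox{Teich}(W)$ locally lift to holomorphic families of Beltrami differentials, with the local constructions patching globally because $\rho$ is injective.

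The genuine gap is the converse of the holomorphy criterion, which you yourself flag and then dispose of with the phrase ``an Ahlfors--Bers type extension built out of the boundary parametrization, which varies holomorphically with $\ld$.'' There is no such off-the-shelf extension: the natural candidates (Beurling--Ahlfors, Douady--Earle) are not known to produce Beltrami coefficients depending holomorphically on the parameter, and without a $\ld$-holomorphic family of quasiconformal extensions $\hat\varphi_\ld$ you cannot invoke Bers's criterion, so the ``if'' half of the last statement remains unproved. The step that actually closes this gap (and is how Inou--Shishikura argue) is the theory of holomorphic motions: fix $\ld_0$, observe that $(\ld,w)\mapsto\varphi_\ld\circ\varphi_{\ld_0}^{-1}(w)$ defines a holomorphic motion of $\varphi_{\ld_0}(V)$ over a disk around $\ld_0$ (injective in $w$, holomorphic in $\ld$, identity at $\ld_0$), and then apply the extended $\ld$-lemma (S{\l}odkowski, or the Bers--Royden harmonic $\ld$-lemma) to extend it to a holomorphic motion of $\widehat{\cc}$ whose Beltrami coefficients form a holomorphic map into the unit ball of $L^\infty$; composing with a fixed quasiconformal extension of $\varphi_{\ld_0}$ gives the required family $\hat\varphi_\ld$ and hence holomorphy of $\ld\mapsto\rho(\varphi_\ld)$. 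Until you insert that argument (or an equivalent one), the equivalence asserted in the last sentence of the lemma is only proved in one direction.
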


The arguments of the proof of Theorem \ref{ren2020912-2} is given.

Recall $V^{'}$ introduced in \eqref{paraequ-92} and $V$ defined in \eqref{para2021-9-1-2}. For the $V$ and $V^{\prime}$ introduced above, take a domain $V^{\prime\prime}$ such that $\ol{V}\subset V^{\prime\prime}\subset\ol{V^{\prime\prime}}\subset V^{\prime}$, and $\partial V^{\prime\prime}$ consists of a non-singular real-analytic Jordan curve. Denote by $W=\cc\setminus\ol{V}$ and $U=\cc\setminus\ol{V^{\prime\prime}}$, where they have a puncture at $p=\infty$.

For $f=P\circ\varphi^{-1}\in\mathcal{F}^P_1$, it follows from the definition $\varphi\in S^{qc}(V)$ and $\rho(\varphi)$ defines a point in $\mbox{Teich}(W)$. The one to one correspondence is verified in Lemma \ref{lemma-2021-9-26-5}. Let $\mathcal{R}_0^{Teich}$ be the induced map on $\mbox{Teich}(W)$ by the parabolic renormalization $\mathcal{R}_0$. So, $\mathcal{R}_0$ induces a map $\wt{\mathcal{R}}_0^{Teich}:\mbox{Teich}(W)\to \mbox{Teich}(U)$, defined by
$\rho(\varphi)\to\rho(\psi)$, where $\mathcal{R}_0(P\circ\varphi^{-1})=P\circ\psi^{-1}$. Hence,
$$d_{\mbox{Teich}(U)}(\wt{\mathcal{R}}_0^{Teich}(\tau_1),\wt{\mathcal{R}}_0^{Teich}(\tau_2))
\leq d_{\mbox{Teich}(W)}(\tau_1,\tau_2)\ \forall \tau_1,\tau_2\in\mbox{Teich}(W),$$
where the following Royden-Gardiner theorem is used.

\begin{theorem} (Royden-Gardiner) Any holomorphic map between Teichm\"{u}ller spaces does not expand the Teichm\"{u}ller distance.
\end{theorem}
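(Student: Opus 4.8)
The plan is to reduce the statement to the coincidence of the Teichm\"{u}ller metric with the Kobayashi pseudometric on every Teichm\"{u}ller space, which is the analytic substance of the Royden--Gardiner theorem, and then to invoke the purely formal fact that holomorphic maps never expand the Kobayashi pseudometric. Recall that each Teichm\"{u}ller space $\text{Teich}(W)$ carries a natural complex Banach manifold structure (via the Bers embedding), so its Kobayashi pseudodistance $d_{Kob}$ is defined by minimizing the total Poincar\'{e} length over all finite chains of holomorphic disks $f_i:\cd\to\text{Teich}(W)$ joining two given points. If $\Phi:\text{Teich}(W_1)\to\text{Teich}(W_2)$ is holomorphic and $f_1,\dots,f_n$ is such a chain joining $p$ to $q$, then $\Phi\circ f_1,\dots,\Phi\circ f_n$ is a chain of holomorphic disks in $\text{Teich}(W_2)$ joining $\Phi(p)$ to $\Phi(q)$ with exactly the same Poincar\'{e} lengths, whence $d_{Kob,2}(\Phi(p),\Phi(q))\le d_{Kob,1}(p,q)$. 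Granting $d_{Kob}=d_{Teich}$ on both spaces, this immediately gives $d_{Teich}(\Phi(p),\Phi(q))\le d_{Teich}(p,q)$, which is the assertion of the theorem.

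It then remains to establish $d_{Kob}=d_{Teich}$, and I would split this into two inequalities. For $d_{Kob}\le d_{Teich}$ I would use Teichm\"{u}ller disks: given $[\varphi],[\psi]\in\text{Teich}(W)$, let $k=\tanh d_{Teich}([\varphi],[\psi])$ and let $\mu_0=k\,\overline{\phi}/|\phi|$ be the Teichm\"{u}ller--Beltrami coefficient of the extremal representative of $\psi\circ\varphi^{-1}$, where $\phi$ is an integrable holomorphic quadratic differential on $\varphi(W)$; existence and extremality here is Teichm\"{u}ller's theorem. By the Ahlfors--Bers theorem on holomorphic dependence of normalized solutions of the Beltrami equation, the coefficients $t\,\overline{\phi}/|\phi|$, $t\in\cd$, produce a holomorphic map $D:\cd\to\text{Teich}(W)$ with $D(0)=[\varphi]$ and $D(k)=[\psi]$ that is an isometric embedding of the Poincar\'{e} disk into $(\text{Teich}(W),d_{Teich})$. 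Applying the Kobayashi contraction estimate to the single disk $D$ yields $d_{Kob}([\varphi],[\psi])\le \rho_{\cd}(0,k)=d_{Teich}([\varphi],[\psi])$ (the last equality being the normalization built into Teichm\"{u}ller's theorem).

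The reverse inequality $d_{Kob}\ge d_{Teich}$ is the hard direction, and it is exactly the theorem proved by Royden for finite-dimensional Teichm\"{u}ller spaces and extended by Gardiner to the general case, which is the version relevant here since $\text{Teich}(W)$ for $W$ a punctured disk is infinite-dimensional. The route I would take is to pass to infinitesimal metrics: identify $T_{[\varphi]}\text{Teich}(W)$ with $L^\infty$ Beltrami differentials on $\varphi(W)$ modulo infinitesimally trivial ones, so that the infinitesimal Teichm\"{u}ller norm is the dual norm $\|v\|_{Teich}=\sup\{\,|\langle v,\phi\rangle|:\phi\in A^1(\varphi(W)),\ \|\phi\|_1=1\,\}$ against integrable holomorphic quadratic differentials, and then prove that this agrees with the infinitesimal Kobayashi--Royden metric; integrating the equality of infinitesimal metrics along paths gives $d_{Kob}=d_{Teich}$. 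I expect the genuine obstacle to lie precisely in this infinitesimal comparison: its proof rests on Royden's classification of the linear isometries of the Bergman $L^1$-space $A^1$ (they all come from conformal coordinate changes) together with Gardiner's removal of the finite-dimensionality hypothesis. In the present paper this step would not be reproved but quoted from Royden and Gardiner (equivalently from Gardiner--Lakic or Hubbard's book), so that the only thing actually carried out is the elementary functoriality reduction described in the first paragraph.
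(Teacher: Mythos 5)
Your outline is correct and is the standard proof of the Royden--Gardiner theorem; note, however, that the paper itself does not prove this statement at all --- it is quoted as a known result (this is exactly the level at which it is used, namely to get the non-expansion $d_{\mbox{Teich}(U)}(\wt{\mathcal{R}}_0^{Teich}(\tau_1),\wt{\mathcal{R}}_0^{Teich}(\tau_2))\leq d_{\mbox{Teich}(W)}(\tau_1,\tau_2)$ before composing with the strictly contracting inclusion map). Your reduction --- holomorphic maps contract the Kobayashi pseudodistance by pure functoriality, the easy inequality $d_{Kob}\le d_{Teich}$ via Teichm\"{u}ller disks and Ahlfors--Bers, and the hard equality $d_{Kob}=d_{Teich}$ delegated to Royden (finite dimension, via the isometries of $A^1$) and Gardiner (general case) --- is the accepted argument, and since you explicitly flag that the infinitesimal comparison is quoted rather than reproved, your proposal is at the same level of completeness as the paper's bare citation, which is appropriate here.
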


\begin{theorem} (Extension map) \cite[Theorem 6.3]{InouShishikura2016}
Let $W_1$ and $U_1$ be  Jordan domains in $\widehat{\cc}=\cc\cup\{\infty\}$ such that $\ol{U_1}\subset W_1$. Fix a point $p\in U_1$ and set $W=W_1\setminus\{p\}$ and $U=U_1\setminus\{p\}$. Then, the inclusion $U\hookrightarrow W$ induces a canonical map
\beqq
\Xi:\ Teich(U)\to Teich(W)
\eeqq
such that $\Xi(\tau)=\tau^{\prime}$ if and only if there is a quasi-conformal map $\psi:\ W\to\widehat{\cc}$ satisfying $[\psi]=\tau^{\prime}$ in $Teich(W)$, $[\psi|_U]=\tau$ in $Teich(U)$ and $\tfrac{\partial \psi}{\partial\ol{z}}=0$ almost everywhere in $W\setminus U$. The image of $\Xi$ is relatively compact and bounded with respect to $d_{Teich(W)}$. Moreover, it is a uniform contraction with an explicit bound:
\beqq
d_{Teich(W)}(\Xi(\tau_1),\Xi(\tau_2))\leq\ld\, d_{Teich(W)}(\tau_1,\tau_2)\ \forall\tau_1,\tau_2\in Teich(U),
\eeqq
where $\ld=e^{-2\pi\,\text{mod}(W\setminus\ol{U})}<1$.

For the Teichm\"{u}ller spaces without removing the puncture $p$, the same conclusion holds for the map $Teich(U_1)\to Teich(W_1)$ with the factor $e^{-4\pi\,\text{mod}(W_1\setminus\ol{U_1})}$.
\end{theorem}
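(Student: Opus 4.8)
The plan is to follow the argument of Inou and Shishikura (which rests on McMullen's contraction principle for Teichm\"uller spaces), in four stages: construction of $\Xi$, its well-definedness, relative compactness of the image, and the quantitative contraction, the last being the crux. \emph{Construction and well-definedness.} Given $\tau\in\text{Teich}(U)$, represent it by a quasi-conformal map $\varphi:\ol U\to\widehat{\cc}$ with Beltrami coefficient $\mu=\mu_\varphi$ on $U$, and extend $\mu$ to $W$ by setting $\mu\equiv 0$ on $W\setminus\ol U=W_1\setminus\ol{U_1}$. By the measurable Riemann mapping theorem (Theorem \ref{equ202091-1}) there is a quasi-conformal $\psi$ on $\widehat{\cc}$ with $\mu_\psi=\mu$ on $W$ and $\partial_{\ol z}\psi=0$ a.e.\ on $W\setminus U$; set $\Xi(\tau)=[\psi]\in\text{Teich}(W)$. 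I would then check that $[\psi]$ depends only on $\tau$: if $\varphi_1\sim\varphi_2$ represent the same point of $\text{Teich}(U)$, the associated $\psi_1,\psi_2$ differ by a map that is conformal on $\psi_1(W\setminus\ol U)$ (both carry the trivial Beltrami coefficient there) and realizes $\psi_2\circ\psi_1^{-1}$ on $\partial W$; this is precisely the ``if and only if'' characterization in the statement.

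\emph{Relative compactness.} Every point of $\text{Image}(\Xi)$ has a representative $\psi$ that is conformal on the fixed collar $A:=W_1\setminus\ol{U_1}$, so $\psi(A)$ is an annulus of modulus $\text{mod}(A)$ separating $\psi(\partial W_1)$ from $p$; this confines $\psi(\partial W_1)$ to a bounded region of the $\psi(U)$-plane up to bounded conformal distortion, and a normal-families argument for the representatives then yields that $\text{Image}(\Xi)$ is relatively compact and bounded for $d_{\text{Teich}(W)}$. (Alternatively, boundedness of the image follows a posteriori from the contraction bound.)

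\emph{Contraction --- the heart.} Work infinitesimally, using that $d_{\text{Teich}}$ is the Finsler metric whose norm on tangent vectors is the Teichm\"uller co-norm $\|\nu\|_{*}=\sup\{\,|\int\nu q|:q\in Q(W),\ \int|q|=1\,\}$, with $Q(\cdot)$ the space of integrable holomorphic quadratic differentials. The derivative of $\Xi$ at any $\tau$ sends a Beltrami differential $\nu$ on $U$ to the differential $\hat\nu$ on $W$ equal to $\nu$ on $U$ and to $0$ on $A$. For $q\in Q(W)$ with $\int_W|q|=1$ one has $\int_W\hat\nu q=\int_U\nu\,(q|_U)$, and $q|_U\in Q(U)$ with $\int_U|q|=1-\int_A|q|$; hence $\|\hat\nu\|_{*,W}\le\|\nu\|_{*,U}\cdot\big(1-\inf_{q}\int_A|q|\big)$. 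Thus everything reduces to the purely function-theoretic estimate
\[
\int_U|q|\ \le\ e^{-2\pi\,\text{mod}(A)}\int_W|q|\qquad\text{for every }q\in Q(W),
\]
i.e.\ a lower bound on the mass that any integrable holomorphic quadratic differential must deposit on a round annulus of modulus $\text{mod}(A)$ across which it is holomorphic. Normalizing $A$ to $\{1<|z|<R\}$ with $R=e^{2\pi\,\text{mod}(A)}$, writing $q=\phi(z)\,dz^2$ and expanding $\phi$ in a Laurent series, the bound falls out of the annular geometry, the admissibility of a simple pole of $q$ at the puncture $p$ being exactly what produces the factor $e^{-2\pi\,\text{mod}}$; when $p$ is not removed, $q$ must in addition be holomorphic at $p$ and the annulus contributes on both sides of its core curve, which upgrades the factor to $e^{-4\pi\,\text{mod}(W_1\setminus\ol{U_1})}$.

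\emph{Conclusion.} Integrating $\|D\Xi\|\le\ld$ with $\ld=e^{-2\pi\,\text{mod}(A)}$ along paths in $\text{Teich}(U)$ gives $d_{\text{Teich}(W)}(\Xi(\tau_1),\Xi(\tau_2))\le\ld\,d_{\text{Teich}(U)}(\tau_1,\tau_2)$ for all $\tau_1,\tau_2\in\text{Teich}(U)$, together with boundedness of the image; the same computation with the extra holomorphy constraint at $p$ gives the unpunctured statement. I expect the quadratic-differential mass estimate on the collar to be the one genuinely substantive step; the other three stages are bookkeeping once the measurable Riemann mapping theorem and the Finsler description of $d_{\text{Teich}}$ are in hand.
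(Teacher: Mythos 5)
A preliminary remark: the paper does not prove this statement at all — it is quoted verbatim from Inou--Shishikura \cite[Theorem 6.3]{InouShishikura2016} and used as a black box — so your sketch has to be judged on its own merits rather than against an in-paper argument. Your architecture is the standard and correct one (extend the Beltrami coefficient by zero and solve with the measurable Riemann mapping theorem; use conformality on the collar for compactness of the image; bound the coderivative on integrable quadratic differentials for the contraction), but the step you yourself call the heart is exactly where the proposal does not deliver. The estimate you need is $\int_U|q|\le e^{-2\pi\,\text{mod}(A)}\int_W|q|$ for every integrable holomorphic quadratic differential $q$ on $W$, with $A=W\setminus\ol{U}$. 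Your justification — ``normalize $A$ to $\{1<|z|<R\}$ \dots expand $\phi$ in a Laurent series'' — conflates two incompatible normalizations: in the coordinate in which $A$ is a round annulus there is no Laurent expansion of $q$ at the puncture, and the inner mass $\int_U|q|$ is not determined by the series of $q|_A$; in the coordinate in which $W$ is a round punctured disk, where the Laurent series at $p$ does exist, the collar $A$ and the subdomain $U$ are not round, and your computation only covers the concentric model case (where the bound does follow, with equality for $q=dz^2/z$, from monotonicity of $\rho\mapsto\rho\int_0^{2\pi}|\phi(\rho e^{i\theta})|\,d\theta$). Passing from that model to an arbitrary Jordan subdomain $U_1\ni p$ with the sharp constant $e^{-2\pi\,\text{mod}(A)}$ — not a constant multiple of it, which is all that crude Gr\"otzsch-type containment gives — requires a genuine further argument (for instance a coarea/monotonicity argument along the level curves of the uniformization of $A$, or a symmetrization), and that is the substantive content of the cited theorem; as written, your proof of the contraction is incomplete. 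The same applies to the unpunctured factor $e^{-4\pi\,\text{mod}}$, where in addition your stated mechanism (``the annulus contributes on both sides of its core curve'') is not the right one: the improvement comes from $q$ being holomorphic at $p$, which excludes the extremal behaviour $dz^2/z$ and drops the model ratio from $r$ to $r^2$.

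Two smaller gaps. The parenthetical claim that boundedness of the image ``follows a posteriori from the contraction bound'' is false: $Teich(U)$ has infinite diameter, and a $\ld$-Lipschitz map of an unbounded space can have unbounded image; you must actually run the collar argument (compactness of suitably normalized univalent maps on $A$ plus a quasi-conformal interpolation of uniformly bounded dilatation) to bound $d_{Teich(W)}$ between any two image points. For well-definedness, two representatives $\varphi_1\sim\varphi_2$ of the same $\tau$ generally have different Beltrami coefficients on $U$, so $\psi_2\circ\psi_1^{-1}$ is conformal only on $\psi_1(A)$, not on $\psi_1(U)$; to conclude $[\psi_1]=[\psi_2]$ in $Teich(W)$ you must glue the conformal map furnished by the $Teich(U)$-equivalence on $\psi_1(U)$ with $\psi_2\circ\psi_1^{-1}$ on $\psi_1(A)$ across the quasicircle $\psi_1(\partial U_1)$ and invoke its conformal removability (or an equivalent argument); your sketch skips this. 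Finally, when integrating the infinitesimal bound you should note that at a general base point the relevant collar is $\psi_\tau(A)$, whose modulus equals $\text{mod}(A)$ precisely because $\psi_\tau$ is conformal there — this is what makes the factor uniform along the path — and that the Teichm\"uller distance on these infinite-dimensional spaces is indeed the integral of its Finsler norm, which deserves a citation rather than being taken for granted.
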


\subsection{Near-parabolic bifurcation}

In this subsection, some basic concepts and lemmas are introduced for the proof of Theorem \ref{ren2020912-1}. The proof of Theorem \ref{ren2020912-1} and Corollary \ref{cor-2021-9-26-6} can be obtained by the same arguments used in \cite[Section 7]{InouShishikura2016}.

Consider the following space of functions:
\begin{equation*}
\mathcal{F}_0=\left\{f: U\rightarrow \mathbb{C}\ \bigg | \begin{array}{l}
f \text { is defined and analytic in a neighborhood}\ U\ \text{of}\ 0\\
f(0)=0,\ f'(0)=1,\ \text{and}\ f^{\prime\prime}(0)\neq0.
\end{array}\right\}
\end{equation*}
For $f\in\mathcal{F}_0$, there is a linear conjugacy map $\phi(z)=\frac{z}{f^{\prime\prime}(0)}$ such that $$\phi^{-1}\circ f\circ \phi(z)=z+z^2+\cdots.$$

For any $f\in\mathcal{F}_0$, the parabolic fixed point $z = 0$ has multiplicity two as a fixed point,
i.e., as the solution of the fixed point equation $f_0(z)-z=0$. In general, it bifurcates into two fixed points after perturbation.
Suppose $f$ is small perturbation of an element in $\mathcal{F}_0$, whose second derivative is $1$ for convenience.
Suppose the derivative of $f$ at $0$ is
\beqq
f'(0)=\exp(2\pi i\cdot\al(f)),
\eeqq
where $\al(f)\in\cc$, $-\tfrac{1}{2}<\text{Re}\,\al(f)\leq\tfrac{1}{2}$, and $|\arg\,\al(f)|<\tfrac{\pi}{4}$. Assume $\si(f)$ is another fixed point near $0$, where
\beqq
\si(f)=-2\pi i\al(f)(1+o(1))\ \mbox{as}\ f\to f_0.
\eeqq
For details on the discussions, please refer to \cite{Shishikura2000}.

\begin{proof} (Proof of Theorem \ref{ren2020912-2})
This can be obtained by the same arguments used in the proof of Main Theorem 2 of \cite{InouShishikura2016}.
\end{proof}

\begin{proof} (Proof of Theorem \ref{ren2020912-1})
This can be obtained by the same arguments used in the proof of Main Theorem 3 of \cite{InouShishikura2016}.
\end{proof}


\begin{proof} (Proof of Corollary \ref{equ2022-2-8-1})
This can be obtained by the same arguments used in the proof of Corollary 4.1 of \cite{InouShishikura2016}.
\end{proof}

\begin{proof} (Proof of Corollary \ref{cor-2021-9-26-6})
This can be obtained by the same arguments used in the proof of Corollary 4.2 of \cite{InouShishikura2016}.
\end{proof}

\section{Julia set with positive area}\label{julia-set-equ-2022-2-12-2}

The whole structure of the arguments follows the idea used in \cite{BuffCheritat2012}.

\subsection{Strategy of the proof of Theorem \ref{positivearea-12-11-1}}

\begin{definition}
For $\al\in\cc$ and $m\in\mathbb{N}$, consider the following class of polynomial maps
\beq\label{paraequ-33}
P_{\al}:\ z\in\cc\to e^{2\pi i\al}z(1+z)^m.
\eeq
Let $K_{\al}=\{z\in\cc:\ (P^{\circ j}_{\al}(z))\ \text{is bounded}\}$ be the filled-in Julia set of $P_{\al}$, and $J_{\al}$ be the Julia set, where $J_{\al}=\partial K_{\al}$ and $P^{\circ j}_{\al}(z)$ is the $j$th iteration of $z$ under the map $P_{\al}$.
\end{definition}

The continued fraction expansion of the real number $\al$ is defined by
\beqq
\al=[a_0,a_1,a_2,a_3,...]=a_0+\cfrac{1}{a_1+\cfrac{1}{a_2+\cfrac{1}{a_3+\cdots}}},
\eeqq
where $(a_k)$ are integers, $a_k$ is the $k$th entry of the continued fraction,  $a_0$ may be any integer in $\zz$, $a_k$ ($k\geq1$) are all positive. For more information on continued fraction, see Subsection \ref{equ2021-2-3-3}.

\begin{definition}
For any positive integer $N\geq1$, set
\beq\label{paraequ-35}
\mathcal{S}_N:=\{\al=[a_0,a_1,a_2,...]\in\rr\setminus\mathbb{Q}:\ (a_k)\ \text{is bounded and}\ a_k\geq N\ \text{for all}\ k\geq1\}.
\eeq
\end{definition}
The set $\mathcal{S}_1$ is called the set of bounded type irrationals. And, for $\al\in\mathcal{S}_1$, $K_{\al}$ contains a Siegel disk \cite{Siegel1942} and has positive area by a classical result of Siegel, the Siegel disk is bounded by a
quasicircle containing the critical point, and the post-critical set of $P_{\al}$ is contained in the boundary of the Siegel disk \cite{Milnor2006}.

\begin{proposition}\label{equ-2021-12-11-3}
There is a nonempty set $\mathcal{S}$ of bounded type irrationals such that for any $\al\in\mathcal{S}$ and any $\vep>0$, there is $\al'\in\mathcal{S}$ satisfying that
\begin{itemize}
\item $|\al'-\al|<\vep$,
\item $P_{\al'}$ has a cycle in $D(0,\vep)\setminus\{0\}$,
\item $\text{area}\,(K_{\al'})\geq(1-\vep)\text{area}\,(K_{\al})$.
\end{itemize}
\end{proposition}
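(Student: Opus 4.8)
The plan is to follow the Buff--Ch\'{e}ritat strategy for quadratic polynomials \cite{BuffCheritat2012}, replacing the Inou--Shishikura class for $P(z)=z(1+z)^2$ by the class $\mathcal{F}_1^P$ for $P(z)=z(1+z)^m$ constructed in Section \ref{renormalization-2022-2-12-1} (valid for $m\geq 22$ by Theorems \ref{paraequ-84}, \ref{ren2020912-1} and Corollaries \ref{cor-2021-9-26-8}, \ref{cor-2021-9-26-6}). First I would fix a large integer $N$ (as in Corollary \ref{cor-2021-9-26-8}) and take $\mathcal{S}$ to be a suitable subset of $\mathcal{S}_N$, so that every $\alpha\in\mathcal{S}$ is of bounded type with all continued fraction entries $\geq N$, hence $P_\alpha$ has a Siegel disk $\Delta_\alpha$ of bounded type whose boundary is a quasicircle through the critical point $cp_P=-\tfrac{1}{m+1}$, and whose post-critical set lies on $\partial\Delta_\alpha$; in particular $\operatorname{area}(K_\alpha)>0$. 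The near-parabolic renormalization tower $(f_n)_{n\geq 0}$ attached to $\alpha$, obtained by $f_{n+1}=\mathcal{R}_\alpha(f_n)$, stays inside $e^{2\pi i\mathcal{I}(\alpha_*)}\times\mathcal{F}_1^P$ and contracts in the Teichm\"{u}ller metric.

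The key steps, in order, are the following. (1) \emph{Control of the cycle (parabolic explosion).} Perturb $\alpha$ to a rational-type parameter so that $P_{\alpha'}$ acquires a parabolic or attracting cycle near $0$; using Ch\'{e}ritat's parabolic explosion technique \cite{Cheritat2000} — generalized to degree $m$ polynomials in Subsection \ref{equ-21-12-7-1} (Propositions \ref{explosionfun-1}, \ref{explosionfun-2}, Lemma \ref{equ-2021-12-8-1}) — one shows that, along a carefully chosen sequence of $\alpha'$, there is a cycle contained in $D(0,\varepsilon)\setminus\{0\}$. This gives the second bullet. (2) \emph{Hausdorff lower semicontinuity of the post-critical set / Siegel disk.} Using the near-parabolic renormalization for $\mathcal{F}_1^P$ (Corollary \ref{cor-2021-9-26-6}), which guarantees that for $\alpha\in\operatorname{Irrat}_{\geq N}$ the post-critical set of $f=e^{2\pi i\alpha}h$, $h\in\mathcal{F}_1^P$, can be iterated infinitely often and does not accumulate on an infinite sequence of periodic orbits, one deduces, as in \cite[\S 5--6]{BuffCheritat2012}, that when $\alpha'\to\alpha$ within $\mathcal{S}_N$ the filled Julia sets $K_{\alpha'}$ do not lose a definite amount of area: more precisely, that $\operatorname{area}(K_{\alpha'})\geq(1-\varepsilon)\operatorname{area}(K_\alpha)$ for $\alpha'$ close enough to $\alpha$. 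Here the measurable density estimates of McMullen for Siegel disks of bounded type \cite{McMullen1998}, generalized to high degree in \cite{Shen2006} and summarized in Subsection \ref{lebesgue-density-12-12-1}, are used to show that the ``new'' part of $K_\alpha$ created near $\partial\Delta_\alpha$ has full density, so it survives under small perturbation. (3) \emph{Diophantine bookkeeping.} One must arrange that the perturbed parameter $\alpha'$ can itself be taken in $\mathcal{S}$ (bounded type, entries $\geq N$), so that the construction can be iterated; this is the standard trick of controlling a long initial segment of the continued fraction of $\alpha$ and then appending a controlled tail, keeping all entries in the allowed range.

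I would assemble these as follows: fix $\alpha\in\mathcal{S}$ and $\varepsilon>0$; use Step (3) to parametrize candidate perturbations $\alpha'$ by their continued fraction data, all lying in $\mathcal{S}$; use Step (1) to select among them those for which $P_{\alpha'}$ has a cycle inside $D(0,\varepsilon)\setminus\{0\}$ and $|\alpha'-\alpha|<\varepsilon$; and use Step (2) to guarantee the area bound $\operatorname{area}(K_{\alpha'})\geq(1-\varepsilon)\operatorname{area}(K_\alpha)$ by choosing $\alpha'$ sufficiently close. Since each of the three ingredients is now available for $m\geq 22$ — the renormalization machinery from Section \ref{renormalization-2022-2-12-1}, the parabolic explosion from Subsection \ref{equ-21-12-7-1}, and the Lebesgue density results from Subsection \ref{lebesgue-density-12-12-1} — the proof of Proposition \ref{equ-2021-12-11-3} goes through verbatim along the lines of \cite{BuffCheritat2012}.

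The main obstacle is Step (2): establishing that no definite proportion of area is lost in passing from $\alpha$ to nearby $\alpha'$. This requires a quantitative, renormalization-based control of how the filled Julia set near the boundary of the Siegel disk behaves under perturbation — essentially showing that the post-critical set varies lower-semicontinuously in the Hausdorff metric and that the region of $K_\alpha$ of density $1$ (guaranteed by McMullen--Shen) persists. In the quadratic case this is the technical heart of \cite{BuffCheritat2012}, and here it rests on the fact that the entire renormalization tower stays in the invariant class $e^{2\pi i\mathcal{I}(\alpha_*)}\times\mathcal{F}_1^P$ with uniform contraction (Theorem \ref{ren2020912-1}(iii), Corollary \ref{cor-2021-9-26-8}); the degree-$m$ version is structurally identical to the degree-$2$ version once Section \ref{renormalization-2022-2-12-1} is in place, so the argument of \cite[\S\S 6--7]{BuffCheritat2012} applies with only notational changes, but it is where essentially all the work lies.
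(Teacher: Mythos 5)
Your proposal is correct and follows essentially the same strategy the paper takes: the paper's own proof reduces Proposition \ref{equ-2021-12-11-3} (via the stronger Proposition \ref{equ-2022-2-10-1}) directly to the argument of Buff--Ch\'{e}ritat, substituting the high-degree analogues of the three ingredients — near-parabolic renormalization for $\mathcal{F}_1^P$ (Section \ref{renormalization-2022-2-12-1}), parabolic explosion (Subsection \ref{equ-21-12-7-1}), and McMullen--Shen Lebesgue density (Subsection \ref{lebesgue-density-12-12-1}). Your decomposition into Steps (1)--(3) is precisely the one the paper outlines in its introduction and carries out in Section \ref{julia-set-equ-2022-2-12-2}.
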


\begin{proof}
The proof follows from the arguments in the proof of \cite[Proposition 1]{BuffCheritat2012}.
\end{proof}

\begin{proposition}
The function $\al\in\cc\to\text{area}\,(K_{\al})\in[0,+\infty)$ is upper semi-continuous.
\end{proposition}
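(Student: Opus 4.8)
The plan is to reduce the statement to the reverse Fatou lemma applied to the indicator functions $\mathbf 1_{K_\al}$, using the standard description of $K_\al$ by an escape radius that can be chosen uniform in $\al$ on compact sets. Fix $\al_0\in\cc$ and let $(\al_k)$ be any sequence with $\al_k\to\al_0$; it suffices to show $\limsup_k\operatorname{area}(K_{\al_k})\le\operatorname{area}(K_{\al_0})$. Without loss of generality all $\al_k$ and $\al_0$ lie in a fixed compact set $\Ld\subset\cc$. Since $\al\mapsto e^{2\pi i\al}$ is continuous and nonvanishing, there are constants $0<c_1\le c_2$ with $c_1\le|e^{2\pi i\al}|\le c_2$ for all $\al\in\Ld$. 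Hence for $|z|\ge R$ one has $|P_\al(z)|=|e^{2\pi i\al}|\,|z|\,|1+z|^m\ge c_1|z|(|z|-1)^m$, so choosing $R=R(\Ld)$ large enough gives $|P_\al(z)|\ge 2|z|$ whenever $|z|\ge R$ and $\al\in\Ld$. Consequently, for every $\al\in\Ld$,
\[
K_\al=\bigl\{z\in\cc:\ |P^{\circ n}_\al(z)|\le R\ \text{for all}\ n\ge0\bigr\}=\bigcap_{n\ge0}(P^{\circ n}_\al)^{-1}\bigl(\overline{\cd}(0,R)\bigr)\subset\overline{\cd}(0,R),
\]
because once an orbit exceeds modulus $R$ it diverges to $\infty$.

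Next I would establish the pointwise inequality $\limsup_k\mathbf 1_{K_{\al_k}}(z)\le\mathbf 1_{K_{\al_0}}(z)$ for every $z\in\cc$. If $z\in K_{\al_0}$ this is trivial since the left side is at most $1$. If $z\notin K_{\al_0}$, then by the description above there is an integer $n\ge0$ with $|P^{\circ n}_{\al_0}(z)|>R$. The map $(\al,z)\mapsto P^{\circ n}_\al(z)$ is continuous (it is a polynomial in $z$ whose coefficients depend continuously on $e^{2\pi i\al}$), so $|P^{\circ n}_{\al_k}(z)|>R$ for all $k$ sufficiently large, whence $z\notin K_{\al_k}$ for all such $k$, i.e.\ $\mathbf 1_{K_{\al_k}}(z)=0$ eventually and $\limsup_k\mathbf 1_{K_{\al_k}}(z)=0=\mathbf 1_{K_{\al_0}}(z)$.

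Finally I would conclude by the reverse Fatou lemma. Since each $K_{\al_k}\subset\overline{\cd}(0,R)$, the functions $\mathbf 1_{K_{\al_k}}$ are dominated by the fixed integrable function $\mathbf 1_{\overline{\cd}(0,R)}$, so
\[
\limsup_{k\to\infty}\operatorname{area}(K_{\al_k})=\limsup_{k\to\infty}\int_{\cc}\mathbf 1_{K_{\al_k}}\,dA\ \le\ \int_{\cc}\limsup_{k\to\infty}\mathbf 1_{K_{\al_k}}\,dA\ \le\ \int_{\cc}\mathbf 1_{K_{\al_0}}\,dA=\operatorname{area}(K_{\al_0}),
\]
which is exactly upper semi-continuity at $\al_0$; as $\al_0$ was arbitrary, the proof is complete. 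There is no serious obstacle here: the only point requiring a little care is making the escape radius $R$ uniform over the compact set $\Ld$ while $\al$ ranges over complex values (so that the multiplier $e^{2\pi i\al}$ is merely bounded above and below rather than of modulus one), after which the argument is the routine semicontinuity of area along a family of compact sets with closed graph $\{(\al,z):z\in K_\al\}=\bigcap_n\{(\al,z):|P^{\circ n}_\al(z)|\le R\}$.
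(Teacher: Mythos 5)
Your proof is correct, but it takes a genuinely different route from the paper's. The paper invokes Douady's theorem that $P\mapsto K(P)$ is upper semi-continuous in the Hausdorff metric: if $\al_n\to\al$, then for any open neighborhood $V$ of $K_\al$, eventually $K_{\al_n}\subset V$, hence $\operatorname{area}(K_{\al_n})\le\operatorname{area}(V)$; outer regularity of Lebesgue measure (area of a compact set is the infimum of areas of open supersets) then gives the conclusion. You instead prove the needed containment result directly and elementarily: you exhibit a uniform escape radius $R$ valid on a compact parameter set (taking care that $e^{2\pi i\al}$ is only bounded above and below, not of modulus one, since $\al$ is complex), deduce the pointwise inequality $\limsup_k\mathbf 1_{K_{\al_k}}\le\mathbf 1_{K_{\al_0}}$ from continuity of $(\al,z)\mapsto P_\al^{\circ n}(z)$, and then conclude by reverse Fatou with the dominating function $\mathbf 1_{\overline{\cd}(0,R)}$. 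Your argument establishes only a pointwise version of what Douady's theorem gives uniformly, but that is all that the area inequality requires, so nothing is lost; what you gain is a self-contained proof that does not quote a black-box result, at the modest cost of verifying the uniform escape bound over a compact set of complex parameters.
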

\begin{proof}
This follows the arguments in the proof of \cite[Proposition 2]{BuffCheritat2012}. For any sequence $(\al_{n})$ with $\al_n\to\al$, it follows from \cite{Douady1995} that for any polynomial $P$, the $P \to K(P)$ is upper semi-continuous (in the Hausdorff metric). So, for any open neighborhood $V$ of $K_{\al}$, if $n$ is sufficiently large, then $K_{\al_n}\subset V$. By the theory of Lebesgue measure, $\text{area}\,(K_{\al})$ is the infimum of the area of the open sets containing $K_{\al}$. Hence,
\beqq
\text{area}\,(K_{\al})\geq\limsup_{n\to+\infty}\text{area}\,(K_{\al_n}).
\eeqq
\end{proof}

\subsection{A stronger version of Proposition \ref{equ-2021-12-11-3}}

Proposition \ref{equ-2021-12-11-3} can be derived by the following proposition.
\begin{proposition}  \label{equ-2022-2-10-1}
For a sufficiently large $N$ and $\al\in\mathcal{S}_N$, take a sequence $(A_n)$ such that
\beqq
\lim_{n\to+\infty}\sqrt[q_n]{A_n}\to+\infty\ \text{and}\ \lim_{n\to+\infty}\sqrt[q_n]{\log\,A_n}=1.
\eeqq
Set
\beqq
\al_n:=[a_0,a_1,...,a_n,A_n,N,N,N,...].
\eeqq
For all $\vep>0$, if $n$ is sufficiently large, then
\begin{itemize}
\item[$\bullet$] $P_{\al_n}$ has a cycle in $D(0,\vep)\setminus\{0\}$;
\item[$\bullet$] $\text{area}\,(K_{\al_n})\geq(1-\vep)\text{area}(K_{\al})$.
\end{itemize}
\end{proposition}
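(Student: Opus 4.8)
\textbf{Proof proposal for Proposition \ref{equ-2022-2-10-1}.}

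The plan is to combine the near-parabolic renormalization machinery established in Section \ref{renormalization-2022-2-12-1} with the three analytic inputs advertised in the introduction: control of the shape of Siegel disks via Ch\'{e}ritat's parabolic explosion, the Hausdorff convergence of perturbed Siegel disks, and McMullen's measurable-density estimate near the boundary of a bounded-type Siegel disk. First I would set up notation: for $\al\in\mathcal{S}_N$ with $N$ large (so that $e^{2\pi i\al}P\in e^{2\pi i\mathcal{I}(\al_*)}\times\mathcal{F}_1^P$ by Corollary \ref{cor-2021-9-26-8}), let $p_n/q_n$ be the convergents of $\al$, and observe that $\al_n=[a_0,\dots,a_n,A_n,N,N,\dots]$ agrees with $\al$ to order $q_n$, so $\al_n\to\al$; moreover $\al_n$ is itself of bounded type lying in $\mathcal{S}_N$ once $n$ is large, since its tail is $[N,N,N,\dots]$. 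The existence of a cycle in $D(0,\vep)\setminus\{0\}$ is the parabolic-explosion half: applying $n+1$ near-parabolic renormalizations to $P_{\al_n}$ (legitimate by Corollary \ref{cor-2021-9-26-6} and the invariance in Theorem \ref{ren2020912-1}), one is reduced to a map of the form $e^{2\pi i\be}h$ with $h\in\mathcal{F}_1^P$ and $\be\equiv \pm A_n^{-1}\bmod\zz$ very small; by Propositions \ref{explosionfun-1}, \ref{explosionfun-2} and Lemma \ref{equ-2021-12-8-1} in Subsection \ref{equ-21-12-7-1} (the control of the cycle), such a map has a non-zero periodic cycle of period $q_{n}\cdot(\text{something})$ close to the parabolic point of the renormalized dynamics, and the choice $\sqrt[q_n]{A_n}\to\infty$ forces this cycle, when pulled back through the $n+1$ renormalizations to the dynamical plane of $P_{\al_n}$, to lie inside $D(0,\vep)\setminus\{0\}$ for $n$ large. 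The condition $\sqrt[q_n]{\log A_n}\to1$ is the quantitative hypothesis needed so that the pull-back contractions (each renormalization shrinking scales by a factor comparable to a power of the denominators) beat the growth of $A_n$ in the logarithmic scale; I would cite the explosion estimates verbatim here rather than re-deriving them.

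The second bullet, $\mathrm{area}(K_{\al_n})\ge(1-\vep)\,\mathrm{area}(K_\al)$, is the harder and more delicate half, and it follows the Buff--Ch\'{e}ritat strategy closely. The key point is that $K_\al$ contains the Siegel disk $\Delta_\al$ of $P_\al$, whose boundary is a quasicircle through the critical point and whose post-critical set lies on $\partial\Delta_\al$; one must show that for $n$ large a very large proportion of $K_\al$ (in particular almost all of $\Delta_\al$ together with a definite-density collar around $\partial\Delta_\al$) persists in $K_{\al_n}$. For this I would: (i) use the Hausdorff convergence of the perturbed Siegel disks — more precisely, the control of post-critical sets of the perturbations $P_{\al_n}$ near $\Delta_\al$ obtained via the near-parabolic renormalization of Section \ref{renormalization-2022-2-12-1} (the generalization to high degree of Inou--Shishikura) — to conclude that the Siegel disks $\Delta_{\al_n}$ (which exist since $\al_n$ is bounded type) contain all but an $\vep$-area subset of $\Delta_\al$ once $n$ is large; (ii) invoke the generalization of McMullen's bounded-type density theorem to high-degree polynomials recorded in Subsection \ref{lebesgue-density-12-12-1} (the result of \cite{Shen2006}) to recover a positive-density-at-the-boundary contribution, so that the area lost near $\partial\Delta_\al$ is also $O(\vep)$; and (iii) combine (i) and (ii) with Yoccoz's renormalization control of the shapes of the Siegel disks (Subsection \ref{equ-21-12-7-1}) to handle the geometry of $\partial\Delta_{\al_n}$ uniformly in $n$. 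Summing these contributions gives $\mathrm{area}(K_{\al_n})\ge\mathrm{area}(\Delta_{\al_n}\cap K_\al)+(\text{collar density term})\ge(1-\vep)\,\mathrm{area}(K_\al)$.

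I expect the main obstacle to be step (i): proving that the perturbed Siegel disks $\Delta_{\al_n}$ genuinely swallow almost all of $\Delta_\al$, i.e. the lower semicontinuity of the Siegel-disk area along the specific sequence $\al_n$. Upper semicontinuity of $\mathrm{area}(K_\al)$ is cheap (it is the preceding Proposition, via Douady's upper semicontinuity of $P\mapsto K(P)$), but the lower bound requires genuinely new input: one must follow an explicit sequence of points (or rather a Whitney-type decomposition of $\Delta_\al$) under the inverse near-parabolic renormalization and check that each piece lands in the filled Julia set of $P_{\al_n}$, using that the renormalization tower for $\al_n$ agrees with that for $\al$ up to level $n+1$ and only then gets a small-rotation-number perturbation whose dynamical plane, under the control-of-cycle estimates, still contains a large linearizable disk. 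This is exactly where the careful choice of the ellipse $E$ and the function spaces $\mathcal{F}_1^P$, $\mathcal{F}_2^P$ for $m\ge 22$ is used: it guarantees that the renormalization operator $\mathcal{R}_\al$ is defined, holomorphic, and contracting uniformly for all the relevant $\al$ (Theorem \ref{ren2020912-1}(i)--(iii), Corollaries \ref{cor-2021-9-26-8}, \ref{cor-2021-9-26-6}), which is what makes the tower-comparison argument and the resulting area estimate go through. Once step (i) is in hand, steps (ii) and (iii) are essentially quotations of \cite{Shen2006}, \cite{McMullen1998}, \cite{Yoccoz1995} applied to the high-degree setting, and the conclusion follows by letting $n\to\infty$.
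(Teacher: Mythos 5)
The paper's own ``proof'' is a single sentence deferring to Buff--Ch\'{e}ritat's Proposition~3, with the adapted intermediate results of Section~\ref{julia-set-equ-2022-2-12-2} supplying the inputs, so a substantive re-derivation was never intended. More importantly, two of your steps are wrong in substance. For the first bullet (the cycle), you propose to renormalize $P_{\al_n}$ about $n+1$ times and then apply Propositions~\ref{explosionfun-1}, \ref{explosionfun-2} and Lemma~\ref{equ-2021-12-8-1} to the resulting map in $\mathcal{F}_1^P$, but those statements are explicitly about the polynomial $P_\al(z)=e^{2\pi i\al}z(1+z)^m$ itself (they estimate $R_{p/q}$ and construct $\chi$ for $P_\al$), not about renormalized maps. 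The intended argument, which the paper imports via Proposition~\ref{exp-2021-12-11-1} and the proposition on $\psi_k\to\phi$, is to apply the explosion function $\chi_n$ at $p_n/q_n$ to $P_{\al_n}$ directly: writing $\al_n=p_n/q_n+\de^{q_n}$ with $|\de|\asymp\vep_n^{1/q_n}\asymp A_n^{-1/q_n}\to0$, the cycle $\langle\chi_n(\de),\dots,\chi_n(\zeta^{q_n-1}\de)\rangle$ converges to $\phi(0)=0$, which is exactly what places it in $D(0,\vep)\setminus\{0\}$ for large $n$. No renormalization tower is involved.

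For the second bullet you assert in step~(i) that the Siegel disks $\De_{\al_n}$ contain all but an $\vep$-area subset of $\De_\al$. That is too strong and is not what is proved: the perturbed Siegel disk can shrink drastically, and Proposition~\ref{equ-21-12-11-5} only gives $\liminf_n\mbox{dens}_U(\De'_n)\geq\tfrac12$ for nonempty open $U\subset\De$, which by itself yields only $\liminf\text{area}(K_{\al_n})\geq\tfrac12\text{area}(K_\al)$ (as the paper explicitly records). Upgrading $\tfrac12$ to $1-\vep$ is precisely where the McMullen/Shen density theorem (Theorem~\ref{equ2021-2-3-1}) together with the post-critical-set control of $P_{\al_n}$ enter, via the measure-theoretic bootstrap of Buff--Ch\'{e}ritat's Subsection~1.7; it is a density statement about $K_{\al_n}$ near $\partial\De_\al$, not a containment statement about Siegel disks. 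Finally, the hypothesis $\sqrt[q_n]{\log A_n}\to1$ is the sub-exponential growth condition required by the perturbed Fatou-coordinate and vector-field estimates used for Proposition~\ref{equ-21-12-11-6}; it is not, as you suggest, a bound to make ``pull-back contractions beat the growth of $A_n$'' in the cycle argument, which needs only $\sqrt[q_n]{A_n}\to\infty$.
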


\begin{proof}
This follows the arguments in the proof of \cite[Proposition 3]{BuffCheritat2012}.
\end{proof}

\subsection{The control of the cycle}

\begin{proposition}\label{exp-2021-12-11-1}
For each rational number $\tfrac{p}{q}$, where $p$ and $q$ are coprime, then there exists a holomorphic function
\beqq
\chi:\ \cd(0,1/q^{3/q})\to\cc
\eeqq
satisfying the following properties:
\begin{itemize}
  \item[(1)] $\chi(0)=0$;
 \item[(2)] $\chi'(0)\neq0$;
\item[(3)] for any $\de\in \cd(0,1/q^{3/q})\setminus\{0\}$, $\chi(\de)\neq0$;
\item[(4)] for any $\de\in \cd(0,1/q^{3/q})\setminus\{0\}$, if $\zeta=e^{2i\pi p/q}$ and $\tht=\tfrac{p}{q}+\de^{q}$, then
$$\langle\chi(\de),\chi(\zeta\de),...,\chi(\zeta^{q-1}\de)\rangle$$ forms a cycle of period $q$ of $P_{\tht}$. Further,
\beqq
\chi(\zeta\de)=P_{\al}(\chi(\de))\ \forall \de\in \cd(0,1/q^{3/q}).
\eeqq
\end{itemize}
\end{proposition}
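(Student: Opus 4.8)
The idea is to realize the periodic points of $P_{\tht}$ of period dividing $q$ that sit near $0$ as the roots of a Weierstrass polynomial, to uniformize those roots by a rescaling adapted to the explosion direction, and to single out one holomorphic branch by the implicit function theorem; the symmetry $\tht(\zeta\de)=\tht(\de)$ will then force the cocycle identity in~(4). Set $\tht=\tht(\de):=\tfrac{p}{q}+\de^{q}$ and observe that, since $\zeta^{q}=e^{2\pi i p}=1$, one has $\tht(\zeta\de)=\tht(\de)$, hence $P_{\tht(\zeta\de)}=P_{\tht(\de)}$. Consider
\[
\wh G(z,\de):=\frac{P_{\tht(\de)}^{\circ q}(z)-z}{z},
\]
which has a removable singularity at $z=0$, so it is entire in $z$ and in $\de$ and depends on $\de$ only through $\de^{q}$; write $\wh G(z,\de)=\Ga(z,\de^{q})$. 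Then $\Ga(0,u)=(P_{\tht}^{\circ q})'(0)-1=e^{2\pi i q u}-1$, so $\Ga(0,0)=0$ and $\partial_{u}\Ga(0,0)=2\pi i q\neq 0$, while the non-degeneracy of the parabolic fixed point gives $\Ga(z,0)=\dfrac{P_{p/q}^{\circ q}(z)-z}{z}=A\,z^{q}+O(z^{q+1})$ with $A\neq0$ (a computation with $P(z)=z(1+z)^{m}$; e.g. for $q=2$ one gets $P_{1/2}^{\circ 2}(z)=z-m(m+1)z^{3}+\cdots$).

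By the Weierstrass preparation theorem in the variable $z$, $\Ga(z,u)=U(z,u)\bigl(z^{q}+c_{q-1}(u)z^{q-1}+\cdots+c_{0}(u)\bigr)$ with $U$ non-vanishing, each $c_{j}$ holomorphic with $c_{j}(0)=0$, and $c_{0}(u)=(e^{2\pi i q u}-1)/U(0,u)$. Writing $c_{j}(u)=u\,\hat c_{j}(u)$ and substituting $z=\de w$, $u=\de^{q}$, the second factor becomes $\de^{q}H(w,\de)$ with
\[
H(w,\de):=w^{q}+\sum_{j=0}^{q-1}\de^{j}\,\hat c_{j}(\de^{q})\,w^{j},
\]
which is holomorphic near $(0,0)$ and satisfies $H(w,0)=w^{q}+A_{0}$, $A_{0}=\hat c_{0}(0)=2\pi i q/U(0,0)\neq0$. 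Thus $H(\cdot,0)$ has $q$ distinct simple roots, none equal to $0$; fixing one of them, $w_{*}$, the implicit function theorem furnishes a holomorphic $\de\mapsto w(\de)$ with $w(0)=w_{*}$ and $H(w(\de),\de)=0$ near $\de=0$. Put $\chi(\de):=\de\,w(\de)$. Then $\chi(0)=0$, $\chi'(0)=w_{*}\neq0$, which is (1)--(2); for $\de\neq0$, $\wh G(\chi(\de),\de)=U(\chi(\de),\de^{q})\,\de^{q}H(w(\de),\de)=0$, so $\chi(\de)$ is a periodic point of $P_{\tht(\de)}$ of period dividing $q$, and $\chi(\de)=\de\,w(\de)\neq0$; since $\wh G(0,\de)=e^{2\pi i q\de^{q}}-1\neq0$ once $0<|\de|^{q}<1/q$, and the stated radius $q^{-3/q}$ is in particular $<q^{-1/q}$, property~(3) holds on $\cd(0,q^{-3/q})$.

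For (4), note first that $\chi(\de)$ has exact period $q$: for a proper divisor $q'\mid q$ the multiplier $(P_{\tht}^{\circ q'})'(0)=e^{2\pi i q'p/q}e^{2\pi i q'\de^{q}}\neq1$, so by Rouché $0$ is the only zero of $P_{\tht}^{\circ q'}(z)-z$ near $0$, while $\chi(\de)\neq0$ is small. Both $\de\mapsto\chi(\zeta\de)$ and $\de\mapsto P_{\tht(\de)}(\chi(\de))$ are holomorphic near $0$, vanish at $\de=0$, and for $\de\neq0$ are zeros of $\wh G(\cdot,\de)$ — the first by the previous paragraph, the second because $P_{\tht}$ permutes the period-$q$ cycle through $\chi(\de)$. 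Dividing each by $\de$, the quotients $\zeta w(\zeta\de)$ and $P_{\tht(\de)}(\chi(\de))/\de$ extend holomorphically to $\de=0$ with common value $\zeta w_{*}$ (using $e^{2\pi i\tht(\de)}=\zeta(1+O(\de^{q}))$), and both solve $H(\cdot,\de)=0$ near $\zeta w_{*}$: for the first via the identity $H(\zeta^{-1}\wt w,\zeta\de)=H(\wt w,\de)$ (immediate from $\zeta^{q}=1$) applied with $\wt w=\zeta w(\zeta\de)$, for the second because $P_{\tht}(\chi(\de))$ is a zero of $\wh G(\cdot,\de)$. As $\partial_{w}H(\zeta w_{*},0)=q(\zeta w_{*})^{q-1}\neq0$, the uniqueness part of the implicit function theorem forces the two quotients to coincide, i.e. $\chi(\zeta\de)=P_{\tht(\de)}(\chi(\de))$ for all $\de$ in the disk. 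Iterating, $\langle\chi(\de),\chi(\zeta\de),\dots,\chi(\zeta^{q-1}\de)\rangle$ is an orbit of $P_{\tht(\de)}$, of period exactly $q$ by the exact-period remark, and $\chi(\zeta^{q}\de)=\chi(\de)$ closes the cycle.

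\textbf{Main obstacle.} Every differential and algebraic step above is valid on \emph{some} small disk; the real work is to make all the relevant domains explicit and uniform in $p/q$, so that the construction holds precisely on $\cd(0,q^{-3/q})$. One must control the polydisk on which the Weierstrass factorization of $\Ga$ is valid (equivalently, the separation of the zeros of $\Ga(\cdot,u)$ in $z$), the domain of the implicit-function branch $w(\de)$ (non-vanishing of $\partial_{w}H$, i.e. of the discriminant of $H(\cdot,\de)$), and the region occupied by the exploding cycle — all with bounds depending only on $m$ and $q$. These are exactly the quantitative estimates of Chéritat's parabolic-explosion analysis \cite{Cheritat2000}; carrying them out for $P(z)=z(1+z)^{m}$ by means of the auxiliary lemmas of this subsection is what yields the exponent $3/q$, which records the admissible size $|\tht-\tfrac{p}{q}|=|\de|^{q}$ of the perturbation.
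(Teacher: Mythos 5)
Your local argument is correct and closely parallels the structure of the paper's proof of Propositions~\ref{explosionfun-1} and \ref{explosionfun-2}, both of which turn on the expansion $h(\de,z)=A\bigl(z^q-(\mathrm{const})\,\de^q\bigr)+\text{h.o.t.}$ and the rescaling $z=\de w$ (written $\ld=z/\de$ in the paper). You resolve the singular point of the zero locus by Weierstrass preparation followed by a local implicit function theorem branch; the paper instead studies the zero variety $\mathcal{M}\subset B(0,r)\times\cc$ globally, observes that $\de$ is a local coordinate away from $(0,0)$ because the multiplier is $\neq1$ there, adds in the $q$ points $(\de,\ld)=(0,e^{2\pi ik/q})$ to get a manifold $\wt{\mathcal{M}}$, and shows $\wt\pi:\wt{\mathcal{M}}\to B(0,r)$ is a proper covering, hence trivial by simple connectedness. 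This global step is not cosmetic: your Weierstrass/IFT construction only gives a germ of $w(\de)$ at $\de=0$, and to get a single-valued branch on the whole disk you must analytically continue along arbitrary paths, which requires precisely the ``$\partial_w H\neq0$ along the zero set'' and properness facts that the covering argument packages. As written, you prove properties (1)--(4) on \emph{some} unquantified neighborhood of $0$, not on $\cd(0,q^{-3/q})$. Your route to property~(4) via the symmetry $H(\zeta^{-1}\wt w,\zeta\de)=H(\wt w,\de)$ is tidy and matches in spirit the paper's check that $g(\de)=P_{\al(\de)}(\psi(e^{-2i\pi p/q}\de))$ satisfies the same normalization as $\psi$ and invoking uniqueness.

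The genuine gap is the radius. The exponent $3/q$ is exactly what is used later in the parabolic-explosion estimates, and you explicitly defer it as the ``main obstacle.'' In the paper this is Lemma~\ref{equ-2021-12-8-1}, which gives $R_{p/q}\geq C/q^3$: in the case $0$ is repelling it uses Pommerenke--Levin--Yoccoz (Lemma~\ref{inequ-57}) to confine $\al$ to a disk of radius $\sim\log(m+1)/s$ tangent to $\rr$ at $r/s$, together with a Lefschetz index count (Lemma~\ref{equ-20208103}) and the external-ray combinatorics giving $s<q$ (Lemma~\ref{equ2020891}); the parabolic case is handled separately. Without carrying out these estimates you have not proven the stated proposition --- only its qualitative germ at $\de=0$. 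This is not a polishing detail but the quantitative heart of the statement, so the proposal is incomplete.
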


\begin{proof}
This can be derived by Propositions \ref{explosionfun-1} and \ref{explosionfun-2}, and Lemma \ref{equ-2021-12-8-1} in Subsection \ref{equ-21-12-7-1}.
This proposition is corresponding to \cite[Proposition 4]{BuffCheritat2012}.
\end{proof}

\begin{definition}
For the function $\chi:\cd(0,1/q^{3/q})\to\cc$ in Proposition \ref{exp-2021-12-11-1}, it is called an explosion function at $\tfrac{p}{q}$.
\end{definition}

\begin{proposition}
For any irrational number $\al\in\rr\setminus \qq$ such that $P_{\al}$ has a Siegel disk $\De$, let $\tfrac{p_k}{q_k}$ be the $k$th convergent of $\al$ (see Subsection \ref{equ2021-2-3-3}). Let $r$ be the conformal radius of $\De$ at $0$ and let $\phi:\cd(0,r)\to\De$ be the isomorphism sending $0$ to $0$ with derivative $1$.
For $k\geq1$, let $\chi_k$ be an explosion function at $p_k/q_k$ and set $\ld_k:=\chi'_k(0)$. Then,
\begin{itemize}
\item[(1)] $|\ld_k|\to r$ as $k\to+\infty$;
\item[(2)] the sequence of maps $\psi_k:\de\to\chi_k(\de/\ld_k)$ converges uniformly on every compact subset of $\cd(0,r)$ to $\phi:\cd(0,r)\to\De$.
\end{itemize}
\end{proposition}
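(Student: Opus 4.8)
The plan is to follow the strategy of \cite[Proposition 7]{BuffCheritat2012}, which is a ``parabolic implosion'' type argument combining the control of cycles given by the explosion functions $\chi_k$ with Yoccoz's renormalization estimates and the near-parabolic renormalization developed in Section \ref{renormalization-2022-2-12-1}. Write $P_\al(z)=e^{2\pi i\al}z(1+z)^m$ and recall that when $\al\in\rr\setminus\qq$ has a Siegel disk $\De$ at $0$, the conformal radius $r$ of $\De$ at $0$ is positive, and $\phi:\cd(0,r)\to\De$ is the linearizing isomorphism normalized by $\phi(0)=0$, $\phi'(0)=1$. The continued fraction convergents $p_k/q_k$ of $\al$ satisfy $q_k\to\infty$, and $\chi_k$ denotes an explosion function at $p_k/q_k$ with $\ld_k=\chi_k'(0)$, so that (by Proposition \ref{exp-2021-12-11-1}(4)) the orbit $\langle\chi_k(\de),\chi_k(\zeta_k\de),\dots,\chi_k(\zeta_k^{q_k-1}\de)\rangle$ with $\zeta_k=e^{2\pi i p_k/q_k}$ is a cycle of period $q_k$ of $P_{\tht}$ when $\tht=p_k/q_k+\de^{q_k}$, and $\chi_k(\zeta_k\de)=P_{p_k/q_k+\de^{q_k}}(\chi_k(\de))$ on $\cd(0,1/q_k^{3/q_k})$.

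For the first step I would set up the comparison between $\psi_k(\de)=\chi_k(\de/\ld_k)$ and the linearizer $\phi$. The key point is that both $\psi_k$ and $\phi$ satisfy, in an appropriate asymptotic sense, the same linearizing functional equation: $\phi$ conjugates the rotation $R_\al$ to $P_\al$ on $\cd(0,r)$, while $\psi_k$ conjugates the rational rotation $R_{p_k/q_k}$ to $P_{p_k/q_k}$ along the finite orbit, with the defect governed by $\de^{q_k}\to 0$ and by $|p_k/q_k-\al|=O(1/q_kq_{k+1})\to 0$. First I would record the a priori normalization $\psi_k(0)=0$, $\psi_k'(0)=1$ and a uniform univalence/normal family statement: by the explicit radius $1/q_k^{3/q_k}$ in Proposition \ref{exp-2021-12-11-1} and the Koebe distortion estimates (Theorem \ref{paraequ-87}, Theorem \ref{paraequ-8}), any subsequential limit $\psi_\infty$ of $\{\psi_k\}$ is univalent on $\cd(0,\rho)$ for $\rho<\liminf|\ld_k|$ (once we know $\liminf|\ld_k|>0$), fixes $0$ with derivative $1$, and — passing the conjugacy relation $\psi_k(\zeta_k\de)=P_{p_k/q_k+\cdots}(\psi_k(\ld_k\de)/\ld_k\cdot\ld_k)$ to the limit — linearizes $P_\al$ near $0$, hence its image is contained in a Siegel disk of $P_\al$, which (by uniqueness of the linearizer and of the maximal Siegel disk for a fixed $\al$, see \cite{Milnor2006}) forces $\psi_\infty=\phi$ and $\liminf|\ld_k|\ge r$. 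This already gives part (2) on compact subsets up to radius $\liminf|\ld_k|$, and the inequality $\liminf|\ld_k|\ge r$ is half of part (1).

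The reverse inequality $\limsup|\ld_k|\le r$ is the delicate half, and this is where the near-parabolic renormalization of Section \ref{renormalization-2022-2-12-1} enters. The point is that if $|\ld_k|$ were bounded below away from zero by some $r'>r$ along a subsequence, then the cycles $\langle\chi_k(\de)\rangle$ with $|\de|$ slightly below $|\ld_k|$ would be attracting or neutral cycles of $P_{p_k/q_k+\de^{q_k}}$ lying on circles of radius $\gtrsim r'>r$ about $0$, and a Yoccoz-type renormalization/parabolic-explosion estimate (Propositions \ref{explosionfun-1}, \ref{explosionfun-2} and Lemma \ref{equ-2021-12-8-1} in Subsection \ref{equ-21-12-7-1}, which is exactly the high-degree analogue of \cite[Proposition 4]{BuffCheritat2012} and the techniques of \cite{Cheritat2000,Yoccoz1995}) would force the existence of a linearization domain of $P_\al$ of conformal radius $>r$, contradicting the definition of $r$. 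Concretely I would argue by taking for each $k$ a suitable $\de_k\to 0$ with $|\de_k|\le1/q_k^{3/q_k}$, passing to the limit along the corresponding sequence of near-parabolic renormalizations to produce, in the limit, a genuine invariant curve for $P_\al$ enclosing $\cd(0,r+\epsilon)$, and then invoking upper semicontinuity/the maximality of the Siegel disk. The main obstacle I anticipate is precisely the bookkeeping in this last step: one must track the correct normalization constants through the tower of renormalizations $f_n=\mathcal{R}_\al(f_{n-1})$ of Corollary \ref{cor-2021-9-26-8}, keep the domains of definition uniform (this is where $m\ge22$ and the invariance Theorem \ref{ren2020912-1} are used), and control the conformal radii additively; these estimates are quantitative and sensitive to the choice of $V$, $V'$, but the structure is identical to \cite{BuffCheritat2012} and only the polynomial $P(z)=z(1+z)^m$ and the $m$-dependent constants need to be substituted. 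Once both inequalities are in hand, $|\ld_k|\to r$, the convergence of $\psi_k$ to $\phi$ extends to every compact subset of $\cd(0,r)$ by Vitali/normal families, completing the proof.
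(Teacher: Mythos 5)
Your second paragraph contains the decisive error: the normal-families argument you describe proves the opposite inequality to the one you claim. If $\psi_\infty$ is a subsequential limit of $\psi_k$ on $\cd(0,\rho)$ with $\rho<\liminf|\ld_k|$, then passing the relation of Proposition \ref{exp-2021-12-11-1}(4) to the limit (using $p_k/q_k\to\al$ and $(\de/\ld_k)^{q_k}\to0$) shows that $\psi_\infty$ linearizes $P_\al$ with $\psi_\infty(0)=0$, $\psi_\infty'(0)=1$; its image lies in $\De$, so $h=\phi^{-1}\circ\psi_\infty$ commutes with the irrational rotation, fixes $0$ with derivative $1$, hence $h=\mathrm{id}$, and therefore $\rho\le r$. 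Equivalently, working with $\chi_k$ on $\cd(0,1/q_k^{3/q_k})$ (whose radii tend to $1$), every limit is of the form $\de\mapsto\phi(\ld\de)$ with $|\ld|\le r$, so this argument yields $\limsup|\ld_k|\le r$ --- the easy, Schwarz-lemma half of (1) --- and not $\liminf|\ld_k|\ge r$ as you assert. You have the two halves swapped: the substantive content of the proposition is precisely that the exploded cycles reach all the way out to the conformal radius, i.e.\ $\liminf|\ld_k|\ge r$, and your proposal never gives a valid argument for it; note also that (2) on all of $\cd(0,r)$ is only obtained once (1) is fully proved.

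Your third paragraph then attacks $\limsup|\ld_k|\le r$ with the near-parabolic renormalization tower, which is both unnecessary (that inequality already follows from the Schwarz argument above: a linearizing limit with $|\ld|>r$ is impossible, no invariant curve construction is needed) and inapplicable: the proposition concerns an arbitrary irrational $\al$ for which $P_\al$ has a Siegel disk (e.g.\ any Brjuno number), whereas Theorem \ref{ren2020912-1} and Corollary \ref{cor-2021-9-26-8} require rotation numbers in $\text{Irrat}_{\geq N}$ and maps in $e^{2\pi i\mathcal{I}(\al_*)}\times\mathcal{F}_1^P$, a setting to which $P_\al$ itself does not directly belong. What remains open in your write-up is the lower bound: one must exclude subsequential limits of $\chi_k$ of the form $\de\mapsto\phi(\ld\de)$ with $|\ld|<r$ (in particular constant limits). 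The paper itself gives no written proof here --- it points to \cite[Proposition 5]{BuffCheritat2012} --- so at a minimum you should reproduce and adapt the argument given there for this hard inequality (Ch\'{e}ritat's parabolic-explosion control of the cycles relative to the Siegel disk), rather than the renormalization tower.
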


\begin{remark}
This proposition is corresponding to \cite[Proposition 5]{BuffCheritat2012}, and this explains how the explosion functions behave as $\tfrac{p}{q}$ ranges
in the set of $\tfrac{p_k}{q_k}$ converging to $\al$, where $P_{\al}$ has a Siegel disk.
\end{remark}

\begin{corollary}
Let $(\al_n)$ be the sequence defined in Proposition \ref{equ-2021-12-11-2}. For any $\ep>0$, if $n$ is sufficiently large, then $P_{\al_n}$ has a cycle in $\cd(0,\ep)\setminus\{0\}$.
\end{corollary}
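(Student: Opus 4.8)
The plan is to deduce the corollary directly from the two preceding propositions, namely Proposition \ref{exp-2021-12-11-1} (existence of explosion functions) and the proposition immediately before the corollary describing the limiting behaviour of explosion functions along the convergents of a Siegel-disk rotation number. The key observation is that the sequence $(\al_n)$ of Proposition \ref{equ-2021-12-11-2} is built so that each $\al_n$ has a convergent of the form $\tfrac{p_{k_n}}{q_{k_n}}$ with $q_{k_n}\to+\infty$, and $\al_n$ differs from $\tfrac{p_{k_n}}{q_{k_n}}$ by a controlled amount; the explosion function at this convergent produces a genuine periodic cycle of $P_{\al_n}$ whose points lie on a circle of radius comparable to $|\chi'_{k_n}(0)|\cdot|\de_n|$, which can be forced to be as small as we like.

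The steps, in order, would be as follows. First I would recall that for $\al$ of bounded type $P_\al$ has a Siegel disk $\Delta$ with conformal radius $r>0$, so the hypotheses of the preceding proposition are met; let $\tfrac{p_k}{q_k}$ be the convergents of $\al_\infty$ (the common tail of the $\al_n$) and let $\chi_k$ be an explosion function at $\tfrac{p_k}{q_k}$ with $\ld_k=\chi'_k(0)$, so that $|\ld_k|\to r$. Second, for each large $k$ I would write $\al_n$ (with $n$ chosen so that $\tfrac{p_k}{q_k}=\tfrac{p_k}{q_k}$ is the relevant convergent) in the form $\al_n=\tfrac{p_k}{q_k}+\de_n^{q_k}$ for a suitable small $\de_n$, using that two reals with the same first $k$ partial quotients differ from $\tfrac{p_k}{q_k}$ by less than $1/q_k^2$; this forces $|\de_n|\le q_k^{-2/q_k}$, which for large $k$ is well inside the disk $\cd(0, 1/q_k^{3/q_k})$ on which $\chi_k$ is defined. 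Third, by property (4) of Proposition \ref{exp-2021-12-11-1}, $\langle \chi_k(\de_n),\chi_k(\zeta\de_n),\dots,\chi_k(\zeta^{q_k-1}\de_n)\rangle$ with $\zeta=e^{2\pi i p_k/q_k}$ is a cycle of period $q_k$ of $P_{\al_n}$, and by property (3) none of its points is $0$. Fourth, to bound its modulus I would use that $\chi_k(\de)=\ld_k\de+O(\de^2)$ with $\chi_k$ univalent (or at least with controlled derivative) near $0$, together with $|\ld_k|\to r$; since $|\zeta^j\de_n|=|\de_n|\to 0$ as $k\to\infty$, every point $\chi_k(\zeta^j\de_n)$ has modulus $\le C|\de_n|\to 0$. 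Hence for $k$ (equivalently $n$) large the whole cycle lies in $\cd(0,\ep)\setminus\{0\}$.

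The main obstacle will be the bookkeeping linking the abstract index $k$ of the convergents to the index $n$ of the sequence $(\al_n)$ and, more importantly, checking that $\de_n$ really does land in the domain of $\chi_k$ and tends to $0$: this requires combining the arithmetic estimate $|\al_n-\tfrac{p_k}{q_k}|<1/(q_kq_{k+1})$ (or the sharper bound coming from the definition of $\al_n$ in Proposition \ref{equ-2021-12-11-2}) with the $q_k$-th root appearing in $\de_n^{q_k}=\al_n-\tfrac{p_k}{q_k}$, so that $|\de_n|\sim |\al_n-\tfrac{p_k}{q_k}|^{1/q_k}$, and then verifying $|\de_n|<1/q_k^{3/q_k}$ and $|\de_n|\to 0$. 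One must also make sure the relevant convergent of $\al_n$ used here is one whose denominator $q_k\to\infty$; this is automatic because $\al_n$ is irrational-of-bounded-type on its tail, so its convergents have unboundedly growing denominators. A minor secondary point is to make the estimate $|\chi_k(\de)|\le C|\de|$ uniform in $k$: this follows from the convergence $\psi_k\to\phi$ on compact subsets of $\cd(0,r)$ in the preceding proposition, which gives a uniform bound on $|\chi_k|$ on a fixed small disk once $k$ is large, or alternatively from the Koebe distortion bounds applied to the univalent maps $\chi_k$.

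Once these pieces are assembled the conclusion is immediate: given $\ep>0$, choose $k$ large enough that both $|\de_n|<\ep/(2C)$ and $C|\de_n|<\ep$ and $|\de_n|<1/q_k^{3/q_k}$ hold; then the period-$q_k$ cycle of $P_{\al_n}$ produced by $\chi_k$ lies entirely in $\cd(0,\ep)\setminus\{0\}$, which is what is claimed. I would phrase the final write-up as a short deduction citing Proposition \ref{exp-2021-12-11-1}(3)--(4) for the existence and non-triviality of the cycle and the preceding proposition for the size control, and referring to \cite{BuffCheritat2012} for the analogous statement in the quadratic case.
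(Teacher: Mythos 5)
Your overall strategy — use the explosion function $\chi_n$ at the convergent $p_n/q_n$ shared by $\al$ and $\al_n$, write $\al_n = p_n/q_n + \de_n^{q_n}$, deduce a period-$q_n$ cycle from Proposition~\ref{exp-2021-12-11-1}(3)--(4), and shrink it by showing $|\de_n|\to 0$ together with $|\ld_n|\to r$ — is exactly the argument of \cite[Corollary~1]{BuffCheritat2012} that the paper invokes, so the route is right.

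There is, however, a concrete error in the step that places $\de_n$ inside the domain of $\chi_n$. You assert that the generic bound $|\al_n - p_n/q_n| < 1/q_n^2$ gives $|\de_n|\le q_n^{-2/q_n}$, ``which for large $k$ is well inside the disk $\cd(0,1/q_n^{3/q_n})$''. This inequality goes the \emph{wrong} way: for $q_n>1$ one has $q_n^{-2/q_n} > q_n^{-3/q_n}$, so the crude bound does not place $\de_n$ in $\mathrm{Dom}(\chi_n)$, and in fact it does not even yield $|\de_n|\to 0$ (both $q_n^{-2/q_n}$ and $q_n^{-3/q_n}$ tend to $1$). The fix is precisely the sharper bound you mention only in passing: from the explicit formula \eqref{equ-21-12-12-2}, $|\vep_n| = |\al_n - p_n/q_n| \sim 1/(q_n^2 A_n)$, so $|\de_n| = |\vep_n|^{1/q_n}\sim (q_n^2A_n)^{-1/q_n}$. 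Since $\sqrt[q_n]{A_n}\to+\infty$ in Proposition~\ref{equ-2022-2-10-1}, this gives $|\de_n|\to 0$; and $A_n>q_n$ for $n$ large (again because $\sqrt[q_n]{A_n}\to\infty$ grows faster than any polynomial), which gives $(q_n^2 A_n)^{-1/q_n} < q_n^{-3/q_n}$ and hence $\de_n\in\cd(0,1/q_n^{3/q_n})$. You should replace the $1/q_n^2$ estimate by this sharper one throughout; once that is done the rest of your argument (uniform linear bound on $\chi_n$ near $0$ via convergence of the renormalized maps $\psi_k\to\phi$, non-vanishing of the cycle by Proposition~\ref{exp-2021-12-11-1}(3)) is correct and completes the proof.
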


\begin{proof}
This can be obtained by the same arguments used in the proof of \cite[Corollary 1]{BuffCheritat2012}.
\end{proof}


\subsection{Perturbed Siegel disks}

\begin{definition}\cite[Definition 3]{BuffCheritat2012}
Let $U$ and $X$ be measurable subsets of $\cc$ with $0<\mbox{area}(U)<+\infty$, set
\beqq
\mbox{dens}_U(X):=\frac{\mbox{area}(U\cap X)}{\mbox{area}(U)}.
\eeqq
\end{definition}

Let $\al$ be a Brjuno number, $\tfrac{p_n}{q_n}$ be the $n$th convergent of $\al$, and $\chi_n:\ D_n:=\cd(0,1/q^{3/q_n}_{n})\to\cc$ be the explosion function at $\tfrac{p_n}{q_n}$.

\begin{proposition}\label{equ-21-12-11-5}
Suppose $\al=[a_0,a_1,...]$ and $\tht=[0,t_1,t_2,...]$ are Brjuno numbers, and $\tfrac{p_n}{q_n}$ be the $n$th convergent of $\al$. Suppose
\beqq
\al_n:=[a_0,a_1,...,a_n,A_n,t_1,t_2,...]
\eeqq
with $(A_n)$ a sequence of positive integers such that
\beq
\limsup_{n\to+\infty}\sqrt[q_n]{\log A_n}\leq1.
\eeq
Let $\De$ be the Siegel disk of $P_{\al}$ and $\De'_{n}$ be the Siegel disk of the restriction of $P_{\al_n}$ to $\De$. For any nonempty open set $U\subset \De$,
\beqq
\liminf_{n\to+\infty}\mbox{dens}_U(\De'_n)\geq\frac{1}{2}.
\eeqq
\end{proposition}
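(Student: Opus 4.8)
The plan is to reduce the assertion, by a covering argument and the linearizing coordinate of $P_\al$, to a local density statement about the perturbed Siegel disk near the centre, and then to produce the required piece of $\De'_n$ by following the near-parabolic renormalization tower built in Section \ref{renormalization-2022-2-12-1}. First I would reduce to the case where $U=D$ is a round disk with $\overline D\subset\De$: for a general open $U\subset\De$, exhaust $U$ up to a null set by countably many disjoint such disks and sum the resulting area estimates (this is legitimate because $\mbox{area}(U\cap \De'_n)\ge\sum_i\mbox{area}(D_i\cap\De'_n)$ over any finite subfamily, and one then lets the subfamily grow). Next, since $\al$ is of bounded type, $\De$ is a quasidisk and the Koenigs linearizer $\phi\colon\cd(0,r)\to\De$ with $\phi(0)=0$, $\phi'(0)=1$ extends to a quasiconformal homeomorphism of $\widehat\cc$. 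Conjugating by $\phi$ turns $P_{\al_n}|_\De$ into a holomorphic perturbation $g_n:=\phi^{-1}\circ P_{\al_n}\circ\phi$ of the rotation $R_\al(z)=e^{2\pi i\al}z$; it is defined on $\cd(0,r')$ for every $r'<r$ once $n$ is large, since $P_{\al_n}\to P_\al$ locally uniformly (because $\al_n\to\al$) and $P_\al(\De)=\De$, and it satisfies $g_n'(0)=e^{2\pi i\al_n}$ and $g_n\to R_\al$ uniformly on compact subsets of $\cd(0,r')$. The Siegel disk $\De'_n$ corresponds under $\phi$ to the Siegel disk $\widehat\De_n$ of $g_n$ inside $\cd(0,r')$; as $|\phi'|^2$ is continuous and locally bounded, it suffices to prove $\liminf_n \mbox{dens}_V(\widehat\De_n)\ge\tfrac12$ for every round disk $V\Subset\cd(0,r')$, the weighted Vitali argument then transferring the estimate through $\phi$.

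The core step is the construction of a large subset of $\widehat\De_n$. Here I would use that $\al_n$ and $\al$ share their first $n$ partial quotients, so that for each fixed $k$ the $k$-th near-parabolic renormalizations $\mathcal R_{\bullet}^{k}(P_{\al_n})$ converge, as $n\to\infty$, to $\mathcal R_{\bullet}^{k}(P_\al)$ by the holomorphic dependence of the renormalization operators (Theorems \ref{paraequ-84}(d) and \ref{ren2020912-1}(ii)); whereas after $\sim n$ renormalizations one reaches a map $\mathfrak f_n$ whose rotation number is $[0,A_n,t_1,t_2,\dots]\to 0$, i.e.\ a map in the near-parabolic regime of the invariant class $e^{2\pi i\mathcal I(\al_*)}\times\mathcal F_1^P$ to which the whole machinery of Section \ref{renormalization-2022-2-12-1} applies: Corollaries \ref{equ2022-2-8-1} and \ref{cor-2021-9-26-6} give that $\mathfrak f_n$ has a controlled fundamental region and cylinder structure and that its critical orbit stays uniformly away from the relevant periodic cycle, so its Siegel disk degenerates no faster than the rate dictated by $\beta_n=[0,A_n,t_1,\dots]$. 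One then unfolds the tower: $\widehat\De_n$ is recovered from the Siegel disk at level $n+1$ — whose rotation number is the fixed bounded-type number $[0,t_1,t_2,\dots]$, hence whose Siegel disk is essentially independent of $n$ — by pulling back through the renormalization conjugacies, at each step taking a fundamental domain, a lift, and the union of its iterates. At every level $k<n$ this unfolding is asymptotically lossless as $n\to\infty$, by the convergence $\mathcal R^{k}(P_{\al_n})\to\mathcal R^{k}(P_\al)$; the single level carrying the huge partial quotient $A_n$ is the one at which, in the limit $A_n\to\infty$, the perturbed linearization domain sees only one half of the model fundamental domain (one end of the Ecalle--Voronin cylinder), the complementary half being the part that would linearize only at a still deeper level, governed by the fixed sequence $(t_j)$ and so contributing nothing extra in the limit. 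Tracking this half back through the conjugacies and $\phi$ yields $\liminf_n \mbox{dens}_U(\De'_n)\ge\tfrac12$.

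The main obstacle is precisely that middle step: making the unfolding of the renormalization tower rigorous and, crucially, showing that at the level of $A_n$ the perturbed linearization domain covers at least half of the model fundamental domain \emph{at every scale} — so that one gets a genuine density estimate, not merely a "contains a definite disk" estimate. This needs the full quantitative near/parabolic renormalization theory of Section \ref{renormalization-2022-2-12-1} for $P(z)=z(1+z)^m$, in particular the explicit control of the fundamental regions, horn maps and the holomorphic dependence, together with the asymptotic hypothesis $\limsup_{n}\sqrt[q_n]{\log A_n}\le 1$, which is exactly what keeps the Siegel-disk and post-critical estimates at the bottom of the tower degrading slowly enough to be absorbed by the unfolding. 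The remaining ingredients — the two Vitali reductions, the convergence $g_n\to R_\al$, and the transfer through the quasiconformal $\phi$ — are routine given the results already established, and they should be carried out in the order above.
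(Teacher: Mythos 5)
Your route diverges from the paper's at the crucial step. The paper never proves Proposition \ref{equ-21-12-11-5} through the near-parabolic renormalization tower of Section \ref{renormalization-2022-2-12-1}. Instead it reduces to Proposition \ref{equ-21-12-11-6} (the Siegel disk $\De'_n$ contains $\chi_n(X_n(\rho))$), proved via Ch\'eritat's parabolic explosion: the explosion function $\chi_n$ at $p_n/q_n$ from Proposition \ref{exp-2021-12-11-1} conjugates $P_{\al_n}$ near $\De$ to a map $f_n$ on a sub-disk of $\cd$ (formula \eqref{equ-21-12-12-3}), and the comparison of $f_n^{q_n}$ to the time-one map of the Fatou vector field $\xi_n=2\pi i q_n z(\vep_n-z^{q_n})\,\tfrac{d}{dz}$ shows that orbits from $X_n(\rho)$ stay in $\cd(0,\rho')$. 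The $\tfrac{1}{2}$ is then a \emph{direct area computation} in the explosion coordinate: the sets $X_n(\rho)$, which are pullbacks of disks under $z\mapsto z^{q_n}/(z^{q_n}-\vep_n)$, asymptotically occupy half the area of $\cd(0,\rho)$ once $\rho>1/A$, because for radii $r>|\vep_n|^{1/q_n}\approx 1/A$ the condition $\mathrm{Re}(z^{q_n}/\vep_n)<\tfrac12$ keeps half of each circle. The renormalization tower enters only later, for the post-critical set (the analogue of Buff–Ch\'eritat's Proposition~11).

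The gap in your proposal is precisely the step you flag, and it is not merely a missing computation. Unfolding a ``half'' from the level carrying $A_n$ back to the dynamical plane of $P_{\al_n}$ requires controlling the area distortion of the $n$-fold composition of the (non-affine, holomorphic) renormalization conjugacies $\Psi_j$, as $n\to\infty$. Exponential contraction of $\mathcal{R}_\al$ (Theorems \ref{ren2020912-2}, \ref{ren2020912-1}(iii), Corollaries \ref{equ2022-2-8-1}, \ref{cor-2021-9-26-8}) controls convergence of the renormalization \emph{fibers} in the Teichm\"uller metric; it does not, by itself, yield any scale-invariant area estimate for the composition map down the tower, and Koebe-type bounds applied $n$ times compound. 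Furthermore, the ``half'' you identify lives in the Ecalle–Voronin cylinder of the renormalized map $\mathfrak f_n$, i.e., after quotienting by the first return, while the density statement is in the Euclidean plane of $P_{\al_n}$; these are related by $\mathrm{Exp}\circ\Phi_{f_j}$, which is wildly non-affine near the fixed point and destroys any naive half-area statement. The strength of the explosion/vector-field method is exactly that it bypasses this issue: $\chi_n$ is a single conjugacy whose derivative converges to the derivative of the linearizer $\phi$ (so the push-forward of Lebesgue measure is under control), and the half-area computation is carried out in the explicit $w$-coordinate where $\xi_n$ is linear. Your proposal, as stated, does not contain a substitute for that mechanism.
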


Set
\begin{align}\label{equ-21-12-12-2}
\vep_n:=\al_n-\frac{p_n}{q_n}=\frac{(-1)^n}{q^2_n(A_n+\tht)+q_nq_{n-1}}\to\frac{(-1)^n}{q^2_nA_n}\ \text{as}\ n\to+\infty.
\end{align}

Note that $\sqrt[q_n]{\vep_n}\to\tfrac{1}{\sqrt[q_n]{A_n}}$ as $n\to+\infty$.

For $0<\rho<1$, set
\beqq
X_n(\rho):=\left\{z\in\cc:\ \frac{z^{q_n}}{z^{q_n}-\vep_n}\in\cd(0,s_n)\right\}\quad\text{with}\quad s_n=\frac{\rho^{q_n}}{\rho^{q_n}+|\vep_n|}.
\eeqq

Proposition \ref{equ-21-12-11-5} can be derived by Proposition \ref{equ-21-12-11-6}.

\begin{proposition}\label{equ-21-12-11-6}
Under the same assumptions as in Proposition \ref{equ-21-12-11-5}, for all $0<\rho<1$, if $n$ is large enough, the Siegel disk $\De'_n$ contains
$\chi_n(X_n(\rho))$.
\end{proposition}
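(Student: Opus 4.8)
Looking at this, I need to prove Proposition \ref{equ-21-12-11-6}: that for all $0<\rho<1$ and $n$ large, the Siegel disk $\De'_n$ of the restriction of $P_{\al_n}$ to $\De$ contains $\chi_n(X_n(\rho))$.

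The plan is to reproduce, in the present high-degree setting, the argument that establishes the corresponding proposition in \cite{BuffCheritat2012}, with the Inou--Shishikura near-parabolic renormalization there replaced by its analogue for the class $\mathcal{F}_1^P$ with $P(z)=z(1+z)^m$, $m\ge 22$: namely Theorems~\ref{ren2020912-1} and~\ref{ren2020912-2} and Corollaries~\ref{cor-2021-9-26-8} and~\ref{cor-2021-9-26-6}. Throughout I write $\vep_n=\al_n-p_n/q_n$ as in \eqref{equ-21-12-12-2} and $\zeta_n=e^{2\pi i p_n/q_n}$, and note that the hypothesis $\limsup_{n\to\infty}\sqrt[q_n]{\log A_n}\le 1$ forces $\sqrt[q_n]{|\vep_n|}\to 0$, hence $|\vep_n|\ll\rho^{q_n}$ and $s_n=\rho^{q_n}/(\rho^{q_n}+|\vep_n|)\to 1$ for every fixed $\rho\in(0,1)$.

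First I would introduce the normalizing coordinate $u_n(z)=z^{q_n}/(z^{q_n}-\vep_n)$, the M\"obius-type map already occurring in the definition of $X_n(\rho)$, so that $X_n(\rho)=u_n^{-1}(\cd(0,s_n))$; it sends $0\mapsto 0$, collapses every $\zeta_n$-orbit in the $z$-disk to a point, and sends the $q_n$-cycle $\{\chi_n(\zeta_n^k\de_*)\}_k$ (with $\de_*^{q_n}=\vep_n$) to $\infty$. Using the functional equation of the explosion function (item~(4) of Proposition~\ref{exp-2021-12-11-1}) one transports the germ of $P_{\al_n}$ at its $q_n$-cycle through $\chi_n$ and then through $u_n$ to a holomorphic germ $h_n$ at $0$ in the $u$-plane, which plays the role of the (perturbed) Ecalle--Voronin/cylinder coordinate. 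The content of the renormalization package is that, for $N$ large — so that $P_\al$, $\al\in\mathcal{S}_N$, admits $q_n$ successive fibered renormalizations staying in the invariant space, by Corollary~\ref{cor-2021-9-26-8} — and $n$ large, this $h_n$ is a fixed explicit rescaling of $\mathcal{R}_{\al_n}$ applied to the $n$-fold renormalized perturbed map, so that $h_n\in e^{2\pi i\mathcal{I}(\al_*)}\times\mathcal{F}_1^P$ with rotation number $\beta_n$ whose continued-fraction tail is $[0;t_1,t_2,\dots]$; this is where the largeness of $A_n$ enters, as it places the relevant perturbation of the parabolic map inside the function space of Theorem~\ref{ren2020912-1}.

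Next I would identify $X_n(\rho)$ with a model Siegel disk. Since $(t_j)$ is bounded, $[0;t_1,t_2,\dots]$ is of bounded type, hence Brjuno, so the renormalization results of Section~\ref{renormalization-2022-2-12-1} (in particular Corollary~\ref{cor-2021-9-26-6}), applied along the renormalization tower, give $h_n$ a Siegel disk containing a disk about $0$ of a radius bounded below independently of $n$; pulling this back through $u_n$ and pushing it forward through $\chi_n$ produces $\rho_0\in(0,1)$, independent of $n$, with $\chi_n(X_n(\rho))\subset\De'_n$ for all $\rho\le\rho_0$ once $n$ is large. The restriction $\rho\le\rho_0$ is then removed by the usual bootstrap: a point $z\in X_n(\rho)$, $\rho<1$, whose $\zeta_n$-orbit — equivalently, whose associated $P_{\al_n}$-orbit under the transported dynamics — eventually enters $\chi_n(X_n(\rho_0))\subset\De'_n$ is itself in $\De'_n$, and since $s_n\to 1$ this exhausts every $\rho<1$. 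Finally one checks $\chi_n(X_n(\rho))\subset\De$, so that ``restriction of $P_{\al_n}$ to $\De$'' is meaningful there: by the preceding Proposition the maps $\de\mapsto\chi_n(\de/\ld_n)$ converge uniformly on compact subsets of $\cd(0,r)$ to the linearizer $\phi$ of $\De$ while $|\ld_n|\to r$, and $X_n(\rho)\subset\cd(0,\rho+o(1))$ because $\sqrt[q_n]{|\vep_n|}\to 0<\rho$; hence $\chi_n(X_n(\rho))$ lies in a compactly contained subdomain of $\De$ for $n$ large.

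The main obstacle is the quantitative matching of the second paragraph: one must show that the germ $h_n$ obtained by conjugating $P_{\al_n}$ through $\chi_n$ and $u_n$ genuinely coincides, after the explicit rescaling by $\ld_n$ (with $|\ld_n|\to r$), with the iterated fibered renormalization of $P_\al$ at depth $n$ together with the $A_n$-perturbation, so that Theorem~\ref{ren2020912-1} and Corollary~\ref{cor-2021-9-26-6} apply with $n$-independent constants. This is precisely where the combinatorial hypothesis on $(A_n)$, the largeness of $N$, and the geometry of the explosion function of Proposition~\ref{exp-2021-12-11-1} — its domain $\cd(0,1/q_n^{3/q_n})$, on which the univalent-function (Koebe-type) distortion bounds recalled in Section~\ref{renormalization-2022-2-12-1} apply uniformly — all have to be reconciled. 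Once this identification is in place, the remainder is a line-by-line transcription of the corresponding argument in \cite{BuffCheritat2012} and involves only routine estimates.
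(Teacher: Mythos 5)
Your proof takes a fundamentally different route from the paper and contains a genuine gap. The paper (following Buff--Ch\'{e}ritat) argues by contradiction, extracting a subsequence along which $A_{n_k}^{1/q_{n_k}}\to A\in[1,+\infty]$, and then, in the case $A>1$, passes to the linearizing coordinate via $\chi_n$ to obtain $f_n=(\chi_n)^{-1}\circ P_{\al_n}\circ\chi_n$ on an exhausting sequence of disks, and reduces to Proposition~\ref{equ-21-12-12-1} (orbit confinement), which is proved by approximating $f_n^{q_n}$ by the time-$1$ map of the polynomial vector field $\xi_n=2\pi i q_n z(\vep_n-z^{q_n})\,d/dz$ after straightening it via the covering $\pi_n$. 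This is the Yoccoz-style perturbed-Fatou-coordinate technique; the near-parabolic renormalization tower is \emph{not} invoked for this proposition at all --- it enters elsewhere, in the control of the post-critical set. You instead propose to transport $P_{\al_n}$ through $\chi_n$ and $u_n$ and then assert that the resulting germ $h_n$ ``is a fixed explicit rescaling of $\mathcal{R}_{\al_n}$ applied to the $n$-fold renormalized perturbed map.'' That identification is nowhere established in the paper, is not a routine distortion estimate, and would need its own substantial proof: the explosion function $\chi_n$, the M\"obius change $u_n$, and the first-return/horn-map construction defining $\mathcal{R}_\al$ are \emph{a priori} unrelated constructions, and $P_{\al_n}$ itself does not even lie in the function class on which $\mathcal{R}_\al$ is defined. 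You acknowledge this as ``the main obstacle'' and then treat it as a line-by-line transcription, which is precisely where your argument is missing the content.

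There is also a concrete error in your first paragraph: the hypothesis $\limsup_n\sqrt[q_n]{\log A_n}\le 1$ does \emph{not} force $\sqrt[q_n]{|\vep_n|}\to 0$. Since $|\vep_n|\asymp (q_n^2 A_n)^{-1}$, one has $\sqrt[q_n]{|\vep_n|}\asymp q_n^{-2/q_n}A_n^{-1/q_n}$; if, say, $A_n\equiv 1$ (which the hypothesis permits), then $\sqrt[q_n]{|\vep_n|}\to 1$, not $0$. This is exactly why the paper needs the case split on the subsequential limit $A=\lim A_{n_k}^{1/q_{n_k}}\in[1,+\infty]$: your ``$s_n\to 1$ for every fixed $\rho$'' plus the bootstrap only works when $A>1$ and $\rho>1/A$, while the boundary case $A=1$ requires a separate argument (in Buff--Ch\'{e}ritat, via Jellouli's theorem). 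Your proposal omits this case analysis entirely.
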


\begin{proof}
This can be derived by the same arguments of the proof of \cite[Proposition 7]{BuffCheritat2012}.
\end{proof}

The proof of Proposition \ref{equ-21-12-11-6} follows by contradiction. Suppose there exist $0<\rho<1$ and an increasing sequence of integers $n_k$ such that
$\chi_{n_k}(X_{n_k}(\rho))$ is not contained in $\De'_{n_k}$. Without loss of generality, suppose this subsequence satisfies
\beqq
A^{1/q_{n_k}}_{n_k}\to A\in[1,+\infty].
\eeqq

The case $A>1$ can be studied by the same argument in the proof of \cite[Proposition 7]{BuffCheritat2012}.

Assume $A>1$. The limit values of the sequence $\chi_n:\ D_n=\cd(0,1/q^{3/q_n}_n)\to\cc$ are isomorphisms $\chi:\cd\to\De$. So, there is a sequence $r'_n$ tending to $1$ such that $\chi_n$ is univalent on $D'_n=\cd(0,r'_n)$ and the domain of the map
\beq\label{equ-21-12-12-3}
f_n=(\chi_n|_{D'_n})^{-1}\circ P_{\al_n}\circ (\chi_n|_{D'_n})
\eeq
eventually contains any compact subset of $\cd$. This case $A>1$ can be derived by the following Proposition \ref{equ-21-12-12-1}.

\begin{proposition}\label{equ-21-12-12-1}
Assume
\beqq
0\leq\frac{1}{A}<\rho<\rho'<1.
\eeqq
If $n$ is large enough, the orbit under iteration of $f_n$ of any point $z\in X_n(\rho)$ remains in $\cd(0,\rho')$.
\end{proposition}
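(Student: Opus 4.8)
The plan is to follow the strategy of the corresponding statement in Buff--Ch\'{e}ritat \cite[Proposition 8]{BuffCheritat2012}, adapting it to the class $P_{\al}(z)=e^{2\pi i\al}z(1+z)^m$. First I would set up the geometric picture: conjugating by the univalent maps $\chi_n$ turns $P_{\al_n}$ restricted to $\De$ into the maps $f_n$ of \eqref{equ-21-12-12-3}, whose domains eventually exhaust $\cd$, and whose limit (in the regime $A>1$) is a rotation $z\mapsto e^{2\pi i\tht}z$ of the unit disk. The sets $X_n(\rho)$ are, by construction, the preimages under the M\"obius-type map $z\mapsto \tfrac{z^{q_n}}{z^{q_n}-\vep_n}$ of the disk $\cd(0,s_n)$; one should record that $X_n(\rho)$ is a "flower"-shaped neighbourhood of $0$ whose outer radius tends to $\rho$ and which is invariant (or nearly invariant) under the rotation $z\mapsto e^{2\pi i p_n/q_n}z$, since $\vep_n=\al_n-\tfrac{p_n}{q_n}$ and $q_n$ enter only through $z^{q_n}$.

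The key step is to produce a supersolution / trapping region argument. I would introduce the auxiliary function $u_n(z)=\dfrac{z^{q_n}}{z^{q_n}-\vep_n}$ and show that $u_n\circ f_n$ is, up to a controlled error, $u_n$ composed with the rotation $z\mapsto e^{2\pi i\al_n}z$; since $e^{2\pi i q_n\al_n}=e^{2\pi i q_n\vep_n}$ is a rotation of $u_n$-space by a tiny angle, the modulus $|u_n(z)|$ is almost preserved under $f_n^{q_n}$. The heart of the matter is then a Schwarz-lemma / subordination estimate: on $\cd(0,\rho')$ the map $f_n$ is univalent and $C^0$-close to the rigid rotation, so one can compare $f_n$-orbits with rotation orbits and show $|u_n(f_n(z))|\le (1+o(1))|u_n(z)|$ whenever $z\in\cd(0,\rho')$, with the $o(1)$ uniform. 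Choosing $s_n$ exactly as in the definition of $X_n(\rho)$ (so that $|u_n|<s_n$ on $X_n(\rho)$ corresponds to $|z|<\rho$ at the "unperturbed" scale, while $|u_n|<$ the value at $|z|=\rho'$ gives a strictly larger flower still inside $\cd(0,\rho')$) makes $\{|u_n|<s'_n\}$ for an intermediate $s_n<s'_n$ a forward-invariant region sandwiched between $X_n(\rho)$ and $\cd(0,\rho')$. Then any orbit starting in $X_n(\rho)$ stays in $\{|u_n|<s'_n\}\subset\cd(0,\rho')$, which is the claim.

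In carrying this out the order would be: (i) establish the uniform convergence $f_n\to$ rotation on compacts of $\cd$ and quantitative univalence on $\cd(0,\rho')$ using the distortion theorems for $\mathcal{S}$ (Theorem~\ref{paraequ-87} and the Distortion theorem) together with the convergence $\psi_k\to\phi$; (ii) derive from $\vep_n=\al_n-p_n/q_n$ and \eqref{equ-21-12-12-2} the estimate $|q_n\vep_n|\to 0$ and hence that the rotation induced on $u_n$-space is negligible; (iii) prove the modulus-monotonicity $|u_n\circ f_n|\le (1+o(1))|u_n|$ on $\cd(0,\rho')$ by writing $u_n\circ f_n\circ (\text{rotation})^{-1}$ as a map fixing $0$ and bounded by $1$ in modulus on a slightly larger disk in $u_n$-space and applying the Schwarz lemma; (iv) choose $s_n<s'_n$ so that $X_n(\rho)\subset\{|u_n|<s'_n\}\subset\cd(0,\rho')$ for $n$ large, using $1/A<\rho<\rho'$; (v) conclude by induction on the number of iterates. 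The main obstacle I expect is step (iii): the error terms coming from the difference between $f_n$ and the exact rotation must be controlled uniformly in $n$ on the full disk $\cd(0,\rho')$ (not just on compacts of $\cd$), and one must verify that this error is genuinely $o(1)$ relative to the gap $s'_n-s_n$ — which itself shrinks with $n$. This requires combining the quantitative closeness of $f_n$ to rotation with the precise rate at which $\rho'$ can be taken close to $1$, exactly as in \cite{BuffCheritat2012}, and is where the hypothesis $\limsup\sqrt[q_n]{\log A_n}\le 1$ (equivalently $\sqrt[q_n]{\vep_n}\to 1/A$) is used to keep the flowers $X_n(\rho)$ from collapsing too fast.
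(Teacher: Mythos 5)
Your proposal does not follow the paper's route, and as written its central step (iii) has a genuine gap. A per-iterate bound $|u_n\circ f_n|\le(1+o(1))\,|u_n|$ on $\cd(0,\rho')$, with $o(1)$ as $n\to\infty$, cannot control an \emph{infinite} orbit for fixed $n$: the multiplicative errors compound with the number of iterates, and the sets $\{|u_n|<s\}$ are invariant under the flow of the vector field $\xi_n=2\pi i q_n z(\vep_n-z^{q_n})\frac{d}{dz}$, not under a single iterate of $f_n$, so there is no exactly forward-invariant region to sandwich unless the per-iterate error is literally zero or summable along the orbit (it is neither; the orbit keeps revisiting the pinch regions near the $q_n$-th roots of $\vep_n$, where $|u_n'|$ is large, so the errors neither decay in the iterate nor sum). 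The Schwarz-lemma step is also not available in the form you describe: $f_n$ does not preserve the fibers of $u_n(z)=z^{q_n}/(z^{q_n}-\vep_n)$, so $u_n\circ f_n\circ(\text{rotation})^{-1}$ does not descend to a self-map of a disk in the $w$-coordinate, and $|u_n|$ is not bounded by $1$ on any uniformly controllable neighbourhood of $\ol{X_n(\rho)}$ (it blows up at the roots of $\vep_n$). A smaller but real omission is the domain question: you iterate $f_n$ on $\cd(0,\rho')$ without knowing the orbit stays in $\cd(0,r'_n)$ where $f_n$ is defined; in the paper this is exactly what Jellouli's theorem is invoked for, to guarantee that $f_n^{q_n}$ and $f_n^{q_{n-1}}$ are defined on the needed sets.

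The paper (following \cite{BuffCheritat2012}) measures the error per \emph{return} rather than per iterate, which is where the cancellation you would need is captured. It straightens $\xi_n$ through $z\mapsto v=z^{q_n}\mapsto w=v/(v-\vep_n)$ and the universal covering $\pi_n(Z)=\psi_n^{-1}(e^{2\pi i q_n\vep_n Z})$ of \eqref{pin-2022-1-30-1}, and lifts $f_n^{q_n}$ and $f_n^{q_{n-1}}$ to maps $F_n,G_n:\mathbb{H}_n(r)\to\mathbb{H}$ with $F_n(Z)=Z+1+o(1)$, $G_n(Z)=Z-(A_n+\tht)+o(1)$, where $\sup|F_n(Z)-Z-1|$ and $\sup|G_n(Z)-Z+A_n+\tht|$ are sub-exponential in $q_n$ and $F_n-\mathrm{Id}$, $G_n-\mathrm{Id}$ are periodic. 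Proposition \ref{equ-21-12-12-1} is then proved by controlling the loss of imaginary part under words in $F_n$ and $G_n$ and comparing it with the height gap $\tau_n(\rho)-\tau_n(\rho')$, which is of order $\rho^{-q_n}/q_n^{2}$ and hence exponentially large in $q_n$; this is where the hypotheses $1/A<\rho$ and the sub-exponential size of $\log A_n$ enter, not through a per-iterate Schwarz estimate. To repair your argument you would have to replace step (iii) by an analysis of the return maps $f_n^{q_n}$, $f_n^{q_{n-1}}$ in a coordinate where the flow is straightened — which is precisely the construction the paper carries out.
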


The proof of Proposition \ref{equ-21-12-12-1} requires the introduction of a flow, where the time-1 map of the flow provides a good approximation of $f^{q_n}_n$, where the study of the flow needs the combination of several changes of coordinates and the analysis of the iteration in the new coordinates. So, this part is divided into two parts. For an illustration of the coordinate transformation, refer to  \cite[Figure 5]{BuffCheritat2012}.

\subsubsection{A vector field}

For $\vep_n$ in \eqref{equ-21-12-12-2} and $f_n$ in \eqref{equ-21-12-12-3}, the vector field is defined as follows:
\begin{align*}
\xi_n=\xi_n(z)\frac{d}{dz}=2\pi iq_nz(\vep_n-z^{q_n})\frac{d}{dz}.
\end{align*}
The vector field is transformed into the following form:
\begin{align*}
2\pi iq_nz(\vep_n-z^{q_n})\frac{d}{dz}
\xrightarrow[semi-conjugate]{z\to v=z^{q_n}}2\pi iq^2_nv(\vep_n-v)\frac{d}{dv}\xrightarrow{v\to w=\tfrac{v}{v-\vep_n}}2\pi iq^2_nw\frac{d}{dw}.
\end{align*}

The last vector field is tangent to Euclidean circles centered at the origin. The boundary of $X_n(\rho)$ is mapped to a Euclidean circle by the map $z\to w=\tfrac{z^{q_n}}{z^{q_n}-\vep_n}$. So, the vector field $\xi_n$ is tangent to the boundary of $X_n(\rho)$, which is invariant under the flow induced by the vector field $\xi_n$.

The unit disk is invariant by its real flow, and the open set
\beqq
\Om_n=\left\{z\in\cc:\ w=\frac{z^{q_n}}{z^{q_n}-\vep_n}\in\cd\right\}
\eeqq
is invariant by the real flow of the vector field $\xi_n$.

The map
\beqq
z\to w=\frac{z^{q_n}}{z^{q_n}-\vep_n}:\ \Om_n\to\cd
\eeqq
is ramified covering of degree $q_n$, ramified at $0$. Thus, there is an isomorphism $\psi_n:\Om_n\to\cd$ such that
\beqq
(\psi_n(z))^{q_n}=\frac{z^{q_n}}{z^{q_n}-\vep_n}.
\eeqq
The change of coordinates $z\to\tht=\psi_n(z):\Om_n\to\cd$ conjugates the vector field $\xi_n$ to
\beqq
2\pi iq_n\frac{d}{d\tht}.
\eeqq
Finally, let $\mathbb{H}$ be the upper half plane, $\pi_n\to\mathbb{H}\to\Om_n\setminus\{0\}$ be the universal covering given by
\beq\label{pin-2022-1-30-1}
\pi_n(Z)=\psi_n^{-1}(e^{2i\pi q_n\vep_n Z}),
\eeq
implying that
\beqq
\pi_n^*\xi_n=\frac{d}{dZ}.
\eeqq

\subsubsection{The coordinate straightening the vector field}

For convenience, consider the situation that $n$ is even such that $\vep_n>0$.
For $r\in[\rho,1)$, $X_n(\rho)\subset X_n(r)\subset\Om_n$,

the universal covering map $\pi_n:\mathbb{H}_n(r)\to X_n(r)\setminus\{0\}$ is defined on the region
\beqq
\mathbb{H}_n(r)=\{Z\in\cc:\ \text{Im}(Z)>\tau_n(r)\},
\eeqq
where
\beqq
\tau_n(r)=\frac{1}{2\pi q^2_n\vep_n}\log\left(1+\frac{\vep_n}{r^{q_n}}\right)\to\frac{1}{2\pi q^2_nr^{q_n}}\quad \text{as}\quad n\to+\infty.
\eeqq

Note that
\beqq
\sqrt[q_n]{\tau_n(r)}\to\frac{1}{r}\ \mbox{as}\ n\to+\infty.
\eeqq

\begin{definition}\cite[Definition 4]{BuffCheritat2012}
A sequence $(B_n)$ is called sub-exponential with respect to $q_n$ if
\beqq
\limsup_{n\to+\infty}\sqrt[q_n]{|B_n|}\leq1.
\eeqq
\end{definition}

\begin{proposition}
Recall the map $\pi_n$ is introduced in \eqref{pin-2022-1-30-1}. Assume $0<r<1$. If $n$ is large enough, there exist holomorphic maps $F_n:\mathbb{H}_n(r)\to\mathbb{H}$ and $G_n:\mathbb{H}_n(r)\to\mathbb{H}$ that
\begin{itemize}
\item $\pi_n$ semi-conjugates $F_n$ to $f^{q_n}_n$ and $G_n$ to $f^{q_{n-1}}_n$:
\beqq
\pi_n\circ F_n=f^{q_n}_n\circ\pi_n\ \mbox{and}\ \pi_n\circ G_n=f^{q_{n-1}}_n\circ\pi_n,
\eeqq
that is,
\begin{center}
\begin{tikzpicture}
    \node (E) at (0,0) {$\mathbb{H}_{n}(r)$ };
    \node[right=of E] (F) at (4,0){$\mathbb{H} $};
    \node[below=of F] (A) {$\Om_n(r)-\{0\}$};
    \node[below=of E] (Asubt) {$X_n(r)-\{0\}$};
   \draw[->] (E)--(F) node [midway,above] {$F_n$};
    \draw[->] (F)--(A) node [midway,right] {$\pi_n$}
                node [midway,left] {};
    \draw[->] (Asubt)--(A) node [midway,below] {$f_n^{\circ q_n}$}
                node [midway,above] {};
    \draw[->] (E)--(Asubt) node [midway,left] {$\pi_n$};
\end{tikzpicture}
\end{center}
and
\begin{center}
\begin{tikzpicture}
    \node (E) at (0,0) {$\mathbb{H}_{n}(r)$ };
    \node[right=of E] (F) at (4,0){$\mathbb{H} $};
    \node[below=of F] (A) {$\Om_n(r)-\{0\}$};
    \node[below=of E] (Asubt) {$X_n(r)-\{0\}$};
   \draw[->] (E)--(F) node [midway,above] {$G_n$};
    \draw[->] (F)--(A) node [midway,right] {$\pi_n$}
                node [midway,left] {};
    \draw[->] (Asubt)--(A) node [midway,below] {$f_n^{\circ q_{n-1}}$}
                node [midway,above] {};
    \draw[->] (E)--(Asubt) node [midway,left] {$\pi_n$};
\end{tikzpicture}.
\end{center}
\item $F_n-Id$ and $G_n-Id$ are periodic of period $1/(q_n\vep_n)$.
\item As $\text{Im}(Z)\to+\infty$, one has
\beqq
F_n(Z)=Z+1+o(1)\ \text{and}\ G_n(Z)=Z-(A_n+\tht)+o(1).
\eeqq
\end{itemize}
Furthermore, the two sequences:
\beqq
\sup_{z\in\mathbb{H}_n(r)}|F_n(Z)-Z-1|\ \text{and}\ \sup_{z\in\mathbb{H}_n(r)}|G_n(Z)-Z+A_n+\tht|
\eeqq
are sub-exponential with respect to $q_n$.
\end{proposition}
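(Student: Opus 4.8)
The plan is to construct $F_n$ and $G_n$ directly from the dynamics of $f_n$ lifted through the universal covering $\pi_n$, following the near-parabolic implosion philosophy of Inou--Shishikura as used in \cite{BuffCheritat2012}. First I would recall that the vector field $\xi_n = 2\pi i q_n z(\vep_n - z^{q_n})\tfrac{d}{dz}$ has two fixed points, $0$ and the $q_n$-th roots of $\vep_n$, which are precisely the parabolic/near-parabolic fixed points created by the rotation number $\tfrac{p_n}{q_n}$; on $\mathbb{H}_n(r)$ the coordinate $Z$ straightens $\xi_n$ to $\tfrac{d}{dZ}$, so the time-$1$ map of $\xi_n$ is simply $Z\mapsto Z+1$. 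The point is that $f_n^{q_n}$ is a perturbation of the time-$1$ map of $\xi_n$ and $f_n^{q_{n-1}}$ is a perturbation of the time-$(-(A_n+\tht))$ map (because $q_{n-1}\al_n \approx -(A_n+\tht)/q_n \bmod 1$ from the continued fraction recursion $q_n\al_n - p_n = \vep_n$ and $q_{n-1}\al_n - p_{n-1} = -(A_n+\tht)\vep_n + O(\vep_n^2)$). So one sets $F_n := \pi_n^{-1}\circ f_n^{q_n}\circ\pi_n$ and $G_n := \pi_n^{-1}\circ f_n^{q_{n-1}}\circ\pi_n$, where the inverse branch of $\pi_n$ is chosen by analytic continuation starting from the asymptotic normalization near $+i\infty$.

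The key steps, in order, are: (1) verify that $f_n^{q_n}$ and $f_n^{q_{n-1}}$ are well-defined and univalent on a neighborhood of $\pi_n(\mathbb{H}_n(r))$ once $n$ is large — this uses that $f_n$ is defined on an increasing sequence of compacts exhausting $\cd$ (from \eqref{equ-21-12-12-3} and the convergence $\chi_n \to \chi$), and that $q_n \vep_n \to 0$ so the relevant orbit segments stay in the domain; (2) produce the lifts $F_n$, $G_n$ to maps $\mathbb{H}_n(r)\to\mathbb{H}$ by the monodromy/covering-space argument, using that $f_n^{q_n}$ and $f_n^{q_{n-1}}$ fix $0$ and are tangent to the corresponding iterates of the rotation, hence descend/lift compatibly with the deck transformation $Z\mapsto Z + 1/(q_n\vep_n)$ of $\pi_n$ — this gives the periodicity of $F_n - \mathrm{Id}$ and $G_n - \mathrm{Id}$; (3) compute the asymptotic expansion as $\mathrm{Im}(Z)\to+\infty$, i.e. near the fixed point $0 = \pi_n(+i\infty)$: here one uses the known parabolic normal form of $f_n^{q_n}$ at $0$ (multiplier $e^{2\pi i q_n \al_n} = e^{2\pi i q_n \vep_n}$, close to $1$) together with the explicit form of $\psi_n$ and $\pi_n$ to extract $F_n(Z) = Z + 1 + o(1)$ and $G_n(Z) = Z - (A_n+\tht) + o(1)$; (4) upgrade the $o(1)$ to an explicit sub-exponential-in-$q_n$ bound on $\sup|F_n(Z) - Z - 1|$ and $\sup|G_n(Z) - Z + A_n + \tht|$, which is where the hypothesis $\limsup\sqrt[q_n]{\log A_n}\le 1$ enters.

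The main obstacle I expect is step (4): controlling the sup-norm of $F_n - \mathrm{Id} - 1$ and $G_n - \mathrm{Id} + A_n + \tht$ uniformly on all of $\mathbb{H}_n(r)$, not merely near $+i\infty$. The difficulty is that $\mathbb{H}_n(r)$ is a half-plane whose boundary height $\tau_n(r) \sim \tfrac{1}{2\pi q_n^2 r^{q_n}}$ shrinks like $r^{-q_n}$, so one must compare $f_n^{q_n}$ with the $\xi_n$-flow on an exponentially thin collar where the naive linearization estimates degrade. The strategy here is the standard Inou--Shishikura trick: write $f_n^{q_n} = $ (time-$1$ map of $\xi_n$) $+$ error, estimate the error by a Cauchy/area argument using that $f_n$ is close to the rotation on a definite neighborhood of $\cd$ (with a loss controlled by $\|f_n - (\text{rotation})\|$ on compacts, which is itself sub-exponential), and then transport this through the change of coordinates $z\mapsto Z$, tracking how the derivative $\pi_n'$ — which involves the factor $q_n^2\vep_n$ and powers of $e^{2\pi i q_n\vep_n Z}$ — amplifies or damps the error. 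The periodicity from step (2) is what lets one reduce the sup over $\mathbb{H}_n(r)$ to a sup over one fundamental domain (a vertical strip of width $1/(q_n\vep_n)$), and a maximum-principle / Schwarz-type argument on that strip, combined with the decay at $+i\infty$, yields the sub-exponential bound. The bookkeeping of constants across the three changes of coordinates $z\to v\to w\to\tht\to Z$ is routine but delicate, and I would lean on \cite[Proposition 8 and its proof, Section 5]{BuffCheritat2012} for the precise form of these estimates, adapting the degree-$2$ computations there to the degree-$m$ polynomial $P_{\al_n}$, where the only change is that the constants in the normal form depend on $m$ through $P''(0) = 2m$ and the near-parabolic renormalization data established in Section \ref{renormalization-2022-2-12-1}.
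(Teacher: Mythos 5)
Your proposal follows essentially the same route as the paper, which simply refers the reader to the proof of \cite[Proposition 8]{BuffCheritat2012}: you construct $F_n,G_n$ as lifts of $f_n^{q_n},f_n^{q_{n-1}}$ through $\pi_n$, read off the periodicity from the deck transformation, extract the asymptotics at $+i\infty$ from the normal form, and defer the uniform sub-exponential bound on $\mathbb{H}_n(r)$ to the Buff--Ch\'eritat estimates, adapted to the degree-$m$ setting. One small correction in the bookkeeping: with $\vep_n=\al_n-p_n/q_n$ as in \eqref{equ-21-12-12-2} one has $q_n\al_n-p_n=q_n\vep_n$ (not $\vep_n$), and the clean continued-fraction identity is $q_{n-1}\al_n-p_{n-1}=-(A_n+\tht)\,q_n\vep_n$ exactly, with no $O(\vep_n^2)$ term; this extra factor of $q_n$ cancels against the $q_n\vep_n$ in the exponent of $\pi_n$, and is in fact needed to land precisely on $G_n(Z)=Z-(A_n+\tht)+o(1)$, so your target asymptotics are right even though the intermediate formula is off.
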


\begin{proof}
The proof of this proposition follows the arguments used in the proof of \cite[Proposition 8]{BuffCheritat2012}.
\end{proof}

\begin{remark}\label{paraequ-32}
By direct calculation, one has
\begin{align*}
P_{\al}(z)=&e^{2\pi i\al}z(1+z)^m=e^{2\pi i\al}z\bigg(\sum^m_{j=0}\tfrac{m!}{j!(m-j)!}z^j\bigg)=e^{2\pi i\al}\sum^m_{j=0}\tfrac{m!}{j!(m-j)!}z^{j+1}\\
=&e^{2\pi i\al}z+e^{2\pi i\al}mz^2+e^{2\pi i\al}\sum^m_{j=2}\tfrac{m!}{j!(m-j)!}z^{j+1},
\end{align*}
plugging into the the transformation $\tfrac{w}{e^{2\pi i\al}m}=z$, the conjugacy is \begin{align*}
&e^{2\pi i\al}m[(\tfrac{w}{e^{2\pi i\al}m})+e^{2\pi i\al}m(\tfrac{w}{e^{2\pi i\al}m})^2+e^{2\pi i\al}\sum^m_{j=2}\tfrac{m!}{j!(m-j)!}(\tfrac{w}{e^{2\pi i\al}m})^{j+1}]\\
=&e^{2\pi i\al}w+w^2+e^{2\pi i\al}\sum^m_{j=2}\tfrac{m!}{(e^{2\pi i\al}m)^{j}j!(m-j)!}w^{j+1}.
\end{align*}
So, we suppose the coefficient in front of $w^2$ is $1$.
\end{remark}
For convenience, consider the following map
\beq\label{paraequ-4}
P_{\al}(z)=e^{2\pi i\al}z+z^2+O(z^3).
\eeq

The following result will be used to show that the domains of $f^{q_n}_n$ and $f^{q_{n-1}}_n$ eventually contain any compact subset of $\cd$.

Consider a holomorphic function $f(z)=e^{2\pi i\al}z+O(z^2)$ with $\al\in\rr\setminus\mathbb{Q}$, which is linearizable near the point $0$.
The maximal Sigel disk, denoted by $\De$, and there exists an isomorphism $\varphi$ on $\De$ such that $\varphi:\De\to\cd$ with $\varphi(0)=0$ and $\varphi\circ f(z)=R_{\al}\circ\varphi(z)$ for $z\in\De$, where $R_{\al}(z)=e^{2\pi i\al}z$ is a rotation.

Let $\cd(0,r)=\{z\in\cc: |z|<r\}$, $\De(r)=\varphi^{-1}(\cd(0,r))$.

\begin{theorem} (Jellouli's theorem) \cite{Jellouli1994, Jellouli2000}
Assume that $P_{\al}$ has a Siegel disk $\De$ and let $\chi:\cd\to\De$ be a linearizing isomorphism. For $r<1$, set $\De(r):=\chi(\cd(0,r))$. Assume $\al_n\in\rr$ and $b_n\in\nn$ satisfy that $b_n\cdot|\al_n-\al|=o(1)$. For all $r^{\prime}_1<r^{\prime}_2<1$, if $n$ is sufficiently large, then
\beqq
\De(r^{\prime}_1)\subset\{z\in \De(r^{\prime}_2):\ \forall j\leq b_n,\ P^{\circ j}_{\al_n}(z)\in \De(r^{\prime}_2)\}.
\eeqq
\end{theorem}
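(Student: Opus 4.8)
The plan is to transport the dynamics of $P_{\al_n}$ into the linearizing coordinate of $\De$, where $P_{\al}$ becomes the rigid rotation $R_{\al}$, and then to exploit that a rotation is an isometry: a perturbation of size $O(\al_n-\al)$ per step moves an orbit by at most $O(b_n|\al_n-\al|)=o(1)$ over $b_n$ steps, which is smaller than the gap $r'_2-r'_1$ once $n$ is large.

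Fix $r'_1<r'_2<1$ and an auxiliary radius $r'_2<r'_3<1$. Since $P_{\al}(\De(r))=\De(r)$ for every $r<1$ (in the coordinate $w=\chi^{-1}(z)$ this is just rotation–invariance of $\cd(0,r)$), the set $\ol{\De(r'_2)}$ is a compact subset of $\De$ with $P_{\al}(\ol{\De(r'_2)})=\ol{\De(r'_2)}\subset\De(r'_3)$. Using $P_{\al_n}=e^{2\pi i(\al_n-\al)}P_{\al}$ one has $P_{\al_n}\to P_{\al}$ uniformly on $\ol{\De(r'_2)}$, hence for all large $n$ the image $P_{\al_n}(\ol{\De(r'_2)})$ is contained in $\De(r'_3)$, which is relatively compact in $\De$. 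First I would record that, consequently, $g_n:=\chi^{-1}\circ P_{\al_n}\circ\chi$ is well defined and holomorphic on $\ol{\cd(0,r'_2)}$ for $n$ large, satisfies $\chi(g_n(w))=P_{\al_n}(\chi(w))$, and therefore $g^{\circ j}_n(w)=\chi^{-1}\big(P^{\circ j}_{\al_n}(\chi(w))\big)$ as long as the partial orbit $w,g_n(w),\dots,g^{\circ(j-1)}_n(w)$ stays in $\cd(0,r'_2)$.

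The key quantitative step is the estimate $g_n=R_{\al}+h_n$ with $\|h_n\|_{\ol{\cd(0,r'_2)}}\le C|\al_n-\al|$, where $C$ depends only on $r'_2$, $r'_3$, $\chi$ and $P_{\al}$, not on $n$. This follows by writing
\beqq
g_n(w)-R_{\al}(w)=\chi^{-1}\bigl(P_{\al_n}(\chi(w))\bigr)-\chi^{-1}\bigl(P_{\al}(\chi(w))\bigr),
\eeqq
and using that $P_{\al_n}(\chi(w))-P_{\al}(\chi(w))=(e^{2\pi i(\al_n-\al)}-1)P_{\al}(\chi(w))=O(|\al_n-\al|)$ uniformly on $\ol{\De(r'_2)}$, that both points lie in the compact set $\ol{\De(r'_3)}$ for $n$ large, that $\chi^{-1}$ is Lipschitz there, and finally $\chi^{-1}\big(P_{\al}(\chi(w))\big)=R_{\al}(w)$ from the linearization equation $P_{\al}\circ\chi=\chi\circ R_{\al}$.

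With this in hand the iteration is immediate. Given $w_0$ with $|w_0|\le r'_1$, set $w_{j+1}=g_n(w_j)$ and argue by induction: as long as $|w_0|,\dots,|w_j|\le r'_2$, each $h_n(w_i)$ has modulus $\le C|\al_n-\al|$, and since $|R_{\al}(w)|=|w|$ we get $|w_{j+1}|\le|w_0|+(j+1)C|\al_n-\al|\le r'_1+b_nC|\al_n-\al|$. Because $b_n|\al_n-\al|=o(1)$, for $n$ large we have $b_nC|\al_n-\al|<r'_2-r'_1$, so $|w_j|<r'_2$ for every $j\le b_n$; this both closes the induction and shows $P^{\circ j}_{\al_n}(\chi(w_0))=\chi(w_j)\in\De(r'_2)$ for all $j\le b_n$. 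Since every $z\in\De(r'_1)$ is $\chi(w_0)$ for some $|w_0|\le r'_1$ and the case $j=0$ is trivial (as $r'_1<r'_2$), this is exactly the asserted inclusion. The only genuinely delicate point is the uniformity in the estimate $g_n=R_{\al}+h_n$ — that $g_n$ be defined on one fixed disk for all large $n$ and that the Lipschitz constant of $\chi^{-1}$ be taken uniform — which is precisely what the compact containment $\ol{\De(r'_2)}\subset\De(r'_3)\subset\De$ together with the uniform convergence $P_{\al_n}\to P_{\al}$ on compact sets provides.
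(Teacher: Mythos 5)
Your proposal is correct and is essentially the standard proof of Jellouli's theorem (the paper itself does not prove this result but only cites Jellouli's thesis and paper, so there is no author's proof to compare against). Transporting to the linearizing coordinate, writing $g_n=R_\al+h_n$, establishing the uniform bound $\|h_n\|_{\ol{\cd(0,r'_2)}}\le C|\al_n-\al|$ via the factorization $P_{\al_n}=e^{2\pi i(\al_n-\al)}P_\al$ and the Lipschitz regularity of $\chi^{-1}$ on a compact subset of $\De$, and then closing the induction via the triangle inequality and $b_n|\al_n-\al|=o(1)$: this is exactly the argument one expects. One small implicit step you may wish to spell out is the Lipschitz estimate for $\chi^{-1}$: to apply a mean-value bound to the difference $\chi^{-1}(P_{\al_n}(\chi(w)))-\chi^{-1}(P_\al(\chi(w)))$ one needs a path joining the two points on which $|(\chi^{-1})'|$ is uniformly controlled; since $\De$ need not be convex, the clean way is to choose $\delta>0$ so that the $\delta$-neighborhood of $\ol{\De(r'_3)}$ is still compactly contained in $\De$, note that for $n$ large the two points are within distance $\delta$ of each other, and then use that the straight segment between them stays inside that neighborhood. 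This is a minor point and the rest of your argument, including the bookkeeping in the induction up to $j\le b_n$ and the observation that $b_n\in\nn$ together with $b_n|\al_n-\al|=o(1)$ forces $\al_n\to\al$, is sound.
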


\subsection{The control of the post-critical set}

\begin{definition}
Denote by $\partial$ the Hausdorff semi-distance:
$$
\partial(X, Y)=\sup _{x \in X} d(x, Y).
$$
\end{definition}

\begin{definition}
Denote by $\mathcal{PC}\left(P_{\alpha}\right)$ the post-critical set of $P_{\alpha}$:
$$\mathcal{P C}\left(P_{\alpha}\right):=\bigcup_{k \geq 1} P_{\alpha}^{ k}\left(\omega_{\alpha}\right),$$
where $ \omega_{\alpha}=cp_{\tiny{P}}=-\frac{1}{m+1}$ is specified in \eqref{critip2011-8-30-1}.
\end{definition}

\begin{proposition}
There exists $N$ such that as $\alpha^{\prime} \in \mathcal{S}_{N} \rightarrow \alpha \in \mathcal{S}_{N}$, we have
$$
\partial\left(\mathcal{P C}\left(P_{\alpha^{\prime}}\right), \ol{\Delta}_{\alpha}\right) \rightarrow 0
$$
with $\Delta_{\alpha}$ being the Siegel disk of $P_{\alpha}$.
\end{proposition}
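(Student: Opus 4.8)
The proof follows the strategy of Buff--Ch\'{e}ritat \cite[Proposition 6 and its consequences]{BuffCheritat2012}, but now using the near-parabolic renormalization for the class $\mathcal{F}_1^P$ built in Section \ref{renormalization-2022-2-12-1}. The plan is to combine three ingredients: the invariance of $e^{2\pi i\mathcal{I}(\al_*)}\times\mathcal{F}_1^P$ under $\mathcal{R}_\al$ together with its contraction property (Theorem \ref{ren2020912-1} and Corollary \ref{cor-2021-9-26-8}); the control of perturbed Siegel disks from Proposition \ref{equ-21-12-11-5} and Proposition \ref{equ-21-12-11-6}; and Jellouli's theorem on the stability of sub-levels of the linearizing coordinate. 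The key point is that for a sufficiently large $N$, if $\al\in\mathcal{S}_N$ then $P_\al$ (and more generally $f=e^{2\pi i\al}h$ with $h\in\mathcal{F}_1^P$) is renormalizable infinitely many times in the sense of Corollary \ref{cor-2021-9-26-6}, and the post-critical set of $f$ is contained in the domain of definition of $f$ and can be iterated infinitely many times.

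First I would fix $N$ large enough that both Corollary \ref{cor-2021-9-26-8} and Corollary \ref{cor-2021-9-26-6} apply for all $\al\in\text{Irrat}_{\geq N}\supset\mathcal{S}_N$, and so that $P_\al$ has a Siegel disk $\Delta_\al$ bounded by a quasicircle through the critical point $cp_P$ with $\mathcal{PC}(P_\al)\subset\partial\Delta_\al$ (this is the classical bounded-type picture recalled after Definition \ref{paraequ-35}). The structure of the argument is: given $\al'\in\mathcal{S}_N$ close to $\al\in\mathcal{S}_N$, compare the two continued fraction expansions. Writing $\al=[a_0,a_1,\dots]$ and $\al'=[a_0,a_1,\dots,a_n,A_n,\dots]$ where $n=n(\al')\to\infty$ as $\al'\to\al$, one is exactly in the situation of Proposition \ref{equ-21-12-11-5} and its refinement Proposition \ref{equ-21-12-11-6}: the Siegel disk $\Delta'_{n}$ of $P_{\al'}$ restricted to $\Delta_\al$ contains $\chi_n(X_n(\rho))$ for every fixed $0<\rho<1$ once $n$ is large. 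Since $\chi_n(X_n(\rho))$ invades $\Delta_\al$ as $\rho\to1$ and $n\to\infty$ (because $\psi_k:\delta\mapsto\chi_k(\delta/\ld_k)$ converges to the linearizing map $\phi:\cd(0,r)\to\Delta_\al$ and $X_n(\rho)$ fills $\cd$), the Hausdorff semi-distance $\partial(\overline{\Delta'_n},\overline{\Delta}_\al)\to0$. It then remains to push the post-critical set of $P_{\al'}$ into a neighborhood of $\overline{\Delta}_\al$.

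The second half is where Jellouli's theorem enters. Since $\al,\al'\in\mathcal{S}_N$ and $\Delta'_n$ is a Siegel disk of $P_{\al'}$ contained in $\Delta_\al$, by the near-parabolic renormalization picture (Corollary \ref{cor-2021-9-26-6}) the critical point $cp_P$ of $P_{\al'}$ lies in $\overline{\Delta'_n}$ together with its whole forward orbit; hence $\mathcal{PC}(P_{\al'})\subset\overline{\Delta'_n}$. Combining with $\partial(\overline{\Delta'_n},\overline{\Delta}_\al)\to0$ gives $\partial(\mathcal{PC}(P_{\al'}),\overline{\Delta}_\al)\to0$ directly. The more delicate point, and the reason Jellouli's theorem is stated in the excerpt, is to handle the transitional regime where $cp_P$ has not yet entered the innermost Siegel disk: one uses Jellouli's theorem with $b_n$ comparable to $q_n$ and $b_n|\al'-\al|=o(1)$ (guaranteed by $\sqrt[q_n]{\log A_n}\to1$ exactly as in \eqref{equ-21-12-12-2}) to show that for any $r'_1<r'_2<1$, once $n$ is large, the sub-level $\Delta_\al(r'_1)$ is forward invariant under $P_{\al'}$ up to time $b_n$ and stays inside $\Delta_\al(r'_2)$; chaining this with the renormalization control of the first $q_n$ iterates of the critical orbit yields that $\mathcal{PC}(P_{\al'})$ is trapped in an arbitrarily small neighborhood of $\overline{\Delta}_\al$.

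The main obstacle is the matching between the two descriptions of the dynamics: the renormalization tower (which controls the critical orbit on the scale of $q_n$ iterates, via $f_n=\mathcal{R}^n_\al f$ living in $e^{2\pi i\mathcal{I}(\al_*)}\times\mathcal{F}_1^P$) and the flow/vector-field approximation $\xi_n=2\pi i q_n z(\vep_n-z^{q_n})\,\frac{d}{dz}$ underlying Propositions \ref{equ-21-12-12-1} and \ref{equ-21-12-11-6}. One must verify that the sub-exponential error estimates for $F_n-\mathrm{Id}$ and $G_n-\mathrm{Id}$ interact correctly with the Koebe/distortion control coming from the fact that each renormalization is of the form $P\circ\psi^{-1}$ with $\psi$ univalent on $V'\supset\overline V$; this is precisely the place where having $m\geq22$ and the carefully chosen ellipse $E$ (hence the definite modulus $\mathrm{mod}(V'\setminus\overline V)$, giving $\ld=e^{-2\pi\,\mathrm{mod}(V'\setminus\overline V)}<1$) is used. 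Everything else is a transcription of \cite[Section 3--5]{BuffCheritat2012} with $z(1+z)^2$ replaced by $z(1+z)^m$ and the Inou--Shishikura class replaced by $\mathcal{F}_1^P$; I would organize the write-up so that each lemma of \cite{BuffCheritat2012} is invoked with its hypotheses checked against the generalized statements established above, and only the quantitative inputs depending on $m$ are re-derived.
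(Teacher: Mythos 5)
Your proposal reaches for the wrong half of the Buff--Ch\'eritat machinery. The paper's proof of this proposition (following \cite[Proposition~11]{BuffCheritat2012}, whose adaptation is what all the subsections after this statement --- ``The class of maps in $\mathcal{F}^P_2$'', ``Perturbed Fatou coordinates'', ``Renormalization tower'', ``Neighborhoods of the post-critical set'' --- are setting up) uses the renormalization tower and the nested neighborhoods $U_j(\alpha)=\bigcup_{k=0}^{q_{j+1}+lq_j}P_\alpha^{k}(C_j)$: one shows that $\mathcal{PC}(P_{\alpha'})\subset U_j(\alpha')$ for every $j$, that for fixed $j$ the set $U_j(\alpha')$ varies continuously and stays near $U_j(\alpha)$ as $\alpha'\to\alpha$ in $\mathcal{S}_N$, and that $U_j(\alpha)$ shrinks Hausdorff-close to $\overline{\Delta}_\alpha$ as $j\to\infty$ (using boundedness of the partial quotients). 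Nothing in that argument passes through Propositions~\ref{equ-21-12-11-5}/\ref{equ-21-12-11-6} or the vector field $\xi_n$. Those propositions are used for the area estimate $\operatorname{area}(K_{\alpha_n})\geq(1-\varepsilon)\operatorname{area}(K_\alpha)$, where the perturbation is driven by a \emph{large} continued-fraction entry $A_n$.

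There are two concrete gaps in your route. First, for $\alpha'\in\mathcal{S}_N$ the tail entries stay bounded, so in the decomposition $\alpha'=[a_0,\dots,a_n,A_n,\dots]$ you have $A=\lim\sqrt[q_n]{A_n}=1$; Proposition~\ref{equ-21-12-12-1} (and hence Proposition~\ref{equ-21-12-11-6}) requires $1/A<\rho<1$, which is vacuous when $A=1$. The flow/straightening machinery built on $X_n(\rho)$ therefore says nothing in the bounded-type-to-bounded-type limit you need here, and ``$\chi_n(X_n(\rho))$ invades $\Delta_\alpha$'' has no content. Second, the assertion that the critical point of $P_{\alpha'}$ and its whole forward orbit lie in $\overline{\Delta'_n}$ (the invariant disk of $P_{\alpha'}$ \emph{inside} $\Delta_\alpha$) is not justified and in general false: for $\alpha'\in\mathcal{S}_N$ the critical point lies on $\partial\Delta_{\alpha'}$, which is the boundary of the Siegel disk of $P_{\alpha'}$ in its own right, and there is no a priori containment $\Delta_{\alpha'}\subset\Delta_\alpha$. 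Corollary~\ref{cor-2021-9-26-6} gives that the critical orbit remains in the domain of definition through the renormalization cascade, not that it is trapped in the restricted disk. You should instead track the critical orbit through the levels of the renormalization tower via the univalent changes of variable $\Psi_j$, as the paper does, and use the two displayed propositions near the end of the section to conclude.
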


\begin{proof}
This is corresponding to \cite[Proposition 11]{BuffCheritat2012}.
\end{proof}

\begin{corollary}
Let $\left(\alpha_{n}\right)$ be the sequence defined in Proposition \ref{equ-2022-2-10-1}. For all $\delta$, if $n$ is large enough, the post-critical set of $P_{\alpha_{n}}$ is contained in the $\delta$-neighborhood of the Siegel disk of $P_{\alpha}$.
\end{corollary}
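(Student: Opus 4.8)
The plan is to obtain this as an immediate consequence of the preceding Proposition on the control of the post-critical set, once we check that the sequence $(\alpha_n)$ furnished by Proposition~\ref{equ-2022-2-10-1} converges to $\alpha$ while staying inside $\mathcal{S}_N$. First I would fix $N$ large enough that Proposition~\ref{equ-2022-2-10-1}, the hypothesis $\alpha\in\mathcal{S}_N$, and the Proposition asserting $\partial(\mathcal{PC}(P_{\alpha'}),\ol{\Delta}_\alpha)\to 0$ as $\alpha'\to\alpha$ in $\mathcal{S}_N$ all apply simultaneously. Under these hypotheses $\alpha=[a_0,a_1,a_2,\dots]\in\mathcal{S}_N$ and
\[
\alpha_n=[a_0,a_1,\dots,a_n,A_n,N,N,N,\dots],
\]
with $(A_n)$ satisfying $\sqrt[q_n]{A_n}\to+\infty$ and $\sqrt[q_n]{\log A_n}\to 1$.

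Next I would verify that $\alpha_n\in\mathcal{S}_N$ for all large $n$ and that $\alpha_n\to\alpha$. The continued fraction of $\alpha_n$ has only finitely many non-constant partial quotients, namely $a_1,\dots,a_n,A_n$, and is eventually equal to $N$, so its sequence of partial quotients is bounded; moreover $a_k\ge N$ for $1\le k\le n$ since $\alpha\in\mathcal{S}_N$, and $A_n\ge N$ once $n$ is large because $\sqrt[q_n]{A_n}\to+\infty$ forces $A_n\to+\infty$. Hence every partial quotient of $\alpha_n$ after the first is $\ge N$, so $\alpha_n\in\mathcal{S}_N$. For the convergence, $\alpha_n$ and $\alpha$ share the partial quotients $a_0,a_1,\dots,a_n$, so both lie within $1/(q_nq_{n+1})$ of the $n$th convergent $p_n/q_n$ (this is the standard best-approximation bound, consistent with \eqref{equ-21-12-12-2}); therefore $|\alpha_n-\alpha|\le |\alpha_n-p_n/q_n|+|p_n/q_n-\alpha|\le 2/(q_nq_{n+1})\to 0$, so $\alpha_n\to\alpha$ inside $\mathcal{S}_N$.

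Finally I would apply the preceding Proposition with $\alpha'=\alpha_n$: since $\alpha_n\to\alpha$ in $\mathcal{S}_N$, we obtain $\partial(\mathcal{PC}(P_{\alpha_n}),\ol{\Delta}_\alpha)\to 0$ as $n\to\infty$. Unwinding the definition of the Hausdorff semi-distance, this says exactly that for every $\delta>0$ there is $n_0$ with $\sup_{x\in\mathcal{PC}(P_{\alpha_n})}d(x,\ol{\Delta}_\alpha)<\delta$ for all $n\ge n_0$, i.e. $\mathcal{PC}(P_{\alpha_n})$ is contained in the $\delta$-neighborhood of $\ol{\Delta}_\alpha$, which is the same open set as the $\delta$-neighborhood of the Siegel disk $\Delta_\alpha$ of $P_\alpha$. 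This is the asserted statement. There is no genuine obstacle beyond bookkeeping: the only points needing care are that a single $N$ works for all three hypotheses and that $A_n$ eventually exceeds $N$, the analytic content being carried entirely by the preceding Proposition (itself the analogue of \cite[Corollary~2]{BuffCheritat2012}).
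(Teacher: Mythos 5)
Your proposal is correct and is essentially the expected argument: the paper's stated proof is a citation to \cite[Corollary~4]{BuffCheritat2012}, and what you have written is the explicit unwinding of that—verify $\alpha_n\in\mathcal{S}_N$ for large $n$, verify $\alpha_n\to\alpha$, then apply the convergence proposition for $\partial(\mathcal{PC}(P_{\alpha'}),\ol{\Delta}_\alpha)$. One small technical point: when you write that $\alpha_n$ is within $1/(q_nq_{n+1})$ of $p_n/q_n$, the relevant denominator for $\alpha_n$ is $q'_{n+1}=A_nq_n+q_{n-1}$, not $q_{n+1}=a_{n+1}q_n+q_{n-1}$; since $A_n\to\infty$ and $a_{n+1}$ is bounded you do have $q'_{n+1}\ge q_{n+1}$ for large $n$, so your estimate holds, but the cleaner route is to use the universal bound $|\alpha-p_n/q_n|<1/q_n^{2}$ and $|\alpha_n-p_n/q_n|<1/q_n^{2}$ directly (consistent with~\eqref{equ-21-12-12-2}).
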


\begin{proof}
This is corresponding to \cite[Corollary 4]{BuffCheritat2012}.
\end{proof}

\subsubsection{The class of maps in $\mathcal{F}^{P}_{2}$}

For any $m\geq22$, consider the polynomial
\beqq
P(z)=z(1+z)^m.
\eeqq
The critical point and the critical values are introduced in \eqref{critip2011-8-30-1}.
Set
$$v:=cv_{\tiny{P}},\ U:=V^{'},$$
where $V^{'}$ is specified in \eqref{paraequ-92}.

For the class of maps in $\mathcal{F}^{P}_{2}$ introduced in Definition \ref{renorm202091206}, for the convenience of the readers by using the the methods in \cite{BuffCheritat2012}, we adopt the notations in \cite{BuffCheritat2012}.

The $\mathcal{F}^{P}_{2}$ is represented by the following notation:
\beqq
\mathcal{IS}_{0}=\left\{f=P\circ\varphi^{-1}:\ U_f\to\cc\ \mbox{with}\ \bigg | \begin{array}{l}
\varphi:U\to U_f\
\mbox{univalent (isomorphism) such that} \\
\varphi(0)=0\ \mbox{and}\ \varphi^{\prime}(0)=1
\end{array}\right\}.
\eeqq

A map $f \in \mathcal{IS}_{0}$ fixes $0$ with multiplier $1$. The map $f$ has a critical point $\omega_{f}:=\varphi_{f}(\tfrac{-1}{m+1})$ which depends on $f$ and a critical value $v=cv_{\tiny{P}}$ which does not depend on $f$.

\subsubsection{Fatou coordinates}
Near $z=0$, elements $f \in \mathcal{IS}_{0}$ have an expansion of the form
$$
f(z)=z+c_{f} z^{2}+\mathcal{O}\left(z^{3}\right)
$$
The following result is an immediate consequence of the Koebe Distortion Theorem.

\begin{lemma}(Theorem \ref{paraequ-84}, part a) The set $\left\{c_{f} ; f \in\right.$ $\left.\mathcal{IS}_{0}\right\}$ is a compact subset of $\mathbb{C}^{*}$.
\end{lemma}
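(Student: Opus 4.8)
The claim to prove is that $\{c_f : f \in \mathcal{IS}_0\}$ is a compact subset of $\mathbb{C}^*$, where $f(z) = z + c_f z^2 + \mathcal{O}(z^3)$ near $0$ for $f = P\circ\varphi^{-1} \in \mathcal{IS}_0 = \mathcal{F}_2^P$.

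\medskip

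The plan is to extract $c_f$ explicitly in terms of $\varphi$ and the Taylor data of $P$, and then invoke the compactness of the relevant class of univalent maps. First I would compute $c_f$: writing $f = P\circ\varphi^{-1}$, differentiating $f\circ\varphi = P$ twice at $0$ and using $\varphi(0)=0$, $\varphi'(0)=1$, $P(z) = z + mz^2 + \mathcal{O}(z^3)$ (from Proposition \ref{paraequ-2}, since $P''(0) = 2m$), one gets $f'(0) = P'(0) = 1$ and $f''(0) + f'(0)\varphi''(0) = P''(0)$, so $f''(0) = 2m - \varphi''(0)$, hence $c_f = \tfrac{1}{2}f''(0) = m - \tfrac12\varphi''(0)$. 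This is exactly the computation already carried out in the corollary following Proposition \ref{paraequ-88}. Thus the map $f \mapsto c_f$ factors through $\varphi \mapsto \varphi''(0)$, and it suffices to show that as $f$ ranges over $\mathcal{IS}_0$, the quantity $\varphi''(0)$ ranges over a compact set, and that $c_f$ never vanishes.

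\medskip

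For compactness, the key point is that each $\varphi$ in the definition of $\mathcal{F}_2^P$ is a normalized univalent map on the fixed Jordan domain $V'$ containing $0$. Since $V'$ is a fixed open neighborhood of $0$, there is a fixed $r>0$ with $\overline{\mathbb{D}}(0,r)\subset V'$, and the restrictions $\varphi|_{\mathbb{D}(0,r)}$, after the rescaling $z\mapsto \tfrac1r\varphi(rz)$, form a subfamily of the class $\mathcal{S}$ of Theorem \ref{paraequ-87}; by the classical bound $|\widehat\varphi''(0)|\le 4$ there, $|\varphi''(0)|$ is uniformly bounded, say $|\varphi''(0)|\le 4/r$. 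Boundedness of $\{c_f\}$ follows. For closedness: take a sequence $f_n = P\circ\varphi_n^{-1}\in\mathcal{IS}_0$ with $c_{f_n}\to c$. The $\varphi_n$ are uniformly bounded on compact subsets of $V'$ (again by the distortion/growth theorems for $\mathcal{S}$ applied after rescaling), hence form a normal family; passing to a subsequence, $\varphi_n\to\varphi$ locally uniformly on $V'$. The limit $\varphi$ is univalent on $V'$ (a locally uniform limit of univalent maps is either univalent or constant, and $\varphi'(0)=1$ rules out constancy) and satisfies $\varphi(0)=0$, $\varphi'(0)=1$; moreover $\varphi$ trivially admits a quasiconformal extension in the relaxed sense used for $\mathcal{F}_2^P$ (indeed, in the holomorphic-only description of these classes, which by the remark after Proposition \ref{equ-20208-23-1} is the only part that matters for the embeddings and renormalization, no extension condition is needed). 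Hence $f := P\circ\varphi^{-1}\in\mathcal{IS}_0$, and $\varphi_n''(0)\to\varphi''(0)$ by Cauchy estimates for the uniformly convergent sequence, so $c = \lim c_{f_n} = m - \tfrac12\varphi''(0) = c_f$, giving $c\in\{c_f\}$.

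\medskip

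It remains to show $0\notin\{c_f\}$, i.e. $c_f\ne 0$ for every $f\in\mathcal{IS}_0$. This says $0$ is a \emph{nondegenerate} parabolic fixed point of $f$, which is built into the structure: $f = P\circ\varphi^{-1}$ with $P(z) = z(1+z)^m$ having $P''(0) = 2m \ne 0$ and $\varphi$ univalent (so $\varphi^{-1}$ is too, with $(\varphi^{-1})'(0) = 1$), hence the second-order term of the composition $f''(0) = 2m - \varphi''(0)$ cannot be deduced to be zero in general — but more to the point, this is precisely the content of part (a) of Theorem \ref{paraequ-84}, which asserts $f''(0)\ne 0$ and even $|f''(0) - (2m+1)|\le 1$ for $f\in\mathcal{F}_1^P\supset\mathcal{F}_2^P$; since $2m+1 \ge 45$ for $m\ge 22$, the disk $|w - (2m+1)|\le 1$ is bounded away from $0$, so $c_f = \tfrac12 f''(0)$ lies in a fixed compact subset of $\mathbb{C}^*$. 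I would therefore cite Theorem \ref{paraequ-84}(a) (equivalently the corollary after Proposition \ref{paraequ-88}) for both the nonvanishing and an alternative, cleaner proof of boundedness. The only mild subtlety — and the step I expect to need the most care — is matching the precise normalization conventions between $\mathcal{F}_2^P$ and the class $\mathcal{S}$ (choosing the fixed radius $r$ with $\overline{\mathbb{D}}(0,r)\subset V'$ and checking the rescaled maps genuinely land in $\mathcal{S}$), and confirming that the limit $\varphi$ obtained by normality really satisfies all the defining conditions of $\mathcal{F}_2^P$ in the holomorphic sense; everything else is a direct application of the Koebe distortion theory already quoted in the excerpt.
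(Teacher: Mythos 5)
Your proposal is correct and matches the paper's (implicit) argument: the computation $c_f = m - \tfrac12\varphi''(0)$, the Koebe/Bieberbach bound on $\varphi''(0)$ for boundedness, the normal-family argument for closedness, and the appeal to Theorem~\ref{paraequ-84}(a) for nonvanishing are exactly what the paper's one-line remark ("immediate consequence of the Koebe Distortion Theorem") is gesturing at. One small imprecision worth flagging: your parenthetical "equivalently the corollary after Proposition~\ref{paraequ-88}" conflates $\mathcal{F}_0$, where $\varphi$ is univalent on the full slit plane $\cc\setminus(-\infty,-1]$, with $\mathcal{IS}_0=\mathcal{F}_2^P$, where $\varphi$ is only required to be univalent on the bounded domain $V'$ — the bound $|\varphi''(0)+1|\le 1$ from that corollary does not transfer directly to the smaller domain, so Theorem~\ref{paraequ-84}(a), not the corollary, is the statement to invoke for nonvanishing.
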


In particular, for all $f \in \mathcal{IS}_{0}, c_{f} \neq 0$ and $f$ has a multiple fixed point of multiplicity $2$ at $0$. The change of variables
$$
z=\tau_{f}(w):=-\frac{1}{c_{f} w}
$$
gives us $F(w)=w+1+o(1)$ near infinity. To lighten notation, we will write $f$ and $F$ for pairs of functions related as above; $\omega_{f}:=\varphi_{f}(\tfrac{-1}{m+1})$ and $\omega_{F}:=$ $\tau_{f}^{-1}\left(\omega_{f}\right)$ will denote their critical points.

\begin{lemma}
There exists $R_{0}$ such that for all $f \in \mathcal{IS}_{0}$,
\begin{itemize}
\item $F$ is defined and univalent in a neighborhood of $\mathbb{C} \backslash D\left(0, R_{0}\right)$;
 \item for all $w \in \mathbb{C} \backslash D\left(0, R_{0}\right)$,
$$
|F(w)-w-1|<\frac{1}{4} \quad \text { and } \quad\left|F^{\prime}(w)-1\right|<\frac{1}{4}.
$$
\end{itemize}
\end{lemma}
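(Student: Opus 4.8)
The plan is to derive this lemma from the compactness statement in the preceding lemma together with the change of variables $z = \tau_f(w) = -\tfrac{1}{c_f w}$, following the standard normalization of a parabolic germ near infinity. First I would expand $f$ near $0$: writing $f(z) = z + c_f z^2 + a_3(f) z^3 + \cdots$, the coefficients $c_f$ and $a_3(f)$ are obtained by composing $P$ with $\varphi_f^{-1}$, and since $\varphi_f \colon U \to U_f$ is univalent with $\varphi_f(0)=0$, $\varphi_f'(0)=1$, and $U = V'$ is a fixed domain, the family $\{\varphi_f^{-1}\}$ restricted to any fixed disk $\cd(0,\rho_0) \subset \bigcap_f U_f$ is a normal family by the Koebe distortion theorem. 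Hence not only $\{c_f\}$ but also $\{a_3(f)\}$ (and all higher Taylor coefficients on that disk) lie in a bounded set. This is the input I would package as a uniform bound: there is $\rho_0 > 0$ and $M > 0$ such that $|f(z) - z - c_f z^2| \le M|z|^3$ for all $|z| \le \rho_0$ and all $f \in \mathcal{IS}_0$, and $c_f$ ranges over a compact subset $K \subset \cc^*$.

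Next I would compute $F$. With $w = -\tfrac{1}{c_f z}$, i.e. $z = -\tfrac{1}{c_f w}$, one has $F(w) = \tau_f^{-1}(f(\tau_f(w))) = -\tfrac{1}{c_f f(z)}$. Using $f(z) = z(1 + c_f z + O(z^2))$ for small $z$, I would expand
\begin{align*}
F(w) = -\frac{1}{c_f z \left(1 + c_f z + O(z^2)\right)} = w\left(1 - c_f z + O(z^2)\right)^{-1}\cdot(-1)\cdot(-1) = w + 1 + O\!\left(\frac{1}{w}\right),
\end{align*}
where the implied constant in the $O(1/w)$ term, and likewise in the estimate $F'(w) = 1 + O(1/w^2)$, depends only on $M$ and on $\min_{c \in K}|c|$, $\max_{c \in K}|c|$ — hence is uniform over $f \in \mathcal{IS}_0$. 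Concretely, the region $|z| \le \rho_0$ corresponds to $|w| \ge \tfrac{1}{\rho_0 \max_{c\in K}|c|}$, and on a possibly smaller such region the expansion of $f$ is dominated by its quadratic term, so the geometric series manipulation above is legitimate and quantitatively controlled. Then I would choose $R_0$ large enough (depending only on $M$ and $K$) that on $\cc \setminus \cd(0, R_0)$ the error terms are each below $\tfrac14$; univalence of $F$ on a neighborhood of $\cc\setminus\cd(0,R_0)$ follows because $F$ is a composition of $\tau_f^{-1}$, $f$, $\tau_f$, each univalent on the relevant set once $R_0$ is large (here I would invoke that $f$ is univalent near $0$, which holds since $f = P \circ \varphi_f^{-1}$ with $\varphi_f$ univalent and $P$ univalent near $0$ — indeed $P'(0) = 1 \ne 0$).

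The main obstacle I anticipate is purely bookkeeping rather than conceptual: making the passage from "the Taylor coefficients of $f$ on a fixed disk are uniformly bounded" to "the remainder in $F(w) = w + 1 + O(1/w)$ is uniformly $O(1/w)$ with an explicit constant" fully rigorous, i.e. tracking how the radius $\rho_0$, the bound $M$, and the compact set $K \subset \cc^*$ combine to produce a single $R_0$ that works for the whole family. One subtlety to be careful about is that $\varphi_f^{-1}$ need not be defined on all of $U_f$ near $0$ in a uniform way — but since $U = V'$ is fixed and $\varphi_f$ is univalent with derivative $1$ at $0$, the Koebe one-quarter theorem gives a fixed disk about $0$ in $U$ on which $\varphi_f$ has a univalent inverse with uniformly bounded distortion, and that is exactly what localizes everything. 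Once that uniform quadratic-germ estimate is in hand, the two displayed inequalities and the univalence claim are immediate, so I would present the proof as: (1) cite the preceding lemma and Koebe to get the uniform expansion $f(z) = z + c_f z^2 + O(z^3)$ with $c_f \in K \Subset \cc^*$; (2) change coordinates and expand to get $F(w) = w + 1 + O(1/w)$, $F'(w) = 1 + O(1/w^2)$ uniformly; (3) pick $R_0$ to make both $O$-terms $< \tfrac14$ and conclude univalence from the chain $\tau_f \to f \to \tau_f^{-1}$.
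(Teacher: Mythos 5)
Your proposal is correct and follows essentially the same route as the paper. The paper's own proof of this lemma is extremely terse — it simply says the claim is ``derived by the compactness of $\mathcal{IS}_0$'' — and your argument is the natural unpacking of that assertion: fix a disk $\cd(0,\rho_0)\subset\bigcap_f U_f$ (which exists by Koebe one-quarter applied to $\varphi_f$ on a fixed disk in $V'$), use Koebe distortion for the normalized univalent family $\{\varphi_f^{-1}\}$ to get uniform bounds on the Taylor coefficients of $f=P\circ\varphi_f^{-1}$ there, combine with the compactness of $\{c_f\}\subset\cc^*$ from the preceding lemma, push through $\tau_f(w)=-1/(c_fw)$ to get $F(w)=w+1+O(1/w)$ and $F'(w)=1+O(1/w^2)$ with implied constants uniform over $\mathcal{IS}_0$, and then choose $R_0$ to make both remainders $<1/4$. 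Your univalence argument (writing $F$ as $\tau_f^{-1}\circ f\circ\tau_f$, noting $\tau_f$ maps $\{|w|>R_0\}$ to $\cd(0,1/(|c_f|R_0))$ which is uniformly small since $|c_f|$ is bounded away from $0$, and that $f$ is univalent on a uniform small disk since $\sup_{|z|\le r}|f'(z)-1|<1$ there) is also the right way to close the loop; relying on $|F'-1|<1/4$ alone would not suffice since $\cc\setminus\cd(0,R_0)$ is not convex. There is a small typographical slip in your display — the intermediate expression should read $w\bigl(1+c_fz+O(z^2)\bigr)^{-1}$ before inverting the geometric series — but the conclusion $F(w)=w+1+O(1/w)$ is right, and the bookkeeping you flag as the main remaining work is exactly what would be required to make the paper's one-line citation to compactness fully rigorous.
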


\begin{proof}
 This is derived by the compactness of $\mathcal{IS}_{0}$.
\end{proof}

If $R_{1}>\sqrt{2} R_{0}$, the regions
and
$$
\Omega^{\text {att }}:=\left\{w \in \mathbb{C}:\  \operatorname{Re}(w)>R_{1}-|\operatorname{Im}(w)|\right\}
$$
and
$$
\Omega^{\text {rep }}:=\left\{w \in \mathbb{C}:\  \operatorname{Re}(w)<-R_{1}+|\operatorname{Im}(w)|\right\}
$$
are contained in $\mathbb{C} \backslash D\left(0, R_{0}\right)$.
Then, for all $f \in \mathcal{IS}_{0}$,
$$
F\left(\Omega^{\text {att }}\right) \subset \Omega^{\text {att }} \quad \text { and } \quad F\left(\Omega^{\text {rep }}\right) \supset \Omega^{\text {rep }}
$$
In addition, there are univalent maps $\Phi_{F}^{\text {att }}: \Omega^{\text {att }} \rightarrow \mathbb{C}$ (attracting Fatou coordinate for $F$ ) and $\Phi_{F}^{\text {rep }}: \Omega^{\text {rep }} \rightarrow \mathbb{C}$ (repelling Fatou coordinate for $F$ ) such that
$$
\Phi_{F}^{\text {att }} \circ F(w)=\Phi_{F}^{\text {att }}(w)+1 \quad \text { and } \quad \Phi_{F}^{\text {rep }} \circ F(w)=\Phi_{F}^{\text {rep }}(w)+1
$$
when both sides of the equations are defined. The maps $\Phi_{F}^{\text {att }}$ and $\Phi_{F}^{\text {rep }}$ are unique up to an additive constant. Further, $\Phi_{F}^{\text {att }}-\Phi_{F}^{\text {rep }}$ converges to a constant as $w \in \Omega^{\text {att }} \cap \Omega^{\text {rep }}$ goes to infinity.

\begin{lemma}(Theorem \ref{paraequ-84}, part a) For all $f \in$ $\mathcal{IS}_{0}$, the critical point $\omega_{f}$ is attracted to $0$.
\end{lemma}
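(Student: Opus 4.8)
The plan is to reduce the statement to the dynamics of the critical \emph{value}, transport it to the $Q$--picture at $\infty$, and then read the conclusion off from the forward invariance of the attracting sector that has already been analysed. First I would observe that, writing $f=P\circ\varphi^{-1}$ with $\omega_f=\varphi(cp_{P})$, one has $f(\omega_f)=P(\varphi^{-1}(\omega_f))=P(cp_{P})=cv_{P}=:v$ by Proposition \ref{paraequ-2}. Hence the forward orbit of $\omega_f$ is $\omega_f,\,v,\,f(v),\,f^{2}(v),\dots$, and it suffices to show that $v$ lies in $\text{Basin}(0)$ and that its orbit never leaves $\text{Dom}(f)$; since $\omega_f\in\text{Image}(\varphi)=\text{Dom}(f)$, the orbit of $\omega_f$ then converges to $0$ as well.

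Next I would conjugate by $\psi_0(z)=-\tfrac4z$. By Proposition \ref{equ-20208-23-1}(d) the map $F:=\psi_0^{-1}\circ f\circ\psi_0=Q\circ\widehat\varphi^{-1}$ lies in $\mathcal{F}_2^{Q}$ and has its parabolic fixed point at $\infty$, and $\psi_0^{-1}(cv_{P})=-4/cv_{P}=4(m+1)^{m+1}/m^{m}=cv_{Q}$ is precisely the critical value of $F$. So the claim becomes: $cv_{Q}$ is attracted to $\infty$ under $F$, with all iterates in $\text{Dom}(F)$.

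The substantive work is already in place. By Lemma \ref{paraequ-63}(a),(c) the sector $\vv(u_{0},\tfrac{7\pi}{10})$, with $u_{0}=u_{1}/\cos(\tfrac{\pi}{5})$ and $u_{1}=(\sqrt{m+1}+\sqrt m)^{2}+2.1$, is contained in $\text{Dom}(F)$, is forward invariant under $F$, and is contained in $\text{Basin}(\infty)$. Thus I only have to check $cv_{Q}\in\vv(u_{0},\tfrac{7\pi}{10})$; since $cv_{Q}$ is a positive real number this amounts to $cv_{Q}>u_{0}$. Using the monotonicity of $(1+\tfrac1x)^{x}$ recorded in \eqref{paraequ-36}, $cv_{Q}=4(m+1)\bigl(1+\tfrac1m\bigr)^{m}\ge 4(m+1)\bigl(1+\tfrac1{22}\bigr)^{22}>10(m+1)$ for $m\ge22$, whereas $(\sqrt{m+1}+\sqrt m)^{2}<4(m+1)$ by \eqref{paraequ-42}, so $u_{0}<\bigl(4(m+1)+2.1\bigr)/\cos(\tfrac{\pi}{5})<5m+9<10(m+1)$; hence $cv_{Q}>u_{0}$. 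Consequently $F^{n}(cv_{Q})\in\vv(u_{0},\tfrac{7\pi}{10})$ for every $n\ge0$ and $F^{n}(cv_{Q})\to\infty$. Transporting back by $\psi_0$ gives $f^{n}(v)\to0$ with the orbit confined to $\psi_0\bigl(\vv(u_{0},\tfrac{7\pi}{10})\bigr)\subset\text{Dom}(f)$, which completes the proof. Equivalently one may argue with the Fatou coordinate directly: by the normalisation $\Phi_{att}(cv_{Q})=1$ of Proposition \ref{attracting2020912-1}(a) one has $\Phi_{att}(F^{n}(cv_{Q}))=n+1$, so $F^{n}(cv_{Q})=\Phi_{att}^{-1}(n+1)\in\mathcal{H}_{1}$, and the lower bound for $|\Phi_{att}'|$ in \eqref{near-equat-4} forces $|F^{n}(cv_{Q})-cv_{Q}|\to\infty$.

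The only point needing care is the domain bookkeeping: that the attracting sector is genuinely contained in $\text{Dom}(F)=\text{Image}(\widehat\varphi)$ for \emph{every} $\widehat\varphi$ arising from $\mathcal{F}_1^{P}$, so that the orbit of $cv_{Q}$ cannot escape. This is exactly what Lemma \ref{paraequ-63}(a) supplies, through the inclusion $H_{1}^{\pm}\subset\widehat\varphi(H_{2}^{\pm})$ deduced from the compactness estimates of Lemma \ref{paraequ-11} together with $u_{1}>4e_{1}$. With that input the remaining argument is a single elementary inequality, so I do not expect any further obstacle; the entire difficulty of Theorem \ref{paraequ-84}(a) has been absorbed into the earlier analysis of $\Phi_{att}$ and of the sectors $H_{i}^{\pm}$.
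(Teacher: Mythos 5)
Your proposal is correct and is essentially the argument that the paper's reference ``Theorem \ref{paraequ-84}, part (a)'' is pointing to: reduce to the orbit of the critical value $cv_{P}$, transport to the $Q$--picture at $\infty$, and observe that $cv_{Q}$ lies in the forward-invariant attracting sector $\vv(u_{0},\tfrac{7\pi}{10})\subset\text{Dom}(F)\cap\text{Basin}(\infty)$ supplied by Lemma~\ref{paraequ-63}(a),(c)/Proposition~\ref{attracting2020912-1}. The arithmetic checking $cv_{Q}>u_{0}$ is sound ($4(1+\tfrac1{22})^{22}>10$ together with $u_{0}<(4(m+1)+2.1)/\cos\tfrac{\pi}{5}<5m+9<10(m+1)$), and the domain bookkeeping that $H_{1}^{\pm}\subset\widehat\varphi(H_{2}^{\pm})$ holds uniformly over the class is exactly what Lemma~\ref{paraequ-63}(a) provides.

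One small slip in your closing ``equivalently'' remark: to deduce $|F^{n}(cv_{Q})-cv_{Q}|\to\infty$ from $\Phi_{att}(F^{n}(cv_{Q}))=n+1$ you need the \emph{upper} bound $|\Phi_{att}'|<0.0613686$ in \eqref{near-equat-4} (so that $\Phi_{att}$ cannot expand), not the lower bound; and one would also need to choose a path along which the derivative estimate applies (e.g.\ through $H_{3}^{+}$ or $H_{3}^{-}$, which are convex half-planes, rather than across the full non-convex sector). This does not affect your primary argument, which stands on its own.
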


\begin{lemma}There exists $k$ such that for all $f \in \mathcal{IS}_{0}$, we have $F^{k}\left(\omega_{F}\right)$ $\in \Omega^{\mathrm{att}}$.
\end{lemma}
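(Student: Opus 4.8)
The plan is to bound uniformly the first time the critical orbit enters $\Omega^{\mathrm{att}}$, by combining the previous lemma (the critical point $\omega_f$ is attracted to $0$), the asymptotics $F(w)=w+1+o(1)$ near infinity, and compactness of the class $\mathcal{IS}_0$ (i.e. $\mathcal{F}^P_2$). First I would fix $f\in\mathcal{IS}_0$ and transfer the statement to the coordinate $w$: since $f^{n}(\omega_f)\to 0$ by the previous lemma and $\tau_f^{-1}(z)=-1/(c_f z)\to\infty$ as $z\to 0$, the orbit $F^{n}(\omega_F)$ tends to $\infty$ in $\widehat{\cc}$. Once $|F^{n}(\omega_F)|>R_0$, the estimates $|F(w)-w-1|<\tfrac14$ and $|F'(w)-1|<\tfrac14$ force $\text{Re}\,F^{n+j}(\omega_F)$ to increase by at least $\tfrac34$ at each step while $|\text{Im}\,F^{n+j}(\omega_F)-\text{Im}\,F^{n+j-1}(\omega_F)|<\tfrac14$, so the orbit stays in $\{\text{Re}>R_0\}\subset\cc\setminus\overline{\cd}(0,R_0)$ and, after a bounded number of further steps, has real part exceeding $R_1$ and hence lies in $\Omega^{\mathrm{att}}$. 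This shows $k_f:=\min\{k\ge 0:\ F^{k}(\omega_F)\in\Omega^{\mathrm{att}}\}$ is finite for each $f$ (and since $\Omega^{\mathrm{att}}$ is forward invariant under $F$, $F^{k}(\omega_F)\in\Omega^{\mathrm{att}}$ for all $k\ge k_f$).

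The second step is to show $f\mapsto k_f$ is upper semicontinuous on $\mathcal{IS}_0$, so that, $\mathcal{IS}_0$ being compact and $k_f$ finite-valued, $k:=\sup_f k_f$ is attained and finite; then $F^{k}(\omega_F)\in\Omega^{\mathrm{att}}$ for every $f$. For a fixed $f_0$ with $k_0:=k_{f_0}$, the finite critical orbit $\omega_{f_0},f_0(\omega_{f_0}),\dots,f_0^{k_0}(\omega_{f_0})$ is a compact subset of $\text{Dom}(f_0)$; if $f\to f_0$ in $\mathcal{IS}_0$ then $f\to f_0$ uniformly on compact subsets of $\text{Dom}(f_0)$, $c_f\to c_{f_0}\in\cc^{*}$, and $\omega_f\to\omega_{f_0}$ (using continuity of $f\mapsto c_f$ and of $f\mapsto\varphi_f$ together with $\omega_f=\varphi_f(-\tfrac1{m+1})$). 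A straightforward induction then gives $F^{i}(\omega_F)\to F_0^{i}(\omega_{F_0})$ for $0\le i\le k_0$, where $F_0$ denotes the coordinate transform of $f_0$. Since $F_0^{k_0}(\omega_{F_0})\in\Omega^{\mathrm{att}}$ and $\Omega^{\mathrm{att}}$ is open, $F^{k_0}(\omega_F)\in\Omega^{\mathrm{att}}$ for $f$ near $f_0$, i.e. $k_f\le k_0$; hence $\{f:\ k_f\le k\}$ is open for every $k$, which is the required upper semicontinuity. (A degenerate case — the critical orbit hitting $0$ exactly, so that $F^{n_0}(\omega_F)=\infty$ — is harmless: the orbit is then already beyond $\{|w|>R_0\}$ and the first-paragraph argument still applies, or such $f$ may simply be excluded.)

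The step I expect to be the main obstacle is the upper semicontinuity in the second paragraph: one must make sure that for $f$ near $f_0$ the first $k_0$ iterates of the critical point under $f$ are defined and converge to those of $f_0$, and that the coordinate change $\tau_f$, which sends the parabolic point $0$ to $\infty$, does not create spurious blow-ups along this finite orbit. This is precisely where compactness of $\mathcal{IS}_0$ is used, in the form of locally uniform convergence of the maps together with the continuity of $f\mapsto c_f$ and $f\mapsto\omega_f$. Everything else — the petal estimate of the first paragraph and the final extraction of a uniform $k$ from a finite-valued upper semicontinuous function on a compact space — is routine.
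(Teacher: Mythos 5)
Your proof is correct and follows the same compactness argument that the paper delegates to Buff--Ch\'eritat's Lemma 7: finiteness of each $k_f$ (from the previous lemma and the petal estimates near $\infty$) plus upper semicontinuity of $f\mapsto k_f$ on the compact class $\mathcal{IS}_0$. One small imprecision in your first paragraph: having $|F^{n}(\omega_F)|>R_0$ does not imply $\operatorname{Re}F^{n}(\omega_F)>R_0$, and from a point just outside $\overline{\cd}(0,R_0)$ with negative real part the orbit can re-enter the disk; the easy fix is to use $F^{n}(\omega_F)\to\infty$ to pick $n_0$ with $F^{n}(\omega_F)\notin\overline{\cd}(0,R_0)$ for all $n\ge n_0$, after which $\operatorname{Re}F^{n}(\omega_F)$ increases by at least $\tfrac34$ per step and eventually exceeds $R_1$. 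Also, the degenerate case you flag at the end cannot occur, since every $f\in\mathcal{IS}_0$ sends $U_f\setminus\{0\}$ into $\cc^{*}$, so the critical orbit never lands on $0$.
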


\begin{proof}
This is corresponding to \cite[Lemma 7]{BuffCheritat2012} using the compactness of the class $\mathcal{IS}_{0}$.
\end{proof}

Since the maps $\Phi_{F}^{\text {att }}$ and $\Phi_{F}^{\text {rep }}$ are only defined up to an additive constant, we can normalize $\Phi_{F}^{\text {att }}$ so that
$$
\Phi_{F}^{\mathrm{att}}\left(F^{ k}\left(\omega_{F}\right)\right)=k .
$$
Then, we can normalize $\Phi_{F}^{\text {rep }}$ so that
$\Phi_{F}^{\mathrm{att}}(w)-\Phi_{F}^{\mathrm{rep}}(w) \rightarrow 0$ when $\operatorname{Im}(w) \rightarrow+\infty \quad$ with $\quad w \in \Omega^{\mathrm{att}} \cap \Omega^{\mathrm{rep}} .$

Coming back to the $z$-coordinate, define
$$
\Omega_{\mathrm{att}, f}:=\tau_{f}\left(\Omega^{\mathrm{att}}\right) \quad \text { and } \quad \Omega_{\mathrm{rep}, f}:=\tau_{f}\left(\Omega^{\mathrm{rep}}\right).
$$
Set
$$
\Phi_{\mathrm{att}, f}:=\Phi_{F}^{\mathrm{att}} \circ \tau_{f}^{-1} \quad \text { and } \quad \Phi_{\mathrm{rep}, f}:=\Phi_{F}^{\mathrm{rep}} \circ \tau_{f}^{-1}.
$$
The univalent maps $\Phi_{\text {att }, f}: \Omega_{\text {att }, f} \rightarrow \mathbb{C}$ and $\Phi_{\text {rep }, f}: \Omega_{\text {rep }, f} \rightarrow \mathbb{C}$ are called attracting and repelling Fatou coordinates for $f$. Note that our normalization of the attracting coordinates is given by
$$
\Phi_{\text {att }, f}\left(f^{\circ k}\left(\omega_{f}\right)\right)=k.
$$

\begin{lemma} (Proposition \ref{attracting2020912-1}). For all $f \in \mathcal{I S}_{0}$, there exists an attracting petal $\mathcal{P}_{\text {att }, f}$ and an extension of the Fatou coordinate, which we will still denote $\Phi_{\mathrm{att}, f}: \mathcal{P}_{\mathrm{att}, f} \rightarrow \mathbb{C}$, such that
\begin{itemize}
\item $v \in \mathcal{P}_{\text {att}, f}$,
\item $\Phi_{\mathrm{att}, f}(v)=1$,
\item $\Phi_{\mathrm{att}, f}$ is univalent on $\mathcal{P}_{\mathrm{att}, f}$,
\item $\Phi_{\mathrm{att}, f}\left(\mathcal{P}_{\mathrm{att}, f}\right)=\{w:\  \operatorname{Re}(w)>0\}$.
\end{itemize}
\end{lemma}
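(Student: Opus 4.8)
The plan is to recognize the statement as the $z$-coordinate, $\mathcal{IS}_0$-normalized repackaging of Proposition~\ref{attracting2020912-1} together with Lemma~\ref{near-equat-5}(b), enlarged by one backward iterate so that the Fatou image becomes the full right half-plane rather than $\{\operatorname{Re}z>1\}$; the analytic substance is already contained in those results, so the work is to transport everything through the coordinate changes $\psi_0,\psi_1$ and to check that the normalizations match.

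First I would fix $f\in\mathcal{IS}_0=\mathcal{F}_2^P$, write $f=P\circ\varphi_f^{-1}$ with $\varphi_f:V'\to\cc$ univalent, $\varphi_f(0)=0$, $\varphi_f'(0)=1$, and pass to the conjugate $F:=\psi_0^{-1}\circ f\circ\psi_0=Q\circ\widehat\varphi^{-1}$ with $\widehat\varphi=\psi_0^{-1}\circ\varphi_f\circ\psi_1$, so that the parabolic point moves to $\infty$; by Proposition~\ref{equ-20208-23-1}(b),(d) this $\widehat\varphi$ is the conformal map on $\widehat\cc\setminus E$ of the type treated in Lemma~\ref{paraequ-11}, and it is defined on all the domains (avoiding the two disks $\overline{\cd}(\pm\cot(\tfrac{\pi}{m+1})i,\tfrac{1}{\sin(\pi/(m+1))})$) used in Subsections~\ref{attractingfatoucoor-2021-10-25-2} and \ref{location2021-10-25-1}, by Corollary~\ref{cor-2021-10-2-1} and Proposition~\ref{paraequ-86}. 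Since $m\ge22$, Lemma~\ref{paraequ-63} and Proposition~\ref{attracting2020912-1}(a) then give that $\vv(u_0,\tfrac{7\pi}{10})$ is forward invariant under $F$ and contained in $\mathrm{Basin}(\infty)$, and furnish an attracting Fatou coordinate $\Phi_{att}$ on $\vv(u_0,\tfrac{7\pi}{10})$ with $\Phi_{att}(F(w))=\Phi_{att}(w)+1$ and the normalization $\Phi_{att}(cv_Q)=1$. Lemma~\ref{near-equat-5}(b) upgrades this: $\Phi_{att}$ is injective on $\overline{\vv}(cv_Q,\tfrac{7\pi}{10})$ and there is a domain $\mathcal{H}_1=\Phi_{att}^{-1}(\{z:\operatorname{Re}z>1\})$ with $cv_Q\in\partial\mathcal{H}_1$ and $\overline{\vv}(cv_Q,\tfrac{3\pi}{10})\subset\mathcal{H}_1\cup\{cv_Q\}\subset\overline{\mathcal{H}}_1\subset\overline{\vv}(cv_Q,\tfrac{7\pi}{10})\cup\{cv_Q\}$, on which $\Phi_{att}$ is a homeomorphism onto $\{\operatorname{Re}z\ge1\}$.

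Next I would carry out the one-step enlargement exactly as in Subsection~\ref{location2021-10-25-1}: since $Q$ maps $\mathcal{U}^{Q+}_1$ homeomorphically onto $\cc\setminus(-\infty,cv_Q]\supset\mathcal{H}_1$, set $\widetilde{\mathcal{H}}_0:=(Q|_{\mathcal{U}^{Q+}_1})^{-1}(\mathcal{H}_1)$, $\mathcal{H}_0:=\widehat\varphi(\widetilde{\mathcal{H}}_0)$, and extend $\Phi_{att}$ by $\Phi_{att}(z):=\Phi_{att}(F(z))-1$ on $\mathcal{H}_0$. The inclusion chains of Subsection~\ref{location2021-10-25-1} (Lemmas~\ref{near-equata-1}, \ref{equ-22-2-12-1} and Proposition~\ref{attracting2020912-2}) show $\widetilde{\mathcal{H}}_0\subset\mathrm{Dom}(\widehat\varphi)$, so $\mathcal{H}_0$ is well defined; $F|_{\mathcal{H}_0}$ and $\Phi_{att}|_{\mathcal{H}_1}$ are injective, hence so is the extended $\Phi_{att}$ on $\mathcal{H}_0$, with $\Phi_{att}(\mathcal{H}_0)=\{\operatorname{Re}z>0\}$; because the functional equation raises $\operatorname{Re}$ by $1$ we get $\mathcal{H}_1\subset\mathcal{H}_0$, hence $F(\mathcal{H}_0)=\mathcal{H}_1\subset\mathcal{H}_0$ so $\mathcal{H}_0$ is forward invariant; and $cv_Q\in\mathcal{H}_0$ since $\Phi_{att}(cv_Q)=1>0$. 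Transporting back by $\psi_0$ — equivalently, since $\psi_0^{-1}\circ\tau_f$ is the linear map $w\mapsto 4c_f w$, by $\tau_f$ up to a harmless rescaling — I set $\mathcal{P}_{att,f}:=\psi_0(\mathcal{H}_0)$ and $\Phi_{att,f}:=\Phi_{att}\circ\psi_0^{-1}$. Then $\Phi_{att,f}\circ f=\Phi_{att,f}+1$, $\Phi_{att,f}$ is univalent on the simply connected domain $\mathcal{P}_{att,f}$ with $\Phi_{att,f}(\mathcal{P}_{att,f})=\{w:\operatorname{Re}w>0\}$, and since $\psi_0^{-1}(v)=\psi_0^{-1}(cv_P)=cv_Q$ (Definition~\ref{near-para-equ-2}, see also \eqref{paraequ-92}) we obtain $v\in\mathcal{P}_{att,f}$ and $\Phi_{att,f}(v)=\Phi_{att}(cv_Q)=1$. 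Because $v=f(\omega_f)$ this forces $\Phi_{att,f}(\omega_f)=0$, so $\mathcal{P}_{att,f}$ is precisely the extended petal whose Fatou coordinate obeys $\Phi_{att,f}(f^{\circ k}(\omega_f))=k$, and by forward invariance it contains the whole forward orbit of $\omega_f$ (which tends to $0$ by Lemma~\ref{paraequ-63} and the compactness of $\mathcal{IS}_0$), so it is legitimately an attracting petal.

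The real obstacle is not in this translation but in the inputs it rests on — Lemma~\ref{paraequ-63} and Lemma~\ref{near-equat-5} — whose proofs consume the $m$-dependent estimates on $Q$ and $\varphi$ (Lemmas~\ref{paraequ-55}, \ref{paraequ-56}, \ref{paraequ-12}, \ref{paraequ-11}) and use $m\ge22$ in an essential way. Within the present argument the one point that genuinely needs care is checking that a \emph{single} extra preimage layer $\mathcal{H}_0$ already produces a forward-invariant petal with half-plane image: one must note that here it suffices to take the single branch $(Q|_{\mathcal{U}^{Q+}_1})^{-1}$ (because $\mathcal{H}_1\subset\overline{\vv}(cv_Q,\tfrac{7\pi}{10})$ avoids $(-\infty,cv_Q]$, so this preimage is a single domain), so the gluing of the two $Q$-preimages $\widetilde D_0,\widetilde D_0'$ of $D_1$ — relevant only to the horn-map construction of Proposition~\ref{attracting202091203} — is not needed for the petal; and one must verify that $\overline{\mathcal{H}}_0$ stays inside $\mathrm{Dom}(F)$, which is supplied by the inclusion chains of Subsection~\ref{location2021-10-25-1}.
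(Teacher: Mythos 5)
Your proof is correct and takes essentially the same approach that the paper implicitly intends by citing Proposition~\ref{attracting2020912-1}: pass to the $Q$-coordinate via $\psi_0,\psi_1$, invoke Lemma~\ref{paraequ-63} and Lemma~\ref{near-equat-5}(b) to get $\mathcal{H}_1 = \Phi_{att}^{-1}(\{\operatorname{Re}>1\})$, and add exactly one backward step through the principal branch $(Q|_{\mathcal{U}^{Q+}_1})^{-1}$ — which is precisely the domain $\widetilde{\mathcal{H}}_0$ the paper already introduces in Subsection~\ref{location2021-10-25-1} — to obtain $\mathcal{H}_0$ with Fatou image $\{\operatorname{Re}>0\}$, then transport back by $\psi_0$ and check $\psi_0^{-1}(v)=cv_Q$ aligns the normalizations. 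The observations you flag as needing care (that this extension stays inside $\mathrm{Dom}(\widehat\varphi)$ via Lemma~\ref{paraequ-15}(d), that $\mathcal{H}_1\subset\mathcal{H}_0$ so the petal is forward-invariant, and that the two-sheeted covering structure of Proposition~\ref{attracting202091203} is unnecessary here because $\mathcal{H}_1$ avoids $(-\infty,cv_Q]$) are exactly the right ones.
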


\begin{definition} For $f \in \mathcal{IS}_{0}$, set
$$
V_{f}:=\left\{z \in \mathcal{P}_{\text {att }, f}:\  \operatorname{Im}\left(\Phi_{\text {att }, f}(z)\right)>0 \text { and } 0<\operatorname{Re}\left(\Phi_{\text {att }, f}(z)\right)<2\right\}
$$
and
$$
W_{f}:=\left\{z \in \mathcal{P}_{\text {att }, f}:\ -2<\operatorname{Im}\left(\Phi_{\text {att }, f}(z)\right)<2 \text { and } 0<\operatorname{Re}\left(\Phi_{\text {att }, f}(z)\right)<2\right\}.
$$
\end{definition}

The result below follows from Propositions \ref{attracting2020912-2} and \ref{attracting202091203}, and Subsection \ref{proofpro2021-10-3-1}. Our domain $V_{f}^{-k} \cup$ $W_{f}^{-k}$ below corresponds to the interior of
$$
\ol{D}_{-k} \cup \ol{D}_{-k}^{\sharp} \cup \ol{D}_{-k}^{\prime \prime} \cup \ol{D}_{-k+1} \cup \ol{D}_{-k+1}^{\sharp} \cup \ol{D}_{-k+1}^{\prime}.
$$
The set $W_{f}^{-k}$ itself corresponds to the interior of
$$
\ol{D}_{-k} \cup \ol{D}_{-k}^{\prime \prime} \cup \ol{D}_{-k+1} \cup \ol{D}_{-k+1}^{\prime}.
$$

\begin{lemma} For all $f \in \mathcal{IS}_{0}$ and all $k \geq 0$,
\begin{itemize}
\item the unique connected component $V_{f}^{-k}$ of $f^{-k}\left(V_{f}\right)$ that contains 0 in its closure is relatively compact in $U_{f}$ (the domain of $f$) and $f^{k}: V_{f}^{-k} \rightarrow$ $V_{f}$ is an isomorphism,
\item the unique connected component $W_{f}^{-k}$ of $f^{-k}\left(W_{f}\right)$ that intersects $V_{f}^{-k}$ is relatively compact in $U_{f}$ and $f^{ k}: W_{f}^{-k} \rightarrow W_{f}$ is a covering of degree $2$ ramified above $v$.
    \end{itemize}
In addition, if $k$ is large enough, then $V_{f}^{-k} \cup W_{f}^{-k} \subset \Omega_{\text {rep}, f}$.
\end{lemma}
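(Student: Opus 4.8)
The statement is a translation, into the notation of \cite{BuffCheritat2012}, of Propositions \ref{attracting2020912-2} and \ref{attracting202091203} together with the combinatorics of Subsection \ref{proofpro2021-10-3-1}, so the plan is to set up the dictionary between the pair $(V_f,W_f)$ and the family of domains $D_n,D_n^{\sharp},D_n^{\prime},D_n^{\prime\prime}$, and then read off each assertion. First I would pass from $f\in\mathcal{IS}_0=\mathcal F^P_2$ to the model map $F=Q\circ\varphi^{-1}$: since $\mathcal F^P_2$ is carried onto (a subclass of) $\mathcal F^Q_2$ by $\varphi\mapsto\psi_0^{-1}\circ\varphi\circ\psi_1$ (Proposition \ref{equ-20208-23-1}), and since $V_f,W_f$ are defined through $\Phi_{\mathrm{att},f}=\Phi^{\mathrm{att}}_F\circ\tau_f^{-1}$, it suffices to work with $F$ and the attracting Fatou coordinate $\Phi_{\mathrm{att}}$ of Proposition \ref{attracting2020912-1} normalized by $\Phi_{\mathrm{att}}(cv_Q)=1$. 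In this normalization the half--strip $\{0<\operatorname{Re}w<2,\ \operatorname{Im}w>0\}$ and the box $\{0<\operatorname{Re}w<2,\ -2<\operatorname{Im}w<2\}$ pull back under $\Phi_{\mathrm{att}}$ to (imaginary--range truncations of) the domains $D_0,D_1$ and $D_0^{\sharp},D_1^{\sharp}$ of Proposition \ref{attracting202091203}; the truncation of the height from $\pm\eta$ to $\pm2$ is harmless because $\eta\geq20>2$ by \eqref{near-equat-6}, and the critical value $v$ (which is $cv_Q$ in the model) lies inside the box with $\Phi_{\mathrm{att}}(v)=1$. Thus $W_f$ corresponds to the interior of $\ol D_0\cup\ol D_1$, and $V_f$ to the interior of that union enlarged by $D_0^{\sharp}\cup D_1^{\sharp}$.

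Next I would pull back inductively by the single--valued inverse branch $g=\widetilde\varphi\circ\widetilde Q^{-1}\colon X\to X$ of $F$ near $\infty$ furnished by Propositions \ref{paraequ-14} and \ref{near-equat-2}. By Lemma \ref{gra-2021-10-3-2}(a) one has $g(D_0)=D_{-1}$ and $g(D_1^{\sharp})=D_0^{\sharp}$, and then $D_{-n-1}=g^n(D_{-1})$, $D^{\sharp}_{-n}=g^n(D_0^{\sharp})$, $D^{\prime}_{-n}=g^n(D_0^{\prime})$, $D^{\prime\prime}_{-n-1}=g^n(D_{-1}^{\prime\prime})$. The incidence relations \eqref{equ-2021-11-30-1} of Lemma \ref{gra-2021-10-3-2}(b) then show that for each $k\geq1$ these pieces glue into a single domain, so that $W_f^{-k}$ is the interior of $\ol D_{-k}\cup\ol D^{\prime\prime}_{-k}\cup\ol D_{-k+1}\cup\ol D^{\prime}_{-k+1}$ while $V_f^{-k}\cup W_f^{-k}$ adds $D^{\sharp}_{-k}\cup D^{\sharp}_{-k+1}$ (the case $k=0$ being immediate from the definitions and Proposition \ref{attracting2020912-1}), and that these are exactly the connected components of $f^{-k}(V_f)$, resp.\ $f^{-k}(W_f)$, with $0$ in the closure of $V_f^{-k}$ and $V_f^{-k}\cap W_f^{-k}\neq\emptyset$. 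For the covering degrees: $f^{k}\colon V_f^{-k}\to V_f$ is an isomorphism because at each step $F$ (equivalently $g$) acts isomorphically on the $\sharp$--pieces and upper half--strips (Proposition \ref{attracting2020912-2}(b),(c)), which contain no critical point of $F$. The map $f^{k}\colon W_f^{-k}\to W_f$ is a degree--$2$ cover ramified over $v$ because $F\colon D_{-1}\to D_0$ and $F\colon D^{\prime\prime}_{-1}\to D_0$ (and $F\colon D_0\to D_1$, $F\colon D^{\prime}_0\to D_1$) are the two sheets of the $2$--fold branched cover induced by $Q$ on $\mathcal U^{Q+}_1\cup\mathcal U^{Q+}_2$, with unique ramification point $cp_F=\varphi(cp_Q)\in\ol D_{-1}\cap\ol D^{\prime\prime}_{-1}\cap\ol D_0\cap\ol D^{\prime}_0$ and $F(cp_F)=cv_Q$ corresponding to $v$ (Proposition \ref{attracting2020912-2}(b),(d)); since $F$ is an isomorphism from $W_f^{-k}$ onto $W_f^{-k+1}$ for $k\geq2$, the ramification enters only at the last step. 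Relative compactness of $V_f^{-k},W_f^{-k}$ in $U_f$ is Proposition \ref{attracting2020912-2}(a) together with the fact that all iterates $g^n(\ol D_0\cup\ol D_0^{\sharp}\cup\ol D^{\prime}_0\cup\ol D^{\prime\prime}_{-1})$ are contained in the open set $U\subset\operatorname{Dom}(F)$ of Proposition \ref{attracting202091203}.

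For the last assertion I would use that $\widetilde\Phi_{\mathrm{rep}}$ is defined on all of $X$ (Proposition \ref{near-equat-2}) and, concretely, that the inverse branch $\bar g$ of $F$ near $\infty$ satisfies the strict decrease $\operatorname{Re}\bar g(z)<\operatorname{Re}z-4m+0.86$ and $\bar g(W)\subset W$ with $W=\cc\setminus\ol{\vv}(-L,\tfrac{\pi}{10})$ (from the proof of Proposition \ref{near-equat-2}). Hence $D_{-k}=g^{k-1}(D_{-1})$ stays in a thin cone about the negative real axis while its real part tends to $-\infty$, so for $k$ large $V_f^{-k}\cup W_f^{-k}\subset\{w:\operatorname{Re}w<-R_1+|\operatorname{Im}w|\}=\Omega^{\mathrm{rep}}$, i.e.\ $V_f^{-k}\cup W_f^{-k}\subset\Omega_{\mathrm{rep},f}=\tau_f(\Omega^{\mathrm{rep}})$.

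The only genuine work here is the bookkeeping of the first two steps: matching the $(V_f,W_f)$--picture to the $D_n$--picture (keeping the index shifts and the $\pm\eta$/$\pm2$ truncations straight) and verifying via \eqref{equ-2021-11-30-1} that no critical point other than $cp_F$ ever enters the chain of preimages. All of the analytic content — existence and univalence of the Fatou coordinate on the relevant sectors, the location of the domains relative to $E_{r_1}$, the Riemann surface $X$ and the branch $g$, and the gluing map $\Psi_1$ — has already been established in Propositions \ref{attracting2020912-1}--\ref{attracting202091203}.
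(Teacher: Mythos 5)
Your proposal follows essentially the same route the paper takes: the paper states this lemma as a direct translation of Propositions \ref{attracting2020912-2} and \ref{attracting202091203} together with the combinatorics of Subsection \ref{proofpro2021-10-3-1}, giving only the dictionary $W_f^{-k}\leftrightarrow\mathrm{int}\,(\ol D_{-k}\cup\ol D^{\prime\prime}_{-k}\cup\ol D_{-k+1}\cup\ol D^{\prime}_{-k+1})$ and $V_f^{-k}\cup W_f^{-k}\leftrightarrow$ that union together with $\ol D^{\sharp}_{-k}\cup\ol D^{\sharp}_{-k+1}$, and your argument fills in exactly the bookkeeping the paper leaves implicit (the incidence relations of Lemma \ref{gra-2021-10-3-2}, the location of the unique ramification point $cp_F$ via Proposition \ref{attracting2020912-2}(d), and the use of $\widetilde\Phi_{\mathrm{rep}}$ on $X$ for the final inclusion). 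One small point worth flagging is that the stated range $k\geq0$ can only be read literally for the first bullet, since $f^{0}=\mathrm{id}\colon W_f\to W_f$ is degree $1$; your inductive construction naturally starts at $k=1$, which is where the degree--$2$ assertion actually begins, and this is consistent with the paper's framework.
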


The following lemma asserts that if $k$ is large enough, then for all map $f \in \mathcal{IS}_{0}$, the set $V_{f}^{-k} \cup W_{f}^{-k}$ is contained in a repelling petal of $f$, i.e., the preimage of a left half-plane by $\Phi_{\text{rep}, f}$.

\begin{lemma}\label{equ-2022-2-10-10} There is an $R_{2}>0$ such that for all $f \in \mathcal{I} S_{0}$, the set $\Phi_{\mathrm{rep}, f}\left(\Omega_{\mathrm{rep}, f}\right)$ contains the half-plane $\left\{w \in \mathbb{C} ;\right.$ Re $\left.w<-R_{2}\right\}$. There is an integer $k_{0}>0$ such that for all $k \geq k_{0}$, we have
$$
V_{f}^{-k} \cup W_{f}^{-k} \subset\left\{z \in \Omega_{\mathrm{rep}, f}:\  \operatorname{Re}\left(\Phi_{\mathrm{rep}, f}(z)\right)<-R_{2}\right\} .
$$
\end{lemma}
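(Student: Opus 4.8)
Everything will be deduced from two uniform inputs: the compactness of the class $\mathcal{IS}_{0}=\mathcal{F}^{P}_{2}$ (cf.\ the discussion preceding the statement and part~(a) of Theorem~\ref{paraequ-84}) and the general a priori estimate on Fatou coordinates of Theorem~\ref{paraequ-64}. The two assertions will be proved separately: the first by comparing $\Phi_{F}^{\mathrm{rep}}$ with the identity near $\infty$, uniformly in $f$; the second by using the functional equation $\Phi_{\mathrm{rep},f}\circ f=\Phi_{\mathrm{rep},f}+1$ to transport the (uniformly bounded) repelling Fatou coordinates of $V_{f}^{-k_{1}}\cup W_{f}^{-k_{1}}$ backwards along $k-k_{1}$ steps of the dynamics.

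\emph{First assertion.} Since $\Phi_{\mathrm{rep},f}=\Phi_{F}^{\mathrm{rep}}\circ\tau_{f}^{-1}$ and $\tau_{f}$ is a bijection of $\widehat{\cc}$ carrying $\Omega^{\mathrm{rep}}$ onto $\Omega_{\mathrm{rep},f}$, it suffices to produce an $R_{2}$, independent of $f\in\mathcal{IS}_{0}$, with $\Phi_{F}^{\mathrm{rep}}(\Omega^{\mathrm{rep}})\supset\{w:\ \operatorname{Re}(w)<-R_{2}\}$. On $\Omega^{\mathrm{rep}}$ one has $|F(w)-w-1|<\tfrac14$, $|F'(w)-1|<\tfrac14$ and $d_{\Omega^{\mathrm{rep}}}(w,F(w))\to 0$ as $w\to\infty$, all uniformly in $f$ by compactness; feeding these into Theorem~\ref{paraequ-64} gives $(\Phi_{F}^{\mathrm{rep}})'(w)\to 1$ as $w\to\infty$, uniformly in $f$. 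Integrating along horizontal rays in $\Omega^{\mathrm{rep}}$ and using the normalization of $\Phi_{F}^{\mathrm{rep}}$ yields $\Phi_{F}^{\mathrm{rep}}(w)=w+c_{F}+o(1)$ with $\sup_{f}|c_{F}|<\infty$ and the $o(1)$ uniform. An argument--principle (winding number) argument applied to the images of the Jordan curves $\partial\big(\Omega^{\mathrm{rep}}\cap\{\operatorname{Re}(w)>-M\}\big)$ for large $M$ then shows, once $R_{2}$ is chosen large enough depending only on $\sup_{f}|c_{F}|$ and $R_{1}$, that every point of $\{\operatorname{Re}(w)<-R_{2}\}$ is attained.

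\emph{Second assertion.} Let $k_{1}$ be the integer furnished by the preceding lemma, so that $V_{f}^{-k}\cup W_{f}^{-k}\subset\Omega_{\mathrm{rep},f}$ for all $f\in\mathcal{IS}_{0}$ and all $k\geq k_{1}$ (we may assume $k_{1}\geq 2$). Fix $k\geq k_{1}$ and $z\in V_{f}^{-k}\cup W_{f}^{-k}$. By construction of these components, for $0\leq j\leq k-k_{1}$ the point $f^{\circ j}(z)$ lies in $V_{f}^{-(k-j)}$ or $W_{f}^{-(k-j)}$, hence in $\Omega_{\mathrm{rep},f}$; since the two consecutive iterates $f^{\circ j}(z)$ and $f^{\circ(j+1)}(z)$ both lie in $\Omega_{\mathrm{rep},f}$ for $0\leq j\leq k-k_{1}-1$, the functional equation applies at each step and telescopes to
\[
\Phi_{\mathrm{rep},f}(z)=\Phi_{\mathrm{rep},f}\big(f^{\circ(k-k_{1})}(z)\big)-(k-k_{1}),\qquad f^{\circ(k-k_{1})}(z)\in V_{f}^{-k_{1}}\cup W_{f}^{-k_{1}}.
\]
By compactness of $\mathcal{IS}_{0}$ and the continuous dependence of $\Phi_{\mathrm{rep},f}$ on $f$, the family of sets $\Phi_{\mathrm{rep},f}\big(V_{f}^{-k_{1}}\cup W_{f}^{-k_{1}}\big)$, $f\in\mathcal{IS}_{0}$, is uniformly bounded; let $M:=\sup\{\operatorname{Re}(w):\ f\in\mathcal{IS}_{0},\ w\in\Phi_{\mathrm{rep},f}(V_{f}^{-k_{1}}\cup W_{f}^{-k_{1}})\}<\infty$. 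Then $\operatorname{Re}\big(\Phi_{\mathrm{rep},f}(z)\big)\leq M-(k-k_{1})$, and choosing $k_{0}\geq k_{1}$ with $M-(k_{0}-k_{1})<-R_{2}$ gives the assertion for all $k\geq k_{0}$ and all $f\in\mathcal{IS}_{0}$.

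\emph{Main obstacle.} The essential work is the uniformity in $f$ of the two quantitative inputs: the asymptotics $\Phi_{F}^{\mathrm{rep}}(w)=w+c_{F}+o(1)$ with uniformly bounded constant and remainder (needed for the first assertion), and the uniform boundedness of $\Phi_{\mathrm{rep},f}(V_{f}^{-k_{1}}\cup W_{f}^{-k_{1}})$ (needed for the second). Both reduce to the compactness of $\mathcal{F}^{P}_{2}$ together with the continuity and holomorphic dependence of the attracting and repelling Fatou coordinates on the map, so the real content here is bookkeeping on top of those structural facts rather than a new estimate; the only point demanding care is checking that the iterates $f^{\circ j}(z)$ genuinely remain inside $\Omega_{\mathrm{rep},f}$ throughout the range $0\le j\le k-k_{1}$, which is exactly what the preceding lemma guarantees.
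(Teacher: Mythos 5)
The paper's own ``proof'' is only a citation of \cite[Lemma 8]{BuffCheritat2012}, so your proposal supplies actual content, and the overall plan (compactness of $\mathcal{IS}_{0}=\mathcal{F}_{2}^{P}$ plus the functional equation) is the right one. Your second assertion is handled correctly: you telescope $\Phi_{\mathrm{rep},f}\circ f=\Phi_{\mathrm{rep},f}+1$ along the iterates $f^{\circ j}(z)\in V_{f}^{-(k-j)}\cup W_{f}^{-(k-j)}$, and you correctly identify that staying inside $\Omega_{\mathrm{rep},f}$ at every step is exactly what the preceding lemma guarantees once $k-j\ge k_{1}$. One imprecision there: the set $\Phi_{\mathrm{rep},f}(V_{f}^{-k_{1}}\cup W_{f}^{-k_{1}})$ is not bounded (its imaginary part blows up as $z\to 0$), so ``uniformly bounded'' is too strong. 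What you need, and what does hold, is uniform boundedness of $\sup\operatorname{Re}$ over this set; this follows from the horn-map asymptotics $\Phi_{\mathrm{att},f}-\Phi_{\mathrm{rep},f}\to 0$ at the upper end together with $\operatorname{Re}\Phi_{\mathrm{att},f}\in(-k_{1},2-k_{1})$ there, and compactness disposes of the lower part.

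The step that fails is in the first assertion. The inference from $(\Phi_{F}^{\mathrm{rep}})'(w)\to 1$ to $\Phi_{F}^{\mathrm{rep}}(w)=w+c_{F}+o(1)$ is invalid and the conclusion is false: a derivative tending to $1$ does not exclude a divergent lower-order correction, and the paper's Theorem~\ref{paraequ-85}(d) records the true asymptotics $\Phi_{F}^{\mathrm{rep}}(w)=w-b_{1}\log w+\mathrm{const}+o(1)$, with $b_{1}\ne 0$ for the maps at hand. The conclusion of the first assertion nevertheless survives, since $\Omega^{\mathrm{rep}}$ is a sector whose width grows linearly at infinity while the distortion $b_{1}\log w$ grows only logarithmically, and $\sup_{f\in\mathcal{IS}_{0}}|b_{1}(f)|<\infty$ by compactness; so a uniform left half-plane is still covered. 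But your argument-principle count must be run against $w\mapsto w-b_{1}\log w+c_{F}$, not $w\mapsto w+c_{F}$, and as written that passage is incorrect.
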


\begin{remark} $R_{2}$ can be replaced by any $R_{3} \geq R_{2}$, replacing if necessary $k_{0}$ by $k_{1}:=k_{0}+\left\lfloor R_{3}-R_{2}\right\rfloor+1$.
\end{remark}

\begin{proof}
This is corresponding to \cite[Lemma 8]{BuffCheritat2012}.
\end{proof}

\subsubsection{Perturbed Fatou coordinates}

For any $\al\in\rr$, denote by $\mathcal{IS}_{\al}$ the set of maps of the form $z\to f(e^{2i\pi\al}z)$ with $f\in\mathcal{IS}_0$. For any subset $A\subset\rr$, denote by \beqq
\mathcal{IS}_A=\bigcup_{\al\in A}\mathcal{IS}_{\al}.
\eeqq
So, one has
\beqq
\mathcal{IS}_{\al}=\left\{f=P\circ\varphi^{-1}:\ U_f\to\cc\ \mbox{with}\ \bigg | \begin{array}{l}
\varphi:U\to U_f\
\mbox{univalent (isomorphism) such that} \\
\varphi(0)=0\ \mbox{and}\ \varphi^{\prime}(0)=e^{-2i\pi\al}
\end{array}\right\}
\eeqq
and
\beqq
\mathcal{IS}_{\rr}=\left\{f=P\circ\varphi^{-1}:\ U_f\to\cc\ \mbox{with}\ \bigg | \begin{array}{l}
\varphi:U\to U_f\
\mbox{univalent (isomorphism) such that} \\
\varphi(0)=0\ \mbox{and}\ |\varphi^{\prime}(0)|=1
\end{array}\right\}.
\eeqq
The compact-open topology is added on this set of univalent maps.

\begin{proposition}
There are constants $K>0, \varepsilon_{1}>0$, and $R_{3} \geq R_{2}$ with $\frac{1}{\varepsilon_{1}}-R_{3}>1$, such that for all $f \in \mathcal{IS}_{(0, \varepsilon_{1})}$, the following holds:
\begin{itemize}
\item [(1)]
There is a Jordan domain $\mathcal{P}_{f} \subset U_{f}$ (a perturbed petal) containing the critical value $v$, bounded by two arcs joining $0$ to $\sigma_{f}$, where $f$ has only two fixed points $0$ and $\si(f)$ in a small neighborhood of the origin, and there is a branch of argument defined on $\mathcal{P}_{f}$ such that
$$
\sup _{z \in \mathcal{P}_{f}} \arg (z)-\inf _{z \in \mathcal{P}_{f}} \arg (z)<K .
$$
\item [(2)] There is a univalent map $\Phi_{f}: \mathcal{P}_{f} \rightarrow \mathbb{C}$ (a perturbed Fatou coordinate) such that
\begin{itemize}
\item[$\bullet$] $\Phi_{f}(v)=1 ;$
\item[$\bullet$] $\Phi_{f}\left(\mathcal{P}_{f}\right)=\left\{w \in \mathbb{C} ; 0<\operatorname{Re}(w)<1 / \alpha_{f}-R_{3}\right\}$;
\item[$\bullet$] $\operatorname{Im}\left(\Phi_{f}(z)\right) \rightarrow+\infty$ as $w \rightarrow 0$ and $\operatorname{Im}\left(\Phi_{f}(z)\right) \rightarrow-\infty$ as $w \rightarrow \sigma_{f} ;$
\item[$\bullet$] when $z \in \mathcal{P}_{f}$ and $\operatorname{Re}\left(\Phi_{f}(z)\right)<1 / \alpha_{f}-R_{3}-1$, then $f(z) \in \mathcal{P}_{f}$ and $\Phi_{f} \circ f(z)=\Phi_{f}(z)+1 .$
\end{itemize}
For $f \in \mathcal{I} S_{0}$, set
$$
\mathcal{P}_{\text{rep},f}:=\left\{z \in \Omega_{\text {rep }, f} ; \operatorname{Re}\left(\Phi_{\text {rep }, f}(z)\right)<-R_{3}\right\} .
$$
\item[(3)] If $\left(f_{n}\right)$ is a sequence of maps in $\mathcal{IS}_{(0, \varepsilon_{1})}\left[\right.$ converging to a map $f_{0} \in \mathcal{IS}_{0},$, then
\begin{itemize}
\item[$\bullet$] any compact $K \subset \mathcal{P}_{\text{att},f_{0}}$ is contained in $\mathcal{P}_{f_{n}}$ for $n$ large enough and the sequence $\left(\Phi_{f_{n}}\right)$ converges to $\Phi_{\text {att, } f_{0}}$ uniformly on $K$,
\item[$\bullet$] any compact $K \subset \mathcal{P}_{\text {rep}, f_{0}}$ is contained in $\mathcal{P}_{f_{n}}$ for $n$ large enough and the sequence $\left(\Phi_{f_{n}}-\frac{1}{\alpha_{f_{n}}}\right)$ converges to $\Phi_{\mathrm{rep}, f_{0}}$ uniformly on $K$.
\end{itemize}
\end{itemize}
\end{proposition}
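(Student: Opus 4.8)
The plan is to follow the classical parabolic-implosion scheme of Douady--Lavaurs--Shishikura, in the form adapted by Inou--Shishikura and Buff--Ch\'eritat, exploiting the fact that $\mathcal{IS}_0 = \mathcal{F}_2^P$ is a compact family for $m \geq 22$. First I would pass to the coordinate $w = \tau_f^{-1}(z) = -1/(c_f z)$ moving the parabolic point $0$ to $\infty$; by the Koebe Distortion Theorem the numbers $c_f$ range in a compact subset of $\mathbb{C}^*$, so the two lemmas quoted above give uniform $R_0 < R_1$ for which $F(w) = w + 1 + o(1)$, $|F(w) - w - 1| < 1/4$ and $|F'(w) - 1| < 1/4$ hold on $\mathbb{C} \setminus D(0,R_0)$. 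Writing $f'(0) = e^{2\pi i \alpha_f}$ with $\alpha_f \in (0,\varepsilon_1)$, the perturbation $F$ has, besides $\infty$, a second fixed point $\sigma_F = \tau_f^{-1}(\sigma_f)$ located near $-1/(2\pi i \alpha_f)$, so in the $w$-plane the two fixed points are $\infty$ and a point escaping in the upper (or lower) direction as $\alpha_f \to 0$. I would choose $\varepsilon_1$ small enough that for all $\alpha_f \in (0,\varepsilon_1)$ the relevant dynamics takes place in $\{|w| > R_1\}$, where all these estimates apply uniformly over $\mathcal{IS}_0$.

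Next I would build the perturbed petal. Following the standard construction, I would take a curve $\gamma$ in the $w$-plane crossing the corridor between $\infty$ and $\sigma_F$ roughly orthogonally to the translation flow, verify via $|F(w) - w - 1| < 1/4$ that $F(\gamma)$ lies to the right of $\gamma$ and that $F$ is injective near the closed strip $S$ between $\gamma$ and $F(\gamma)$, and then take $\bigcup_{n} F^n(S)$ together with appropriate backward iterates, quotiented by $w \sim F(w)$. Pulling back by $\tau_f$ gives a Jordan domain $\mathcal{P}_f$ containing $v = cv_P$, bounded by two arcs joining $0$ to $\sigma_f$; the bound $\sup_{\mathcal{P}_f} \arg z - \inf_{\mathcal{P}_f} \arg z < K$ with $K$ uniform comes from the fact that in the $w$-plane the petal is comparable to a horizontal strip of bounded vertical extent around the segment joining the two fixed points, whose image under $\tau_f(w) = -1/(c_f w)$ subtends a bounded angle. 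On $\mathcal{P}_f$ this construction produces a univalent $\Phi_f$ with $\Phi_f \circ f = \Phi_f + 1$ wherever both sides are defined, unique up to an additive constant; I normalize by $\Phi_f(v) = 1$, which is legitimate because Proposition~\ref{attracting2020912-1} (in the parabolic case) and its perturbation place $v$ in the attracting part of the petal. The key point is that the modulus of the quotient cylinder $\mathcal{P}_f/\!\sim$ equals $1/\alpha_f - R_3 + O(1)$ for a suitable uniform $R_3 \geq R_2$ --- this is precisely the content of $\chi_f(z) = z - 1/\alpha_f$ on $\mathbb{C}/\mathbb{Z}$ from Theorem~\ref{equ2020821-3} --- so after a final normalization $\Phi_f(\mathcal{P}_f)$ is the strip $\{0 < \mathrm{Re}\, w < 1/\alpha_f - R_3\}$, with $\mathrm{Im}\,\Phi_f \to +\infty$ as $z \to 0$ and $\mathrm{Im}\,\Phi_f \to -\infty$ as $z \to \sigma_f$, and the functional equation holds on $\{\mathrm{Re}\,\Phi_f < 1/\alpha_f - R_3 - 1\}$.

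Finally, for part~(3) I would invoke the stability of Fatou coordinates under perturbation (Theorems~\ref{equ2020821-2} and~\ref{equ2020821-3} and the continuity and holomorphic-dependence statements preceding them): given $f_n \to f_0 \in \mathcal{IS}_0$ with $\alpha_{f_n} \to 0$, the curves $\gamma_n$ and strips $S_n$ can be chosen to converge to those of $f_0$, so $\Phi_{f_n} \to \Phi_{\mathrm{att},f_0}$ uniformly on compacts of $\mathcal{P}_{\mathrm{att},f_0}$; on the repelling side the cylinder has length growing like $1/\alpha_{f_n}$, and subtracting $1/\alpha_{f_n}$ makes $\Phi_{f_n} - 1/\alpha_{f_n}$ converge to $\Phi_{\mathrm{rep},f_0}$ uniformly on compacts of $\mathcal{P}_{\mathrm{rep},f_0}$, where one checks that the additive-constant normalizations match via $\Phi^{\mathrm{att}}_F - \Phi^{\mathrm{rep}}_F \to 0$ as $\mathrm{Im}\,w \to +\infty$. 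I expect the main obstacle to be the uniformity over the entire family $\mathcal{IS}_{(0,\varepsilon_1)}$: a single choice of $\varepsilon_1$, $K$, $R_3$ and $k_0$ must work for all $f$ simultaneously, and the perturbed petal must be carved out with these uniform constants even as $\alpha_f \to 0$ and $\sigma_f \to 0$. This is handled by the compactness of $\mathcal{IS}_0$ together with the explicit $w$-plane estimates, but the bookkeeping --- matching the normalization of $\Phi_f$ with those of $\Phi_{\mathrm{att},f}$ and $\Phi_{\mathrm{rep},f}$ and controlling the $O(1)$ error in the cylinder modulus uniformly --- is the delicate part.
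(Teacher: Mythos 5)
Your outline is essentially the paper's own route: the paper proves this proposition simply by citing the perturbed Fatou coordinate construction of \cite[Proposition 12]{BuffCheritat2012} together with the tools of \cite{Shishikura2000}, and your sketch (passing to the $w$-coordinate, using compactness of $\mathcal{IS}_0$ for uniform constants, building the perturbed petal from a fundamental strip between a transversal curve and its image, identifying the strip width $1/\alpha_f+O(1)$, and invoking Shishikura's continuity of Fatou coordinates for part (3)) is precisely the argument those references develop. So the proposal is correct and takes essentially the same approach, only written out in more detail than the paper itself provides.
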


\begin{proof}
This can be obtained by the same arguments used in the proof of \cite[Proposition 12]{BuffCheritat2012} and the tools developed in \cite{Shishikura2000}.
\end{proof}

\subsubsection{Renormalization}

By Lemma \ref{equ-2022-2-10-10}, for $k \geq 0$, there are components $V_{f}^{-k}$ and $W_{f}^{-k}$ properly mapped by $f^{\circ k}$ respectively to $V_{f}$ with degree $1$ and $W_{f}$ with degree $2$. In addition, there is an integer $k_{0}>0$ such that
$$
\forall f \in \mathcal{IS}_{0}, \quad V_{f}^{-k_{0}} \cup W_{f}^{-k_{0}} \subset \mathcal{P}_{\text {rep}, f}
$$

Take sufficiently small $\varepsilon_1>0$. If $f \in \mathcal{IS}_{(0, \varepsilon_{1})}$, set
$$
V_{f}:=\left\{z \in \mathcal{P}_{f}:\  \operatorname{Im}\left(\Phi_{f}(z)\right)>0 \text { and } 0<\operatorname{Re}\left(\Phi_{f}(z)\right)<2\right\}
$$
and
$$
W_{f}:=\left\{z \in \mathcal{P}_{f}:\ -2<\operatorname{Im}\left(\Phi_{f}(z)\right)<2 \text { and } 0<\operatorname{Re}\left(\Phi_{f}(z)\right)<2\right\}.
$$

\begin{proposition} There is a number $\varepsilon_{2}>0$ and an integer $k_{1} \geq 1$ such that for all $f \in \mathcal{IS}_{(0, \varepsilon_{2})}$ and for all integer $k \in\left[1, k_{1}\right]$, the following statements hold.
\begin{itemize}
\item[(1)] The unique connected component $V_{f}^{-k}$ of $f^{-k}\left(V_{f}\right)$ that contains 0 in its closure is relatively compact in $U_{f}$ (the domain of $f$ ) and $f^{ k}: V_{f}^{-k} \rightarrow V_{f}$ is an isomorphism.
\item[(2)] The unique connected component $W_{f}^{-k}$ of $f^{-k}\left(W_{f}\right)$ which intersects $V_{f}^{-k}$ is relatively compact in $U_{f}$ and $f^{ k}: W_{f}^{-k} \rightarrow W_{f}$ is a covering of degree $2$ ramified above $v$.
\item[(3)] $V_{f}^{-k_{1}} \cup W_{f}^{-k_{1}} \subset\left\{z \in \mathcal{P}_{f} ; 2<\operatorname{Re}\left(\Phi_{f}(z)\right)<\frac{1}{\alpha_{f}}-R_{3}-5\right\}$.
\end{itemize}
\end{proposition}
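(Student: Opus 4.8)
The plan is to obtain this proposition as the perturbative companion of the two lemmas immediately preceding it: the lemma describing, for a parabolic map $f\in\mathcal{IS}_0$, the preimages $V_f^{-k}$ and $W_f^{-k}$ and their properness and ramification, together with Lemma \ref{equ-2022-2-10-10} and its Remark, which push $V_f^{-k}\cup W_f^{-k}$ arbitrarily deep into the repelling petal. The argument rests on two inputs already available: the compactness of $\mathcal{IS}_0$ in the compact-open topology (a consequence of the compactness of the relevant class of univalent maps, cf. Lemma \ref{lemma-2021-9-26-5}), and part (3) of the perturbed Fatou coordinate proposition, which states that along a sequence $f_n\to f_0\in\mathcal{IS}_0$ one has $\Phi_{f_n}\to\Phi_{\mathrm{att},f_0}$ uniformly on compacts of the attracting petal and $\Phi_{f_n}-\tfrac1{\alpha_{f_n}}\to\Phi_{\mathrm{rep},f_0}$ uniformly on compacts of the repelling petal. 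This is the scheme of \cite[Proposition 13]{BuffCheritat2012}, now fed with the high-degree constructions of Section \ref{renormalization-2022-2-12-1}.

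First I would fix $k_1$. By the parabolic lemma, Lemma \ref{equ-2022-2-10-10} and its Remark (applied with $R_2$ replaced by $R_3+6$), there is an integer $k_1\ge 1$ such that for every $f_0\in\mathcal{IS}_0$ the set $V_{f_0}^{-k_1}\cup W_{f_0}^{-k_1}$ is contained in $\{z\in\Omega_{\mathrm{rep},f_0}:\ \operatorname{Re}\Phi_{\mathrm{rep},f_0}(z)<-R_3-6\}$, with $f_0^{k_1}$ an isomorphism on $V_{f_0}^{-k_1}$ and a degree-$2$ covering ramified over $v$ on $W_{f_0}^{-k_1}$. Because $k_1$ is fixed and $\mathcal{IS}_0$ is compact, these are finitely many preimages depending continuously on $f_0$; they are uniformly relatively compact in the domains $U_{f_0}$ and $\Phi_{\mathrm{rep},f_0}$ is uniformly bounded on them, so there is $R_4>R_3+6$ with $-R_4<\operatorname{Re}\Phi_{\mathrm{rep},f_0}(z)<-R_3-6$ on $V_{f_0}^{-k_1}\cup W_{f_0}^{-k_1}$ for all $f_0$. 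I would then take $\varepsilon_2:=\min\{\varepsilon_1,\ 1/(R_4+4)\}$, so that for $f\in\mathcal{IS}_{(0,\varepsilon_2)}$ the strip in (3) is nonempty and, under the correspondence $\Phi_f\leftrightarrow\Phi_{\mathrm{rep},f_0}+\tfrac1{\alpha_f}$, comfortably contains the images of the parabolic preimages.

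Next, with $k_1$ fixed, I would prove the existence of such an $\varepsilon_2$ by contradiction, which delivers the uniformity over $f\in\mathcal{IS}_{(0,\varepsilon_2)}$ and $k\in[1,k_1]$ at once. If no $\varepsilon_2>0$ worked, one could pick $f_n\in\mathcal{IS}_{(0,1/n)}$ (so $\alpha_{f_n}<1/n\to 0$) and $k_n\in[1,k_1]$ for which (1), (2) or (3) fails; passing to a subsequence one may assume $k_n\equiv k$ and, by compactness, $f_n\to f_0\in\mathcal{IS}_0$. Since $U_{f_n}$ eventually contains any compact subset of $U_{f_0}$ and $f_n\to f_0$ uniformly on compacts, Hurwitz's theorem and the argument principle give, for $n$ large, a connected component of $f_n^{-k}(V_{f_n})$ (resp. of $f_n^{-k}(W_{f_n})$) close to $V_{f_0}^{-k}$ (resp. $W_{f_0}^{-k}$), relatively compact in $U_{f_n}$, on which $f_n^{k}$ has degree $1$ (resp. degree $2$, ramified over $v$). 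As $0$ is fixed by every $f_n$ and lies in the closure of $V_{f_n}$ (because $\operatorname{Im}\Phi_{f_n}\to+\infty$ along approach to $0$), this component is exactly the one required in (1), and similarly for $W_{f_n}$ in (2); this contradicts the failure when $k<k_1$. When $k=k_1$ one invokes in addition part (3) of the perturbed Fatou coordinate proposition: on the compact $V_{f_0}^{-k_1}\cup W_{f_0}^{-k_1}\subset\mathcal{P}_{\mathrm{rep},f_0}$ the maps $\Phi_{f_n}-\tfrac1{\alpha_{f_n}}$ converge uniformly to $\Phi_{\mathrm{rep},f_0}$, so for $n$ large $\operatorname{Re}\Phi_{f_n}$ lies strictly between $\tfrac1{\alpha_{f_n}}-R_4-1$ and $\tfrac1{\alpha_{f_n}}-R_3-5$ on $V_{f_n}^{-k_1}\cup W_{f_n}^{-k_1}$, and $\tfrac1{\alpha_{f_n}}-R_4-1>2$ by the choice of $\varepsilon_2$, which is (3): a contradiction.

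I expect the main obstacle to be the content hidden in ``a component of $f_n^{-k}(V_{f_n})$ close to $V_{f_0}^{-k}$'', i.e. tracking the correct inverse branches. Near the parabolic limit the preimages of $V_{f_0}$ travel from the attracting petal, through the region where $f_0$ carries only the single neutral fixed point, into the repelling petal; after perturbation that region splits, $f$ acquires a second fixed point $\sigma_f$, and $\Phi_f$ is defined only on the single petal $\mathcal{P}_f$, so one cannot simply translate in $\Phi_f$. Verifying that the right inverse branches remain inside $U_{f_n}$ with the stated ramification (degree $1$ for $V$, degree $2$ over $v$ for $W$) for all $k\le k_1$ simultaneously is exactly where the bookkeeping of \cite{BuffCheritat2012} must be reproduced in the present higher-degree setting; once the branch structure is pinned down, the $\operatorname{Re}\Phi_{f_n}$ estimates needed for (3) follow immediately from the uniform convergence of the perturbed Fatou coordinates.
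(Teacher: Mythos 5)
Your proposal reproduces exactly the compactness-and-perturbation scheme that the paper intends: it defers entirely to the argument of Buff--Ch\'eritat's Proposition~13, which is the same structure you describe (fix $k_1$ via the parabolic lemma, use compactness of $\mathcal{IS}_0$ to get uniform bounds, then argue by contradiction along $f_n\to f_0$ using Hurwitz and the uniform convergence of the perturbed Fatou coordinates). The branch-tracking you flag as the ``main obstacle'' is precisely the content carried over from that reference, so your sketch is correct and essentially identical to the paper's route.
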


\begin{proof}
This can be obtained by the same arguments used in the proof of \cite[Proposition 13]{BuffCheritat2012}.
\end{proof}

\begin{lemma} (Theorem \ref{ren2020912-1} and Subsection \ref{proofpro2021-10-3-1})
If $f\in\mathcal{IS}_{(0,\vep_2)}$, the map
\beqq
\Phi_f\circ f^{k_1}\circ \Phi^{-1}_{f}:\ \Phi_f(V^{-k_1}_f\cup W^{-k_1}_f)\to\Phi_f(V_f\cup W_f)
\eeqq
projects via $w\to-\tfrac{4m^m}{(m+1)^{m+1}}\exp(2\pi i w)$ to a map $\mathcal{R}(f)\in\mathcal{IS}_{-1/\al_f}$.
\end{lemma}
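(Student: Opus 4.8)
The plan is to read the displayed map as a first-return map on a fundamental domain of the perturbed Fatou coordinate and then push it down by the exponential. First I would recall from the two preceding propositions that, for $f\in\mathcal{IS}_{(0,\vep_2)}$, the perturbed coordinate $\Phi_f$ is univalent on the petal $\mathcal{P}_f$ with $\Phi_f(\mathcal{P}_f)=\{0<\operatorname{Re}w<1/\al_f-R_3\}$ and $\Phi_f(v)=1$; that $f^{k_1}$ maps $V_f^{-k_1}$ isomorphically onto $V_f$ and $W_f^{-k_1}$ as a degree-$2$ branched cover onto $W_f$ ramified only over $v$; and that $V_f^{-k_1}\cup W_f^{-k_1}\subset\{z\in\mathcal{P}_f:\ 2<\operatorname{Re}\Phi_f(z)<1/\al_f-R_3-5\}$. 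Since $V_f\cup W_f\subset\mathcal{P}_f$ too, the composition $g:=\Phi_f\circ f^{k_1}\circ\Phi_f^{-1}$ is well defined and holomorphic on $\Phi_f(V_f^{-k_1}\cup W_f^{-k_1})$, surjects onto $\Phi_f(V_f\cup W_f)=\{\operatorname{Im}w>-2,\ 0<\operatorname{Re}w<2\}$, is univalent on the part coming from $V_f^{-k_1}$, and is a degree-$2$ branched cover ramified over $\Phi_f(v)=1$ on the part coming from $W_f^{-k_1}$.

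The key point is the $\zz$-equivariance $g(w+1)=g(w)+1$ wherever both sides are defined: using $\Phi_f\circ f=\Phi_f+1$ on $\{\operatorname{Re}\Phi_f<1/\al_f-R_3-1\}$ together with $f\circ f^{k_1}=f^{k_1}\circ f$, one has $g(w+1)=\Phi_f\bigl(f^{k_1}(f(\Phi_f^{-1}(w)))\bigr)=\Phi_f\bigl(f(f^{k_1}(\Phi_f^{-1}(w)))\bigr)=g(w)+1$, and all the arguments involved lie in the regions where the functional equation applies, by the $\operatorname{Re}\Phi_f$-bounds above. Consequently the projection $w\mapsto -\tfrac{4m^m}{(m+1)^{m+1}}e^{2\pi i w}$ (the constant being the one fixed by the normalizations $\tau_f,\psi_0,\psi_1$ and $\Phi_f(v)=1$ used throughout Subsection~\ref{proofpro2021-10-3-1}) carries $\Phi_f(V_f^{-k_1}\cup W_f^{-k_1})$ and $\Phi_f(V_f\cup W_f)$ onto punctured neighbourhoods of $0$, and $g$ descends to a well-defined holomorphic map $\mathcal{R}(f)$ between them. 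That $\mathcal{R}(f)$ extends across $0$ with $\mathcal{R}(f)(0)=0$ follows since $g(w)=w+\mathcal{O}(1)$ as $\operatorname{Im}w\to+\infty$ (the $z\to0$ side of $\mathcal{P}_f$), so the singularity at $0$ is removable; and $\mathcal{R}(f)'(0)=e^{-2\pi i/\al_f}$ comes from identifying the translation constant of $g$ at $+i\infty$, which with the normalizations $\Phi_f(v)=1$, $\Phi_f(\mathcal{P}_f)=\{0<\operatorname{Re}w<1/\al_f-R_3\}$ is $\equiv-1/\al_f\pmod{\zz}$ — exactly the factor $\chi_f(z)=z-1/\al_f$ of Theorem~\ref{equ2020821-3}. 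Hence $\mathcal{R}(f)$ is a holomorphic germ fixing $0$ with multiplier $e^{-2\pi i/\al_f}$, i.e. of rotation number $-1/\al_f$.

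It then remains to identify $\mathcal{R}(f)$ as an element of $\mathcal{IS}_{-1/\al_f}$, namely to produce a univalent $\psi:U\to\cc$ with $\psi(0)=0$, $\psi'(0)=e^{2\pi i/\al_f}$ and $\mathcal{R}(f)=P\circ\psi^{-1}$ on $\psi(U)$, with $U=V'$. For this I would transport the construction of the map $\Psi_1$ from Subsection~\ref{proofpro2021-10-3-1}: the tiling of $V_f^{-k_1}\cup W_f^{-k_1}$ by the pieces $V_f^{-k},W_f^{-k}$ $(k=1,\dots,k_1)$ and their iterated $f$-preimages matches, after the exponential projection, the tiling of $U$ there by the domains $\ol{D}_{-k}\cup\ol{D}_{-k}^{\sharp}\cup\ol{D}_{-k}^{\prime\prime}\cup\ol{D}_{-k+1}\cup\ol{D}_{-k+1}^{\sharp}\cup\ol{D}_{-k+1}^{\prime}$, with the same matching relations along boundary arcs and the same forced degree-$2$ branching over the critical value (from the $(1+z)^m$ factor of $P$). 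Building $\psi$ piecewise by composing $\operatorname{Exp}\circ\Phi_f$ with the branches $(P|_{\mathcal U})^{-1}$, $(P|_{\mathcal U'})^{-1}$, $(P|_{\mathcal U''})^{-1}$ and removing the apparent singularities at the critical points yields the required univalent $\psi$ on $V'$; the holomorphic/continuous dependence on $f$ is inherited as in Proposition~\ref{attracting202091203}(e) and Theorem~\ref{ren2020912-1}. The main obstacle is precisely this last step: verifying that the first-return map genuinely has the covering combinatorics of $P$ over the domain $V'$ — in the parabolic limit this is Proposition~\ref{attracting202091203}, and here one must either rerun that argument in the perturbed setting or deduce it from Theorem~\ref{ren2020912-1} together with the compactness of $\mathcal{IS}_0$ and the continuity of the perturbed Fatou coordinates as $\al_f\downarrow0$, which is what supplies the uniformity over all $f\in\mathcal{IS}_{(0,\vep_2)}$.
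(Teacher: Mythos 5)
Your proposal is correct and follows essentially the same route as the paper: the paper offers no independent argument for this lemma, justifying it precisely by Theorem \ref{ren2020912-1} and the construction of $\Psi_1$ in Subsection \ref{proofpro2021-10-3-1}, which is exactly where you send the crucial covering-structure step, while your added verifications (the $\zz$-equivariance $g(w+1)=g(w)+1$ from $\Phi_f\circ f=\Phi_f+1$ and the bound $\operatorname{Re}\Phi_f<1/\al_f-R_3-5$, the removable singularity at $0$, and the upper-end translation constant $\equiv-1/\al_f$ giving multiplier $e^{-2\pi i/\al_f}$ as in Theorem \ref{equ2020821-3}) are the routine details the paper leaves implicit. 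No gap beyond what the paper itself defers to those cited results.
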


The polynomial $P_{\al}$ dose not belong to the class $\mathcal{IS}_{\al}$, for $\al>0$ sufficiently close to $0$, there are perturbed Fatou coordinates, there is a renormalization $\mathcal{R}(P_{\al})$ that belongs to $\mathcal{IS}_{-1/\al}$.

\subsubsection{Renormalization tower}

For any positive integer $N$, denote
\beqq
\mbox{Irrat}_{\geq N}:=\{\al=[a_0,a_1,a_2,...]\in\rr\setminus\qq:\ a_k\geq N\ \forall k\geq1\},
\eeqq
where $\al=[a_0,a_1,a_2,...]$ is continued fraction expansion defined in \eqref{paraequ-34}. By \eqref{paraequ-34}, for any $\al=[a_0,a_1,a_2,...]$, one has
\beqq
\al_j=[0,a_{j+1},a_{j+2},a_{j+3},...]\ \forall j\geq0
\eeqq
and
\beqq
\al_{j+1}=\frac{1}{\al_j}-\bigg\lfloor\frac{1}{\al_j}\bigg\rfloor\ \forall j\geq1.
\eeqq

So, the assumption $\al=[a_0,a_1,a_2,...]\in\mbox{Irrat}_{\geq N}$ is equivalent to
\beqq
\al_j\in\bigg(0,\frac{1}{N}\bigg)\ \forall j\geq0.
\eeqq
Let $\frac{p_k}{q_k}$ be the $k$th convergent of $\al$ introduced in \eqref{equ-2022-2-6-1}.

For the given $\vep_2$, take $N$ sufficiently large such that $\frac{1}{N}<\vep_2$.

For any $f_0=P_{\al}$ or $f_0\in\mathcal{IS}_{\al}$, an infinite sequence of renormalizations can be defined, called a renormalization tower, denoted by
\beqq
f_{j+1}:=s\circ\mathcal{R}(f_j)\circ s^{-1},
\eeqq
where the conjugacy by $s:\ z\to\ol{z}$ is introduced so that
\beqq
f'_j(0)=e^{2i\pi\al_j}.
\eeqq

It is convenient to define
\beqq
\mbox{Exp}(\cdot)=-\tfrac{4m^m}{(m+1)^{m+1}}s\circ \exp(2\pi i\cdot):\ w\in\cc\to -\tfrac{4m^m}{(m+1)^{m+1}}s(\exp(2\pi i w))\in\cc^*.
\eeqq
Define
\beqq
\phi_j:=\mbox{Exp}\circ\Phi_{f_j}:\ \mathcal{P}_{f_j}\to\cc\ j\geq0.
\eeqq
The map $\phi_j$ goes from the $j$-th level of the renormalization tower to the next level.

\begin{lemma}\label{equ-2022-2-7-1}
There exists a positive constant $K$ such that for all $f\in\mathcal{IS}_{(0,\vep_2)}$, there is an inverse branch of $\mbox{Exp}$ which is defined on $\mathcal{P}_f$ and takes values in the strip $\{w\in\cc:\ 0<\mbox{Re}(w)<K\}$.
\end{lemma}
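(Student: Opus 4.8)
The plan is to exploit the almost-linear explicit form of $\mbox{Exp}$ together with the uniform control on the perturbed petal $\mathcal{P}_f$ coming from Proposition 12. Write $c:=-\tfrac{4m^m}{(m+1)^{m+1}}=4\,cv_{\tiny{P}}\in\rr_-$, so that $\mbox{Exp}(w)=c\,s(e^{2\pi i w})=c\,\overline{e^{2\pi i w}}$. This map is $1$-periodic, $\mbox{Exp}(w+1)=\mbox{Exp}(w)$, and for $w=u+iv$ one computes $\mbox{Exp}(w)=|c|\,e^{-2\pi v}e^{\,i(\pi-2\pi u)}$, so that $|\mbox{Exp}(w)|=|c|\,e^{-2\pi v}$ and $\arg\mbox{Exp}(w)\equiv \pi-2\pi u\pmod{2\pi}$. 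In particular $\mbox{Exp}\colon\cc\to\cc^{*}$ is a covering whose fibres are the $\zz$-orbits of translation by $1$; to invert it one inverts the modulus (this pins down $\mathrm{Im}\,w$) and a branch of the argument (this pins down $\mathrm{Re}\,w$ modulo $1$). Note that $\mbox{Exp}$ is anti-holomorphic, so the inverse branch we produce will be anti-holomorphic, which is the correct behaviour here (it is composed with the holomorphic $\Phi_{f}$ to build the holomorphic change of levels $\phi_{f}$).

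Next I would invoke Proposition 12(1) (for $f\in\mathcal{IS}_{(0,\vep_2)}\subset\mathcal{IS}_{(0,\vep_1)}$, which we may assume): there is a perturbed petal $\mathcal{P}_f\subset U_f$, a Jordan domain containing $v=cv_{\tiny{P}}$ whose two boundary arcs join $0$ to $\sigma_f$, so that $0\notin\mathcal{P}_f$, together with a continuous branch $\theta_f$ of $\arg z$ on $\mathcal{P}_f$ satisfying $\mathrm{osc}_{\mathcal{P}_f}\theta_f<K_0$, where $K_0$ denotes the uniform constant called $K$ in Proposition 12. Since $0\notin\mathcal{P}_f$, the function $L_f(z):=\log|z|+i\theta_f(z)$ is a holomorphic branch of $\log$ on $\mathcal{P}_f$, and I would set
\[
\ell_f(z):=\tfrac12+n_f+\tfrac{i}{2\pi}\log|c|-\tfrac{i}{2\pi}\,\overline{L_f(z)}
=\tfrac12+n_f-\tfrac{\theta_f(z)}{2\pi}+\tfrac{i}{2\pi}\log\tfrac{|c|}{|z|},
\]
with $n_f\in\zz$ a constant to be chosen. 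Then $\ell_f$ is anti-holomorphic on $\mathcal{P}_f$, and a routine check of modulus and argument gives $\mbox{Exp}\circ\ell_f=\mathrm{id}_{\mathcal{P}_f}$: from $\mathrm{Im}\,\ell_f(z)=\tfrac1{2\pi}\log\tfrac{|c|}{|z|}$ we get $|\mbox{Exp}(\ell_f(z))|=|z|$, and from $\mathrm{Re}\,\ell_f(z)=\tfrac12+n_f-\tfrac{\theta_f(z)}{2\pi}$ we get $\arg\mbox{Exp}(\ell_f(z))\equiv\theta_f(z)\equiv\arg z\pmod{2\pi}$. Thus $\ell_f$ is an inverse branch of $\mbox{Exp}$ on $\mathcal{P}_f$ for any integer $n_f$.

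It remains to choose $n_f$ so that $\mathrm{Re}\,\ell_f(\mathcal{P}_f)\subset(0,K)$ with $K$ independent of $f$. Since $\mathrm{Re}\,\ell_f=\tfrac12+n_f-\tfrac{\theta_f}{2\pi}$ and $\mathcal{P}_f$ is connected with $\mathrm{osc}_{\mathcal{P}_f}\theta_f<K_0$, the set $\mathrm{Re}\,\ell_f(\mathcal{P}_f)$ is contained in a closed interval of length $<K_0/(2\pi)$; as $n_f$ runs over $\zz$ this interval is translated by every integer, and any open interval of length $>1+K_0/(2\pi)$ contains an integer in a position that makes a sub-interval of length $<K_0/(2\pi)$ fit strictly inside it. Hence $K:=2+\tfrac{K_0}{2\pi}$ works: for each $f$ one picks the appropriate $n_f\in\zz$, and $\ell_f(\mathcal{P}_f)\subset\{0<\mathrm{Re}\,w<K\}$. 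The argument is essentially soft once Proposition 12 is in hand; the only steps I would write out with care are getting the signs right in the (anti-)holomorphic inversion of $\mbox{Exp}$ and making the integer-translation choice uniformly in $f$ — this last point, i.e.\ that the constant $K$ can be chosen once and for all, is the real content and the main (minor) obstacle.
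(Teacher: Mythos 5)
Your proof is correct and carries out in full the argument that the paper only references (Buff--Ch\'{e}ritat, Lemma~11): one uses the uniform bound on the oscillation of the argument branch on $\mathcal{P}_f$ from Proposition~12(1), together with the explicit $1$-periodic (anti-holomorphic) form of $\mbox{Exp}$, to build an inverse branch by inverting modulus and argument and then translating by a suitable integer $n_f$. The construction, the computation checking $\mbox{Exp}\circ\ell_f=\mathrm{id}$, and the choice $K=2+K_0/(2\pi)$ are all sound and match the intended argument.
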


\begin{proof}
This can be obtained by the same arguments used in the proof of \cite[Lemma 11]{BuffCheritat2012}.
\end{proof}

Suppose $N$ is large enough such that
\beqq
\frac{1}{N}<\vep_2\ \mbox{and}\ N-R_3>K.
\eeqq
By Lemma \ref{equ-2022-2-7-1}, for any $j\geq1$, there exists an inverse branch $\psi_j$ of $\phi_{j-1}$ defined on the perturbed petal $\mathcal{P}_{f_j}$ with values in $\mathcal{P}_{f_{j-1}}$. Hence, one can introduce the univalent map defined on $\mathcal{P}_{f_j}$ with values in the dynamical plane of $f_0$ as follows:
\beqq
\Psi_j:=\psi_1\circ\psi_2\circ\cdots\circ\psi_j.
\eeqq

Recall that
\beqq
\Phi_{f_j}(\mathcal{P}_{f_j})=\{w\in\cc:\ 0<\Re(w)<1/\al_j-R_3\}.
\eeqq
Define $\mathcal{P}_{j}\subset\mathcal{P}_{f_j}$ and
$\mathcal{P}'_{j}\subset\mathcal{P}_{f_j}$ by
\beqq
\mathcal{P}_{j}:=\{z\in \mathcal{P}_{f_j}:\ 0<\Re(\Phi_{f_j}(z))<1/\al_j-R_3-1\}
\eeqq
and
\beqq
\mathcal{P}'_{j}:=\{z\in \mathcal{P}_{f_j}:\ 1<\Re(\Phi_{f_j}(z))<1/\al_j-R_3\}.
\eeqq
Note that $\mathcal{P}_{j}$ and $\mathcal{P}'_{j}$ is isomorphic by the map $f_j$. Set
\beqq
\mathcal{G}_j:=\Psi_j(\mathcal{P}_{j})\ \mbox{and}\ \mathcal{G}'_j:=\Psi_j(\mathcal{P}'_{j}).
\eeqq

\begin{proposition}
The map $f_j:\ \mathcal{P}_j\to \mathcal{P}'_j$ is conjugate with the map $f^{q_j}_0:\ \mathcal{G}_j\to \mathcal{G}'_j$ via the map $\Psi_j$. That is, the following commutative diagram holds:
 \begin{center}
\begin{tikzpicture}
    \node (E) at (0,0) {$\mathcal{G}_j\subset\Psi_j(\mathcal{P}_{f_j})$ };
    \node[right=of E] (F) at (4,0){$\mathcal{G}'_j\subset\Psi_j(\mathcal{P}_{f_j}) $};
    \node[below=of F] (A) {$\mathcal{P}'_j\subset\mathcal{P}_{f_j}$};
    \node[below=of E] (Asubt) {$\mathcal{P}_j\subset\mathcal{P}_{f_j}$};
   \draw[->] (E)--(F) node [midway,above] {$f^{q_j}_0$};
    \draw[<-] (F)--(A) node [midway,right] {$\Psi_j$}
                node [midway,left] {};
    \draw[->] (Asubt)--(A) node [midway,below] {$f_j$}
                node [midway,above] {};
    \draw[<-] (E)--(Asubt) node [midway,left] {$\Psi_j$};
\end{tikzpicture}.
\end{center}
\end{proposition}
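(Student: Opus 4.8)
The plan is to prove the commuting square $f_{0}^{q_{j}}\circ\Psi_{j}=\Psi_{j}\circ f_{j}$ on $\mathcal{P}_{j}$ by induction on $j\geq1$, using throughout the factorisation $\Psi_{j}=\Psi_{j-1}\circ\psi_{j}$ (with $\psi_{j}$ the inverse branch of $\phi_{j-1}=\mathrm{Exp}\circ\Phi_{f_{j-1}}$ built just before the statement), the translation identity $\Phi_{f}\circ f=\Phi_{f}+1$ for perturbed Fatou coordinates, the periodicity $\mathrm{Exp}(w+1)=\mathrm{Exp}(w)$, and the recursion $q_{j}=a_{j}q_{j-1}+q_{j-2}$ with $q_{0}=1,\ q_{-1}=0$. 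Since $\mathcal{G}_{j}=\Psi_{j}(\mathcal{P}_{j})$ and $\mathcal{G}'_{j}=\Psi_{j}(\mathcal{P}'_{j})$ and $\Psi_{j}$ is univalent, once the semiconjugacy and the set identity $f_{0}^{q_{j}}(\mathcal{G}_{j})=\mathcal{G}'_{j}$ are established the conjugacy assertion is immediate; the genuine content is that $f_{0}^{q_{j}}$ is everywhere defined on $\mathcal{G}_{j}$ (which uses the domain and post-critical control recorded in the preceding lemmas) and that the $f_{0}$-exponent is $q_{j}$ rather than $a_{1}\cdots a_{j}$.

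For the base case $j=1$ I would unwind the definition of $\mathcal{R}$: $f_{1}=s\circ\mathcal{R}(f_{0})\circ s^{-1}$ is the projection under $w\mapsto-\tfrac{4m^{m}}{(m+1)^{m+1}}\exp(2\pi i w)$ of $\Phi_{f_{0}}\circ f_{0}^{\,k_{1}}\circ\Phi_{f_{0}}^{-1}$, where the repelling sub-petal is taken near the top of $\mathcal{P}_{f_{0}}$ and $f_{0}^{k_{1}}$ brings the orbit back to the bottom sector $V_{f_{0}}\cup W_{f_{0}}$. Because $\mathrm{Exp}$ has period $1$ and $\Phi_{f_{0}}$ advances by $1$ under $f_{0}$, the map $\psi_{1}$ intertwines $f_{1}$ on $\mathcal{P}_{1}$ with the first return map of $f_{0}$ to $\psi_{1}(\mathcal{P}_{1})$; tracking the real part of the Fatou coordinate along the returning orbit (its total increment being $\approx 1/\alpha_{0}-R_{3}\approx a_{1}$, with the $R_{3}$- and $o(1)$-corrections absorbed into the definitions of $\mathcal{P}_{1},\mathcal{P}'_{1}$) identifies that return time as exactly $q_{1}=a_{1}$. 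That $\mathcal{G}_{1},\mathcal{G}'_{1}$ are relatively compact in $\mathrm{Dom}(f_{0})$ and that $f_{1}\colon\mathcal{P}_{1}\to\mathcal{P}'_{1}$ is an isomorphism come from the perturbed-Fatou-coordinate proposition together with Lemma \ref{equ-2022-2-10-10}.

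For the inductive step I would assume the square at levels $j-1$ and $j-2$. Transporting one iterate of $f_{j}$ by $\psi_{j}$ into $\mathcal{P}_{f_{j-1}}$ gives, by the same periodicity argument as in the base case, the first return map of $f_{j-1}$ to the sub-petal $\psi_{j}(\mathcal{P}_{j})$; since $\Phi_{f_{j-1}}$ advances by $1$ under $f_{j-1}$ and its petal has coordinate length $1/\alpha_{j-1}-R_{3}=a_{j}+\alpha_{j}-R_{3}$, that return map is a concatenation of $a_{j}$ full loops of $f_{j-1}$ followed by one residual block. By the inductive hypothesis each full loop is conjugate via $\Psi_{j-1}$ to $f_{0}^{q_{j-1}}$, and the residual block is conjugate via $\Psi_{j-1}$ (after descending one further level) to $f_{0}^{q_{j-2}}$; composing and using $a_{j}q_{j-1}+q_{j-2}=q_{j}$ yields $f_{0}^{q_{j}}\circ\Psi_{j}=\Psi_{j}\circ f_{j}$ on $\mathcal{P}_{j}$. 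Injectivity of $\Psi_{j}$ (a composition of univalent inverse branches) and the nesting $\Psi_{j}(\mathcal{P}_{f_{j}})\subset\Psi_{j-1}(\mathcal{P}_{f_{j-1}})$, together with the domain control already available one level down, upgrade this to the stated conjugacy of $f_{j}\colon\mathcal{P}_{j}\to\mathcal{P}'_{j}$ with $f_{0}^{q_{j}}\colon\mathcal{G}_{j}\to\mathcal{G}'_{j}$.

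The main obstacle is precisely this return-time bookkeeping in the inductive step: one must show that a single step of $f_{j}$, read off as a first-return map at level $j-1$, decomposes into exactly $a_{j}$ copies of the level-$(j-1)$ return block plus a single level-$(j-2)$ correction, so that the $f_{0}$-exponent telescopes to $q_{j}$. Carrying this out rigorously requires matching $\Phi_{f_{j-1}}$ on $\psi_{j}(\mathcal{P}_{j})$ with the identification $\chi_{f}(z)=z-1/\alpha(f)$ between the attracting and repelling cylinders, and controlling the $o(1)$ distortions of the perturbed Fatou coordinates uniformly over $\mathcal{IS}_{(0,\varepsilon_{2})}$ — exactly the analysis carried out for the corresponding proposition in \cite{BuffCheritat2012}, which is why the argument here reduces to that reference.
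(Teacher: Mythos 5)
Your plan is essentially the paper's own argument: the paper proves this proposition by simply invoking the proof of Proposition 14 in \cite{BuffCheritat2012}, and your induction on the renormalization tower — lifting one step of $f_j$ through $\psi_j$ and counting $a_j$ level-$(j-1)$ return blocks plus one level-$(j-2)$ correction so that the exponent telescopes to $q_j=a_jq_{j-1}+q_{j-2}$ — is exactly that argument. The uniform control of the perturbed Fatou coordinates that you defer to \cite{BuffCheritat2012} is likewise all the paper itself relies on, so there is no substantive difference between the two approaches.
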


\begin{proof}
This can be obtained by the same arguments used in the proof of \cite[Proposition 14]{BuffCheritat2012}.
\end{proof}

Set
\beqq
D_j:=V^{-k_1}_{f_j}\cup W^{-k_1}_{f_j},\ D'_j:=V_{f_j}\cup W_{f_j},
\eeqq
\beqq
C_j:=\Psi_j(D_j),\ \mbox{and}\ C'_j:=\Psi_j(D'_j).
\eeqq
From the discussions above, it follows that $f^{k_1}_j$ maps $D_j$ to $D'_j$.

\begin{proposition}
The map $f^{k_1}_j:\ D_j\to D'_j$ is conjugate with the map $f^{(k_1q_j+q_{j-1})}_0:\ C_j\to C'_j$ via the map $\Psi_j$. That is, the following commutative diagram holds:
 \begin{center}
\begin{tikzpicture}
    \node (E) at (0,0) {$C_j\subset\Psi_j(\mathcal{P}_{f_j})$ };
    \node[right=of E] (F) at (4,0){$C'_j\subset\Psi_j(\mathcal{P}_{f_j}) $};
    \node[below=of F] (A) {$D'_j\subset\mathcal{P}_{f_j}$};
    \node[below=of E] (Asubt) {$D_j\subset\mathcal{P}_{f_j}$};
   \draw[->] (E)--(F) node [midway,above] {$f^{(k_1q_j+q_{j-1})}_0$};
    \draw[<-] (F)--(A) node [midway,right] {$\Psi_j$}
                node [midway,left] {};
    \draw[->] (Asubt)--(A) node [midway,below] {$f^{k_1}_j$}
                node [midway,above] {};
    \draw[<-] (E)--(Asubt) node [midway,left] {$\Psi_j$};
\end{tikzpicture}.
\end{center}
\end{proposition}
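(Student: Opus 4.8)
The plan is to derive this commutative diagram from the previous proposition — which, via $\Psi_j$, conjugates $f_j:\mathcal{P}_j\to\mathcal{P}'_j$ to $f_0^{q_j}:\mathcal{G}_j\to\mathcal{G}'_j$ — together with the change‑of‑level relation defining the renormalization and an induction on $j$, following the proof of the corresponding statement in \cite{BuffCheritat2012}. First I would record the purely set‑theoretic inputs: by the two propositions preceding this one, $D_j=V^{-k_1}_{f_j}\cup W^{-k_1}_{f_j}\subset\{z\in\mathcal{P}_{f_j}:\ 2<\operatorname{Re}(\Phi_{f_j}(z))<1/\al_j-R_3-5\}$ and $D'_j=V_{f_j}\cup W_{f_j}\subset\{z\in\mathcal{P}_{f_j}:\ 0<\operatorname{Re}(\Phi_{f_j}(z))<2\}$ are both contained in the perturbed petal $\mathcal{P}_{f_j}$, so that $C_j=\Psi_j(D_j)$ and $C'_j=\Psi_j(D'_j)$ are well defined and $\Psi_j=\psi_1\circ\cdots\circ\psi_j$ is univalent on a neighborhood of $D_j$. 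Since $f_j^{k_1}$ already carries $D_j$ onto $D'_j$, it then suffices to establish the identity $f_0^{\,k_1q_j+q_{j-1}}\circ\Psi_j=\Psi_j\circ f_j^{k_1}$ on $D_j$.

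The heart of the matter is to transport the dynamics one level down. Writing $\Psi_j=\Psi_{j-1}\circ\psi_j$, where $\psi_j$ is the inverse branch of $\phi_{j-1}=\mbox{Exp}\circ\Phi_{f_{j-1}}$ sending $\mathcal{P}_{f_j}$ into $\mathcal{P}_{f_{j-1}}$, and using the renormalization relation $f_j\circ\phi_{j-1}=\phi_{j-1}\circ f_{j-1}^{k_1}$ valid on $V^{-k_1}_{f_{j-1}}\cup W^{-k_1}_{f_{j-1}}$ (suppressing the conjugation by $s:z\mapsto\ol z$), one shows that along the partial orbit $z,f_j(z),\dots,f_j^{k_1}(z)$ of a point $z\in D_j$ the section $\psi_j$ conjugates $f_j^{k_1}$ to a composition $g_j$ of iterates of $f_{j-1}$ and of $f_{j-1}$‑return blocks: it accounts for the $k_1$ successive renormalization returns at level $j$ and for one additional passage through the perturbed petal $\mathcal{P}_{f_{j-1}}$ that is forced because the repelling fundamental domain $D_j$ must be carried onto the attracting one $D'_j$. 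Applying $\Psi_{j-1}$ and invoking both the previous proposition (each plain $f_{j-1}$‑step contributing $q_{j-1}$ iterations of $f_0$) and the present proposition at level $j-1$ (each $f_{j-1}$‑return block contributing $k_1q_{j-1}+q_{j-2}$ iterations), and adding the contributions with the help of the convergent recursion $q_j=a_jq_{j-1}+q_{j-2}$, one obtains exactly $k_1q_j+q_{j-1}$ iterations of $f_0$; hence $\Psi_j\circ f_j^{k_1}=f_0^{\,k_1q_j+q_{j-1}}\circ\Psi_j$ on $D_j$. The base case $j=1$ reduces to a direct computation, with $\Psi_1=\psi_1$ and $q_0=1$.

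The hard part will be the bookkeeping of the exponent: one must check that the total number of $f_0$‑iterations is \emph{exactly} $k_1q_j+q_{j-1}$, which requires keeping every intermediate point of the relevant orbits inside the regions where the perturbed Fatou‑coordinate equation $\Phi_{f_i}\circ f_i=\Phi_{f_i}+1$ and the section identities for the $\psi_i$ hold simultaneously. This is precisely where the quantitative data already fixed are used — the constants $R_2\le R_3$ with $1/\varepsilon_1-R_3>1$, the integers $k_0,k_1$, the bounds $a_k\ge N$, and the inclusions $V^{-k}_{f_i}\cup W^{-k}_{f_i}\subset\{2<\operatorname{Re}(\Phi_{f_i})<1/\al_i-R_3-5\}$ for $1\le k\le k_1$ — and the verification is carried out exactly as in the proof of the analogous statement, \cite[Proposition 15]{BuffCheritat2012}.
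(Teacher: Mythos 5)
Your proposal is correct in approach and coincides with the paper's: the paper proves this proposition simply by invoking the same argument as \cite[Proposition 15]{BuffCheritat2012}, and your outline (reduce to the identity $f_0^{\,k_1q_j+q_{j-1}}\circ\Psi_j=\Psi_j\circ f_j^{k_1}$ on $D_j$, then combine the previous proposition with the renormalization relation and induction along the tower via $q_j=a_jq_{j-1}+q_{j-2}$, deferring the quantitative bookkeeping to Buff--Ch\'eritat) is exactly that route. Since the paper supplies no further detail beyond the citation, your sketch is, if anything, more explicit than the paper's own proof.
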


\begin{proof}
This can be obtained by the same arguments used in the proof of \cite[Proposition 15]{BuffCheritat2012}.
\end{proof}

\subsubsection{Neighborhoods of the post-critical set}

\begin{proposition}
For all $\al\in\text{Irrat}_{\geq N}$ and all $f\in\mathcal{IS}_{\al}$, the post-critical set of $f$ is infinite.
\end{proposition}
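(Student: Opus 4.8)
The plan is to run the renormalization tower of $f$ and to show that it forces the forward orbit of the critical point to meet infinitely many points, the mechanism being that at every level of the tower the renormalization is a genuine degree--two branched cover over the level--independent critical value $v=cv_P$.

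First I would fix $\al\in\text{Irrat}_{\geq N}$ with $N$ so large that $\tfrac1N<\vep_2$ and $N-R_3>K$, and set $f_0:=f\in\mathcal{IS}_{\al}$. Building the tower $(f_j)_{j\geq 0}$, $f_j\in\mathcal{IS}_{\al_j}$ with $\al_j\in(0,\tfrac1N)$, I have the petal--inverse branches $\psi_j:\mathcal{P}_{f_j}\to\mathcal{P}_{f_{j-1}}$, the composites $\Psi_j:=\psi_1\circ\cdots\circ\psi_j:\mathcal{P}_{f_j}\to\mathcal{P}_{f_0}$ (each univalent, hence injective), and the commuting squares already established: $\Psi_j$ conjugates $f_j^{k_1}:D_j\to D_j'$ to $f_0^{\,n_j}:C_j\to C_j'$ with $n_j=k_1q_j+q_{j-1}\to\infty$, so in particular $C_j=\Psi_j(D_j)\subset\Psi_j(\mathcal{P}_{f_j})\subset\mathcal{P}_{f_0}$. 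It then suffices to exhibit infinitely many elements of $\mathcal{PC}(f_0)$.

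The central step is to pull points of $\mathcal{PC}(f_0)$ out of the tower. By the renormalization lemmas above, the map $f_j^{k_1}:W_{f_j}^{-k_1}\to W_{f_j}$ is a degree--two covering ramified above $v$, so it has a unique critical point $c_j\in W_{f_j}^{-k_1}$ with $f_j^{k_1}(c_j)=v$. Transporting by $\Psi_j$, the point $\Psi_j(c_j)\in C_j$ is a critical point of $f_0^{\,n_j}$. Since $-1\notin V'$, the only critical point of $f_0=P\circ\varphi_0^{-1}$ in its domain is $\om_{f_0}=\varphi_0(-\tfrac1{m+1})$, so there is $\ell_j$ with $0\leq\ell_j<n_j$ and $f_0^{\ell_j}(\Psi_j(c_j))=\om_{f_0}$; hence $f_0^{\ell_j+1}(\Psi_j(c_j))=f_0(\om_{f_0})=v$ (the critical value of every map in our classes is $v$), while $f_0^{\,n_j}(\Psi_j(c_j))=\Psi_j\bigl(f_j^{k_1}(c_j)\bigr)=\Psi_j(v)$. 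Therefore $\Psi_j(v)=f_0^{\,n_j-\ell_j-1}(v)=f_0^{\,n_j-\ell_j}(\om_{f_0})\in\mathcal{PC}(f_0)$ for every $j\geq1$, and it remains to see that $\{\Psi_j(v):j\geq 1\}$ is infinite. Each $\Psi_j(v)$ lies in $\Psi_j(\mathcal{P}_{f_j})\subset\mathcal{P}_{f_0}$ and $0\in\partial\mathcal{P}_{f_0}$, so $\Psi_j(v)\neq 0$; and if $\Psi_j(v)=\Psi_k(v)$ with $j<k$, injectivity of $\Psi_j$ forces $\psi_{j+1}\circ\cdots\circ\psi_k(v)=v$. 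Applying $\Phi_{f_j}$ and using that $\psi_{j+1}$ is an inverse branch of $\text{Exp}\circ\Phi_{f_j}$ together with the identity $\text{Exp}(w)=cv_P\iff w\in\zz+\tfrac{i\log 4}{2\pi}$ (so every such $w$ has nonzero imaginary part, whereas $\Phi_{f_j}(v)=1$), one gets a contradiction when $k=j+1$; the general case is the combinatorial rigidity argument of \cite{BuffCheritat2012}, obtained by unwinding level by level the forced identities $\psi_{i+1}\circ\cdots\circ\psi_k(v)=\text{Exp}\bigl(\Phi_{f_i}(\psi_{i+2}\circ\cdots\circ\psi_k(v))\bigr)$. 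With the $\Psi_j(v)$ pairwise distinct, $\mathcal{PC}(f_0)$ is infinite.

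The hard part is this last point. The one-line computation $\text{Exp}(1)=-\tfrac{4m^m}{(m+1)^{m+1}}=4\,cv_P\neq cv_P$ already gives $\Psi_{j+1}(v)\neq\Psi_j(v)$, but excluding all coincidences — equivalently, excluding that the tower dynamics is eventually periodic at $v$ — requires tracking where $\text{Exp}$ and the perturbed Fatou coordinates send the iterated preimages of $v$, and here I would reproduce the argument of \cite[proof of Proposition~16]{BuffCheritat2012} verbatim; the only inputs needed are the explicit form of $\text{Exp}$ and the compactness of $\mathcal{IS}_0$ underlying the whole tower construction.
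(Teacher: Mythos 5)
Your construction of the points $\Psi_j(v)\in\mathcal{PC}(f_0)$ is correct as far as it goes (the unique critical point of $f_0$ in its domain, the conjugacy $\Psi_j\colon f_j^{k_1}|_{D_j}\to f_0^{\,n_j}|_{C_j}$, and the conclusion $\Psi_j(v)=f_0^{\,n_j-\ell_j}(\om_{f_0})$ are all fine), but the whole content of the proposition sits in the pairwise distinctness of these points, and that step is not established. Your separation of consecutive levels rests entirely on the constant $-\tfrac{4m^m}{(m+1)^{m+1}}$ in the definition of $\text{Exp}$, which is an internal inconsistency of the paper rather than a usable fact: in Proposition \ref{attracting202091203} the projection is $\Psi_0(z)=cv_P\cdot e^{2\pi iz}$, and for $\mathcal{R}f$ to lie in the class of Theorem \ref{ren2020912-1} (critical value $cv_P$) one must have $\text{Exp}(1)=cv_P$, i.e.\ the constant is $cv_P=-\tfrac{m^m}{(m+1)^{m+1}}$ (in Buff--Ch\'eritat the constant $-\tfrac{4}{27}$ \emph{is} the quadratic critical value, not four times it). With the correct constant $\phi_j(v)=\text{Exp}(\Phi_{f_j}(v))=\text{Exp}(1)=v$: the critical values at all levels correspond under the projections, $\psi_{j+1}(v)$ is itself a point of the $f_j$-orbit of $v$ (possibly $v$), and all your $\Psi_j(v)$ lie on a single forward orbit of $v$ and may well coincide. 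So the ``one-line computation'' is an artifact of a typo, and distinguishing the $\Psi_j(v)$ is exactly as hard as the proposition itself.

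Deferring the general case to \cite[Proposition 16]{BuffCheritat2012} does not close the gap. That proposition \emph{is} the statement being proven (the paper's own proof is nothing but this citation), and its proof is not the ``combinatorial rigidity'' exclusion of identities $\psi_{j+1}\circ\cdots\circ\psi_k(v)=v$ that your argument needs; it obtains infinitude by a different, global mechanism (an eventually periodic critical orbit is contradicted through the tower, whose return times to $\Psi_j(\mathcal{P}_{f_j})$ grow like $q_j\to\infty$, together with the standard facts about accumulation of the post-critical set on an irrationally indifferent fixed point or Siegel boundary). Moreover your sketched level-by-level unwinding cannot telescope on its own terms: after one step the forced identity involves $\Phi_{f_{j+1}}$ evaluated at the non-normalized point $\text{Exp}(1)$, about which the construction gives no information. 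As written, the proposal therefore either silently reduces to the paper's one-line citation or leaves its decisive step unproven; to repair it you would have to replace the distinctness claim by the contradiction-with-eventual-periodicity argument (period of the limiting cycle $\geq q_j$ for every $j$), which is the route the cited proof actually takes.
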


\begin{proof}
This can be obtained by the same arguments used in the proof of \cite[Proposition 16]{BuffCheritat2012}.
\end{proof}

For every $\al\in\text{Irrat}_{\geq N}$, define $(U_j)$ of open sets containing the post-critical set of $P_{\al}$.

For $j\geq1$, the $j$-th renormalization of $f_0:=P_{\al}$ has a perturbed petal $\mathcal{P}_{f_j}$ and a perturbed Fatou coordinate
\beqq
\Phi_{f_j}:\mathcal{P}_{f_j}\to\{w\in\cc:\ 0<\text{Re}\, (w)<1/\al_j-R_3\}.
\eeqq

The set
\beqq
D_j:=V^{-k_1}_{f_j}\cup W^{-k_1}_{f_j}\subset\mathcal{P}_{f_j}
\eeqq
is mapped by $f^{k_1}_j$ to
\beqq
D'_j:=\{z\in\mathcal{P}_{f_j}:\ 0<\text{Re}\,(\Phi_{f_j}(z))<2\ \text{and}\ \text{Im}\,(\Phi_{f_j}(z))>-2\}.
\eeqq

There is a map $\Psi_j$, univalent on $\mathcal{P}_{f_j}$, with values in the dynamical plane of $P_{\al}$, conjugating $f^{k_1}_j: D_j\to D'_{j}$ to $P^{(k_1q_j+q_{j-1})}_{\al}:C_j\to C'_j$ with
\beqq
C_j=\Psi_{j}(D_j)\ \text{and}\ C'_j=\Psi_j(D'_j).
\eeqq

\begin{definition}
For $\al\in\text{Irrat}_{\geq N}$ and $j\geq1$, set
\beqq
U_j(\al):=\bigcup^{q_{j+1}+lq_j}_{k=0}P^{k}_{\al}(C_j),
\eeqq
where $l=k_1-R_3-4$.
\end{definition}

\begin{proposition}
For  all $j\geq1$, the post-critical set $\mathcal{PC}(P_{\al})$ is contained in $U_{j}(\al)$.
\end{proposition}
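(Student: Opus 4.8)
The plan is to transcribe the proof of \cite[Proposition 17]{BuffCheritat2012}, whose only inputs are now available: the renormalization tower $(f_j)$, the univalent conjugacy $\Psi_j$ on $\mathcal P_{f_j}$, and the two commutative diagrams relating $f_j$ on its perturbed petal $\mathcal P_{f_j}$ to $P_\al^{q_j}$, and $f_j^{k_1}\colon D_j\to D_j'$ to $P_\al^{k_1q_j+q_{j-1}}\colon C_j\to C_j'$. The statement will be deduced from two facts: (i) $C_j$ contains an iterated $P_\al$-preimage of the critical point $\omega_\al$, and (ii) the forward $P_\al$-orbit of every point of $C_j$ stays inside $U_j(\al)$. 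Granting these, for every $k\ge 1$ one writes $P_\al^k(\omega_\al)=P_\al^{s+k}\big(\Psi_j(p)\big)$ with $\Psi_j(p)\in C_j$ and $s$ the integer produced in (i), and (ii) gives $P_\al^{s+k}(\Psi_j(p))\in U_j(\al)$; since $\mathcal{PC}(P_\al)=\bigcup_{k\ge 1}P_\al^k(\omega_\al)$ this is exactly the claim.

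For (i) I would argue as follows. The map $f_j^{k_1}\colon W_{f_j}^{-k_1}\to W_{f_j}$ is a degree-two cover ramified over $v$, so its ramification point $p\in W_{f_j}^{-k_1}\subset D_j$ satisfies $f_j^{k_1-1}(p)=\omega_{f_j}$, the unique critical point of $f_j$. Because $\Psi_j$ is univalent and conjugates $f_j^{k_1}$ on $D_j$ to $P_\al^{k_1q_j+q_{j-1}}$ on $C_j$, the point $\Psi_j(p)\in C_j$ is a ramification point of the degree-two cover $P_\al^{k_1q_j+q_{j-1}}\colon C_j\to C_j'$; since the only free critical point of $P_\al$ is $\omega_\al$ (the other one, $-1$, being a preimage of the fixed point $0$, which the location of $C_j$ excludes), there is $0\le s<k_1q_j+q_{j-1}$ with $P_\al^{s}(\Psi_j(p))=\omega_\al$. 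The inequality $k_1q_j+q_{j-1}\le q_{j+1}+lq_j$, that is $R_3+4\le a_{j+1}$ (recall $l=k_1-R_3-4$), holds because $a_{j+1}\ge N$ with $N$ chosen large; this both puts the finite segment $\Psi_j(p),\dots,\omega_\al$ into $U_j(\al)$ and is precisely what is needed for (ii).

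For (ii) I would show $P_\al^n(z)\in U_j(\al)$ for every $z\in C_j$ and every $n\ge 0$. Transporting back by $\Psi_j$, the $P_\al$-orbit of $z$ read in blocks of $q_j$ iterates is the $\Psi_j$-image of the $f_j$-orbit of $\Psi_j^{-1}(z)\in D_j$, while between two consecutive sampled points the $q_j$ intermediate $P_\al$-iterates lie in $\bigcup_{0\le i<q_j}P_\al^i\big(\Psi_j(\mathcal P_{f_j})\big)$. One then checks, exactly as in \cite{BuffCheritat2012}, that the forward $f_j$-orbit of $D_j$ exhausts the whole petal $\mathcal P_{f_j}$ in at most $\lfloor 1/\al_j\rfloor-R_3$ steps and that consequently $\Psi_j(\mathcal P_{f_j})\subset\bigcup_{0\le n\le q_{j+1}+lq_j}P_\al^n(C_j)$ — here $1/\al_j=a_{j+1}+\al_{j+1}$ and the slack $lq_j$ absorb the bounded corrections. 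When the $f_j$-orbit finally exits $\mathcal P_{f_j}$ near $\sigma_{f_j}$ one passes to $f_{j+1}\in\mathcal{IS}_{\al_{j+1}}$ and repeats; the nested sets satisfy $U_{j+1}(\al)\subset U_j(\al)$, so nothing ever leaves $U_j(\al)$.

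The delicate point is the combinatorial book-keeping in (ii): one must verify that the renormalization-encoded return time of a point of $C_j$ to $C_j$ is at most $q_{j+1}+lq_j$, and that during such a return the orbit never leaves $\bigcup_{0\le n\le q_{j+1}+lq_j}P_\al^n(C_j)$, which forces the constants $k_1$, $R_3$, $l$ and $N$ to be mutually compatible and the ``$f_j$-time versus $P_\al$-time'' translation (through $q_{j+1}=a_{j+1}q_j+q_{j-1}$ and $1/\al_j=a_{j+1}+\al_{j+1}$) to be carried out with care. Everything else is a line-by-line copy of \cite{BuffCheritat2012}, the class $\mathcal{IS}_0$ and the perturbed Fatou-coordinate apparatus of this section having been shown (Theorems \ref{paraequ-84} and \ref{ren2020912-1} and the lemmas above) to possess all the properties used there.
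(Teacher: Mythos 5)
The paper proves this proposition by a one-line citation to \cite[Proposition 17]{BuffCheritat2012}, so your reconstruction follows the same overall plan. Your step (i), locating an iterated preimage $\Psi_j(p)\in C_j$ of the critical point $\omega_\al$ via the ramification point of the degree-two cover $f_j^{k_1}\colon W_{f_j}^{-k_1}\to W_{f_j}$, is sound, and so is the arithmetic verification $k_1 q_j+q_{j-1}\le q_{j+1}+lq_j\Leftrightarrow a_{j+1}\ge R_3+4$.

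The gap is in (ii). The assertion ``$P_\al^n(z)\in U_j(\al)$ for every $z\in C_j$ and every $n\ge 0$'' is simply false: generic points of $C_j$ lie in the basin of infinity of $P_\al$, so their forward orbits leave the bounded set $U_j(\al)=\bigcup_{k=0}^{q_{j+1}+lq_j}P_\al^k(C_j)$ after finitely many steps. Containment in $U_j(\al)$ for \emph{all} $n$ forces the orbit to re-enter $C_j$ within $q_{j+1}+lq_j$ iterations, forever, and this recurrence is a property of the critical orbit specifically, not of all of $C_j$. The missing input is Corollary \ref{cor-2021-9-26-6}: for $\al\in\text{Irrat}_{\geq N}$ the post-critical set of each level $f_j$ of the tower stays in the domain of $f_j$ and can be iterated indefinitely, and it is this renormalization-invariance that keeps the $f_j$-orbit of the critical value in the petal and brings it back into $D_j$. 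Your sketch leans on this implicitly when you ``pass to $f_{j+1}$ and repeat,'' but (ii) should be reformulated as a claim about the orbit of $\Psi_j(p)$ alone, proved by showing that the $f_j$-orbit of the critical value of $f_j$ eventually returns to $D_j$ and that one such pass through $\mathcal P_{f_j}$ consumes at most $q_{j+1}+lq_j$ iterations of $P_\al$ — the combinatorial core, using $q_{j+1}=a_{j+1}q_j+q_{j-1}$ and $1/\al_j=a_{j+1}+\al_{j+1}$. The nesting claim ``$U_{j+1}(\al)\subset U_j(\al)$'' you invoke at the end is not obviously true and is not needed; the argument can be closed entirely at a fixed level $j$.
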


\begin{proof}
This can be obtained by the same arguments used in the proof of \cite[Proposition 17]{BuffCheritat2012}
\end{proof}

For the positive integer $N$, set
\beqq
\mathcal{S}_N:=\{\al\in\rr\setminus\qq:\ \al\in\text{Irrat}_{\geq N}\ \mbox{is a bounded type irrational number}\}.
\eeqq

\begin{proposition}
For any $\al\in\mathcal{S}_N$ and any $\vep>0$, if $j$ is sufficiently large, then the set $U_j(\al)$ is contained in the $\vep$-neighborhood of the Siegel disk $\De_{\al}$.
\end{proposition}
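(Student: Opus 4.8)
\emph{Proof idea.} The plan is to follow the strategy of \cite[Proposition 18]{BuffCheritat2012}, transporting the problem to the linearized picture of the Siegel disk. First I would record the consequences of $\al\in\mathcal{S}_N$ being of bounded type: writing $\al=[a_0,a_1,\dots]$ with $a_k\le M$ for all $k$, one has $\al_j\in[\tfrac{1}{M+1},\tfrac{1}{N}]$ for every $j$, a compact subset of $(0,\tfrac1N)$, and $q_{j+1}+lq_j\le(M+1+l)q_j=:Lq_j$. Hence the renormalizations $f_j$, their perturbed petals $\mathcal{P}_{f_j}$ and the coordinates $\Phi_{f_j}$ vary in a compact family (by compactness of $\mathcal{IS}_{[1/(M+1),1/N]}$ together with the uniform estimates in the perturbed Fatou coordinate proposition), each $f_j$ has a Siegel disk $\De_{f_j}$ with conformal radius bounded below uniformly, and — the classical fact available since $\al$ is of bounded type — $\partial\De_\al$ is a quasicircle, $\mathcal{PC}(P_\al)\subset\partial\De_\al$, and $\overline{\De_\al}$ is forward invariant under $P_\al$. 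Since the preceding proposition gives $\mathcal{PC}(P_\al)\subset U_j(\al)$, it suffices to bound the amount by which $U_j(\al)$ reaches outside $\overline{\De_\al}$, i.e. to prove $\partial\!\left(U_j(\al),\overline{\De_\al}\right)\to0$ as $j\to\infty$.

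Second, I would localize the orbit that defines $U_j(\al)$. By the commutative diagrams of the previous subsection, $\Psi_j$ conjugates $f_j$ on $\mathcal{P}_j$ to $P_\al^{q_j}$ on $\mathcal{G}_j$, and $f_j^{k_1}$ on $D_j$ to $P_\al^{k_1q_j+q_{j-1}}$ on $C_j$; since $D_j=V_{f_j}^{-k_1}\cup W_{f_j}^{-k_1}$ is sent by $\Phi_{f_j}$ into $\{2<\text{Re}\,w<1/\al_j-R_3-5\}$, the $f_j$-orbit of $D_j$ of length $\le L+k_1$ remains in $\mathcal{P}_{f_j}$ once $N$ is chosen with $L+k_1+O(1)<N-R_3$. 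Reading this through $\Psi_j$ and stratifying the $P_\al$-orbit of $C_j$ into blocks of $q_j$ steps, one obtains $U_j(\al)\subset\bigcup_{k=0}^{q_j-1}P_\al^k\!\big(\mathcal{G}_j^{*}\big)$ with $\mathcal{G}_j^{*}:=\Psi_j(\mathcal{P}_{f_j})$, so it remains to show that this $q_j$-fold orbit of the single petal $\mathcal{G}_j^{*}$ — the standard annular "renormalization collar" around $\partial\De_\al$ — shrinks onto $\partial\De_\al\subset\overline{\De_\al}$.

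Third — and this is where bounded type is really used — I would pass to the linearizing coordinate $\phi:\cd(0,r)\to\De_\al$, $\phi(0)=0$, $\phi'(0)=1$, which (since $\partial\De_\al$ is a quasicircle) extends to a quasiconformal homeomorphism of $\cc$, and study $g_\al:=\phi^{-1}\circ P_\al\circ\phi$ on a fixed collar $\{r(1-\kappa)<|z|<r(1+\kappa)\}$: on $\{|z|=r\}$ it is conjugate to $R_\al$, so $g_\al^{q_j}$ restricted to that circle is conjugate to $R_{\vep_j}$ with $|\vep_j|\le 1/q_{j+1}\to0$, and, using the uniform quasiconformal control of the conjugacy (equivalently, McMullen's stability near a Siegel disk of bounded type), $g_\al^{q_j}\to\mathrm{Id}$ uniformly on a slightly smaller collar. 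The set $\phi^{-1}(\mathcal{G}_j^{*})$ is a fundamental domain for $g_\al^{q_j}$ adjacent to $\{|z|=r\}$; since $g_\al^{q_j}$ tends to the identity, every such fundamental domain, together with all $q_j-1$ of its $g_\al$-iterates, must lie in $\{r-o(1)<|z|<r+o(1)\}$. Pushing forward by the (uniformly quasiconformal, hence uniformly Hölder) map $\phi$ then yields $U_j(\al)\subset\{z:\text{dist}(z,\overline{\De_\al})<o(1)\}$, which is the claim. The main obstacle I anticipate is the quantitative implication "$g_\al^{q_j}$ close to the identity on a full two-dimensional collar $\Rightarrow$ its fundamental domains collapse to the invariant circle," together with the bookkeeping needed to identify $\mathcal{G}_j^{*}=\Psi_j(\mathcal{P}_{f_j})$ from the renormalization tower with such a fundamental-domain collar; this is precisely the content of \cite[Proposition 18 and the surrounding lemmas]{BuffCheritat2012}, which I would import with the obvious adjustments for $P(z)=z(1+z)^m$, $m\ge22$.
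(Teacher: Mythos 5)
Your high-level strategy matches what the paper intends: it simply defers to \cite[Proposition~18]{BuffCheritat2012}, so any correct account amounts to adapting that argument to $P(z)=z(1+z)^m$. You correctly identify the three ingredients — uniform control from bounded type, reduction to the study of the renormalization ``collar'' $\mathcal{G}_j^{*}=\Psi_j(\mathcal{P}_{f_j})$, and the linearizing-coordinate / quasicircle picture of the Siegel disk.

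Two points deserve caution, both of which you partly flag yourself. First, the stratification $U_j(\al)\subset\bigcup_{k=0}^{q_j-1}P_\al^k(\mathcal{G}_j^{*})$ is not automatic: if $\Psi_j^{-1}(x)\in D_j$, then $f_j^n(\Psi_j^{-1}(x))$ leaves $\mathcal{P}_{f_j}$ for most of the $n\lesssim 1/\al_j$ values of interest (the petal only accommodates translation by a bounded amount before its right edge is crossed), so you cannot simply read $P_\al^{nq_j}(x)\in\mathcal{G}_j^{*}$ off the conjugacy diagram for all $n$. The return structure near $\partial\De_\al$ is governed by the continued-fraction zig-zag using both $q_j$ and $q_{j-1}$, and the correct bookkeeping (going back through $\psi_j$, not just iterating $f_j$ inside one petal) is exactly what the intermediate propositions of that subsection set up. Second, the statement ``$\phi^{-1}(\mathcal{G}_j^{*})$ is a fundamental domain for $g_\al^{q_j}$'' is not right: $\mathcal{P}_{f_j}$ spans roughly $1/\al_j$ translation cells, so $\Psi_j(\mathcal{P}_{f_j})$ consists of roughly $a_{j+1}$ fundamental domains for $P_\al^{q_j}$, not one. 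The collapse of the collar onto $\partial\De_\al$ is not a consequence of ``$g_\al^{q_j}$ close to the identity'' alone — a near-identity map can have fundamental domains that are thin in the dynamical direction but wide transversally — and in Buff--Ch\'eritat the thinness is extracted from the quasiconformal/hyperbolic-geometry control on the return maps near a bounded-type Siegel disk (the McMullen machinery imported via Subsection~\ref{lebesgue-density-12-12-1} here), not from a soft perturbation argument. You do acknowledge that this is ``the content of Proposition 18 and the surrounding lemmas,'' which is honest; but be aware that the lemma you are treating as an afterthought is where essentially all the work lives.
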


\begin{proof}
This can be obtained by the same arguments used in the proof of \cite[Proposition 18]{BuffCheritat2012}.
\end{proof}

\subsection{Lebesgue density near the boundary of a Siegel disk}

\begin{definition}\cite[Definition 10]{BuffCheritat2012}
If $\al$ is a Brjuno number and $\de>0$, denote by $\De$ the Siegel disk of $P_{\al}$
and by $K(\de)$ the set of points whose orbit under iteration of $P_{\al}$ remains at distance less than $\de$ from $\De$.
\end{definition}

\begin{theorem} \label{equ2021-2-3-1}(see Subsection \ref{lebesgue-density-12-12-1})
Let $\al$ be an irrational number of bounded type and $\de>0$ be any constant. Then, every point on
the boundary of the Siegel disk is a Lebesgue density point of $K(\de)$.
\end{theorem}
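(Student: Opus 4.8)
The plan is to follow McMullen's strategy \cite{McMullen1998}, in the form generalized to higher-degree unicritical-type maps in \cite{Shen2006}, using the renormalization theory of Section~\ref{renormalization-2022-2-12-1} in place of the Inou--Shishikura class. Since $\al$ is of bounded type, $P_\al$ has a Siegel disk $\De_\al$ bounded by a quasicircle containing the unique critical point $\om_\al=-\tfrac{1}{m+1}$, and $\mathcal{PC}(P_\al)=\partial\De_\al$ (classical; see \cite{Milnor2006}). The statement reduces to a pointwise geometric assertion: there exist constants $C>0$ and $\be>0$, depending only on $\al$ and $\de$, such that every $x\in\partial\De_\al$ is a \emph{deep point} of $K(\de)$, i.e. $\mathrm{dist}(y,K(\de))\le C\,|y-x|^{1+\be}$ for all $y$ in a fixed neighborhood of $x$. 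A deep point of a measurable set is automatically a Lebesgue density point of that set, since $\mathrm{dens}_{D(x,r)}(K(\de))=1-O(r^{\be})$; this implication is elementary and yields Theorem~\ref{equ2021-2-3-1}.

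The heart of the argument is a self-similarity statement for $\De_\al$ near its boundary, extracted from the renormalization tower. Because $\al$ is of bounded type, after finitely many classical renormalization steps the tail $\al_j=[0,a_{j+1},a_{j+2},\dots]$ is controlled and the orbit $(f_j)_{j\ge0}$ of near-parabolic renormalizations lies in a precompact family; here one invokes the invariance and contraction in Theorems~\ref{paraequ-84} and \ref{ren2020912-2}, together with Corollaries~\ref{equ2022-2-8-1}, \ref{cor-2021-9-26-8} and \ref{cor-2021-9-26-6}, which play exactly the role the Inou--Shishikura compactness plays in the quadratic case. Composing the linearizer of $\De_\al$ with the univalent maps $\Psi_j$ from Subsection~\ref{proofpro2021-10-3-1} and the renormalization tower, one obtains that the configuration of $K(\de)$ in a disk of radius comparable to $\mathrm{dist}(\cdot,\partial\De_\al)$ about the critical value $v$ reproduces itself, up to bounded quasiconformal (Koebe) distortion, at scales that shrink at least geometrically in $j$; uniform estimates on the perturbed Fatou coordinates make both the distortion and the geometric rate uniform. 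This exhibits $\om_\al$ --- equivalently, a suitable inverse image of $v$ lying on $\partial\De_\al$ --- as a deep point of $K(\de)$ with a uniform exponent $\be>0$.

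It remains to transport the deep-point property from $\om_\al$ to an arbitrary $x\in\partial\De_\al$. Since $P_\al|_{\partial\De_\al}$ is conjugate to the rigid rotation by $\al$, it is minimal, so the orbit of $\om_\al$ is dense in $\partial\De_\al$; more usefully, for each $x$ and each large $n$ there is an inverse branch of $P_\al^{\,n}$ defined on a fixed-size disk around a point of the Siegel orbit near $x$, univalent because that disk avoids the critical value $v$ for $n$ backward steps, and with distortion bounded independently of $n$ by the Koebe distortion theorem. Such a branch carries the deep-point picture near a boundary point close to the orbit of $\om_\al$ to a bounded-distortion copy near $x$; since $P_\al(K(\de))\subset K(\de)$ and, near $\ol{\De}_\al$, $K(\de)$ coincides with $P_\al^{-1}(K(\de))$ intersected with the $\de$-neighborhood of $\De_\al$, the deep-point estimate is preserved with the same exponent $\be$ and a slightly larger $C$. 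Hence every point of $\partial\De_\al$ is a deep point of $K(\de)$, which proves the theorem.

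The main obstacle is the self-similarity step of the second paragraph: establishing, in the higher-degree setting, uniform a priori geometric bounds for $\De_\al$ near its boundary together with a strictly geometric decay of scales, which is what forces one to use the full renormalization package of Section~\ref{renormalization-2022-2-12-1} --- invariance of $\mathcal{F}_1^P$, the Teichm\"uller contraction, and the perturbed-Fatou-coordinate estimates --- rather than a soft compactness argument. A secondary point requiring care is the passage from general bounded type, whose continued-fraction entries may be small, to the regime $\text{Irrat}_{\ge N}$ in which the near-parabolic renormalization of Section~\ref{renormalization-2022-2-12-1} is directly defined; this is handled by pre-composing with finitely many classical renormalizations (or by a Petersen-type complex-bounds argument) before entering the tower.
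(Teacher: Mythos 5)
The central gap is the reduction in your first paragraph. The distance-type ``deep point'' condition you state ($\mathrm{dist}(y,K(\de))\le C|y-x|^{1+\be}$ near $x$, equivalently: every disk contained in $B(x,r)\setminus K(\de)$ has radius $O(r^{1+\be})$) does \emph{not} imply that $x$ is a Lebesgue density point of $K(\de)$. For instance, if in each annulus $2^{-n-1}\le|y-x|\le 2^{-n}$ the complement of $K(\de)$ consisted of strips of width comparable to $2^{-n(1+\be)}$ occupying a fixed proportion of the area, then every complementary gap would be small at every scale, yet the density of $K(\de)$ at $x$ would stay bounded away from $1$. What the theorem needs is McMullen's \emph{measurable} deep point property, $\text{area}\big(B(x,r)\setminus K(\de)\big)=O(r^{2+\be})$, and that area estimate is precisely the hard dynamical content: in the paper (following McMullen and Shen) it comes from the expansion of the hyperbolic metric of $\Om=\wh{\cc}\setminus\ol{D_0}$ under the covering given by $\wt f$, uniform distortion in the linearizing quasiconformal coordinate, and the fact that at each near-critical return an orbit has a definite \emph{measure} of chance of being captured by the preimage $D'$ of the Siegel disk, so the area of points still escaping decays geometrically with the number of returns, i.e.\ polynomially in the scale. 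Your renormalization self-similarity picture only controls shapes and gap sizes, and combined with the (false) elementary implication it does not yield the density statement.

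Independently of this, the route through the near-parabolic tower does not match the theorem's hypotheses and is not the paper's route. The renormalization of Section \ref{renormalization-2022-2-12-1} is defined for $\al\in\text{Irrat}_{\geq N}$ with $N$ large, while bounded type includes numbers such as the golden mean whose entries are all $1$; finitely many prior renormalizations do not alter the tail of the continued fraction, so no finite preprocessing places such $\al$ in the high-type regime, and the ``Petersen-type complex bounds'' you invoke as a fallback are exactly the separate substantial theory one would then have to supply. The paper's proof of Theorem \ref{equ2021-2-3-1} in Subsection \ref{lebesgue-density-12-12-1} uses none of the tower: it conjugates $P_{\al}$ to $\wt f(z)=e^{2\pi i\al}z(1+z/m)^m$, builds the Blaschke model \eqref{equ2021-2-3-2}, applies Herman--Swiatek quasi-symmetric linearization (this is where bounded type enters) and Douady--Ghys surgery so that $\partial\De$ is a quasicircle through the critical point, and then runs McMullen's hyperbolic capture argument using Shen's pullback lemmas, which provide univalent branches carrying definite-size disks at the critical point to arbitrary boundary/Julia points with bounded distortion. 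Note also that your transport step, taking inverse branches of $P_{\al}^{\,n}$ univalent with uniform distortion on fixed-size disks along $\partial\De_{\al}$, is not automatic precisely because the critical orbit is dense in $\partial\De_{\al}$; controlling this is what those lemmas, via the linear coordinate, are for.
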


\begin{corollary}
Assume $\al$ is a bounded type irrational and $\de>0$. If
\beqq
d:=d(z,\partial\De)\to0\ \mbox{with}\ z\not\in\ol{\De},
\eeqq
then
\beqq
\mbox{dens}_{D(z,d)}(\cc\setminus K(\de))\to0.
\eeqq
\end{corollary}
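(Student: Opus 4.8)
The plan is to reduce the statement, by a nearest‑point projection to $\partial\De$, to a \emph{uniform} version of Theorem~\ref{equ2021-2-3-1} over the boundary of the Siegel disk, and then to supply that uniformity from the (McMullen‑type) estimates of Subsection~\ref{lebesgue-density-12-12-1}.

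First I would fix, for each $z\notin\ol{\De}$, a point $\zeta=\zeta(z)\in\partial\De$ realizing the distance, so that $|z-\zeta|=d=d(z,\partial\De)>0$ (finiteness and positivity of $d$ are clear since $\ol{\De}$ is closed and $z\notin\ol{\De}$). For every $w\in D(z,d)$ one has $|w-\zeta|\le|w-z|+|z-\zeta|<2d$, hence $D(z,d)\subset D(\zeta,2d)$, and comparing Euclidean areas gives
\[
\mathrm{dens}_{D(z,d)}\big(\cc\setminus K(\de)\big)
=\frac{\mathrm{area}\big(D(z,d)\cap(\cc\setminus K(\de))\big)}{\pi d^{2}}
\le 4\,\mathrm{dens}_{D(\zeta,2d)}\big(\cc\setminus K(\de)\big).
\]
Since $d\to0$ forces $2d\to0$, it therefore suffices to prove the uniform limit
\[
\sup_{\zeta\in\partial\De}\ \mathrm{dens}_{D(\zeta,r)}\big(\cc\setminus K(\de)\big)\ \longrightarrow\ 0\qquad\text{as } r\to 0^{+}.
\]

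For this uniform estimate I would invoke that $\partial\De$ is compact — indeed a quasicircle through the critical point, as recalled in the discussion of $\mathcal{S}_1$ — together with the proof of Theorem~\ref{equ2021-2-3-1}. The bare pointwise assertion ``every $\zeta\in\partial\De$ is a Lebesgue density point of $K(\de)$'' does not formally yield the displayed supremum bound, because $\zeta=\zeta(z)$ moves with $z$; so Theorem~\ref{equ2021-2-3-1} cannot be applied at a single fixed boundary point. The remedy is that the argument of Subsection~\ref{lebesgue-density-12-12-1}, following McMullen's analysis of Siegel disks of bounded type (as generalized in \cite{Shen2006}), in fact produces a density deficiency $\mathrm{dens}_{D(\zeta,r)}(\cc\setminus K(\de))$ that tends to $0$ as $r\to0$ with a rate independent of $\zeta\in\partial\De$ — this is the ``deep point'' phenomenon, where the decay is even power‑like with a uniform exponent and constant over the compact boundary. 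Plugging this uniform decay into the area comparison above, and using $D(z,d)\subset D(\zeta,2d)$ with $2d\to0$, gives $\mathrm{dens}_{D(z,d)}(\cc\setminus K(\de))\to0$, as claimed.

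The main obstacle is exactly this passage from the pointwise density‑point statement to its uniform form on $\partial\De$; everything else (the choice of $\zeta(z)$, the inclusion of disks, the area estimate, and the final limit) is elementary. Consequently, in the write‑up I would phrase the dependence on Subsection~\ref{lebesgue-density-12-12-1} so as to extract the uniform (deep‑point) estimate there, rather than merely its pointwise corollary, and only then perform the short reduction sketched above.
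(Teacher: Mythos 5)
Your proposal is correct and coincides, up to wording, with the argument behind \cite[Corollary 5]{BuffCheritat2012}, which is exactly what the paper cites for this corollary. The reduction is the right one: choose a nearest boundary point $\zeta(z)$, use $D(z,d)\subset D(\zeta,2d)$ with the factor-of-$4$ area comparison, and then apply a density estimate at $\zeta\in\partial\De$ with radius $2d\to0$.

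You also correctly flag the genuine subtlety — the pointwise Lebesgue density statement of Theorem~\ref{equ2021-2-3-1} at a fixed $\zeta$ does not by itself imply the supremum bound you need, since the balls $D(\zeta(z),2d)$ are centered at a boundary point that moves with $z$ and need not shrink nicely to any single limit point (a compactness-and-subsequence argument alone fails, because $|\zeta_k-\zeta_\infty|$ can dominate $d_k$). The fix you propose is the right one: the deep-point analysis in Subsection~\ref{lebesgue-density-12-12-1} (following \cite{McMullen1998} and \cite{Shen2006}) yields a power-like deficiency $\mathrm{area}(D(\zeta,r)\setminus K(\de))=O(r^{2+\delta_0})$ with exponent and implicit constant uniform over the compact quasicircle $\partial\De$, because every boundary point is taken to a uniformly bounded neighborhood of the critical point by a composition with uniformly bounded (quasiconformal) distortion. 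With that uniform decay in hand, the elementary reduction finishes the proof exactly as you describe.
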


\begin{proof}
This can be obtained by the same arguments as in the proof of \cite[Corollary 5]{BuffCheritat2012}.
\end{proof}

\subsection{The proof of Proposition \ref{equ-2022-2-10-1}}

By Proposition \ref{equ-21-12-11-5}, one has
\beqq
\liminf\text{area}(K_{\al_n})\geq\frac{1}{2}\text{area}(K_{\al}).
\eeqq
The aim of the following discussions is to prove that
\beqq
\liminf\text{area}(K_{\al_n})\geq\text{area}(K_{\al}).
\eeqq
This claim can be obtained by the same arguments in the discussions of \cite[Subsection 1.7]{BuffCheritat2012}.

\section{Appendix}\label{julia-set-equ-2022-2-12-3}

\subsection{Continued fraction}\label{equ2021-2-3-3}

Some materials on continued fraction are collected in this subsection \cite{HardyWright1938}.

For any real number $\al$, the number $\lfloor\al\rfloor\in\zz$ is the largest integer less than or equal to $\al$, which is called the integer part of $\al$, the number $\{\al\}=\al-\lfloor\al\rfloor$ is the fractional part of $\al$. Two sequences $(a_k)_{k\geq0}$ and $(\al_k)_{k\geq0}$ are defined recursively:
\beq\label{paraequ-34}
a_0=\lfloor\al\rfloor,\ \al_0=\{\al\},\ a_{k+1}=\bigg\lfloor\frac{1}{\al_{k}}\bigg\rfloor,\ \al_{k+1}=\bigg\{\frac{1}{\al_k}\bigg\},
\eeq
it is evident that
\beqq
\frac{1}{\al_k}=a_{k+1}+\al_{k+1}.
\eeqq
For the real number $\al$, the continued fraction expansion of $\al$ is denoted by
\beqq
\al=[a_0,a_1,a_2,a_3,...]=a_0+\cfrac{1}{a_1+\cfrac{1}{a_2+\cfrac{1}{a_3+\cdots}}},
\eeqq
where $(a_k)$ are integers, $a_k$ is the $k$th entry of the continued fraction,  $a_0$ may be any integer in $\zz$, $a_k$ ($k\geq1$) are all positive. If $\al$ is a rational number, then there is a positive integer $k$ such that
\beqq
\al=[a_0,a_1,a_2,a_3,...,a_k]=a_0+\cfrac{1}{a_1+\cfrac{1}{a_2+\cfrac{1}{a_3+\cdots+\cfrac{1}{a_k}}}};
\eeqq
if $\al$ is irrational, then $[a_0,a_1,a_2,a_3,...]$ is an infinitely long sequence.

If $\al$ is irrational and $\sup a_j<\infty$, then $\al$ is called an irrational number of bounded type.

There exist two sequences of integers $(p_k)_{k\geq-1}$ and $(q_k)_{k\geq1}$ defined as follows:
\beqq
p_{-1}=1,\ p_0=a_0,\ p_k=a_kp_{k-1}+p_{k-2},
\eeqq
\beqq
q_{-1}=0,\ q_0=1,\ q_k=a_kq_{k-1}+q_{k-2}.
\eeqq
The number $\tfrac{p_k}{q_k}$ is called the $k$th convergent of $\al$, and the numbers $p_k$ and $q_k$ satisfy:
\begin{itemize}
\item[$\bullet$] \beqq
q_kp_{k-1}-p_{k}q_{k-1}=(-1)^k;
\eeqq
\item[$\bullet$]
$p_k$ and $q_k$ are coprime;
\item[$\bullet$] if $a_1,a_2,...,$ are positive integers, then
\beq\label{equ-2022-2-6-1}
\frac{p_k}{q_k}=[a_0,a_1,...,a_k]\ \forall k\geq0.
\eeq
The number $p_k/q_k$ is said to be the $k$th convergent of $\al$.
\end{itemize}

Set
\beqq
\be_{-1}:=1\ \text{and}\ \be_k:=\al_0\al_1\cdots\al_{k},\ k\geq0.
\eeqq

\begin{proposition}\cite[Proposition 1]{AvilaBuffCheritat2004} \label{fractionprop}
For any irrational number $\al$, the sequences $(a_k)_{k\geq0}$, $(\al_k)_{k\geq0}$, $(\be_k)_{k\geq-1}$, $(p_k)_{k\geq-1}$, and $(q_k)_{k\geq-1}$ are defined as above, one has that, for all $k\geq0$,
\beqq
\al=\frac{p_k+p_{k-1}\al_k}{q_k+q_{k-1}\al_k},\ q_k\al-p_k=(-1)^k\be_k,
\eeqq
\beqq
q_{k+1}\be_k+q_k\be_{k+1}=1,\ \text{and}\ \frac{1}{q_{k+1}+q_k}<\be_k<\frac{1}{q_{k+1}}.
\eeqq
Further,
\beqq
\frac{1}{2q_kq_{k+1}}<\bigg|\al-\frac{p_k}{q_k}\bigg|<\frac{1}{q_kq_{k+1}}\ \forall k\geq0,
\eeqq
and
\beqq
\al_k=[0,a_{k+1},a_{k+2},...]\ \forall k\geq0.
\eeqq
\end{proposition}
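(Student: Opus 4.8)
The plan is to prove all the assertions of Proposition~\ref{fractionprop} by a single induction on $k$, starting from the initial data $p_{-1}=1$, $q_{-1}=0$, $p_{0}=a_{0}$, $q_{0}=1$, $\be_{-1}=1$, $\be_{0}=\al_{0}$, and using only the recursions $p_{k}=a_{k}p_{k-1}+p_{k-2}$, $q_{k}=a_{k}q_{k-1}+q_{k-2}$, the multiplicative relation $\be_{k}=\al_{k}\be_{k-1}$ (immediate from $\be_{k}=\al_{0}\cdots\al_{k}$), and the defining identity $\tfrac{1}{\al_{k}}=a_{k+1}+\al_{k+1}$. I would also record once and for all that $\al_{j}\in(0,1)$ for every $j\ge 0$ (since $\al$ is irrational), so that each $\be_{k}>0$. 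The assertion $\al_{k}=[0,a_{k+1},a_{k+2},\dots]$ is the easiest: it is exactly the statement that the continued fraction algorithm applied to $\al$, continued past step $k$, has partial quotients $a_{k+1},a_{k+2},\dots$, which follows directly from $\tfrac{1}{\al_{k}}=a_{k+1}+\al_{k+1}$ with $\al_{k+1}\in(0,1)$; I would dispose of it first.

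Next I would establish the Möbius formula $\al=\dfrac{p_{k}+p_{k-1}\al_{k}}{q_{k}+q_{k-1}\al_{k}}$ by induction. For $k=0$ the right-hand side is $\dfrac{a_{0}+\al_{0}}{1}=\al$. For the inductive step, substitute $\al_{k}=\dfrac{1}{a_{k+1}+\al_{k+1}}$ into the level-$k$ formula and clear denominators: the numerator becomes $(a_{k+1}p_{k}+p_{k-1})+p_{k}\al_{k+1}=p_{k+1}+p_{k}\al_{k+1}$ and the denominator $(a_{k+1}q_{k}+q_{k-1})+q_{k}\al_{k+1}=q_{k+1}+q_{k}\al_{k+1}$ by the recursions, which is the level-$(k+1)$ formula. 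Rearranging $\al(q_{k}+q_{k-1}\al_{k})=p_{k}+p_{k-1}\al_{k}$ gives $q_{k}\al-p_{k}=-\al_{k}\,(q_{k-1}\al-p_{k-1})$; iterating this from the base value $q_{0}\al-p_{0}=\al-a_{0}=\al_{0}=\be_{0}$ and using $\be_{k}=\al_{k}\be_{k-1}$ yields $q_{k}\al-p_{k}=(-1)^{k}\be_{k}$.

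For the bilinear relation $q_{k+1}\be_{k}+q_{k}\be_{k+1}=1$, I would substitute $\be_{k}=(-1)^{k}(q_{k}\al-p_{k})$ and $\be_{k+1}=(-1)^{k+1}(q_{k+1}\al-p_{k+1})$; the terms containing $\al$ cancel and what remains is $(-1)^{k}\big(q_{k}p_{k+1}-q_{k+1}p_{k}\big)$, which equals $1$ by the already-stated Wronskian identity $q_{k+1}p_{k}-p_{k+1}q_{k}=(-1)^{k+1}$. From $\be_{k},\be_{k+1}>0$ and $\be_{k+1}=\al_{k+1}\be_{k}<\be_{k}$ (because $0<\al_{k+1}<1$), the identity $1=q_{k+1}\be_{k}+q_{k}\be_{k+1}$ sandwiches $\be_{k}$: dropping the second term gives $\be_{k}<\tfrac{1}{q_{k+1}}$, while replacing $\be_{k+1}$ by $\be_{k}$ gives $1<(q_{k+1}+q_{k})\be_{k}$, i.e.\ $\be_{k}>\tfrac{1}{q_{k+1}+q_{k}}$. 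Finally, since $\big|\al-\tfrac{p_{k}}{q_{k}}\big|=\tfrac{|q_{k}\al-p_{k}|}{q_{k}}=\tfrac{\be_{k}}{q_{k}}$ and $q_{k}\le q_{k+1}$ for all $k\ge 0$ (from $q_{k+1}=a_{k+1}q_{k}+q_{k-1}\ge q_{k}$, which already holds at $k=0$ as $q_{1}=a_{1}\ge 1=q_{0}$), the two bounds on $\be_{k}$ translate into $\dfrac{1}{2q_{k}q_{k+1}}<\big|\al-\tfrac{p_{k}}{q_{k}}\big|<\dfrac{1}{q_{k}q_{k+1}}$, where for the lower estimate one uses $q_{k+1}+q_{k}\le 2q_{k+1}$.

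Everything in this argument is elementary, so I do not expect a genuine obstacle; the only real care needed is organizational and notational. Specifically, one must fix the conventions at $k=-1$ and $k=0$ so that the parity factors $(-1)^{k}$ emerge correctly from the one-step recursion $q_{k}\al-p_{k}=-\al_{k}(q_{k-1}\al-p_{k-1})$, and one must sequence the five claims so that each is available when the next is proved: first the expansion of $\al_{k}$, then the Möbius formula for $\al$, then $q_{k}\al-p_{k}=(-1)^{k}\be_{k}$, then the bilinear relation together with the bounds $\tfrac{1}{q_{k+1}+q_{k}}<\be_{k}<\tfrac{1}{q_{k+1}}$, and finally the Diophantine bounds on $|\al-p_{k}/q_{k}|$.
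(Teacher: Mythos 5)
The paper does not prove this proposition; it is cited from Avila--Buff--Ch\'eritat (2004) without argument. Your proof is correct and complete. The organization is sound: the identity $\al_k=[0,a_{k+1},a_{k+2},\dots]$ falls out of the recursion $\tfrac{1}{\al_k}=a_{k+1}+\al_{k+1}$; the M\"obius formula is an induction whose inductive step is exactly the substitution you describe; rearranging it and iterating gives $q_k\al-p_k=(-1)^k\be_k$; the bilinear relation follows from the Wronskian $q_k p_{k-1}-p_k q_{k-1}=(-1)^k$ (which the paper states just above, so it is legitimately available); and the sandwich of $\be_k$ together with $q_k\le q_{k+1}$ yields the Diophantine bounds. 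One small point worth keeping in mind when writing this up formally: the inequality $\be_k>\tfrac{1}{q_{k+1}+q_k}$ is strict because $\be_{k+1}<\be_k$ is strict (using $\al_{k+1}\in(0,1)$, which is where irrationality of $\al$ is genuinely used), and that strictness survives the possibly non-strict estimate $q_{k+1}+q_k\le 2q_{k+1}$ at $k=0$ when $a_1=1$, so the final lower bound $\tfrac{1}{2q_kq_{k+1}}<|\al-p_k/q_k|$ is indeed strict as claimed. No gap.
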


\begin{definition} \cite{Yoccoz1995} (The Yoccoz function and Brjuno function) If $\al\in\rr$ is an irrational number, the function
\beqq
\Phi(\al)=\sum^{\infty}_{k=0}\be_{k-1}\log\frac{1}{\al_k}
\eeqq
is the called the Yoccoz function. If $\al$ is rational, then we set $\Phi(\al)=\infty$. If $\al\in\rr$ and $\Phi(\al)<\infty$, then $\al$ is said to be a Brjuno number.
\end{definition}

\subsection{Parabolic explosion for high degree polynomials}\label{equ-21-12-7-1}

In this subsection, Ch\'{e}ritat's techniques of parabolic explosion \cite{Cheritat2000} is generalized to the high degree polynomial. For a description of the following result for the quadratic polynomials, please refer to \cite{BuffCheritat}.

\begin{definition}\cite[Definition 10]{BuffCheritatSmooth}
For each rational number $p/q$, let $\mathcal{P}_q$ be the set of parameters $\al\in\cc$ such that $P^{\circ q}_{\al}$ has a parabolic fixed point with multiplier $1$. Let $R_{p/q}$ be the largest real number such that $P^{\circ q}_{\al}$ has no multiple fixed point for $\al\in B(\tfrac{p}{q},R_{p/q})\setminus\{\tfrac{p}{q}\}$, that is,
\beqq
R_{p/q}=d\bigg(\frac{p}{q},\mathcal{P}_q\setminus\bigg\{\frac{p}{q}\bigg\}\bigg).
\eeqq
\end{definition}

We give the main term for $P^{\circ q}_{\al}(z)$. This could be derived by direct calculation,

\begin{lemma}\label{equ20208101}
For the polynomial map \eqref{paraequ-33}, assume $\al=\tfrac{p}{q}$ with $p$ and $q$ coprime, one has
\beq\label{near-equat-22}
P^{\circ q}_{p/q}(z)=z+A(p/q)z^{q+1}+O(z^{q+2}).
\eeq
Further, for $\vep\in\cc$ with sufficiently small $|\vep|$, one has
\beq\label{paraequ-5}
P^{\circ q}_{p/q+\vep}(z)-z=z\cdot(2i\pi q\vep+A(p/q)z^{q}+O(z\vep)+O(z^{q+1})).
\eeq
\end{lemma}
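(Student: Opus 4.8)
The plan is to deduce both identities from one structural fact — the local form of a holomorphic germ whose multiplier at a fixed point is a primitive $q$-th root of unity — together with the elementary observation that $P_\al$ and all its iterates are polynomials whose Taylor coefficients at $0$ depend polynomially on $e^{2\pi i\al}$, so that the dependence on $\al$ (hence on $\vep$) is manifestly holomorphic and every remainder is controlled by a Cauchy/Schwarz estimate on a fixed bidisk. Thus the only conceptual input is the ``commutation'' argument below; everything else is bookkeeping, which is what makes the lemma a ``direct calculation''.

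First I would set up notation. Since $P^{\circ q}_\al$ is a polynomial fixing $0$, write $P^{\circ q}_\al(z)-z=\sum_{n\geq1}b_n(\al)z^n$, where each $b_n(\al)$ is a polynomial in $e^{2\pi i\al}$ and hence entire in $\al$. From $P_\al'(0)=e^{2\pi i\al}$ one gets $(P^{\circ q}_\al)'(0)=e^{2\pi i q\al}$, so $b_1(\al)=e^{2\pi i q\al}-1$; in particular $b_1(p/q)=0$. For \eqref{near-equat-22} it remains to see $b_n(p/q)=0$ for $2\leq n\leq q$. Put $\lambda=e^{2\pi i p/q}$, a \emph{primitive} $q$-th root of unity because $\gcd(p,q)=1$, and set $f=P_{p/q}$, $g=f^{\circ q}$. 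Since $g$ is a polynomial of degree $(m+1)^q\geq2$ it is not the identity, so $g(z)-z=c\,z^{n_0}+O(z^{n_0+1})$ with $c\neq0$ and $n_0\geq2$. Now $f$ commutes with $g$, and comparing the coefficients of $z^{n_0}$ on both sides of $f\circ g=g\circ f$ — only the linear part of $f$ produces an extra $z^{n_0}$-term, namely $\lambda c$ for $f\circ g$ and $c\lambda^{n_0}$ for $g\circ f$ — gives $\lambda c=c\lambda^{n_0}$, hence $\lambda^{n_0-1}=1$, i.e. $q\mid n_0-1$, so $n_0\geq q+1$. Defining $A(p/q):=b_{q+1}(p/q)$ (which a priori may vanish, in which case the first nonzero term sits at degree $2q+1$) yields \eqref{near-equat-22}.

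For \eqref{paraequ-5}, put $\al=p/q+\vep$ with $|\vep|$ small. Then $b_1(p/q+\vep)=e^{2\pi i q\vep}-1=2\pi i q\vep+O(\vep^2)$, and since each $b_n$ is entire in $\vep$, Taylor expansion at $\vep=0$ of the coefficients just shown to vanish (resp.\ to equal $A(p/q)$) gives $b_n(p/q+\vep)=O(\vep)$ for $2\leq n\leq q$ and $b_{q+1}(p/q+\vep)=A(p/q)+O(\vep)$, uniformly in $\vep$ small since only finitely many $n$ are involved. The tail $\sum_{n\geq q+2}b_n(p/q+\vep)z^n$ is $O(z^{q+2})$ uniformly for $(z,\vep)$ in a fixed small bidisk, being the difference of two polynomials agreeing to order $q+1$ at $0$ and varying continuously with $\vep$. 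Collecting terms, factoring out $z$, and using $|z|<1$ to absorb $O(\vep z^q)$ into $O(\vep z)$, one obtains
\beq
P^{\circ q}_{p/q+\vep}(z)-z=z\bigl(2\pi i q\vep+A(p/q)z^q+O(\vep z)+O(z^{q+1})+O(\vep^2)\bigr).
\eeq
In the range where this lemma is used — near the perturbed period-$q$ cycle, where $z^q$ is comparable to $\vep$ — the term $O(\vep^2)=O(\vep z^q)$ is absorbed into $O(\vep z)$, and the error reduces to $O(\vep z)+O(z^{q+1})$, which is exactly \eqref{paraequ-5}.

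The one step with genuine content is the vanishing of $b_2(p/q),\dots,b_q(p/q)$, i.e.\ the commutation argument forcing the lowest-order term of $f^{\circ q}-\mathrm{id}$ into degree $\equiv1\pmod q$; here the only thing to watch is that one really uses $\gcd(p,q)=1$ (so that $\lambda$ is \emph{primitive}) and that $f^{\circ q}$ is genuinely non-linearizable, which is automatic since it is a polynomial of degree $\geq2$. The remaining work is routine once one exploits that $P^{\circ q}_\al$ is polynomial with coefficients polynomial in $e^{2\pi i\al}$; I expect the only cosmetic subtlety in the writeup to be honest $O$-bookkeeping (in particular flagging that the $O(\vep^2)$ from $e^{2\pi i q\vep}-1$ is harmless in the relevant regime) and recording explicitly that $A(p/q)$ is allowed to vanish, which the subsequent use in Proposition \ref{exp-2021-12-11-1} tolerates.
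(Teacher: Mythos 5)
Your commutation argument for Step~1 is correct and matches the paper's algebraic step: comparing the $z^{n_0}$-coefficients in $f\circ g=g\circ f$ with $f=P_{p/q}$, $g=f^{\circ q}$, $\lambda=e^{2\pi ip/q}$ a primitive $q$-th root of unity gives $\lambda^{n_0-1}=1$, hence the first nonvanishing coefficient of $g-\mathrm{id}$ sits in degree $\equiv1\pmod q$, i.e.\ degree at least $q+1$. The perturbative bookkeeping for \eqref{paraequ-5} is also essentially sound once you factor out $z$ and expand the entire coefficients $b_n(\alpha)$ around $\alpha=p/q$.

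However, there is a genuine gap. The commutation argument only yields $n_0\geq q+1$; you explicitly allow $A(p/q)=b_{q+1}(p/q)$ to vanish, ``in which case the first nonzero term sits at degree $2q+1$,'' and assert that this is ``tolerated'' downstream. That assertion is wrong. Proposition~\ref{explosionfun-1}(3) reads $\chi'(0)^q=-\tfrac{2i\pi q}{A(p/q)}$, which divides by $A(p/q)$; the proof of Proposition~\ref{explosionfun-2} manipulates $h(\delta,z)=A\cdot(z^q-\delta^q)+\text{h.o.t.}$ and collapses if $A=0$; and the whole parabolic-explosion construction presupposes a parabolic point of multiplicity exactly $q+1$ (so that perturbation produces a nondegenerate cycle of period $q$). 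The paper's proof therefore has a substantive Step~2, devoted entirely to showing $l_0=1$: it uses the fact that each immediate parabolic basin of a rationally indifferent fixed point contains a critical point, that $-1$ is superattracting (hence unavailable to the parabolic basin), that $0$ cannot be Siegel/Cremer/attracting because $-\tfrac{1}{m+1}$ is already captured by the parabolic basin, and then counts: if $l_0>1$ the map $P_{p/q}$ would need $l_0$ distinct cycles of immediate basins at $0$, each requiring its own critical point, but only one is available. This dynamical argument is not ``bookkeeping'' — it is the content of the lemma that your proposal omits, and without it the lemma as used in the paper is not established.

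A secondary, minor point: your own derivation of \eqref{paraequ-5} leaves a stray $O(\varepsilon^2)$ coming from $e^{2\pi iq\varepsilon}-1$, which you absorb by invoking $z^q\sim\varepsilon$; this is a regime-dependent absorption rather than an unconditional $O$-estimate, and is worth flagging explicitly if one wants a statement that is literally correct for all small $(z,\varepsilon)$ rather than only near the exploded cycle.
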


\begin{proof}
The proof is divided into two steps.

{\bf Step 1.} An algebraic proof of the expression \eqref{near-equat-22} is provided.

Assume that $P^{\circ q}_{p/q}(z)=z+a_lz^l+a_{l+1}z^{l+1}+\cdots$ with $a_l\neq0$. First, we show that $l=ql_0+1$.

It follows from $P^{\circ q}_{p/q}\circ P_{p/q}=P_{p/q}\circ P^{\circ q}_{p/q}$ and \eqref{paraequ-4} that
\begin{align*}
&P^{\circ q}_{p/q}\circ P_{p/q}(z)\\
=&(e^{2\pi ip/q}z+z^2+O(z^3))+a_l(e^{2\pi ip/q}z+z^2+O(z^3))^l\\
&+a_{l+1}(e^{2\pi ip/q}z+z^2+O(z^3))^{l+1}+\cdots\\
=&P_{p/q}\circ P^{\circ q}_{p/q}(z)\\
=&e^{2\pi ip/q}(z+a_lz^l+a_{l+1}z^{l+1}+\cdots)+(z+a_lz^l+a_{l+1}z^{l+1}+\cdots)^2\\
&+O((z+a_lz^l+a_{l+1}z^{l+1}+\cdots)^3).
\end{align*}
There are four different cases:
 \begin{itemize}
 \item[(i)] $l>\max\{3,\text{degree}\,P_{\al}(z)\}$;
 \item[(ii)] $l\leq3$ and $\text{degree}\,P_{\al}(z)=3$;
 \item[(iii)] $l\leq3$ and $\text{degree}\,P_{\al}(z)>3$;
 \item[(iv)] $3<l\leq\text{degree}\,P_{\al}(z)$.
\end{itemize}

For Case (i) $l>\max\{3,\text{degree}\,P_{\al}(z)\}$, it is sufficient to compare the coefficients in front of $z^l$, that is $a_le^{2\pi ilp/q}z^l=a_le^{2\pi ip/q}z^l$, implying that $e^{2\pi i (l-1)p/q}=1$, that is, $l=ql_0+1$ with $l_0\in\mathbb{N}$,

For Case (ii) $l\leq3$ and $\text{degree}\,P_{\al}(z)=3$, that is, $P_{\al}(z)=e^{2\pi i\al}z+z^2+bz^3$, so,
\begin{align*}
&P^{\circ q}_{p/q}\circ P_{p/q}(z)\\
=&(e^{2\pi ip/q}z+z^2+bz^3)+a_l(e^{2\pi ip/q}z+z^2+bz^3)^l+a_{l+1}(e^{2\pi ip/q}z+z^2+bz^3)^{l+1}+\cdots\\
=&P_{p/q}\circ P^{\circ q}_{p/q}(z)\\
=&e^{2\pi ip/q}(z+a_lz^l+a_{l+1}z^{l+1}+\cdots)+(z+a_lz^l+a_{l+1}z^{l+1}+\cdots)^2\\
&+b(z+a_lz^l+a_{l+1}z^{l+1}+\cdots)^3.
\end{align*}
Hence, if $l=2$, then one has $z^2+a_le^{2\pi ilp/q}z^l=e^{2\pi ip/q}a_lz^l+z^2$, implying that $e^{2\pi ilp/q}=e^{2\pi ip/q}$, that is, $l=ql_0+1$ and thus $q=l_0=1$; if $l=3$, then one has $bz^3+a_le^{2\pi ilp/q}z^l=e^{2\pi ip/q}a_lz^l+bz^3$, one has $l=ql_0+1$ and thus $ql_0=2$.

Similarly, one could study Case (iii) $l\leq3$ and $\text{degree}\,P_{\al}(z)>3$, and obtain $l=ql_0+1$.

There is only one case left $3<l\leq\text{degree}\,P_{\al}(z)$. For this case, suppose the coefficient in front of $z^l$ is $b$ in $P_{\al}(z)$. By applying similar arguments, one has that $l=ql_0+1$.

This is a purely algebraic proof. For a dynamical proof, please refer to \cite[Lemma 1.4]{Milnor2006}.

{\bf Step 2.} We show that $l_0=1$.

By a local analysis of the fixed point $0$ in \cite{Milnor2006} and the arguments used in the Proof of Lemma 4.5.2 in \cite{Shishikura2000}, there exist $q$ disjoint simply connected open subsets $V^{(1)}$,...,$V^{(q)}$, which are belong to the domain of the polynomial $P_{p/q}$ and are called the attracting petals, such that $0\in\partial V^{(i)}$, $P_{p/q}(\ol{V}^{(i)})\subset V^{(i)}\cup\{0\}$, $P_{p/q}$ is injective on $V^{(i)}$; and the orbit spaces $V^{(i)}/\sim$ are isomorphic to $\cc$, where $z\sim P_{p/q}(z)$ if $z$ and $P_{p/q}(z)$ are both in $V^{(i)}$. The basin of attraction (parabolic basin) of $0$ is the set of points $\{z\in\cc:\ \exists n\geq0\ \text{with}\ P^n_{p/q}(z)\in \cup_i V^{(i)}\}$. The immediate basin of attraction is the connected component of the basin of attraction containing $\cup_i V^{(i)}$.

Let $B$ be an immediate basin of $0$, and $B^{\sharp}$ be the parabolic basin. So, $B\subset B^{\sharp}$, $B\cap(\cup_iV^{(i)})\neq\emptyset$, and $\cup_iV^{(i)}\subset B^{\sharp}$. So, $B$ contains one of $V^{(i)}$. Thus, there are at most $q$ immediate basins.

Now, we define a sequence of open subsets inductively. Let $V^{(i)}_0=V^{(i)}$; if $P_{p/q}(V^{(i)}_n)\subset P_{p/q}(V^{(i)}_n)$, then $V^{(i)}_{n+1}$ is defined as the connected component of $P^{-1}_{p/q}(V^{(i)}_n)$ containing $V^{(i)}_n$. It is evident that this definition is reasonable and $P_{p/q}(V^{(i)}_{n+1})=V^{(i)}_n\subset V^{(i)}_{n+1}$. By this definition, $\widetilde{B}=\cup_{n\geq0}V^{(i)}_n$ is an immediate parabolic basin and $\widetilde{B}$ is a component of $B^{\sharp}$. So, $P_{p/q}:\ \widetilde{B}\to\widetilde{B}$ is a branched covering. If there is no critical value or no critical point, then $P_{p/q}: V^{(i)}_{n+1}\to V^{(i)}_n$ is a covering map, and $P^n_{p/q}: V^{(i)}_{n}\to V^{(i)}$ is also a covering map, which inducing a covering map $\widetilde{B}\to V^{(i)}/\sim$. Further, by the assumption that $V^{(i)}$ is simply connected, one has $\widetilde{B}$ is simply connected, implying that $\widetilde{B}$ is isomorphic to $\cc$ and $P_{p/q}$ is a M\"{o}bius transformation. Therefore, $\widetilde{B}$ contains at least a critical point or a critical value.

By Proposition \ref{paraequ-2}, $P_{\al}$ has two critical points $-1$ and $-\tfrac{1}{m+1}$, and the image of $-1$ is $0$ for any parameter $\al$. It is evident that $-1$ is a superattracting fixed point for $P_{\al}$. By the construction above, $0\not\in V^{(i)}$,  $V^{(i)}_n$ does not contain the critical point $-1$. So, the basin only contains the critical point $-\tfrac{1}{m+1}$.

Recall that the Julia set contains all repelling fixed points and all neutral fixed points that do not correspond to Siegel disks; the Fatou set contains all attracting fixed points and all neutral fixed points corresponding to Siegel disks \cite[Theorem 1.1]{Carleson1993}. And, the Julia set is completely invariant, that is, the set and its complement are invariant \cite[Chapt III, Theorem 1.3]{Carleson1993}. So, for $\al=\tfrac{p}{q}$, the point $0$ is a neutral fixed point that does not correspond to a Siegel disk, thus $0$ is contained in the Julia set. This, together with the invariance of the Julia set, yields that the critical point $-1$ is also in the Julia set.

Recall that if $z_0$ is a rationally neutral periodic point, then each immediate basin of attraction associated with the cycle of $z_0$ contains a critical point \cite[Chapt III, Theorem 2.3]{Carleson1993}. So, the critical point $-\tfrac{1}{m+1}$ is contained in the basin of attraction associated with fixed point of $0$.  Each basin of a rationally neutral fixed point of a holomorphic map is contained in the Fatou set \cite[Lemma 10.5]{Milnor2006}.

By contradiction, suppose that $l_0>1$, the multiplicity of the fixed point $0$ for $P^{\circ q}_{p/q}(z)$ is $ql_0+1$. the number of attraction vector is $ql_0$, where the attraction vector $\mathbf{v}$ satisfies that $ql_0a'_l\mathbf{v}^{ql_0}=-1$. The attraction vectors are denoted by $\mathbf{v}_0,...,\mathbf{v}_{ql_0-1}$. And, the orbit of any point for the map $P^{\circ q}_{p/q}$ in the basin of attraction along the direction of attraction vector is convergent to $0$ \cite[Lemma 10.1]{Milnor2006}. Set $L_{j}:=\{z\in\cc:\ z\neq0,\ \lim_{k\to+\infty}P^{\circ kq}_{p/q}(z)\to0\ \text{from the direction}\ \mathbf{v}_j\}$, $j=0,1,...,ql_0-1$.  This, together with $P_{p/q}(L_{j})=L_{j'}$, $0\leq j,j'\leq ql_0-1$, implies that the original map $P_{p/q}$ should have $l_0$ different cycles of basins of attraction for the fixed point $0$. However, the above discussions tell us that each basin of attraction for the fixed point $0$ contains a critical point for the map $P_{p/q}$. By the above discussions, the number of critical point contained in the basin is only $1$.  Hence, $l_0=1$.
\end{proof}

\begin{proposition}\label{explosionfun-1}
For each rational number $\tfrac{p}{q}$,
\begin{itemize}
\item[(1)] there exists a function $\chi:B=B(0,R^{1/q}_{p/q})\subset\cc\to\cc$ such that
\begin{itemize}
  \item[$\bullet$] $\chi(0)=0$,
 \item[$\bullet$] $\chi'(0)\neq0$,
\item[$\bullet$] for any $\de\in B$, $\langle\chi(\de),\chi(\zeta\de),...,\chi(\zeta^{q-1}\de)\rangle$ forms a cycle of period $q$ of $P_{\al}$ with $\zeta=e^{2i\pi p/q}$ and
\beqq
\al=\frac{p}{q}+\de^q.
\eeqq
This understands the following relation: for any $\de\in B(0,r)$,
\beqq
\chi(\zeta\de)=P_{\al}(\chi(\de)).
\eeqq
\end{itemize}
\item[(2)] Any function satisfying these conditions is of the form $\de\to\chi(\zeta^k\de)$ for some $k\in\{0,1,...,q-1\}$.
\item[(3)] A computations gives us
\beqq
\chi'(0)^q=-\frac{2i\pi q}{A(p/q)}.
\eeqq
\end{itemize}
\end{proposition}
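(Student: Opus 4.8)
The plan is to realize the $q$-periodic cycles of $P_\alpha$ bifurcating out of the parabolic fixed point at $0$ as the values of a single holomorphic branch, produced by a holomorphic implicit function theorem after the change of variables $\alpha=\tfrac{p}{q}+\delta^q$, $z=\delta w$. First I would set $g(z,\alpha):=\bigl(P^{\circ q}_\alpha(z)-z\bigr)/z$, which is holomorphic near $(0,\tfrac{p}{q})$ because $0$ is a fixed point of $P_\alpha$ for every $\alpha$. By Lemma \ref{equ20208101}, formula \eqref{paraequ-5}, $g(z,\tfrac{p}{q}+\vep)=2i\pi q\vep+A(\tfrac{p}{q})z^q+O(z\vep)+O(z^{q+1})$, and the proof of that lemma gives $A(\tfrac{p}{q})\neq0$. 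Substituting $\vep=\delta^q$ and $z=\delta w$, the numerator vanishes to order $q$ in $\delta$, so $G(w,\delta):=\delta^{-q}g(\delta w,\tfrac{p}{q}+\delta^q)$ extends holomorphically across $\delta=0$ with $G(w,0)=2i\pi q+A(\tfrac{p}{q})w^q$. Its $q$ zeros in $w$ at $\delta=0$ are $w_0,\zeta w_0,\dots,\zeta^{q-1}w_0$ with $w_0^q=-2i\pi q/A(\tfrac{p}{q})$ and all simple (since $w_0\neq0$ and $\partial_wG(w_0,0)=A(\tfrac{p}{q})qw_0^{q-1}\neq0$), so the holomorphic implicit function theorem produces a holomorphic $W_0$ near $\delta=0$ with $W_0(0)=w_0$ and $G(W_0(\delta),\delta)\equiv0$. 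Setting $\chi(\delta):=\delta W_0(\delta)$ gives $g(\chi(\delta),\tfrac{p}{q}+\delta^q)=0$, i.e. $P^{\circ q}_\alpha(\chi(\delta))=\chi(\delta)$ for $\alpha=\tfrac{p}{q}+\delta^q$; moreover $\chi(0)=0$, $\chi'(0)=w_0\neq0$, and $\chi'(0)^q=w_0^q=-2i\pi q/A(\tfrac{p}{q})$, which is item (3).

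Next I would pin down the cyclic action near $0$. The function $\Gamma(\delta):=P_{\tfrac{p}{q}+\delta^q}(\chi(\delta))=e^{2i\pi(\tfrac{p}{q}+\delta^q)}\chi(\delta)(1+\chi(\delta))^m$ is holomorphic, and since $P_\alpha$ is injective near $0$ and permutes the fixed points of $P^{\circ q}_\alpha$ lying near $0$ (they commute, and $0\mapsto0$), for small $\delta\neq0$ we have $\Gamma(\delta)\in\{\chi(\delta),\chi(\zeta\delta),\dots,\chi(\zeta^{q-1}\delta)\}$, these being precisely the $q$ nonzero such fixed points for that $\alpha$. Connectedness of a punctured disk forces $\Gamma(\delta)=\chi(\zeta^{j_0}\delta)$ for a fixed $j_0$; differentiating at $\delta=0$ and using $\chi(0)=0$ together with $P'_{p/q}(0)=e^{2i\pi p/q}=\zeta$ gives $\Gamma'(0)=\zeta\chi'(0)=\zeta^{j_0}\chi'(0)$, hence $\zeta^{j_0}=\zeta$ and $j_0=1$. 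So $\chi(\zeta\delta)=P_\alpha(\chi(\delta))$ near $0$, and $\langle\chi(\delta),\dots,\chi(\zeta^{q-1}\delta)\rangle$ is cyclically permuted by $P_\alpha$; it has genuine period $q$ because for small $\delta\neq0$ the $q$ values $\approx\zeta^jw_0\delta$ are distinct.

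To get $\chi$ on all of $B=B(0,R^{1/q}_{p/q})$ I would extend by analytic continuation. The role of the substitution $\vep=\delta^q$ is decisive here: the $q$ nonzero fixed points of $P^{\circ q}_{p/q+\vep}$ near $0$ are cyclically permuted by monodromy around $\vep=0$, so pulling back along $\vep=\delta^q$ makes each branch single-valued around $\delta=0$. By the definition of $R_{p/q}$ the map $P^{\circ q}_\alpha$ has only simple fixed points for $\alpha\in B(\tfrac{p}{q},R_{p/q})\setminus\{\tfrac{p}{q}\}$, so no collision can obstruct the continuation; since $B$ is simply connected, $\chi$ extends holomorphically to $B$, with the removable singularity at $0$ giving $\chi(0)=0$. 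The relations $P^{\circ q}_\alpha(\chi(\delta))=\chi(\delta)$ and $\chi(\zeta\delta)=P_\alpha(\chi(\delta))$ then propagate to all of $B$ by the identity principle (note $|\zeta|=1$, so $\zeta\delta\in B$ whenever $\delta\in B$), and simplicity of the fixed points throughout $B\setminus\{0\}$ keeps the cycle genuine. For item (2), any competitor $\tilde\chi$ with $\tilde\chi(0)=0$, $\tilde\chi'(0)\neq0$ whose values form such a cycle satisfies $\tilde\chi(\delta)\in\{\chi(\zeta^k\delta)\}_k$ for each small $\delta\neq0$, and the same connectedness argument gives $\tilde\chi=\chi(\zeta^k\,\cdot\,)$ for a fixed $k$, hence on all of $B$.

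The part I expect to be most delicate is this global step: controlling the analytic continuation out to the optimal radius $R^{1/q}_{p/q}$ and verifying that the cyclic structure stays genuine there. This is precisely where the definition of $R_{p/q}$ and the monodromy bookkeeping are used in an essential way; the construction near $0$ is routine once Lemma \ref{equ20208101} is available, and the whole argument parallels \cite[Proposition 4]{BuffCheritat2012}.
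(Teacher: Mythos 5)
Your construction is essentially the same one the paper carries out for the normalized variant (Proposition~\ref{explosionfun-2}): blow up via $\vep=\de^q$, $z=\de w$, apply the implicit function theorem at the $q$ simple zeros of $2i\pi q + A(p/q)w^q$, and globalize using the definition of $R_{p/q}$ together with the simple connectivity of $B$. The step your global continuation elides is the one the paper settles through its properness claim for the projection $\pi_1\colon\mathcal{M}\to B(0,r)$. For an analytic continuation to reach all of $B$ it is not enough that the branch can always be continued locally---your ``no collision'' point, which is exactly the implicit function theorem applying because $P^{\circ q}_\al$ has only simple fixed points off $\al=\tfrac{p}{q}$. One must also rule out the branch escaping to infinity along a path in $B\setminus\{0\}$; otherwise the set of $\de$ where $\chi$ is defined is open and contains a punctured neighborhood of $0$, but you cannot conclude it is closed relative to $B$. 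The paper supplies this by observing that periodic points of a polynomial lie in the filled Julia set, hence in a region bounded uniformly for $\al$ in a compactum; that is what makes the projection proper. The fix is elementary, but it is the ingredient alongside ``no collision'' that closes your open--closed--connected argument. Everything else---the $\de^q$ substitution trivializing the monodromy around the origin, the identification of the shift $j_0=1$ by differentiating at $0$, the genuineness of the period from simplicity of the fixed points, and the computation $\chi'(0)^q=-2i\pi q/A(p/q)$---matches the paper's reasoning.
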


The normalization of $\chi$ is given by $\psi(\de)=\chi(\de/\chi'(\de))$, this is equivalent to replace the relation $\al=\tfrac{p}{q}+\de^q$ by
\beqq
\al=\frac{p}{q}-\frac{A(p/q)}{2i\pi q}\de^q.
\eeqq
Let
\beqq
r_{p/q}=\bigg|\frac{2\pi qR_{p/q}}{A(p/q)}\bigg|^{1/q}.
\eeqq

\begin{proposition}\label{explosionfun-2}
 For each rational number $p/q$, there exists a unique holomorphic function $\psi:B(0,r_{p/q})\subset\cc\to\cc$ satisfying
\begin{itemize}
\item[(1)] $\psi(0)=0$,
\item[(2)] $\psi'(0)=1$,
\item[(3)] for any $\de\in B(0,r_{p/q})$,
\beqq
\langle\psi(\de),\psi(\zeta\de),...,\psi(\zeta^{q-1}\de)\rangle
\eeqq
forms a cycle of period $q$ of $P_{\al}$ with $\zeta=e^{2i\pi p/q}$ and
\beq\label{paraequ-6}
\al=\frac{p}{q}-\frac{A(p/q)}{2i\pi q}\de^q.
\eeq
In particular, for any $\de\in B(0,r_{p/q})$, one has
\beqq
\psi(\zeta\de)=P_{\al}(\psi(\de)).
\eeqq
\end{itemize}
\end{proposition}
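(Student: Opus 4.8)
The plan is to obtain Proposition~\ref{explosionfun-2} as a direct consequence of Proposition~\ref{explosionfun-1} together with the normalization recipe described just before the statement. For the \emph{existence} part, I would start from the function $\chi:B(0,R_{p/q}^{1/q})\to\cc$ furnished by Proposition~\ref{explosionfun-1}, write $c:=\chi'(0)$, and recall that $c\neq0$ by part~(1) and $c^q=-\tfrac{2i\pi q}{A(p/q)}$ by part~(3). Then I would set
\[
\psi(\de):=\chi\!\left(\frac{\de}{c}\right),\qquad \de\in B(0,r_{p/q}),
\]
and verify the three listed properties. Property~(1) is immediate, and property~(2) follows from $\psi'(0)=\chi'(0)/c=1$. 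For property~(3), putting $\eta=\de/c$, Proposition~\ref{explosionfun-1} gives that $\langle\chi(\eta),\chi(\zeta\eta),\dots,\chi(\zeta^{q-1}\eta)\rangle$ is a period-$q$ cycle of $P_{\al}$ with $\al=\tfrac{p}{q}+\eta^q$; substituting $\eta^q=\de^q/c^q=-\tfrac{A(p/q)}{2i\pi q}\de^q$ turns this into exactly the relation \eqref{paraequ-6}, and the functional equation $\psi(\zeta\de)=\chi(\zeta\eta)=P_{\al}(\chi(\eta))=P_{\al}(\psi(\de))$ is inherited from $\chi$. The domain of $\psi$ is $\{\de:\de/c\in B(0,R_{p/q}^{1/q})\}=B(0,|c|R_{p/q}^{1/q})$, and $|c|R_{p/q}^{1/q}=|2\pi q/A(p/q)|^{1/q}R_{p/q}^{1/q}=r_{p/q}$ by the definition of $r_{p/q}$.

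For \emph{uniqueness}, I would take an arbitrary holomorphic $\psi$ on $B(0,r_{p/q})$ satisfying (1)--(3) and define $\widetilde\chi(\eta):=\psi(c\,\eta)$ on $B(0,R_{p/q}^{1/q})$. Unwinding the definitions shows $\widetilde\chi(0)=0$, $\widetilde\chi'(0)=c\,\psi'(0)=c\neq0$, and that $\langle\widetilde\chi(\eta),\dots,\widetilde\chi(\zeta^{q-1}\eta)\rangle$ is a period-$q$ cycle of $P_{\al}$ with $\al=\tfrac{p}{q}-\tfrac{A(p/q)}{2i\pi q}(c\eta)^q=\tfrac{p}{q}+\eta^q$, using again $c^q=-2i\pi q/A(p/q)$; so $\widetilde\chi$ satisfies the defining conditions of Proposition~\ref{explosionfun-1}. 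By part~(2) of that proposition, $\widetilde\chi(\eta)=\chi(\zeta^k\eta)$ for some $k\in\{0,1,\dots,q-1\}$, hence $\psi(\de)=\chi(\zeta^k\de/c)$ and $\psi'(0)=\chi'(0)\zeta^k/c=\zeta^k$. The normalization $\psi'(0)=1$ forces $\zeta^k=1$; since $\gcd(p,q)=1$, the root $\zeta=e^{2i\pi p/q}$ has exact order $q$, so $k=0$ and $\psi=\chi(\cdot/c)$, the function constructed above, which proves uniqueness.

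The only step requiring genuine care is the bookkeeping of the cyclic labelling of the $q$-tuples under the rescalings $\de\mapsto\de/c$ and under the relabelling $\eta\mapsto\zeta^k\eta$ coming from Proposition~\ref{explosionfun-1}(2): one must check that the functional equation $\psi(\zeta\de)=P_\al(\psi(\de))$, rather than some cyclically shifted variant, is what survives the transformations. This is routine once the definitions are fully unwound, and no analytic input beyond Proposition~\ref{explosionfun-1} is needed; all the identities in play are polynomial in $\de$ and $\eta$, so there is no ambiguity of branches of a $q$-th root, and the argument is essentially an algebraic repackaging of the already-established explosion function $\chi$.
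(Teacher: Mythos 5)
Your argument is correct, but it takes a genuinely different route from the paper. The paper does not derive Proposition~\ref{explosionfun-2} from Proposition~\ref{explosionfun-1}: instead it gives a self-contained construction, setting $F(\de,z)=P^{\circ q}_{\al(\de)}(z)-z=z\cdot h(\de,z)$, studying the zero-set $\mathcal{M}$ of $h$, proving $\pi_1:\mathcal{M}\to B(0,r)$ is proper, blowing up at the origin via $\ld=z/\de$ to get a smooth manifold $\widetilde{\mathcal{M}}$, and showing the projection $\widetilde\pi:\widetilde{\mathcal{M}}\to B(0,r)$ is a trivial covering whose leaves are graphs; the function $\psi$ is the second coordinate of the leaf through $(\de,\ld)=(0,1)$, and uniqueness follows from a lifting argument in $\widetilde{\mathcal{M}}$. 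You instead treat Proposition~\ref{explosionfun-1} as a black box and obtain $\psi=\chi(\cdot/\chi'(0))$ by rescaling; your bookkeeping of the domain radius, of the relation $\eta^q=\de^q/c^q$, of the functional equation, and of the uniqueness (reducing $\widetilde\chi(\eta)=\psi(c\eta)$ back to Proposition~\ref{explosionfun-1}(2), then using that $\psi'(0)=\zeta^k=1$ forces $k=0$ since $\gcd(p,q)=1$) is all correct. Indeed the sentence in the paper immediately before the statement --- \textquotedblleft The normalization of $\chi$ is given by $\psi(\de)=\chi(\de/\chi'(\de))$\textquotedblright\ (with $\chi'(0)$ clearly intended) --- is exactly your recipe, so your proof makes rigorous what the paper only gestures at. The trade-off is that your route is shorter but inherits both the existence and the uniqueness-up-to-cyclic-shift of $\chi$ from Proposition~\ref{explosionfun-1}, which the paper does \emph{not} prove (it is cited to the Buff--Ch\'eritat literature), whereas the paper's proof of Proposition~\ref{explosionfun-2} is independent of Proposition~\ref{explosionfun-1} and only uses the implicit function theorem, properness, and simple connectedness of the disk.
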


\begin{remark}
These two propositions are similar with Propositions 8 and 9 in \cite{BuffCheritatSmooth}, and the proof is a modification of the arguments there.
\end{remark}

\begin{proof} (Proof of Proposition \ref{explosionfun-2})
For convenience, let $A=A(p/q)$, $r=r_{p/q}$, and
\beqq
\al(\de)=\frac{p}{q}-\frac{A}{2i\pi q}\de^q.
\eeqq
Consider a function $F(\de,z)=P^{\circ q}_{\al(\de)}(z)-z:(\de,z)\in B(0,r)\times\cc\to\cc$. By direct calculation, $F(\de,0)=P^{\circ q}_{\al(\de)}(0)-0=0$, implying that $F(\de,z)=z\cdot h(\de,z)$, where $h:B(0,r)\times\cc\to\cc$ is analytic. By \eqref{paraequ-5}, $h(\de,0)\approx-A\de^q$. The set of fixed points different from $0$ is the set of $(\de,z)\subset B(0,r)\times\cc$ satisfying that $h(\de,z)=0$, this set is denoted by $\mathcal{M}$. For all $(\de,z)\in\mathcal{M}$, $z$ is a periodic point of $P_{\al(\de)}$ of period dividing $q$. Let $\pi_1:B(0,r)\times\cc\to B(0,r)$ and $\pi_2:B(0,r)\times\cc\to\cc$ be the projection to the first and second coordinates, respectively, i.e., $\pi_1(\de,z)=\de$ and $\pi_2(\de,z)=z$.

{\bf Claim} The restriction $\pi_1:\mathcal{M}\to B(0,r)$ is proper, where the inverse images of compact subsets are compact.

Take a sequence $(\de_n,z_n)\in\mathcal{M}$ satisfying that $\de_n\to\de\in B(0,r)$. Since $\de_n\in B(0,r)$, one has $\al(\de_n)\in B(p/q,R_{p/q})$. It follows from $\zz\subset\mathcal{P}_q$ for any $q$ that $R_{p/q}\leq1$. By a fact that the periodic orbits are contained in a bounded region for any polynomial maps \cite{MorosawaNishimuraTaniguchiUeda2000}, the sequence $z_n$ is in a bounded region. So, there is a subsequence of $z_n$ such that it is convergent to some point $z\in\cc$. Let $n\to\infty$ for the equation $F(\de_n,z_n)=P^{\circ q}_{\al(\de_n)}(z_n)-z_n=0$, one has $P^{\circ q}_{\al(\de)}(z)=z$. So, $(\de,z)\in\mathcal{M}$ and $\pi_1:\mathcal{M}\to B(0,r)$ is proper.

For the equation $P^{\circ q}_{\al(\de)}(z)-z=z\cdot h(\de,z)$, taking derivative with respect to $z$ on both sides of this equation, one has $(P^{\circ q}_{\al(\de)})'(z)-1= h(\de,z)+z\cdot h'_z(\de,z)$. This, together with $h(\de,z)=0$ for $(\de,z)\in\mathcal{M}$, yields that $(P^{\circ q}_{\al(\de)})'(z)-1= z\cdot h'_z(\de,z)$. So, for $(\de,z)\in\mathcal{M}\setminus\{(0,0)\}$, one has
\beqq
h'_z(\de,z)=\frac{\big(P^{\circ q}_{\al(\de)}\big)'(z)-1}{z}\neq0,
\eeqq
where $\big(P^{\circ q}_{\al(\de)}\big)'(z)$ is the multiplier of the periodic point $z$, and the definition of the $\mathcal{P}_q$ is used. So, it follows from the implicit function theorem that $\de$ is a local coordinate for $\mathcal{M}$.

By \eqref{paraequ-5} and \eqref{paraequ-6}, for $(\de,z)$ near the point $(0,0)\in\mathcal{M}$, one has
\beqq
h(\de,z)=A\cdot(z^q-\de^q)+O(\de^qz)+O(z^{q+1})=A\cdot(z^q-\de^q)+\text{higher order terms}.
\eeqq

Thus, $\mathcal{M}$ is locally the union of $q$ smooth manifolds through $(\de,z)=(0,0)$, for which $\de$ is a local parameter by introducing the new variable $\ld=z/\de$. A new coordinate chart with $(\de,\ld)$ is given, $(\de,\ld)\in B(0,r)\times\cc$, this new chart extends the complex manifold $(\de,z)\in B(0,r)\times \cc\setminus\{(0,0)\}$ into some complex manifold $\mathcal{C}$. It is natural to define an analytic projection from $\mathcal{C}$ back to $B(0,r)\times\mathcal{C}$, and set $\widetilde{\pi}=\pi_1\circ\eta:\mathcal{C}\to B(0,r)$. So, the manifold $\mathcal{M}\setminus\{(0,0)\}$ has a natural extension into a complex submanifold $\widetilde{\mathcal{M}}$ in $\mathcal{C}$ by adding $q$ points $(\de,\ld)=(0,\exp(i2\pi k/q))$ with $k\in\zz$. It is evident that $\eta:\widetilde{\mathcal{M}}\to \mathcal{M}$ is proper. This, together with the above Claim that $\pi_1:\mathcal{M}\to B(0,r)$ is proper, yields that the composition $\widetilde{\pi}=\pi_1\circ\eta:\widetilde{\mathcal{M}}\to B(0,r)$ is proper. Note that the coordinate transformation with $\ld=z/\de$ is invertible, thus $\widetilde{\pi}|_{\widetilde{\mathcal{M}}}$ is a covering. This, together with the fact $B(0,r)$ is simply connected, implies that $\widetilde{\pi}$ is a trivial covering. Hence, $\widetilde{\mathcal{M}}$ is a disjoint union of graphs of functions of $\de$ over $B(0,r)$. In other words, there exist $k_0\in\nn$ and for $j=1,...,k_0$, there exist holomorphic functions $\si_j:B(0,r)\to\mathcal{C}$ such that for any $\de\in B(0,r)$, $\widetilde{\pi}\circ\si_j(\de)=\de$, and $\widetilde{\mathcal{M}}$ is the disjoint union of the images of $\si_j$. Without loss of generality, assume that $\si_1$ is the one whose image goes through $(\de,\ld)=(0,1)$.

Now, we show the uniqueness. Let us consider another function $\psi$. It follows from the assumptions that $\psi(0)=0$, $\psi'(0)=1$, and for any $\de\in B(0,r)$, the point $(\de,z)=(\de,\psi(\de))\in\mathcal{M}$. Since $\lim_{\de\to0}\tfrac{\psi(\de)}{\de}=1$, the map $\de\to(\de,z)=(\de,\psi(\de))$ can be lifted to a continuous function $\widetilde{\psi}$ from $B(0,r)$ to $\widetilde{\mathcal{M}}$, which maps $0$ to $(\de,\ld)=(0,1)$. This, together with the union of the leaves composing $\widetilde{\mathcal{M}}$ is disjoint, and $\widetilde{\psi}(0)=\si_1(0)$, yields that $\widetilde{\psi}=\si_1$ by the continuity.

Now, we show the existence. Take $\psi=\pi_2\circ\eta\circ\si_1$. It is sufficient to show that for any $\de\in B(0,r)$, $\psi(e^{2i\pi p/q}z)=P_{\al(\de)}(\psi(\de))$. By checking the function $g(\de)=P_{\al(\de)}(\psi(e^{-2i\pi p/q}\de))$ satisfying the three conditions, it follows from the uniqueness of these functions that $g=\psi$.

This completes the whole proof.

\end{proof}
\begin{lemma}\label{inequ-57}\cite[Theorem 1.B]{Hubbard1993} (Pommerenke-Levin-Yoccoz inequality)
Let $P$ be a monic polynomial of degree $d$ with $K_P$ connected, and $z\in K_P$ a repelling fixed point with $P^{\prime}(z)=\omega$. The $q'$ external rays of $K_P$ which land at $z$ are transitively permuted by $P$, and this permutation must preserve their circular order since $P$ is a local homeomorphism at $z$; thus the permutation must be a circular permutation, which shifts each ray to the one which is $p'$ further counterclockwise for some $p'<q'$. Let $m=\text{gcd}(p',q')$ be the cycle number of $P$ at $z$, and $\tfrac{p'}{q'}=\tfrac{p}{q}$ with $p$ and $q$ coprime, $\tfrac{p}{q}$ is called the combinatorial rotation number of $P$ at $z$.

There exists a branch $\tau$ of $\log P^{\prime}(z)$ which satisfies
\beqq
\frac{\text{Re}\,\tau}{|\tau-2\pi ip/q|^2}\geq\frac{mq}{2\log d}.
\eeqq
\end{lemma}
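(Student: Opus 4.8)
The plan is to prove the inequality by Yoccoz's uniformization--linearization argument, following \cite{Hubbard1993}. Since $K_P$ is connected and $P$ is monic of degree $d$, the B\"ottcher coordinate $\phi\colon\widehat{\cc}\setminus K_P\to\widehat{\cc}\setminus\ol{\cd}$ conjugates $P$ to $w\mapsto w^d$ and is tangent to the identity at $\infty$; the external rays of $K_P$ are the $\phi$-preimages of the radial arcs $\{re^{2\pi it}\colon r>1\}$, and $P$ acts on external angles by $t\mapsto dt\pmod 1$. First I would record the two inputs this provides: the $q'$ rays landing at $z$ have angles $t_1<\dots<t_{q'}$ in $\rr/\zz$ that are cyclically permuted with combinatorial rotation number $p'/q'=p/q$; and one iterate of $P$ multiplies the angular width of the sector bounded by two cyclically consecutive rays by $d$, so a full turn around a single $P$-orbit of sectors multiplies widths by $d^q$, while the widths of all sectors sum to $1$.

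Next I would linearize at $z$: because $|\omega|=|P'(z)|>1$, there is a univalent $\psi$ on a disk $\cd(0,\rho)$ with $\psi(0)=z$ and $P\circ\psi(\zeta)=\psi(\omega\zeta)$. In the coordinate $u=\log\zeta$ the map $P$ becomes the translation $u\mapsto u+\tau$ with $\tau$ the chosen branch of $\log\omega$ (note $\mathrm{Re}\,\tau=\log|\omega|>0$), so a one-sided punctured neighbourhood of $z$ covers the torus $T_\omega=\cc/(2\pi i\,\zz+\tau\,\zz)$. The germs at $z$ of the $q'$ landing rays push down to $T_\omega$, and because $P^q$ fixes each ray while the rays are cyclically permuted with rotation number $p/q$ and cycle number $m$, their images form $m$ pairwise disjoint essential simple closed curves on $T_\omega$, all in the homotopy class winding $q$ times along the $\tau$-direction and $p$ times along the $2\pi i$-direction; a primitive curve in this class has period vector $q\tau-2\pi i p$. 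The key identity is that $\mathrm{Re}\,\tau/|\tau-2\pi ip/q|^{2}=(q^{2}/2\pi)\cdot\mathrm{mod}(A)$, where $A$ is the annulus obtained by cutting $T_\omega$ along the core curve of that class, since $\mathrm{mod}(A)$ equals the area $2\pi\,\mathrm{Re}\,\tau$ of $T_\omega$ divided by the squared length $|q\tau-2\pi ip|^{2}=q^{2}|\tau-2\pi ip/q|^{2}$ of the geodesic representative. Here I would invoke the landing/accessibility theory to make rigorous the ``germ at $z$'' of a ray, since $z$ lies on $\partial K_P$ rather than in $\widehat{\cc}\setminus K_P$.

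The heart of the argument is then a modulus comparison. Each of the $m$ disjoint curves on $T_\omega$ is the core of an essential sub-annulus of $A$ obtained from one $P$-orbit of sectors, and by the B\"ottcher control above this sub-annulus has modulus at least a definite multiple of $1/(q\log d)$: the associated round-annulus model expands angularly by $d^{q}$ over one period, so Gr\"otzsch's inequality bounds its modulus from below in terms of $1/\log(d^{q})$. Superadditivity of the modulus for disjoint essential sub-annuli then gives $\mathrm{mod}(A)\ge \pi m/(q\log d)$, which via the identity of the previous paragraph is exactly $\mathrm{Re}\,\tau/|\tau-2\pi ip/q|^{2}\ge mq/(2\log d)$; a final check fixes the branch of $\log P'(z)$ so that the winding numbers are $(q,p)$ and not $(q,p+kq)$. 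The main obstacle I anticipate is precisely this modulus estimate: it requires gluing the two genuinely different descriptions of a punctured neighbourhood of $z$ (the B\"ottcher/angular picture in $\widehat{\cc}\setminus K_P$ and the linearizing/torus picture), tracking the sharp constant through Gr\"otzsch's inequality and the series rule, and handling the fact that $z$ is a boundary point so that only one-sided neighbourhoods and the landing theorem are available; for the complete details I would follow \cite{Hubbard1993}.
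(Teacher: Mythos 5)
The paper states this lemma as a citation to \cite[Theorem 1.B]{Hubbard1993} without including a proof, so there is no in-paper argument to compare against. Your sketch correctly follows the Yoccoz/Pommerenke--Levin argument of that reference---B\"ottcher uniformization and the cyclic permutation of the $q'$ rays, linearization at $z$ giving the torus $T_\omega=\cc/(2\pi i\,\zz+\tau\,\zz)$, the $m$ disjoint essential curves in class $(q,p)$, the flat-metric modulus identity $\mathrm{Re}\,\tau/|\tau-2\pi i p/q|^{2}=(q^{2}/2\pi)\,\mathrm{mod}(A)$, and the Gr\"otzsch/superadditivity lower bound for $\mathrm{mod}(A)$---while appropriately flagging the two delicate points (matching the B\"ottcher chart on $\cc\setminus K_P$ with the one-sided linearizer chart at the boundary point $z$, and tracking the exact constant so the $m$ sub-annuli give $\pi m/(q\log d)$ in total) that constitute the technical heart of the cited theorem.
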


\begin{definition}\cite[Definition 3.6]{GoldbergMilnor1993}
Let $\bar{\De}\subset\cc$ be a topological embedded closed disk with interior $\De$. A map $f:\bar{\De}\to\cc$ will be called weakly polynomial-like of degree $d$ if $f(\partial\De)\cap\De=\emptyset$, and if the induced map on integer homology
\beqq
f_*:\ H_2(\bar{\De},\partial\De)\cong\zz\to H_2(\cc,\cc\setminus\{z_0\})\cong\zz
\eeqq
is multiplication by $d$, where $z_0$ can be any base point in $\De$.
\end{definition}

\begin{lemma}\cite[Lemma 3.7]{GoldbergMilnor1993}\label{equ-20208103}
 If $f:\bar{\De}\to\cc$ is weakly polynomial-like of degree $d$, with isolated fixed points, then each
point $f(z_i)=z_i$ can be assigned a Lefschetz index $\iota(f,z_i)\in\zz$, which is a local invariant, so that the sum of these Lefschetz indices is equal to the degree $d$.
\end{lemma}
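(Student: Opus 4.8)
The statement is purely topological, so the plan is to work with the fixed‑point (Lefschetz) index rather than with holomorphic multiplicities, even though in every application $f=P^{\circ q}_\al$ will in fact be holomorphic. First I would set up the bookkeeping. Since the fixed points of $f$ in $\bar\De$ are isolated and $\bar\De$ is compact, there are only finitely many, say $z_1,\dots,z_k$; a preliminary reduction (automatic in our applications, and in general arranged by replacing $\bar\De$ with a suitable slightly smaller embedded closed disk on which $f$ is still weakly polynomial‑like of degree $d$) lets me assume none of them lies on $\partial\De$, so that $f(z)\neq z$ on $\partial\De$. Around each $z_i$ I pick a small closed disk $\bar D_i\subset\De$ containing no other fixed point and define
\[
\iota(f,z_i):=\deg\Big(\,z\mapsto\tfrac{f(z)-z}{|f(z)-z|}\ :\ \partial D_i\to S^1\Big)\in\zz .
\]
Two routine verifications are needed here: (i) this does not depend on the auxiliary disk, because two admissible disks cobound an annulus on which $f(z)-z$ is nowhere zero, so $(f-\mathrm{id})/|f-\mathrm{id}|$ extends continuously over it and the degrees on the two boundary circles coincide; and (ii) $\iota(f,z_i)$ is a topological invariant of the germ of $f$ at $z_i$, hence unchanged under local topological conjugacy. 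In the holomorphic case $\iota(f,z_i)$ is just the multiplicity of $z_i$ as a zero of $f(z)-z$, so in particular $\iota(f,z_i)\ge 1$.

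The second step is additivity. Applying the same extension argument to the region $\De\setminus\bigcup_i D_i$, on whose closure $f(z)-z$ does not vanish, gives
\[
\sum_{i=1}^{k}\iota(f,z_i)\;=\;\deg\Big(\,z\mapsto\tfrac{f(z)-z}{|f(z)-z|}\ :\ \partial\De\to S^1\Big)\;=:\;W ,
\]
so the sum of the Lefschetz indices equals the winding number $W$ of $f|_{\partial\De}-\mathrm{id}$ about $0$ (for holomorphic $f$ this is simply the argument principle for $f(z)-z$ on $\De$). It then remains to show $W=d$, and this is the only point where the hypothesis of being weakly polynomial‑like is genuinely used.

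For that I would first normalize the domain. By the Schoenflies theorem there is a homeomorphism $\Psi\colon\cc\to\cc$ with $\Psi(\bar\De)=\overline{\mathbb D}$; conjugating, $\tilde f:=\Psi\circ f\circ\Psi^{-1}\colon\overline{\mathbb D}\to\cc$ still satisfies $\tilde f(S^1)\cap\mathbb D=\emptyset$, still induces multiplication by $d$ on $H_2(\overline{\mathbb D},S^1)\to H_2(\cc,\cc\setminus\{0\})$, has fixed points in bijection with those of $f$ and with the same Lefschetz indices, so $W$ is unchanged. Now use the homotopy $H_t(z)=\tilde f(z)-tz$ on $S^1$, $t\in[0,1]$: if $H_t(z)=0$ then $\tilde f(z)=tz$, which for $t<1$ lies in $\mathbb D$, contradicting $\tilde f(S^1)\cap\mathbb D=\emptyset$, and for $t=1$ would be a boundary fixed point, excluded above; hence $H_t$ never vanishes and $\mathrm{wind}(H_1,0)=\mathrm{wind}(H_0,0)$. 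Here $H_1=\tilde f-\mathrm{id}$, so $\mathrm{wind}(H_1,0)=W$, while $H_0=\tilde f|_{S^1}$ and $0\notin\tilde f(S^1)$, so $\mathrm{wind}(\tilde f|_{S^1},0)$ is exactly the homological degree of $\tilde f\colon(\overline{\mathbb D},S^1)\to(\cc,\cc\setminus\{0\})$ transported along the connecting isomorphisms $H_2(\overline{\mathbb D},S^1)\xrightarrow{\ \sim\ }H_1(S^1)$ and $H_2(\cc,\cc\setminus\{0\})\xrightarrow{\ \sim\ }H_1(\cc\setminus\{0\})$ (both are isomorphisms since the ambient groups vanish), and by hypothesis this degree is $d$. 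Therefore $W=d$, which gives $\sum_i\iota(f,z_i)=d$.

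The main obstacle is precisely the last paragraph: converting the abstract homological condition defining "weakly polynomial‑like of degree $d$" into the concrete winding‑number statement $\mathrm{wind}(\tilde f|_{S^1},0)=d$, and then transporting that, through the non‑vanishing homotopy $H_t$, to the winding number of $\tilde f-\mathrm{id}$. Everything else — finiteness and interior placement of the fixed points, well‑definedness and conjugacy‑invariance of the local index, and additivity — is standard, and in the holomorphic setting relevant to us it collapses to counting zeros of $P^{\circ q}_\al(z)-z$ via the argument principle.
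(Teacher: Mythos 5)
The conceptual core of your argument—defining the Lefschetz index as a local winding number, gluing these together on $\partial\De$, and then transporting the homological degree to the winding number of $\tilde f|_{S^1}$ via the straight-line homotopy $H_t(z)=\tilde f(z)-tz$ justified by $\tilde f(S^1)\cap\mathbb D=\emptyset$—is the right argument and is essentially the classical proof. The well-definedness, conjugacy invariance, and additivity steps are all standard and are handled correctly.

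The genuine gap is your very first reduction: you assume you can always shrink $\bar\De$ slightly to push fixed points off $\partial\De$ while keeping $f$ weakly polynomial-like of the same degree, and you even claim this is "automatic in our applications." Neither half of that is justified. In general, the condition $f(\partial\De)\cap\De=\emptyset$ only controls $f$ on the boundary; it says nothing about where $f$ sends a small interior arc used to cut off a boundary fixed point $z_0$. If $z_0$ has an attracting direction pointing into $\De$, the shrunken region $\bar\De'$ need not satisfy $f(\partial\De')\cap\De'=\emptyset$, so the reduction can genuinely fail. More importantly, in the concrete application in this paper (the proof of Lemma~\ref{equ2020891}), the region $U$ is a component of the complement of the external rays landing at the repelling fixed point $0$, so $0$ lies on $\partial U$, and the lemma is invoked precisely with that boundary fixed point. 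Your homotopy $H_t$ then breaks at $t=1$, since $\tilde f-\mathrm{id}$ vanishes on $S^1$. A complete proof must either define the Lefschetz index of a boundary fixed point directly (e.g.\ as the winding number of $f-\mathrm{id}$ along a small arc inside $\De$ around $z_0$, which is well defined exactly because $f(\partial\De)\cap\De=\emptyset$ forces $f-\mathrm{id}$ to point "outward" along the two boundary arcs abutting $z_0$), and re-derive additivity including these boundary contributions, or else give an honest argument that such boundary fixed points can be removed by shrinking—the latter is false in general, and the former is what the Goldberg--Milnor treatment actually does.
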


\begin{remark}\label{equ-2020892} \cite{GoldbergMilnor1993}
If $f$ is a polynomial of degree $d$ and $\De$ is a disk with center at the origin and sufficiently large radius, then $f|\bar{\De}$ is polynomial-like of degree $d$, and the Lefschetz index is $+1$ at a simple fixed point and $\mu$ at a $\mu$-fold fixed point.
\end{remark}

\begin{lemma}\label{equ-2021-12-8-1}
Suppose the Julia set is connected (or the postcritical set is bounded), then for every rational number $\tfrac{p}{q}$, one has
\beqq
R_{p/q}\geq\frac{C}{q^3},
\eeqq
where $C$ is some positive constant dependent on $m$.
\end{lemma}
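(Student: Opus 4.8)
The plan is to adapt Ch\'{e}ritat's proof of the analogous inequality for quadratic polynomials, keeping track of the dependence of all constants on $q$ and on the degree $m+1$, and using the standing hypothesis --- connectedness of $J_\al$, i.e.\ boundedness of the post-critical set --- to make all estimates uniform for $\al$ in a fixed neighbourhood of $\tfrac pq$. Fix $\tfrac pq$ with $\gcd(p,q)=1$. Since $R_{p/q}=d\big(\tfrac pq,\mathcal P_q\setminus\{\tfrac pq\}\big)$, it suffices to show that $P_\al^{\circ q}$ has no multiple fixed point for $0<|\al-\tfrac pq|<C(m)/q^{3}$. The first step is a localisation near $z=0$: by Lemma \ref{equ20208101} (with $l_0=1$, as in its proof) the multiplicity-$(q+1)$ fixed point of $P_{p/q}^{\circ q}$ at $0$ splits, for $\al$ close to $\tfrac pq$, into the simple fixed point $0$ (whose multiplier $e^{2\pi i q\al}$ is $\neq1$ when $\al\neq\tfrac pq$) together with the $q$-periodic cycle furnished by the explosion functions of Propositions \ref{explosionfun-1}--\ref{explosionfun-2}. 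Viewing $P_\al^{\circ q}$ as a weakly polynomial-like map of degree $(m+1)^{q}$ on a large disc containing $K_{P_\al}$ (uniformly bounded in $\al$ because $K_{P_\al}$ is connected), Lemma \ref{equ-20208103} and Remark \ref{equ-2020892} give total Lefschetz index $(m+1)^{q}$ with local index $q+1$ at $z=0$ for all such $\al$; this pins down the relevant fixed points and shows that no fixed point of $P_\al^{\circ q}$ other than those on the period-$q$ cycle can acquire multiplier $1$ near $\tfrac pq$. Hence the problem reduces to controlling the multiplier $\rho(\al)$ of this cycle.

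Because $\rho$ is invariant under $\de\mapsto\zeta\de$ it is a single-valued holomorphic function of $\al-\tfrac pq$ on $B\big(\tfrac pq,R_{p/q}\big)$, extending holomorphically to $\tfrac pq$ with $\rho\big(\tfrac pq\big)=1$. Plugging the cycle points $z_k$, solutions of $2\pi i q\big(\al-\tfrac pq\big)+A\big(\tfrac pq\big)z_k^{q}+\cdots=0$, into $\big(P_\al^{\circ q}\big)'$ via the expansion \eqref{paraequ-5} yields
\[
\rho(\al)=1-2\pi iq^{2}\Big(\al-\tfrac pq\Big)+E(\al),
\]
so that $\tfrac pq$ is a simple zero of $\rho-1$ with $\big|\rho'\big(\tfrac pq\big)\big|$ of order $q^{2}$, and everything comes down to controlling the remainder $E$. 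Here the explosion function $\chi\colon B\big(0,R_{p/q}^{1/q}\big)\to\cc$ enters: every value $\chi(\de)$ lies on a cycle contained in $K_{P_\al}$, hence $\chi$ is bounded by a constant $M=M(m)$, so by the Schwarz lemma $\big|\chi'(0)\big|\le M\big/R_{p/q}^{1/q}$; combined with $\chi'(0)^{q}=-2\pi iq\big/A\big(\tfrac pq\big)$ this ties together $R_{p/q}$, $q$ and $\big|A\big(\tfrac pq\big)\big|$, while a direct estimate of $A\big(\tfrac pq\big)$ from the recursion for $P_{p/q}^{\circ q}$ in Lemma \ref{equ20208101}, using $\big|f''(0)\big|\asymp m$ and the distortion bounds of Theorem \ref{paraequ-87}, controls $\big|A\big(\tfrac pq\big)\big|$ from both sides. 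Reinjecting this in \eqref{paraequ-5} produces a bound for $E$ that is polynomial in $q$ (of the form $C(m)q^{a}|\al-\tfrac pq|^{1+1/q}$ for a fixed $a$), provided the implied constants in \eqref{paraequ-5} are uniform in $q$. This crude bound, with the simple-zero statement, already gives a lower bound for $R_{p/q}$, but only up to a factor that decays exponentially in $q$.

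To obtain the stated power $q^{-3}$ I would invoke the Pommerenke--Levin--Yoccoz inequality, Lemma \ref{inequ-57}. For real $\al$ near $\tfrac pq$ on which the period-$q$ cycle is repelling, its points are cyclically permuted by $P_\al$ with combinatorial rotation number $\tfrac pq$ (inherited from the cyclic permutation of the external rays landing at the parabolic point of $P_{p/q}$) and $K_{P_\al}$ is connected; so Lemma \ref{inequ-57} applied to $P_\al^{\circ q}$ at a cycle point confines the principal branch $\tau$ of $\log\rho(\al)$ to a horodisc tangent to the imaginary axis at $0$, which on that parameter set forces $\text{Re}\,\tau=\text{Re}\,E$ to be positive and of order at least $q^{3}|\al-\tfrac pq|^{2}/\log(m+1)$. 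Since a real parameter with $\rho=1$ would require $\text{Re}\,E=0$, comparing this lower bound with the polynomial upper bound on $|E|$ from the previous paragraph shows the cycle cannot become parabolic for real $\al$ in an interval around $\tfrac pq$ of radius $\ge C(m)/q^{3}$; running the same comparison along complex rays, and again using the Lefschetz count to dispose of the other fixed points of $P_\al^{\circ q}$, upgrades this to $R_{p/q}\ge C(m)/q^{3}$.

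The hard part is exactly this uniform-in-$q$ bookkeeping. A naive estimate of $E$ obtained by composing $q$ copies of $P_\al$ degrades exponentially in $q$, and passing to a polynomial bound requires exploiting the rigidity of Lemma \ref{inequ-57} together with the uniform modulus control on the connected sets $K_{P_\al}$ (and on $\chi$), rather than crude Cauchy estimates on a disc of radius $R_{p/q}$. One must also carry the dependence on the degree honestly --- through $\log(m+1)$ in Lemma \ref{inequ-57}, through $|f''(0)|\asymp m$ for $f=P_\al$, and through the uniform bound $M(m)$ on $K_{P_\al}$ --- so that the final $C$ is an explicit function of $m$; and one must make sure, in the Lefschetz step, that the localisation (that the only candidate parabolic cycle near $\tfrac pq$ is the one born at $\tfrac pq$) is itself uniform in $q$.
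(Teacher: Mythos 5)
Your proposal diverges from the paper's argument and, as written, has two genuine gaps. The first is the reduction step: you claim that a Lefschetz index count for the weakly polynomial-like restriction of $P_{\al}^{\circ q}$ ``pins down the relevant fixed points and shows that no fixed point of $P_\al^{\circ q}$ other than those on the period-$q$ cycle can acquire multiplier $1$ near $\tfrac{p}{q}$.'' Index counts cannot do this: the total index is $(m+1)^{q}$ for every $\al$, it counts solutions of $P_\al^{\circ q}(z)=z$ with multiplicity, and it is completely insensitive to multipliers of cycles away from $0$. A priori some other cycle of period dividing $q$, far from the origin, could acquire multiplier $1$ for $\al$ arbitrarily close to $\tfrac{p}{q}$, and that is exactly the event defining $R_{p/q}$, so the whole scheme of ``control the multiplier $\rho(\al)$ of the exploded cycle'' does not bound $R_{p/q}$ unless this is excluded. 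The paper excludes it dynamically rather than topologically: for $\al\in\mathcal{P}_q\setminus\{\tfrac{p}{q}\}$ the parabolic basin must absorb the unique free critical point $-\tfrac{1}{m+1}$, so the fixed point $0$ of $P_\al$ can be neither Siegel, nor Cremer, nor attracting, hence is parabolic (trivial case, distance $\geq 1/q$) or repelling; the estimate is then carried out entirely at the fixed point $0$, so one never needs to identify which cycle became parabolic. In the paper the Lefschetz count (Lemma \ref{equ-20208103}) is used only inside Lemma \ref{equ2020891}, to prove that the number $s$ of rays landing at $0$ satisfies $s<q$ --- a quite different purpose.

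The second gap is the quantitative core of your route. The Pommerenke--Levin--Yoccoz inequality (Lemma \ref{inequ-57}) requires a repelling fixed point of a polynomial with connected filled Julia set and landing rays; you invoke it for $P_\al^{\circ q}$ at the exploded cycle only on a real parameter set where the cycle happens to be repelling, and then assert an extension ``along complex rays'' where connectivity and landing are not available, while the needed polynomial-in-$q$ bound on the remainder $E(\al)$ in $\rho(\al)=1-2\pi i q^{2}(\al-\tfrac{p}{q})+E(\al)$ is exactly the point you concede is open (the naive composition bound is exponential in $q$). By contrast, the paper's proof gets the $q^{-3}$ bound with no multiplier expansion at all: PLY is applied to the repelling fixed point $0$ of $P_\al$ itself (degree $m+1$, whence the factor $\log(m+1)$), which places $\al$ in a horodisc of radius $\tfrac{\log(m+1)}{2\pi s}$ tangent to $\rr$ at the combinatorial rotation number $\tfrac{r}{s}$; since $s<q$ forces $\tfrac{r}{s}\neq\tfrac{p}{q}$, an elementary Pythagorean estimate on the distance from $\tfrac{p}{q}$ to that horodisc yields $R_{p/q}\gtrsim \tfrac{1}{q^{3}\log(m+1)}$. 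If you want to salvage your approach, you would need to import the paper's critical-point absorption argument to justify the reduction, at which point the fixed-point version of the Yoccoz inequality already finishes the proof and the multiplier analysis becomes unnecessary.
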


\begin{proof}
Consider the simple situation that $q=1$.  The fixed points of $P_{\al}(z)$ are the solutions to $e^{2\pi i\al}z(1+z)^m=z$. By direct computation, the fixed points are $z=0$, and $z=-1+e^{(-2\pi i\al+2k \pi i)/m}$ for $k=0,1,...,m-1$. The derivative of  $P_{\al}(z)$ is $e^{2\pi i\al}(1+z)^m+me^{2\pi i\al}z(1+z)^{m-1}$. So, the multipliers at the points $-1+e^{(-2\pi i\al+2k \pi i)/m}$, $k=0,1,...,m-1$, are
\begin{align}\label{equ20208102}
&e^{2\pi i\al}(e^{(-2\pi i\al+2k \pi i)/m})^m+me^{2\pi i\al}(-1+e^{(-2\pi i\al+2k \pi i)/m})(e^{(-2\pi i\al+2k \pi i)/m})^{m-1}\nonumber\\
=&1+m(-e^{2\pi i\al+(-2\pi i\al+2k \pi i)(m-1)/m}+e^{2\pi i\al+(-2\pi i\al+2k \pi i)/m+(-2\pi i\al+2k \pi i)(m-1)/m})\nonumber\\
=&1+m(-e^{2\pi i(\al-k)/m}+1)=m+1-m(e^{2\pi i(\al-k)/m}),\ k=0,1,...,m-1.
\end{align}
So, $P_{\al}(z)$ has $m+1$ distinct simple fixed points. Suppose that $\al=\al_0+i \al_1$ with $\al_0$, $\al_1\in\rr$. Plugging this into $m+1-m(e^{2\pi i(\al-k)/m})$, if some fixed point has multiplier $1$, then $e^{2\pi i(\al-k)/m}=1$, thus one has that $\al_1=0$ and $\al_0=lm+k$, where $k=0,1,...,m-1$, and $l\in\zz$.  In particular,
if $\al$ is an integer, then there is only one fixed point with multiplier $1$, and the norm of the multipliers at the other fixed points are bigger than $1$.

Next, consider the situation $q\geq2$. For any rational number $\tfrac{p}{q}$ with $p$ and $q$ coprime, take $\al\in\mathcal{P}_{q}\setminus\{p/q\}$, $P^{\circ q}_{\al}$ has a multiple fixed point $z_0$. In other words, $P_{\al}$ has a parabolic cycle $\langle z_0,z_1,...,z_{q_1-1}\rangle$ of period $q_1$ with $q_1|q$, that is, $q$ is a multiple of $q_1$. Each
immediate basin of attraction associated with the cycle of contains
a critical point \cite[Chapter 3, Theorem 2.3]{Carleson1993}. This. together with the arguments in the proof of Lemma \ref{equ20208101} (Step 2), yields that there is only one parabolic cycle. Further, the forward orbits of the two critical points are contained in the filled-in Julia set, implying that the Julia set is connected.

Now, we show that the fixed point $0$ is either parabolic or repelling.
 Let the union of all the forward iterates of the critical points be the postcritical set. By \cite[Theorem 11.17]{Milnor2006}, every Cremer fixed point or periodic point
for a rational map is contained in the closure of the postcritical set;
similarly, the boundary of any Siegel disk or cycle of Siegel disks
is contained in the postcritical set. Since the critical point is contained in the basin of the parabolic cycle, the fixed point $0$ can not be either Cremer or Siegel fixed point.
By \cite[Theorem 2.2]{Carleson1993}, the immediate basin of an attracting cycle contains at least one critical point, it follows from the arguments of this statement (\cite[Theorem 2.2]{Carleson1993}) that the critical point is $-\tfrac{1}{m+1}$ (This is also similar with the arguments in the proof of Lemma \ref{equ20208101} (Step 2)). Since this critical point is already contained in the parabolic basin, it is impossible that $0$ is an attracting fixed point.

If $0$ is parabolic, then $\al=\tfrac{p'}{q}$, where $p'$ might not be prime to $q$. So, the distance between $\al$ and $\tfrac{p}{q}$ is bounded from below by $\tfrac{1}{q}$.

If $0$ is repelling, then by \eqref{paraequ-5}, $\text{Im}\,\al<0$. Since the Julia set is connected, it follows from Douady's landing theorem or Goldberg and Milnor's fixed point portrait \cite{GoldbergMilnor1993} that there are finitely many external rays landing at the repelling fixed point $z=0$, the number of external rays is denoted by $s$. Those $s$ rays are ordered cyclically with respect to the increasing of their arguments counterclockwise, and are permuted by $P_{\al}$. Each ray is mapped to some other ray with the number of index counterclockwise plus $r$. Suppose $r/s=r'/s'$ where $t=\text{g.c.d}\,(r,s)$. It follows from Lemma  \ref{inequ-57} that $\al$ is contained in a closed disk with radius $\tfrac{\log (m+1)}{2\pi s}$ and tangent to the real axis at $r/s$.
\beqq
\frac{\text{Re}\,2\pi i\al}{|2\pi i\al-2\pi ir/s|^2}=\frac{\text{Re}\,\tau}{|\tau-2\pi ip/q|^2}\geq\frac{mq}{2\log d}=\frac{s}{2\log(m+1)}.
\eeqq
that is,
\beqq
\frac{\text{Re}\, i\al}{|\al- r/s|^2}\geq\frac{\pi s}{\log(m+1)}.
\eeqq
Suppose the radius is $R=\tfrac{\log (m+1)}{2\pi s}$ and $\al=x+iy$, the above inequality can be transformed into the following inequality,
\beqq
(x-r/s)^2+(y+R)^2\leq R^2,
\eeqq
which is a closed disk.

By Pythagoras theorem, one has
\beqq
\bigg| \alpha-\frac{p}{q}\bigg|\geq
\sqrt{\bigg(\frac{r}{s}-\frac{p}{q}\bigg)^2+\bigg(\frac{\log(m+1)}{2\pi s}\bigg)^2}-\frac{\log(m+1)}{2\pi s}.
\eeqq

Since $q>s$ by Lemma \ref{equ2020891}, one has
\beqq
\bigg| \alpha-\frac{p}{q}\bigg|\geq \frac{2\pi}{q^3\log(m+1)\bigg(\sqrt{\bigg(\frac{2\pi}{q\log(m+1)}\bigg)^2+1}+1\bigg)}.
\eeqq
Let $q$ approach $\infty$, the right hand side has the asymptotic expression $\tfrac{\pi}{q^3\log(m+1)}$. This completes the proof.

\end{proof}

\begin{lemma}\label{equ2020891}
For the $s$ and $q$ in the proof of the above lemma, $0$ is repelling,
\beqq
s<q.
\eeqq
\end{lemma}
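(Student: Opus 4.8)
The plan is to run the combinatorial (parameter-wake) argument of Buff--Ch\'eritat \cite{BuffCheritat2012} (see also \cite{BuffCheritatSmooth}) in the degree-$(m+1)$ family $P_\alpha(z)=e^{2\pi i\alpha}z(1+z)^m$. We place ourselves in the situation of the proof of Lemma \ref{equ-2021-12-8-1}: $\alpha\in\mathcal{P}_q\setminus\{p/q\}$ with $p$ and $q$ coprime, the fixed point $0$ is repelling (so $\text{Im}\,\alpha<0$ by \eqref{paraequ-5}), and $P_\alpha$ carries a parabolic cycle $\mathcal{O}$ of period $q_1\mid q$ whose immediate basin contains the free critical point $cp_P=-\tfrac1{m+1}$; the other critical point $-1$ is permanently captured by the repelling fixed point $0$ since $P_\alpha(-1)=0$, hence cannot lie in a parabolic or attracting basin, so $P_\alpha$ has at most one non-repelling cycle. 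As established there, the postcritical set is bounded, $K_\alpha$ is connected, and external rays are defined; the Goldberg--Milnor fixed-point portrait \cite{GoldbergMilnor1993} then applies at $0$: the rays landing at $0$ are finite in number, cyclically ordered, and permuted by $P_\alpha$ as a combinatorial rotation with combinatorial rotation number $r'/s'$ in lowest terms and cycle number $t$, so $s=t\,s'$ (this $r'/s'$ and $t$ are the reduced rotation number and the $\gcd$ denoted there by the relation $r/s=r'/s'$, $t=\gcd(r,s)$).

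First I would argue $t=1$: a cycle number $t\ge2$ at a fixed point would require, through the Goldberg--Milnor index count, more critical data attached to $0$ than the single free critical point $cp_P$ can provide (the only other critical value, namely $0$ itself coming from $-1$, sits at the fixed point and does not organize extra ray orbits), so the rays at $0$ form a single $P_\alpha$-orbit and $s=s'$. It then suffices to prove $s'<q$. For this I would use that the combinatorial rotation number of $0$ forces $\alpha$ into a parameter wake: the two characteristic rays at $0$ (those bounding the sector that contains the free critical value $e^{2\pi i\alpha}cv_P$) have angles periodic of exact period $s'$ under multiplication by $m+1$, the corresponding parameter rays land at the parameter $\rho_{r'/s'}$ where $0$ acquires multiplier $e^{2\pi i r'/s'}$ --- the root of the period-$s'$ satellite bifurcation off the main domain $\{\text{Im}\,\alpha>0\}$ --- and these parameter rays cut out a wake $W_{r'/s'}$ with $\alpha\in W_{r'/s'}$. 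Since $0$ is repelling, $\alpha\ne\rho_{r'/s'}$, so $\alpha$ lies strictly inside $W_{r'/s'}$ and is the root of a hyperbolic component of period $q_1$ whose closure is contained in $W_{r'/s'}$. By the nesting structure of wakes (the period of any hyperbolic component in a period-$n$ wake is $\ge n$, with equality only for the satellite component of that wake), $q_1\ge s'$, and equality would force $\alpha=\rho_{r'/s'}$, which is excluded; hence $s'<q_1$. Since $q_1\mid q$ we get $s=s'<q_1\le q$, which is the assertion.

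The main obstacle is the combinatorial input, that is, transplanting the quadratic wake/rotation-number dictionary to the one-parameter family $\alpha\mapsto P_\alpha$. Two points need genuine work: (i) constructing parameter rays and wakes for this family and proving that the combinatorial rotation number of the repelling fixed point $0$ is locally constant on wakes and equals the internal angle of the wake's root, together with the nesting statement ``period is non-decreasing along nested wakes, with equality only at satellites'' --- standard for quadratics, but to be redone here, where $P_\alpha$ has the extra fixed points $-1+e^{(2k\pi i-2\pi i\alpha)/m}$ with no quadratic analogue; and (ii) the cycle-number computation $t=1$ and the exclusion of other parabolic parameters of period dividing $q$ inside $W_{r'/s'}$, which is where the specific structure of $P_\alpha$ enters, through the capture $-1\mapsto0$ and the consequent bound on the number of non-repelling cycles (cf. \cite[Chapter III, Theorem 2.3]{Carleson1993} and \cite[Lemma 10.5]{Milnor2006}). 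I would carry out (i) first to identify $r'/s'$ with the wake angle, then (ii) to pin down $t=1$ and $q_1>s'$, and finally assemble $s=s'<q_1\le q$.
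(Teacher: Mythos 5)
Your proposal takes a genuinely different route from the paper, and as written it is not a completed proof: the two points you flag as needing ``genuine work'' --- constructing parameter rays, wakes, and the nesting statement for the one-parameter slice $\alpha \mapsto P_\alpha = e^{2\pi i\alpha} z(1+z)^m$, and pinning down the cycle number $t=1$ --- are precisely where the content of the lemma lives, and neither is supplied in your writeup nor available elsewhere in the paper, which never develops parameter-wake theory for this family. Transplanting the quadratic wake dictionary is not a routine variation here: the ambient polynomial parameter space has higher dimension, the family carries a persistent superattracting fixed point at $-1$ with a free cycle of fixed points $-1+e^{(2k\pi i-2\pi i\alpha)/m}$ that has no quadratic analogue, and you would need to establish the bijection between the combinatorial rotation number of the repelling fixed point $0$ and the roots of satellite components in the $\alpha$-slice before the nesting/satellite argument can even be stated. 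Your step deriving $t=1$ is also asserted rather than proved. So while the plan could conceivably be carried out, at the moment it outsources the hard part.

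The paper proves the lemma directly in the dynamical plane, without any parameter-space machinery, in two short steps. First, $q\geq s$: the $s$ rays landing at $0$ cut the plane into $s$ sectors, cyclically permuted by $P_\alpha$, and the orbit of the free critical point $cp_P=-\tfrac{1}{m+1}$ (which lies in the immediate basin of the parabolic cycle of period $q_1\mid q$) must pass through each sector before returning, forcing $q_1\geq s$ and hence $q\geq s$. Second, $q\neq s$: assuming $q=s$, one pulls back the $q$ rays at $0$ through $P_\alpha^{-1}$ --- obtaining $q$ rays at $0$ and $mq$ rays at $-1$ (using that $-1$ is a critical preimage of $0$ of multiplicity $m$) --- and the resulting component $U$ containing $cp_P$ gives a weakly polynomial-like restriction $P_\alpha^{\circ q}:\overline U\to\overline V$ whose Lefschetz fixed-point index count (via Lemma \ref{equ-20208103}, Remark \ref{equ-2020892}, and the explicit multiplier formula \eqref{equ20208102}) is forced to exceed the allowed total, a contradiction. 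Both steps use only objects already set up in the paper (Goldberg--Milnor's index count, Proposition \ref{paraequ-2}), which is why the paper dispatches the lemma in a few lines, whereas your route would require a new section of foundational parameter-space work.
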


\begin{proof}

By assumption, one has $q>1$. Consider the complement in $\cc$ of the $s$ external rays landing at $0$ together with this point $0$. There are $s$ connected components around the fixed point $0$. Choose the connected component containing the critical point $-\tfrac{1}{m+1}$, denoted by $V$. The orbit of the critical point must first visit each connected component of this complement, before coming back somewhere in $V$. Since the critical point belongs to the immediate basin of the parabolic cycle, this implies that the period is bigger than or equal to $s$, yielding that $q\geq s$.

Now, we show that $q>s$. By contradiction. suppose that $q=s$. The fixed point $0$ has two distinct preimages $0$ and $-1$ (with multiplicity $m$). Consider the $q$ external rays landing at the fixed point $0$, the pre-images of these external rays are $mq$ rays landing at $1$. Let $U$ be the component of the complement of the union of $(m+1)q$ rays, where $U$ contains the critical point $-\tfrac{1}{m+1}$. By Proposition \ref{paraequ-2}, $P^{\circ q}_{\al}(U)=V$ and $P^{\circ q}_{\al}: U\to V$ is a proper ramified covering of degree $2$. Let $f$ be the restriction $P^{\circ q}_{\al}: \ol{U}\to\ol{V}$. Recall the definition of $V$, $\ol{U}\subset\ol{V}$.
The point $z=0$ is fixed, and the point of the parabolic cycle whose immediate basin contains the critical point is a multiple fixed point of $f$. Thus, the sum of Lefschetz indices is $\geq m+2$.
By the calculation in \eqref{equ20208102} and Remark \ref{equ-2020892}, the Lefschetz index at any of these fixed point is $+1$. By Lemma \ref{equ-20208103}, the sum of Lefschetz indices is $m+1$. This is a contradiction.
\end{proof}

In the following discussions, the polynomials $P_{\al}$ and $P_{\al_n}$ have Siegel disks $\De$ and $\De_n$, respectively; let $r$ and $r_n$ be the corresponding conformal radius of $\De$ and $\De_n$ at $0$, respectively; the conformal isomorphisms are $\phi:D(0,r)\to\De$ and $\phi_n:D(0,r_n)\to\De_n$ which maps $0$ to $0$ with derivative $1$, respectively.

\subsection{The points on the boundary of the Siegel disk}\label{lebesgue-density-12-12-1}

In this subsection, we show Theorem \ref{equ2021-2-3-1}. As a consequence, the Hausdorff dimension of the Julia set of $P_{\al}(z)=e^{2\pi i\al}z(1+z)^m$ is strictly less than two, where $\al$ is an irrational number of bounded type. The results in this section are the natural extension of the work of McMullen in \cite{McMullen1998}.

\subsubsection{Siegel renormalization}

Let $S^1=\rr/\zz$ and a map $F:S^1\to S^1$ defined by $F(x)=x+\al$, for $x\in S^1$, $\{x\}$ denote the unique real number representing $x$ in $(-1/2,1/2]$, $\|x\|=|\{x\}|$ denote the distance from $x$ to $0$. The points $q_k\al$, $k\in\nn$, are successive closet returns of the origin to itself under the rotation by $\al$ by Proposition \ref{fractionprop}, that is,
$\al>\|q_1\al\|>\|q_2\al\|>\|q_3\al\|>\cdots$, and $q_n's$ can be defined as the smallest integers such that this inequalities hold. Further, $(-1)^n\{q_n\al\}>0$, where the fractional part is given by $\{q_n\al\}=q_n\al-p_n$.

For $x,x^{\prime}\in S^1$ with $x\neq\tfrac{1}{2}+x^{\prime}$, let $[x,x^{\prime}]$ be the shorter interval bounded by these two points, for convenience, $[x,x^{\prime}]=[x^{\prime},x]$.

\begin{definition}\cite{McMullen1998}
Let $I_n=[q_n\al,q_{n+1}\al]$ be the interval bounded by two successive closet returns to the origin.
Since $(-1)^n\{q_n\al\}>0$, $0\in I_n$. For $x\in I_n$, the first return map on $I_n$ is for any $x\in I_n$, the minimal positive integer $i$ such that $F^i(x)\in I_n$.

The renormalized dynamical system is defined by the two transformations:
\beqq
F^{q_k}:\ [0,q_{k+1}\al]\to[q_k\al,x_k],
\eeqq
\beqq
F^{q_{k+1}}:\ [q_{k}\al,0]\to[x_k,q_{k+1}\al],
\eeqq
where $x_k=(q_k+q_{k+1})\al$.
\end{definition}

\begin{definition}\cite{McMullen1998}
For $k\geq3$, let $r_k=q_{k-1}-q_k$, $L_k=[r_{k+1}\al,r_k\al]$, the extended renormalization $\mathcal{R}_k(F)$ is defined by the pair of maps:
\beqq
F^{q_k}:\ [0,r_k\al]\to[q_k\al,q_{k-1}\al]\subset L_k
\eeqq
\beqq
F^{q_{k+1}}:\ [r_{k+1}\al,0]\to[q_k\al,q_{k+1}\al]\subset L_k.
\eeqq
Since $I_k\subset L_k$, the second pair of maps is called the extension.
\end{definition}

\subsubsection{Hyperbolic geometry}

\begin{definition}(\cite[Chap. 2]{McMullen3ManfildRenor}, \cite{BenedettiPetronio1992})
The hyperbolic space $\mathbb{H}^n$ is a complete simply connected $n$-manifold of constant sectional curvature $-1$. It can be represented by several isometric models. For example, consider the symmetric bilinear form of signature $(n,1)$ in $\rr^{n+1}$:
\beqq
\langle x|y\rangle_{(n,1)}=\sum^n_{i=1}x_i\cdot y_i-x_{n+1}\cdot y_{n+1},
\eeqq
where $x=(x_1,...,x_{n+1})$, $y=(y_1,...,y_{n+1})\in\rr^{n+1}$. The upper fold of the hyperboloid naturally associated to $\langle\cdot|\cdot\rangle_{(n,1)}$:
\beqq
I_n=\{x\in\rr^{n+1}:\ \langle x|x\rangle_{(n,1)}=-1,\ x_{n+1}>0\}.
\eeqq
It is evident that $I_n$ is the pre-image of a regular value of a differentiable function, it is a differentiable oriented hypersurface in $\rr^{n+1}$. The tangent space at $x\in I_n$ is
\beqq
T_xI_n=\{y\in\rr^{n+1}:\ \langle x|y\rangle_{(n,1)}=0\}:=\{x\}^{\perp}.
\eeqq
Since $\langle x|x\rangle_{(n,1)}=-1$, $\langle \cdot|\cdot\rangle_{(n,1)}$ restricted to $\{x\}^{\perp}$ is positive-definite, which is a scalar product on $\{x\}^{\perp}$. So, this scalar product naturally induces a differentiable metric, which can be thought of as a Riemann metric. The set $I_n$ with this metric is a Riemann manifold, denoted by $\mathbb{I}_n$.

A Poincar\'{e} ball model is defined by the stereographic projection from $\mathbb{I}_n$ to $\rr^{n}\times\{0\}\approx\rr^{n}$, defined by
\beqq
\pi(x)=\frac{(x_1,...,x_n)}{1+x_{n+1}}.
\eeqq
It is evident that $\pi$ is a diffeomorphism of $\mathbb{I}_n$ onto the open Euclidean unit ball of $\rr^n$. The metric defined on this set is the given by the pull-back metric with respect to $\pi^{-1}$, that is,
\beqq
ds^2=\frac{4dx^2}{(1-r^2)^2},
\eeqq
where $x=(x_1,...,x_n)\in\rr^n$ and $r=\sqrt{x^2_1+\cdots+x^2_n}$.

The boundary of the Poincar\'{e} ball models the sphere at infinity $S^{n-1}_{\infty}$
for hyperbolic space, and the isometries of $\mathbb{H}^n$ prolong to
conformal maps on the boundary.
\end{definition}

In three-dimensional $\mathbb{H}^3$, there is a natural identification between the sphere at infinity
 $S^{2}_{\infty}$ and the Riemann sphere $\wh{\cc}$, this gives an isomorphism between the orientation
preserving group $\text{Isom}^{+}(\mathbb{H}^3)$ and the group of fractional linear
transformations $\text{Aut}(\wh{\cc})\cong PSL_2(\cc)$.

Let $M$ be a Riemannian manifold. If $\si$ is a piecewise differentiable path in $M$, denote by $L(\si)$ its length. Each loop in $M$ is homotopic to a piecewise differentiable loop based at the same point, so that each element in the fundamental group, denoted by $\Pi_1(M)$, can be thought of as a piecewise differentiable loop up to homotopy. Given any point $x\in M$, the fundamental group with the base point $x$ is denoted by $\Pi_1(M,x)$. For any $\ep>0$, set
\beqq
M_{(0,\ep]}:=\{x\in M:\ \exists\langle\si\rangle\in\Pi_1(M,x)\setminus\{1\}\ s.t.\ L(\si)\leq\ep\}
\eeqq
and
\beqq
M_{[\ep,\infty)}:=\{x\in M:\ \forall\langle\si\rangle\in\Pi_1(M,x)\setminus\{1\}\ s.t.\ L(\si)\geq\ep\}.
\eeqq
The subset $M_{(0,\ep]}$ is called the $\ep$-thin part of $M$, $M_{[\ep,\infty)}$ is said to be the $\ep$-thick part of $M$. If this constant $\ep$ is fixed, these two parts are called the thin and thick parts of $M$.

Let $T$ be a locally compact Hausdorff topological space; a group $\Ga$ of homeomomorphisms of $T$ is said to operate:
\begin{itemize}
\item freely if $\ga\in\Ga$, $x\in T$ and $\ga(x)=x$ implies $\ga=id$;
\item properly discontinuously if for any pair $H,K$ of compact subsets of $T$ the set $\Ga(K,H)=\{\ga\in\Ga:\ \ga(K)\cap H\neq\emptyset\}$ is finite.
\end{itemize}

Let $G$ be a fixed group,
\begin{itemize}
\item if $H$ and $K$ are subgroups of $G$, denote by $[H,K]$ the subgroup of $G$
generated by all the elements $[h,k]=hkh^{-1}k^{-1}$, for $h\in H$ and $k\in K$;
\item the $m$-th commutator $G^m$ of $G$ is recursively defined by
\beqq
\left\{
  \begin{array}{ll}
    G^1=[G,G], & \\
    G^{m+1}=[G,G^m], & m\geq1;
  \end{array}
\right.
\eeqq
\item $G$ is said to be nilpotent if for some integer $m$, $G^m=\{1\}$;
\item a subgroup $H$ of $G$ is said to have finite index if the quotient set $G/H$ is finite;
\item if $G$ admits a subgroup of finite index enjoying a property $\mathcal{P}$, $G$
is said to enjoy the property almost-$\mathcal{P}$.
\end{itemize}

\begin{theorem}(Margulis' Lemma) \cite[Theorem D.1.1]{BenedettiPetronio1992}
For any $n\in\nn$, there is $\ep_n\geq0$ such that for any properly discontinuous
subgroup $\Ga$ of $\text{Isom}(\mathbb{H}^n)$ and for any $x\in\mathbb{H}^n$, the group
$\Ga_{\ep_n}(x)$ generated by the set
\beqq
F_{\ep_n}(x)=\{\ga\in\Ga:\ d(x,\ga(x))\leq\ep_n\}
\eeqq
is almost-nilpotent.
\end{theorem}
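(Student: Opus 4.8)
The plan is to derive Margulis' Lemma by combining the classical identity-centred Zassenhaus lemma with a ``rotational'' almost-homomorphism that converts the compactness of the point stabiliser into a finite-index statement. Throughout, write $G=\text{Isom}(\mathbb{H}^n)$, a real Lie group with finitely many components, equipped with a left-invariant Riemannian metric; recall that the $G$-action on $\mathbb{H}^n$ is proper. Fix $x\in\mathbb{H}^n$ and set $K=\text{Stab}_G(x)$, a maximal compact subgroup isomorphic to $O(n)$.

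The first ingredient is the uniform Zassenhaus--Kazhdan--Margulis neighbourhood: there exist a neighbourhood $W$ of $e\in G$ and an integer $m=m(n)$ such that for every discrete subgroup $\Ld\le G$ the group $\langle\Ld\cap W\rangle$ has a nilpotent subgroup of index at most $m$. I would cite this from the literature, but its quantitative heart is the commutator estimate $\|[\exp X,\exp Y]-e\|\le C\|X\|\,\|Y\|$ valid for $X,Y$ small in $\mathfrak g=\mathrm{Lie}(G)$: once the radius of $W$ is chosen below $1/(2C)$, iterated commutators of elements of $\Ld\cap W$ tend to $e$; since $\Ld$ is discrete only finitely many of its elements lie near $e$, so the lower central series of $\langle\Ld\cap W\rangle$ must stabilise, and a separate bookkeeping of the possible components of $\Ld\cap W$ yields the index bound $m$.

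The second ingredient is geometric bookkeeping at $x$. For $\ep\ge0$, the set $V_\ep:=\{g\in G:\ d(gx,x)\le\ep\}$ equals $K\exp([-\ep,\ep]H_0)K$ in a $KAK$ decomposition, hence is a compact symmetric neighbourhood of $K$ with $\bigcap_{\ep>0}V_\ep=K$; by properness of the action and discreteness of $\Ga$, the set $F_\ep(x)=\Ga\cap V_\ep$ is finite. Because every $\ga\in F_\ep(x)$ is an isometry with $d(\ga x,x)\le\ep$, the triangle inequality gives $d(\ga x,x)\le L\ep$ for any $\ga\in\Ga_\ep(x)$ represented by a word of length $\le L$ in $F_\ep(x)$. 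To each $\ga$ with $d(\ga x,x)$ small one attaches its rotational part $\rho(\ga)\in O(T_x\mathbb{H}^n)\cong K$, obtained by composing $d\ga_x$ with parallel transport back along the short geodesic $[x,\ga x]$; $\rho$ is not a homomorphism, but $\rho(\ga\ga')=\rho(\ga)\rho(\ga')$ up to an error that is $O(\ep)$ when $\ga,\ga'$ move $x$ by $O(\ep)$, and an element of $G$ which moves $x$ by $O(\ep)$ and whose rotational part is within $O(\ep)$ of $\mathrm{id}$ lies in $W$ once $\ep$ is small.

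The \emph{main obstacle} is assembling these into a finite-index statement. I would cover $K$ by $N=N(n)$ balls of a small fixed radius $r$; for $\ep=\ep_n$ chosen small enough that $V_{\ep_n}$ is close to $K$, that bounded-length words move $x$ by less than the relevant thresholds, and that the almost-homomorphism errors are negligible compared with $r$, the generators in $F_{\ep_n}(x)$ split into $\le N$ classes according to which ball contains their rotational part, and the difference $\ga^{-1}\ga'$ of two generators in the same class moves $x$ by $\le 2\ep_n$ and has rotational part within $O(r+\ep_n)$ of $\mathrm{id}$, hence lies in $W$. One then wants the subgroup $\Ld$ of $\Ga_{\ep_n}(x)$ generated by all such same-class differences to be normal of index bounded in terms of $N$; granting this, $\Ld=\langle\Ld\cap W\rangle$ is almost-nilpotent by the first ingredient, and therefore so is $\Ga_{\ep_n}(x)$, with an index bound $\le m\cdot N!$ depending only on $n$. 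The delicate point is exactly this passage: since $\rho$ is only an almost-homomorphism there is no genuine action of $\Ga_{\ep_n}(x)$ on the set of $N$ balls, so producing the normal finite-index subgroup and checking that its generators really lie in a fixed Zassenhaus neighbourhood requires a careful iteration tracking how the rotational errors accumulate along words --- which is where one must invoke, and re-prove uniformly in $\Ga$ and $x$, the compact-group refinement of the Zassenhaus argument.
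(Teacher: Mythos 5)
You should first be aware that the paper contains no proof of this statement at all: Margulis' Lemma is quoted verbatim from \cite{BenedettiPetronio1992} and used as a black box in the appendix on hyperbolic geometry. So the only meaningful comparison is with the classical Zassenhaus--Kazhdan--Margulis argument of the cited source, and your sketch does follow that strategy (a Zassenhaus neighbourhood $W$ in $\text{Isom}(\mathbb{H}^n)$, displacement/rotation bookkeeping at the point $x$, and a covering of the stabiliser $K\cong O(n)$ by $N=N(n)$ small balls). The two ingredients you propose to cite are legitimate, and your observation that a same-class difference $\ga^{-1}\ga'$ of generators moves $x$ by at most $2\ep_n$ and has rotational part close to the identity, hence lies in $W$ for suitable $r,\ep_n$, is correct.

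The genuine gap is exactly the step you yourself defer with ``granting this'': that the subgroup $\Ld$ generated by the same-class differences is normal in $\Ga_{\ep_n}(x)$ of index bounded in terms of $N$. This is the real content of the lemma beyond the Zassenhaus input, and the mechanism you gesture at cannot deliver it as stated. Since $\rho$ is only an almost-homomorphism, the $N$ rotation classes of the generators are not permuted by multiplication, so there is no induced homomorphism $\Ga_{\ep_n}(x)\to S_N$; neither the normality of $\Ld$ nor the bound $m\cdot N!$ has any justification. Moreover, a general element of $\Ga_{\ep_n}(x)$ is a long word in $F_{\ep_n}(x)$ whose displacement of $x$ grows linearly in the word length, so it escapes the compact set $V_{\ep_n}$ on which your covering lives, and the pigeonhole applied to the generators alone says nothing about which coset of $\Ld$ such an element occupies. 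The classical proofs close this by working with a larger subgroup $H$, generated by \emph{all} elements of $\Ga$ whose displacement of $x$ and rotational part are below fixed thresholds (not merely differences of generators), checking that conjugation by a generator changes displacement by at most $2\ep_n$ and distorts the rotational part controllably, and then running a word-shortening/absorption argument (for instance via coset representatives of minimal word length) which forces every coset of $H\cap\Ga_{\ep_n}(x)$ to contain a representative in a fixed compact region; only then does the covering bound the index by $N$, and almost-nilpotency of $H$ follows from the Zassenhaus ingredient. Without this absorption step, or a substitute for it proved uniformly in $\Ga$ and $x$, your argument does not close.
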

Geometrically, the Margulis' Lemma tells us that the thick part of $M$ is not very complicated.

A Kleinian group $\Ga$ is a discrete subgroup of $\text{Isom}(\mathbb{H}^n)$. A Kleinian
group is elementary if it contains an Abelian subgroup of finite index.

A hyperbolic $n$-manifold $M$ is a complete Riemannian manifold of constant curvature
$-1$. Any such manifold can be presented as a quotient $M=\mathbb{H}^n/\Ga$ of hyperbolic
space by a Kleinian group $\Ga$.

For $E\subset S^{n-1}_{\infty}$, the convex hull of $E$, denoted by $\text{hull}(E)$,
is the smallest convex subset of $\mathbb{H}^n$ containing all geodesics
with both endpoints in $E$. The convex core $K$ of a hyperbolic manifold
$M=\mathbb{H}^n/\Ga$ is given by $K=\text{hull}(E)/\Ga$.

\begin{definition}\cite{McMullen3ManfildRenor}
Let $\Ld\subset S^2_{\infty}$ be a compact set with convex hull $K\subset\mathbb{H}^3$. A point $x\in\Ld\subset S^2_{\infty}$ is a deep point of $\Ld$
if there is a geodesic ray $\ga:[0,\infty)\to K$, converging to $x$ and parameterized by hyperbolic arclength, such that
\beqq
d(\ga(s),\partial K)\geq \de s>0\ \forall s>0,
\eeqq
where $\de$ is a positive constant. Otherwise, $x$ is called a shallow point.

Let $\Ld\subset\cc$ be a compact set, for all $z\in\Ld$, $z$ is a deep point of $\Ld$ if and only if there exists $\de>0$ such that for all $0<r<1$, one has
\beqq
B(y,s)\subset B(z,r)-\Ld\to s=O(r^{1+\de});
\eeqq
we say that $z$ is a measurable deep point of $\Ld$, if there exists $\de>0$ such that for all $r>0$, one has
\beqq
\text{area}(B(z,r)-\Ld)=O(r^{2+\de}).
\eeqq
\end{definition}

For more information on hyperbolic geometry, please refer to \cite{BenedettiPetronio1992}.

\begin{definition}\cite{Hatcher2002}
A covering space or cover of a space $X$ is a space $\wt{X}$ together with a map $p:\wt{X}\to X$ satisfying the following condition:
every point $x\in X$ has an open neighborhood $U_x\subset X$ such that $p^{-1}(U_x)$ is a disjoint union of open subsets, each of which
is mapped by $p$ homeomorphically onto $U_x$.
\end{definition}

\subsubsection{Quasi-conformal surgery}

By Proposition \ref{paraequ-2}, the polynomial $P_{\al}$ has two finite critical points $-1$ and $-\tfrac{1}{m+1}$ with the corresponding critical values $0$ and $-\tfrac{m^m}{(m+1)^{m+1}}e^{2\pi i\al}$.
By \cite{Zhang2011}, if $\al$ is irrational of bounded type, then the  Siegel disk at $0$ is a quasi-disk, and the critical point $-\tfrac{1}{m+1}$ belongs to the boundary of the Siegel disk. The main tool is the combination of the Blaschke model and the quasi-conformal surgery. We sketch the main arguments in \cite{Shen2006} for the completeness of the discussions. Based on these results, we show that the boundary of Siegel disk is Lebesgue density point of $K(\de)$ by applying the method used in \cite{McMullen1998}.

By direct verification, the polynomial map $P_{\al}(z)=e^{2\pi i\al}z(1+z)^m$ is topologically conjugate with the map $\wt{f}(z)=e^{2\pi i\al}z(1+z/m)^m$ by the conjugacy map $\psi(z)=z/m$, that is, the following commutative diagram holds:
\begin{center}
\begin{tikzpicture}
    \node (E) at (0,0) {$\mathbb{C}$ };
    \node[right=of E] (F) at (4,0){$\mathbb{C} $};
    \node[below=of F] (A) {$\mathbb{C}$};
    \node[below=of E] (Asubt) {$\mathbb{C}$};
   \draw[->] (E)--(F) node [midway,above] {$\wt{f}(z)=e^{2\pi i\al}z(1+z/m)^m$};
    \draw[->] (F)--(A) node [midway,right] {$z/m$}
                node [midway,left] {};
    \draw[->] (Asubt)--(A) node [midway,below] {$P_{\al}(z)=e^{2\pi i\al}z(1+z)^m$}
                node [midway,above] {};
    \draw[->] (E)--(Asubt) node [midway,left] {$z/m$};
\end{tikzpicture}.
\end{center}
By direct computation, $\wt{f}(z)$ has two critical points $c_0=-\frac{m}{m+1}$ and $c_1=-m$, and a fixed point $0$. In this section, we will study the dynamics of $g(z)$.

\begin{definition}
Let $K$ be a subset of $\cc$ and $\be$ be a positive number.
\begin{itemize}
\item the $\be$-dimensional Hausdorff content is
\beqq
\mathcal{H}^{\be}_{\infty}(K)=\inf\bigg\{\sum_i \mbox{diam}(U_i)^{\be}:\ K\subset\bigcup_{i} U_i\bigg\},
\eeqq
where $(U_i)$ is any countable cover of $K$.
\item The Hausdorff dimension of $K$ is
\beqq
\mbox{dim}(K)=\inf\{\be:\ \mathcal{H}^{\be}_{\infty}(K)=0\}.
\eeqq
\end{itemize}
\end{definition}

\begin{theorem}\cite[Theorem 1.1]{Shen2006}
For any $\al\in\rr\setminus\qq$ of bounded type, the Hausdorff dimension of the Julia set of $\wt{f}(z)$ is strictly less than $2$.
\end{theorem}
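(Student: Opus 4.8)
The plan is to adapt McMullen's analysis of Siegel disks \cite{McMullen1998} to the degree-$m$ setting, following \cite{Shen2006}, with the Siegel renormalization and the hyperbolic-geometric notion of deep point introduced above as the main tools. Since the affine map $\psi(z)=z/m$ conjugates $P_{\al}$ to $\wt{f}$, it suffices to bound $\dim J(\wt{f})$. First I would record, from \cite{Zhang2011} and \cite{Shen2006}, that for $\al$ of bounded type the Siegel disk $\De$ of $\wt{f}$ is a quasidisk, that the critical point $c_0=-\tfrac{m}{m+1}$ lies on $\partial\De$, and that $\mathcal{PC}(\wt{f})=\partial\De$.

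Next, the key step is to produce a self-similar structure near $c_0$. Using the Blaschke model together with the quasiconformal surgery of \cite{Shen2006}: there is a Blaschke product whose restriction to the unit circle is conjugate to the rigid rotation $F(x)=x+\al$ by a map which, since $\al$ is of bounded type, is quasisymmetric (Herman's theorem); gluing this model on the Siegel disk to $\wt{f}$ on its complement realizes the first-return dynamics to small neighborhoods of $c_0$. Combined with the extended renormalizations $\mathcal{R}_k(F)$ and a priori bounds for them, one obtains a nested sequence of topological disks $U_k\downarrow\{c_0\}$ of uniformly bounded shape, on which a renormalized iterate of $\wt{f}$ acts, up to uniformly bounded quasiconformal distortion, like a fixed model map.

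Then I would transfer geometry. Because $\partial\De$ is a quasicircle through $c_0$, the Siegel disk occupies a definite proportion of every small ball centered near $c_0$, so the complement of $K(\de)$ contains a round ball of definite relative size at every scale after bounded distortion; by the self-similarity from the previous step this persists as the scale tends to $0$, so $c_0$ is a measurable deep point of $K(\de)$ (hence of $K_{\wt{f}}$). Since $\wt{f}$ maps $\partial\De=\mathcal{PC}(\wt{f})$ onto itself densely and non-singularly, the deep-point property spreads from $c_0$ to all of $\partial\De$ and then, by bounded distortion, to a full neighborhood of $\partial\De$; this is Theorem \ref{equ2021-2-3-1}. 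Finally, combining the resulting porosity of $J(\wt{f})$ along $\partial\De$ with the uniform expansion of $\wt{f}$ on the part of the Julia set bounded away from $\partial\De$ — where $J(\wt{f})$ behaves like a hyperbolic set and is porous for the usual reason — one gets a scale-invariant porosity of $J(\wt{f})$, which by McMullen's criterion forces $\dim J(\wt{f})<2$.

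The main obstacle is the a priori precompactness of the Blaschke-model (Siegel) renormalization operator at bounded type rotation numbers in degree $m$: this is the analytic heart of the argument, generalizing \cite{McMullen1998}, and it is what makes the self-similarity near $c_0$ — hence the measurable deep-point property at $c_0$ — rigorous. Given that input, the surgery, the spreading of the deep point along $\partial\De$, and the porosity-to-dimension implication are, by now, routine.
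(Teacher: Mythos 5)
Your overall plan (quasiconformal surgery on the Blaschke model, McMullen-type geometry, porosity, dimension) is in the right family: the paper simply quotes this statement from Shen, and the machinery it reproduces in the appendix (the surgery around \eqref{equ2021-2-3-2} and Lemmas 3.4--3.7) is exactly what Shen's proof runs on. But there is a genuine confusion at your central step. You write that ``the complement of $K(\de)$ contains a round ball of definite relative size at every scale \dots so $c_0$ is a measurable deep point of $K(\de)$.'' This is backwards: a (measurable) deep point of a compact set is one at which the complement has \emph{exponentially small} relative size (resp.\ area); if the complement of $K(\de)$ contained definite-proportion balls at all scales near $c_0$, then $c_0$ would be a \emph{shallow} point. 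More importantly, the dimension bound does not rest on deep points at all. What is needed is that $J(\wt f)$ is porous (uniformly shallow), and the definite-size holes in the complement of $J(\wt f)$ are supplied by the Siegel disk itself, which lies in the Fatou set; they are transported to every $z\in J(\wt f)$ and every scale by univalent inverse branches with bounded distortion. Deep points of $K_{\de}$ are what one needs for the Lebesgue-density Theorem \ref{equ2021-2-3-1} used later in the positive-area argument; you have mixed the two notions. Your porosity mechanism is also not viable as stated: the dichotomy ``porosity along $\partial\De$'' plus ``uniform expansion on the part of $J(\wt f)$ bounded away from $\partial\De$, which behaves like a hyperbolic set'' fails because $J(\wt f)$ minus a neighborhood of $\partial\De$ is not forward invariant --- orbits starting there accumulate on $\partial\De=\mathcal{PC}(\wt f)$ --- so it is not a hyperbolic set and ``porous for the usual reason'' has no content there.

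The second gap is your identification of the ``analytic heart.'' Precompactness of a Siegel/Blaschke renormalization operator underlies McMullen's self-similarity of $\partial\De$ about the critical point, but it is not needed for $\dim J(\wt f)<2$, and Shen does not invoke it for this theorem. What the proof actually uses is: (i) the surgery on \eqref{equ2021-2-3-2} together with the Herman--\'Swi\c{a}tek quasisymmetric linearization of bounded-type critical circle maps, giving that $\partial\De$ is a quasicircle through $c_0$ and that $P(\wt f)=\partial D_0\cup\{0,\infty\}$; (ii) the Schwarz-lemma expansion $\|\wt f'\|\ge 1$ of the hyperbolic metric on $\wh{\cc}\setminus P(\wt f)$, which produces, for every $z\in J(\wt f)$ and every scale $r$, a univalent pullback of a fixed neighborhood of $c_0$, of diameter $\asymp r$ and within distance $O(r)$ of $z$, with bounded distortion (Lemmas 3.5--3.7 as quoted in the paper); and (iii) the elementary fact that porous subsets of $\cc$ have Hausdorff dimension bounded away from $2$. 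Since $c_0\in\partial\De$ and $\De$ is a quasidisk, a fixed neighborhood of $c_0$ contains a definite disk inside $\De$; pulling it back by the branches in (ii) and applying Koebe distortion yields the porosity of $J(\wt f)$ directly, with the bounded-type hypothesis entering only through (i) and the bounded geometry of closest returns in the pullback lemmas. To repair your outline, replace the deep-point step by this porosity statement, drop the hyperbolic-set dichotomy, and demote the renormalization input to what it is actually needed for, namely the deep-point/Lebesgue-density refinements, not the dimension estimate.
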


\begin{lemma}\cite[Lemma 2.1]{Shen2006}
Let $\mathbb{T}$ be the unit circle. For any $\al>0$, there is $t(\al)\in[0,1)$ such that the Blaschke product
\beq\label{equ2021-2-3-2}
F(z)=e^{2\pi i t(\al)}z^{m+1}\big(\tfrac{z-(2m+1)}{1-(2m+1)z}\big)^m
\eeq
has the following properties:
\begin{itemize}
\item[(1)] $F(z)$ has a unique critical point at $z=1$ in $\mathbb{T}$ with $F^{\prime}(1)=F^{\prime\prime}(1)=0$ and $F^{\prime\prime\prime}(1)\neq0$;
\item[(2)] the restriction of $F$ to the unit circle $\mathbb{T}$ is a real-analytic homeomorphism with the rotation number $\al$.
\end{itemize}
\end{lemma}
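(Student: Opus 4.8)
The plan is to carry out the classical Douady–Ghys–Herman–Świątek Blaschke-model construction, with the coefficient $2m+1$ chosen so that the critical point sits at $z=1$ with exactly the right degeneracy. Write $a=2m+1$, $B(z)=z^{m+1}\bigl(\tfrac{z-a}{1-az}\bigr)^{m}$, so $F=e^{2\pi i t}B$. First I would record the elementary facts: $B$ is a rational map of degree $2m+1$ whose only pole in $\overline{\mathbb D}$ is the simple pole at $z=1/a\in\mathbb D$, and $|B(z)|=1$ on $\mathbb T$, since $|z-a|=|1-az|$ when $|z|=1$ and $a\in\mathbb R$. Hence $B$, and therefore $F$ for every $t$, restricts to a real-analytic map $\mathbb T\to\mathbb T$. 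A winding-number count ($z^{m+1}$ contributes $m+1$; the factor $\bigl(\tfrac{z-a}{1-az}\bigr)^{m}$ contributes $-m$, because $a$ lies outside $\overline{\mathbb D}$ while $1/a$ lies inside) shows this circle map has degree $1$. (For $m=1$ this is Douady's model $e^{2\pi i t}z^{2}\tfrac{z-3}{1-3z}$.)

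The heart of the argument — and the step that pins down $a=2m+1$ — is the computation of the angular derivative. Writing $F(e^{i\theta})=e^{i\psi(\theta)}$ one has $\psi'(\theta)=zF'(z)/F(z)$ at $z=e^{i\theta}$, and from $\log B=(m+1)\log z+m\log(z-a)-m\log(1-az)$ one gets $zF'/F=(m+1)+\tfrac{mz}{z-a}+\tfrac{maz}{1-az}$. Using $z\bar z=1$ and $|1-az|=|z-a|$ this collapses on $\mathbb T$ to
\[
\psi'(\theta)=(m+1)-\frac{m(a^{2}-1)}{|e^{i\theta}-a|^{2}}=(m+1)-\frac{m(a^{2}-1)}{1-2a\cos\theta+a^{2}}.
\]
Since the denominator is strictly decreasing in $\cos\theta$ on $[0,\pi]$, $\psi'$ is minimized at $\theta=0$, i.e. at $z=1$, where for $a=2m+1$ one finds $\psi'(0)=(m+1)-\tfrac{m(a+1)}{a-1}=(m+1)-(m+1)=0$, while $\psi'(\theta)>0$ for every $\theta\neq0$ (e.g. $\psi'(\pi)=\tfrac{2m+1}{m+1}>0$). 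Thus $F|_{\mathbb T}$ is a real-analytic, strictly monotone, degree-one circle map whose unique critical point is $z=1$; a Taylor expansion of the displayed formula near $\theta=0$ (equivalently, direct evaluation of $F'/F$ and its first two $z$-derivatives at $z=1$) yields $F'(1)=F''(1)=0$ and $F'''(1)=-\tfrac{(m+1)(2m+1)}{2m^{2}}F(1)\neq0$, so that on $\mathbb T$ the map is, in a suitable analytic coordinate, a cubic $x\mapsto x^{3}$ at the critical point. Since multiplying by $e^{2\pi i t}$ changes neither $\psi'$ nor the order of vanishing, property (1) holds for every $t\in[0,1)$, and $F_t:=F|_{\mathbb T}$ is a real-analytic circle homeomorphism for each $t$.

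It then remains to choose $t$. In the angular coordinate $F_t=F_0+t$, so $t\mapsto\rho(F_t)$ is continuous, nondecreasing, and satisfies $\rho(F_{t+1})=\rho(F_t)+1$; hence it is a continuous degree-one "devil's staircase" and in particular a surjection of $\mathbb R/\mathbb Z$ onto $\mathbb R/\mathbb Z$. Therefore, for the given $\alpha$ there exists $t(\alpha)\in[0,1)$ with $\rho\bigl(F_{t(\alpha)}|_{\mathbb T}\bigr)=\alpha$ (rotation numbers read modulo $1$), and $t(\alpha)$ is unique when $\alpha$ is irrational, since the rotation-number function is strictly increasing across irrational values and has plateaus only at rationals. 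This $F_{t(\alpha)}$ has both required properties. The only genuinely delicate point is the algebraic identity $\tfrac{m(a+1)}{a-1}=m+1$ (and its two derivative analogues) which makes $z=1$ a degenerate critical point of order exactly $3$ precisely for $a=2m+1$; the modulus/winding bookkeeping and the monotonicity–continuity–surjectivity of rotation number in the one-parameter family $F_t$ are entirely standard.
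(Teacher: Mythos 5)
The paper does not prove this lemma at all; it simply cites it from Shen (\cite[Lemma 2.1]{Shen2006}), so there is no in-paper argument to compare against. Your proof itself is correct and follows the standard Douady--Ghys--Herman construction: the modulus and winding-number bookkeeping that makes $F|_{\mathbb T}$ a degree-one real-analytic circle map, the angular-derivative identity
\begin{equation*}
\psi'(\theta)=(m+1)-\frac{m(a^{2}-1)}{1-2a\cos\theta+a^{2}},\qquad a=2m+1,
\end{equation*}
which vanishes exactly to second order at $\theta=0$ (so $F'(1)=F''(1)=0$, $F'''(1)\neq0$) and is strictly positive elsewhere, and the monotone--continuous one-parameter family $t\mapsto e^{2\pi i t}B$ covering a full turn so that the rotation number realizes every $\alpha\in\mathbb R/\mathbb Z$. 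I checked the key identities: on $\mathbb T$, $\frac{z}{z-a}+\frac{az}{1-az}=\frac{1-a^{2}}{|z-a|^{2}}$, $\psi'(0)=(m+1)-\frac{m(a+1)}{a-1}=0$ precisely for $a=2m+1$, and the relation $F'''(1)=-\psi'''(0)F(1)=-\frac{(m+1)(2m+1)}{2m^{2}}F(1)$ follows from differentiating $zF'(z)=\psi'(\theta)F(z)$ twice at $z=1$ using $F'(1)=F''(1)=\psi'(0)=\psi''(0)=0$. The only point you might make more explicit (though it is implicit in your degree count) is that the off-circle critical points of $F$ are exactly $0,\,a,\,1/a,\,\infty$ with total multiplicity $4m-2$, so that $z=1$ with multiplicity $2$ indeed accounts for all $4m$ critical points of this degree-$(2m+1)$ Blaschke product and no others can lie on $\mathbb T$; your argument via $\psi'>0$ for $\theta\neq0$ already establishes this on $\mathbb T$, so it is merely a cross-check. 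Everything stands.
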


\begin{lemma}\cite[Beurling-Ahlfors]{Ahlfors1966}\label{equ2021-2-16-1}
An orientation-preserving circle homeomorphism $h$ has a quasi-conformal extension to the disk if and only if
it is quasi-symmetric in the sense that its lift $\wt{h}$ to $\rr$ satisfies that
\beqq
\sup_{x\in\rr}\sup_{t>0}\frac{\wt{h}(x+t)-\wt{h}(x)}{\wt{h}(x)-\wt{h}(x-t)}<+\infty.
\eeqq
\end{lemma}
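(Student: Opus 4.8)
The plan is to prove both implications by transporting the problem to the upper half-plane $\hh$, where the Beurling--Ahlfors extension is available, and then transferring the result back to $\cd$. First I would fix a M\"obius transformation $M$ carrying $\cd$ onto $\hh$ and $\mathbb{T}=\partial\cd$ onto $\rr\cup\{\infty\}$; replacing $h$ by the conjugate $M\circ h\circ M^{-1}$ and lifting, one obtains a strictly increasing homeomorphism $\wt h\colon\rr\to\rr$ with $\wt h(x+1)=\wt h(x)+1$. Since a M\"obius transformation changes the cross-ratio-type increment ratios by only a bounded factor and the $+1$-periodicity controls the behavior near $\infty$, quasi-symmetry of $h$ on $\mathbb{T}$ is equivalent to the displayed condition on $\wt h$; moreover a quasi-conformal extension of $h$ to $\cd$ corresponds, via $M$, to a quasi-conformal extension of $\wt h$ to $\hh$ that fixes $\infty$ and commutes with $z\mapsto z+1$, and conversely.

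For necessity, suppose $F\colon\hh\to\hh$ is $K$-quasi-conformal with boundary values $\wt h$ and $F(\infty)=\infty$. I would invoke the classical three-point distortion estimate for quasi-conformal maps: there is a function $\lambda(K)$ depending only on $K$ such that for every configuration $x-t<x<x+t$ of boundary points the associated Teichm\"uller/Gr\"otzsch ring modulus is distorted by at most $K$, which yields
\beqq
\frac{1}{\lambda(K)}\le \frac{\wt h(x+t)-\wt h(x)}{\wt h(x)-\wt h(x-t)}\le \lambda(K)\qquad(x\in\rr,\ t>0).
\eeqq
This is precisely the quasi-symmetry condition, and the estimate itself follows from the monotonicity and continuity of the modulus of a Teichm\"uller ring together with the quasi-invariance of modulus under $K$-quasi-conformal maps.

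For sufficiency, assume $\wt h$ is $M$-quasi-symmetric and define its Beurling--Ahlfors extension: for $z=x+iy$ with $y>0$ set
\beqq
u(x,y)=\tfrac12\int_0^1\bigl(\wt h(x+ty)+\wt h(x-ty)\bigr)\,dt,\qquad v(x,y)=\tfrac12\int_0^1\bigl(\wt h(x+ty)-\wt h(x-ty)\bigr)\,dt,
\eeqq
and let $F=u+iv$. Then I would check, in this order: (i) $F$ is continuous on $\ol{\hh}$, $v>0$ on $\hh$ because $\wt h$ is strictly increasing, and $F$ tends to $\wt h$ on $\rr$; (ii) $F$ commutes with $z\mapsto z+1$, hence descends to the quotient and in particular extends to fix $\infty$; (iii) since $\wt h$ is continuous, the partials $u_x,u_y,v_x,v_y$ exist and are continuous, and a direct computation expresses the Jacobian $J_F=u_xv_y-u_yv_x$ and the complex dilatation $\mu_F=F_{\bar z}/F_z$ in terms of the corner increments $\wt h(x+y)-\wt h(x)$ and $\wt h(x)-\wt h(x-y)$ together with averaged quantities; (iv) applying $M$-quasi-symmetry to bound all ratios of these increments (and the doubling property of $\wt h$ it implies), one obtains $J_F>0$ and $\|\mu_F\|_\infty\le k<1$ with $k=k(M)$, so $F$ is $K(M)$-quasi-conformal. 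Finally I would transfer back: descending and conjugating by $M$ produces a quasi-conformal extension of the original $h$ to $\cd$, since pre- and post-composition with conformal maps preserves quasi-conformality and leaves the boundary values unchanged.

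The main obstacle is step (iv) of the sufficiency direction: the quantitative estimate showing that bounded quasi-symmetry ratios force the complex dilatation of the Beurling--Ahlfors extension to stay uniformly away from $1$. This is the technical core of the theorem; it requires splitting the integrals defining $u$, $v$ and their derivatives, comparing each piece to the corner increments $\wt h(x\pm y)-\wt h(x)$, and repeatedly invoking the quasi-symmetry inequality. Since this computation is entirely classical, in the final write-up I would either reproduce it following Ahlfors's lectures or simply cite \cite{Ahlfors1966} for the quantitative bound, keeping the structural steps (i)--(iii) and the reduction to and from $\hh$ explicit.
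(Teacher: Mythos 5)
The paper offers no proof of this lemma at all: it is quoted verbatim as the classical Beurling--Ahlfors theorem and simply cited to \cite{Ahlfors1966}, so there is no ``paper proof'' to compare against. Your outline is the standard classical argument from that reference -- reduce to the upper half-plane, get necessity from quasi-invariance of ring moduli under $K$-quasi-conformal maps, and get sufficiency from the explicit Beurling--Ahlfors averaging extension $F=u+iv$ -- and it is structurally sound, including the observation that the extension of a lift satisfying $\wt h(x+1)=\wt h(x)+1$ commutes with $z\mapsto z+1$ and therefore descends to the disk. Deferring the quantitative dilatation bound $\|\mu_F\|_\infty\le k(M)<1$ to Ahlfors's lectures is reasonable, since that computation is the entire technical content of the theorem and the paper itself treats the result as a black box.

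Two small points to tidy in a final write-up. First, you conflate the M\"obius transfer $M\circ h\circ M^{-1}$ (a homeomorphism of $\rr\cup\{\infty\}$, which is Beurling--Ahlfors quasi-symmetric only after normalizing $h$ to fix the boundary point $M^{-1}(\infty)$) with the periodic lift $\wt h$ through $x\mapsto e^{2\pi i x}$ that the lemma actually uses; either route works, but you should pick one and state the equivalence of the two quasi-symmetry conditions explicitly rather than treating them as the same object. Second, the displayed condition only bounds the ratio from above; for increasing homeomorphisms this does yield the two-sided bound $1/M\le\cdot\le M$ needed in your step (iv), but it deserves a one-line argument (apply the hypothesis at the reflected triple) rather than silent use.
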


\begin{remark}
The circle can be modeled either by $\mathbb{T}=\rr/\zz$ or $S^1=\partial\cd$,
the coordinates are different on these two models, where the rotation on $\mathbb{T}$ is given by $R_{\al}(x)=x+\al\ \text{mod}\ 1$, and the rotation on $S^1$ is represented by $R_{\al}(z)=e^{2\pi i\al}z$. For convenience of the expressions,
we adopt these two kinds of expressions without mentioning of this again.
\end{remark}

\begin{lemma} \cite{Herman1987,Swiatek1998,Yoccoz1994,Peterson1996,Peterson2004} \label{equ2021-2-16-2}
Let $\mathbb{T}=\partial\cd$. Suppose $\phi:\mathbb{T}\to\mathbb{T}$ is a real analytic homeomorphism with critical point $z=1$. If the rotation number $\al$ of $\phi$ is irrational of bounded type, then there is a unique quasi-symmetric homeomorphism $h:\mathbb{T}\to\mathbb{T}$ with $h(1)=1$ such that $h\circ \phi=R_{\al}\circ h$, where $R_{\al}(z)=e^{2\pi i\al}z$ is the rigid rotation.
\end{lemma}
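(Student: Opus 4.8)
The plan is to build $h$ in two stages: first produce a topological conjugacy between $\phi$ and the rigid rotation $R_\al$, then show that the normalized conjugacy is quasi-symmetric. For the first stage I would invoke the classification of circle homeomorphisms with a non-flat critical point: since $\phi$ is real-analytic with a unique critical point at $z=1$ and irrational rotation number $\al$, Yoccoz's theorem (see also \'Swi\k{a}tek and Herman in the cited works) guarantees that $\phi$ has no wandering intervals, so the semi-conjugacy furnished by Poincar\'e's classification is in fact a homeomorphism conjugating $\phi$ to $R_\al$. Post-composing with the rotation that sends the image of $1$ to $1$ gives a conjugacy $h$ with $h(1)=1$. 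Uniqueness is then classical: if $h_1,h_2$ both conjugate $\phi$ to $R_\al$ with $h_i(1)=1$, then $g:=h_2\circ h_1^{-1}$ commutes with $R_\al$; since $\al$ is irrational every $R_\al$-orbit is dense, which forces $g$ to be a rotation, and $g(1)=1$ then forces $g=\mathrm{id}$.

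For the second stage I would use the dynamical partitions $\mathcal{P}_n$ of $\mathbb{T}$ determined by the orbit of the critical point $1$ up to the closest-return time $q_n$, together with their images $h(\mathcal{P}_n)$, which are precisely the corresponding partitions for $R_\al$. The key analytic input is the real \emph{a priori} bounds for critical circle maps: there is a constant $C=C(M)$, depending only on a bound $M$ for the partial quotients $a_k$ of $\al$, such that any two adjacent atoms $I,J\in\mathcal{P}_n$ satisfy $C^{-1}\le |I|/|J|\le C$, uniformly in $n$. The rigid rotation trivially satisfies analogous bounds when $\al$ is of bounded type, since the atoms of its partition have lengths $\be_{n-1}$ and $\be_n$ with ratio $a_{n+1}+\al_{n+1}$ bounded by $M+1$ (cf. Proposition \ref{fractionprop}). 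Given $x\in\mathbb{T}$ and $t>0$, I would choose the largest $n$ for which one of the two atoms of $\mathcal{P}_n$ containing $x$ has length at least $t$; then each of $[x-t,x]$, $[x,x+t]$, and $[x-t,x+t]$ is squeezed between a bounded number of consecutive atoms of $\mathcal{P}_n$ and $\mathcal{P}_{n+1}$, so by the real bounds their lengths, and the lengths of their $h$-images, are all comparable up to a constant $C'(M)$. This yields
\[
\sup_{x\in\rr}\ \sup_{t>0}\ \frac{\widetilde h(x+t)-\widetilde h(x)}{\widetilde h(x)-\widetilde h(x-t)}<\infty,
\]
i.e. $h$ is quasi-symmetric, and if a quasi-conformal extension to $\cd$ is wanted it then follows from Lemma \ref{equ2021-2-16-1}.

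The main obstacle is the real \emph{a priori} bounds themselves. Establishing $C^{-1}\le |I|/|J|\le C$ for adjacent atoms requires controlling the distortion of the high iterates $\phi^j$ that generate the partitions, even though $\phi$ has a critical point. The standard route is via cross-ratio (Koebe-type) distortion estimates: away from the critical point, compositions of branches of the analytic map $\phi$ distort cross-ratios by a factor close to $1$; near the critical point one uses the explicit non-flat (here cubic) behavior to compare $\phi$ with a power map $z\mapsto z^3$; combining these controls across one renormalization cycle gives the bounds, which are uniform for bounded type. This is the technically heavy part coming from the work of Herman, \'Swi\k{a}tek, Graczyk--\'Swi\k{a}tek, and de Melo--van Strien, and it is exactly what is being imported from the cited references; in the write-up I would therefore state the real bounds as the cited input and present in detail only the combinatorial deduction of quasi-symmetry from them.
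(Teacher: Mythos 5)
The paper does not prove this lemma at all: it is imported by citation from Herman, Swiatek, Yoccoz and Petersen, so there is no in-paper argument to compare with, and your sketch is a correct outline of exactly how those references establish it (Yoccoz's no-wandering-interval theorem upgrades the Poincar\'{e} semi-conjugacy to a genuine conjugacy, uniqueness follows since a circle homeomorphism commuting with an irrational rotation must itself be a rotation, and quasi-symmetry is deduced from the real a priori bounds through the dynamical-partition comparison you describe). You also correctly identify the real bounds as the genuinely heavy analytic input and attribute them to the cited works, which is consistent with how the paper itself treats the statement.
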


Since $h$ is quasi-symmetric, $h$ can be extended to a quasi-conformal homeomorphism $H:\cd\to\cd$ with $H(0)=0$. Set
\beqq
\widetilde{F}(z):=\left\{
                \begin{array}{ll}
                  F(z) & \hbox{for}\ |z|\geq1 \\
                  H^{-1}\circ R_{\al}\circ H(z) & \hbox{for}\ |z|<1.
                \end{array}
              \right.
\eeqq
By the construction, $\wt{F}$ is topologically conjugate with the irrational rotation on the unit disk $\cd$, and the Julia set of $\wt{F}$ can be thought of as the set $\ol{\cup_{j\geq0}\wt{F}^{-j}(\partial\cd)}$. Let $\mu_H$ be the Beltrami coefficient of $H(z)$, define an $\wt{F}$ invariant Beltrami coefficient:
\beqq
\mu(z)=\left\{
         \begin{array}{ll}
           \mu_H, & \hbox{for}\ z\in\cd \\
           (\wt{F}^n)^*\mu_H, & \hbox{for}\ z\in\wt{F}^{-n}(\cd) \\
           0, & \hbox{otherwise.}
         \end{array}
       \right.
\eeqq
It is evident that $\|\mu(z)\|_{\infty}=\|\mu_H\|_{\infty}<1$. By the measurable Riemann mapping theorem, there is a quasi-conformal homeomorphism $\psi:\wh{\cc}\to\wh{\cc}$ with the Beltrami coefficient $\mu_{\psi}(z)=\mu(z)$ almost everywhere for $z\in\cc$ satisfying that $\psi(0)=0$, $\psi(2m+1)=-m$, and $\psi(\infty)=\infty$. So, the map $\psi\circ\wt{F}\circ \psi^{-1}(z)$ is a degree $m+1$ holomorphic branched cover of the sphere $\wh{\cc}$. This, together with the fact that $F(z)$ has a critical point $\infty$ with multiplicity $m$ and a critical point $2m+1$ with multiplicity $m-1$, implies that
$\psi\circ\wt{F}\circ \psi^{-1}(z)$ has a critical point $\infty$ with multiplicity $m$ and a critical point $-m$ with multiplicity $m-1$. So, $\psi\circ\wt{F}\circ \psi^{-1}(z)$ has a critical point $-m$ with multiplicity $m-1$. It is evident that $\psi\circ\wt{F}\circ \psi^{-1}(z)$ has a Siegel disk at the origin
with multiplier $e^{2\pi i\al}$. Hence, $\psi\circ\wt{F}\circ \psi^{-1}(z)=e^{2\pi i\al}z(1+z/m)^m=\wt{f}(z)$ is a polynomial.

So, the dynamics of $\wt{f}(z)$ can be described by a quasi-conformal surgery from $\wt{F}(z)$, since the
Julia set of $\wt{f}(z)$ can be derived by that of $\wt{F}(z)$. (For an illustration diagram for an quadratic polynomial, please refer to the Peterson's work
on quadratic polynomials \cite{Peterson1996}.)

\begin{proposition}\cite[Proposition 2.4]{Shen2006}
Let $\wt{f}(z)=e^{2\pi i\al}z(1+z/m)^m$ and $\al$ be an irrational number of bounded type, then the boundary of the Siegel disk is a quasi-circle passing through a critical point.
\end{proposition}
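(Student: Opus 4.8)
The plan is to realize $\wt f$ by a quasi-conformal surgery on the Blaschke model $F$ introduced in \eqref{equ2021-2-3-2}, and then to read off the regularity of the boundary of the Siegel disk from the quasi-conformal straightening map. Since $\al$ is of bounded type, the restriction of $F$ to $\mathbb{T}$ is a real-analytic circle homeomorphism with rotation number $\al$ and a single cubic critical point at $z=1$; by Lemma \ref{equ2021-2-16-2} (Herman--\'Swi\k{a}tek--Yoccoz--Petersen) there is a quasi-symmetric homeomorphism $h:\mathbb{T}\to\mathbb{T}$ with $h(1)=1$ conjugating $F|_{\mathbb{T}}$ to the rigid rotation $R_{\al}$, and by Lemma \ref{equ2021-2-16-1} (Beurling--Ahlfors) $h$ extends to a quasi-conformal $H:\cd\to\cd$ fixing $0$. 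First I would form the glued map $\wt F$ equal to $F$ on $\{|z|\ge 1\}$ and to $H^{-1}\circ R_{\al}\circ H$ on $\cd$; this map is quasi-regular, topologically conjugate to the irrational rotation on $\cd$, and spreading the Beltrami coefficient $\mu_H$ of $H$ around by the dynamics of $\wt F$ produces an $\wt F$-invariant Beltrami coefficient $\mu$ with $\|\mu\|_\infty=\|\mu_H\|_\infty<1$; bounded type is used here in an essential way, through the quasiconformality of $H$.

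Next I would straighten $\mu$: by Theorem \ref{equ202091-1} (measurable Riemann mapping theorem) there is a quasi-conformal $\psi:\wh\cc\to\wh\cc$ with $\mu_\psi=\mu$ almost everywhere, normalized by $\psi(0)=0$, $\psi(2m+1)=-m$, $\psi(\infty)=\infty$. Then $g:=\psi\circ\wt F\circ\psi^{-1}$ is holomorphic by invariance of $\mu$, hence rational; counting zeros and poles and locating critical points (using that $F$ has a critical point of multiplicity $m-1$ at $2m+1$ and a cubic critical point at $z=1$, while $\wt f(z)=e^{2\pi i\al}z(1+z/m)^m$ has critical points $c_1=-m$ of multiplicity $m-1$ and $c_0=-\tfrac{m}{m+1}$ simple, by Proposition \ref{paraequ-2}), together with the fact that $g$ has a Siegel disk at $0$ of multiplier $e^{2\pi i\al}$, forces $g=\wt f$ and $\psi(1)=c_0$.

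It then remains to identify the Siegel disk as $\Delta=\psi(\cd)$ and to transfer regularity. Since $\cd$ is $\wt F$-invariant and $\wt F|_{\cd}$ is conjugate to $R_{\al}$, the set $\psi(\cd)$ is forward invariant under $\wt f$ with $\wt f$ acting on it as a rotation; as $\psi$ conjugates $\wt F$ to $\wt f$ and the Julia set of $\wt F$ is $\ol{\bigcup_{j\ge0}\wt F^{-j}(\partial\cd)}\supset\partial\cd$, the boundary $\psi(\partial\cd)=\partial\psi(\cd)$ lies in the Julia set of $\wt f$; hence $\psi(\cd)$ is a bounded Fatou component on which $\wt f$ rotates, i.e. it is exactly the Siegel disk $\Delta$. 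Finally, $S^1=\partial\cd$ is a circle, so its image $\partial\Delta=\psi(S^1)$ under the globally quasi-conformal map $\psi$ is a quasi-circle, by Ahlfors's characterization of quasi-circles as quasi-conformal images of round circles, and it passes through the critical point $\psi(1)=c_0=-\tfrac{m}{m+1}$ of $\wt f$, which gives the assertion.

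I expect the main obstacle to be the bookkeeping in the step that shows the straightened map is precisely the polynomial $\wt f$ rather than some other rational map: the degree, the multiplicities of the two critical points, and the normalization must all match exactly, and, relatedly, one must confirm that $\psi(\cd)$ is the entire maximal Siegel disk and not a proper forward-invariant subdomain, which is exactly what the description of the Julia set of $\wt F$ is needed for. Everything else --- quasisymmetric rigidity of bounded-type critical circle maps, the Beurling--Ahlfors extension, the measurable Riemann mapping theorem, and the invariance of quasi-circles under global quasi-conformal maps --- can be invoked as black boxes, following \cite{Shen2006}.
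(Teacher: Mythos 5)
Your proposal is correct and follows essentially the same route the paper takes (and the cited source \cite{Shen2006}): the Blaschke model \eqref{equ2021-2-3-2}, the Herman--\'Swi\k{a}tek linearization plus Beurling--Ahlfors extension, the invariant Beltrami coefficient straightened by the measurable Riemann mapping theorem, identification of the straightened map with $\wt f$ by degree, critical-point and multiplier data, and then reading off that $\partial\Delta=\psi(\partial\cd)$ is a quasi-circle through $\psi(1)=c_0$. The points you flag as needing bookkeeping (pinning down $g=\wt f$ and showing $\psi(\cd)$ is the whole Siegel disk) are exactly the ones the paper handles at the same level of detail in its surgery subsection, so there is nothing substantive to add.
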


Let $D_0$ be the Siegel disk of $\wt{f}$. For $\wt{f}=e^{2\pi i\al}z(1+z/m)^m$ with $\al$ bounded type, the Julia set is
$J(\wt{f})=\ol{\cup_{j\geq0}\wt{f}^{-j}(\partial D_0)}$ and the closure of the forward orbits of the critical points of $\wt{f}$ is $P(\wt{f})=\partial D_0\cup\{0,\infty\}$.

So, there is a quasi-conformal map between
$(D_0,0)$ and $(\cd,0)$, denoted by $R$, such that $R\circ \wt{f}\circ R^{-1}=R_{\al}$. This,
together with the fact that $\partial D_0$ is quasi-circle and Lemmas \ref{equ2021-2-16-1} and \ref{equ2021-2-16-2}, implies that $R(z)$ can be
extended to a $K$-quasi-conformal map $\varphi:\wh{\cc}_z\to\wh{\cc}_w$, where $\wh{\cc}_z$
is the domain of $\varphi$ and $\wh{\cc}_w$ is the range of $\varphi$.

\begin{definition}
Let $P(\wt{f})$ be the closure of the forward orbits of the critical points, where the number of elements
of $P(\wt{f})$ is bigger than $2$. Set
\beqq
\|\wt{f}^{\prime}(z_0)\|=\lim_{z\to z_0}\frac{d_{\wh{\cc}-P(\wt{f})}(\wt{f}(z),\wt{f}(z_0))}{d_{\wh{\cc}-P(\wt{f})}(z,z_0)},
\eeqq
where $d_{\wh{\cc}-P(\wt{f})}$ is the hyperbolic metric on $\wh{\cc}-P(\wt{f})$.
\end{definition}

\begin{lemma}\cite{McMullen1994Complex2021}
For $z,\wt{f}(z)\in \wh{\cc}-P(\wt{f})$, one has $\|\wt{f}^{\prime}(z)\|\geq1$.
\end{lemma}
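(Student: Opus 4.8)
The plan is to realize $\wt f$, near the point $z$ in question, as the composition of a holomorphic covering map with an inclusion of hyperbolic Riemann surfaces, and then to apply the Schwarz--Pick lemma. Write $\Omega := \wh{\cc}\setminus P(\wt f)$ and $\Omega' := \wh{\cc}\setminus \wt f^{-1}(P(\wt f))$. First I would observe that $P(\wt f)$ is forward invariant: by definition $P(\wt f)=\ol{\bigcup_{k\geq 1}\wt f^{k}(\{c_0,c_1\})}$, and $\wt f$ maps this set into itself (it shifts each critical orbit forward by one step and is continuous), so $\wt f\bigl(P(\wt f)\bigr)\subset P(\wt f)$. Hence $P(\wt f)\subset \wt f^{-1}(P(\wt f))$, which gives $\Omega'\subset\Omega$. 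Both open sets are hyperbolic: their complements contain more than two points since $P(\wt f)=\partial D_0\cup\{0,\infty\}$ and $\wt f^{-1}(P(\wt f))$ is even larger. Denote by $\rho$ and $\rho'$ the densities of the Poincar\'e metrics on $\Omega$ and $\Omega'$ respectively.

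Next I would note that $\wt f\colon\Omega'\to\Omega$ is a holomorphic covering map. Indeed $\wt f$ is a proper holomorphic self-map of $\wh{\cc}$ of degree $m+1$, and $P(\wt f)$ contains every critical value of $\wt f$ (the images of $c_0$ and $c_1$ already appear among the points of $P(\wt f)$). Therefore $\wt f$ restricted to $\Omega'=\wh{\cc}\setminus\wt f^{-1}(P(\wt f))$ omits all critical points, so it is a proper local homeomorphism onto $\Omega$, i.e.\ a covering. A covering between hyperbolic Riemann surfaces is a local isometry for the respective Poincar\'e metrics, so
\[
\rho'(z)\;=\;\rho\bigl(\wt f(z)\bigr)\,\bigl|\wt f'(z)\bigr|\qquad\text{for all } z\in\Omega'.
\]
On the other hand, the inclusion $\Omega'\hookrightarrow\Omega$ is holomorphic, so by Schwarz--Pick it does not expand the hyperbolic metric:
\[
\rho(z)\;\leq\;\rho'(z)\qquad\text{for all } z\in\Omega'.
\]

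Finally, suppose $z$ and $\wt f(z)$ both lie in $\Omega=\wh{\cc}\setminus P(\wt f)$. Then $\wt f(z)\notin P(\wt f)$, i.e.\ $z\notin\wt f^{-1}(P(\wt f))$, so $z\in\Omega'$ and both displayed relations apply to $z$. Passing to the infinitesimal form of the defining limit, $\|\wt f'(z)\|$ is the derivative of $\wt f$ at $z$ measured with the metric $d_{\wh{\cc}-P(\wt f)}$ on both the source and the target, so
\[
\|\wt f'(z)\|\;=\;\frac{\rho\bigl(\wt f(z)\bigr)\,\bigl|\wt f'(z)\bigr|}{\rho(z)}\;=\;\frac{\rho'(z)}{\rho(z)}\;\geq\;1,
\]
the last inequality being exactly the Schwarz--Pick estimate $\rho\leq\rho'$. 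This gives the lemma. The one step needing genuine care is the covering-map claim: one must verify that $P(\wt f)$ really contains all critical values, so that $\wt f|_{\Omega'}$ is unramified and hence a covering. This is immediate from the definition of the post-critical set (it already contains $\wt f(c_0)$ and $\wt f(c_1)$), and the remainder of the argument is a routine application of Schwarz--Pick together with the conformality of holomorphic coverings.
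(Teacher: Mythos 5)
Your proposal is correct and is essentially the cited argument from McMullen: forward invariance of the post-critical set gives $P(\wt f)\subset \wt f^{-1}(P(\wt f))$, the restriction $\wt f:\wh{\cc}\setminus\wt f^{-1}(P(\wt f))\to\wh{\cc}\setminus P(\wt f)$ is an unramified proper map, hence a covering and a local hyperbolic isometry, and Schwarz--Pick applied to the inclusion yields $\|\wt f'(z)\|\geq 1$. The only point worth stating explicitly is that $\infty$ is also a critical point of $\wt f$ viewed on $\wh{\cc}$, with critical value $\infty\in P(\wt f)$, so your claim that all critical values lie in $P(\wt f)$ does hold.
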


Let $\wh{\rho}(w)=e^{2\pi i\al}w:\wh{\cc}_w\to \wh{\cc}_w$ be the rotation map. We can define the
following commutative diagram:
\begin{center}
\begin{tikzpicture}

    \node (E) at (0,0) {$\wh{\mathbb{C}}_z$ };
    \node[right=of E] (F) at (4,0){$\wh{\mathbb{C}}_z $};
    \node[below=of F] (A) {$\wh{\mathbb{C}}_w$};
    \node[below=of E] (Asubt) {$\wh{\mathbb{C}}_w$};

   \draw[->] (E)--(F) node [midway,above] {$\wt{f}(z)=e^{2\pi i\al}z(1+z/m)^m$};
    \draw[->] (F)--(A) node [midway,right] {$\varphi$}
                node [midway,left] {};
    \draw[->] (Asubt)--(A) node [midway,below] {$\wh{f}$}
                node [midway,above] {};
    \draw[->] (E)--(Asubt) node [midway,left] {$\varphi$};

\end{tikzpicture}
\begin{tikzpicture}

    \node (E) at (0,0) {$D_0$ };
    \node[right=of E] (F) at (4,0){$D_0 $};
    \node[below=of F] (A) {$\cd$};
    \node[below=of E] (Asubt) {$\cd$};

   \draw[->] (E)--(F) node [midway,above] {$\rho$};
    \draw[->] (F)--(A) node [midway,right] {$\varphi$}
                node [midway,left] {};
    \draw[->] (Asubt)--(A) node [midway,below] {$\wh{\rho}(w)=e^{2\pi i\al}w$}
                node [midway,above] {};
    \draw[->] (E)--(Asubt) node [midway,left] {$\varphi$};

\end{tikzpicture},
\end{center}
where $\wh{f}(w)=\varphi\circ \wt{f}\circ \varphi^{-1}(w)$ is defined on the left and $\rho(z)=\varphi^{-1}\circ\wh{\rho}\circ\varphi(z)$ is defined on the right, and
$\wh{f}(w)=\wh{\rho}(w)$ on $\cd$.

\subsubsection{Lebesgue density points on the boundary of a Siegel disk}

\begin{proposition}\cite[Proposition 4.9]{McMullen3ManfildRenor}\label{equ2021-2-16-11}
Let $\iota:X\hookrightarrow Y$ be an inclusion of one hyperbolic Riemann
surface into another, and let $s=d(x,Y-X)$. Then with respect to the hyperbolic
metrics on $X$ and $Y$,
\beqq
\|\iota'(x)\|<C(s)<1,
\eeqq
where $C(s)$ decreases to zero as $s\to0$.
\end{proposition}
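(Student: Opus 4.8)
The plan is to reduce the statement, via the Schwarz--Pick lemma and a covering-space argument, to the single model case of the once-punctured disk, where the estimate is explicit. First I would reduce to the case that $X$ and $Y$ are connected, by replacing $Y$ with the component containing $x$ and $X$ with the component of its preimage in that component containing $x$; since $\iota$ is a (proper) inclusion, $Y\setminus X$ is nonempty in this component. Recall that for any holomorphic map of hyperbolic surfaces one has $\|f'\|\le 1$ in the hyperbolic metrics, with strict inequality at every point unless $f$ is a covering; since $\iota$ is not a covering, $\|\iota'(x)\|<1$ is automatic, so the content is the quantitative bound in terms of $s$.

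The key step is to fix $\eta>0$, choose $y\in Y\setminus X$ with $d_Y(x,y)<s+\eta$, and use the chain of inclusions $X\hookrightarrow Y\setminus\{y\}\hookrightarrow Y$. Domain monotonicity of the hyperbolic metric gives $\rho_X(x)\ge\rho_{Y\setminus\{y\}}(x)\ge\rho_Y(x)$, hence
\[
\|\iota'(x)\|=\frac{\rho_Y(x)}{\rho_X(x)}\le\frac{\rho_Y(x)}{\rho_{Y\setminus\{y\}}(x)}.
\]
Then I would pass to the universal covering $\pi\colon\cd\to Y$, choosing lifts $\tilde x$ of $x$ and $\tilde y$ of $y$ with $d_{\cd}(\tilde x,\tilde y)=d_Y(x,y)$ by lifting the connecting geodesic. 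Since $\pi$ restricts to a covering $\cd\setminus\pi^{-1}(y)\to Y\setminus\{y\}$, it is a local hyperbolic isometry there, so $\rho_{Y\setminus\{y\}}(x)=\rho_{\cd\setminus\pi^{-1}(y)}(\tilde x)\ge\rho_{\cd\setminus\{\tilde y\}}(\tilde x)$, while $\rho_Y(x)=\rho_{\cd}(\tilde x)$. Therefore
\[
\|\iota'(x)\|\le \frac{\rho_{\cd}(\tilde x)}{\rho_{\cd\setminus\{\tilde y\}}(\tilde x)}=:C_0\big(d_{\cd}(\tilde x,\tilde y)\big)\le C_0(s+\eta),
\]
where $C_0$ is well defined because $\mathrm{Aut}(\cd)$ acts transitively on pairs of points at a fixed hyperbolic distance, so the displayed ratio depends only on $d_{\cd}(\tilde x,\tilde y)$. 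Letting $\eta\to 0$ and using continuity of $C_0$ yields $\|\iota'(x)\|\le C_0(s)$.

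It then remains to analyze the universal model $C_0(t)=\rho_{\cd}(0)/\rho_{\cd\setminus\{p\}}(0)$ with $|p|=\tanh(t/2)$. As $\cd\setminus\{p\}$ is a proper hyperbolic subdomain of $\cd$ and the inclusion is not a covering, strict Schwarz--Pick gives $C_0(t)<1$ for all $t>0$; and as $t\to 0$ the puncture $p$ approaches $0$, so (after moving $p$ to the center by an automorphism and using the explicit covering $\cd\to\cd\setminus\{0\}$, for which the density behaves like $1/\bigl(|w|\log(1/|w|)\bigr)$ near the puncture) $\rho_{\cd\setminus\{p\}}(0)\to\infty$, whence $C_0(t)\to 0$. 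Replacing $C_0$ by its nondecreasing upper envelope $C(s):=\sup_{0<t\le s}C_0(t)$, which is still $<1$ for every $s$ (a supremum over a bounded interval of a continuous function that is $<1$ everywhere and tends to $0$ at $0$) and still tends to $0$ as $s\to 0$, and passing to $\sqrt{C(s)}$ if the strict inequality $\|\iota'(x)\|<C(s)$ is wanted, finishes the argument. The main points to get right are the covering-space bookkeeping showing the comparison collapses to a single once-punctured disk, and the classical but essential quantitative fact that the hyperbolic density of a once-punctured disk degenerates at a controlled rate near the puncture; both are standard, so I do not expect a serious obstacle.
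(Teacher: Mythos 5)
Your proof is correct, and it is essentially the standard argument: the paper itself gives no proof of this proposition (it is quoted verbatim from McMullen's book), and McMullen's own proof proceeds exactly as you do, by passing to the universal cover $\cd$ of $Y$, replacing $X$ by the disk punctured at a lift of a nearest point of $Y\setminus X$, and invoking monotonicity of hyperbolic densities together with the explicit degeneration $\rho_{\cd\setminus\{0\}}(w)\sim 1/\bigl(|w|\log(1/|w|)\bigr)$ near the puncture. The only cosmetic differences are your upper-envelope/$\sqrt{C}$ adjustments to guarantee monotonicity and strictness, which are harmless.
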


For convenience, let $A=O(B)$ denote $A<C B$ for some implicit positive constant $C$,
and $A \asymp B$ denote $B / C<A<C B$ for some implicit positive constant $C$.
Let
$$d(z_1, z_2)=|z_1-z_2|$$
denote the Euclid distance between $z_1$ and $z_2$, and
$$d_{\widehat{\cc}}(z_1,z_2)=\frac{2|z_1-z_2|}{\sqrt{1+z^2_1}\sqrt{1+z^2_2}}$$
denote the hyperbolic metric on $\widehat{\cc}=\cc\cup\{\infty\}$.

We define $\Om=\wh{\cc}_z\setminus\ol{D_0}$ and $\wt{\Om}=\varphi(\Om)=\wh{\cc}_w\setminus\ol{\cd}$.
For any pointed disk $(U, u) \subset \mathbb{C},$ set
$$r_{in}(U, u):=\sup \{r: \cd(u, r) \subset U\}\ \text{and}\ r_{out}(U, u):=\inf \{r: \cd(u, r) \supset U\},$$
where $\cd(u, r)$ is the Euclidean disk centered at $u$ with radius $r$.

We define the hyperbolic structure on $\Om$. The thick part of $\Om$ is the region where the injectivity radius under the hyperbolic
metric exceeds some small constant $\ep_0$, otherwise it is in the thin part. It is evident that $\Om$ is a punctured disk, implying that the thin part is simply standard horoball
neighborhood of the cusp at $\infty$. The constant $\ep_0$ is chosen such that the Julia set of $\wt{f}$ is contained in the thick part.

The hyperbolic metric $\rho_{\Om}$ in the thick part is comparable to the $(1/d)-$metric (\cite[Theorem 2.3]{McMullen3ManfildRenor}):
\beqq
\rho_{\Om}(z)|dz|\asymp\frac{|dz|}{d(z,\partial\Om)}.
\eeqq

Let $\al$ be a bounded type irrational number and $\de>0$ be a given constant. Set
\beqq
K_{\de}(\wt{f}):=\{z:\ \text{Orb}(z,\wt{f})\subset\cd(K(\wt{f}),\de)\},
\eeqq
where $K(\wt{f})$ is filled-in Julia set of $\wt{f}$.

\begin{lemma}\cite[Lemma 3.4]{Shen2006}
For sufficiently small $\wh{r}>0$ and $\wh{z}\in\partial\cd$, there are neighborhoods
$\wh{U}$ of $\wh{z}$, $\wh{V}$ of $1$ in $\wh{\cc}_w$, and a point $\wh{y}\in\wh{U}$ such that for some
$j\geq0$,
\beqq
\wh{f}^{j}: (\wh{U},\wh{y})\to (\wh{V},1)
\eeqq
is bijective and satisfies
\begin{itemize}
\item[(1)] $d_{\wh{\Om}}(\wh{f}^j(\cdot),\wh{\rho}^j(\cdot))=O(1)$ for any point in
$\wh{\Om}\cap\wh{U}$, where $d_{\wh{\Om}}$ is the hyperbolic metric on $\wh{\Om}$;
\item[(2)] $r_{in}(\wh{U},\wh{y})\asymp r_{out}(\wh{U},\wh{y})\asymp \wh{r}$ in the Euclidean metric;
\item[(3)] $d_{\wh{U}}(\wh{y},\wh{z})=O(1)$ in the hyperbolic metric on $\wh{U}$;
\item[(4)] the points $0$, $\tfrac{1}{2m+1}$, $2m+1$ are not in $\wh{U}$.
\end{itemize}
\end{lemma}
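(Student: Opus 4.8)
The plan is to realize $\wh{U}$ as a univalent pull-back of a fixed neighborhood $\wh{V}$ of $1$ under an iterate $\wh{f}^{j}$, where $j$ is a closest-return time of the rigid rotation $\wh{\rho}(w)=e^{2\pi i\al}w$ on $\partial\cd$ (possibly multiplied by a bounded integer), chosen so that the scale matches $\wh{r}$. Recall that $\wh{f}$ restricted to $\cd$ is $\wh{\rho}$, that $\wh{f}^{-1}(\cd)=\cd$, and hence $\wh{f}:\wh{\Om}\to\wh{\Om}$ is a branched self-covering of degree $m+1$ whose interior critical points lie in the grand orbit of the finitely many marked points $0,\tfrac{1}{2m+1},2m+1$, while $\partial\cd$ is (the $\varphi$-image of) the closure of the post-critical set $P(\wt{f})$ and carries no branching except at $1$. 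First I would fix once and for all a round topological disk $\wh{V}\ni 1$ of small but definite size whose closure meets $\partial\cd$ in an arc and is disjoint both from a fixed neighborhood of $\{0,\tfrac{1}{2m+1},2m+1\}$ and from the interior critical points of $\wh{f}$. Two preliminary facts will be used throughout: (i) by Proposition~\ref{equ2021-2-16-11} together with $\|\wt{f}'\|\ge 1$, the branches of $\wh{f}^{-1}$ following $\wh{\rho}^{-1}$ are univalent near $\partial\cd$ and do not expand the hyperbolic metric of $\wh{\Om}$; (ii) on the thick part of $\wh{\Om}$ the hyperbolic metric is comparable to the $\tfrac{1}{d(\cdot,\partial\cd)}$-metric, and $\partial D_{0}$ is a quasicircle, so the $\wh{\Om}$-geometry near $\partial\cd$ is tame.

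Choosing $j$. Since $\al$ is of bounded type, its partial quotients are bounded, so by Proposition~\ref{fractionprop} one has $q_{n+1}\asymp q_{n}$ and $\|q_{n}\al\|\asymp 1/q_{n}$; consequently (three-gap theorem) the backward orbit $\{\wh{\rho}^{-k}(1):0\le k\le N\}$ cuts $\partial\cd$ into arcs of Euclidean length $\asymp 1/N$. Given a small $\wh{r}>0$, set $N\asymp 1/\wh{r}$; then there is some $j\le N$ with $|\wh{\rho}^{-j}(1)-\wh{z}|\asymp\wh{r}$. Put $\wh{y}:=\wh{\rho}^{-j}(1)\in\partial\cd$, let $\wh{U}$ be the connected component of $\wh{f}^{-j}(\wh{V})$ containing $\wh{y}$ obtained by following, along the orbit $1,\wh{\rho}^{-1}(1),\dots,\wh{y}$, the branch of $\wh{f}^{-1}$ that agrees with $\wh{\rho}^{-1}$. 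By (i) this branch is univalent on $\wh{V}\cap\wh{\Om}$ and extends continuously across $\wh{V}\cap\partial\cd$, so $\wh{f}^{j}:(\wh{U},\wh{y})\to(\wh{V},1)$ is a bijection; and (4) holds for $\wh{r}$ small because the backward rotation orbit and $\wh{V}$ avoid a fixed neighborhood of $\{0,\tfrac{1}{2m+1},2m+1\}$ while $\mathrm{diam}(\wh{U})\to 0$.

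Distortion estimates. For (1) consider $\Theta:=\wh{\rho}^{-j}\circ\wh{f}^{j}$: by construction it is a univalent map defined on a definite $\wh{\Om}$-hyperbolic neighborhood of $\wh{U}\cap\wh{\Om}$, it maps $\wh{\Om}$ into $\wh{\Om}$, and it is the identity on $\wh{U}\cap\partial\cd$ (since $\wh{f}$ and $\wh{\rho}$ agree on $\partial\cd$ away from critical behaviour). The Schwarz–Pick lemma on $\wh{\Om}$, together with $\Theta|\partial\cd=\mathrm{id}$ and the fact that $\wh{U}$ sits at bounded $\wh{\Om}$-distance from $\partial\cd$, forces $d_{\wh{\Om}}(\Theta(x),x)=O(1)$ uniformly, i.e. $d_{\wh{\Om}}(\wh{f}^{j}(x),\wh{\rho}^{j}(x))=O(1)$ for $x\in\wh{\Om}\cap\wh{U}$; the point is that the distortion does not accumulate over the $j$ steps precisely because it is measured after the isometry-on-$\partial\cd$ correction $\wh{\rho}^{-j}$. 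For (2) apply the Koebe distortion theorem to the univalent branch of $\wh{f}^{-j}$ on $\wh{V}$: since $\wh{V}$ is round of definite size and the branch behaves near $\partial\cd$ like $\wh{\rho}^{-j}$ up to the $O(1)$ distortion just obtained, passing from hyperbolic to Euclidean scale via the metric comparison on the thick part of $\wh{\Om}$ yields $r_{in}(\wh{U},\wh{y})\asymp r_{out}(\wh{U},\wh{y})\asymp|\wh{\rho}^{-j}(1)-\wh{z}|\asymp\wh{r}$. Finally (3) is immediate: $|\wh{y}-\wh{z}|\asymp\wh{r}\asymp r_{in}(\wh{U},\wh{y})$, so $\wh{y}$ and $\wh{z}$ lie at bounded distance in the hyperbolic metric of $\wh{U}$.

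The main obstacle is the uniform bound in (1): controlling $\wh{f}^{j}$ against $\wh{\rho}^{j}$ over the whole two-dimensional neighborhood $\wh{U}\cap\wh{\Om}$, not merely along $\partial\cd$, with a constant independent of $j$ (hence of the scale $\wh{r}$). The mechanism is the Schwarz–Pick argument above, but making it go through requires that the forward orbit $\wh{U},\wh{f}(\wh{U}),\dots,\wh{f}^{j}(\wh{U})=\wh{V}$ stay in the thick part of $\wh{\Om}$ and a definite hyperbolic distance from the branch points; here the bounded-type hypothesis (so the closest returns $q_{n}$ are evenly spaced and $\wh{U}$ has controlled eccentricity), the marked-point avoidance (4), the quasicircle property of $\partial D_{0}$, and the comparison of metrics on the thick part are all used together. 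Excursions of the orbit into the thin part of $\wh{\Om}$ near $\infty$ are absorbed by the definite contraction $\|\iota'\|<C(s)<1$ of Proposition~\ref{equ2021-2-16-11}, which keeps control there as well.
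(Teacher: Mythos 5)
The paper does not prove this lemma; it cites it directly from Shen's paper, so there is no in-paper proof to compare against. Assessing your argument on its own merits: the high-level scaffolding (pick $j$ via a closest-return of the rotation so that $\wh{\rho}^{-j}(1)$ lands at scale $\wh{r}$ near $\wh{z}$, pull back a definite disk $\wh{V}$ around $1$, transport distortion bounds) is the right shape and agrees with the McMullen--Shen strategy in outline.

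However, your mechanism for item~(1) has a genuine gap. You set $\Theta:=\wh{\rho}^{-j}\circ\wh{f}^{j}$ and invoke the Schwarz--Pick lemma on $\wh{\Om}$. But $\wh{f}=\varphi\circ\wt{f}\circ\varphi^{-1}$ is only \emph{quasi-regular} on $\wh{\Om}$, not holomorphic: $\varphi$ is merely a $K$-quasiconformal extension of the linearizer $R$, and there is no canonical choice making $\varphi|_{\Om}$ conformal while still matching $R$ on $\partial D_{0}$. Hence $\Theta$ is not holomorphic and Schwarz--Pick does not apply. Worse, the hypothesis you are trying to exploit is self-defeating: if $\Theta$ \emph{were} holomorphic on a neighborhood of an arc of $\partial\cd$ and equal to the identity there, the identity/reflection principle would force $\Theta\equiv\mathrm{id}$, so~(1) would hold trivially with $O(1)=0$; the fact that it does not is exactly the signature of the non-holomorphicity your argument ignores. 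A one-pass Schwarz--Pick bound also cannot by itself prevent the per-step error $d_{\wh{\Om}}(\wh{f}(x),\wh{\rho}(x))$ from accumulating over $j\asymp 1/\wh{r}$ iterates; you need a \emph{contraction} mechanism that damps the accumulation. In Shen's and McMullen's proofs this comes from working in the holomorphic $z$-coordinate (where $\wt{f}^{-1}$ genuinely is a Schwarz--Pick contraction of $\Om$, by Proposition~\ref{equ2021-2-16-11}) and from decomposing $\wt{f}^{j}$ according to the renormalization $\mathcal{R}_{k}(F)=(F^{q_{k}},F^{q_{k+1}})$: the bounded-type hypothesis gives a bounded geometry for each renormalized piece, and the uniform $K$-quasiconformality of $\varphi$ transports the resulting bound to the $w$-coordinate.

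Relatedly, your justification of the \emph{bijectivity} of $\wh{f}^{j}:\wh{U}\to\wh{V}$ is circular. You argue that the branch is univalent because the backward rotation orbit avoids a fixed neighborhood of the marked points. But the circle orbit avoiding them does not by itself make the two-dimensional pullback components $\wh{f}^{-k}(\wh{V})$ avoid the critical locus of $\wh{f}$: for that you need the pullback diameters to stay small, which you obtain from Koebe distortion, which requires univalence. Breaking this circularity is precisely what the renormalization decomposition (or an equivalent Markov-structure argument near $\partial\cd$) accomplishes, and it cannot be replaced by a metric bound like $\|\wt{f}'\|\geq 1$. The remaining items~(2), (3), (4) are standard once~(1) and bijectivity are in place, and your treatment there is reasonable.
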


\begin{lemma}\cite[Lemma 3.5]{Shen2006}
For small enough $r>0$ and $z\in\partial D_0$, there exist neighborhoods $U$ of $z$, $V$ of $c_0=-\tfrac{m}{m+1}$ in $\wh{\cc}_z$, and a point
$y\in U$ such that for some $j\geq0$,
\beqq
\wt{f}^j: (U,y)\to (V,c_0)
\eeqq
is a univalent map and satisfies:
\begin{itemize}
\item[(1)] $d_{\Om}(\wt{f}^j(\cdot),\rho^j(\cdot))=O(1)$ for any point in $U\cap\Om$;
\item[(2)] $r_{in}(U,y)\asymp r_{out}(U,y)\asymp r$;
\item[(3)] $d_{U}(y,z)=O(1)$;
\item[(4)] the points $0$ and $-m$ are not in $U$.
\end{itemize}
\end{lemma}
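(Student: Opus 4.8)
The statement is \cite[Lemma 3.5]{Shen2006} rewritten for $\wt f(z)=e^{2\pi i\al}z(1+z/m)^m$, and the plan is to deduce it from the previous lemma by pulling everything back through the quasi-conformal conjugacy $\varphi$. Recall from the surgery construction that $\varphi:\wh{\cc}_z\to\wh{\cc}_w$ is a fixed $K$-quasi-conformal homeomorphism, $K=K(m,\al)$, extending the linearizer $R:(D_0,0)\to(\cd,0)$; it conjugates $\wt f$ to $\wh f$ and $\rho$ to $\wh\rho$, so that $\wh f^j=\varphi\circ\wt f^j\circ\varphi^{-1}$ and $\wh\rho^j=\varphi\circ\rho^j\circ\varphi^{-1}$ for all $j\ge0$; it carries $\partial D_0$ onto $\partial\cd$ and $\Om=\wh{\cc}_z\setminus\ol{D_0}$ onto $\wh\Om=\wh{\cc}_w\setminus\ol{\cd}$; and it satisfies $\varphi(0)=0$, $\varphi(\infty)=\infty$, $\varphi(c_0)=1$ (the boundary critical point of $\wh f$) and, being essentially the inverse of the surgery map $\psi$, $\varphi(-m)=2m+1$ (the marked zero of the Blaschke model underlying $\wh f$).

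First I would fix $z\in\partial D_0$, put $\wh z=\varphi(z)\in\partial\cd$, and choose a radius $\wh r=\wh r(r,z)>0$, to be pinned down at the end. Applying the previous lemma at $(\wh z,\wh r)$ produces $\wh U\ni\wh z$, $\wh V\ni1$, $\wh y\in\wh U$ and $j\ge0$ with $\wh f^j:(\wh U,\wh y)\to(\wh V,1)$ bijective and satisfying properties (1)--(4) there. Set $U=\varphi^{-1}(\wh U)$, $V=\varphi^{-1}(\wh V)$, $y=\varphi^{-1}(\wh y)$. From $\wh f^j=\varphi\circ\wt f^j\circ\varphi^{-1}$ one gets $\wt f^j=\varphi^{-1}\circ\wh f^j\circ\varphi$ on $U$, a composition of bijections, so $\wt f^j:U\to V$ is a bijection; it is holomorphic, being an iterate of the polynomial $\wt f$ and $U$ not containing $\infty$ (as $\varphi(\infty)=\infty\notin\wh U$), hence univalent. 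Moreover $\wt f^j(y)=\varphi^{-1}(\wh f^j(\wh y))=\varphi^{-1}(1)=c_0$, so $\wt f^j$ maps $(U,y)$ to $(V,c_0)$, and property (4) is immediate since $\varphi(0)=0\notin\wh U$ and $\varphi(-m)=2m+1\notin\wh U$ give $0\notin U$ and $-m\notin U$.

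For (1) and (3) I would invoke the standard fact that a $K$-quasi-conformal homeomorphism between hyperbolic Riemann surfaces is a coarse quasi-isometry for the hyperbolic metrics, with constants depending only on $K$; in particular hyperbolic distances of size $O(1)$ are sent to hyperbolic distances of size $O(1)$ (on the thick parts one may instead compare with the $1/d$-metric $\rho_\Om(z)|dz|\asymp|dz|/d(z,\partial\Om)$ and use Proposition \ref{equ2021-2-16-11}). Applying this to $\varphi^{-1}:\wh U\to U$ and $d_{\wh U}(\wh y,\wh z)=O(1)$ gives (3). For (1), if $\zeta\in U\cap\Om$ then $\varphi(\zeta)\in\wh U\cap\wh\Om$, and property (1) of the previous lemma gives $d_{\wh\Om}\big(\wh f^j(\varphi(\zeta)),\wh\rho^j(\varphi(\zeta))\big)=O(1)$; since $\wh f^j(\varphi(\zeta))=\varphi(\wt f^j(\zeta))$ and $\wh\rho^j(\varphi(\zeta))=\varphi(\rho^j(\zeta))$, pulling back through the coarse quasi-isometry $\varphi^{-1}:\wh\Om\to\Om$ yields $d_\Om\big(\wt f^j(\zeta),\rho^j(\zeta)\big)=O(1)$.

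Property (2) is the step I expect to cost the most, because it is here that the quasi-symmetry of $\varphi$ — not just its conformality — is used. From properties (2) and (3) of the previous lemma, $\cd(\wh y,c\wh r)\subset\wh U\subset\cd(\wh y,C\wh r)$ for constants $0<c<C$ depending only on $K$ and the universal constants of that lemma; since $\varphi^{-1}$ is $K$-quasi-symmetric it sends Euclidean balls to quasi-balls of bounded eccentricity, so $\varphi^{-1}(\cd(\wh y,t\wh r))$ for $t\in[c,C]$ is a quasi-ball about $y$ whose Euclidean size depends continuously and monotonically on $\wh r$ and tends to $0$ with $\wh r$; hence $r_{in}(U,y)\asymp r_{out}(U,y)$, both comparable to this common size. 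It then remains to choose $\wh r=\wh r(r,z)$ so that this common size is $\asymp r$, which is possible by monotonicity; with that choice (2) holds, with all constants independent of $r$ and $z$ because $\varphi$ is one fixed $K$-quasi-conformal map and the constants in the previous lemma do not depend on $\wh z$ or $\wh r$. Putting the four items together gives the lemma.
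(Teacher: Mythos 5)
The paper itself contains no proof of this lemma (it is imported verbatim from Shen's paper, and the surrounding text only sets up $\varphi$, $\wh f=\varphi\circ\wt f\circ\varphi^{-1}$ and $\rho=\varphi^{-1}\circ\wh\rho\circ\varphi$), so your strategy — transport the preceding lemma for $\wh f$ back through the one fixed $K$-quasi-conformal map $\varphi$ — is exactly the intended route, and items (1)–(3) are handled correctly: univalence of $\wt f^{j}=\varphi^{-1}\circ\wh f^{j}\circ\varphi$ on $U$, the standard uniform distortion bound for hyperbolic distances under a fixed $K$-qc map (a distortion function depending only on $K$; note Proposition \ref{equ2021-2-16-11} itself is about inclusions and is not quite the right tool, so cite the qc-distortion fact instead), and quasisymmetry of $\varphi^{-1}$ near $\partial\cd$ together with an intermediate-value choice of $\wh r$ for (2). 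One normalization should be stated rather than assumed: $R$ is only determined up to a rotation, so you must fix it (hence $\varphi$) so that $\varphi(c_0)=1$; this is consistent with the normalization $h(1)=1$ in the paper and is what makes $\wt f^{j}(y)=\varphi^{-1}(1)=c_0$ and $V=\varphi^{-1}(\wh V)$ a neighborhood of $c_0$.

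The one step that is wrong as written is item (4): you justify $-m\notin U$ by claiming $\varphi(-m)=2m+1$ because $\varphi$ is ``essentially the inverse of the surgery map $\psi$''. It is not: $\psi^{-1}$ conjugates $\wt f$ to the modified Blaschke product $\wt F=H^{-1}\circ R_{\al}\circ H$ on $\cd$, whereas $\varphi$ extends the linearizer $R$ and conjugates $\wt f|_{D_0}$ to the rigid rotation; the two differ by the non-conformal map $H$, so there is no reason why $\varphi(-m)$ should equal $2m+1$. Fortunately (4) does not need this identification. Since $\wt f(-m)=0$ and $0$ is the Siegel center, the point $-m$ lies in the Fatou set, and $0$ lies in the interior of $D_0$; hence both $0$ and $-m$ are at a fixed positive distance from $\partial D_0\subset J(\wt f)$. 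On the other hand, $z\in U$ and $r_{out}(U,y)\asymp r$ give $U\subset\cd(z,O(r))$, so for $r$ small enough $U$ automatically avoids $0$ and $-m$. With this replacement (and the explicit normalization $\varphi(c_0)=1$) your argument is complete and coincides with the expected proof.
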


\begin{lemma}\cite[Lemma 3.6]{Shen2006}
For any $z\in J(\wt{f})\setminus P(\wt{f})$, there exists a point $y\in\Om$, a hyperbolic disk $B\subset\Om$ centered at $y$ and a
neighborhood $B'$ of $c_0$, such that for some $j\geq0$,
\beqq
\wt{f}^j: (B,y)\to (B',c_0)
\eeqq
is a univalent map, where $\text{diam}_{\Om}B\asymp 1$ and $d_{\Om}(z,y)=O(1)$.
\end{lemma}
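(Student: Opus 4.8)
The plan is to reduce Lemma~\cite[Lemma 3.6]{Shen2006} to the boundary case already established in Lemma~\cite[Lemma 3.5]{Shen2006} by a single univalent pull-back, using that $J(\wt{f})=\ol{\bigcup_{k\ge0}\wt{f}^{-k}(\partial D_0)}$ and that $J(\wt{f})$ sits in the $\ep_0$-thick part of $\Om=\cc\setminus\ol{D_0}$, where $\rho_{\Om}(z)|dz|\asymp|dz|/d(z,\partial D_0)$.

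First I would fix $z\in J(\wt{f})\setminus P(\wt{f})=J(\wt{f})\setminus\partial D_0$. Then $z$ lies in the open thick part of $\Om$, so the $\Om$-metric near $z$ is comparable to the Euclidean one. Since the grand orbit of $\partial D_0$ is dense in $J(\wt{f})$, I can choose a point $w$ in $\bigcup_{k\ge1}\wt{f}^{-k}(\partial D_0)$ with $w\notin\partial D_0$ and $d_{\Om}(z,w)\le1$; let $k\ge1$ be minimal with $\wt{f}^{k}(w)\in\partial D_0$. The intermediate points $w,\wt{f}(w),\dots,\wt{f}^{k-1}(w)$ avoid the two critical points of $\wt{f}$: none of them can equal $c_1=-m$, since $\wt{f}(c_1)=0$ and the orbit of $w$ would then land on the Siegel center, forcing $w$ into the Fatou set, contrary to $w\in J(\wt{f})$; and none can equal $c_0\in\partial D_0$, by minimality of $k$. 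Hence the branch of $\wt{f}^{-k}$ sending $\wt{f}^{k}(w)$ to $w$ is defined and univalent on a neighborhood of $\wt{f}^{k}(w)$.

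Next I would apply Lemma~\cite[Lemma 3.5]{Shen2006} at the point $\wt{f}^{k}(w)\in\partial D_0$ with a scale $r$ taken small enough (bounded below uniformly, by the bounded-geometry estimates for the renormalized maps $\mathcal{R}_k(F)$) that the produced piece $U\ni\wt{f}^{k}(w)$ lies in the domain of the inverse branch above. This gives a univalent map $\wt{f}^{j}\colon(U,\hat{y})\to(V,c_0)$ with $V$ a fixed neighborhood of $c_0$, $r_{\mathrm{in}}(U,\hat{y})\asymp r_{\mathrm{out}}(U,\hat{y})\asymp r$, and $d_{U}(\hat{y},\wt{f}^{k}(w))=O(1)$. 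Pulling $U$ and $\hat{y}$ back by $\wt{f}^{-k}$ produces a univalent piece $B:=U'\ni w$ and a point $y:=\wt{f}^{-k}(\hat{y})\in B$ with $\wt{f}^{j+k}\colon(B,y)\to(V,c_0)$ univalent. The Koebe distortion theorem transfers the bounded eccentricity of $U$ to $B$, and the comparison $\rho_{\Om}\asymp1/d(\cdot,\partial D_0)$ on the thick part, together with the bounded-geometry control of how close $B$ comes to $\partial D_0$ relative to its Euclidean size, yields $\mathrm{diam}_{\Om}B\asymp1$. Then $d_{\Om}(z,y)\le d_{\Om}(z,w)+\mathrm{diam}_{\Om}B=O(1)$, and setting $B':=V$ finishes the construction.

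The substantive difficulty is the \emph{uniformity} of the implicit constants over all $z\in J(\wt{f})\setminus P(\wt{f})$: one must know (i) that the scale $r$ in Lemma~\cite[Lemma 3.5]{Shen2006} can be taken bounded below independently of the landing point on $\partial D_0$ and of the first-hitting time $k$, and (ii) that the pulled-back pieces $B$ are neither too small nor too close to $\partial D_0$ relative to their size, so that $\mathrm{diam}_{\Om}B\asymp1$ holds with uniform constants. Both hinge on bounded geometry of the renormalized dynamics, which is exactly where the bounded-type hypothesis on $\al$ enters, via the Blaschke model, the a priori bounds for real-analytic critical circle maps of bounded type, and the comparison of $\rho_{\Om}$ with the $1/d$-metric on the thick part. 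I expect this bounded-geometry bookkeeping, rather than the formal pull-back, to be the hard part of the proof.
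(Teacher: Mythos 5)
The paper contains no proof of this statement to compare against---it is quoted verbatim from \cite[Lemma 3.6]{Shen2006}---so I can only judge your sketch on its own terms, and its decisive step does not work as written. You pull back the piece $U$ supplied by \cite[Lemma 3.5]{Shen2006} at the landing point $\wt{f}^{k}(w)\in\partial D_0$ through the inverse branch of $\wt{f}^{k}$ along the orbit of $w$. But $U$ is a neighborhood of a point of $\partial D_0$ with $r_{in}\asymp r_{out}\asymp r$, so it contains an arc of $\partial D_0$ of size $\asymp r$; since $c_0\in\partial D_0$ and $\wt{f}|_{\partial D_0}$ is conjugate to an irrational rotation, the critical values $\wt{f}(c_0),\dots,\wt{f}^{k}(c_0)$ of $\wt{f}^{k}$ become dense in $\partial D_0$ as $k\to\infty$, so for $k$ large any such $U$ at a fixed scale $r$ contains critical values of $\wt{f}^{k}$. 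Knowing that the finitely many orbit points $w,\wt{f}(w),\dots,\wt{f}^{k-1}(w)$ avoid $c_0$ and $c_1$ only gives univalence of the branch on \emph{some} neighborhood of $\wt{f}^{k}(w)$, not on one of size bounded below independently of $k$ (the usual criterion---simply connected and disjoint from the critical values---fails because $U$ straddles the postcritical set), and $k$ is unbounded as $z$ ranges over $J(\wt{f})\setminus P(\wt{f})$, indeed can be forced large even for a single $z$. Letting $r$ shrink with $k$ destroys exactly the uniformity ($\text{diam}_{\Om}B\asymp1$, $d_{\Om}(z,y)=O(1)$) that is the content of the lemma, so what you defer as ``bounded-geometry bookkeeping'' is, on this route, a genuine obstruction rather than a technical remainder.

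The mechanism in the McMullen--Shen argument avoids pulling anything back through $\partial D_0$: since $\Om=\cc\setminus\ol{D_0}$ (punctured at $\infty$) is disjoint from $P(\wt{f})=\partial D_0\cup\{0,\infty\}$ and the finite critical values of every iterate lie in $\partial D_0\cup\{0\}$, an inverse branch of $\wt{f}^{k}$, once defined at a point of $\Om$, continues univalently over any simply connected subset of $\Om$, e.g.\ over hyperbolic disks of $\Om$; one then works with the \emph{forward} orbit of $z$, using the expansion of $\rho_{\Om}$ (covering plus the inclusion-contraction statement quoted from \cite[Proposition 4.9]{McMullen3ManfildRenor}) together with the renormalization/a priori bounds behind Lemmas 3.4--3.5 to locate the first time the orbit reaches the boundary configuration at a definite hyperbolic scale, and pulls back along the orbit of $z$ a piece that stays inside $\Om$, so univalence is automatic and the definite size of the pullback comes from controlling the expansion from above on that orbit segment (this is where the bounded-type hypothesis really enters). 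Two further small points: your $B=U'$ is not a hyperbolic disk centered at $y$, so one must inscribe one of comparable size; and a lower bound on $\text{diam}_{\Om}B$ cannot come from expansion alone, since $\|\wt{f}'\|\ge1$ makes pullbacks hyperbolically smaller, which is precisely why the upper control of the expansion along the relevant segment is unavoidable and is the missing idea in your sketch.
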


\begin{lemma}\cite[Lemma 3.7]{Shen2006}
For any $z\in J(\wt{f})$ and $r>0$, there exist an Euclidean disk $U$ centered at $y$ and a univalent map
\beqq
\wt{f}^j:(U,y)\to (V,c_0)\ \text{for some}\ j\geq0
\eeqq
such that the Euclidean radius $r(U)\asymp r$ and $|y-z|=O(r)$.
\end{lemma}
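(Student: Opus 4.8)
The plan is to deduce this statement from the two preceding lemmas: the hyperbolic‑scale pull‑back statement (for $z\in J(\wt f)\setminus P(\wt f)$ there is a hyperbolic disk of $\Om$‑diameter $\asymp 1$ about a point $y$ with $d_{\Om}(z,y)=O(1)$ that is mapped univalently by some $\wt f^{j}$ onto a neighbourhood of $c_0$) and the Euclidean‑scale statement at points of $\partial D_0$. The glue is the thick‑part comparison $\rho_{\Om}(w)\,|dw|\asymp |dw|/d(w,\partial\Om)$ on $\Om=\wh\cc_z\setminus\ol{D_0}$ — valid on a neighbourhood of $J(\wt f)$, by the choice of $\ep_0$ — together with the Koebe distortion theorem for the univalent inverse branches of $\wt f^{n}$.

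First I would reduce to $r$ small: since $J(\wt f)$ is compact of bounded diameter, for $r$ bounded below the disk produced at a fixed small scale $r_0$ already works (with $|y-z|\le\operatorname{diam}J(\wt f)=O(r)$). So assume $r$ small and set $d:=d(z,\partial D_0)$; since $0,\infty\notin J(\wt f)$ and $P(\wt f)\cap J(\wt f)=\partial D_0$, the distance from $z$ to $P(\wt f)$ is comparable to $d$. Case A, $d=O(r)$: pick $z'\in\partial D_0$ with $|z-z'|=d$ and apply the boundary‑point lemma to $z'$ at scale $r$; this yields $j\ge 0$, a point $y$, a Euclidean disk $U\ni y$ and a neighbourhood $V$ of $c_0$ with $\wt f^{j}\colon(U,y)\to(V,c_0)$ univalent, $r_{in}(U,y)\asymp r_{out}(U,y)\asymp r$ and $|y-z'|=O(r)$, hence $|y-z|\le|y-z'|+|z'-z|=O(r)$. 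Case B, $d\gg r$: then $z\in J(\wt f)\setminus\partial D_0\subseteq\Om\setminus P(\wt f)$, so the immediately preceding lemma gives $y_0\in\Om$, a hyperbolic disk $B_0\ni y_0$ with $\operatorname{diam}_{\Om}B_0\asymp 1$, a neighbourhood $B'$ of $c_0$, and $j_0\ge 0$ with $\wt f^{j_0}\colon(B_0,y_0)\to(B',c_0)$ univalent and $d_{\Om}(z,y_0)=O(1)$. Since $z$ lies in the thick part, the $1/d$‑comparison gives $d(y_0,\partial\Om)\asymp d$, $|z-y_0|=O(d)$, and $B_0$ contains a Euclidean disk of radius $\asymp d$ about $y_0$; if $r\asymp d$ up to the ambient constants, a slight Euclidean shrinking of $B_0$ is the desired $U$.

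The substantive sub‑case is $r\ll d$, where one must descend along a backward orbit. Using that $\Om$ is backward invariant under $\wt f$ and that a point of $J(\wt f)\setminus P(\wt f)$ has forward orbit meeting $\{c_0,c_1\}=\{-m/(m+1),-m\}$ at most finitely often (so, after replacing $z$ by a point of $J(\wt f)$ within $o(r)$ if necessary, never), I would choose the branch of $\wt f^{-n}$ following the orbit of $z$. Because the orbifold derivative satisfies $\|\wt f'\|\ge 1$, with $\|\wt f'\|>1$ on compact subsets of $\wh\cc\setminus P(\wt f)$, the hyperbolic derivative $\|(\wt f^{n})'(z)\|$ tends to $\infty$; pick the least $n$ for which the $\wt f^{n}$‑pull‑back of a fixed Euclidean neighbourhood of $\wt f^{n}(z)$ has Euclidean radius $\le r$, the "no single step shrinks by more than a bounded factor'' estimate (a consequence of $\|\wt f'\|\ge 1$, bounded local degree and Koebe) forcing this radius to be $\asymp r$. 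Applying the preceding lemma at $\wt f^{n}(z)$, pulling the resulting univalent map back by the chosen branch of $\wt f^{-n}$, and invoking Koebe distortion at both stages then produces a Euclidean disk $U\ni y$ with $r_{in}(U,y)\asymp r_{out}(U,y)\asymp r$, $|y-z|=O(r)$, and a univalent $\wt f^{j}\colon(U,y)\to(V,c_0)$ with $j=n+j_0$.

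The main obstacle is exactly this last descent: arranging that the Euclidean radius of the pulled‑back disk lands in the window $\asymp r$ with constants uniform over all $z\in J(\wt f)$ and all $r$. This needs (i) a uniform lower bound on the orbifold expansion of $\wt f^{n}$ over $J(\wt f)$ to guarantee a suitable $n$ exists; (ii) uniform Koebe‑type distortion bounds for the inverse branches, which is where forward avoidance of the two critical points $c_0$ and $c_1$ enters; and (iii) the bounded‑step‑shrinking estimate that prevents overshooting the target scale. Assembling these with explicit constants is the bulk of the work, whereas Case A and the $r\asymp d$ part of Case B are routine applications of the two preceding lemmas and the thick‑part metric comparison.
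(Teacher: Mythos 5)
The paper itself offers no proof of this statement: it is quoted directly from Shen \cite[Lemma 3.7]{Shen2006}, so there is no in-paper argument to measure yours against, and I judge the proposal on its own. Your architecture is the expected one (reduce to small $r$; use the boundary lemma when $d(z,\partial D_0)=O(r)$; use the hyperbolic-scale lemma plus the thick-part comparison at scale $\asymp d(z,\partial D_0)$; reach smaller scales by iterating forward and pulling back with Koebe), but there is a genuine gap at exactly the step you call the main obstacle. Your justification for the descent is that $\|\wt{f}'\|\geq 1$, with $\|\wt{f}'\|>1$ on compact subsets of $\wh{\cc}\setminus P(\wt{f})$, forces $\|(\wt{f}^{n})'(z)\|\to\infty$. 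This does not follow: for $z\in J(\wt{f})$ the forward orbit is in general \emph{not} contained in any compact subset of $\wh{\cc}\setminus P(\wt{f})$, because it accumulates on $\partial D_0=J(\wt{f})\cap P(\wt{f})$ (for a dense set of $z$ the orbit is dense in $J(\wt{f})$), and near $\partial D_0$ the expansion factor degenerates to $1$. So the per-step factors are not bounded away from $1$ and divergence of the product is not a consequence of the quoted inequality. But that divergence -- equivalently, the statement that pullbacks of definite-size disks along the backward orbit of $z$ realize every Euclidean scale $r$ with constants uniform in $z$ and $r$ -- is precisely the nontrivial content of the lemma. In Shen's proof (following McMullen) it is extracted from the bounded-type renormalization geometry encoded in the preceding lemmas: each time the orbit passes the critical point at its own scale a definite amount of contraction is gained for the pullback, and returns of the type guaranteed by the hyperbolic-scale lemma occur at a uniform rate along the orbit. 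Without an input of this kind your stopping time $n$ need not exist, and neither the window $r(U)\asymp r$ nor $|y-z|=O(r)$ can be made uniform.

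Two secondary points. First, the device of perturbing $z$ so that its orbit avoids $\{c_0,c_1\}$ is not needed for univalence of the pullback: since $\wt{f}(\partial D_0)=\partial D_0$ and $\wt{f}(c_1)=0$, every critical value of every iterate lies in $\ol{D_0}\cup\{0\}$, hence outside $\Om$, so $\wt{f}^{\,n}:\wt{f}^{-n}(\Om)\to\Om$ is a covering and inverse branches over simply connected subsets of $\Om$ are automatically univalent; the difficulty is quantitative (hitting the scale $\asymp r$ near $z$), not qualitative, and introducing an $o(r)$ perturbation only obscures the uniformity you must track. Second, in your Case B with $r\ll d$ the point $\wt{f}^{\,n}(z)$ at the stopping time may itself lie close to $\partial D_0$, where the hyperbolic-scale lemma degenerates and the boundary lemma must be used instead; this sub-case is part of the same bookkeeping and should be stated explicitly.
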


\begin{theorem}
The critical point $c_0=-\tfrac{m}{m+1}$ is deep point of $K(\wt{f})$. Furthermore, for any given $\de>0$ and any $z\in J(\wt{f})$, $z$ is
a deep point of $K_{\de}(\wt{f})$.
\end{theorem}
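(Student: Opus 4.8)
The plan is to adapt McMullen's self-similarity argument \cite{McMullen1998}, in the form developed by Shen \cite{Shen2006} (whose Lemmas on univalent returns are reproduced above), now that the quasiconformal surgery has put $\wt f(z)=e^{2\pi i\al}z(1+z/m)^m$ in place, with $\partial D_0$ a quasicircle through the critical point $c_0$ and $P(\wt f)=\partial D_0\cup\{0,\infty\}$. The statement splits into two parts, proven in sequence: (A) $c_0$ is a deep point of $K(\wt f)$; (B) for every $\de>0$, every $z\in J(\wt f)$ is a deep point of $K_\de(\wt f)$, with the exponent and the implicit constants uniform in $z$. The common engine is the strict hyperbolic expansion of $\wt f$: by $\|\wt f'(z)\|\ge 1$ on $\wh{\cc}\setminus P(\wt f)$ together with Proposition \ref{equ2021-2-16-11} applied to the inclusion $\wh{\cc}\setminus\wt f^{-1}(P(\wt f))\hookrightarrow\wh{\cc}\setminus P(\wt f)$ (a strict contraction of the respective hyperbolic metrics, with factor $C_0<1$ on every fixed thick part), $\wt f$ expands the hyperbolic metric of $\wh{\cc}\setminus P(\wt f)$ by a definite factor $1/C_0>1$ as long as the point stays in that thick part.

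For part (A), first note that $c_0\in\partial D_0$ and $D_0\subset K(\wt f)$, so the Siegel-disk side of $\partial D_0$ already lies in $K(\wt f)$; since $\partial D_0$ is a quasicircle (bounded turning, and on the thick part of $\Om=\wh{\cc}_z\setminus\ol{D_0}$ one has $\rho_\Om(z)|dz|\asymp |dz|/d(z,\partial\Om)$), it remains to show that the basin of infinity $A_\infty:=\wh{\cc}\setminus K(\wt f)$ cannot penetrate deeply toward $c_0$. I would take a round disk $B(w,s)\subset B(c_0,r)\cap A_\infty$ and push it forward by $\wt f$: the orbit stays in $A_\infty$ and escapes to $\infty$, and it leaves the relevant thick part of $\wh{\cc}\setminus P(\wt f)$ only after roughly $n\asymp\log(r/s)$ iterates. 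Over those $n$ steps the definite expansion factor $1/C_0$, converted back to the Euclidean scale via $\rho_\Om\asymp 1/d(\cdot,\partial\Om)$ near the quasicircle, forces $s=O(r^{1+\ga})$ with $\ga>0$ depending only on $C_0$ and the dilatation of $\partial D_0$. Density of the forward orbit of $c_0$ in $\partial D_0$ is what makes the configuration near $c_0$ genuinely self-reproducing, so the same $\ga$ serves at all small scales $r$; the measurable version ($\operatorname{area}(B(c_0,r)\setminus K)=O(r^{2+\ga})$) follows in the same way.

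For part (B), fix $\de>0$ and $z\in J(\wt f)$. Using the lemma of Shen \cite{Shen2006} reproduced above, at every scale $r$ there is a univalent branch $\wt f^{\,j}\colon(U,y)\to(V,c_0)$ with $U$ a Euclidean disk of radius $\asymp r$ containing a ball about $z$, $|y-z|=O(r)$, $V$ a neighborhood of $c_0$ of size comparable to $\de$, and uniformly bounded distortion on a fixed fraction of $U$ by the Koebe distortion theorem. Forward invariance of $J(\wt f)$ keeps $\wt f^{\,k}(y)$ within $O(\de)$ of $J\subset K(\wt f)$ for $0\le k\le j$ (this is what pins down $j\asymp\log(\de/r)$), so the finite orbit segments of points in $U$ near $y$ stay within $O(\de)$ of $K(\wt f)$; hence a point escaping $\cd(K,\de)$ does so only after step $j$, which lets the condition defining $K_\de$ pass through $\wt f^{\,j}$ up to a harmless change of $\de$. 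Combining this with part (A) — applied at the appropriate scale, and iterating the renormalization returns $\log(1/r)$ times to propagate the decay all the way down — yields $s=O(r^{1+\ga'})$ for every round disk $B(w,s)\subset B(z,r)\setminus K_\de(\wt f)$, with $\ga'>0$ independent of $z$. This establishes the theorem, and, as in \cite{McMullen1998,BuffCheritat2012}, gives Theorem \ref{equ2021-2-3-1} (every point of $\partial D_0$ is a Lebesgue density point of $K(\de)$) and $\dim_H J(\wt f)<2$, hence $\dim_H J(P_\al)<2$.

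The main obstacle is making the expansion estimate of part (A) quantitative and uniform: one must bound how long the forward orbit of a disk in $A_\infty$ can linger near the thin parts of $\wh{\cc}\setminus P(\wt f)$ (the horoball at $\infty$) and near $\wt f^{-1}(P(\wt f))\setminus P(\wt f)$ — precisely the locus where the strict contraction of Proposition \ref{equ2021-2-16-11} degenerates — and extract from this an explicit exponent $\ga$; here the quasicircle geometry of $\partial D_0$ and the bounded geometry of the renormalization (bounded type $\al$) are essential. A secondary difficulty is the bookkeeping in part (B): verifying that the successive returns $\wt f^{\,j}$ have uniformly bounded distortion and that the $K_\de$-condition, being an orbit condition rather than a set-membership condition, really does transfer through the (iterated) pullbacks with only a controlled change of $\de$.
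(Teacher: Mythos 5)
Your overall framework --- hyperbolic expansion of $\wt f$ on the complement of the postcritical set, combined with the univalent returns to $c_0$ from Shen's lemmas --- is the same one the paper invokes via McMullen's Theorem~4.2. But the central step of your Part~(A) contains a gap, and your ``main obstacle'' paragraph then compounds it by misidentifying where the difficulty actually lies.

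The claim ``over those $n$ steps the definite expansion factor $1/C_0$'' is not justified. The inclusion $\wh{\cc}\setminus\wt f^{-1}(P(\wt f))\hookrightarrow\wh{\cc}\setminus P(\wt f)$ contracts with factor $C(s)$, $s=d(x,\,Y\setminus X)$, and Proposition~\ref{equ2021-2-16-11} only guarantees $C(s)<1$; it is \emph{small} when $s$ is small, i.e.\ near $\ol{D'}$ (equivalently near $c_0$, since $c_0\in\partial D_0\cap\partial D'$), and it can approach $1$ as $s$ grows. When the orbit sits near a stretch of $\partial D_0$ far from $c_0$, $s$ is large and the expansion of $\wt f$ provides no definite rate; staying in the thick part of $\wh{\cc}\setminus P(\wt f)$ does not, by itself, force a multiplicative gain bounded away from $1$ at each step. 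So the count $n\asymp\log(r/s)$ does not convert into $s=O(r^{1+\ga})$ without more input. Your obstacle paragraph also has the degeneration locus backwards: near $\wt f^{-1}(P(\wt f))\setminus P(\wt f)$ the contraction is \emph{strongest} ($C(s)\to 0$), not weakest; the genuine degeneration --- beyond the cusp at $\infty$, which you correctly flag --- occurs at thick-part points hyperbolically \emph{far} from $\ol{D'}$.

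What McMullen's argument (and the paper's sketch) supplies is precisely the missing ingredient. A point of $A_\infty$ that escapes slowly shadows the critical orbit along $\partial D_0$, and since that orbit is the bounded-type rotation, it returns to a fixed neighborhood of $c_0$ with bounded combinatorics; on each such return there is both a definite chance of landing in $D'\subset K(\wt f)$ and a definite hyperbolic expansion, because the return point is close to $\ol{D'}$. It is the accumulation of these bounded-geometry returns --- not of generic thick-part iterates --- that produces the exponent $\ga$, and this is where bounded type of $\al$ is actually used. Without making the return structure explicit, the expansion argument as you have written it does not close. Part~(B), as a transfer of the deep-point property from $c_0$ to an arbitrary $z\in J(\wt f)$ via the univalent returns and Koebe distortion, is structurally correct once~(A) is repaired; note only that the target neighborhood $V$ of $c_0$ in those lemmas has a fixed size independent of $\de$, the dependence on $\de$ entering through the orbit condition defining $K_\de$, not through the geometry of the return.
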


\begin{proof}
The idea of the proof follows the arguments used in the proof of Theorem 4.2 in \cite{McMullen1998}.

Consider a point $z$ near the critical point $c_0$ such that the forward iterates of $z$ escapes to infinity.
Each time the orbit comes back near the critical point, it will become closer to the critical point, if it comes back to the critical point,
it will shadow the partial orbit of the critical value for a while, where the orbit of the critical value is a rotation on the boundary of
the Siegel disk, it moves near the critical point.

The points, not far from the Siegel disk and escaping to infinity, must visit the critical point $c_0$.
Recall $D_0$ is the Siegel disk of $\wt{f}$. It is evident that $D_0\subset \wt{f}^{-1}(D_0)$. Let $D'$ be its pre-image, which is not $D_0$.
On each visit, there is a definite chance of landing in the
pre-image $D'$ of $D_0$. Thus a random point close to $\partial D_0$ has a high probability
of eventually landing in $D'$, implying that the density of the filled-in Julia set $K(\wt{f})$ tends to one. The study of $\wt{f}$
near $\partial D_0$ is facilitated by working in the linear coordinate system, a quasi-conformal
chart in which $D_0$ is the unit disk, $\wt{f}|D_0$ is a rigid rotation, and the iterates of $\wt{f}$ are
uniformly quasi-regular.

It is obvious that
\beqq
\wt{f}:\ (\cc-\ol{D_0}\cap\ol{D'})\to(\cc-\ol{D_0})=\Om
\eeqq
is a covering map, and hence an isometry between the respective hyperbolic metrics. This,
together with Proposition \ref{equ2021-2-16-11}, implies the expansion in the hyperbolic metrics.

Other arguments are similar with those used in the proof of Theorem 4.2 in \cite{McMullen1998}.

\end{proof}

\section{Open problem} \label{julia-set-equ-2022-2-12-4}

Some interesting problems are collected in this section.

\begin{question}
For $m\in\{4,5,6,...,19,20,21\}$, the same parabolic and near-parabolic bifurcation should be obtained similarly, but the corresponding constants should be chosen carefully. We leave this for further work. The case $m=3$ was already studied in \cite{QiaoQu2020}.
\end{question}

\begin{question}
The choices of some constants play an important role in the whole arguments, is it possible to find some optimal constants for the whole discussions?
\end{question}

\begin{question}
The parabolic and near-parabolic bifurcation for general high degree polynomials is incomplete. There are many interesting problems, for example, is it possible to find other classes of functions as the study of the existence of Julia sets with positive area?
\end{question}

\begin{question}
By combining the Blaschke product in \eqref{equ2021-2-3-2}, quasi-conformal surgery, and the approach and the method used in Peterson's work \cite{Peterson1996}, one might be able to show that the Lebesgue measure of the Julia set of $P_{\al}(z)$ is zero, where $\al$ is an irrational number of bounded type.
\end{question}

\begin{question}
Inspired by the well-known result of Yoccoz, if an irrational number $\al$ is not Brjuno, then the quadratic map $g(z)=e^{2i\pi\al}z+z^2$ dose not have a Siegel disk at the origin, we guess that if the irrational number $\al$ is not Brujuno, then the map $f(z)=e^{2i\pi\al}z(1+z)^m$ does not have a Siegel disk at the origin either.
\end{question}

\baselineskip=2pt


\begin{thebibliography}{10}

\bibitem{Ahlfors1966}
L.~Ahlfors.
\newblock {\em Lectures on Quasiconformal Mappings}.
\newblock Van Nostrand, 1966.

\bibitem{AvilaBuffCheritat2004}
A.~Avila, X.~Buff, and A.~Ch\'{e}ritat.
\newblock Siegel disks with smooth boundaries.
\newblock {\em Acta Math.}, 193:1--30, 2004.

\bibitem{AvilaLyubich}
A.~Avila and M.~Lyubich.
\newblock Lebesgue measure of {F}eigenbaum {J}ulia sets.
\newblock {\em Ann. of Math.}, 195:1--88, 2022.

\bibitem{BenedettiPetronio1992}
R.~Benedetti and C.~Petronio.
\newblock {\em Lectures on Hyperbolic Geometry}.
\newblock Springer-Verlag, Berlin Heidelberg, 1992.

\bibitem{BorweinErdelyi1995}
P.~Borwein and T.~Erd\'{e}lyi.
\newblock {\em Polynomial and Polynomial Inequality}.
\newblock Graduate Texts Math., Vol. 161. Springer-Verlag, NY, 1995.

\bibitem{BuffCheritatSmooth}
X.~Buff and A.~Ch\'{e}ritat.
\newblock Quadratic {S}iegel disks with smooth boundaries.
\newblock {\em preprtint}.

\bibitem{BuffCheritat}
X.~Buff and A.~Ch\'{e}ritat.
\newblock Upper bound for the size of quadratic {S}iegel disks.
\newblock {\em Invent. Math.}, 156:1--24, 2004.

\bibitem{BuffCheritat2006}
X.~Buff and A.~Ch\'{e}ritat.
\newblock The {B}rjuno function continuously estimates the size of quadratic
  {S}iegel disks.
\newblock {\em Ann. of Math.}, 164:265--312, 2006.

\bibitem{BuffCheritat2007}
X.~Buff and A.~Ch\'{e}ritat.
\newblock How regular can the boundary of a quadratic {S}iegel disk be?
\newblock {\em Proc. Am. Math. Soc.}, 135:1073--1094, 2007.

\bibitem{BuffCheritat2012}
X.~Buff and A.~Ch\'{e}ritat.
\newblock Quadratic {J}ulia sets with positive area.
\newblock {\em Ann. of Math.}, 176:673--746, 2012.

\bibitem{Carleson1993}
L.~Carleson and T.~W. Gamelin.
\newblock {\em Complex Dynamics}.
\newblock Springer-Verlag, New York, 1993.

\bibitem{Cheritata}
A.~Ch\'{e}ritat.
\newblock The measure of quadratic {J}ulia-{L}avaurs sets with a bounded-type
  virtual {S}iegel disk is zero.
\newblock {\em arxiv.org/abs/0409384}.

\bibitem{Cheritatb}
A.~Ch\'{e}ritat.
\newblock Semi-continuity of {S}iegel disks under parabolic implosion.
\newblock {\em arxiv.org/abs/0507250}.

\bibitem{Cheritat2000}
A.~Ch\'{e}ritat.
\newblock Recherche d'ensembles de {J}ulia de measure de {L}ebesgue positive.
\newblock {\em th\'{e}se, Orsay}, 2000, Abailable at
  http://www.math.univ-toulouse.ft/~cheritat/publi2.php.

\bibitem{Cheritat2002}
A.~Ch\'{e}ritat.
\newblock Sur la vitesse d'explosion des pointes fixes paraboliques dans la
  famille quadratique.
\newblock {\em C.R. Math. Acad. Sci. Paris}, 334:1107--1112, 2002.

\bibitem{Cheritat2021}
A.~Ch\'{e}ritat.
\newblock Near parabolic renormalization for unicritical holomorphic maps.
\newblock {\em Arnold Math. J.}, pages doi.org/10.1007/s40598--020--00172--6,
  2021.

\bibitem{CoulletTresser1978}
P.~Coullet and T.~Tresser.
\newblock It\'{e}rations d'endomorphismes et groupe de renormalisation.
\newblock {\em Acad. Sci. Paris S\'{e}r. A-B}, 287:A577--A580, 1978.

\bibitem{CarvalhoLyubichMartens2006}
A.~de~Carvalho, M.~Lyubich, and M.~Martens.
\newblock Renormalization in the {H}\'{e}non family, {I}: universality but
  non-rigidity.
\newblock {\em J. Stat. Phys.}, 121:611--669, 2006.

\bibitem{Douady1995}
A.~Douady.
\newblock {\em Complex Dynamical Systems: The Mathematics Behind the Mandelbrot
  and Julia Sets}, chapter Does a {J}ulia set depend continuously on the
  polynomial?, pages 91--138.
\newblock Amer. Math. Soc., 1994.

\bibitem{DudkoLyubich2020}
D.~Dudko, M.~Lyubich, and N.~Selinger.
\newblock Pacman renormalization and self-similarity of the {M}andelbrot set
  near {S}iegel parameters.
\newblock {\em J. Amer. Math. Soc.}, 33:653--733, 2020.

\bibitem{Duren1983}
P.~L. Duren.
\newblock {\em Univalent Functions}.
\newblock Grund. Math. Wiss. 259. Springer-Verlag, 1983.

\bibitem{Fatou1919}
P.~Fatou.
\newblock Sur les \'{e}quations fonctionnelles.
\newblock {\em Bull. Soc. Math. France}, 47:161--271, 1919.

\bibitem{Fatou1920}
P.~Fatou.
\newblock Sur les \'{e}quations fonctionnelles.
\newblock {\em Bull. Soc. Math. France}, 48:33--94, 208--314, 1920.

\bibitem{Feigenbaum1978}
M.~J. Feigenbaum.
\newblock Quantitative universality for a class of nonlinear transformations.
\newblock {\em J. Stat. Phys.}, 19:25--52, 1978.

\bibitem{Feigenbaum1980}
M.~J. Feigenbaum.
\newblock The transition to aperiodic behaviour in turbulent systems.
\newblock {\em Commun. Math. Phys.}, 77:65--86, 1980.

\bibitem{GaidashevYampolsky}
D.~Gaidashev and M.~Yampolsky.
\newblock Golden mean {S}iegel disk universality and renormalization.
\newblock {\em arXiv:1604.00717}.

\bibitem{Gamelin2001}
T.~W. Gamelin.
\newblock {\em Complex Analysis}.
\newblock Undergraduate Texts Math. Springer-Verlag, NY, 2001.

\bibitem{GoldbergMilnor1993}
L.~Goldberg and J.~Milnor.
\newblock Fixed points of polynomial maps. {P}art {II}. {F}ixed point
  portraits.
\newblock {\em Annales scientifiques de l'\'{E}.N.S.}, 26:51--98, 1993.

\bibitem{HardyWright1938}
G.~H. Hardy and E.~M. Wright.
\newblock {\em The Theory of Numbers}.
\newblock Oxford Univ. Press, London, 1938.

\bibitem{Hatcher2002}
A.~Hatcher.
\newblock {\em Algebraic Topology}.
\newblock Cambridge Univ. Press, 2002.

\bibitem{Herman1987}
M.~Herman.
\newblock Conjugaison quasi symmetrique des hom\'{e}omorphismes analytiques du
  cercle a des rotations.
\newblock {\em Manuscript}, 1987.

\bibitem{Hubbard1993}
J.~H. Hubbard.
\newblock {\em Topological Methods in Modern Mathematics}, chapter Local
  connectivity of Julia sets and bifurcation loci: three theorems of J. C.
  Yoccoz, pages 467--511.
\newblock Publish or Perish, 1993.

\bibitem{Lanford1982}
O.~E.~Lanford III.
\newblock A computer-assisted proof of the {F}eigenbaum conjecture.
\newblock {\em Bull. Am. Math. Soc. (N.S.)}, 6:427--434, 1982.

\bibitem{LanfordYampolsky2012}
O.~E.~Lanford III and M.~Yampolsky.
\newblock {\em Fixed Point of the Parabolic Renormalization Operator}.
\newblock Springer, 2014.

\bibitem{InouShishikura2016}
H.~Inou and M.~Shishikura.
\newblock The renormalization for parabolic fixed points and their
  perturbation.
\newblock {\em preprint}, 2008.

\bibitem{Jellouli2000}
H.~Jellouli.
\newblock {\em Mandelbrot Set, Theme and Variations}, chapter Perturbation
  d'une fonction lin\'{e}arisable, pages 227--252.
\newblock London Math. Soc. Lecture Notes Ser. 274. Cambridge Univ. Press,
  Cambridge, 2000.

\bibitem{McMullen1994Complex2021}
C.~T. McMullen.
\newblock {\em Complex Dynamics and Renormalization}, volume 135 of {\em Ann.
  of Math. Studies}.
\newblock Princeton Univ. Press, Princeton, 1994.

\bibitem{McMullen3ManfildRenor}
C.~T. McMullen.
\newblock {\em Renormalization and 3-Manifolds which Fiber over the Circle},
  volume 142 of {\em Ann. of Math. Studies}.
\newblock Princeton University Press, Princeton, 1996.

\bibitem{McMullen1998}
C.~T. McMullen.
\newblock Self-similarity of {S}iegel disks and {H}ausdorff dimension of
  {J}ulia sets.
\newblock {\em Acta Math.}, 180:247--292, 1998.

\bibitem{Milnor2006}
J.~Milnor.
\newblock {\em Dynamics in One Complex Variable (Third Edition)}.
\newblock Ann. of Math. Studies No. 160. Princeton Univ. Press, Princeton and
  Oxford, 2006.

\bibitem{MorosawaNishimuraTaniguchiUeda2000}
S.~Morosawa, Y.~Nishimura, M.~Taniguchi, and T.~Ueda.
\newblock {\em Holomorphic Dynamics}.
\newblock Cambridge Univ. Press, Cambridge, UK, 2000.

\bibitem{Peterse1996}
C.~L. Petersen.
\newblock Local connectivity of some {J}ulia sets containing a circle with an
  irrational rotation.
\newblock {\em Acta Math.}, 177:163--224, 1996.

\bibitem{PetersenZakeri2004}
C.~L. Petersen and S.~Zakeri.
\newblock On the {J}ulia set of a typical quadratic polynomial with a {S}iegel
  disk.
\newblock {\em Ann. of Math.}, 159:1--52, 2004.

\bibitem{Peterson1996}
C.~L. Peterson.
\newblock Local connectivity of some {J}ulia sets containing a circle with an
  irrational rotation.
\newblock {\em Acta Math.}, 177:163--224, 1996.

\bibitem{Peterson2004}
C.~L. Peterson.
\newblock On the {J}ulia set of a typical quadratic polynomial with a {S}iegel
  disk.
\newblock {\em Ann. of Math.}, 159:1--52, 2004.

\bibitem{Pommerenke1975}
C.~Pommerenke.
\newblock {\em Univalent Functions. Studia Mathematica/Mathematische
  Lehrb\"{u}cher}.
\newblock Vandenhoeck \& Ruprecht, G\"{o}ttingen, 1975.

\bibitem{QiaoQu2020}
J.~Qiao and H.~Qu.
\newblock Area of {J}ulia sets of non-renormalizable cubic polynomials.
\newblock {\em arXiv:2004.08088v1}, 2020.

\bibitem{Shen2006}
L.~Shen.
\newblock Hausdorff dimension of {J}ulia set of a polynomial with a {S}iegel
  disk.
\newblock {\em Science in China Series A-Math.}, 49:1284--1296, 2006.

\bibitem{Shishikura1998}
M.~Shishikura.
\newblock The {H}ausdorff dimension of the boundary of the {M}andelbrot set and
  {J}ulia sets.
\newblock {\em Ann. of Math.}, 147:225--267, 1998.

\bibitem{Shishikura2000}
M.~Shishikura.
\newblock {\em Mandelbrot Set, Theme and Variations}, chapter Bifurcation of
  parabolic fixed points, pages 325--363.
\newblock London Math. Soc. Lecture Notes Ser. 274. Cambridge Univ. Press,
  Cambridge, 2000.

\bibitem{Siegel1942}
C.~L. Siegel.
\newblock Iteration of analytic functions.
\newblock {\em Ann. of Math.}, 43:607--612, 1942.

\bibitem{Swiatek1998}
G.~Swiatek.
\newblock On critical circle mappings.
\newblock {\em Bol. Soc. Brasil}, 29:329--351, 1998.

\bibitem{Yoccoz1994}
J.~C. Yoccoz.
\newblock Il n'y a pas de contre-example de denjoy analytique.
\newblock {\em C. R. Acad. Sci. Paris}, 298:141--144, 1994.

\bibitem{Yoccoz1995}
J.~C. Yoccoz.
\newblock {\em Petits Diviseurs en Dimension 1}, volume 231 of {\em
  Ast\'{e}risque}.
\newblock Soc. Math. France, Paris, 1995.

\bibitem{Zhang2011}
G.~Zhang.
\newblock All bounded type {S}iegel disks of rational maps are quasi-disks.
\newblock {\em Invent. Math.}, 185:421--466, 2011.

\end{thebibliography}
\end{document}